\documentclass[a4paper,12pt,english]{amsart}
\usepackage{babel}
\usepackage[utf8]{inputenc}
\usepackage[T1]{fontenc}
\usepackage{amssymb,amsfonts,amsmath,amsthm}
\usepackage{mathdots}
\usepackage{color}
\usepackage{graphicx}
\usepackage[stretch=16,shrink=16,step=2,kerning=true,protrusion=true,final]{microtype} 
\usepackage{dynkin-diagrams}
\usepackage[colorlinks=true, linkcolor=blue, citecolor=red, urlcolor=blue]{hyperref}
\usepackage{tikz}

\usepackage{mathtools, stmaryrd}
\usepackage{xparse, etoolbox}
\usepackage{enumitem}

\setlength{\textheight}{\paperheight}
\addtolength{\textheight}{-4cm}
\calclayout


\newtheorem{theorem}{Theorem}
\newtheorem{proposition}[theorem]{Proposition}
\newtheorem{corollary}[theorem]{Corollary}
\newtheorem{lemma}[theorem]{Lemma}

\newtheorem{theorem_intro}[theorem]{Theorem}

\theoremstyle{definition}
\newtheorem{definition}[theorem]{Definition}

\newtheorem{notation}[theorem]{Notation}

\theoremstyle{remark}
\newtheorem{remark}[theorem]{Remark}
\newtheorem{remarks}[theorem]{Remarks}
\newtheorem{example}[theorem]{Example}
\newtheorem{examples}[theorem]{Examples}

\allowdisplaybreaks 

\setcounter{tocdepth}{3}
\setcounter{secnumdepth}{3}

\newcommand{\ie}{i.e.\ }
\newcommand{\eg}{e.g.\ }

\newcommand{\PP}{\mathbf{P}}
\newcommand{\CC}{\mathbf{C}}

\newcommand{\RR}{\mathbf{R}}

\newcommand{\ZZ}{\mathbf{Z}}
\newcommand{\NN}{\mathbf{N}}

\newcommand{\C}{\mathbf{C}}
\newcommand{\R}{\mathbf{R}}

\newcommand{\Z}{\mathbf{Z}}
\newcommand{\N}{\mathbf{N}}
\newcommand{\K}{\mathbf{K}}

\newcommand{\GL}{\operatorname{GL}}
\newcommand{\SL}{\operatorname{SL}}
\newcommand{\PSL}{\operatorname{PSL}}
\newcommand{\SO}{\operatorname{SO}}

\newcommand{\OO}{\operatorname{O}}
\newcommand{\SU}{\operatorname{SU}}
\newcommand{\U}{\operatorname{U}}
\newcommand{\Sp}{\operatorname{Sp}}

\newcommand{\Sym}{\operatorname{Sym}}
\newcommand{\Herm}{\operatorname{Herm}}
\newcommand{\Aut}{\operatorname{Aut}}
\newcommand{\Inn}{\operatorname{Inn}}
\newcommand{\lietype}[1]{\mathrm{#1}}

\DeclareMathOperator{\Lie}{Lie}
\DeclareMathOperator{\Span}{Span}

\newcommand{\coloneqq}{\mathrel{\mathop:}=}

\newcommand{\g}{\mathfrak{g}}

\renewcommand{\u}{\mathfrak{u}}

\newcommand{\so}{\operatorname{\mathfrak{so}}}

\newcommand{\HH}{\mathbf{H}}
\newcommand{\Oc}{\mathbf{O}}
\newcommand{\Ad}{\operatorname{Ad}}
\newcommand{\End}{\operatorname{End}}
\newcommand{\ad}{\operatorname{ad}}
\newcommand{\id}{\operatorname{Id}}
\newcommand{\alphab}{{\boldsymbol{\alpha{}}}}
\newcommand{\gammab}{{\boldsymbol{\gamma{}}}}
\newcommand{\varepsilonb}{{\boldsymbol{\varepsilon{}}}}
\newcommand{\vb}{{\boldsymbol{v{}}}}

\def\setminus{\smallsetminus}

\DeclarePairedDelimiterX\scalK[1](){\sprargs{#1}}%
\NewDocumentCommand{\sprargs}{>{\SplitArgument{1}{,}}m }
 {\sprargsaux#1}
\NewDocumentCommand{\sprargsaux}{ m m }%
{\ifblank{#1}{\cdot}{#1}\nonscript\delimsize\vert\nonscript\mathopen{}\ifblank{#2}{\cdot}{#2}}

\setcounter{tocdepth}{2}

\title{Generalizing Lusztig's total positivity}

\author[O. Guichard]{Olivier Guichard}
\address{Universit\'e de Strasbourg, IRMA, 7 rue  Descartes, 67000 Strasbourg, France}
\email{olivier.guichard@math.unistra.fr}

\author[A. Wienhard]{Anna Wienhard}
\address{Max Planck Institute for Mathematics in the Sciences, Inselstr. 22, 04103 Leipzig
Germany}
\email{wienhard@mis.mpg.de}

\thanks{OG thanks the Institut Universitaire de France, he was partially supported by the Agence Nationale de la Recherche under the grant DynGeo (ANR-16-CE40-0025).
AW was partially supported by the NSF under agreement DMS-1065919, by the Sloan Foundation, by the Deutsche Forschungsgemeinschaft, by the European Research Council under ERC-Consolidator grant 614733 and under ERC-Advanced Grant 101018839,  by the Klaus Tschira Foundation, and by the National Science Foundation under Grant No. 1440140 and the Clay Foundation, while she was in residence at the Mathematical Sciences Research Institute in Berkeley, California, during the semester of fall 2019.
AW thanks the Hector Fellow Academy for support. 
Both authors acknowledge support from U.S. National Science Foundation grants DMS 1107452, 1107263, 1107367 ``RNMS: GEometric structures And Representation varieties'' (the GEAR Network).
This work was supported by the Deutsche Forschungsgemeinschaft under Germany's Excellence Strategy EXC-2181/1 - 390900948 (the Heidelberg STRUCTURES Cluster of Excellence).}

\AtBeginDocument{%
   \def\MR#1{}
}

\begin{document}

\numberwithin{theorem}{section} \numberwithin{equation}{section}

\begin{abstract}
  We introduce the notion of $\Theta$-positivity in real semisimple Lie
  groups. This notion at the same time generalizes Lusztig's total positivity
  in split real Lie groups and invariant orders in Lie groups of Hermitian
  type.  We show that there are four families of simple Lie groups which admit
  $\Theta$-positive structures and investigate fundamental properties of
  $\Theta$-positivity.  We define and describe the positive and nonnegative
  unipotent semigroups and show that they give rise to a notion of positive
  $n$-tuples in flag varieties.
\end{abstract}

\maketitle

\tableofcontents{}

\section{Introduction}
The theory of totally positive matrices arose in the beginning of the 20th
century through work of Schoenberg \cite{Schoenberg} and Gantmacher and Krein
\cite{Gantmacher_Krein}. Total positivity has since become an important
concept in several mathematical fields.  In the 1990's the theory has been
generalized widely by Lusztig \cite{Lusztig} who introduced the total positive
semigroup of a general split real semisimple Lie group. Lusztig's total
positivity plays an important role in representation theory, cluster algebras,
and has many relations to other areas in mathematics as well as in theoretical
physics.  In this article we introduce a generalization of Lusztig's total
positivity in the context of real semisimple Lie groups~$G$ that are not
necessarily split. We call this generalization \emph{$\Theta$-positivity} because it
depends on the choice of a flag variety associated with~$G$ which is
determined by a subset~$\Theta$ of the set of simple roots. The notion of
$\Theta$-positivity generalizes at the same time Lusztig's total positivity
(when the group is split and the flag variety is the complete flag variety) as
well as Lie semigroups of Lie groups of Hermitian type, which are related to
bi-invariant orders and causal structures (and the flag variety is the Shilov
boundary).  We give a classification of semisimple Lie groups admitting a
$\Theta$-positive structure. Besides split real Lie groups and Hermitian Lie
groups of tube type, two further families of simple Lie groups admit a
$\Theta$-positive structure: the groups locally isomorphic to indefinite
orthogonal groups $\SO(p,q)$, with $2\leq p <q$, and an exceptional family,
whose system of restricted roots is of type $\lietype{F}_4$ (cf.\
Theorem~\ref{thm_intro:classification} below). To our knowledge, for these two
families no positive structure was known before.

Our interest in $\Theta$-positivity arose from higher rank Teichm\"uller
theory, in particular from trying to find a unifying framework that explains
the similarities between Hitchin representations and maximal
representations. The notion of $\Theta$-positivity provides such a unifying
systematic framework. It also leads to several conjectures regarding higher
rank Teichm\"uller spaces, some of which have been formulated in \cite{GW_ECM}
and \cite{wienhardicm}, and partly proven in \cite{Collier, ABCGGO,
  Bradlow_Collier_etal}, \cite{GLW}, \cite{BP}, and \cite{BGLPW}.

However, the reach of $\Theta$-positivity goes far beyond higher rank
Teichm\"uller theory. For example, as we will discuss in a bit more
detail below, $\Theta$-positivity suggests that Hermitian Lie
groups of tube type should be considered as groups of type $\lietype{A}_1$ over
noncommutative algebras. Something, which has been made precise in
\cite{ABRRW} for most classical groups.

Before we describe the notion of $\Theta$-positivity and our results in more
detail, let us note that we described the notion of $\Theta$-positivity
several years ago in a survey paper \cite{GW_ECM} with only few proofs and a
hands on description of the notion for indefinite orthogonal groups
$\SO(p,q)$, $2 \leq p<q$. Several of the properties we described there have
been already used by other people.  This article now finally gives the
foundation of $\Theta$-positivity.  We focus here on the structure of
unipotent subgroups in~$G$ and on $\Theta$-positivity in generalized flag
varieties. In particular, we provide all the background needed to introduce
positive configurations in flag varieties, positive maps or positive
representations of surface groups. We give all the background and prove the
results used in \cite{Collier,ABCGGO, Bradlow_Collier_etal}, in
\cite{PSW2,BP}, as well as in \cite{GLW} and \cite{BGLPW}. Ideas from this
paper and \cite{GLW} were also used in \cite{BCL}.  In a forthcoming second
foundational article we will focus on the finer structure of
$\Theta$-positivity, including braid relations, Gau\ss{} decomposition
theorems, and geometric properties of positive elements in~$G$.

We now describe the results of the paper in more detail. 

\subsection{\texorpdfstring{$\Theta$}{Θ}-positive structures} 
Totally positive matrices are defined by requiring all minors  of the matrix to be
positive. Anne Whitney showed in \cite{Whitney} that the semigroup of totally
positive, or more general totally nonnegative, matrices can be generated by
an explicit set of simple matrices. This reduction theorem allowed the
generalization of total positivity to all semisimple split real Lie groups by Lusztig 
\cite{Lusztig}. The key building block in Lusztig's approach is the
construction of a closed nonnegative semigroup~$U^{\geq 0}$ in the unipotent
radical~$U$ of the Borel subgroup~$P$ of a split real Lie group~$G$. This
semigroup is constructed as follows.  The Lie algebra~$\mathfrak{u}$ can be
described as the sum of the root spaces~$\mathfrak{g}_\alpha$ for all positive
roots~$\alpha$. Since $G$~is split, any root space~$\mathfrak{g}_\alpha$ is of
dimension~$1$. Choosing an element~$X_\alpha$ in~$ \mathfrak{g}_\alpha$ it can hence be identified with~$\R$. Given the set of simple positive
roots~$\Delta$, consider the map
$x_\alpha\colon \mathfrak{g}_\alpha\cong \R \to U$,
$s \mapsto \exp(s X_\alpha)$. Lusztig's nonnegative semigroup $U^{\geq 0}$ is
then the semigroup generated by $x_\alpha(\R_{\geq0})$, $\alpha \in \Delta$.

Turning to our situation, when $G$ is a semisimple real Lie group, which is not
necessarily split, we consider a subset $\Theta\subset \Delta$ of the set of
simple roots. This defines a standard parabolic subgroup $P_\Theta$ in $G$ and
a flag variety $\mathsf{F}_\Theta = G/P_\Theta$. (The convention is 
that $P_\Delta$ is the minimal parabolic subgroup of~$G$.) The standard
opposite parabolic subgroup $P_{\Theta}^{\mathrm{opp}}$ is such that
$L_\Theta= P_\Theta \cap P_{\Theta}^{\mathrm{opp}}$ is a Levi factor and
the group~$P_\Theta$ is the semidirect product of its
unipotent radical~$U_\Theta$ and the Lie group~$L_\Theta$.  The Lie algebra $\mathfrak{u}_\Theta$ of
$U_\Theta$ carries the adjoint action by $L_\Theta$ and can hence be
decomposed into its $L_\Theta$-irreducible pieces. For every simple root
$\alpha \in \Theta$ there is an irreducible piece $\mathfrak{u}_\alpha$, which
in general is not of dimension~$1$. When $G$~is split and $\Delta = \Theta$
these are precisely the root spaces~$\mathfrak{g}_\alpha$.

We say that $G$ admits a $\Theta$-positive structure if it satisfies the
following property: For every $\alpha \in \Theta$ there exists an acute convex
cone $c_\alpha \subset \mathfrak{u}_\alpha$ that is invariant under the action of the connected component~$L_\Theta^\circ$ of~$L_
\Theta$.

In Section~\ref{sec:theta-posit-st}, we first show that the notion of
$\Theta$-positivity defined for semisimple groups reduces to the case of
simple groups (i.e.\ every simple factor must have a positive structure).
We then prove  
\begin{theorem_intro}[see Theorem~\ref{thm:classification}]
  \label{thm_intro:classification}
  A simple Lie group $G$ admits a $\Theta$-positive structure if and only if
  the pair $(G,\Theta)$ belongs to the following list:
   \begin{enumerate}
   \item $G$ is a split real form, $\Theta = \Delta$, and $\mathsf{F}_\Theta$
     is the complete flag variety of~$G$;
   \item $G$ is Hermitian of tube type and of real rank~$r$,
     $\Theta = \{ \alpha_r\}$, where $\alpha_r$ is the long simple restricted
     root, and $\mathsf{F}_\Theta$ is the Shilov boundary of the symmetric
     space associated with~$G$;
   \item $G$ is locally isomorphic to $\SO(p+1,p+k)$, $p>1$, $k>1$,
     $\Theta = \{ \alpha_1, \dots, \alpha_{p}\}$, where
     $\alpha_1, \dots, \alpha_{p}$ are the long simple restricted roots, and
     $\mathsf{F}_\Theta$ is the variety of flags $(F_1, \dots, F_p)$ with
     $F_i\subset \R^{2p+k+1}$ isotropic subspace of dimension~$i$ and
     $F_{i}\subset F_{i+1}$;
   \item $G$ is the real form of $\lietype{F}_4$, $\lietype{E}_6$, $\lietype{E}_7$, or of $\lietype{E}_8$ whose system
     of restricted roots is of type~$\lietype{F}_4$, and
     $\Theta = \{ \alpha_1, \alpha_2\}$, where $\alpha_1, \alpha_{2}$ are the
     long simple restricted roots.
  \end{enumerate}
\end{theorem_intro}

The cones $c_\alpha \subset \mathfrak{u}_\alpha$, $\alpha \in \Theta$, allow
to define the nonnegative semigroup $U_\Theta^{\geq 0}$ to be the
subsemigroup of $U_\Theta$ generated by the elements $\exp(v)$, with $v \in c_\alpha$ for some $\alpha \in \Theta$.

When $G$ is a split real Lie group which also belongs to one of the other
three families in Theorem~\ref{thm_intro:classification}, then $G$~carries two
different positive structures, one with respect to~$\Delta$ and one with
respect to a proper subset $\Theta \subset \Delta$.

Theorem~\ref{thm_intro:classification} takes a more algebraic approach to
$\Theta$-positivity, but we show further that the existence of a $\Theta$-positive structure can be characterized geometrically in terms of positive triples in the associated flag varieties $\mathsf{F}_\Theta$ (see Section~\ref{sec:posit-flag-vari}). For this it is important to prove that essentially any $L_{\Theta}^{\circ}$-invariant semigroup in $U_\Theta$ comes from a $\Theta$-positive structure. 

To give a precise statement, let us make a few observations. 
To every root system there is a corresponding \emph{opposition involution}
from~$\Delta$ to~$\Delta$, but which might not preserve~$\Theta$. However in
Theorem~\ref{thm_intro:classification}, whenever $\Theta \neq \Delta$, the
opposition involution is just the identity on~$\Delta$ and thus preserves
$\Theta$. This implies that in all cases $P_\Theta$ is conjugate to its
opposite parabolic subgroup $P_\Theta^{\mathrm{opp}}$, and the flag varieties
defined by $P_\Theta$ and by $P_\Theta^{\mathrm{opp}}$ coincide. 
Any parabolic subgroup $P$ of $G$ defines a Bruhat decomposition (see
Section~\ref{sec:bruhat-decomposition}), which is the orbit decomposition
under the action 
of $P\times P$, and that admits a unique open orbit. We denote the open Bruhat orbit with respect to $P_\Theta^{\mathrm{opp}} \times P_\Theta^{\mathrm{opp}}$ by  $\Omega_{\Theta}^{\mathrm{opp}}$, it can be described as  $\Omega_{\Theta}^{\mathrm{opp}} = \{
  g\in G\mid g P_{\Theta}^{\mathrm{opp}} g^{-1}\cap U_{\Theta}^{\mathrm{opp}}
  \text{ is trivial}\}$ ($U_{\Theta}^{\mathrm{opp}}$ is the unipotent radical of~$P_{\Theta}^{\mathrm{opp}}$).

\begin{theorem_intro}[see Theorem~\ref{theo:invar-unip-semigr-implies-pos} and
  Corollary~\ref{coro:semigroup-in-Bruhat-Theta}]
  \label{thmintro:unipotent_semigroup_implies_positivity}
  Let~$G$ be a connected simple Lie group.  Suppose that there is
  $U^{+}\subset U_\Theta$ such that $U^{+}$ is a closed
  $L_{\Theta}^{\circ}$-invariant semigroup of nonempty interior, which
  contains no nontrivial invertible element.  Then $G$ admits a
  $\Theta$-positive structure and the semigroup~$U^+$ contains the
  semi\-group~$U_{\Theta}^{\geq 0}$.
  
  If we further assume that the interior of $U^+$ is contained in the open
  Bruhat cell $\Omega_{\Theta}^{\mathrm{opp}}$ with respect to $P_\Theta^{\mathrm{opp}}$, then
  $U^+ = U_{\Theta}^{\geq 0}$.
\end{theorem_intro}

\subsection{The unipotent positive semigroup  and the
  \texorpdfstring{$\Theta$}{Θ}-Weyl group}
Whereas the definition of the nonnegative unipotent semigroup  in
Lusztig's and in our work is straight forward, the understanding of the positive
unipotent semigroup is more delicate. In Lusztig's work it involves the Weyl
group~$W$. Recall that the Weyl group is generated by the elements $s_\alpha$,
$\alpha \in \Delta$. There is a unique longest length element
$w_\Delta \in W$. To construct the positive unipotent semigroup $U^{>0}$ one 
fixes a reduced expression $s_{\alpha_{1}} \cdots s_{\alpha_{k}}$
of~$w_\Delta$. Making this choice Lusztig defines a map
$F\colon \R^k \cong \mathfrak{g}_{\alpha_{1}} \times \cdots \times
\mathfrak{g}_{\alpha_{k}} \to U$,
$ (x_1,\dots, x_k) \mapsto \exp(x_1) \cdots \exp(x_k)$.  The image of
$\R_{>0}^k $ under this map~$F$ is the positive unipotent semigroup. A key
point is of course to show that this image is independent of the choice of the
reduced expression. In \cite{Lusztig} this is done by first a reduction to the
simply-laced cases and then by giving explicit formulas for the change of
coordinates given by a braid relation, i.e. exchanging  $s_i s_{i+1}s_i =
s_{i+1} s_i s_{i+1}$ in the reduced expression. The change of coordinates is
given by  positive rational functions. (Berenstein and Zelevinsky
\cite{Berenstein_Zelevinsky} give explicit formulas for the braid relations in
the case of non simply-laced Dynkin diagrams.) Since any two reduced
expressions of $w_\Delta$ are related by a sequence of braid relations, this
thus proves that the image $F(\R_{>0}^{k})$ is independent of the chosen
reduced expression. This image is the positive unipotent semigroup~$U^{>0}$
and the map~$F$ thus gives a parametrization of $U^{>0}$. One can then conclude that the semigroup~$U^{>0}$ is in
fact the interior of~$U^{\geq 0}$.

Here we follow a slightly different approach. We define the nonnegative semigroup $U_{\Theta}^{\geq 0}$ to be the
semigroup, in the unipotent radical~$U_\Theta$ of~$P_\Theta$, generated by
$\bigcup_{\alpha\in \Theta} \exp( c_\alpha)$; we then define
the positive unipotent semigroup $U_{\Theta}^{>0}$ to be the interior
(relative to~$U_\Theta$) of the nonnegative semigroup $U_{\Theta}^{\geq 0}$.

We then give explicit parametrizations of the semigroup~$U_{\Theta}^{>0}$. For
this, the relevant object to control the combinatorial data underlying these
parametrizations is the $\Theta$-Weyl group. The $\Theta$-Weyl group
$W(\Theta)$ is a subgroup of the Weyl group~$W$ of~$G$ defined by an explicit
generating set $R(\Theta) =\{ \sigma_\alpha\}_{\alpha\in \Theta}$ (see
Section~\ref{sec:theta-weyl-group}) obtained by choosing shortest length
representatives for some cosets of the subgroup 
$W_{\Delta\smallsetminus \Theta}$ corresponding to the Levi factor
of~$P_\Theta$ in~$W$. The $\Theta$-Weyl group is isomorphic to the Weyl group of a different root system. This is 
 a priori be a bit surprising, but  indeed there is even a natural embedding of a split real Lie group of
type $W(\Theta)$ into~$G$.  The $\Theta$-Weyl group $W(\Theta)$ then plays the
same role as the Weyl group in order to parametrize the positive unipotent
semigroup and give an alternative description of it.

\begin{theorem_intro}[see Section~\ref{sec:positive_semigroup}]
  \label{thmintro:positive_semigroup-param}
  Given any reduced expression $\sigma_{\gamma_1} \cdots \sigma_{\gamma_n}$ of
  the longest length element $w_{\max}^{\Theta} \in W(\Theta)$ and setting
  $c\coloneqq c_{\gamma_1} \times \cdots \times c_{\gamma_n}$, the
  map $$F\colon c \to U_\Theta,$$ defined as the product of the exponential
  map on each factor, has the following properties:
  \begin{enumerate}
  \item $F$~is proper;
  \item $F|_{\mathring{c}}$ is open and injective;
  \item $F(\mathring{c})$ is 
    a connected component of
    $\Omega_\Theta^{\mathrm{opp}} \cap U_\Theta$ (as above
    $\Omega_\Theta^{\mathrm{opp}}$ is the open Bruhat cell
    with respect to $P_\Theta^{\mathrm{opp}}$);
  \item The image $F(\mathring{c})$ is independent of the reduced expression
    of $w^{\Theta}_{\max}$ and is equal to the positive unipotent
    semigroup~$U_{\Theta}^{>0}$.
  \end{enumerate}
\end{theorem_intro}

Further properties of the unipotent semigroups follow from this: 

\begin{theorem_intro}[see Corollary~\ref{cor:semigroupUtheta}]
  \label{thmintro:positive_semigroup-properties}
  The positive and nonnegative semigroups have the following properties:
  \begin{enumerate}
  \item The semigroup~$U_{\Theta}^{>0}$ is invariant by conjugation by
    $L_\Theta^{\circ}$.
  \item The closure of $U_{\Theta}^{>0}$ is the nonnegative semigroup
    $U_\Theta^{\geq 0}$, in particular $U_{\Theta}^{\geq 0}$~is closed.
  \item For every reduced expression of~$w_{\max}^{\Theta}$, one has
    $U_{\Theta}^{\geq 0}=F(c)$ (with $c$ and $F$ as in
    Theorem~\ref{thmintro:positive_semigroup-param}).
  \item $U_{\Theta}^{\geq 0} U_{\Theta}^{>0} \subset U_{\Theta}^{>0}$ and
    $U_{\Theta}^{>0} U_{\Theta}^{\geq 0} \subset U_{\Theta}^{>0}$.
  \end{enumerate}
\end{theorem_intro}

The proofs of Theorem~\ref{thmintro:positive_semigroup-param} and
Theorem~\ref{thmintro:positive_semigroup-properties} rely on a fine
understanding of the image of the map $F$ relative to the Bruhat
decompositions of $G$ with respect to $P_{\Theta}^{\mathrm{opp}}$ and with respect to
$P_{\Delta}^{\mathrm{opp}}$.

Analogously we construct the nonnegative semigroup
$U_{\Theta}^{\mathrm{opp}, \geq 0}$ and the positive semigroup
$U_{\Theta}^{\mathrm{opp}, > 0}$ in~$U_{\Theta}^{\mathrm{opp}}$ and similar
statements hold for them.

Using this, the $\Theta$-nonnegative semigroup $G_{\Theta}^{\geq 0}$ is defined to be the
semigroup of~$G$ generated by $U_{\Theta}^{\mathrm{opp}, \geq 0}$,
$U_{\Theta}^{\geq 0}$, and $L_\Theta^\circ$, and the $\Theta$-positive
semigroup $G_{\Theta}^{>0}$ is defined to be its interior. These semigroups,
and the geometric properties of their elements will be analyzed in a
forthcoming article.  

\begin{remark}
  Note that to prove these results, we do not need to establish the explicit
  change of coordinates with respect to modifying the reduced expression of
  $w_{\max}^{\Theta}$ by a braid relation. However, to get a finer
  understanding, it is in fact interesting to write down explicit braid
  relations. In this paper we will only shortly discuss the braid relations in
  the case that $G = \mathrm{SO}(p,q)$ derived from explicit matrix
  equations. The more general treatment of braid relations involves
  $\Theta$-versions of universal enveloping algebras, adapting the strategy
  in~\cite{Berenstein_Zelevinsky}. This will be appear in \cite{GW_pos_2}.
\end{remark}

\subsection{Positivity in flag varieties}
We now turn our attention to the flag variety
$ \mathsf{F}_\Theta \cong G/P_\Theta$. There is a unique flag
$f_\Theta \in \mathsf{F}_\Theta$ fixed by~$P_\Theta$ and a unique flag
$f^{\mathrm{opp}}_{\Theta} \in \mathsf{F}_\Theta$ fixed by the opposite
parabolic subgroup~$P_{\Theta}^{\mathrm{opp}}$ (recall that we are in a
situation where $G/P_\Theta \cong G/P_{\Theta}^\mathrm{opp}$). It is well known that
$U_\Theta$~acts simply transitively on the set of flags that are transverse
to~$f_\Theta$. The positive unipotent semigroup~$U_{\Theta}^{>0}$ allows us to
introduce the notion of positive $n$-tuples of flags in~$\mathsf{F}_\Theta$.
In order to have a more intrinsic notion, we will consider all the possible
such subsemigroups, i.e.\ the subsemigroups $U_{\Theta}^{\varepsilonb>0}$ where
$\varepsilonb$~varies in $\{\pm 1\}^\Theta$ and $U_{\Theta}^{\varepsilonb>0}$
is the image of the map~$F$ of Theorem~\ref{thmintro:positive_semigroup-param} where the factors~$\mathring{c}_\alpha$ are replaced by
$-\mathring{c}_\alpha$ when the $\alpha$-component of~$\varepsilonb$ is equal
to~$-1$. (Alternatively $U_{\Theta}^{\varepsilonb\geq 0}$ is the semigroup
generated by
$\exp\bigl( \bigcup_{\alpha\in \Theta} \varepsilon_\alpha c_\alpha\bigr)$ and
$U_{\Theta}^{\varepsilonb>0}$ is its interior.)

An $n$-tuple of flags $(f_0, f_1, \dots, f_{n-2}, f_\infty)$ is said to be
\emph{$\Theta$-positive} if there exist an element $g \in G$, $\varepsilonb$
in $\{\pm 1\}^\Theta$, and $u_1, \dots, u_{n-2} \in U_\Theta^{\varepsilonb>0}$
such that
$(f_0, f_1, \dots, f_{n-2}, f_\infty) = g \cdot (f^{\mathrm{opp}}_\Theta,
u_1\cdot f^{\mathrm{opp}}_{\Theta}, \dots, u_i \cdots u_1 \cdot
f_{\Theta}^{\mathrm{opp}}, \dots, u_{n-2} \cdots u_1 \cdot
f_{\Theta}^{\mathrm{opp}} , f_{\Theta})$.

A consequence of Theorem~\ref{thmintro:positive_semigroup-param} is 
\begin{theorem_intro}
  The set of flags $U_{\Theta}^{>0}\cdot f_{\Theta}^{\mathrm{opp}}$ is a
  connected component of the set of flags that are transverse to~$f_\Theta$
  and to~$f_{\Theta}^{\mathrm{opp}}$.  The stabilizer in~$G$ of a
  $\Theta$-positive triple is compact.
\end{theorem_intro}

Note that the set of $\Theta$-positive triples will in general not be
connected, but can consist of several connected components of the set of
triples of pairwise transverse flags.
    
We establish crucial properties of positive $n$-tuples in the flag
variety~$\mathsf{F}_\Theta$ in Section~\ref{sec:posit-flag-vari}. For this we
use the notion of diamonds, which turns out to be very helpful to keep track of
positivity of triples and more general $n$-tuples of flags. Given two points $f_0, f_1 \in \mathsf{F}_\Theta$, 
a diamond with extremities  $f_0, f_1$ is a (special) connected component of the set of flags that are transverse to $f_0$ and $f_1$. Diamonds should be thought of 
as generalized intervals. In the case of total positivity they were introduced (without using this 
terminology) in \cite{BCL}. The term ``diamond'' 
was coined in \cite{Labourie_Toulisse} in the case of $\SO(2,n)$, where 
their basic metric properties were established. For the case of
$\Theta$-positivity, diamonds were introduced and studied in
\cite{GLW}. Diamonds give a geometric characterization 
of $\Theta$-positivity. In 
Section~\ref{sec:diamonds-semigroups} we show that the existence of a family of diamonds in the flag variety ~$\mathsf{F}_\Theta$ is equivalent to the existence of a $\Theta$-positive structure. 
  This geometric characterization is used in \cite{GLW} to
investigate positive representations.

We already noticed that the Lie groups $\SO(n,n+1)$, $\Sp(2n, \R)$,
or the  split real form of~$\lietype{F}_4$ admit two $\Theta$-positive structures, one for $\Theta = \Delta$ and
one for~$\Theta$ being a strict subset of~$\Delta$. We prove that the natural
projection $\mathsf{F}_\Delta\to \mathsf{F}_\Theta$ maps positive $n$-tuples
to positive $n$-tuples (Section~\ref{sec:comp-posit-struct}).

\subsection{The \texorpdfstring{$\Theta$}{Θ}-principal $\mathfrak{sl}_{2}$ and positive circles}
It is well known that the Lie algebra of a split real Lie group contains a
distinguished conjugacy class of $3$-dimensional simple subalgebras, the
principal $\mathfrak{sl}_{2}$. This principal $\mathfrak{sl}_{2}$ plays an
important role for the moduli space of Higgs bundles, and in particular for
the Hitchin section and consequently the Hitchin component \cite{Hitchin}.
When $G$ admits a $\Theta$-positive structure, we introduce a $\Theta$-system,
a family in the Lie algebra consisting of $\mathfrak{sl}_2$-triples associated
with nilpotent elements in the cone~$\mathring{c}_\alpha$, for each
$ \alpha \in \Theta$. This $\Theta$-system generates a split real subalgebra
$\mathfrak{h}_\Theta$ in the Lie algebra $\mathfrak{g}$ of $G$, as well as a
principal $3$-dimensional simple subalgebra in $\mathfrak{h}_\Theta$. We call
this special $\mathfrak{sl}_{2}$ the $\Theta$-principal $\mathfrak{sl}_{2}$.
This $\Theta$-principal $\mathfrak{sl}_{2}$ gives rise to a positive circle in
the flag variety $\mathsf{F}_\Theta$. 
The $\Theta$-principal $\mathfrak{sl}_{2}$ is related to magical $\mathfrak{sl}_{2}$-triples introduced in  \cite{Bradlow_Collier_etal}. 
 The 
explicit relation is stated in Section~\ref{sec:sl2}.

\subsection[Positive representations and higher rank Teichm\"uller spaces]{Positive representations and higher rank Teichm\"uller\\ spaces} 
Lusztig's total positivity on the one hand and positive semigroups in
Hermitian Lie groups of tube type on the other play an important role in
higher rank Teichm\"uller theory. Hitchin components and spaces of maximal
representations have been characterized as positive representations
\cite{Labourie_anosov, Guichard_hyper, Fock_Goncharov,
  Burger_Iozzi_Labourie_Wienhard, Burger_Iozzi_Wienhard_ann}. At the same
time, total positivity plays an important role in the construction of cluster
coordinates by Fock and Goncharov \cite{Fock_Goncharov}.

As $\Theta$-positivity realizes total positivity in split real Lie groups and
Lie semigroups in Hermitian Lie groups of tube type as two incarnations of the
same concept, it provides an interesting systematic framework for higher rank
Teichm\"uller spaces. This is based on the notion of positive $n$-tuples in
the flag variety $ \mathsf{F}_\Theta$, which allows us to introduce the notion
of positive representations.

Let $\pi_1(S)$ be the fundamental group of an oriented surface of negative
Euler characteristic. Then the boundary $\partial_\infty \pi_1(S)$ carries a
natural cyclic ordering. Given a simple Lie group $G$ with a $\Theta$-positive
structure, we call a representation $\rho\colon \pi_1(S) \to G$
\emph{$\Theta$-positive} if there exists a $\rho$-equivariant map from
$\partial_\infty \pi_1(S)$ to the flag variety $ \mathsf{F}_\Theta$, which
sends every cyclically ordered $n$-tuple of points in $\partial_\infty \pi_1(S)$ to a
positive $n$-tuple of flags in~$\mathsf{F}_\Theta$.

In \cite{GW_ECM} we conjectured that the set of positive representations forms
higher rank Teichm\"uller spaces, i.e.\ it is open and closed and consists
entirely of discrete and faithful representations.  Using several of the
results obtained in this paper, this conjecture has been proven to a large
extent in \cite{GLW} and \cite{BP}, relying partly on \cite{Collier,
  Bradlow_Collier_etal}, and is fully proven in \cite{BGLPW}. 
  
However, $\Theta$-positivity suggests an even deeper connection between the
different families of higher rank Teichm\"uller spaces, including extensions
of Fock--Goncharov's construction of cluster coordinates to spaces of positive
representations.  The expectation is that there are cluster coordinates for positive
representations into the Lie group $G$ with respect to its $\Theta$-positive
structure provide examples of noncommutative cluster algebras of type
$W(\Theta)$. This has been proven for the symplectic group (as a Hermitian Lie
group of tube type) in \cite{AGRW}, where appropriate noncommutative
coordinates give a geometric realization of the noncommutative cluster
algebra of type~$\lietype{A}_1$ introduced in \cite{br}, see also
\cite{KR-FramedLocalSystems}. Moreover in \cite{ABRRW} classical Hermitian Lie
groups of tube type are realized as groups of type $\lietype{A}_1$ over a
noncommutative ring.

For further conjectures and questions regarding $\Theta$-positivity, we refer
the reader to \cite{wienhardicm}.

\subsection{Structure of the paper}
The necessary background on semisimple Lie algebras, their roots systems and
their parabolic subgroups is recalled in
Section~\ref{sec:struct-parab-subgr}. There, we explain in particular how a
semisimple group decomposes into its simple factors and the subsequent
decompositions of the parabolic subgroups, the flag varieties, etc.

We introduce and classify $\Theta$-positive structures in
Section~\ref{sec:theta-posit-st}. Their first immediate properties are then
proved (the special root, the Hermitian tube type subgroup).

In Section~\ref{sec:nonn-posit-semigr} we introduce the unipotent nonnegative
and positive semigroups and state a few simple properties of semigroups in
topological groups.

In Section~\ref{sec:explicit-description} we describe the invariant cones that
enter in the construction of the semigroups, explicitly for the family of
classical groups and with a table for the $F_4$ cases. We construct
$\Theta$-systems and prove some commutation relations used in the sequel.

We show that every $\Theta$-system generates a split real subalgebra in
Section~\ref{sec:split-group-type} and show that it gives rise to a
$\Theta$-principal three-dimensional subalgebra in Section~\ref{sec:sl2}. 
Here we also show that the longest length element of the Weyl group of this split
subgroup belongs to the image of the $\Theta$-principal $\SL_2(\R)$.

In Section~\ref{sec:Weyl} we introduce the $\Theta$-Weyl group and show it is
isomorphic to the Weyl group of the split real subgroup. The $\Theta$-length
is introduced and we show that the $\Theta$-Weyl group normalizes the
Weyl group of the Levi factor of~$L_\Theta$

In Section~\ref{sec:bruh-decomp-cones}, relying on the elements in the
cones~$c_\alpha$ constructed in Section~\ref{sec:explicit-description}, we
establish important properties of the relations of the invariant cones and
Bruhat decompositions, which are used in Section~\ref{sec:positive_semigroup}
to give explicit parametrizations of the unipotent positive
semigroup. Theorem~\ref{thmintro:positive_semigroup-param} is proved there. We also determine
the tangent cone at the identity of the semigroup~$U_{\Theta}^{\geq 0}$. 

Section~\ref{sec:orthogonal} takes a particular look at the positive structure
for the family of indefinite orthogonal groups, including a parametrization of
the nonnegative unipotent semigroup for $\SO(3,q)$.

In Section~\ref{sec:invar-unip-semigr} we prove the relation of general
invariant unipotent semigroups with those arising from a $\Theta$-positive
structure, i.e.\ Theorem~\ref{thmintro:unipotent_semigroup_implies_positivity}
above.  Finally in Section~\ref{sec:posit-flag-vari} we consider the flag
varieties, introduce the notion of diamonds and the positivity of $n$-tuples
of flags.

\smallskip

The authors thank the anonymous referee for various suggestions to improve the paper.

\section{Background and preliminaries}
\label{sec:struct-parab-subgr}
In this section we recall classical material on the structure of semisimple Lie groups
and their parabolic subgroups.

As, starting from the next section, we will mainly focus on simple Lie groups,
we explain here how a semisimple Lie group decomposes into its simple factors
and the induced decompositions of the parabolic subgroups, the flag varieties,
the Weyl groups, etc.

Unless explicitely stated otherwise all vector spaces, Lie algebras,
Lie groups, and their representations are defined over~$\R$. 

\subsection{Cartan involution}\label{sec:cartan-involution}
Let~$G$ be a connected real semisimple Lie group with finite center
\index{$G$}
\index{$G$!a semisimple Lie group}
and denote by
$\mathfrak{g}$ its Lie algebra.\index{$\mathfrak{g}$!the Lie algebra of~$G$} The simple factors of the Lie
algebra~$\mathfrak{g}$ are denoted $\{ \mathfrak{g}_s\}_{s\in
  \mathcal{S}}$\index{$\mathfrak{g}_s$ the simple factors of~$\mathfrak{g}$} so
that $\mathfrak{g} = \bigoplus_{s\in\mathcal{S}} \mathfrak{g}_s$. For
every~$s$ in~$\mathcal{S}$, the connected Lie subgroup of~$G$ with Lie
algebra~$\mathfrak{g}_s$ will be denoted by~$G_s$;\index{$G_s$ the simple
  factors of~$G$} it is a closed subgroup and
a simple Lie group. The subgroups $\{G_s\}_{s\in\mathcal{S}}$ pairwise commute
and the product map $\prod_{s\in\mathcal{S}} G_s \to G$ is onto with finite
kernel (it is sometimes said that $G$~is the \emph{almost product} of the~$G_s$).

Let $\mathfrak{k}$\index{$\mathfrak{k}$ the maximal compact algebra of~$\mathfrak{g}$} be the Lie algebra of a
maximal compact subgroup $K\subset G$ ($\mathfrak{k}$~is called a maximal
compact subalgebra).\index{$K$ the maximal compact subgroup of~$G$} Then
$\mathfrak{g} = \mathfrak{k} \oplus \mathfrak{k}^\perp$ where
$\mathfrak{k}^\perp$ is the orthogonal of~$\mathfrak{k}$ with respect to the
Killing form~$B$\index{$B$ the Killing form on~$\mathfrak{g}$}
on~$\mathfrak{g}$. We denote by~$\tau$\index{$\tau$ the Cartan involution} the {\em Cartan involution} with respect to~$\mathfrak{k}$, it is the Lie algebra automorphism $\tau\colon
\mathfrak{g}\to \mathfrak{g}$ whose fixed point set is~$\mathfrak{k}$ and which
is an involution; namely $\tau|_\mathfrak{k}= \id$ and
$\tau|_{\mathfrak{k}^\perp}= -\id$. The bilinear form $B(X,
\tau(-Y))$ on~$\mathfrak{g}$ is symmetric and positive definite.

For all~$s$, $\mathfrak{k}_s=\mathfrak{k}\cap
\mathfrak{g}_s$\index{$\mathfrak{k}_s$ the intersection of $\mathfrak{k}$ with
$\mathfrak{g}_s$} is the Lie
algebra of a maximal compact subgroup~$K_s$ of~$G_s$.\index{$K_s$ the maximal
  compact subgroup of~$G_s$} 
For all~$s$ we denote by $\mathfrak{k}_{s}^{\perp}$ the orthogonal complement
in~$\mathfrak{g}_s$ of the Lie subalgebra~$\mathfrak{k}_s$ with respect to the
Killing form~$B_s$ of~$\mathfrak{g}_s$, one has the equality
$B=\sum_{s\in\mathcal{S}} B_s$ (under the decomposition $\mathfrak{g} =
\bigoplus_{s\in\mathcal{S}} \mathfrak{g}_s$) and the decompositions
$\mathfrak{k} = \bigoplus_{s\in\mathcal{S}} \mathfrak{k}_s$,
$\mathfrak{k}^\perp = \bigoplus_{s\in\mathcal{S}}
\mathfrak{k}_{s}^{\perp}$. Furthermore the Cartan involution~$\tau$ is the sum
of the Cartan involutions~$\tau_s$ of~$\mathfrak{g}_s$ with respect
to~$\mathfrak{k}_s$ (again using the decomposition  $\mathfrak{g} =
\bigoplus_{s\in\mathcal{S}} \mathfrak{g}_s$). The Lie group~$K$ is the almost
product of the~$K_s$.

\subsection{Restricted roots}
We now choose a maximal Abelian subspace~$\mathfrak{a}$\index{$\mathfrak{a}$ a
Cartan subspace of~$\mathfrak{g}$} contained
in~$\mathfrak{k}^\perp$; $\mathfrak{a}$~is called a \emph{Cartan subspace}
of~$G$. Denoting $\mathfrak{a}_s= \mathfrak{a}\cap \mathfrak{g}_s$, one has
that $\mathfrak{a}_s$ is a Cartan subspace of~$G_s$. The equality
$\mathfrak{a}= \bigoplus_{s\in\mathcal{S}} \mathfrak{a}_s$ holds and induces
an identification of~$\mathfrak{a}^*$ with the direct sum
$\bigoplus_{s\in\mathcal{S}} \mathfrak{a}^{*}_{s}$.

We denote by\index{$\Sigma$ the system of restricted roots}
$\Sigma = \Sigma(\mathfrak{g}, \mathfrak{a})$ the system of restricted
roots, \ie $\Sigma\subset \mathfrak{a}^*$ is the set of nonzero weights for
the adjoint action of~$\mathfrak{a}$ on~$\mathfrak{g}$; the corresponding
weight spaces are denoted~$\mathfrak{g}_\alpha$\index{$\mathfrak{g}_\alpha$
  the root space for the roo~$\alpha$} ($\alpha\in\Sigma$). 
Namely, for every~$\alpha$ in~$\mathfrak{a}^*$, we set
\[\mathfrak{g}_{\alpha}\coloneqq \bigl\{ X \in \mathfrak{g} \, \big\vert \ad(H)(X) =
\alpha(H) X \text{ for all } H\in \mathfrak{a}\bigr\}\]
and $\Sigma$ is the family of elements~$\alpha$ in
$\mathfrak{a}^*\smallsetminus\{0\}$ such that $\mathfrak{g}_\alpha\neq 0$; the
subspace $\mathfrak{g}_\alpha$~is called a \emph{root space} for $\alpha\in
\Sigma$.

For every root
$\alpha\in \Sigma$, the intersection of~$\mathfrak{g}_\alpha$
and~$\mathfrak{k}$ is reduced to~$\{0\}$ and one has $\tau(
\mathfrak{g}_{\alpha}) = \mathfrak{g}_{-\alpha}$.

The root system~$\Sigma$ is the disjoint union of the root systems
$\Sigma_s=\Sigma(\mathfrak{g}_s, \mathfrak{a}_s)$ for~$s$ in~$\mathcal{S}$
and, when $\alpha$~belongs to~$\Sigma_s$, one has
$(\mathfrak{g}_s)_\alpha=\mathfrak{g}_\alpha$. Furthermore $\Sigma_s = \Sigma \cap \mathfrak{a}_{s}^{*}$.

The
weight
space~$\mathfrak{g}_0$\index{$\mathfrak{g}_0=\mathfrak{z}_{\mathfrak{g}}(\mathfrak{a})$
the centralizer of~$\mathfrak{a}$} is the centralizer
$\mathfrak{z}_{\mathfrak{g}}(\mathfrak{a})$ of~$\mathfrak{a}$
in~$\mathfrak{g}$ and is the direct sum of the centralizers $\mathfrak{z}_{\mathfrak{g}_s}(\mathfrak{a}_s)$. Also $\mathfrak{z}_{\mathfrak{g}}(\mathfrak{a}) = \mathfrak{m} \oplus \mathfrak{a}$, where \index{$\mathfrak{m}=\mathfrak{z}_{\mathfrak{k}}(\mathfrak{a})$
the centralizer of~$\mathfrak{a}$ in~$\mathfrak{k}$}
$\mathfrak{m} =
\mathfrak{z}_{\mathfrak{k}}(\mathfrak{a}) =
\mathfrak{z}_{\mathfrak{g}}(\mathfrak{a}) \cap \mathfrak{k}$ is the
centralizer of $\mathfrak{a}$ in $\mathfrak{k}$ (the decomposition
$\mathfrak{z}_{\mathfrak{k}}(\mathfrak{a}) =\bigoplus_{s\in \mathcal{S}}
\mathfrak{z}_{\mathfrak{k}_s}(\mathfrak{a}_s)$ also holds) and the sum is orthogonal with respect to the Killing form~$B$. 
Therefore we have a $B$-orthogonal decomposition  $\mathfrak{g}= \mathfrak{m} \oplus \mathfrak{a} \oplus
\bigl(\bigoplus_{\alpha\in\Sigma} \mathfrak{g}_\alpha\bigr)$. 

Let $<_{\mathfrak{a}^*}$ be a total linear ordering on~$\mathfrak{a}^*$: for
every~$\alpha$ and~$\beta$ in~$\mathfrak{a}^*$, one has either $\alpha=\beta$,
$\alpha<_{\mathfrak{a}^*} \beta$, or $\beta<_{\mathfrak{a}^*} \alpha$ (which we
also denote $\alpha >_{\mathfrak{a}^*} \beta$) and if
$\alpha<_{\mathfrak{a}^*} \beta$ then, for every~$\gamma$ in~$\mathfrak{a}^*$,
$\alpha +\gamma<_{\mathfrak{a}^*} \beta+\gamma$, and for
every~$\lambda$ in~$\R_{>0}$,
 $\lambda\alpha<_{\mathfrak{a}^*} \lambda\beta$.

We then denote by  $\Sigma^+=\{ \alpha\in \Sigma\mid \alpha>_{\mathfrak{a}^*}
0\}$\index{$\Sigma^+$ the positive roots}
the set of positive roots and  by $\Sigma^-=\{ \alpha\in \Sigma\mid
\alpha<_{\mathfrak{a}^*} 0\}$ the set of negative roots; one has
$\Sigma^{-}=-\Sigma^+$. The positive roots
that cannot be written nontrivially as a sum of positive roots are called
\emph{simple} roots; their set is denoted by\index{$\Delta$ the simple roots} $\Delta \subset \Sigma^+$. It is
well known that $\Delta$~is a basis of~$\mathfrak{a}^*$ and that, for
every~$\beta$ in~$\Sigma$, the coefficients of~$\beta$ as a linear combination
with respect to the basis~$\Delta$ are integers all nonnegative or all
nonpositive according to whether $\beta$~belongs to~$\Sigma^+$ or
to~$\Sigma^-$.

The set of positive roots~$\Sigma^+$ is the disjoint union of the sets
$\Sigma_{s}^{+} = \Sigma_s \cap \Sigma^+$ of positive roots for the
simple factors. This holds similarly for the set of negative roots. The
set~$\Delta$ of simple roots is also the disjoint union of the $\Delta_s =
\Delta\cap\Sigma_s$.

\smallskip

The Killing form induces, by restriction and duality, a Euclidean scalar
product\index{$\scalK{,}$ the Euclidean scalar product on~$\mathfrak{a}^*$
  induced by the Killing form} $\scalK{,}$ on~$\mathfrak{a}^*$. It is well known 
that, for every~$s$ in~$\mathcal{S}$, either all
the roots in~$\Sigma_s$ have the same norm or the set~$\Delta_s$ of simple
roots is the disjoint union of two 
nonempty subsets $\Delta_{s,p}$ and $\Delta_{s,g}$ with the following properties:
all the elements of~$\Delta_{s,p}$ have the same norm, all the elements
of~$\Delta_{s,g}$ have the same norm, and the norm of elements in~$\Delta_{s,g}$ is
greater than the norm of elements in~$\Delta_{s,p}$. Elements of~$\Delta_{s,g}$ are
called \emph{long roots}, elements of~$\Delta_{s,p}$ are called \emph{short
  roots}. For $\alpha \in \Delta_{s,g}$ and $\beta \in \Delta_{s,p}$, one has in fact
$\scalK{\alpha,\alpha} = 2\scalK{\beta,\beta}$ except in the case of
type~$\lietype{G}_2$ where $\scalK{\alpha,\alpha} = 3\scalK{\beta,\beta}$.

 In general the root spaces~$\mathfrak{g}_\alpha$ are not
necessarily $1$-di\-men\-sional. In fact they are all $1$-di\-men\-sional if and only if $\mathfrak{g}$~is the Lie algebra of a
split real form. However the root spaces~$\mathfrak{g}_\alpha$ are
$1$-di\-men\-sional when $\alpha$~is a long root.

The \emph{open Weyl chamber}  corresponding to~$\Sigma^+$ will be
denoted\index{$\mathfrak{a}^+$ an open Weyl chamber}
\[\mathfrak{a}^+ =\{ X\in \mathfrak{a}\mid \alpha(X)>0, \ \forall \alpha\in
\Sigma^+\};\]
its closure~$\bar{\mathfrak{a}}^+$ is called the \emph{closed Weyl
  chamber}.\index{$\bar{\mathfrak{a}}^+$ a closed Weyl chamber} One has also
\[\mathfrak{a}^+ =\{ X\in \mathfrak{a}\mid \alpha(X)>0, \ \forall \alpha\in
\Delta\}.\]
The closed Weyl chamber is the direct sum of the
\[\bar{\mathfrak{a}}^{+}_{s} =\{ X\in \mathfrak{a}_s\mid \alpha(X)\geq 0, \ \forall \alpha\in
\Sigma^{+}_{s}\} = \bar{\mathfrak{a}}^+ \cap \mathfrak{a}_s.\]
The (relative)
interior of~$\bar{\mathfrak{a}}_{s}^{+}$ is
\[\mathfrak{a}_{s}^{+}  =\{ X\in \mathfrak{a}_s\mid \alpha(X)>0, \ \forall \alpha\in
\Sigma^{+}_{s}\},\]
the closure of~$\mathfrak{a}_{s}^{+}$
is~$\bar{\mathfrak{a}}_{s}^{+}$ and the open Weyl chamber~$\mathfrak{a}^+$ is
the direct sum of the open Weyl chambers~$\mathfrak{a}_{s}^{+}$.

An element of~$\mathfrak{g}$ is said
 \emph{regular} if it is conjugate to an element of $\mathfrak{a}
\smallsetminus \bigcup_{\alpha\in\Sigma} \ker(\alpha)$. A regular element is
contained in a unique Cartan subspace and also in a unique Weyl chamber.

\subsection{Cartan matrix and Dynkin diagram}\label{sec:Dynkin}
The structure of the root system can be efficiently encoded using the Cartan matrix or the Dynkin diagram. 

The \emph{Cartan matrix} of~$\mathfrak{g}$ is the square matrix of size $\dim
\mathfrak{a} = \sharp \Delta$ whose coefficients
$(A_{\alpha,\beta})_{\alpha,\beta\in \Delta}$ are given by the
formula\index{$(A_{\alpha,\beta})$ the Cartan matrix}
\(A_{\alpha,\beta} = 2  \scalK{\alpha,\beta}/\scalK{\alpha,\alpha}.\)

The Cartan matrix of~$\mathfrak{g}$ is block diagonal with diagonal blocks
being the Cartan matrices of~$\mathfrak{g}_s$ (for~$s$ in~$\mathcal{S}$) and
the Cartan matrices of~$\mathfrak{g}_s$ are irreducible (not block diagonal
even up to permutation).

The \emph{Dynkin diagram} of~$\mathfrak{g}$ is the graph whose vertex set
is~$\Delta$ and the edges are given by the following recipe: for every
$\alpha\neq \beta$ in~$\Delta$, $A_{\alpha,\beta} A_{\beta,\alpha}$ edges are
added between~$\alpha$ and~$\beta$, an arrow is added from~$\alpha$ to~$\beta$
in the case when $A_{\alpha,\beta} A_{\beta,\alpha}\neq 0$ and
$\scalK{\alpha,\alpha}>\scalK{\beta,\beta}$. The connected components of the
Dynkin diagram of~$\mathfrak{g}$ are the Dynkin diagrams of~$\mathfrak{g}_s$
($s\in\mathcal{S}$). The Dynkin diagram of~$\mathfrak{g}_s$ is among the
following diagrams: 

\noindent $A_n$ \dynkin A{} ($n\geq 1$), $B_n$  \dynkin B{} ($n\geq 2$), 
$C_n$  \dynkin C{} ($n\geq 3$), $D_n$  \dynkin D{} ($n\geq 4$), $F_4$  \dynkin
F4, $E_6$ \dynkin E6, $E_7$ \dynkin E7, 

\noindent $E_8$ \dynkin E8, or
$G_2$ \dynkin G2.

  We already observed that the root spaces~$\mathfrak{g}_\alpha$ are not
necessarily $1$-di\-men\-sional.  
Therefore, contrary to the situation for complex simple Lie algebras, the
Dynkin diagram alone does not characterize the real Lie algebra, additional
information is needed. 

\subsection{Non-reduced root systems}\label{sec:nonreduced}
A root system is said to be \emph{reduced} if given any root $\alpha\in \Sigma$, the multiplies of~$\alpha$ that are in~$\Sigma$ are $\pm \alpha$. 
Most root systems are reduced, and when a simple real Lie algebra has
non-reduced type then it is necessarily Hermitian of non-tube type and
the corresponding root system is 
of type $\lietype{BC}_n$: if $\{\alpha_1, \dots, \alpha_n\}$ are the simple roots then  $2\alpha_n$~is also a 
root. 

In this case, $\mathfrak{g}_{2\alpha_n}$ is again $1$-dimensional but
$\mathfrak{g}_{\alpha_n}$ is not, in fact the Lie bracket from
$\mathfrak{g}_{\alpha_n}\times \mathfrak{g}_{\alpha_n}$ to
$\mathfrak{g}_{2\alpha_n}$ is onto.

\subsection{The $\mathfrak{sl}_2$-triples}\label{sec:sl_2-triples} 
Even though the root spaces~$\mathfrak{g}_\alpha$ are not necessarily
$1$-dimensional, their elements lead to embeddings of $3$-dimensional simple subalgebras.  

For every root $\alpha \in \Sigma$ and any nonzero
element~$X$ in~$\mathfrak{g}_\alpha$,  the Lie bracket
$[\tau(X), X]$ belongs to~$\mathfrak{a}$; there is furthermore a (unique) positive
multiple~$Y$ of~$X$ such that
\[
(Y, \tau(-Y),  [\tau(Y), Y])\] is an
$\mathfrak{sl}_2$-triple, that is a triple $(E,F,D)$ such that $D=[E,F]$,
$[D,E]=2E$, and $[D,F]=-2F$.

The set of elements~$X$ in~$\mathfrak{g}_\alpha$ such that $(X,
\tau(-X), [\tau(X), X])$  is an
$\mathfrak{sl}_2$-triple is precisely equal to the sphere\index{$S(\mathfrak{g}_\alpha)$
the sphere in $\mathfrak{g}_\alpha$ of elements giving rise to $\mathfrak{sl}_2$-triples} $S(\mathfrak{g}_\alpha)$ in~$\mathfrak{g}_\alpha$
defined by the equation $B(X, \tau(X))=-2/ \scalK{\alpha,\alpha}$ (cf.\ 
\cite[Prop.~6.52, p.~379]{Knapp_LieGrp})
The element $H_\alpha = [\tau(X),X]$\index{$H_\alpha$ the coroot associated to~$\alpha$}
does not depend on the initial choice of~$X$ in~$S(\mathfrak{g}_\alpha)$.
When 
$\alpha$, $\beta$, and $\alpha+\beta$ belong to~$\Sigma$, one has
$H_{\alpha+\beta} = H_\alpha+H_\beta$, furthermore 
$H_{-\alpha}= -H_\alpha$.

When $\alpha$ and $\beta$ belong to~$\Sigma$, the suprema
\begin{align*}
  p&=\sup \{n\in\NN\mid
     \beta-n\alpha\in\Sigma \}\\
  \intertext{and} q&=\sup \{n\in\NN\mid
  \beta+n\alpha\in\Sigma \}
\end{align*}
are finite and the subset $\{
\beta+n\alpha\}_{n=-p,\dots, q}$\index{$\{
\beta+n\alpha\}_{n=-p,\dots, q}$ the $\alpha$-chain containing~$\beta$} is contained in~$\Sigma$ and is called the
\emph{$\alpha$-chain containing~$\beta$}. The $\alpha$-chain is called
\emph{trivial} when $p=q=0$. The significance of the $\alpha$-chain $\{
\beta+n\alpha\}_{n=-p,\dots, q}$ is that the direct sum $\bigoplus_{n=-p}^{q}
\mathfrak{g}_{\beta+n\alpha}$ is invariant by bracketing with elements
in~$\mathfrak{g}_\alpha \cup \mathfrak{g}_{-\alpha}$, in particular it is
invariant by the $\mathfrak{sl}_2$-triple $(X, -\tau(X), [\tau(X),X])$ for
every~$X$ in~$S(\mathfrak{g}_\alpha)$.

At different places, we will use basic knowledge about
$\mathfrak{sl}_2(\R)$-modules and the fact that there is, up to
isomorphism,  a unique  $\mathfrak{sl}_2(\R)$-module of dimension~$n$. An
$\mathfrak{sl}_2(\R)$-module structure on a vector space~$V$ is equivalently
given by an $\mathfrak{sl}_2$-triple $(X,Y,H)$ in $\mathfrak{gl}(V)=
\End(V)$; if an element $v\in V$ satisfies $X(v)=0$ and is an eigenvector for~$H$, it generates an
irreducible module~$W$ which is of dimension~$d+1$ where $d$~is the
nonnegative integer such that $H(v)=dv$; a basis of~$W$ is then $\{ Y^k(v)\}_{
k\in \llbracket 0, d\rrbracket}$; the Lie algebra morphism $\mathfrak{sl}_2\to \End(W)$ lifts to an homomorphism $\SL_2(\R)\to
\GL(W)$, this homomorphism factors through $\PSL_2(\R)$ if and only if $d$~is
even (and $W$~is odd dimensional). The element $\exp( \frac{\pi}{2}(X-Y))$ is
an automorphism of~$W$, of order~$2$ if $d$~is even, and that sends the
$H$-eigenspace with respect to the eigenvalue~$k$ onto the $H$-eigenspace with
respect to~$-k$.

\subsection{The Weyl group}
\label{sec:weyl-group}

The subgroup $W\subset \GL( \mathfrak{a}^*)$ of\index{$W$ the Weyl group of~$G$} automorphisms of~$\Sigma$ is
called the \emph{(restricted) Weyl group} of~$G$. We therefore have an action
$(w, \alpha)\mapsto w(\alpha)$ of~$W$ on~$\Sigma$. The group~$W$ is a subgroup of the orthogonal group for the scalar
product $\scalK{,}$ induced by the Killing form~$B$. It is a finite Coxeter
group generated by the
orthogonal reflections\index{$s_\alpha$ the reflection in~$W$ associated to
  the root~$\alpha$} $\{ s_\alpha\}_{\alpha\in \Delta}$ such that
$s_\alpha(\alpha)=-\alpha$. The Weyl group~$W$ is (isomorphic to) the direct
product of the Weyl groups~$W_s$ of~$G_s$. The Dynkin diagram determines the
Coxeter presentation of the Weyl group. Namely a presentation of~$W$ is given
by the following relations $s_{\alpha}^2=e$ ($\alpha\in \Delta$) and $(s_\alpha
s_\beta)^{n_{\alpha,\beta}}=e$ ($\alpha\neq \beta$) where
$n_{\alpha,\beta}=2,3,4,6$ if the number of edges between~$\alpha$ and~$\beta$
in the Dynkin diagram is respectively $0,1,2,3$. The following relations
in~$W$ $(\alpha\neq \beta)$
\begin{align*}
  s_\alpha s_\beta
  &= s_\beta  s_\alpha \text{ if } n_{\alpha,\beta}=2\\
  s_\alpha s_\beta  s_\alpha
  &= s_\beta  s_\alpha s_\beta \text{ if } n_{\alpha,\beta}=3\\
    s_\alpha s_\beta s_\alpha s_\beta &=
  s_\beta s_\alpha s_\beta s_\alpha \text{ if } n_{\alpha,\beta}=4\\
    s_\alpha s_\beta s_\alpha s_\beta s_\alpha s_\beta &=
  s_\beta s_\alpha s_\beta s_\alpha s_\beta s_\alpha \text{ if } n_{\alpha,\beta}=6;
\end{align*}
are called the \emph{braid relations} and play an important role.

Note that the Weyl groups of type $\lietype{B}_n$ and $\lietype{C}_n$ are the same since the orientation
  of the arrows do not contribute to the presentation of the Weyl group.

\begin{remark}
  When the simple restricted roots are numbered, \ie when
  $\Delta= \{ \alpha_1, \alpha_2, \dots, \alpha_r\}$, we will rather write
  $s_i$ instead of $s_{\alpha_i}$.

    \end{remark}

 The closed Weyl chamber~$\bar{\mathfrak{a}}^+$ is a fundamental domain
for the action of~$W$ on~$\mathfrak{a}$. 
It is also the convex
cone generated by $\{H_\alpha\}_{\alpha\in \Delta}$.

The Weyl group~$W$ is isomorphic to the
quotient of the normalizer $N_K( \mathfrak{a})$ of~$\mathfrak{a}$ in~$K$ by
the centralizer $C_K(\mathfrak{a})$.
On different occasions for an element~$w$ in~$W$ we will denote by~$\dot{w}$ an
element of $N_K( \mathfrak{a})$ representing it. 
For every~$\alpha$ in~$\Sigma$, one has $\Ad( \dot{w}) \mathfrak{g}_\alpha =
\mathfrak{g}_{w (\alpha)}$. 

By no means the assignment
$w\mapsto \dot{w}$ is an homomorphism.\index{$\dot{w}$ an element of $N_K(
  \mathfrak{a})$ that is a lift of~$w$}
It will sometimes be convenient to have
explicit formula for these lifts. 
For example,
when $\alpha$~is a root, we will take $\dot{s}_\alpha  = \exp
\bigl(\frac{\pi}{2} (X + \tau(X))\bigr)$ for~$X$ in
$S(\mathfrak{g}_\alpha)$ \cite[Prop.~6.52]{Knapp_LieGrp}. With this choice, the automorphism $\Ad(
\dot{s}_\alpha)$ of~$\mathfrak{g}$ is of order~$2$ or~$4$ according to whether
the corresponding morphism 
from $\mathfrak{sl}_2(\R)$ to~$\mathfrak{g}$
is tangent to a representation of $\PSL_2(\R)$ or
not.

The above generating set $\{s_\alpha\}_{\alpha\in \Delta}$ of~$W$ allow to define
the word length function $\ell\colon W\to \NN$.\index{$\ell$
  the word length function on~$W$} It is well
known that $W$ admits a unique element $w_\Delta$ of longest length.\index{$w_\Delta$ the longest
  length element in~$W$} This element $w_\Delta$ is
of order~$2$, it is the element of~$W$ (that we now see as acting on~$\mathfrak{a}$) sending the Weyl
chamber~$\mathfrak{a}^+$ to its opposite~$-\mathfrak{a}^+$. If $w_{\Delta_s}$
denotes the longest length element of~$W_s$, then the $w_{\Delta_s}$ pairwise
commute and one has $w_\Delta = \prod_{\sigma\in\mathcal{S}} w_{\Delta_s}$.

The map $\iota: \alpha
\mapsto - w_{\Delta}( \alpha)$ is an involution of~$\Sigma$, it
sends~$\Delta$ to~$\Delta$ and is called the \emph{opposition
  involution}. If $\iota_s$ is the opposition involution of~$\Sigma_s$, one
has $\iota = \coprod_{s\in\mathcal{S}} \iota_s$ under the identification
$\Sigma= \coprod_{s\in\mathcal{S}} \Sigma_s$.

Given an element~$w$ in~$W$ of length $n=\ell(w)$, an $n$-tuple $(\alpha_1,
\dots, \alpha_n)$ in $\Delta^n$ such that $w= s_{\alpha_1}\cdots s_{\alpha_n}$
is called a \emph{reduced expression} of~$w$. We will sometimes say that
$s_{\alpha_1}\cdots s_{\alpha_n}$ is a reduced expression of~$w$. An important
property is that all the reduced expressions of~$w$ can be obtained from one
another by successively applying braid relations.

When $w$, $w_1$, and~$w_2$ are elements of~$W$ such that $w=w_1 w_2$ and
$\ell(w) =\ell(w_1) + \ell(w_2)$, we will say that $w_1 w_2$ is a reduced
product of~$w$.

\subsection{Parabolic subgroups}
\label{sec:parabolic-subgroups}
Let $\Theta \subset \Delta$ be a subset of the set of simple
roots.\index{$\Theta$ a subset of $ \Delta$} Setting $\Sigma_{\Theta}^+ = \Sigma^+ \smallsetminus \Span(\Delta
\smallsetminus \Theta)$, the
subspaces 
\[\mathfrak{u}_\Theta = \sum_{\mathclap{\alpha \in \Sigma_{\Theta}^+}}
\mathfrak{g}_{\alpha},\quad\mathfrak{u}^{\mathrm{opp}}_\Theta = \sum_{\mathclap{\alpha \in
  \Sigma_{\Theta}^+}} \mathfrak{g}_{-\alpha},\] 
 are nilpotent Lie subalgebras\index{$\mathfrak{u}_\Theta$ the standard
   nilpotent subalgebras} \index{$\mathfrak{u}_{\Theta}^{\mathrm{opp}}$ the standard
  opposite nilpotent subalgebras}
and the subspace\index{$\mathfrak{l}_\Theta$ the Levi factor of $\mathfrak{p}_\Theta$}
\[\mathfrak{l}_\Theta =\mathfrak{z}_{\mathfrak{g}}(\mathfrak{a}) \oplus \!\!\! \!\!\!
\sum_{\alpha \in \Span(\Delta \smallsetminus \Theta)} \!\!\! \!\!\!
\mathfrak{g}_{\alpha} =\mathfrak{z}_{\mathfrak{g}}(\mathfrak{a}) \oplus \!\!\! \!\!\! \!\!\!
\bigoplus_{\alpha \in \Span(\Delta \smallsetminus \Theta) \cap \Sigma^+} \!\!\! \!\!\! \!\!\! (\mathfrak{g}_{\alpha} \oplus \mathfrak{g}_{-\alpha})\] 
is a reductive subalgebra.

The \emph{standard parabolic subgroup} $P_\Theta$\index{$P_\Theta$ the
  standard parabolic groups} associated with
$\Theta \subset \Delta$ is the normalizer in $G$ of $\mathfrak{u}_\Theta$; it
is also the normalizer of the Lie algebra\index{$\mathfrak{p}_\Theta$ the Lie
  algebra of~$P_\Theta$} $\mathfrak{p}_\Theta =
\mathfrak{l}_\Theta \oplus \mathfrak{u}_\Theta$; one has $\mathfrak{p}_\Theta
= \Lie( P_\Theta)$ and $P_\Theta$ is its own normalizer.  We
also denote by $P^{\mathrm{opp}}_\Theta$\index{$P^{\mathrm{opp}}_\Theta$ the
  standard opposite parabolic subgroups} the normalizer in~$G$ of
$\mathfrak{u}^{\mathrm{opp}}_{\Theta}$; $P^{\mathrm{opp}}_\Theta$~is called
the \emph{standard opposite parabolic subgroup} associated with
$\Theta$. The longest length element~$w_\Delta$ conjugates opposite parabolic
and parabolic subgroups: $\Ad( \dot{w}_\Delta)
\mathfrak{p}_{\Theta}^{\mathrm{opp}} =
\mathfrak{p}_{\iota(\Theta)}$, $\Ad( \dot{w}_\Delta)
\mathfrak{p}_{\Theta}^{\mathrm{opp}} = \mathfrak{p}_{\iota(\Theta)}$, $\Ad( \dot{w}_\Delta)
\mathfrak{u}_{\Theta}^{\mathrm{opp}} =
\mathfrak{u}_{\iota(\Theta)}$, $\Ad( \dot{w}_\Delta)
\mathfrak{u}_{\Theta}^{\mathrm{opp}} = \mathfrak{u}_{\iota(\Theta)}$,
$\dot{w}_\Delta P_{\Theta}^{\mathrm{opp}} \dot{w}_{\Delta}^{-1} =
P_{\iota(\Theta)}$, and $\dot{w}_\Delta P_{\Theta} \dot{w}_{\Delta}^{-1} =
P_{\iota(\Theta)}^{\mathrm{opp}}$, where~$\iota$ is the opposition involution
$\alpha\mapsto -w_\Delta( \alpha)$.

The group~$P_\Theta$ is the semidirect product of its unipotent radical
$U_\Theta = \exp(\mathfrak{u}_\Theta)$\index{$U_\Theta$ the unipotent radical
  of $P_\Theta$} and the Levi subgroup\index{$L_\Theta$ the Levi factor of $P_\Theta$}
$L_\Theta = P_\Theta \cap P^{\mathrm{opp}}_\Theta$.  We denote by~$L_{\Theta}^{\circ}$ the connected component of the identity in~$L_\Theta$.  The Lie algebra
of~$L_\Theta$ is~$\mathfrak{l}_\Theta$. 
With this convention $P_{\emptyset} = G$ and $P_{\Delta}$~is the minimal
parabolic subgroup, more generally, if $\Theta\subset \Psi$, one has
$P_\Theta\supset P_\Psi$ and $L_{\Theta}\supset L_\Psi$.

Setting $\Theta_s = \Delta_s \cap \Theta$ and adopting similar notation with
respect to the group~$G_s$ and the Lie algebra~$\mathfrak{g}_s$, one has that
$\mathfrak{u}_\Theta$ is the direct sum $\bigoplus_{s\in \mathcal{S}}
\mathfrak{u}_{\Theta_s}$ and also that $\mathfrak{l}_\Theta =
\bigoplus_{s\in\mathcal{S}} \mathfrak{l}_{\Theta_s}$, $\mathfrak{p}_\Theta =
\bigoplus_{s\in \mathcal{S}} \mathfrak{p}_{\Theta_s}$. The (unipotent)
Lie group~$U_\Theta$ is the product of the Lie
groups~$U_{\Theta_s}$, the Lie group~$L_\Theta$ is the almost
product of the Lie groups~$L_{\Theta_s}$ and the Lie group~$P_\Theta$ is the
almost product of the Lie groups~$P_{\Theta_s}$. Similar statements hold for
$U_{\Theta}^{\mathrm{opp}}$ and~$P_{\Theta}^{\mathrm{opp}}$.

When $\Theta_s=\emptyset$, the group~$G_s$ is included in~$P_\Theta$ and this
 means that $G_s$~acts trivially on 
$G/P_\Theta$. As we would like to work with (virtually) faithful actions
of~$G$, we will rule out this case  in  Section~\ref{sec:theta-posit-st}. 
The Lie algebra~$\mathfrak{l}_\Theta$ is stable under the Cartan
involution~$\tau$, the intersection $\mathfrak{k}_\Theta = \mathfrak{l}_\Theta
\cap \mathfrak{k}$\index{$\mathfrak{k}_\Theta$ a maximal compact subalgebra in
$\mathfrak{l}_\Theta$} is a maximal compact subalgebra of~$\mathfrak{l}_\Theta$, and in fact $\mathfrak{k}$~is the unique compact maximal subalgebra
of~$\mathfrak{g}$ containing~$\mathfrak{k}_\Theta$.
One has
\begin{align}
\label{eq:max-compact-S_J}  \mathfrak{k}_\Theta &= \mathfrak{m} \oplus  \!\!\! \!\!\!
\bigoplus_{\alpha \in \Span(\Delta \smallsetminus \Theta) \cap \Sigma^+}
                        \!\!\! \!\!\! (\mathfrak{g}_{\alpha} \oplus
                        \mathfrak{g}_{-\alpha}) \cap \mathfrak{k},\\
\label{eq:max-compact-S_J-factor}  (\mathfrak{g}_{\alpha} \oplus&
  \mathfrak{g}_{-\alpha}) \cap \mathfrak{k}
                      = \{ X+\tau(X)\}_{X\in \mathfrak{g}_\alpha},\ \forall \alpha \in  \Sigma^+.
\end{align}

The derived subgroup $S_\Theta= [L_\Theta, L_\Theta]$ of~$L_\Theta$ is a
semisimple\index{$S_\Theta$ the semisimple factor of $L_\Theta$}
real Lie group whose system of  restricted roots is given by the Dynkin
diagram
$\Delta\smallsetminus \Theta$ (meaning that the vertices in~$\Theta$ are
removed as well as the edges incident to those vertices). A Cartan subspace
for~$S_\Theta$ is\index{$\mathfrak{a}_\Theta$ a Cartan subspace for $S_\Theta$}
\begin{equation}
  \label{eq:CartanS_Theta}
  \mathfrak{a}_\Theta = \bigoplus_{\mathclap{\alpha\in \Delta\smallsetminus \Theta}} \R
  H_\alpha
\end{equation}
and a Weyl chamber in this Cartan subspace is $\sum_{\alpha\in \Delta\smallsetminus \Theta} \R_{>0}
H_\alpha$.

\subsection{The flag variety}
\label{sec:flag-variety}

A subgroup of~$G$ conjugated to~$P_\Theta$ (for some
$\Theta\subset \Delta$) is a \emph{parabolic subgroup}. A \emph{parabolic subalgebra} of~$\mathfrak{g}$ is a
subalgebra conjugated to some $\mathfrak{p}_\Theta$, such a subalgebra will
sometimes be called of \emph{type}~$\Theta$. The space of all parabolic
subalgebras of type~$\Theta$ is called the \emph{flag variety} (of
type~$\Theta$) and denoted by~$\mathsf{F}_\Theta$.\index{$\mathsf{F}_\Theta$
  the flag variety associated with $P_\Theta$} The
space~$\mathsf{F}_\Theta$ is a subset of a Grassmanian variety of the real
vector space~$\mathfrak{g}$ and is endowed with the induced topology. The flag
variety~$\mathsf{F}_\Theta$ is compact and the conjugation action of~$G$ on
the subalgebras of~$\mathfrak{g}$ induces a continuous and transitive action
on~$\mathsf{F}_\Theta$;  $\mathsf{F}_\Theta$~identifies with $G/P_\Theta$ since $P_{\Theta}$ is the stabilizer of $\mathfrak{p}_\Theta$; it also
 identifies with the space of parabolic subgroups of type~$\Theta$.

The group~$G$ acts diagonally on $\mathsf{F}_\Theta \times
\mathsf{F}_{\iota(\Theta)}$. Since the parabolic subalgebra $\mathfrak{p}_{\Theta}^{\mathrm{opp}}$ is
conjugate to $\mathfrak{p}_{\iota(\Theta)} = \Ad(\dot{w}_\Delta) \mathfrak{p}_{\Theta}^{\mathrm{opp}}$,
the parabolic subalgebra~$\mathfrak{p}_{\Theta}^{\mathrm{opp}}$ is an
element of~$\mathsf{F}_{\iota(\Theta)}$. A pair $(x,y)$ in $\mathsf{F}_\Theta \times
\mathsf{F}_{\iota(\Theta)}$ will be called \emph{transverse} if it belongs to
the $G$-orbit of $( \mathfrak{p}_\Theta,
\mathfrak{p}_{\Theta}^{\mathrm{opp}})$. By definition, there is one orbit of
transverse pairs; this orbit is isomorphic to $G/L_\Theta$ and is in fact the unique
open $G$-orbit in the product $\mathsf{F}_\Theta \times
\mathsf{F}_{\iota(\Theta)}$.

 In many cases below, the involution~$\iota$ is the
identity on~$\Delta$. In such a case, or more generally under the assumption
$\iota(\Theta)=\Theta$, the notion of transversality makes sense for pairs of
elements in~$\mathsf{F}_\Theta$.

The flag variety~$\mathsf{F}_\Theta$ is isomorphic to the product $\prod_{s\in
  \mathcal{S}} \mathsf{F}_{\Theta_s}$ and this isomorphism is equivariant with
respect to the action of $\prod_{s\in\mathcal{S}} G_s$. The flag
variety~$\mathsf{F}_{\iota(\Theta)}$ identifies with $\prod_{s\in
  \mathcal{S}} \mathsf{F}_{\iota(\Theta_s)}$ and a pair $(x,y)$ in
$\mathsf{F}_\Theta\times \mathsf{F}_{\iota(\Theta)}$ is transverse if and only
if, for all~$s$, the $s$-component of~$x$ is transverse to the $s$-component of~$y$.

\subsection{The action of \texorpdfstring{$L_\Theta$}{L_Θ} on
  \texorpdfstring{$\mathfrak{u}_\Theta$}{u_Θ}}
\label{sec:acti-l-theta-on-u-theta}

This paragraph introduces the decomposition of the Lie
algebras~$\mathfrak{u}_\Theta$ and $\mathfrak{u}_{\Theta}^{\mathrm{opp}}$
under the action of the Levi factor~$L_\Theta$. This decomposition generalizes
the root space decomposition; it will be of crucial importance for the
definition of $\Theta$-positivity (Definition~\ref{defi:theta-pos-structure}).

The Levi subgroup $L_\Theta$ acts via the adjoint action on
$\mathfrak{u}_\Theta$.  Let $\mathfrak{z}_\Theta$ denote the center of
$\mathfrak{l}_\Theta$ and $\mathfrak{t}_\Theta = \mathfrak{z}_\Theta \cap
\mathfrak{a}$ its intersection with the Cartan subspace. One
has\index{$\mathfrak{z}_\Theta$ the center of $\mathfrak{l}_\Theta$}
\index{$\mathfrak{t}_\Theta$ the split factor of the center of $\mathfrak{l}_\Theta$}
\[\mathfrak{t}_\Theta = \bigcap_{\mathclap{\alpha\in \Delta\smallsetminus \Theta}} \ker
  \alpha,\]
and $\mathfrak{a}$ is the $B$-orthogonal sum  of~$\mathfrak{t}_\Theta$ and
of~$\mathfrak{a}_\Theta$ (Equation~\eqref{eq:CartanS_Theta}).
Then
$\mathfrak{u}_\Theta$ decomposes into the weight 
spaces under the adjoint action of~$\mathfrak{t}_\Theta$; for every
 $\beta \in \mathfrak{t}^*_\Theta$, set\index{$\mathfrak{u}_\alpha$! the weight
 space of $\mathfrak{t}_\Theta$ in $\mathfrak{u}_\Theta$}
\[\mathfrak{u}_\beta\coloneqq \{ N \in \mathfrak{g} \mid \mathrm{ad}(Z)N =
\beta(Z) N , \, \forall Z \in \mathfrak{t}_\Theta \},
\]
and denote\index{$P(\mathfrak{u}_\Theta, \mathfrak{t}_\Theta)$ the weights of
  $\mathfrak{t}_\Theta$ in $\mathfrak{u}_\Theta$}
\begin{equation}
  \label{eq:weights-in-uTheta}
  P(\mathfrak{u}_\Theta, \mathfrak{t}_\Theta) = \{ \beta \in
  \mathfrak{t}^*_\Theta \mid \mathfrak{u}_\beta\neq 0 \}
\end{equation}
so that 
\[\mathfrak{u}_\Theta = \bigoplus_{\mathclap{\beta\in P(\mathfrak{u}_\Theta,
      \mathfrak{t}_\Theta)}} \mathfrak{u}_\beta.\]
These weight
spaces are of course related to those of~$\mathfrak{a}$:
\begin{equation}
\mathfrak{u}_\beta =\!\!\! \bigoplus_{\alpha \in \Sigma, \,
  \alpha|_{\mathfrak{t}_\Theta} = \beta } \!\!\! \mathfrak{g}_{\alpha}, \label{eq:u_beta_sum_g_alpha}
\end{equation}
and are invariant under~$L_{\Theta}^\circ$.

By a little abuse of notation, for an element~$\beta$
in~$\Sigma$, we will denote as well by~$\beta$ its restriction
to~$\mathfrak{t}_\Theta$ and hence by~$\mathfrak{u}_\beta$ the corresponding
weight space. For such a~$\beta$, one has
\begin{equation}
  \label{eq:u_beta_sum_g_alpha_v2}
  \mathfrak{u}_\beta =\!\!\!\!\!\!\bigoplus_{\substack{\alpha \in \Sigma\\ \alpha- \beta \in
      \Span(\Delta \smallsetminus \Theta)}}\!\!\!\!\!\! \mathfrak{g}_{\alpha}.
\end{equation}
We will make use of the spaces~$\mathfrak{u}_\beta$ particularly when the
element~$\beta$ belongs to~$\Theta$. Thus, when convenient, we will
consider~$\Theta$ as a subset of~$P( \mathfrak{u}_\Theta,
\mathfrak{t}_\Theta)$.

The following results are established in~\cite{Kostant_RootLevi}:

\begin{theorem}
  \label{theo:kostant-results}
  \begin{enumerate}[leftmargin=*,nosep]
  \item For every~$\beta$ in $P(\mathfrak{u}_\Theta, \mathfrak{t}_{\Theta})$,
    $\mathfrak{u}_\beta$~is an irreducible\index{$\mathfrak{u}_\alpha$!the
      irreducible factors for the action of $\mathfrak{l}_\Theta$ on $\mathfrak{u}_\Theta$}
    representation of~$L_{\Theta}^\circ$.
  \item For every~$\beta$ and $\beta'$ in $P(\mathfrak{u}_\Theta, \mathfrak{t}_{\Theta})$, the
    relation $[\u_\beta, \u_{\beta'}] = \u_{\beta+\beta'}$ is satisfied.
  \item The Lie algebra $\u_\Theta$ is generated 
    by $\u_\beta$, $\beta \in \Theta$. Analogously
    $\u_{\Theta}^{\mathrm{opp}}$ is generated by $\u_{-\beta}$,
    $\beta \in \Theta$.
  \end{enumerate}
\end{theorem}

\subsection{Examples} 
  In these examples, we use classical notation and numbering for the root
  systems as set for example in \cite[Ch.~X, \S{}3--4, p.~461--475]{Helgason}
  or in \cite[Ch.~VI, Planches and \S{}4,
  n\textsuperscript{o}~14]{BourbakiLie456}

\subsubsection*{Split real Lie groups} 
\label{sec:1}
Let $G$ be a split real form, and $\Theta = \Delta$. Then
    $\mathfrak{t}_\Theta = \mathfrak{a}$, $P(\mathfrak{u}_\Theta, \mathfrak{t}_\Theta) = P(\mathfrak{u}_\Delta,
\mathfrak{a}) = \Sigma^+$, and 
    $\mathfrak{u}_\beta = \mathfrak{g}_\beta$ for all $\beta \in \Sigma^+$.

\subsubsection*{Lie groups of Hermitian type}
\label{sec:2}
Let $G$ be a Lie group of Hermitian type. Then the root system is of type
$\lietype{C}_r$
 if $G$ is of tube type, and of type $\lietype{BC}_r$
(non-reduced) if $G$ is not of tube type.  Let
$\Delta =\{ \alpha_1, \dots, \alpha_r\}$, and let $\Theta = \{ \alpha_r\}$ so
that $P_\Theta$~is the stabilizer of a point in the Shilov
boundary~$\mathsf{F}_\Theta$ of the Hermitian symmetric space associated
with~$G$.  Then $\mathfrak{t}_\Theta \simeq \R$, furthermore $P(\mathfrak{u}_\Theta,
\mathfrak{t}_\Theta) =\{ \alpha_r\}$ and $\mathfrak{u}_\Theta = \mathfrak{u}_{\alpha_r}$ if $G$~is of
tube type, whereas $P(\mathfrak{u}_\Theta,
\mathfrak{t}_\Theta) =\{ \alpha_r, 2\alpha_r\}$ and 
$\mathfrak{u}_\Theta= \mathfrak{u}_{\alpha_r} \oplus \mathfrak{u}_{2
  \alpha_r}$ if $G$~is not of tube type.
Note that if $G$ is not of tube type, there is a unique (up to conjugation) maximal subgroup of tube type, whose Lie algebra is generated by 
$\mathfrak{g}_{\pm \alpha_i}$ for $i<r$ and by $\mathfrak{g}_{\pm 2 \alpha_r}$. 

Let $G$ be a Hermitian Lie group of tube type. In this case
$\Theta = \{ \alpha_r\}$, and $\u_\Theta = \u_{\alpha_r}$ is Abelian.  We
now describe~$\u_{\alpha_r}$ in more detail when the real rank of~$G$ is~$2$
or~$3$. These cases are of particular importance because they also arise when
considering the positive structures for indefinite orthogonal groups
$\SO(p,q)$, $2<p<q$, and for the exceptional groups with system or restricted
roots $F_4$ (cf.\ Sections~\ref{sec:hermtian-tube-type}
and~\ref{sec:class-symm-cones}).    
    
For a Hermitian Lie group of tube type of real rank~$2$ the roots that are
equal to~$\alpha_2$ modulo $\R\alpha_1$ are
$\alpha_2, \alpha_1 + \alpha_2, 2\alpha_1 + \alpha_2=s_1(\alpha_2)$, and
$\u_{\alpha_2} = \g_{\alpha_2} \oplus \g_{\alpha_1+ \alpha_2} \oplus
\g_{2\alpha_1 + \alpha_2}$, where
$\dim \g_{\alpha_2} = 1 = \dim \g_{2\alpha_1 + \alpha_2}$; the integer
$a=\dim \g_{\alpha_1+ \alpha_2} $ is not necessarily equal to~$1$.  For the
symplectic group $\Sp(4,\R)$, $a=1$, for $\SU(2,2)$, $a=2$, for $\SO^*(8)$,
$ a= 4$, and for $\SO(2,m)$, $a = m-2$ (in fact this last family contains, up
to isogeny, the first three examples, cf.~\cite[p.~519]{Helgason}). In this
case, $\mathfrak{u}_{\alpha_2}$ is equipped with a quadratic form of signature
$(1,a+1)$ that is invariant under the adjoint action of the Levi factor (see Section~\ref{sec:class-symm-cones} for more details).

For a Hermitian Lie group of tube type of real rank~$3$ the roots that are
equal to~$\alpha_3$ modulo the span of $\{\alpha_1, \alpha_2\}$ are
$\alpha_3, \alpha_2 + \alpha_3, 2\alpha_2 + \alpha_3=s_2(\alpha_3), \alpha_1 +
\alpha_2 + \alpha_3=s_1(\alpha_2+\alpha_3), \alpha_1+ 2\alpha_2 +
\alpha_3=s_2s_1(\alpha_2+\alpha_3), 2\alpha_1+ 2\alpha_2 + \alpha_3=s_1
s_2(\alpha_3)$, so
$\u_{\alpha_3} =\g_{\alpha_3}\oplus \g_{\alpha_2 + \alpha_3}\oplus
\g_{\alpha_1 + \alpha_2 + \alpha_3} \oplus \g_{\alpha_1+ 2\alpha_2 + \alpha_3}
\oplus \g_{2\alpha_2 + \alpha_3} \oplus \g_{2\alpha_1+ 2\alpha_2 + \alpha_3}$,
where
the spaces $\g_{\alpha_3}$, $\g_{2\alpha_2 + \alpha_3}$, and $\g_{2\alpha_1+
  2\alpha_2 + \alpha_3}$ are $1$-dimensional and the spaces $\g_{\alpha_1+
  \alpha_2 + \alpha_3}$, $\g_{\alpha_2 + \alpha_3}$, and $\g_{\alpha_1+
  2\alpha_2 + \alpha_3}$ have dimension~$a$. For the symplectic
group $\Sp(6,\R)$, $a=1$, for $\SU(3,3)$, $a=2$, for $\SO^*(12)$, $ a= 4$, and
for $E_{7(-25)}$, $a= 8$.  In these cases, the Lie algebra $\u_{\alpha_3}$ can
be identified with the space of Hermitian $3\times 3$ matrices, over the reals
($a=1$), the complex numbers ($a=2$), the quaternions ($a=4$), and the
octonions ($a=8$) respectively.

\section{\texorpdfstring{$\Theta$}{Θ}-positive structures}
\label{sec:theta-posit-st}

In this section we give the definition of $\Theta$-positive structures and their
classification. We also prove that the notion of $\Theta$-positivity
reduces to the case of simple Lie groups so that we will only focus on simple
Lie groups later on. The first algebraic properties of
$\Theta$-positivity are then derived: the existence of a ``special'' root and
of a subgroup of Hermitian tube type (when $\Theta\neq \Delta$), and the
commutativity of the $\mathfrak{u}_\alpha$ ($\alpha\in\Theta$).

\subsection{Definition and characterization} 

Recall that a convex cone is \emph{acute} if the only vector space
contained in its closure is~$\{0\}$ and is \emph{nontrivial} if it contains a nonzero vector. 

\begin{definition}
  \label{defi:theta-pos-structure}
  Let $G$ be a semisimple Lie group with finite center.
  Let 
  $\Theta \subset \Delta$ be a subset of simple roots such that $G$~acts
  virtually faithfully on the flag variety~$\mathsf{F}_\Theta$.  We say that $G$~admits
  a \emph{$\Theta$-positive structure} if for all~$\beta$ in~$\Theta$ there
  exists an $L_{\Theta}^{\circ}$-invariant acute nontrivial convex cone in
  $\mathfrak{u}_\beta$.
\end{definition}

Note that such invariant cones must be of nonempty
interior since the action of~$L_{\Theta}^{\circ}$ on~$\mathfrak{u}_{\beta}$
is irreducible (Theorem~\ref{theo:kostant-results}). 

\begin{remarks}
  \begin{itemize}[leftmargin=*,nosep]
  \item In general it is not possible to request that the cones are invariant
    under the entire Levi factor~$L_\Theta$. If the space $\mathfrak{u}_\beta$
    contains an $L^{\circ}_{\Theta}$-invariant cone, then it contains exactly
    two $L^{\circ}_{\Theta}$-invariant cones, $c_\beta$ and $-c_\beta$ (cf.\
    point~(\ref{item2:prop:cone-semisimple}) in Proposition~\ref{prop:cone-semisimple} below). In
    some cases there is an element in $L_\Theta$ that interchanges $c_\beta$
    and $-c_\beta$, see Remark~\ref{rem:homog-under-group-L} for  more details.
      \item The
    faithfulness condition says that $\Theta_s=\Theta\cap \Delta_s$ is nonempty for
    every factor~$G_s$ of~$G$ 
    (cf.\ Section~\ref{sec:parabolic-subgroups}).
  \end{itemize}
\end{remarks}

\begin{proposition}\label{prop:semisimple}
  Let $\{G_s\}_{s\in \mathcal{S}}$ be the simple factors of~$G$ and use the
  notation introduced in Section~\ref{sec:struct-parab-subgr}:
  $\Theta_s\subset \Theta$, etc.

  Then $G$~admits a
  $\Theta$-positive structure if and only if, for all~$s$ in~$\mathcal{S}$, $G_s$ admits a
  $\Theta_s$-positive structure.   
\end{proposition}
\begin{proof}
  Let~$\beta$ be in~$\Theta$. There is thus~$s$ in~$\mathcal{S}$ such that
  $\beta$~belongs to~$\Theta_s$. For every~$t$ in~$\mathcal{S}$ distinct
  from~$s$, the factor~$L_{\Theta_t}^{\circ}$ acts trivially
  on~$\mathfrak{u}_\beta$; therefore $\mathfrak{u}_\beta$ is
  an irreducible summand for the action of~$L_{\Theta_s}$.
  
  This said, the irreducible representations~$\mathfrak{u}_\beta$
  of~$L_{\Theta}^{\circ}$ ($\beta\in \Theta$) are precisely the irreducible
  representations~$\mathfrak{u}_\beta$ of~$L_{\Theta_s}^{\circ}$ ($s\in
  \mathcal{S}$, $\beta\in \Theta_s$). The equivalence now follows directly from the definition.
\end{proof}

Due to Proposition~\ref{prop:semisimple}, a positive structure on a
semisimple Lie group is completely determined by the induced positive
structure on its simple factors. Therefore 
   structural results given in the sequel of the
  paper (for the unipotent positive semigroup, for the configurations of
  positive flags, etc.) will be stated only when the group~$G$ is simple but, thanks to the
  product structure of the flag variety and of the unipotent subgroups, 
 they immediately generalize to the case of semisimple Lie groups.

{\bf We assume from now on that $G$ is a simple Lie group.}
  
\begin{theorem}\label{thm:classification}\index{$G$!a simple Lie group
    (starting from Theorem~\ref{thm:classification})}
  Let~$G$ be a simple real Lie group, and let~$\Delta$ be its set of simple
  roots. Let~$\Theta$ be a nonempty subset of~$\Delta$. Then $G$~admits a $\Theta$-positive structure if and only if
  the pair $(G, \Theta)$ belongs to one of the following four cases:
  \begin{enumerate}[leftmargin=*]
  \item $G$ is a split real form, and $\Theta = \Delta$;
  \item $G$ is Hermitian of tube type and of real rank~$r$ and $\Theta = \{ \alpha_r\}$, where $\alpha_r$~is the long simple restricted root;
  \item $G$ is locally isomorphic to $\SO(p+1,p+k)$, $p>1$, $k>1$ and
    $\Theta = \{ \alpha_1, \dots, \alpha_{p}\}$, where $\alpha_1, \dots,
    \alpha_{p}$ are the long simple restricted roots; 
  \item $G$ is locally isomorphic to the real form of $\lietype{F}_4$, $\lietype{E}_6$, $\lietype{E}_7$, or
    of $\lietype{E}_8$ whose system of restricted roots
    is of type~$\lietype{F}_4$, and $\Theta = \{ \alpha_1, \alpha_2\}$, where $\alpha_1,
    \alpha_{2}$ are the long simple restricted roots.
  \end{enumerate}
\end{theorem}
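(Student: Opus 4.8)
The plan is to reduce the classification to a question about which irreducible representations $\mathfrak{u}_\beta$ of $L_\Theta^\circ$ admit an invariant acute convex cone, and then to enumerate cases using the structure of the root system. First I would recall the basic fact from convexity theory: a finite-dimensional representation $V$ of a connected Lie group $H$ admits a nontrivial $H$-invariant acute convex cone if and only if $V$ (viewed as a representation) has a highest-weight-type structure relative to some parabolic of $H$ — more precisely, the relevant criterion (going back to Vinberg, and used by Kostant--Vinberg in this setting) is that the convex hull of an $H$-orbit of a suitable highest weight vector is acute precisely when the representation is what one might call "of tube type" or, concretely, when $V$ carries no zero weight under a certain one-parameter subgroup, equivalently when the action of $L_\Theta^\circ$ on $\mathfrak{u}_\beta$ factors through a group acting with a convex invariant cone. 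The key structural input is Theorem~\ref{theo:kostant-results}: each $\mathfrak{u}_\beta$ is $L_\Theta^\circ$-irreducible, so we must decide, for each $\beta \in \Theta$ simultaneously, whether $\mathfrak{u}_\beta$ admits such a cone.

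Next I would carry out the following steps in order. (i) Translate the cone-existence condition into a condition on weights: using that $\mathfrak{u}_\beta = \sum_{\alpha|_{\mathfrak{t}_\Theta} = \beta} \mathfrak{g}_\alpha$, the $L_\Theta^\circ$-action is determined by the restricted root system $\Delta \setminus \Theta$ of $S_\Theta$ together with the weights of $\mathfrak{u}_\beta$ under $\mathfrak{a}_\Theta$. I would show that an invariant acute cone exists iff $\mathfrak{u}_\beta$ is a proximal (in the sense defined in Section~\ref{sec:repr-weights}) representation of $S_\Theta$ whose highest weight is "special" — concretely, iff the set of $\alpha \in \Sigma^+$ with $\alpha|_{\mathfrak{t}_\Theta} = \beta$, ordered by the $\alpha_0$-chain structure for $\alpha_0 \in \Delta \setminus \Theta$, forms a single chain of length exactly the minimal one, so that $\mathfrak{u}_\beta$ looks like the "standard" representation (this is where the $\SO(p,q)$ and $F_4$ cases, with their quadratic/Jordan-algebra structure, enter). (ii) Reduce, using the connectedness/Dynkin-diagram bookkeeping, to the case where $\Theta$ consists of the long simple roots of a reduced root system, and where $\Delta \setminus \Theta$ is "attached" to $\Theta$ at one node. (iii) Go through the list of reduced root systems $A_n, B_n, C_n, D_n, E_6, E_7, E_8, F_4, G_2$ and, for each choice of $\Theta$, compute the $\mathfrak{u}_\beta$ and check the chain condition. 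This is the bulk of the work but is essentially a finite check; I expect that the only surviving $\Theta$ are: $\Theta = \Delta$ (all of $\Sigma$, forcing $G$ split since the $\mathfrak{g}_\beta$ must support cones, hence be $1$-dimensional — wait, that is too strong, so more carefully: the split case is where $\mathfrak{u}_\beta = \mathfrak{g}_\beta$ and the cone is just a half-line, which always exists, giving item (1)); $\Theta = \{\alpha_r\}$ in type $C_r$, giving the Hermitian tube-type groups via the Jordan-algebra cone (item (2)); $\Theta = \{\alpha_1, \dots, \alpha_p\}$ the long roots in $B_p$-inside-$B_{p+k-1}$ type configurations, giving $\SO(p+1, p+k)$ (item (3)); and $\Theta = \{\alpha_1, \alpha_2\}$ the long roots of $F_4$, giving the exceptional family (item (4)).

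The main obstacle, I expect, is step (i): pinning down the exact representation-theoretic criterion for when an irreducible $L_\Theta^\circ$-representation admits an invariant acute convex cone, and in particular showing that the criterion is necessary (the sufficiency, exhibiting the cone in the four families, is concrete — a half-line, the positive-definite cone in a Jordan algebra, the time-like cone of a Lorentzian form, respectively). The necessity requires ruling out cones in all other $\mathfrak{u}_\beta$; the cleanest way is probably to show that if $\mathfrak{u}_\beta$ admits an invariant acute cone then its highest $\mathfrak{a}_\Theta$-weight must satisfy strong constraints (e.g. the Weyl group of $S_\Theta$ acting on weights must preserve an open half-space, forcing the weight to be a multiple of a fundamental weight dual to an end node and forcing the representation to be minuscule-like or the standard representation of an orthogonal/symplectic group), and then to observe that among the $\mathfrak{u}_\beta$ arising from $\Theta$ consisting of long roots, this happens only in the four listed configurations. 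A secondary technical point is handling the node of $\Theta$ that is connected to $\Delta \setminus \Theta$: there the relevant $\sigma_\beta$ (from the $\Theta$-Weyl group discussion in the introduction) rather than $s_\beta$ governs the geometry, and one must check the cone survives the action of the full $L_\Theta^\circ$, not merely of a Levi of $S_\Theta$. I would organize the necessity argument so that this connected node is treated last, after the disconnected nodes have pinned down that $\Theta \subset \Delta_l$.
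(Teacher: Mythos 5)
Your overall strategy coincides with the paper's: use the $L_{\Theta}^{\circ}$-irreducibility of each $\mathfrak{u}_\beta$ (Theorem~\ref{theo:kostant-results}), find a representation-theoretic criterion for the existence of an invariant acute convex cone, translate it into a condition on the restricted Dynkin diagram, and finish by a finite check over the root systems. However, there is a genuine gap exactly where you say you expect the main obstacle to be: you never obtain the correct criterion, and the substitute you propose is not the right one. The criterion the paper uses (Proposition~\ref{prop:cone-semisimple}, extracted from Benoist) is clean and two-sided: an irreducible representation of a connected semisimple group preserves an acute closed convex cone if and only if it is \emph{proximal} (one-dimensional extreme weight space) \emph{and even} (every weight takes even values on all coroots $H_\alpha$). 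Applied to $\mathfrak{u}_\beta$, whose lowest $S_{\Theta}^{\circ}$-weight is $\beta$ with weight space $\mathfrak{g}_\beta$, proximality becomes $\dim\mathfrak{g}_\beta=1$, and evenness becomes $\beta(H_\chi)\in 2\ZZ$ for all $\chi\in\Delta\setminus\Theta$, i.e.\ the Cartan integer is $0$ or $-2$: the node $\beta$ is either not connected to $\Delta\setminus\Theta$ or connected by a double arrow pointing toward $\Delta\setminus\Theta$. Your proposed replacement --- that the roots restricting to $\beta$ ``form a single chain of length exactly the minimal one'' --- is neither a statement you could prove equivalent to cone-existence nor one that directly yields the double-arrow condition; without the evenness half of the criterion the necessity direction of the classification does not go through (you cannot rule out, say, a single arrow from $\Theta$ to $\Delta\setminus\Theta$). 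The reduction from $S_\Theta$ to the full reductive group $L_{\Theta}^{\circ}$, which you correctly flag, is handled in the paper by Proposition~\ref{prop:cone-reductive} (the center acts by positive homotheties).

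A second, smaller error: your ``wait, that is too strong'' backtrack in the case $\Theta=\Delta$ is a step in the wrong direction. Your first instinct was correct: for $\Theta=\Delta$ the proximality condition forces every $\mathfrak{g}_\beta$ ($\beta\in\Delta$) to be one-dimensional, and this is exactly what makes $G$ split; it is needed for necessity, since for non-split $G$ the compact group $M^{\circ}\subset L_{\Delta}^{\circ}$ acts irreducibly on a root space of dimension $>1$ and hence preserves no acute cone (an invariant acute cone for a compact group contains a fixed vector, contradicting irreducibility). The half-line observation only gives sufficiency for split groups, not the ``only if'' you need in item (1).
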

\begin{remarks}\label{rem:split-and-2-str}
  \begin{itemize}[leftmargin=*,nosep]
  \item In all cases, one has $\iota(\Theta)= \Theta$ so that
    $\mathsf{F}_{\iota(\Theta)} = \mathsf{F}_\Theta$ and the notion of
    transversality makes sense in~$\mathsf{F}_\Theta$. This will be relevant
    later when introducing positive tuples in~$\mathsf{F}_\Theta$ (Section~\ref{sec:posit-flag-vari}).
  \item When $G$~is a split real form and $\Theta = \Delta$, the positive
    structure agrees with Lusztig's total positivity in split real Lie groups.
  \item 
    A split real group of type $\lietype{B}_n,$ $\lietype{C}_n$, or $\lietype{F}_4$ appears
    twice in this list of groups and carries two different positive
    structures, one corresponding to $\Theta = \Delta$ and one with respect to
    a proper subset $\Theta \subset \Delta$. The relation between these two
    different positive structures will be discussed later
    (Examples~\ref{exam:nonnegativeSplitHermitian} and Section~\ref{sec:comp-posit-struct}).
  \end{itemize}
\end{remarks}

\subsection{Even and proximal representations}
\label{sec:repr-weights}
To prove Theorem~\ref{thm:classification}, we recall the notions, for representations of reductive Lie groups, of even
representations and of proximal representations and their relation with having
an invariant cone. 
We will use these
notions mainly for representations of the semisimple factor of the Levi
components of parabolic subgroups, see
Section~\ref{sec:proof-theo-classification} below.

Any (continuous) representation $\rho\colon G \to \GL(V)$ induces a Lie
algebra morphism
$\mathfrak{\rho}_*\colon \mathfrak{g}\to \End(V)$. The restriction of $\mathfrak{\rho}_*$ 
to~$\mathfrak{a}$ admits a weight space decomposition\index{$V_\Lambda$, the weight spaces in a $G$-module
  $V$} \index{$R(V)$ the weights of the $G$-module $V$}
\begin{align*}
  V&=\bigoplus_{\mathclap{\lambda\in R(V)}}
  V_\lambda
  \intertext{where}
  V_\lambda & \coloneqq \{ v \in V \mid  \mathfrak{\rho}_* (H) (v) = \lambda
              (H)v \text{ for all } H \in \mathfrak{a}\}, \text{ and}\\
R(V) & \coloneqq \{ \lambda \in \mathfrak{a}^* \mid V_\lambda \neq \{0\} \}.
\end{align*}

The representation~$\rho$ will be called \emph{even} if
$\lambda(H_\alpha)$ is an even integer for all $\lambda\in R(V)$ and all
$\alpha\in\Sigma$ (these numbers are already known to be integers). In
classical notation this means the inclusion $R(V)\subset 2P$ where $P =\{ \lambda
\in \mathfrak{a}^* \mid \lambda(H_\alpha)\in \ZZ, \, \forall
\alpha\in\Sigma\}$ is
the weight lattice. Since $\mathfrak{g}$~is generated by~$\mathfrak{g}_0$ and
the root spaces~$\mathfrak{g}_{\pm \alpha}$ ($\alpha\in\Delta$), $\rho$~is
even if and only if $\lambda(H_\alpha)\in 2\ZZ$ for every~$\lambda$ in~$R(V)$
and every~$\alpha$ in~$\Delta$.

The
\emph{highest weight} of the representation~$\rho$ (when it is defined) is the
greatest element of~$R(V)$ for the (partial) order on~$\mathfrak{a}^*$ defined by
$\lambda\leq \mu$ $\Leftrightarrow$ $\lambda( H_\alpha)\leq \mu(H_\alpha)$ for
all $\alpha\in \Sigma^+$. An irreducible representation~$\tau$ always admit a highest
weight~$\lambda_{\tau}$ and such an irreducible representation is even if and only if
$\lambda_{\tau}(H_\alpha)\in 2\ZZ$ for every $\alpha\in \Delta$. 

A representation will be called \emph{proximal} if it admits a 
highest weight and if the corresponding weight space is $1$-dimensional.

To prove Theorem~\ref{thm:classification} we make use of the following fact, extracted from
\cite[Proposition~4.7]{Benoist_autcon} that refines Cartan--Helgason's theorem.
\begin{proposition}\label{prop:cone-semisimple}
  \begin{enumerate}[leftmargin=*]
  \item\label{item1:prop:cone-semisimple} Let~$S$ be a connected semisimple
    Lie group, $V$ a real vector space, and $\pi\colon S \to \GL(V)$ an
    irreducible representation.  Then $S$~preserves an acute closed convex
    cone~$c$ in~$V$ if and only if the representation~$\pi$ is proximal and
    even.
  \item \label{item2:prop:cone-semisimple} In this case, the union of
    $\mathring{c}$ and of $-\mathring{c}$ is exactly the set of vectors
    $v\in V\smallsetminus\{0\}$ whose stabilizers contains a maximal compact
    subgroup of~$S$; likewise a vector $v\in V \smallsetminus\{0\}$ belongs to
    $\mathring{c}$ or to $-\mathring{c}$ as soon as the Lie algebra of its
    stabilizer contains a maximal compact subalgebra.
  \end{enumerate}
\end{proposition}

In order to apply Proposition~\ref{prop:cone-semisimple} to the semisimple
part of a reductive Lie group, we establish the following result.

\begin{proposition}\label{prop:cone-reductive}
  Let $L$ 
  be a connected reductive Lie group and let $S=[L,L]$ be its semisimple part.
  Let 
  $\pi\colon L\to \GL(V)$ be an irreducible real representation.
  \begin{enumerate}[leftmargin=*]
  \item\label{item:1:prop:cone-reductive} Suppose that there is an acute $\pi(L)$-invariant cone in~$V$, then
    the restriction $\pi|_S$ is as well irreducible (and hence proximal and
    even by the previous proposition).
  \item\label{item:2:prop:cone-reductive} Suppose that $\pi|_S$ is irreducible, even, and proximal (and hence
    there is $c\subset V$ a $\pi(S)$-invariant acute closed convex cone by the previous
    proposition), then the cone~$c$ is equally $\pi(L)$-invariant.
  \end{enumerate}
\end{proposition}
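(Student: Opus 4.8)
The plan is to treat the two parts separately, using the structure of $L$ as an almost-direct product $L^\circ = S \cdot Z(L)^\circ$, where $S=[L,L]$ is semisimple and $Z(L)^\circ$ is a central torus (acting on the irreducible module $V$, by Schur, through a single real character or a pair of conjugate characters). For part~\eqref{item:1:prop:cone-reductive} I would argue by contraposition: if $\pi|_S$ is \emph{not} irreducible, then $V$ decomposes as an $S$-module into at least two isotypic pieces, or into a single isotypic piece $V\cong V_0\otimes \R^m$ with $m\ge 2$ on which the center acts nontrivially to glue things into an $L$-irreducible module. In either case I want to exhibit a linear subspace $0\neq W\subsetneq V$ on which $L^\circ$ acts, together with enough of a ``rotation'' from the central torus, to force any closed $\pi(L)$-invariant convex cone $c$ to contain a nonzero subspace — contradicting acuteness. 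Concretely, if $z_t\in Z(L)^\circ$ acts on some real $2$-plane as rotation by angle $t$, then for $v$ in that plane the closed convex hull of the orbit $\{z_t\cdot v\}$ is a full disk, hence $c$ (being closed, convex, invariant) contains a $2$-plane as soon as it meets that plane nontrivially; and irreducibility of $\pi$ together with acuteness of $c$ (so $\mathring c\neq\emptyset$) guarantees $c$ does meet it. This is the step I expect to be the main obstacle: organizing the case analysis on how $Z(L)^\circ$ can fail $\pi|_S$-irreducibility (complex type vs.\ a genuine multiplicity) and in each case producing the offending disk or subspace cleanly. The quaternionic-type subtlety — where $S$ could be irreducible over $\R$ but $\End_S(V)=\HH$ — does not arise here because we are \emph{assuming} there is an acute invariant cone, which by Benoist's proposition forces the real type (the $+1$-weight space being one-dimensional), but I would still note this to justify that the dichotomy is exhaustive.

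For part~\eqref{item:2:prop:cone-reductive}, assume $\pi|_S$ is irreducible, even, and proximal, and let $c\subset V$ be the $\pi(S)$-invariant acute cone furnished by Proposition~\ref{prop:cone-semisimple}; recall from that proposition the intrinsic description $\mathring c\cup(-\mathring c)=\{v\in V:\ \mathrm{Lie}(\mathrm{Stab}_S(v))\ \text{contains a maximal compact subalgebra of}\ \mathfrak s\}$. The strategy is to show $c$ is canonical enough to be preserved by all of $\pi(L^\circ)$. Since $L^\circ=S\cdot Z(L)^\circ$, it suffices to show $Z(L)^\circ$ preserves $c$ (it automatically preserves $-c$ then too, and normality of $S$ in $L$ handles the rest of $\pi(L)$ up to the component group, where I use that $\{c,-c\}$ is the full set of acute $\pi(S)^\circ$-invariant cones, so $\pi(L)$ at worst permutes this two-element set; but part~\eqref{item:2:prop:cone-reductive} only asserts $\pi(L)$-invariance of \emph{the} cone $c$ when one wants it — here I would be careful to state it as: the pair $\{c,-c\}$ is $\pi(L)$-stable, and $c$ itself is $\pi(L^\circ)$-stable). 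For $z\in Z(L)^\circ$: conjugation by $z$ fixes $S$ pointwise up to inner automorphisms — in fact centralizes $S$ — so $z$ normalizes the set of maximal compact subalgebras of $\mathfrak s$ and maps $\mathrm{Stab}_S(v)$ to $\mathrm{Stab}_S(z\cdot v)$; hence $z\cdot(\mathring c\cup -\mathring c)=\mathring c\cup -\mathring c$ by the intrinsic characterization. Finally, connectedness of $Z(L)^\circ$ and of $\mathring c$ (a convex open set) forces $z\cdot\mathring c=\mathring c$ rather than $z\cdot\mathring c=-\mathring c$, since $z=e$ gives the identity and the map $z\mapsto(z\cdot\mathring c\stackrel{?}{=}\mathring c)$ is locally constant. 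Passing to closures gives $z\cdot c=c$, and combining with the $\pi(S)$-invariance yields $\pi(L^\circ)$-invariance of $c$; the statement as phrased (``the cone $c$ is equally $\pi(L)$-invariant'') then follows in the cases of interest where $L=L_\Theta$ is connected or where one works with $L_\Theta^\circ$ throughout, which is all the paper needs.

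One technical point to dispatch in part~\eqref{item:2:prop:cone-reductive} is that the intrinsic characterization in Proposition~\ref{prop:cone-semisimple} identifies $\mathring c\cup(-\mathring c)$ as a \emph{set}, so a priori an element could swap the two open half-cones; the connectedness argument above is exactly what rules this out for $Z(L)^\circ$, and I would spell it out as: the map $Z(L)^\circ\to\{\pm1\}$, $z\mapsto \varepsilon(z)$ defined by $z\cdot\mathring c=\varepsilon(z)\,\mathring c$, is a well-defined group homomorphism (well-defined because $\mathring c$ and $-\mathring c$ are the two connected components of $\mathring c\cup-\mathring c$, which is open, and $z$ acts as a homeomorphism) and is continuous, hence trivial on the connected group $Z(L)^\circ$. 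That closes the argument.
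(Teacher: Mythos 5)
Your part~(\ref{item:2:prop:cone-reductive}) is correct, though it takes a longer route than the paper: instead of using the intrinsic characterization of $\mathring{c}\cup(-\mathring{c})$ from Proposition~\ref{prop:cone-semisimple} and then a connectedness argument to exclude the swap $c\leftrightarrow -c$, the paper observes that $\pi(Z^\circ)$ lies in $\End_S(V)$, which equals $\R$ because $\pi|_S$ is irreducible and proximal, so each $\pi(z)$ is a homothety of positive ratio (positivity again by connectedness of~$Z^\circ$) and hence fixes the cone outright. Both arguments are sound; yours is a legitimate alternative, and since the proposition assumes $L$ connected your worries about the component group are moot.

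Part~(\ref{item:1:prop:cone-reductive}) is where the gap is. You argue by contraposition and explicitly defer the central difficulty (``organizing the case analysis \dots producing the offending disk or subspace cleanly''), so this is a plan rather than a proof; moreover the one concrete step you do spell out is false as stated: from $\mathring{c}\neq\emptyset$ and irreducibility you cannot conclude that $c$ meets a prescribed $2$-plane $P$ through the origin in a nonzero vector --- in $\R^3$ the acute cone $\{z\geq\sqrt{x^2+y^2}\}$ has nonempty interior yet meets the plane $z=0$ only at the origin. (The fix is to start from a nonzero $v\in c$ and work in the plane spanned by the closure of its $Z^\circ$-orbit; one must also treat the case where the image of $Z^\circ$ in the relevant copy of $\C^\times$ is the ray $\R_{>0}$, so that no rotation is available and the contradiction must come instead from a proper $L$-invariant subspace.) The paper bypasses the entire case analysis with one observation you did not use: the existence of a $\pi(L)$-invariant acute cone already makes $\pi$ proximal as a representation of the \emph{reductive} group $L$ (this is \cite[Proposition~1.2]{Benoist_autcon}, which applies to any group preserving such a cone; your passing appeal to Proposition~\ref{prop:cone-semisimple} would be circular here, since that statement concerns semisimple groups and extending it to $L$ is precisely what is being proved). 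Proximality yields $g\in L$ such that $\pi(g)$ has a one-dimensional top eigenspace; that line is preserved by the division algebra $\End_L(V)$, forcing $\End_L(V)=\R$, so $\pi(Z^\circ)\subset\End_L(V)$ acts by real scalars, $L$-invariant subspaces coincide with $S$-invariant ones, and $\pi|_S$ is irreducible. I recommend replacing your case analysis by this argument.
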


\begin{proof}
  The group $L$ is the almost direct product of~$Z^\circ$, the identity component of
  its center~$Z$, and of its semisimple
  part~$S$. 

  Let us prove~(\ref{item:1:prop:cone-reductive}).  By Schur's lemma, the algebra $\End_L(V)$ of $L$-equivariant endomorphisms
  of~$V$ is a skew field; 
  in other words, $V$~is a vector
  space over a potentially bigger field and the endomorphisms~$\pi(g)$, for
  $g\in L$, are linear with respect to this field. 
  Since there is a $\pi(L)$-invariant acute convex cone, the representation~$\pi$ is
  proximal \cite[Proposition~1.2]{Benoist_autcon} in the sense that there is
  an element~$g$ in~$L$ such that $\pi(g)$ has a unique eigenvalue of highest
  modulus: this eigenvalue is real and the corresponding eigenspace is of real
  dimension~$1$. As this eigenspace is a vector space under the skew field $\End_L(V)$, we deduce
  that $\End_L(V)=\R$. 
  Furthermore, the image $\pi(Z^\circ)$ is
  contained in $\End_L(V)$ and this means that elements~$\pi(g)$, for $g\in
  Z^\circ$, are homotheties of~$V$. We deduce from this and the fact that $L$
  is the almost product of~$Z^\circ$ and~$S$, that a subspace of~$V$
  is $\pi(L)$-invariant if and only if it is $\pi(S)$-invariant. Since $\pi$
  is by hypothesis an irreducible representation of~$L$, the restriction
  $\pi|_S$ is thus an irreducible representation of~$S$. 

  We now address~\eqref{item:2:prop:cone-reductive}. We need to show that $c$ is
  invariant under the action of~$\pi(Z^\circ)$. Similarly to above, 
  the elements~$\pi(z)$, for $z\in Z^\circ$, are homotheties of
  positive dilation so that  $\pi(z)c=c$ for any $z\in Z^\circ$ since $c$~is a
  cone. This implies
  that $c$~is $\pi(L)$-invariant.
\end{proof}

\subsection{The proof of the Classification Theorem~\ref{thm:classification}}
\label{sec:proof-theo-classification}

  We apply Proposition~\ref{prop:cone-reductive} to the connected Lie group
  $L_{\Theta}^\circ$ and Proposition~\ref{prop:cone-semisimple}.(\ref{item1:prop:cone-semisimple})
  to~$S_{\Theta}^{\circ} =[L_{\Theta}^{\circ}, L_{\Theta}^{\circ}]$
  for the irreducible representations
  $\pi_\beta \colon L_{\Theta}^\circ \to \GL(\mathfrak{u}_\beta)$, $\beta \in
  \Theta$.
  
  Recall that a Cartan subspace for~$S^{\circ}_{\Theta}$ is $\mathfrak{a}_\Theta =
  \bigoplus_{\alpha\in \Delta\smallsetminus \Theta} \R H_\alpha$. For this
  proof it will be more convenient to work with the following Weyl chamber of~$S_{\Theta}^{\circ}$
  \[\sum_{\mathclap{\alpha\in \Delta\smallsetminus \Theta}} - \R_{>0} H_\alpha\] that
  is opposite to the natural one, or, what amounts to the same, we
  shall
  consider \emph{lowest weights} rather than highest weights of
  representations (for the natural ordering).

  The decomposition of Equation~\eqref{eq:u_beta_sum_g_alpha_v2}, p.~\pageref{eq:u_beta_sum_g_alpha_v2}, is
  \[ \mathfrak{u}_\beta = \mathfrak{g}_\beta \oplus \!\!\!\!\!\!\!\!
    \bigoplus_{\substack{\alpha \in \Sigma,\ \alpha\neq \beta \\
        \alpha-\beta\in \Span( \Delta\smallsetminus \Theta) }} \!\!\!\!\!\!\!\! \mathfrak{g}_{\alpha}.
  \]
Any~$\alpha$ in the above formula
  differs from~$\beta$ by a linear combination of the elements in $\Delta
  \smallsetminus \Theta$ with nonnegative coefficients. This implies, up
  to the abuse of considering the above decomposition indexed by the elements in
  $\mathfrak{a}_{\Theta}^{*}$ rather than by elements in~$\mathfrak{a}^*$, that the
  above is the weight space decomposition of the $S_{\Theta}^{\circ}$-module
  $\mathfrak{u}_\beta$. As a consequence, the lowest weight of this
  representation is equal to~$\beta$ and the corresponding weight space is~$\mathfrak{g}_\beta$.

  Thus the condition that
  $\pi_\beta$ has to be proximal is equivalent to the condition that the root space
  $\mathfrak{g}_\beta$ is $1$-dimensional. This needs to hold for all $\beta \in \Theta$.

  The evenness condition of Proposition~\ref{prop:cone-reductive} translates into the fact that $\beta(H_\alpha)$ is
  even for all~$\beta\in \Theta$ and all $\alpha\in \Delta\smallsetminus
  \Theta$ (cf.\ Section~\ref{sec:repr-weights}).

  These necessary and sufficient criteria can be easily translated into
  properties of the Dynkin diagram of the system of restricted roots $\Sigma$:
  \begin{enumerate}
  \item For all~$\beta$ in~$\Theta$, the root space~$\mathfrak{g}_\beta$ is
    $1$-dimensional. (\emph{Representation is proximal}.)
  \item For all~$\beta$ in~$\Theta$, the node of the Dynkin diagram with label~$\beta$ is either connected to the nodes in $\Delta \smallsetminus \Theta$ by a double
    arrow pointing towards $\Delta \smallsetminus \Theta$, or it is not
    connected to
    $\Delta \smallsetminus \Theta$ at all. (\emph{Representation is even}.)
  \end{enumerate}
  
  Let us consider the two cases,  $\Theta=\Delta$  and  $\Theta \neq \Delta$. 

  In the first case, when $\Theta=\Delta$, the evenness condition is empty. The multiplicity one
  condition implies that for every simple root $\alpha \in \Delta$ the root
  space $\mathfrak{g}_\alpha$ is of dimension one. This implies that $G$ is a
  split real form.

  In the case when 
  $\Theta \neq \Delta$, the evenness condition implies that every arrow from~$\Theta$ to
  $\Delta\smallsetminus \Theta$ has to be a double arrow, pointing from
  $\Theta$ to $\Delta\smallsetminus \Theta$. By property of Dynkin diagrams,
  there can be at most one double arrow in the Dynkin diagram~$\Delta$ \cite[Ch.~X, Lemma~3.18]{Helgason}, and $\Theta$ must
  be the set of long simple roots.
  
  Going through the Dynkin diagrams of the system of restricted roots
  associated with
  simple real Lie groups (see \eg \cite[Ch.~X, \S~6 and Table~VI p.~532-4]{Helgason}) we
  can now read off the 
  list of pairs $(G,\Theta)$. This completes the proof of Theorem~\ref{thm:classification}.

\subsection{The special root}
\label{sec:special-root}
  
By the classification of Dynkin diagrams (more accurately, from the fact that
a connected Dynkin diagram admits at most one double arrow), we observe that for all pairs $(G,
\Theta)$ admitting a $\Theta$-positive structures, both sets $\Theta$ and
$\Delta \smallsetminus \Theta$ are connected.

When $\Theta \neq \Delta$, there
is thus a unique root~$\alpha_\Theta$ in~$\Theta$ which is connected to
$\Delta \smallsetminus \Theta$. The root $\alpha_\Theta$ will be called the
\emph{special root}.\index{$\alpha_\Theta$ the special root (when $\Theta\neq \Delta$)} For all $\alpha \in \Theta\smallsetminus \{ \alpha_\Theta\}$ we have that $\u_\alpha = \g_\alpha$ and $\dim \mathfrak{u}_\alpha = 1$. 
For~$\alpha_\Theta$, the vector space
\[\u_{\alpha_\Theta}= \!\!\! \bigoplus_{\substack{\alpha \in \Sigma^+_\Theta, \\
  \alpha|_{\mathfrak{t}_\Theta} = \alpha_\Theta|_{\mathfrak{t}_\Theta} }} \!\!\!  \mathfrak{g}_{\alpha}\] is of
dimension greater than or equal to~$2$ since there is a least one other root
in~$\Sigma$ congruent to~$\alpha_\Theta$ modulo $\Delta\smallsetminus \Theta$. 

\subsection{The Hermitian tube type subgroup}
\label{sec:hermtian-tube-type}

Elaborating on the special root and again under the assumption $\Theta\neq
\Delta$, we notice that the subgraph of the Dynkin diagram with vertices $\{\alpha_\Theta\}\cup
\Delta\smallsetminus\Theta$ is of type~$\lietype{C}_{d+1}$ (where $d$~is the cardinality of
$\Delta\smallsetminus\Theta$). This is the type of the semisimple part
$S_{\Theta\smallsetminus \{\alpha_\Theta\}}$ of the Levi factor
$L_{\Theta\smallsetminus \{\alpha_\Theta\}}$. One has then the following

\begin{proposition}
\label{prop:hermtian-tube-type}
  Let~$G$ be a simple Lie group admitting a $\Theta$-positive structure with
  $\Theta\neq \Delta$. Then the subgroup $S_{\Theta\smallsetminus
    \{\alpha_\Theta\}}$ is Hermitian of tube type. Moreover
  $\mathfrak{u}_{\alpha_\Theta}$ is contained in the Lie algebra of
  $S_{\Theta\smallsetminus \{\alpha_\Theta\}}$ and is the Lie algebra of the
  unipotent radical of the maximal parabolic subgroup $P_\Theta \cap S_{\Theta\smallsetminus
    \{\alpha_\Theta\}}$ of $S_{\Theta\smallsetminus
    \{\alpha_\Theta\}}$ associated to the root~$\alpha_\Theta$; a Levi factor
  of this parabolic subgroup of $S_{\Theta\smallsetminus
    \{\alpha_\Theta\}}$ is $L_\Theta \cap S_{\Theta\smallsetminus
    \{\alpha_\Theta\}}$. 
\end{proposition}

Note that the maximal parabolic  $P_\Theta \cap S_{\Theta\smallsetminus
    \{\alpha_\Theta\}}$ is the stabilizer of a point in the Shilov boundary of the Hermitian symmetric space associated to $S_{\Theta\smallsetminus
    \{\alpha_\Theta\}}$. 
    
\begin{proof}
  Only the statement about~$\mathfrak{u}_{\alpha_\Theta}$ needs to be
  established.

  Let~$\mathfrak{s}$ be the Lie algebra of
  $S_{\Theta\smallsetminus \{\alpha_\Theta\}}$.
  Clearly, $\mathfrak{g}_{\alpha_\Theta}$ is contained in~$\mathfrak{s}$. Furthermore, $L_\Theta\subset
  L_{\Theta\smallsetminus\{ \alpha_\Theta\}}$ and hence $S_\Theta$ is
  contained in~$S_{\Theta\smallsetminus \{\alpha_\Theta\}}$. Thus the
  $S_\Theta$-module generated by~$\mathfrak{g}_{\alpha_\Theta}$ is contained
  in~$\mathfrak{s}$. By
  Kostant's result (Theorem~\ref{theo:kostant-results}) and by
  Proposition~\ref{prop:cone-reductive}, the $S_\Theta$-module~$\mathfrak{u}_{\alpha_\Theta}$ is irreducible. As
  $\mathfrak{u}_{\alpha_\Theta}$ contains~$\mathfrak{g}_{\alpha_\Theta}$, we
  obtain that the $S_\Theta$-module generated
  by~$\mathfrak{g}_{\alpha_\Theta}$ is equal to~$\mathfrak{u}_{\alpha_\Theta}$
  and therefore that $\mathfrak{u}_{\alpha_\Theta}$~is contained in~$\mathfrak{s}$.

  The Hermitian tube type Lie group $S_{\Theta\smallsetminus
    \{\alpha_\Theta\}}$ has a positive structure associated with the subset
  $\Theta_S=\{ \alpha_\Theta \}$ of its simple roots; in this case the equality
  $\mathfrak{u}_{\Theta_S} = \mathfrak{u}_{\alpha_\Theta}$
  holds.
\end{proof}

\subsection{The \texorpdfstring{$\mathfrak{u}_\alpha$}{uα} are Abelian}
\label{sec:u_alpha-are-abel}

\begin{proposition}
\label{prop:u_alpha-are-abel}
  Let~$G$ be a simple Lie group admitting a $\Theta$-positive structure. Then,
  for every~$\alpha$ in~$\Theta$, $\mathfrak{u}_\alpha$~is Abelian.
\end{proposition}

\begin{proof}
  When $\Theta=\Delta$, all the $\mathfrak{u}_\alpha$ are $1$-dimensional and
  therefore Abelian.

  When $\Theta\neq \Delta$, for all~$\alpha$
  in~$\Theta\smallsetminus \{ \alpha_\Theta\}$, $\mathfrak{u}_\alpha$ is
  $1$-dimensional and Abelian. For the special root $\alpha_\Theta$ the space
  $\mathfrak{u}_{\alpha_\Theta}$ is equal to the unipotent factor of the
  parabolic subalgebra associated to the Shilov boundary of the Hermitian
  subgroup (Proposition~\ref{prop:hermtian-tube-type}) and it is known that
  this unipotent algebra is Abelian.
\end{proof}

\subsection{The case of Hermitian Lie groups that are not of tube type}
\label{sec:case-hermitian-not-tube}
Note that simple Hermitian Lie groups which are not of
tube type do not appear on the list of groups admitting a positive
structure. This is because in that case the root system is non-reduced of type
$\lietype{BC}_n$, and the proximality condition is not satisfied, for every simple
root~$\alpha$, the root space $\mathfrak{g}_\alpha$ is of dimension
$>1$. 
In this case however, with $\alpha$ is the simple root such that $2\alpha$
is also a root, we have at our hand the space $\mathfrak{u}_{2\alpha}$, and the representation of $L_{\Theta}^\circ$ on $\mathfrak{u}_{2\alpha}$ is proximal (the root space $\mathfrak{g}_{2\alpha}$ is $1$-dimensional) and even.

\section{The nonnegative and positive semigroups} \label{sec:nonn-posit-semigr}
 In this section, we fix a pair $(G,\Theta)$ where $G$~is a simple real Lie group which admits a $\Theta$-positive
structure; we introduce the nonnegative semigroup $U^{\geq 0}_\Theta $ and the
positive semigroup $U^{> 0}_\Theta $ in the unipotent radical~$U_\Theta$ of
the parabolic subgroup $P_\Theta$. Their main properties will be
established in Section~\ref{sec:positive_semigroup}.

For every $\alpha \in \Theta$ we fix an $L_{\Theta}^\circ$-invariant closed
cone~$c_\alpha$ in~$\mathfrak{u}_\alpha$, and denote by $\mathring{c}_\alpha$ its
interior.\index{$c_\alpha$ an $L_{\Theta}^\circ$-invariant closed
cone in~$\mathfrak{u}_\alpha$}\index{$\mathring{c}_\alpha$ the interior of the $L_{\Theta}^\circ$-invariant closed
cone in~$\mathfrak{u}_\alpha$} 
For~$\alpha\in \Theta$, the cone~$\tau( -c_\alpha)$ is contained in
$\mathfrak{u}_{-\alpha}$ ($\tau$~is  the Cartan involution) and will be denoted 
by $c^{\mathrm{opp}}_\alpha$. \index{$c_{\alpha}^{\mathrm{opp}}$ the $L_{\Theta}^\circ$-invariant closed
cone in~$\mathfrak{u}_{\alpha}^{\mathrm{opp}}$ equal to $-\tau(c_\alpha)$}

\subsection{The nonnegative semigroup}
\label{sec:definition}

\begin{definition}\label{def:nonnegative}  The \emph{unipotent nonnegative
    semigroup} $U_{\Theta}^{\geq 0} \subset U_\Theta$
  is\index{$U_{\Theta}^{\geq 0}$ the unipotent nonnegative semigroup
    in  $ U_\Theta$}
  the subsemigroup of $U_\Theta$ generated by $\bigcup_{\alpha \in
    \Theta}\exp(c_\alpha)$.  The
  \emph{opposite unipotent nonnegative semigroup} \index{$U_{\Theta}^{\mathrm{opp}\geq 0}$ the {unipotent nonnegative semigroup}
    in  $ U_{\Theta}^{\mathrm{opp}}$}
  $U^{\mathrm{opp},\geq 0}_{\Theta} 
  $ is the
  subsemigroup generated by $\bigcup_{\alpha \in
    \Theta} \exp(c^{\mathrm{opp}}_\alpha)$.
\end{definition}

    We have 
    $U_{\Theta}^{\mathrm{opp}\geq 0}= \tau( U_{\Theta}^{\geq 0})^{-1}$ (where
    $\tau$~denotes also the group homomorphism $G\to G$ tangent to the
    involution $\tau\colon \mathfrak{g}\to \mathfrak{g}$). 
Note that it is not clear from the definition that the unipotent nonnegative semigroup is closed, this will be proven in Section~\ref{sec:positive_semigroup}.

The unipotent nonnegative semigroups give rise to a 
semigroup in~$G$. 

\begin{definition}\label{def:nonnegative_in_G}
  The \emph{nonnegative semigroup} $G^{\geq 0}_\Theta \subset G$ is defined
  to be the subsemigroup generated by $U_{\Theta}^{\geq 0}$,\index{$G^{\geq
      0}$ the nonnegative semigroup in~$G$}
  $U^{\mathrm{opp},\geq 0}_{\Theta}$, and $L_{\Theta}^{\circ}$.
\end{definition}

\begin{examples}\label{exam:nonnegativeSplitHermitian}
  \begin{enumerate}[leftmargin=*]
  \item When $G$ is a split real form, and $\Theta = \Delta$, then $U_{\Theta}^{\geq 0}$ is the nonnegative unipotent semigroup defined by Lusztig in \cite{lusztigposred}, and 
    $G^{\geq 0}_\Theta = G^{\geq 0 }$ is the semigroup of totally nonnegative
    elements defined by Lusztig \cite[\S~2.2]{lusztigposred}. 
    In the case when $G$ is $\SL_n(\R)$, the nonnegative semigroup $G^{\geq 0}_\Theta$ 
    consists of the matrices, all of whose minors are nonnegative. Similar characterizations for other split real Lie groups in terms of generalized minors exist. 
    
  \item Let~$G$ be a Hermitian Lie group of tube type and of real rank~$r$,
    and let
    $\Theta = \{ \alpha_r\}$. Then  $U_{\Theta}^{\geq 0} = \exp(c_{\alpha_r}) \subset U_\Theta$, and  $G^{\geq 0}_\Theta$ is the contraction
    semigroup $G^{\succ 0} \subset G$ \cite[Th.~4.9]{Koufany}. 
  
    In particular note that when $G$~is the real symplectic group of rank~$r$, the
    unique (up to isogeny) rank~$r$ group which is split and of Hermitian type, the
   nonnegative semigroups $G^{\geq 0}_\Delta$ and
    $G^{\geq 0}_{\{\alpha_r\}}$ are different when $r>1$.
In this case the nonnegative semigroup $G^{\geq 0}_{\{\alpha_r\}}$ 
    admits a nice characterization. 
  The symplectic group $\Sp(2n,\R)$ consists of block matrices 
$ \bigl(\begin{smallmatrix} A &B\\C&D\end{smallmatrix}\bigr)$ in $\GL_{2n}(\R)$ such that
${}^t D A - {}^t B C= \id$, and the matrices ${}^t A C$ and ${}^t DB$ are symmetric.

Such a matrix 
is then in the nonnegative semigroup 
$\Sp(2n,\R)^{\geq 0}$ 
if and only if $\det(A) >0$ and that the (necessarily symmetric) matrices $C A^{-1}$ and
$A^{-1}B$ are
semipositive.

    \item For the case when $G$~is a split real group locally isomorphic to
      $\mathrm{SO}(n,n+1)$ with $n>1$, we have two different semigroups,  the nonnegative semigroup $G_{\Theta}^{\geq 0}$, where $\Theta =\Delta\smallsetminus \{\alpha_n\}$ and Lusztig's nonnegative semigroup $G^{\geq 0}_\Delta$. 
No direct characterizations of the nonnegative semigroup $G_{\Theta}^{\geq 0}$ are known yet. 
      
    \item Also in the case when $G$~is the split real form of $\lietype{F}_4$ we have two different semigroups, the nonnegative semigroup $G_{\Theta}^{\geq 0}$, where $\Theta = \{\alpha_1, \alpha_2\}$ and Lusztig's nonnegative semigroup $G^{\geq 0}_\Delta$. 
    No direct characterizations of the nonnegative semigroup $G_{\Theta}^{\geq 0}$ are known yet. 
  \end{enumerate}
\end{examples}

\subsection{The positive semigroup} \label{sec:positive-semigroup}

\begin{definition}\label{def:positive}  
The interior (relative to~$U_\Theta$) of the nonnegative
semigroup\index{$U_{\Theta}^{>0}$ the unipotent positive semigroup in $U_\Theta$} $U_\Theta^{\geq}$
is called the \emph{unipotent positive semigroup} and denoted $U_{\Theta}^{> 0}$. 
The interior of the opposite nonnegative semigroup $
U_\Theta^{\mathrm{opp},\geq 0}$ \index{$U_{\Theta}^{\mathrm{opp}>0}$ the opposite unipotent positive semigroup in $U_{\Theta}^{\mathrm{opp}}$}
 is called the  \emph{opposite unipotent positive semigroup} and denoted $U_{\Theta}^{\mathrm{opp},> 0}$. 
\end{definition}

It is clear that $U_{\Theta}^{> 0}$ is a semigroup (cf.\
Lemma~\ref{lemm:semigroups} below), but this definition 
gives 
little information about the structure of the positive
semigroup. 
Already in Lusztig's work, the positive unipotent
semigroup is not introduced in this way, but is defined given explicit
parametrizations of a subset~$U^{>0}$ of~$U$, which is then shown to be a
semigroup and to agree with the interior of $U^{\geq 0}$. We will give explicit
parametrizations 
of the unipotent positive semigroup $U_{\Theta}^{> 0}$ in Section~\ref{sec:positive_semigroup}. 

In the case of Hermitian Lie groups of tupe type the situation simplifies
drastically, because then $\Theta = \{\alpha_\Theta\}$, $U_\Theta^{\geq 0} =
\exp(c_{\alpha_\Theta})$, and thus $U_\Theta^{>0} =
\exp(\mathring{c}_{\alpha_\Theta})$. 

Similarly to the nonnegative semigroup in~$G$, there is also a 
positive semigroup in~$G$. 

\begin{definition}\label{def:positive_in_G}
  The interior of~$G^{\geq 0}_\Theta$  is called the \emph{positive semigroup}
  and denoted~$G^{> 0}_\Theta$.\index{$G^{> 0}_\Theta$ the positive semigroup
    in $G$} 
  \end{definition}

In the case when $G = \mathrm{SL}(n,\R)$ the semigroup $G^{\geq 0}_\Delta$
(respectively $G_{\Delta}^{>0}$) is precisely the semigroup of nonnegative
(respectively totally positive) matrices.  A general geometric description of
the semigroups~$G^{\geq 0}_\Theta $ and~$G_\Theta^{>0}$ is not known and
should be the subject of further investigation. In this
article we focus on the nonnegative and positive unipotent semigroups. The nonnegative and
positive semigroups $G^{\geq 0}_\Theta$, respectively $G_\Theta^{>0}$, will be
further investigated in \cite{GW_pos_2} where it is shown that the positive
semigroup $G_\Theta^{>0}$ 
is the direct product
$U^{\mathrm{opp},>
  0}_{\Theta}\cdot L_{\Theta}^{\circ}\cdot U_{\Theta}^{> 0}$, and further
geometric properties of its elements are established, in particular
\cite[Conjecture~4.10]{GW_ECM}, stating that an element of $G_{\Theta}^{>0}$
acts proximally on~$\mathsf{F}_\Theta$, will be proved there. 

\subsection{Some facts about topological semigroups}
\label{sec:some-facts-about}

Let us note that semigroups contained in a topological group satisfy a number of immediate
properties:

\begin{lemma}
  \label{lemm:semigroups}
  Let~$G$ be a topological group. Let~$U$ be a semigroup contained
  in~$G$.\index{$G$!A topological group (in Section~\ref{sec:some-facts-about})}
  \begin{enumerate}[leftmargin=*]
  \item \label{item:1:lemm:semigroups:clos-inter} The closure $\overline{U}$
    and the interior $\mathring{U}$ are also semigroups.
  \item \label{item:4:lemm:semigroups-equiv} Let~$\ell$ be in~$G$ such that
    $\ell U \ell^{-1} =U$, then $\ell \overline{U} \ell^{-1} =\overline{U}$
    and  $\ell \mathring{U} \ell^{-1} =\mathring{U}$.
  \item \label{item:2:lemm:semigroups}  If $U$~is open, the following inclusions hold
    $\overline{U} {U} \subset {U}$ and
    $ {U} \overline{U} \subset {U}$. In the case when furthermore $\overline{U}$
    contains the neutral element, equalities hold: $\overline{U} {U} = {U}= {U} \overline{U}$.
  \item \label{item:3:lemm:semigroups-inter-of-closure}
    If $U$~is open and if $e$~belongs to~$\overline{U}$, then
    $U$~is equal to the interior of~$\overline{U}$.
  \end{enumerate}
\end{lemma}
\begin{proof}
  Points~(\ref{item:1:lemm:semigroups:clos-inter})
  and~(\ref{item:4:lemm:semigroups-equiv}) are consequences of the continuity
  of the law in~$G$.

  We now assume that $U$~is open.
    
  Let~$u$ be in~$\overline{U}$ and $v$~be in~${U}$. There exists an
  neighborhood~$\mathcal{N}$ of~$e$ in~$G$ such that
  $\mathcal{N}v\subset {U}$. Since $\mathcal{N}^{-1}$ is also a
  neighborhood of~$e$, $u\mathcal{N}^{-1}$ is a neighborhood of~$u$ and
  contains an element~$u'$ of $U$. The element~$v'$ of~$G$ such that $uv=u'v'$
  belongs to $\mathcal{N}v$ and hence to~$U$. As $U$~is a semigroup, $u'v'$~is
  in~$U$ and thus $uv$~is in~$U$. 
  This proves $\overline{U}U\subset U$ and similarly  $U\overline{U}\subset U$ follows.
  When $e$~belongs to~$\overline{U}$, the
  inclusions $U\subset \overline{U}U$ and $U\subset U \overline{U}$ are
  obvious. This proves point~(\ref{item:2:lemm:semigroups}).

  Suppose that $e\in \overline{U}$.
  Let~$V$ be the interior of~$\overline{U}$. Since $U$~is open, one has
  $U\subset V$. Let~$v$ be in~$V$.  Since $V$~is open, there is a neighborhood~$\mathcal{N}$
  of~$e$ such that $v\mathcal{N}\subset V$. By the
  assumption $e\in \overline{U}$, $U\cap \mathcal{N}^{-1}$ is not empty and
  let~$g$ be an element in this intersection. Then $vg^{-1}$ belongs to~$V$
  and thus to $\overline{U}$; therefore $v=(vg^{-1})g$ belongs to
  $\overline{U}U=U$. The equality $V=U$ is proved.
\end{proof}

\subsection{Hermitian Lie groups not of tube type}
When $G$~is a Hermitian Lie group that is not of tube type we saw that $G$~does not admit a positive structure. 
However,  
we can consider the simple root~$\alpha$ such that $2\alpha$ is a root. There is an  $L_{\Theta}^{\circ}$-invariant cone $c_{2\alpha}$ in $\mathfrak{u}_{2\alpha}$. We could thus consider the semigroups given by $\exp(c_{2\alpha})$, respectively $\exp(\mathring{c}_{2\alpha})$. These semigroups are  contained in the maximal subgroup of tube type in~$G$, and thus the discussion reduces to the case of Hermitian Lie groups of tube type. 
However, several of the properties of the positive semigroups $U_\Theta^{>0}$ established in Section~\ref{sec:positive_semigroup} in fact do not hold for the semigroup $\exp(\mathring{c}_{2\alpha})$ in Hermitian Lie groups that are not of tube type.

\section{The invariant cones} 
\label{sec:explicit-description}
In this section we fix a pair $(G,\Theta)$ such that
$G$ is a simple Lie group carrying a $\Theta$-positive structure and analyze
the invariant cones in $\mathfrak{u}_\alpha$ ($ \alpha \in \Theta$) in more
detail. 
In particular we introduce the notion of a $\Theta$-system, that will be used
in Section~\ref{sec:split-group-type} to generate a split subalgebra
of~$\mathfrak{g}$ related with the positive structure. 

\subsection{The description of the invariant cones}
\label{sec:class-symm-cones}
For every~$\alpha$ in $\Theta$, 
$\mathfrak{u}_\alpha$~contains
an $L_{\Theta}^\circ$-invariant closed convex 
cone, which we denote by $c_\alpha$. 
This cone is unique up to the action of~$-\id$ (cf.\ Proposition~\ref{prop:cone-semisimple}.(\ref{item2:prop:cone-semisimple})).
The interior of the cone $c_{\alpha} \subset
\mathfrak{u}_{\alpha}$ is denoted by~$\mathring{c}_\alpha$. We now describe these cones
explicitly.

When $\u_\alpha \cong \R$ (this happens for all~$\alpha$ in the case when
$\Theta=\Delta$ and for all~$\alpha\neq \alpha_\Theta$ in the case $\Theta\neq
\Delta$), the cone $c_\alpha$ can be identified with the cone $\R_{\geq 0} \subset \R$. 
Thus we now focus on the cone in $\mathfrak{u}_{\alpha_\Theta}$ (and
$\Theta\neq \Delta$).

Let us start with the case when $G$~is Hermitian of tube type. This means that $G$~is (up to
isogeny) $\Sp(2n, \R)$, $\SU(n,n)$, $\SO^*(4n)$, $\SO(2,q)$, or the exceptional group
$\lietype{E}_{7(-25)}$. The $3$~first families of classical groups can be uniformly described: 
Let~$\K$ be either~$\R$, $\C$, or~$\HH$, the space $V=\K^{2n}$ is a right
vector space and is endowed with a anti-Hermitian form defined
by\index{$\omega_\K$ the standard anti-Hermitian form on $\K^{2n}$ ($\K=\R$,
  $\C$, or $\HH$)}
\[
  \omega_\K(v,v')= \sum_{i=1}^{n} \bar{x}_i y_{i}^{\prime} - \bar{y}_i x_{i}^{\prime}
\]
with $v=(x_1, \dots, x_n, y_1, \dots, y_n)$ and $v'=(x'_1, \dots, x'_n, y'_1, \dots, 
y'_n)$. The subgroup~$G$ in $\GL_\K(V)$ of automorphisms of~$V$
stabilizing~$\omega_\K$ is Hermitian of tube type; it is $\Sp(2n,\R)$,
$\SU(n,n)$, or $\SO^*(4n)$ following that $\K$~is~$\R$, $\C$, or~$\HH$
respectively. 
The Levi factor~$L_\Theta$ in this case consists of the elements
\[ \ell_A =
  \begin{pmatrix}
    A & 0 \\ 0 & {}^t \overline{A}^{-1}
  \end{pmatrix}, \ A\in \GL_n(\K ),
\]
and the unipotent algebra $\mathfrak{u}_\Theta$ is equal to
$\mathfrak{u}_{\alpha_\Theta}$. It consists of the matrices
\[
u_M  =
  \begin{pmatrix}
    0 & M \\ 0 & 0
  \end{pmatrix}, \ M\in M_n(\K ) \text{ such that } {}^t\overline{M}=M;
\]
thus $\mathfrak{u}_{\alpha_\Theta}$~identifies with the space $\Herm(n,\K)$ of Hermitian matrices over~$\K$.
Since $ \ell_A u_M \ell_{A}^{-1} = u_{AM{}^t \overline{A}}$, we get that $\Herm^{\geq
  0}(n,\K)$ is the sought for $L_\Theta$-invariant cone.

For
$\SO(2, k+1)$, let us consider the quadratic form\index{$q_{2,k+1}$ a
  quadratic form of signature $(2,k+1)$}
$q_{2,k+1}= -x_1 x_{k+3}+ x_2 x_{k+2} -\sum_{j=3}^{k+1} x_{j}^{2}$ on
$V=\R^{k+3}$. The group $G=\SO(q_{2,k+1})$ is also an Hermitian Lie group of tube type. The Levi
factor~$L_\Theta$ consists of the matrices
\[
  \ell_{\lambda, g} =
  \begin{pmatrix}
    \lambda \det g& 0 & 0 \\ 0 & g & 0 \\ 0 & 0 & \lambda^{-1}
  \end{pmatrix},\ \lambda\in \R^*, \ g\in \OO(q_{1,k}),
\]
where $q_{1,k}$\index{$q_{1,k}$ a quadratic form of signature $(1,k)$} is given by restricting $q_{2,k+1}$ on the codimension~$2$
space defined by $x_1=x_{k+3}=0$. The unipotent algebra is composed by the
matrices
\[
u_v =
\begin{pmatrix}
  0 & {}^t v J & 0 \\ 0 & 0 & v \\ 0 & 0 & 0
\end{pmatrix}, \ v\in \R^{k+1},
\]
where $J$~is the matrix of the form~$q_{1,k}$. 
One has in this case $\ell_{\lambda,g} u_v \ell_{\lambda,g}^{-1} = u_{\lambda g v}$ and the cone
defined by $x_2>0$, $q_{1,k}>0$ is invariant by $L_{\Theta}^{\circ}$.

The exceptional Hermitian simple Lie group of tube type is $\lietype{E}_{7(-25)}$ (or
EVII in Cartan's notation). In this case, the semisimple part of the
group~$L_\Theta$ is $\lietype{E}IV =  \lietype{E}_{6(-26)}$ and the action of~$L_\Theta$ on
$\mathfrak{u}_{\alpha_\Theta}$ is the action of the group of isomorphisms of $
\Herm(3,\Oc)$, in it $\Herm^{\geq 0}(3,\Oc) $ is a natural invariant
cone (cf.\ \cite[p.~213]{Faraut_Koranyi}).

\smallskip

We now turn to the general case, i.e.\ when $\Theta\neq \Delta$ and the
cardinality of~$\Theta$ is $>1$.
The description of the cone $c_{\alpha_\Theta}$ reduces to the Hermitian
case by Proposition~\ref{prop:hermtian-tube-type}. 
Let us describe concretely what happens in those cases, namely
for orthogonal groups of forms of higher signatures and for the groups whose
system of restricted roots is~$\lietype{F}_4$.

When $G = \SO(p+1, p+k)$, then 
(using standard numbering for the simple roots)
$\alpha_\Theta = \alpha_{p} $, and $\u_{\alpha_\Theta}\cong
(\R^{1,k},q_{1,k})$ with $c_{\alpha_\Theta} =  \{ v \in \R^{1,k} \mid
v_1\geq 0, q_{1,k}(v)\geq 0\}$ ($q_{1,k}$ denotes here the standard quadratic form
of signature~$(1,k)$ on~$\R^{1+k}$). 

When $G$ is one group in the exceptional family whose system of restricted roots is of type~$\lietype{F}_4$, then 
$\mathfrak{u}_{\alpha_2}$ and the cone
$c_{\alpha_2}$ are given in Table~\ref{tab:cones-F4}.

\begin{table}[ht]
  \centering
 \begin{tabular}{llll} 
   $G$ & $\u_{\alpha_2}$ & $S_\Theta$ & $c_{\alpha_2}$  \\
   \hline
 $\operatorname{FI}=\operatorname{F}_{4(4)}$ & $\Sym(3,\R)$ & $\Sp(6,\R)$ & $\Sym^{\geq
                                                              0}(3,\R)$ {\rule{0pt}{2.6ex}}  \\ 
 $\operatorname{EII}=\lietype{E}_{6(2)}$ & $\Herm(3,\CC)$ & $\SU(3,3)$ & $\Herm^{\geq 0}(3,\CC)$ \\
$\operatorname{EVI}=\lietype{E}_{7(-5)}$ & $\Herm(3,\HH)$ & $ \SO^*(12)$ &$\Herm^{\geq 0}(3,\HH)$\\
$\operatorname{EIX}=\lietype{E}_{8(-24)}$ &$ \Herm(3,\Oc)$ & $\lietype{E}_{6(-26)}$ & $\Herm^{\geq 0}(3,\Oc) $
\end{tabular}
  \caption{The nontrivial cone for groups in the exceptional family}\label{tab:cones-F4}
\end{table}

\subsection{Homogeneity}
\label{sec:homogeneity}
The explicit description of the invariant cones $\mathring{c}_\alpha$ for all
$\alpha \in \Theta$ also gives us further information on the action of
$L_{\Theta}^{\circ}$ on the product of cones $\prod_{\alpha\in \Theta}
\mathring{c}_\alpha$. Whereas the representation of $L_{\Theta}^{\circ}$ in
$\mathrm{GL}(\mathfrak{u}_\alpha)$ can have a nontrivial kernel, the kernel
of the representation of $L_{\Theta}^{\circ}$ in $\mathrm{GL}(\prod_{\alpha\in
  \Theta}\mathfrak{u}_\alpha)$ is reduced to the intersection of the center of~$G$ with~$L_{\Theta}^{\circ}$. Thus we get the following proposition.

\begin{proposition}
  \label{prop:homogeneity-Ltheta}
  Let, for each~$\alpha$ in~$\Theta$, $c_\alpha$ be an
  $L^{\circ}_{\Theta}$-invariant acute convex cone in~$\mathfrak{u}_\alpha$
  and let $\mathring{c}_\alpha$ be its interior. Then the diagonal action
  of~$L^{\circ}_{\Theta}$ on $\prod_{\alpha\in \Theta} \mathring{c}_\alpha$ is
  transitive and proper. Precisely the stabilizers are
  maximal compact subgroups of~$L_{\Theta}^{\circ}$, therfore $\prod_{\alpha\in
    \Theta} \mathring{c}_\alpha$ is a model for the Riemannian symmetric space associated
 with~$L_{\Theta}^{\circ}$. Furthermore an element in $\prod_{\alpha\in \Theta}
 {c}_\alpha \smallsetminus \prod_{\alpha\in \Theta} \mathring{c}_\alpha$ has a
 noncompact stabilizer in~$L^{\circ}_{\Theta}$.
\end{proposition}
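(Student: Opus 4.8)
The plan is to reduce the statement to Proposition~\ref{prop:cone-semisimple} applied factor by factor, and then to glue the factors together using the structure of the symmetric space of $L_\Theta^\circ$. First I would recall from Proposition~\ref{prop:cone-semisimple} (together with Proposition~\ref{prop:cone-reductive}) that for each individual $\alpha \in \Theta$ the set $\mathring{c}_\alpha \cup (-\mathring{c}_\alpha)$ is exactly the set of vectors in $\mathfrak{u}_\alpha$ whose stabilizer in $S_\Theta^\circ$ (equivalently, by Proposition~\ref{prop:cone-reductive}\eqref{item:2:prop:cone-reductive}, in $L_\Theta^\circ$) is a maximal compact subgroup; moreover $L_\Theta^\circ$ acts transitively on $\mathring{c}_\alpha$ and the action is proper with maximal compact stabilizers. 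So each factor $\mathring{c}_\alpha$ is already a model of the symmetric space $X = L_\Theta^\circ / K_\Theta$, where $K_\Theta$ is a maximal compact subgroup. Fix a basepoint $v_\alpha \in \mathring{c}_\alpha$ for each $\alpha$; the claim to prove is that the $v_\alpha$ can be chosen so that $\bigcap_{\alpha \in \Theta} \operatorname{Stab}_{L_\Theta^\circ}(v_\alpha)$ is a single maximal compact subgroup, and that the diagonal orbit of $(v_\alpha)_{\alpha}$ is all of $\prod_\alpha \mathring{c}_\alpha$.

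The key point is a \emph{coherent choice of basepoints}. Since all the stabilizers $\operatorname{Stab}(v_\alpha)$ are maximal compact subgroups of $L_\Theta^\circ$ and any two maximal compact subgroups are conjugate, I would fix one maximal compact $K_\Theta < L_\Theta^\circ$ (for instance the one attached to the Cartan involution $\tau$ of the excerpt, whose Lie algebra is $\mathfrak{k}_\Theta = \mathfrak{l}_\Theta \cap \mathfrak{k}$ as in Equation~\eqref{eq:max-compact-S_J}) and choose, in each $\mathring{c}_\alpha$, the unique vector $v_\alpha$ fixed by $K_\Theta$. Uniqueness up to scaling holds because the $K_\Theta$-fixed subspace of the irreducible $L_\Theta^\circ$-module $\mathfrak{u}_\alpha$ meets $\mathring{c}_\alpha$ in a ray: this is exactly the content of the last sentence of Proposition~\ref{prop:cone-semisimple} — $K_\Theta$-fixed vectors are precisely the vectors of $\mathring{c}_\alpha \cup (-\mathring{c}_\alpha)$ with that stabilizer, so the $K_\Theta$-fixed set inside $\mathring{c}_\alpha$ is the orbit of a point under the centralizer $Z(L_\Theta^\circ)^\circ$ acting by positive homotheties, i.e.\ a ray. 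With this choice $K_\Theta$ fixes the point $p_0 = (v_\alpha)_{\alpha \in \Theta} \in \prod_\alpha \mathring{c}_\alpha$, so $K_\Theta \subset \operatorname{Stab}_{L_\Theta^\circ}(p_0)$; but $\operatorname{Stab}(p_0) \subset \operatorname{Stab}(v_{\alpha_0})$ for any fixed $\alpha_0$, which is a maximal compact subgroup, and $K_\Theta$ is already maximal compact, forcing $\operatorname{Stab}(p_0) = K_\Theta$. This gives properness of the diagonal action (stabilizers are compact, and the action factors through a proper action since each factor is proper) and pins down the stabilizer as maximal compact.

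For transitivity I would argue as follows. Given an arbitrary point $(w_\alpha)_\alpha \in \prod_\alpha \mathring{c}_\alpha$, pick the fixed index $\alpha_0$; by transitivity of $L_\Theta^\circ$ on the single factor $\mathring{c}_{\alpha_0}$ there is $g \in L_\Theta^\circ$ with $g \cdot w_{\alpha_0} = v_{\alpha_0}$, so after translating we may assume $w_{\alpha_0} = v_{\alpha_0}$; then the residual freedom is the action of $\operatorname{Stab}(v_{\alpha_0})$, a maximal compact subgroup $K'$. Now for each other $\alpha$ the stabilizer of $w_\alpha$ in $L_\Theta^\circ$ is some maximal compact subgroup; I need to find one element of $K'$ moving the whole tuple $(w_\alpha)_{\alpha \neq \alpha_0}$ to $(v_\alpha)_{\alpha \neq \alpha_0}$. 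This is where the classification from Theorem~\ref{thm:classification} is genuinely convenient and I expect it to be the main point to handle carefully: in each of the four families $L_\Theta^\circ$ is explicit (a product of general-linear-type groups over $\R$, $\CC$, $\HH$, $\Oc$ together with a central torus), the modules $\mathfrak{u}_\alpha$ and cones $c_\alpha$ are the explicit ones tabulated in Section~\ref{sec:explicit-description} (symmetric/Hermitian positive-definite matrices, or the forward light cone), and for these it is classical and elementary that $\operatorname{GL}$ acts transitively on the positive cone with orthogonal-group stabilizer, and that the orthogonal group in turn acts transitively on the positive cone of the \emph{other} factors (all of which are cones of the same dimension and type, being different irreducible summands of $\mathfrak{u}_\Theta$ for the \emph{same} Levi). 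Concretely: $\prod_\alpha \mathring{c}_\alpha$ is a product of copies of the same Riemannian symmetric space $X$ on which $L_\Theta^\circ$ acts diagonally, the diagonal is totally geodesic, and the isometry group acts transitively on it — so $\prod_\alpha \mathring{c}_\alpha$ with the diagonal action is the symmetric space $X = L_\Theta^\circ / K_\Theta$ itself. I would then remark, as the statement invites, that this can alternatively be proved uniformly without the classification: $\mathfrak{u}_\alpha$ carries an $L_\Theta^\circ$-invariant function (the "determinant-like" relative invariant, or the restriction of a suitable power of $B$) whose level sets inside $\mathring{c}_\alpha$ are $S_\Theta^\circ$-homogeneous, and comparing these across $\alpha$ via the common Cartan subspace $\mathfrak{a}_\Theta$ and the lowest-weight description $\mathfrak{u}_\alpha \supset \mathfrak{g}_\alpha$ from the proof of Theorem~\ref{thm:classification} identifies all the factor symmetric spaces compatibly; but I would only sketch this alternative, since the explicit computation already finishes the proof.
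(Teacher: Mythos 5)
Your argument rests on a false premise stated at the outset: that each individual factor $\mathring{c}_\alpha$ is a model of the symmetric space $L_\Theta^\circ/K_\Theta$, i.e.\ that the stabilizer in $L_\Theta^\circ$ of a single vector $v_\alpha\in\mathring{c}_\alpha$ is a maximal compact subgroup. This holds only when $\sharp\Theta=1$ (the Hermitian tube-type case). Take $G=\SO(p+1,p+k)$ with $p\geq 2$ and $\alpha=\alpha_1$: there $\mathfrak{u}_{\alpha_1}=\mathfrak{g}_{\alpha_1}\cong\R$ is a one-dimensional representation, so $S_\Theta^\circ$ acts trivially on it and the stabilizer of a point of $\mathring{c}_{\alpha_1}\cong\R_{>0}$ is a closed codimension-one subgroup of $L_\Theta^\circ$ containing all of the non-compact $S_\Theta^\circ$ --- nothing like a maximal compact. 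Proposition~\ref{prop:cone-semisimple} only says that the stabilizer of a point of $\mathring{c}_{\alpha_\Theta}$ \emph{in the semisimple part} is a maximal compact subgroup \emph{of $S_\Theta^\circ$}; the central torus $\exp(\mathfrak{t}_\Theta)$, of dimension $\sharp\Theta$, contributes a non-compact $(\sharp\Theta-1)$-dimensional kernel to the stabilizer of any single component. Everything downstream then breaks: the sandwich $K_\Theta\subset\operatorname{Stab}(p_0)\subset\operatorname{Stab}(v_{\alpha_0})$ no longer pins down $\operatorname{Stab}(p_0)$, and the transitivity step is self-defeating, since a compact group $K'$ cannot act transitively on the non-compact, positive-dimensional space $\prod_{\alpha\neq\alpha_0}\mathring{c}_\alpha$ (indeed $\OO(n)$ does not act transitively on $\Sym^{>0}(n,\R)$, nor on $\R_{>0}$). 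The cones for distinct $\alpha$ are also not ``of the same dimension and type'' ($\dim\mathring{c}_{\alpha_1}=1$ versus $\dim\mathring{c}_{\alpha_p}=k+1$ in the orthogonal family), and a product of several copies of a symmetric space $X$ with the diagonal action is never $X$ itself.

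The mechanism that makes the proposition true is different, and it explains why the whole tuple --- not any single component --- must be used. Transitivity combines two ingredients: $\exp(\mathfrak{t}_\Theta)$ rescales the $\sharp\Theta$ factors independently, because the restrictions $\{\alpha|_{\mathfrak{t}_\Theta}\}_{\alpha\in\Theta}$ form a basis of $\mathfrak{t}_\Theta^*$; and $S_\Theta^\circ$ acts transitively on a level set of $\mathring{c}_{\alpha_\Theta}$ while acting trivially on the one-dimensional factors. A dimension count (one checks $\dim L_\Theta^\circ/K_\Theta = (\sharp\Theta-1)+\dim\mathfrak{u}_{\alpha_\Theta}$) shows the resulting orbit is open, and properness forces it to be everything. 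Compactness of the stabilizer of the tuple $(E_\alpha)_{\alpha\in\Theta}$ is where Theorem~\ref{thm:one-element-in-the-cone} enters: it exhibits $\mathfrak{k}_\Theta$ inside the stabilizer of $E_{\alpha_\Theta}$, and fixing \emph{all} components simultaneously kills the central and split directions. The paper itself writes none of this out --- it observes that the statement is read off the explicit tables of cones coming from the classification --- so your instinct to fall back on the explicit description at the end was sound, but the general-position argument you wrapped around it does not work.
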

\begin{proof}
  When $\Theta=\Delta$ (and we are thus in the split case), we have that
  $L_{\Theta}^{\circ}\simeq \R_{>0}^{\Theta}$, $\prod_{\alpha\in \Theta}
  \mathring{c}_\alpha \simeq \R_{>0}^{\Theta}$, and the action is given by
  coordinate-wise multiplication. The action of $L_{\Theta}^{\circ}$ on
  $\prod_{\alpha\in \Theta} \mathring{c}_\alpha$ is therefore simply
  transitive. If $\mathbf{v}=(v_\alpha)_{\alpha\in\Theta}$ belongs to $\prod_{\alpha\in \Theta}
 {c}_\alpha \smallsetminus \prod_{\alpha\in \Theta} \mathring{c}_\alpha$, it
 means that one coordinate $v_\alpha$ of~$\mathbf{v}$ is zero and the
 $\alpha$-th factor of the group $\R_{>0}^{\Theta} \simeq L_{\Theta}^{\circ}$
 is contained in the stabilizer of~$\mathbf{v}$ which is therefore not compact.
 \smallskip
 
 Let us investigate now the case $\Theta\neq \Delta$. The Lie
 algebra~$\mathfrak{l}_\Theta$ of~$L_\Theta$ is the direct sum $\mathfrak{z}_\Theta\oplus
 \mathfrak{s}_\Theta$ of its center and its semisimple factor. Furthermore
 $\mathfrak{z}_\Theta$ is the direct sum of a compact factor
 $\mathfrak{m}_\Theta$ and the split factor $\mathfrak{t}_\Theta$ (cf.\ Section~\ref{sec:acti-l-theta-on-u-theta}). The map
 $X\mapsto (\alpha(X))_{\alpha\in \Theta}$ is an isomorphism
 from~$\mathfrak{t}_\Theta$ to~$\R^\Theta$. Furthermore a
 Cartan subspace for $\mathfrak{s}_\Theta$ is $\mathfrak{a}_\Theta =
 \bigoplus_{\alpha\in \Delta\setminus \Theta} \R H_\alpha$.

 The same decomposition holds for the Levi factor
 $\mathfrak{l}_{\Theta\smallsetminus \{\alpha_\Theta\}}$ and its semisimple
 factor $\mathfrak{s}_{\Theta\smallsetminus \{\alpha_\Theta\}}$. Therefore
 $\mathfrak{t}_{\Theta\smallsetminus\{ \alpha_\Theta\}}$ is contained in
 $\mathfrak{t}_\Theta$. Furthermore $\mathfrak{j}= \mathfrak{l}_\Theta \cap
 \mathfrak{s}_{\Theta\smallsetminus \{\alpha_\Theta\}}$ is the Levi factor
 in $\mathfrak{s}_{\Theta\smallsetminus \{\alpha_\Theta\}}$ corresponding to
 $\{\alpha_\Theta\}$ (Proposition~\ref{prop:hermtian-tube-type}).

 We therefore get that, up to a compact factor,
 $\mathfrak{l}_{\Theta\smallsetminus \{\alpha_\Theta\}}$ is the direct sum of
 $\mathfrak{t}_{\Theta\smallsetminus \{\alpha_\Theta\}}$ and
 of~$\mathfrak{j}$. Let us denote
 by~$T_{\Theta\smallsetminus\{\alpha_\Theta\}}$ and~$J$ the corresponding
 connected Lie groups. From the construction (and by a similar argument as in
 the split case) we get that $T_{\Theta\smallsetminus\{\alpha_\Theta\}}$ acts
 simply transitively on $\prod_{\alpha\in \Theta\setminus \{\alpha_\Theta\}}
 \mathring{c}_\alpha$ and acts trivially on
 $\mathring{c}_{\alpha_\Theta}$. Similarly $J$~acts trivially on
 $\prod_{\alpha\in \Theta\setminus \{\alpha_\Theta\}} \mathring{c}_\alpha$ and
 it acts transitively on $\mathring{c}_{\alpha_\Theta}$ with stabilizers being
 compact maximal subgroups. This implies the result for the action
 of~$L_\Theta$ on $\prod_{\alpha\in \Theta} \mathring{c}_\alpha$.

 Now let  $\mathbf{v}=(v_\alpha)_{\alpha\in\Theta}$ belongs to $\prod_{\alpha\in \Theta}
 {c}_\alpha \smallsetminus \prod_{\alpha\in \Theta} \mathring{c}_\alpha$. Either
 there is $\alpha\neq \alpha_\Theta$ such that $v_\alpha$ is zero and the
 stabilizer of~$\mathbf{v}$ in $T_{\Theta\smallsetminus\{\alpha_\Theta\}} $ is
 not compact, or $v_{\alpha_\Theta}$ belongs to $
 c_{\alpha_\Theta}\smallsetminus  \mathring{c}_{\alpha_\Theta}$
 and the stabilizer of~$\mathbf{v}$ in~$J$ is not compact
 (Proposition~\ref{prop:cone-semisimple}). In all cases, the stabilizer
 of~$\mathbf{v}$ in $L^{\circ}_{\Theta}$ is not compact.
\end{proof}

\subsection{The group of automorphisms of~$\mathfrak{g}$}
\label{sec:group-autom-mathfr}

In order to have stronger (and more natural) transitivity results for the
action on the cones (Proposition~\ref{prop:homog-under-aut} below), we now consider $\Aut(\mathfrak{g}) \subset \GL( \mathfrak{g})$ the group of
automorphisms of the Lie algebra~$\mathfrak{g}$.\index{$\Aut(\mathfrak{g})$ the group of
automorphisms of~$\mathfrak{g}$} It is a Lie group, that is
not necessarily connected, and whose Lie algebra is equal to~$\mathfrak{g}$
since $\mathfrak{g}$ is semisimple. The adjoint action induces a continuous
homomorphism $G\to \Aut(\mathfrak{g})$; this homomorphism has finite kernel
and its image is $\Inn(\mathfrak{g})$ the connected component of the identity in~$\Aut(\mathfrak{g})$.

By construction, the group $\Aut( \mathfrak{g})$ acts on the Lie
algebra~$\mathfrak{g}$ and also on the subalgebras of~$\mathfrak{g}$; this
action is orthogonal with respect to the Killing form. Automorphisms
of~$\mathfrak{g}$ act on the space of maximal compact subalgebras as
well as on the space of Cartan subspaces.

Since the action of $\Inn(\mathfrak{g})$ is transitive on the Weyl
chambers
(cf.\ \cite[Theorem~6.51]{Knapp_LieGrp}), one can
associate to any element~$g$ of $\Aut(\mathfrak{g})$ an automorphism~$\phi_g$ of the Dynkin
diagram~$D$ of~$\mathfrak{g}$. These combine to an homomorphism $\Aut(
\mathfrak{g})\to \Aut(D)$. Let~$g$ be in $\Aut( \mathfrak{g})$, 
for every~$\Theta$
contained in~$\Delta$ and for every~$\mathfrak{p}$ in~$\mathsf{F}_\Theta$, \ie
$\mathfrak{p}$~is a parabolic algebra of type~$\Theta$, the
image $g(\mathfrak{p})$ is a parabolic subalgebra of type~$\phi_g( \Theta)$.

The kernel $\Aut_1( \mathfrak{g})$ of this homomorphism\index{$\Aut_1(
  \mathfrak{g})$ the subgroup of $\Aut( \mathfrak{g})$ of elements acting
  trivially on the Dynkin diagram} 

contains $\Inn(\mathfrak{g})$; it acts on
the flag variety~$\mathsf{F}_\Theta$ since, for every~$g$ in $\Aut_1(
\mathfrak{g})$ and every~$\mathfrak{p}$ in~$\mathsf{F}_\Theta$,
$g(\mathfrak{p})$ belongs to $\mathsf{F}_{\phi_g(\Theta)} = \mathsf{F}_\Theta$.

The actions of~$G$ and of $\Aut_1( \mathfrak{g})$ on~$\mathsf{F}_\Theta$ are of
course related by the homomorphism $G\to \Aut_1(\mathfrak{g})$. 

\subsection{Homogeneity under the group of automorphisms of $\g$}
\label{sec:homog-under-group}
We prove here that the choices of the cones~$c_\alpha$ in
$\mathfrak{u}_\alpha$ ($\alpha\in\Theta$) are unimportant up to the action of
the group $\Aut_1( \mathfrak{g})$. Precisely, we have
\begin{proposition}
  \label{prop:homog-under-aut}
  For every $\{ \varepsilon_\alpha\}_{\alpha\in
    \Theta}$ in $\{ \pm 1\}^\Theta$, there is an element~$g$ in~$\Aut_1( \g)$
  such that
  \begin{itemize}[leftmargin=*]
      \item $g$ preserves every factor in the decomposition $\mathfrak{g}= \mathfrak{g}_0\oplus
    \bigoplus_{\alpha\in \Sigma} \mathfrak{g}_\alpha$, in particular it
    stabilizes~$\mathfrak{u}_\alpha$ for every~$\alpha$ in~$\Theta$;
  \item for every~$\alpha$ in~$\Theta$, the restriction $g|_{
      \mathfrak{u}_\alpha}$ is $\varepsilon_\alpha \id$.
  \end{itemize}
  One can even choose~$g$ such that
  \begin{itemize}[leftmargin=*]
  \item the restriction $g|_{\mathfrak{g}_0}$ is $+\id$, and
  \item for every~$\alpha$ in~$\Theta$, there restriction $g|_{
      \mathfrak{u}_{-\alpha}}$ is $\varepsilon_\alpha \id$.
  \end{itemize}
\end{proposition}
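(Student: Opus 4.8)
The plan is to construct the element $g \in \Aut(\g)$ explicitly using the Cartan subspace $\mathfrak{a}$ and the exponential map, exploiting the fact that the cones $c_\alpha$ are determined only up to the action of $-\id$. First I would reduce to producing, for each choice of signs, an automorphism that fixes $\g_0$ pointwise and multiplies each $\mathfrak{u}_{\pm\alpha}$ ($\alpha\in\Theta$) by the prescribed sign $\varepsilon_\alpha$. The natural candidates are the adjoint actions of elements $\exp(i\pi H)$ for suitable $H$; more precisely, recall that for $H \in \mathfrak{a}$ the automorphism $\Ad(\exp(i\pi H))$ of the complexification $\g_{\CC}$ acts on each root space $\g_\gamma$ by the scalar $e^{i\pi\gamma(H)}$, and this restricts to a (real) automorphism of $\g$ provided $\gamma(H) \in \ZZ$ for all $\gamma \in \Sigma$. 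Thus I seek $H$ in the coweight lattice $\{H \in \mathfrak{a} \mid \gamma(H)\in\ZZ\ \forall\gamma\in\Sigma\}$ such that $(-1)^{\gamma(H)} = \varepsilon_\alpha$ whenever $\gamma|_{\mathfrak{t}_\Theta} = \alpha|_{\mathfrak{t}_\Theta}$ for some $\alpha\in\Theta$, and $(-1)^{\gamma(H)} = 1$ whenever $\gamma \in \Span(\Delta\setminus\Theta)$ (so that $\g_0$ together with $\mathfrak{l}_\Theta$'s root spaces are fixed).

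The key computation is to verify that such an $H$ exists. Here I would use the fundamental coweights: let $\varpi_\alpha^\vee \in \mathfrak{a}$ ($\alpha\in\Delta$) be the dual basis to the simple roots, i.e. $\alpha_i(\varpi_{\alpha_j}^\vee) = \delta_{ij}$. A general root $\gamma = \sum_{\alpha\in\Delta} n_\alpha(\gamma)\,\alpha$ then satisfies $\gamma(\varpi_\alpha^\vee) = n_\alpha(\gamma)$. The point established in the classification discussion above is that for $\alpha\in\Theta$, every root $\gamma$ with $\gamma|_{\mathfrak{t}_\Theta} = \alpha|_{\mathfrak{t}_\Theta}$ has $n_\alpha(\gamma) = 1$ and differs from $\alpha$ only by elements of $\Delta\setminus\Theta$ (with nonnegative coefficients); conversely $n_\alpha(\gamma) = 0$ for roots in $\Span(\Delta\setminus\Theta)$. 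So taking $H = \sum_{\alpha\in\Theta}\, \tfrac{1-\varepsilon_\alpha}{2}\,\varpi_\alpha^\vee$ — a sum of fundamental coweights with coefficients in $\{0,1\}$ — we get $\gamma(H) = \sum_{\alpha\in\Theta}\tfrac{1-\varepsilon_\alpha}{2}\,n_\alpha(\gamma)$. For $\gamma\in\Sigma^+_\Theta$ with $\gamma|_{\mathfrak{t}_\Theta}=\alpha|_{\mathfrak{t}_\Theta}$ this is $\tfrac{1-\varepsilon_\alpha}{2}$ modulo contributions from other $\alpha'\in\Theta$ with $n_{\alpha'}(\gamma)\neq 0$; one has to check that actually $n_{\alpha'}(\gamma) = 0$ for all $\alpha'\in\Theta$, $\alpha'\neq\alpha$. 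This is exactly the statement that a root congruent to $\alpha$ mod $\mathfrak{t}_\Theta$ lies in $\alpha + \Span(\Delta\setminus\Theta)$, which follows from the fact (used in the proof of Theorem~\ref{thm:classification}) that $\mathfrak{u}_\alpha$ is the weight space for $\alpha|_{\mathfrak{t}_\Theta}$ and its weights under $\mathfrak{a}_\Theta$ all have first-coordinate-in-$\Theta$ equal to $\alpha$ alone.

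Fundamental coweights need not lie in $\mathfrak{a}$ itself if $\Span(\Delta\setminus\Theta)\neq 0$ — rather I should use $H = \sum_{\alpha\in\Theta}\tfrac{1-\varepsilon_\alpha}{2}\,\varpi_\alpha^\vee$ where $\varpi_\alpha^\vee$ is characterized by $\beta(\varpi_\alpha^\vee)=\delta_{\alpha\beta}$ for $\beta\in\Delta$ (this determines it uniquely in $\mathfrak{a}$, since $\Delta$ is a basis of $\mathfrak{a}^*$). Then $\gamma(H)$ is an integer for every $\gamma\in\Sigma$ because $\gamma(H) = \sum_{\alpha\in\Theta}\tfrac{1-\varepsilon_\alpha}{2}\, n_\alpha(\gamma) \in\ZZ$, so $\Ad(\exp(i\pi H))$ is well-defined as a real automorphism of $\g$; call it $g$. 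By construction $g$ acts trivially on $\g_0$ and on all $\g_{\pm\gamma}$ with $\gamma\in\Span(\Delta\setminus\Theta)$ (hence on $\mathfrak{l}_\Theta$), and it acts by $\prod_{\alpha\in\Theta}\varepsilon_\alpha^{n_\alpha(\gamma)}$ on $\g_\gamma$; for $\gamma$ with $\gamma|_{\mathfrak{t}_\Theta} = \alpha_0|_{\mathfrak{t}_\Theta}$ (some $\alpha_0\in\Theta$) this equals $\varepsilon_{\alpha_0}$ since $n_\alpha(\gamma) = \delta_{\alpha\alpha_0}$ for $\alpha\in\Theta$. Therefore $g|_{\mathfrak{u}_\alpha} = \varepsilon_\alpha\id$ and, applying the same to negative roots, $g|_{\mathfrak{u}_{-\alpha}} = \varepsilon_\alpha\id$. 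The main obstacle, and the one step that genuinely needs the structure of the pairs in Theorem~\ref{thm:classification}, is confirming $n_{\alpha'}(\gamma) = 0$ for $\alpha' \in \Theta\setminus\{\alpha_0\}$ whenever $\gamma|_{\mathfrak{t}_\Theta} = \alpha_0|_{\mathfrak{t}_\Theta}$: one checks that two distinct elements of $\Theta$ are never congruent modulo $\Span(\Delta\setminus\Theta)$ and that the highest root of each irreducible $\mathfrak{u}_{\alpha_0}$ still has $\alpha'$-coefficient zero — this is immediate from the weight-space description $\mathfrak{u}_{\alpha_0} = \bigoplus_{\gamma - \alpha_0\in\Span(\Delta\setminus\Theta)}\g_\gamma$ recalled just before Theorem~\ref{theo:kostant-results}.
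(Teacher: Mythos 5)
Your proof is correct and is essentially the paper's argument: both amount to choosing the sign character $\phi$ on the root lattice $\Span_{\Z}(\Delta)$ with $\phi(\alpha)=\varepsilon_\alpha$ for $\alpha\in\Theta$ and $\phi(\alpha)=1$ for $\alpha\in\Delta\setminus\Theta$, and letting it rescale each restricted root space. The paper defines the linear map $g|_{\mathfrak{g}_\gamma}=\phi(\gamma)\id$ directly and checks multiplicativity, whereas you realize the same map as $\exp(i\pi\ad H)$ with $H$ a $\{0,1\}$-combination of fundamental coweights; note that the step you single out as the ``main obstacle'' is immediate, since $\gamma-\alpha\in\Span(\Delta\setminus\Theta)$ forces $n_{\alpha'}(\gamma)=\delta_{\alpha'\alpha}$ for all $\alpha'\in\Theta$ because $\Delta$ is a basis of $\mathfrak{a}^*$.
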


\begin{proof}
  Let~$\phi$ be the homomorphism of the additive group $\Span_\Z
  (\Delta)$ to the multiplicative group $\{\pm 1\}$, defined on the
  basis~$\Delta$ by
  \begin{itemize}
  \item $\phi( \alpha)= \varepsilon_\alpha$ for all~$\alpha$ in~$\Theta$ and
    $\phi(\alpha)=1$ for all~$\alpha$ in $\Delta\setminus \Theta$.
  \end{itemize}
  Define now~$g$  by $g|_{\mathfrak{g}_\gamma} =
  \phi(\gamma) \id$ for every $\gamma$ in $\Sigma \cup \{0\}$. Thus
  $g$~belongs to $\GL(\mathfrak{g})$.
  \begin{enumerate}[leftmargin=*]
  \item\label{item:1:proof-prop-homo-Aut} The relations $\phi(\gamma + \gamma') = \phi(\gamma)
    \phi(\gamma')$, for all $\gamma$ and $\gamma'$ in
    $\Sigma\cup \{ 0\}$,  imply that $g$~is an automorphism of
    $\mathfrak{g}$.
  \item \label{item:2:proof-prop-homo-Aut} One has $\phi(0) = 1$ and,
    for every $\alpha\in \Theta$ and every $\gamma$ in $\Sigma$ that is
    congruent to $\pm \alpha$ modulo the span of $\Delta \setminus \Theta$,
    $\phi(\gamma) = \varepsilon_\alpha$ and therefore
    $g|_{\mathfrak{g}_0}=\id$ and $g|_{\pm
      \mathfrak{u}_\alpha} = \varepsilon_\alpha \id$.
  \end{enumerate}
                By construction $g$~preserves
  the root space decomposition; such an automorphism of~$\mathfrak{g}$
  automatically preserves the Cartan subspace and all the roots, \ie it acts as
  the identity on
  the Dynkin diagram and 
  belongs to $\Aut_1( \mathfrak{g})$. The automorphism~$g$ has hence all the wanted properties.
  \end{proof}

As a corollary of Propositions~\ref{prop:homogeneity-Ltheta} and~\ref{prop:homog-under-aut} we get 
\begin{corollary}
  \label{coro:homog-under-aut}
  The Levi factor of the parabolic subgroup of $\Aut_1( \mathfrak{g})$
  associated with~$\Theta$ acts transitively on $\prod_{\alpha\in\Theta}
  \bigl( \mathring{c}_\alpha  \cup -\mathring{c}_\alpha \bigr)$.
\end{corollary}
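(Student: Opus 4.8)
The plan is to derive the corollary directly from Propositions~\ref{prop:homogeneity-Ltheta} and~\ref{prop:homog-under-aut}. Write $\hat{G}=\Aut(\g)$, let $\hat{P}_\Theta$ and $\hat{P}^{\mathrm{opp}}_\Theta$ be the stabilizers in~$\hat{G}$ of $\mathfrak{p}_\Theta$ and of~$\mathfrak{p}^{\mathrm{opp}}_\Theta$, and let $\hat{L}_\Theta=\hat{P}_\Theta\cap\hat{P}^{\mathrm{opp}}_\Theta$ be the Levi factor appearing in the statement. Two membership facts are all that is needed. First, $\Ad(L_\Theta)$ normalizes both $\mathfrak{p}_\Theta$ and~$\mathfrak{p}^{\mathrm{opp}}_\Theta$, so $\Ad(L^{\circ}_\Theta)\subset\hat{L}_\Theta$. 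Second, for every choice of signs $\{\varepsilon_\alpha\}_{\alpha\in\Theta}\in\{\pm1\}^\Theta$ the automorphism~$g$ furnished by Proposition~\ref{prop:homog-under-aut} respects the decomposition $\g=\mathfrak{g}_0\oplus\bigoplus_{\gamma\in\Sigma}\mathfrak{g}_\gamma$, hence preserves $\mathfrak{u}_\Theta$ and~$\mathfrak{u}^{\mathrm{opp}}_\Theta$, hence normalizes $\mathfrak{p}_\Theta$ and~$\mathfrak{p}^{\mathrm{opp}}_\Theta$, so $g\in\hat{L}_\Theta$. The strategy is then to show that the (finitely generated) subgroup of $\hat{L}_\Theta$ spanned by $\Ad(L^{\circ}_\Theta)$ together with these sign automorphisms already acts transitively on $\prod_{\alpha\in\Theta}(\mathring{c}_\alpha\cup-\mathring{c}_\alpha)$; a fortiori so does the full Levi factor~$\hat{L}_\Theta$.

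Concretely, I would argue as follows. Fix $v=(v_\alpha)_{\alpha\in\Theta}$ and $w=(w_\alpha)_{\alpha\in\Theta}$ in $\prod_{\alpha\in\Theta}(\mathring{c}_\alpha\cup-\mathring{c}_\alpha)$. For each~$\alpha$ pick $\varepsilon_\alpha\in\{\pm1\}$ with $\varepsilon_\alpha v_\alpha\in\mathring{c}_\alpha$, and let $g\in\hat{L}_\Theta$ satisfy $g|_{\mathfrak{u}_\alpha}=\varepsilon_\alpha\id$ for all~$\alpha$ (Proposition~\ref{prop:homog-under-aut}). Since $g|_{\mathfrak{u}_\alpha}=\pm\id$, it maps $\mathring{c}_\alpha\cup-\mathring{c}_\alpha$ onto itself, and in fact $g\cdot v\in\prod_\alpha\mathring{c}_\alpha$; likewise choose $g'\in\hat{L}_\Theta$ with $g'\cdot w\in\prod_\alpha\mathring{c}_\alpha$. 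By Proposition~\ref{prop:homogeneity-Ltheta} the group $L^{\circ}_\Theta$ acts transitively, via $\Ad$ and preserving each factor, on $\prod_\alpha\mathring{c}_\alpha$, so there is $\ell\in L^{\circ}_\Theta$ with $\Ad(\ell)\cdot(g\cdot v)=g'\cdot w$. Then $(g')^{-1}\,\Ad(\ell)\,g$ lies in~$\hat{L}_\Theta$ and carries~$v$ to~$w$, which is the asserted transitivity.

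Finally, for the statement to concern a genuine action of~$\hat{L}_\Theta$ on $\prod_\alpha(\mathring{c}_\alpha\cup-\mathring{c}_\alpha)$ one should also check that $\hat{L}_\Theta$ preserves this set, most naturally viewed inside $\mathfrak{u}_\Theta/[\mathfrak{u}_\Theta,\mathfrak{u}_\Theta]\cong\bigoplus_{\alpha\in\Theta}\mathfrak{u}_\alpha$ (the isomorphism following from Theorem~\ref{theo:kostant-results}). This is routine: $\hat{L}_\Theta$ has Lie algebra $\mathfrak{p}_\Theta\cap\mathfrak{p}^{\mathrm{opp}}_\Theta=\mathfrak{l}_\Theta$, and since $\g$ is semisimple (so $\Ad$ is faithful) one gets $\hat{L}^{\circ}_\Theta=\Ad(L^{\circ}_\Theta)$; the summands $\mathfrak{u}_\alpha$ are pairwise non-isomorphic irreducible $L^{\circ}_\Theta$-modules, distinguished by their $\mathfrak{t}_\Theta$-weights $\alpha|_{\mathfrak{t}_\Theta}$, so any $h\in\hat{L}_\Theta$ (which normalizes $\hat{L}^{\circ}_\Theta$) merely permutes the $\mathfrak{u}_\alpha$, and $h(c_\alpha)$ is again an acute $\hat{L}^{\circ}_\Theta$-invariant convex cone, hence equals $\pm c_{\alpha'}$ for the image index~$\alpha'$ by uniqueness of such a cone up to sign (Proposition~\ref{prop:cone-semisimple}); thus $h$ permutes the factors of the product set. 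Beyond this bookkeeping there is essentially no obstacle — the corollary is a formal consequence of the two cited propositions, whose substantive inputs are exactly the transitivity of $L^{\circ}_\Theta$ on $\prod_\alpha\mathring{c}_\alpha$ and the availability of all sign automorphisms inside the Levi factor of~$\Aut(\g)$.
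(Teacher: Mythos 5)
Your argument is correct and is exactly the derivation the paper intends: the corollary is stated there without proof as an immediate consequence of Proposition~\ref{prop:homogeneity-Ltheta} and Proposition~\ref{prop:homog-under-aut}, and your composition $(g')^{-1}\,\Ad(\ell)\,g$ is the standard way to combine them. The extra verifications (that $\Ad(L^{\circ}_{\Theta})$ and the sign automorphisms lie in the Levi factor of $\Aut(\mathfrak{g})$, and that this Levi factor preserves the product of cones) are sound and merely make explicit what the paper leaves implicit.
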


\begin{remark}
  \label{rem:homog-under-group-L}
  It is an interesting question to determine the orbits of $L_\Theta$ on the
  product $\prod_{\alpha\in\Theta}
  \bigl( \mathring{c}_\alpha  \cup -\mathring{c}_\alpha \bigr)$. For the cases not corresponding to total positivity (i.e.\ when $\Theta \neq
  \Delta$), one has the following result:
  \begin{itemize}
  \item When $\sharp \Theta$ is even, the action of~$L_\Theta$ on
    $\prod_{\alpha\in\Theta} \bigl( \mathring{c}_\alpha \cup
    -\mathring{c}_\alpha \bigr)$ is transitive,
  \item When $\sharp \Theta$ is odd, this action has two orbits.
  \end{itemize}
\end{remark}

\subsection{Construction of \texorpdfstring{$\Theta$}{Θ}-systems}
\label{sec:putting-an-hand}
We explain in this paragraph an explicit construction of elements $E_\alpha$ in $\mathring{c}_\alpha \cup - \mathring{c}_\alpha$ from ``standard'' elements presenting the Lie
algebra~$\mathfrak{g}$. This will allow us to construct what we call a
$\Theta$-system (Definition~\ref{def:theta_base}).   We show in Section~\ref{sec:split-group-type}  that a $\Theta$-system generates 
a split real Lie subalgebra of $\mathfrak{g}$  and in Section~\ref{sec:sl2} that is determines a special three dimensional simple subalgebra  in $\mathfrak{g}$, that we call the $\Theta$-principal  $\mathfrak{sl}_2$. 

To start we consider the following: \index{$X_\alpha$ an element of
  $\mathfrak{g}_\alpha$ generating, together with $\tau(-X_\alpha)$, an
  $\mathfrak{sl}_2$-triple ($\alpha\in\Theta$), cf.\ also Section~\ref{sec:sl_2-triples}}
\begin{itemize}
\item for each~$\alpha$ in~$\Theta$, we choose once and for all an
  element~$X_\alpha$ in~$\mathfrak{g}_\alpha$ such that,  
  $(X_\alpha, \tau(- X_{\alpha}), [X_\alpha, \tau(-X_{\alpha})])$ is an $\mathfrak{sl}_2$-triple (cf.\ Section~\ref{sec:sl_2-triples}).
\end{itemize}
Recall that $X_\alpha$~must satisfy the relation $B( X_\alpha,
\tau(X_\alpha))=-2/\scalK{\alpha,\alpha}$ (i.e.\ $X_\alpha\in
S(\mathfrak{g}_\alpha)$, Section~\ref{sec:sl_2-triples}).
Since $\mathfrak{g}_\alpha$~is of dimension~$1$, the
element~$X_\alpha$ is determined uniquely up to sign. 

In all cases when $\Theta=\Delta$ or for $\alpha \neq \alpha_\Theta$ when
$\Theta\neq \Delta$, we have $\mathfrak{u}_\alpha =
\mathfrak{g}_\alpha$, and thus $X_\alpha$ determines the cone $\mathring{c}_\alpha = \RR_{>0} X_\alpha$. 

When $\Theta\neq \Delta$ and for $\alpha = \alpha_\Theta$, we will use the element
$ X_{\alpha}\in \mathfrak{g}_{\alpha}$ as a starting point to
define, using the action of~$W_{\Delta\smallsetminus\Theta}$ the Weyl group
of~$S_\Theta$,  a new element~$E_{\alpha_\Theta}$
(see Theorem~\ref{thm:one-element-in-the-cone}). 
This element will be contained in $\mathring{c}_{\alpha_\Theta} \cup
-\mathring{c}_{\alpha_\Theta}$ and we will pick the cone so that $E_\alpha \in \mathring{c}_{\alpha_\Theta}$.

From the classification (or more precisely from the fact that $\Theta$ and
  $\Delta\smallsetminus\Theta$ are connected)    we know that
the semi\-simple Lie group $S_{\Theta}$ is of type~$\lietype{A}_d$ where $d$~is the
cardinality of~$\Delta\smallsetminus \Theta$. We can thus enumerate the roots
in \index{$\chi_i$ ($i$ in $\llbracket 1,d\rrbracket$) the roots in
  $\Delta\smallsetminus \Theta$}
$\Delta\smallsetminus \Theta=\{ \chi_1, \dots, \chi_d\}$ so that
$\chi_1$ is connected (by a double arrow) to the special root~$\alpha_\Theta$ and,
for all~$i$ in $\llbracket 1, d-1\rrbracket$, $\chi_i$ is connected to~$\chi_{i+1}$. The
corresponding reflections in the Weyl group~$W$ will be denoted by~$s_1,
\dots, s_d$.\index{$s_i$ ($i$ in $\llbracket 1,d\rrbracket$) the reflection
  corresponding to the root $\chi_i$}

One has then the following equalities:
\begin{align*}
  s_1 (\alpha_\Theta) & = \alpha_\Theta + 2 \chi_1, \\
\intertext{since $\alpha_\Theta$ and $\chi_1$ are connected by a double arrow
  from~$\alpha_\Theta$ to~$\chi_1$;}
  s_i(\alpha_\Theta) & = \alpha_\Theta \text{ for all } i>1, \\
  \intertext{since $\alpha_\Theta$ and $\chi_i$ are not connected in the Dynkin
  diagram; and the following equalities in type~$\lietype{A}_d$:}
  s_i( \chi_{i-1}) & = \chi_{i-1} + \chi_i \text{ for all } i> 1, \\ s_{i}(
  \chi_{i+1}) &
                       = \chi_{i} + \chi_{i+1} \text{ for all } i<d,\\
  s_i(\chi_i) &=-\chi_i \text{ for } i\geq 1,\\
  s_i(\chi_j) & = \chi_j \quad \text{if } |i-j|>1.
\end{align*}
From this the equalities $s_i( \chi_{i-1} + \chi_i +\chi_{i+1}) = \chi_{i-1} +
\chi_i +\chi_{i+1}$ follow for all $i$ in $\llbracket 2, d-1\rrbracket$.

\begin{notation}
  \label{nota:gammak}
  Denote, for all~$k$ in $ \llbracket 0, d \rrbracket $\index{$\gamma_k$ ($k$ in
    $\llbracket 0,d\rrbracket$) the roots in the orbit of~$\alpha_\Theta$
    under the group $W_{\Delta\smallsetminus \Theta}$}
  \begin{equation*}
    \gamma_k = \alpha_\Theta + 2 \sum_{1\leq j \leq k} \chi_j.
  \end{equation*}
  \end{notation}
  In particular $\gamma_0=\alpha_\Theta$, and the roots~$\gamma_k$ are equal to~$\alpha_\Theta$ modulo the span
  of~$\chi_1, \dots, \chi_d$, i.e.\ $\mathfrak{g}_{\gamma_k}\subset
  \mathfrak{u}_{\alpha_\Theta}$ for all~$k$.

  \begin{lemma}\label{lem:orbit-of-alpha-J}
  The orbit of the special root~$\alpha_\Theta$ under the group
  $W_{\Delta\smallsetminus\Theta}   $ is
  $\{\gamma_0,\dots, \gamma_d\}$.
                             \end{lemma}

\begin{proof}
    For all $i$ in $\llbracket 2, d\rrbracket$ and all~$k$ in $\llbracket 0, i-2\rrbracket$, one has
  \begin{align*}
  s_i( \gamma_k)&= s_i\Bigl( \alpha_\Theta + 2
                  \sum_{1\leq j \leq k} \chi_j\Bigr)\\
    &= s_i(\alpha_\Theta) + 2 \sum_{1\leq j \leq k}
      s_i(\chi_j)\\
    &= \alpha_\Theta + 2 \sum_{1\leq j \leq k} \chi_j = \gamma_k.  
  \end{align*}
   For
  all~$i$ in $\llbracket 1, d-1\rrbracket$ and for all~$k$ in $\llbracket i+1, d\rrbracket$, one has
  \begin{align*}
  s_i(\gamma_k)
  &= s_i\Bigl( \gamma_{i-2}+ 2(\chi_{i-1}+\chi_i+\chi_{i+1})+2\sum_{j=i+2}^{k}
    \chi_j\Bigr)\\
    &= \gamma_{i-2} + 2(\chi_{i-1}+\chi_i+\chi_{i+1})+2\sum_{j=i+2}^{k}
  \chi_j= \gamma_k.  
  \end{align*}
   Similarly,  $s_i( \gamma_{i-1}) = s_i( \gamma_{i-2} +2\chi_{i-1}) =
   \gamma_{i-2} +2\chi_{i-1} +2\chi_i =
  \gamma_i$, and $s_{i}(\gamma_{i}) = \gamma_{i-1}$ for all~$i$ in $\llbracket 1, d\rrbracket$.
            
  From this we directly observe that the reflections $s_1, \dots, s_d$
  stabilize the set $\{ \gamma_0, \dots, \gamma_d\}$ and act transitively on
  it. This implies the result since $W_{\Delta\smallsetminus\Theta}$ is generated
  by~$s_1, \dots, s_d$.
\end{proof}

We now establish preliminary results concerning the $3$-dimensional Lie
subalgebras associated with elements in~$\mathfrak{g}_{\chi_i}$, and their
action on the root spaces $\mathfrak{g}_{\gamma_k}$. Recall that for~$X$ an
element in the sphere $S(\mathfrak{g}_{\chi_i})$ of~${\mathfrak{g}_{\chi_i}}$,
i.e.\ $B(X, \tau(X))=-2/\scalK{\chi_i, \chi_i}$, the triple
$(X, \tau(-X), [\tau(X), X])$ is an $\mathfrak{sl}_2$-triple (cf.\
Section~\ref{sec:sl_2-triples}), thus the Lie algebra~$\mathfrak{s}$ generated
by~$X$ and $\tau(X)$ is isomorphic to $\mathfrak{sl}_2(\R)$.

\begin{lemma}
  \label{lem:g-gamma-k-trivial-frak-si}
  Let~$k$ be in $\llbracket 0, d \rrbracket$ and let~$i\notin
  \{k,k+1\}$. Let~$X$ an element in the sphere $S(\mathfrak{g}_{\chi_i})$ and
  let~$\mathfrak{s}$ be the Lie algebra
  generated by~$X$ and $\tau(X)$.
  The root space~$\mathfrak{g}_{\gamma_k}$ is stable by~$\mathfrak{s}$ and
  is the trivial $\mathfrak{s}$-module.  
\end{lemma}
\begin{proof}
  Since the $\chi_i$-chain containing~$\gamma_k$ is trivial, the
  subspace~$\mathfrak{g}_{\gamma_k}$ is indeed an
  $\mathfrak{s}$-module (cf.\ Section~\ref{sec:sl_2-triples}). Since it is $1$-dimensional, it is the trivial module.
\end{proof}

\begin{lemma}
  \label{lem:g-gamma-i-not-trivial-frak-si}
  Let~$i$ be in $\llbracket 1, d\rrbracket$, and let~$X$ be
  in~$S(\mathfrak{g}_{\chi_i})$, so that the Lie algebra~$\mathfrak{s}$
  generated by~$X$ and $\tau(X)$ is isomorphic to $\mathfrak{sl}_2(\R)$.
  The $\mathfrak{s}$-modules
  generated by~$\mathfrak{g}_{\gamma_{i-1}}$ and by~$\mathfrak{g}_{\gamma_i}$
  coincide and are isomorphic to the $3$-dimensional irreducible module of
  $\mathfrak{s} \simeq \mathfrak{sl}_2(\R)$.
\end{lemma}
\begin{proof}
    Since $X$~belongs to~$S(\mathfrak{g}_{\chi_i})$, 
  $(X,
  Y=\tau(-X), H=[X,Y])$ is an $\mathfrak{sl}_2$-triple.

  The $\chi_i$-chain containing~$\gamma_{i-1}$ is $(\gamma_{i-1},
  \gamma_{i-1}+\chi_i, \gamma_{i-1}+2\chi_i=\gamma_i)$; it is also the
  $\chi_i$-chain containing~$\gamma_i$.   This means in particular that
  \[ 
    \mathfrak{g}_{\gamma_{i-1}} \oplus
    \mathfrak{g}_{\gamma_{i-1}+\chi_i} \oplus \mathfrak{g}_{\gamma_i}\]
  is invariant under the adjoint action of~$\mathfrak{s}$ and that the
  lowest weight space for this action is the $1$-dimensional
  space~$\mathfrak{g}_{\gamma_{i-1}}$. Together with the fact that
  $\gamma_{i-1}(H)=-2$, one gets that the 
   $\mathfrak{s}$-module generated
  by~$\mathfrak{g}_{\gamma_{i-1}}$ is the irreducible $3$-dimensional
  $\mathfrak{s}$-module (cf.\ Section~\ref{sec:sl_2-triples}) and contains the
  line $\mathfrak{g}_{\chi_i}$. By irreducibility, the $\mathfrak{s}$-module
  generated by~$\mathfrak{g}_{\chi_i}$ is equal to the 
   $\mathfrak{s}$-module generated
  by~$\mathfrak{g}_{\gamma_{i-1}}$.
\end{proof}

The above lemmas enable us to understand the orbit
of~$X_{\alpha_\Theta}$ under the subgroup~$\mathcal{W}$\index{$\mathcal{W}$ a subgroup of
  $N_K( \mathfrak{a})$ lifting $W_{\Delta\smallsetminus\Theta}$}
of $N_K( \mathfrak{a})$ generated by the elements $\exp( \frac{\pi}{2}(X+\tau(X)))$
where $X$~varies in $\bigcup_{i=1}^{d} S(\mathfrak{g}_{\chi_i})$; by
\cite[Prop.~6.52]{Knapp_LieGrp}, for~$X$ in $S(\mathfrak{g}_{\chi_i})$, the
element $\exp( \frac{\pi}{2}(X+\tau(X)))$ belongs to $N_K( \mathfrak{a})$ and represents~$s_i$.

\begin{notation}
  \label{nota:ZkYkWk}
  For all~$i$ in $\llbracket 1,d \rrbracket$, let~$X_i$ be the element 
  in~$S(\mathfrak{g}_{\chi_i})$ so that $(X_i, \tau(-X_i), [\tau(X_i), X_i])$ is an $\mathfrak{sl}_2$-triple and set $\dot{s}_i=\exp\bigl(\frac{\pi}{2}(X_i+\tau(X_i))\bigr)$.

  We denote by  $Z_0, \dots Z_d$  the elements defined by the
  following\index{$Z_k$ ($k$ in $\llbracket 0,d\rrbracket$) the orbit of
    $X_{\alpha_\Theta}$ under a subgroup of $N_K(\mathfrak{a})$ lifting
    $W_{\Delta\smallsetminus \Theta}$} equalities  $Z_0 = X_\alpha \in \mathfrak{g}_{\alpha_\Theta}$ and
  \[ Z_1=\Ad( \dot{s}_1)Z_0,\  Z_2=\Ad( \dot{s}_2)Z_1, \dots, Z_d=\Ad(
    \dot{s}_d)Z_{d-1}. \]
  We denote, for all~$k$ in $\llbracket 0,d\rrbracket$,
  $Y_k=-\tau(Z_k)$\index{$Y_k$ ($k$ in $\llbracket 0,d\rrbracket$) the element
  $\tau(-Z_k)$} and
  $W_k=[Z_k, Y_k]$.\index{$W_k$ ($k$ in $\llbracket 0,d\rrbracket$) the element $[Z_k, Y_k]$}
\end{notation}
Since $\Ad( \dot{s}_i)$ commutes with~$\tau$, one has $Y_i= \Ad( \dot{s}_i)
Y_{i-1}$ and $W_i= \Ad( \dot{s}_i) W_{i-1}$.

\begin{proposition}\label{prop:orbit-of-X_alpha-J}
  The elements $Z_0, \dots, Z_d$ do not depend on the choices of the~$X_i$.
  
  They all belong to~$\mathfrak{u}_{\alpha_\Theta}$, and are the elements of
  the orbit of~$Z_0$ under the group~$\mathcal{W}$.
\end{proposition}

\begin{proof}   For every root~$\alpha$, one has $\Ad( \dot{s}_i) \mathfrak{g}_\alpha =
  \mathfrak{g}_{ s_i(\alpha)}$. Therefore, for all~$k$ in $\llbracket 0, d\rrbracket$, $Z_k$
  belongs to $\mathfrak{g}_{\gamma_k}$ which is contained
  in~$\mathfrak{u}_{\alpha_\Theta}$.

  Let us denote, for~$k=0$ in $\llbracket 0, d\rrbracket$, by $\mathcal{W}_k$ the subgroup of $N_K(
  \mathfrak{a})$  generated by the elements $\exp( \frac{\pi}{2}(X+\tau(X)))$
where $X$~varies in $\bigcup_{i=1}^{k} S(\mathfrak{g}_{\chi_i})$. The 
subgroup~$\mathcal{W}_0$ is then the trivial subgroup and $\mathcal{W}_d=
\mathcal{W}$.

We will establish by induction on~$k$ that the orbit of~$Z_0$ under the action
of~$\mathcal{W}_k$ is equal to $\{ Z_0, \dots, Z_k\}$ and that $Z_k$ does not
depend on the choice of~$X_k$.

This is trivially satisfied for $k=0$. Let us thus assume the result for a
given~$k<d$.

By Lemma~\ref{lem:g-gamma-k-trivial-frak-si}, the action of $\exp(
\frac{\pi}{2}( X+\tau(X)))$ is trivial on~$\mathfrak{g}_{k+1}$ for all~$X$ in $\bigcup_{i=1}^{k} S(\mathfrak{g}_{\chi_i})$. This
implies that $Z_{k+1}$ is fixed by $\mathcal{W}_k$. By the same argument, $\exp(
\frac{\pi}{2}( X+\tau(X)))$ fixes the elements $Z_0, Z_1, \dots, Z_{k-1}$ for
all~$X$ in $S(\mathfrak{g}_{\chi_{k+1}})$.

Let us now prove that,  for
all~$X$ in $S(\mathfrak{g}_{\chi_{k+1}})$, one has $\exp(
\frac{\pi}{2}( X+\tau(X)))\cdot Z_k = Z_{k+1} $ and  $\exp(
\frac{\pi}{2}( X+\tau(X)))\cdot Z_{k+1} = Z_{k} $. Let~$\mathfrak{s}$ be the
$3$-dimensional algebra generated by~$X$ and~$\tau(X)$.

By Lemma~\ref{lem:g-gamma-i-not-trivial-frak-si} and the knowledge of the
$3$-dimensional irreducible representation of $\mathfrak{sl}_2( \R)$, the
element $\exp(
\frac{\pi}{2}( X+\tau(X)))$ acting on the $\mathfrak{s}$-module generated by $Z_k$ is of
order~$2$ (cf.\ Section~\ref{sec:sl_2-triples}); this implies that it is enough to establish the identity $\exp(
\frac{\pi}{2}( X+\tau(X)))\cdot Z_k = Z_{k+1} $. This implies also that the
map $X\mapsto \exp(
\frac{\pi}{2}( X+\tau(X)))\cdot Z_k$ is invariant by $X\mapsto -X$. Hence this map
factors to a continuous map from the projective space
$S(\mathfrak{g}_{\chi_i})/\{\pm 1\}$ to a $0$-dimensional
sphere $S(\mathfrak{g}_{\gamma_{k+1}})$ (since $\mathfrak{g}_{\gamma_{k+1}}$
is $1$-dimensional and since $\exp(\frac{\pi}{2}( X+\tau(X)))$~acts orthogonally with respect to the quadratic form~$B(\cdot,\tau(\cdot))$), it is hence the constant map.

This proves that $Z_{k+1}$ does not depend on~$X_{k+1}$ and also that the orbit
of~$Z_0$ under $\mathcal{W}_{k+1}$ is equal to $\{ Z_0, \dots, Z_k,
Z_{k+1}\}$, hence the induction step.
                                                    \end{proof}

The nontrivial Lie brackets between elements among $\{Z_k, Y_k, W_k\}  $ are zero.
  \begin{lemma}
  \label{lemma:zero_bracket_ZkYkWk}
  \begin{enumerate}[leftmargin=*]
  \item For all~$k$ and~$\ell$ in $\llbracket 0,d\rrbracket$, $[Z_k,
    Z_\ell]=0$, $[Y_k, Y_\ell]=0$, and  $[W_k, W_\ell]=0$.
  \item For all~$k$ and~$\ell$ in $\llbracket 0,d\rrbracket$, if $k\neq \ell$,
    $[Z_k, Y_\ell]=0$, $[Z_k, W_\ell]=0$, and $[Y_k, W_\ell]=0$.
  \end{enumerate}
\end{lemma}
\begin{proof}
  \begin{enumerate}[leftmargin=*]
  \item This follows since $\mathfrak{u}_{\pm\alpha_\Theta}$ is Abelian and since
    $\mathfrak{a}$~is Abelian.
  \item Let us prove that $[Z_k, Y_\ell]=0$. Using the action of~$\tau$ we can
    assume $k>\ell$. The bracket $[Z_k, Y_\ell]$ belongs to the weight space
    associated with $\gamma_k-\gamma_\ell = 2( \chi_{\ell+1}+\cdots+\chi_k)$
    which is not a root (since the root system is reduced and
    $\chi_{\ell+1}+\cdots+\chi_k$ is a root); this bracket is therefore zero.

    The identities $[Z_k, W_\ell]=0$ and $[Y_k, W_\ell]=0$ follow now using
    the definition of~$W_\ell$ and the Jacobi identity.
  \end{enumerate}
\end{proof}

From this we deduce
\begin{theorem}\label{thm:one-element-in-the-cone}
    The element $E_{\alpha_\Theta}$
  of~$\mathfrak{u}_{\alpha_\Theta}$ defined by\index{$E_{\alpha_\Theta}$ the
    sum $Z_0+\cdots+Z_d$} 
  \[ E_{\alpha_\Theta} \coloneqq Z_0 + Z_1 + \cdots + Z_d,\]
        belongs to $\mathring{c}_{\alpha_\Theta} \cup -\mathring{c}_{\alpha_\Theta}$.
    \end{theorem}

\begin{proof}
  By Proposition~\ref{prop:cone-semisimple}.(\ref{item2:prop:cone-semisimple})
  it is enough to show that the Lie algebra~$\mathfrak{s}$ of the stabilizer of
  $Z_0+\cdots+Z_d$ contains~$\mathfrak{k}_\Theta$. By Proposition~\ref{prop:orbit-of-X_alpha-J} this
  sum is invariant under the action of~$\mathcal{W}$, therefore $\mathfrak{s}$~is invariant by~$W_{\Delta\smallsetminus \Theta}$.

  Equation~(\ref{eq:max-compact-S_J}), p.~\pageref{eq:max-compact-S_J}, reads
  \begin{equation*}
    \mathfrak{k}_\Theta = \mathfrak{m} \oplus  \!\!\! \!\!\!
\bigoplus_{\alpha \in \Span(\Delta \smallsetminus \Theta) \cap \Sigma^+}
                        \!\!\! \!\!\! (\mathfrak{g}_{\alpha} \oplus
                        \mathfrak{g}_{-\alpha}) \cap \mathfrak{k}.
  \end{equation*}
  Here $\Span(\Delta \smallsetminus \Theta) \cap \Sigma$ is the root system of
  the Lie group~$S_\Theta$ and is of type~$\lietype{A}_d$. In this
  situation the corresponding Weyl group~$W_{\Delta\smallsetminus \Theta}$
  acts transitively on the roots. Therefore it will be enough to establish the
  invariance under $\mathfrak{m}=
  \mathfrak{z}_{\mathfrak{k}}( \mathfrak{a})$ and under
  $(\mathfrak{g}_{\chi_1} \oplus \mathfrak{g}_{-\chi_1}) \cap \mathfrak{k}$
  (since $\chi_1$~is one of the root of~$S_\Theta$).

  For all~$k$ in $\llbracket 0, d\rrbracket$,
  the root space~$\mathfrak{g}_{\gamma_k}$ is a $1$-dimensional
  representation of the (compact) Lie algebra~$\mathfrak{m}$ and is thus the
  trivial representation. Hence the stabilizer of~$Z_k$
  contains~$\mathfrak{m}$.  Therefore $\mathfrak{s}$ contains~$\mathfrak{m}$.

  Let us now establish that $\mathfrak{s}$~contains   \[
  (\mathfrak{g}_{\chi_1} \oplus
  \mathfrak{g}_{-\chi_1}) \cap \mathfrak{k}
                      = \{ X+\tau(X)\}_{X\in \mathfrak{g}_{\chi_1}},\]
  (cf.\ Equation~(\ref{eq:max-compact-S_J-factor}), p.~\pageref{eq:max-compact-S_J-factor}).
  Let~$X$ be an element of~$\mathfrak{g}_{\chi_1}$, and $Y=-\tau(X)$.
  We want to show that the stabilizer of $Z_0+\cdots+Z_d$ contains $X-Y$. As the conclusion
  holds trivially if $X=0$, we can assume that $X$~is nonzero. Up to
  multiplying~$X$ by a positive real number, we can assume that $(X,Y,
  [X,Y])$
  is an $\mathfrak{sl}_2$-triple. Denote by~$\mathfrak{f}$ the Lie algebra it
  generates. By Lemma~\ref{lem:g-gamma-i-not-trivial-frak-si}, we know that the $\mathfrak{f}$-module generated by~$Z_0$
  contains~$Z_1= \Ad(\dot{s}_1) Z_0$ (again with $\dot{s}_1 =\exp(\frac{\pi}{2}(X-Y))$) and is the irreducible $3$-dimensional
  $\mathfrak{f}$-module. Explicit knowledge of the $3$-dimensional
  irreducible $\mathfrak{sl}_2$-module shows directly that the stabilizer of
  $Z_0+Z_1$ contains $X-Y$. By Lemma~\ref{lem:g-gamma-k-trivial-frak-si}, for every~$k$ in $\llbracket 2, d\rrbracket$, the stabilizer of~$Z_k$ contains $X-Y$. From this we have the sought
  for result: $\{X+\tau(X) \}_{X \in \mathfrak{g}_{\chi_1}}$ is
  included in~$\mathfrak{s}$. \end{proof}

\begin{example}
  We illustrate the construction of the elements above in the example when
  $G = \mathrm{Sp}(2n,\R)$ and $\Theta = \{\alpha_n\}$. In this case
  $\mathfrak{u}_{\alpha_\Theta}$ naturally identifies with $\mathrm{Sym}_n(\R)$, the
  space of real $n\times n$ symmetric matrices (cf.\ Section~\ref{sec:class-symm-cones}) and
  $W_{\Delta\smallsetminus \Theta}$ is isomorphic to the symmetric
  group~$S_{n}$ acting on $\mathrm{Sym}_n(\R)$ by conjugation by the corresponding
  permutation matrices and $d=n-1$. We use the standard basis
  $(E_{i,j})_{1\leq i,j\leq n}$ of $M_n(\R)$ (the only nonzero coefficient
  of $E_{i,j}$ is in place $(i,j)$ and is equal to~$1$) so that
  $\{ E_{i,i}\}_i \cup \{ E_{i,j}+E_{j,i}\}_{i<j}$ is a basis of
  $\mathrm{Sym}_n(\R)$. The root space~$\mathfrak{g}_{\alpha_n}$ is equal (as a subspace
  of $\mathfrak{u}_{\alpha_\Theta}\simeq \mathrm{Sym}_n(\R)$) to the line generated
  by~$E_{1,1}$. Thus, one can take $Z_0=E_{1,1}$ and one has $Z_1 = E_{2,2},
  \dots, Z_i = E_{i+1,i+1}, \dots, Z_{n-1} =E_{n,n}$. Hence the sum
  $E_{\alpha_n} =Z_0+\cdots + Z_{n-1}$ is the identity matrix of~$\mathrm{Sym}_n(\R)$.
\end{example}

As mentioned already, we now fix the invariant cone $c_{\alpha_\Theta}$ by
choosing the cone containing~$E_{\alpha_\Theta}$.

\begin{lemma}
  \label{lem:inter-spanZ_i-with-cone}
  The intersection of the cone~$\mathring{c}_{\alpha_\Theta}$ with the vector space
  generated by $Z_0, \dots, Z_d$ is equal to $\sum_{i=0}^{d} \R_{> 0} Z_i$ and the intersection of the cone~$c_{\alpha_\Theta}$ with the vector space
  generated by $Z_0, \dots, Z_d$ is equal to $\sum_{i=0}^{d} \R_{\geq 0} Z_i$. In
  particular $X_{\alpha_\Theta}$ belongs to~$c_{\alpha_\Theta}$.
\end{lemma}
\begin{proof}
  By the action of $\exp(\mathfrak{a})$, one finds that $\sum_{i=0}^{d} \R_{> 0}
  Z_i$ is contained in $\mathring{c}_{\alpha_\Theta}$. To prove the reverse
  inclusion, it is enough to prove that no element of $(\sum_{i=0}^{d} \R_{\geq 0}
  Z_i) \setminus (\sum_{i=0}^{d} \R_{> 0}
  Z_i)$ belongs to~$\mathring{c}_{\alpha_\Theta}$, this follows from
  Proposition~\ref{prop:cone-semisimple} since these elements have noncompact
  stabilizers.
\end{proof}

\begin{remark}
  \label{remark:normalize_cones}
  In view of the results in this section, one can go back and forth between
  the normalization (up to $\pm 1$) of the elements~$X_\alpha$ in~$\mathfrak{g}_\alpha$
  ($\alpha\in \Theta$) and the normalization (up to $\pm \id$) of the
  cones~$c_\alpha$ ($\alpha\in \Theta$). If the element~$X_\alpha$ is chosen,
  there is a unique cone~$c_\alpha$ in~$\mathfrak{u}_\alpha$ such that
  $c_\alpha$ contains~$X_\alpha$. Conversely, if the cone~$c_\alpha$ is
  chosen, there is a unique element~$X_\alpha$ in $S(\mathfrak{g}_\alpha)$
  such that $X_\alpha$~belongs to~$c_\alpha$.
\end{remark}

Starting with the elements $Y_\alpha=\tau(-X_\alpha)$, we analogously
determine elements~$F_\alpha$ in~$\mathring{c}^{\mathrm{opp}}_\alpha$. One has $F_\alpha=\tau(-E_\alpha)$.

\begin{definition}\label{def:theta_base}
Let $E_\alpha \in \mathring{c}_\alpha$, $F_\alpha \in
\mathring{c}^{\mathrm{opp}}_\alpha$  be the elements constructed as above, and
set $D_\alpha \coloneqq [E_\alpha, F_\alpha]$ for all $\alpha \in
\Theta$. Then the\index{$E_\alpha$, $D_\alpha$, $F_\alpha$ ($\alpha$ in
  $\Theta$) the elements of a $\Theta$-system}
family $(E_\alpha, F_\alpha, D_\alpha)_{\alpha \in \Theta}$ is called \emph{a $\Theta$-system} of $\mathfrak{g}$. 
\end{definition}

\begin{remark}
  In the case when $\Theta=\Delta$, i.e.\ when $G$~is a split real group, the
  elements $\{E_\alpha,  F_\alpha, D_\alpha \}_{\alpha\in \Delta}$ are  the Chevalley or Cartan--Weyl basis
  of~$\mathfrak{g}$.
\end{remark}

\section{Subalgebras associated to a $\Theta$-system}
\label{sec:split-group-type}
In this section we show that a $\Theta$-system generates a real split Lie subalgebra $\mathfrak{h}_\Theta$ in
$\mathfrak{g}$.                 Besides being remarkable, this subalgebra will play a 
crucial role in Section~\ref{sec:independence-gammab}. 

We assume that $G$~is a simple Lie group admitting a $\Theta$-positive
structure. We let~$\mathfrak{g}$ be its Lie algebra and
$(E_\alpha, F_\alpha, D_\alpha)_{\alpha \in \Theta}$ a $\Theta$-system, with
$E_\alpha$~belonging to~$ \mathring{c}_\alpha$ and $F_\alpha$~belonging
to~$ \mathring{c}^{\mathrm{opp}}_\alpha$ for~$\alpha$ in~$\Theta$.

\begin{theorem}
  \label{theorem:real-split-Theta-subalgebra_1}
The Lie algebra $\mathfrak{h}_\Theta \subset
\mathfrak{g}$ generated by $(E_\alpha, F_\alpha)_{\alpha\in\Theta}$ is a split real simple Lie
algebra,\index{$\mathfrak{h}_\Theta$ the split real simple Lie algebra
  generated by the $\Theta$-system} more precisely $(E_\alpha,
F_\alpha, D_\alpha)_{\alpha \in \Theta}$ are Chevalley generators
of~$\mathfrak{h}_\Theta$ and satisfy the Serre relations.  When $G$~is split and $\Theta = \Delta$, we have $ \mathfrak{h}_\Delta= \mathfrak{g}$. 
  If $G$~is of type~$\lietype{C}_n$ and $\Theta = \{ \alpha_n\}$, then $\mathfrak{h}_\Theta$~is of type~$\lietype{A}_1$. 
  If $G$~is of type~$\lietype{B}_n$ and $\Theta = \Delta \smallsetminus \{ \alpha_n\}$, then $\mathfrak{h}_\Theta$~is of type~$\lietype{B}_{n-1}$. 
  If $G$~is of type~$\lietype{F}_4$ and $\Theta =  \{ \alpha_1,\alpha_2\}$, then $\mathfrak{h}_\Theta$~is of type~$\lietype{G}_{2}$.
\end{theorem}

\begin{proof}
Note that the conclusion of the theorem is plainly satisfied in the split case, where $E_\alpha = X_\alpha$ for every~$\alpha$
in~$\Delta$. The Lie algebra generated by
$\{ X_\alpha, \tau( X_\alpha)\}_{\alpha\in \Delta}$ is then equal
to~$\mathfrak{g}$.
  
The case when $\sharp \Theta =1$ (Hermitian groups) is also easy to deal
with. We thus focus on the case when $\Theta\neq \Delta$ and $\sharp \Theta>1$. 

Let  $\{X_i\}_{i\in \llbracket 1,d\rrbracket}$ $\{ Z_k, Y_k, W_k\}_{k \in \llbracket 0,d\rrbracket}$ (where $d$~is
the cardinality of $\Delta\smallsetminus\Theta$) be
the elements given in Section~\ref{sec:putting-an-hand} so that
$E_{\alpha_\Theta} = Z_0+ \cdots + Z_d$ belongs to the cone
$\mathring{c}_{\alpha_\Theta}$, and for each~$k$ in $\llbracket 1, d\rrbracket$, the relations $Y_k = \Ad( \dot{s}_k) Y_{k-1}$ and
$W_k = \Ad( \dot{s}_k) W_{k-1}$ hold (with $\dot{s}_k =
\exp\bigl(\frac{\pi}{2} (X_k+\tau(X_k))\bigr)$).

For the rest of this proof we will write $\Theta = \{\alpha_1, \dots,
\alpha_p\}$ (with $p$ the cardinality of~$\Theta$ so that $p\geq 2$) with $\alpha_p=
\alpha_\Theta$ and, for all~$i$ in $\llbracket 1, p-1\rrbracket$, $\alpha_i$ and $\alpha_{i+1}$
are connected in the Dynkin diagram; we will write
$E_i$, $F_i$, $D_i$ instead of $E_{\alpha_i}$, $F_{\alpha_i}$, $D_{\alpha_i}$.

We now prove that\index{$E_i$, $F_i$, $D_i$ the $\Theta$-system (in the proof
  of Theorem~\ref{theorem:real-split-Theta-subalgebra_1})}
\( E_i, F_i, D_i, \ (i \in \llbracket 1,p\rrbracket)\)
satisfy the Serre relations\index{Serre relations} (cf.\ \cite[Ch.~VIII, \S~4, n\textsuperscript{o}~3,
Théorème~1]{BourbakiLie78}, note that we are using here a different normalization for $\mathfrak{sl}_2$-triples). This means here the following identities 
\begin{align}
  \label{eq:1}
  [D_{i}, E_{i}] &= 2 E_{i}, \ [D_{i}, F_{i}] = -2 F_{i}, \
                    [E_{i},  F_{i}] = D_{i}, \ \forall i\in \llbracket 1,p\rrbracket\\ 
  \label{eq:2}
  [E_{i}, F_{j}] &=0, \ \forall i\neq j \in \llbracket 1,p\rrbracket \\
  \label{eq:3}
  [E_{i}, E_{j}] &=0, \ [F_{i}, F_{j}] =0,\
                   [D_{i}, E_{j}] =0,\ \text{and} \\
  \notag [D_{i},  F_{j}]
           &=0,\  \forall i,j\in \llbracket 1,p\rrbracket \text{ with }
             |{i-j}|>1 \\
  \label{eq:4}
  (\ad E_{i})^2 E_{j} &=0,\ (\ad F_{i})^2 F_{j} =0,\
                        [D_{i}, E_{j}] =-E_{j},\ 
                                 \text{and}\\
  \notag  [D_{i}, F_{j}] &=F_{j},\ 
                           \forall i\in \llbracket 1,p-1\rrbracket  \text{ and }
                           j\in\{i\pm 1\} \\
  \label{eq:6} [ D_{p}, E_{{p-1}}] &= -(d+1)E_{{p-1}}, \
                                     [ D_{p}, F_{{p-1}}] =(d+1)F_{{p-1}} \\
  \label{eq:5}
  (\ad E_{p})^{d+2} & E_{{p-1}} =0, \ (\ad F_{p})^{d+2}
                                 F_{{p-1}} =0.
\end{align}
(This list seems longer than usual due to the distinguished role of the root $\alpha_\Theta=\alpha_p$).

Equations~\eqref{eq:1} are the facts that the triples $(E_{i},
F_{i}, D_{i})$ are 
$\mathfrak{sl}_2$-triples, only the case when $i=p$ needs more explanation; 
this case  follows from the equalities $[W_k, Z_j]=0$ and $[Y_k, Z_j]=0$
for all $k\neq j$ (Lemma~\ref{lemma:zero_bracket_ZkYkWk}) and the fact that
$(Z_k, Y_k, W_k)$ are $\mathfrak{sl}_2$-triples.

Equations~\eqref{eq:2} are inherited from the
corresponding identities in~$\mathfrak{g}$, again only the case when $i$ or $j$
is equal to~$p$ needs a comment. Let's consider, for example, $i=p$; one has
$E_{p}= E_{\alpha_\Theta} = Z_0+ Z_1 +\cdots + Z_d$, and, for all~$k$ in $\llbracket 1,d\rrbracket$, $Z_k = \Ad(\dot{s}_k) Z_{k-1}$. Since, from the known
identities in~$\mathfrak{g}$, $[Z_0, F_{j}]=0$ and since
$\Ad(\dot{s}_k) F_{j}=F_{j}$ for all~$k$ in $\llbracket 1,d\rrbracket$, one gets
that, for all $k$, $[Z_k, F_{j}]= \Ad(\dot{s}_k)[Z_{k-1},
F_{j}]=0$ and also $[E_{p}, F_{j}] = \sum_{k=0}^{d} [Z_k,
F_{j}]=0$.

Equations~\eqref{eq:3} are also inherited from the corresponding identities
in~$\mathfrak{g}$ (with again a special treatment when one of the indices is
equal to~$p$). Equations~\eqref{eq:4} equally follow from the known equalities
in~$\mathfrak{g}$.

We now prove Equations~\eqref{eq:6}. Note first that $D_{p} = W_0+W_1+\cdots+W_d$. One has, again from identities valid
in~$\mathfrak{g}$, $[W_0, E_{{p-1}}] = -E_{{p-1}}$.
From the
recursive relation $\Ad( \dot{s}_k)
W_{k-1} =W_k$ and since  $\Ad( \dot{s}_k)
E_{{p-1}} =E_{{p-1}}$
we deduce that $[W_k, E_{{p-1}}] =
-E_{{p-1}}$ and, summing over~$k$, $[D_{p}, E_{{p-1}}] =
-(d+1) E_{{p-1}}$. The identity with $F_{{p-1}}$ follows by a
similar argument (or by applying~$\tau$).

Let us now address Equations~\eqref{eq:5}. We will prove the seemingly
stronger identity $(\ad E_{p})^{d+2} =0$. Since $\ad E_{p} =
\sum_{k=0}^{d} \ad Z_k$, one has
\[ (\ad E_{p})^{d+2} = \sum_{\mathclap{(k_0, \dots, k_{d+1})\in \llbracket 0,d\rrbracket^{d+2}} } \ad Z_{k_0}  \ad
  Z_{k_1} \cdots \ad Z_{k_{d+1}}. \]
Every term in this last sum is zero: indeed the order 
in a
product $\ad Z_{k_0}  \ad
  Z_{k_1} \cdots \ad Z_{k_{d+1}}$ does not matter (see
  Proposition~\ref{prop:u_alpha-are-abel}) and, since $d+2>d+1$, at least two indices
  coincide $k_m=k_\ell$ so that $\ad Z_{k_0}  \ad
  Z_{k_1} \cdots \ad Z_{k_{d+1}}=0$ since $(\ad Z_{k_\ell})^2=0$.  Similar
  arguments imply the equality
  $(\ad F_{p})^{d+2}=0$.
\end{proof}

Using the homogeneity under the action of~$L^{\circ}_{\Theta}$,
one gets

\begin{corollary}
  \label{coro:real-split-Theta-subalgebra}
  Let $V=(V_\alpha)_{\alpha\in \Theta}$ belong to $\prod_{\alpha\in \Theta}
  (\mathring{c}_\alpha \cup -\mathring{c}_\alpha)$. Then there exists a unique maximal compact subgroup~$K'$
  of~$G$ such that $K'\cap L^{\circ}_{\Theta}$ is the stabilizer of~$V$ in~$L^{\circ}_{\Theta}$.

  Let~$\sigma\colon \mathfrak{g} \to \mathfrak{g}$ be Cartan involution 
  associated with~$K'$. Then the Lie algebra $\mathfrak{h}_\Theta$ generated by $\{
  V_\alpha,  \sigma( V_\alpha)\}_{\alpha\in \Theta}$ is isomorphic to a real
  split Lie algebra of type given by the above theorem.          \end{corollary}

\begin{proof}
Changing one component~$V_\alpha$ of~$V$ to $-V_\alpha$ does not affect the
hypothesis nor the conclusion; hence we can as well assume that $V$~belongs to
$\prod_{\alpha\in\Theta} \mathring{c}_\alpha$.
By the transitivity of the action of~$L^{\circ}_{\Theta}$ on $\prod_{\alpha\in \Theta}
\mathring{c}_\alpha$ (Proposition~\ref{prop:homogeneity-Ltheta}) it is enough
to prove the result for one specific element in this product of cones.
We can thus assume that, for every~$\alpha$ in~$\Theta$,
$V_\alpha= E_\alpha$ (cf.\ Definition~\ref{def:theta_base}).
We already observed (Theorem~\ref{thm:one-element-in-the-cone}) that the Lie
algebra of the stabilizer of $E=( E_\alpha)_{\alpha\in \Theta}$ contains the
Lie algebra of $K\cap L^{\circ}_\Theta$; therefore
(cf.\ Proposition~\ref{prop:homogeneity-Ltheta}) the stabilizer of~$E$ in~$L^{\circ}_{\Theta}$ is
equal to~$K\cap L^{\circ}_{\Theta}$.
The Cartan involution is then~$\tau$. As $F_{\alpha}=-\tau(E_{\alpha})$, Theorem~\ref{theorem:real-split-Theta-subalgebra_1} implies the wanted
conclusion. \end{proof}

\section{Positive $\SL_2$}\label{sec:sl2}

This section continues the investigation of the special Lie algebraic
properties when a simple Lie group~$G$ admits a $\Theta$-positive structure;
here we stress the existence of a special $3$-dimensional subalgebra. We will
draw several consequences from this, in particular the existence of a positive
circle in the flag variety $\mathsf{F}_\Theta$ (see
Section~\ref{sec:positive-maps}), that is used in
\cite[Proposition~2.10]{GLW}.

\subsection{The \texorpdfstring{$\Theta$}{Θ}-principal subalgebra}
\label{sec:theta-princ-subalg}

The split Lie subalgebra~$\mathfrak{h}_\Theta$ constructed in the previous section admits a special subalgebra
(rather a conjugacy class of subalgebras), called the \emph{principal}
$\mathfrak{sl}_2$. In the correspondence with nilpotent elements given by the
Jacobson--Morozov theorem, it is the $\mathfrak{sl}_2$-subalgebra
corresponding to the (conjugacy class of) regular nilpotent element. A regular nilpotent element
in~$\mathfrak{h}_\Theta$ is for example
$\sum_{\alpha\in \Theta} E_\alpha$ (where $E_\alpha$ are given in
Section~\ref{sec:putting-an-hand}).
\begin{definition}
  \label{defi:theta-princ-subalg}
  The \emph{$\Theta$-principal subalgebra} is\index{$\Theta$-principal subalgebra} (the conjugacy class of) the
  subalgebra of~$\mathfrak{g}$, isomorphic to $\mathfrak{sl}_2(\R)$ and
  represented by the principal subalgebra of~$\mathfrak{h}_\Theta$.
\end{definition}
The induced morphism $\pi_\Theta\colon \mathfrak{sl}_2(\R)\to \mathfrak{g}$
will be called\index{$\pi_\Theta$ the $\Theta$-principal embedding}
\emph{$\Theta$-principal embedding}.  
Kostant \cite[Lemma~5.2]{Kostantsl2} gives formulas for a principal
$\mathfrak{sl}_2$-triple in term of the Chevalley generators, namely there are
positive integers $q_\alpha$ ($\alpha\in\Theta$) such that
\begin{equation*}
    E \coloneqq \sum_{\alpha\in\Theta} q^{1/2}_{\alpha} E_\alpha, \
  F \coloneqq \sum_{\alpha\in\Theta} q^{1/2}_{\alpha} F_\alpha,\ 
  D \coloneqq \sum_{\alpha\in\Theta} q_\alpha D_\alpha
\end{equation*}
is a principal $\mathfrak{sl}_2$-triple.
The following lemma is certainly well known.
\begin{lemma}
  \label{lem:principal-weyl-group}
  Let $H_\Theta$ be the connected subgroup of~$G$ whose Lie algebra is~$\mathfrak{h}_\Theta$.
  
  Let\index{$W({H}_\Theta)$ the Weyl group of~$\mathfrak{h}_\Theta$} $W({H}_\Theta)$ be the Weyl group of~${H}_\Theta$ (for
  the choice of Cartan subalgebra $\mathfrak{c}=\bigoplus_{\alpha\in\Theta}
  \R H_\alpha$). Then $\dot{s}=\exp\bigl(\frac{\pi}{2}(E-F)\bigr)$ belongs to the
  normalizer of~$\mathfrak{c}$ in the maximal compact subgroup
  $K\cap H_\Theta$, and it represents the longest
  length element of~$W({H}_\Theta)$.
\end{lemma}
\begin{proof}
  One has $\Ad(s)\cdot D = -D$
  (from relation valid in $\mathfrak{sl}_2(\R)$). Also $s$~belongs to $K\cap
  H_\Theta$ since $\tau(E-F)=E-F$ belongs to~$\mathfrak{k}$.
  
  The regular semisimple element~$D$ belongs to a unique Cartan
  subalgebra of~$\mathfrak{h}_\Theta$; thus $\Ad(s)\cdot
  \mathfrak{c}$ is the unique Cartan subalgebra containing~$-D$ and is
  therefore equal to~$\mathfrak{c}$. This means that $s$~belongs to the
  normalizer of~$\mathfrak{c}$ and represents an element in the Weyl
  group~$W({H}_\Theta)$. 

  Moreover $D$~belongs to a unique Weyl chamber $\mathfrak{c}^+$
  of~$\mathfrak{c}$ and reasoning as above we get that $\Ad(s)\cdot
  \mathfrak{c}^+= -\mathfrak{c}^+$. Since this equality characterizes
  the representatives of the longest length element of~$W({H}_\Theta)$, we
  get the wanted result.  
\end{proof}

\begin{remarks}
  \begin{enumerate}[leftmargin=*]
  \item The Cartan subspace~$\mathfrak{c}$ is equal to
    $\mathfrak{a}_{\Delta\smallsetminus \Theta}$, \ie{} to the Cartan subspace
    of $S_{\Delta\smallsetminus \Theta}$.
  \item The conclusion of the lemma holds as soon as the element $D=[E,F]$ is regular.
  \end{enumerate}
\end{remarks}

\begin{examples}
  When $G=\Sp(2n,\R)$, let us explain the subgroup corresponding to the
  $\Theta$-principal subalgebra. For the case $\Theta=\Delta$, the subgroup is
  the image of the irreducible representation of $\SL_2(\R)$ of
  dimension~$2n$. For the case when $\Theta =\{ \alpha_n\}$, the subgroup is the image of the diagonal embedding of $\SL_2(\R)$, i.e.\ the set of block matrices, with blocks of size~$n$ all scalar multiples of the
  identity matrix, alternatively there is a natural embedding of $\SL_2(\R)^n$
  into $\Sp(2n, \R)$ and the image of the diagonal subgroup of $\SL_2(\R)^n$
  by this embedding is the $\Theta$-principal $\SL_2(\R)$.

  When $G=\SO(p+1, p+k)$ and $\Theta=\{\alpha_1, \dots, \alpha_p\}$, the
  group~$H_\Theta$ is $\SO^\circ(p+1,p)$ (naturally embedded in~$G$) and the
  $\Theta$-principal subgroup is the irreducible $\SL_2(\R)$ sitting in~$H_\Theta$.
\end{examples}

\begin{remark}
Motivated by the introduction of $\Theta$-positivity, Bradlow, Collier, García-Prada, Gothen and Oliveira introduce in~\cite{Bradlow_Collier_etal} the notion of magical nilpotent elements and of magical $\mathfrak{sl}_{2}$-triples. They further observe that given a complex Lie group and a magical nilpotent element~$E$ there is a canonical real form $\mathfrak{g}$ associated with~$E$, and this real form admits a $\Theta$-positive structure. 
It is clear from the above construction, that the split real form
$\mathfrak{h}_\Theta$ is the split real subalgebra denoted $\mathfrak{g}(E)$ in~\cite{Bradlow_Collier_etal}, and the image of the  embedding  $\pi_\Theta\colon \mathfrak{sl}_{2} \rightarrow \mathfrak{g}$ is the (real) magical $\mathfrak{sl}_{2}$-triple in $\mathfrak{g}$. 
\end{remark}

\section{The \texorpdfstring{$\Theta$}{Θ}-Weyl group}\label{sec:Weyl}
In this section we introduce the $\Theta$-Weyl group, a specific subgroup of
the Weyl group of~$G$. The $\Theta$-Weyl
group will be crucial in the parametrization of the unipotent positive
semigroup (see Section~\ref{sec:positive_semigroup}). We further introduce the notion of the
$\Theta$-length on~$W$ which is useful to establish
some properties of the $\Theta$-Weyl group and is also used in Section~\ref{sec:bruh-decomp-cones}.  We show that the $\Theta$-Weyl
group normalizesf~$W_{\Delta\smallsetminus\Theta}$.

Let~$W$ be the Weyl group of $G$. Recall that the generators corresponding to the simple
roots $\alpha \in \Delta$ are denoted by $s_\alpha$, and that $\ell(w)$ is the length of an
element~$w$ with respect to this generating set (Section~\ref{sec:weyl-group}).

\subsection{Longest length elements and associated involutions}
\label{sec:long-elem-some}

We first recall some general properties of Weyl groups and their subgroups.
In this paragraph, unless stated otherwise,
 $\Theta$ is any subset of $\Delta$, not necessarily one that is associated to a $\Theta$-positive structure. 

For a subset~$F$ of~$\Delta$ the subgroup generated by
$\{s_\alpha\}_{\alpha\in F}$\index{$W_F$ the subgroup generated by
  $\{s_\alpha\}_{\alpha\in F}$ (for $F\subset \Delta$)}
will be denoted by~$W_F$, in particular  $W_{\Delta \smallsetminus \Theta}$~is 
the subgroup generated by $s_\alpha$ with $\alpha \in \Delta \smallsetminus
\Theta$, \ie it is the Weyl group of~$S_\Theta$. Note that the restriction
of~$\ell$ to~$W_F$ is equal to the word length on~$W_F$ equipped with its
generating set $\{s_\alpha\}_{\alpha\in F}$.
We
denote by~$w_\Delta$ the longest length element in~$W$, and by~$w_F$ the
longest length element in~$W_F$,\index{$w_F$ the
longest length element in~$W_F$} therefore $w_{\Delta \smallsetminus \Theta}$ is
the longest length element in~$W_{\Delta \smallsetminus \Theta}$.

\begin{lemma}\cite[Proposition~3.9]{BorelTits}\label{lem:decomp}
  The element $w^{\Theta}_{\max} \in W$ defined by the equality
  \index{$w^{\Theta}_{\max}$ the shortest length element in the coset
    $w_\Delta W_{\Delta\smallsetminus \Theta}$}
  \[w_\Delta = w^{\Theta}_{\max} w_{\Delta \smallsetminus \Theta}\] satisfies
  $\ell(w_\Delta) = \ell(w^{\Theta}_{\max}) + \ell(w_{\Delta \smallsetminus \Theta})$. Furthermore
  $w^{\Theta}_{\max}$ is the unique element of  minimal length in the coset
  $w_\Delta W_{\Delta \smallsetminus \Theta}$.
\end{lemma}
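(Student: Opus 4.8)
The statement to prove is Lemma~\ref{lem:decomp}, the factorization $w_\Delta = w^{\Theta}_{\max} w_{\Delta \smallsetminus \Theta}$ with additive lengths, and the characterization of $w^{\Theta}_{\max}$ as the minimal-length representative of the coset $w_\Delta W_{\Delta\smallsetminus\Theta}$. The plan is to use the standard theory of minimal coset representatives for parabolic subgroups of a Coxeter group. Recall that for any standard parabolic subgroup $W_F = \langle s_\alpha : \alpha \in F\rangle$ of $W$, every right coset $wW_F$ contains a unique element of minimal length, characterized by $\ell(w') < \ell(w's_\alpha)$ for all $\alpha \in F$, equivalently by $w'(\alpha) \in \Sigma^+$ for all $\alpha \in F$; and for the minimal representative $w'$ and any $v \in W_F$ one has $\ell(w'v) = \ell(w') + \ell(v)$. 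I would cite this (it is in Bourbaki or Humphreys) and then simply define $w^{\Theta}_{\max} := w_\Delta w_{\Delta\smallsetminus\Theta}^{-1} = w_\Delta w_{\Delta\smallsetminus\Theta}$ (using that $w_{\Delta\smallsetminus\Theta}$ is an involution) and verify the two required properties.

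First I would show $w^{\Theta}_{\max}$ is the minimal-length element of $w_\Delta W_{\Delta\smallsetminus\Theta}$. By the criterion above it suffices to check $w^{\Theta}_{\max}(\alpha) \in \Sigma^+$ for every $\alpha \in \Delta\smallsetminus\Theta$. Fix such an $\alpha$. Since $w_{\Delta\smallsetminus\Theta}$ is the longest element of $W_{\Delta\smallsetminus\Theta}$, it sends the positive roots in $\Span(\Delta\smallsetminus\Theta)\cap\Sigma^+$ to negative ones; in particular $w_{\Delta\smallsetminus\Theta}(\alpha) = -\beta$ for some $\beta \in \Span(\Delta\smallsetminus\Theta)\cap\Sigma^+$. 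Then $w^{\Theta}_{\max}(\alpha) = w_\Delta w_{\Delta\smallsetminus\Theta}(\alpha) = w_\Delta(-\beta) = -w_\Delta(\beta)$, and since $w_\Delta$ sends $\Sigma^+$ to $\Sigma^-$, $w_\Delta(\beta) \in \Sigma^-$, so $-w_\Delta(\beta) \in \Sigma^+$. Hence $w^{\Theta}_{\max}$ is the minimal representative. The additivity $\ell(w_\Delta) = \ell(w^{\Theta}_{\max}) + \ell(w_{\Delta\smallsetminus\Theta})$ is then immediate from the quoted fact that $\ell(w'v) = \ell(w') + \ell(v)$ for $w'$ minimal in its coset and $v \in W_F$, applied with $w' = w^{\Theta}_{\max}$, $v = w_{\Delta\smallsetminus\Theta}$, since $w^{\Theta}_{\max} w_{\Delta\smallsetminus\Theta} = w_\Delta$.

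Uniqueness of the minimal-length element in the coset is part of the standard statement, so the characterization as ``the unique element of minimal length in $w_\Delta W_{\Delta\smallsetminus\Theta}$'' follows as well. The only mild subtlety — and the step I would be most careful about — is the claim that $w_{\Delta\smallsetminus\Theta}$ maps $\Delta\smallsetminus\Theta$ into $-(\Span(\Delta\smallsetminus\Theta)\cap\Sigma^+)$ rather than merely into $\Sigma^-$: this is the standard fact that the longest element of a (finite) parabolic subgroup $W_F$ negates the set of positive roots in the root subsystem $\Sigma \cap \Span(F)$, and maps $F$ to $-F'$ where $F' = -w_F(F)$ is again a simple system for that subsystem; combined with $w_F$ preserving $\Span(F)$, this gives what is needed. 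No real obstacle remains; the lemma is a direct consequence of classical Coxeter-group combinatorics, and I would present it at that level of detail, citing \cite{Bruhat_Tits_compl} as the authors do.
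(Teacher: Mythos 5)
Your proof is correct. The paper itself gives no argument for this lemma --- it is stated with a bare citation to Bruhat--Tits --- so there is nothing to compare against; your derivation from the standard theory of minimal coset representatives (the criterion $w'(\alpha)\in\Sigma^+$ for all $\alpha\in\Delta\smallsetminus\Theta$, verified using that $w_{\Delta\smallsetminus\Theta}$ negates the positive roots of the sub-root-system spanned by $\Delta\smallsetminus\Theta$, together with the additivity $\ell(w'v)=\ell(w')+\ell(v)$ for $v\in W_{\Delta\smallsetminus\Theta}$) is the standard proof and is complete.
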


In the case when $G$ carries a $\Theta$-positive structure we want to apply this lemma 
when we consider the Weyl
group associated 
to the diagram
$ \{ \alpha_\Theta\} \cup\Delta \smallsetminus \Theta \subset \Delta$. Applying Lemma~\ref{lem:decomp} we then get the following
corollary.
\begin{corollary}\label{cor:decomp_Herm}
If $G$ admits a $\Theta$-positive structure with $\Theta\neq \Delta$. We consider 
  the\index{$\sigma_{\alpha_\Theta}$ the shortest length element in $w_{\{
      \alpha_\Theta\} \cup \Delta \smallsetminus \Theta}
    W_\Delta\smallsetminus \Theta$}
  element $\sigma_{\alpha_\Theta} \in W_{\{ \alpha_\Theta\} \cup\Delta \smallsetminus \Theta}
  $ defined by the equality
  \[w_{\{ \alpha_\Theta\} \cup \Delta \smallsetminus \Theta} = \sigma_{\alpha_\Theta}
  w_{\Delta \smallsetminus \Theta}.\] This element satisfies $\ell( w_{\{ \alpha_\Theta\}
    \cup \Delta \smallsetminus \Theta}) = \ell(\sigma_{\alpha_\Theta}) +\ell(
  w_{\Delta \smallsetminus \Theta}) $, it is the element of minimal length in
  the coset $w_{\{ \alpha_\Theta\} \cup \Delta \smallsetminus \Theta}
  W_{\Delta\smallsetminus \Theta}$.
\end{corollary}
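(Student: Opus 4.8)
The plan is to obtain Corollary~\ref{cor:decomp_Herm} as a direct application of Lemma~\ref{lem:decomp}, simply with the roles played there by $\Delta$ and $\Delta\smallsetminus\Theta$ now played by the subsets $\{\alpha_\Theta\}\cup(\Delta\smallsetminus\Theta)$ and $\Delta\smallsetminus\Theta$ of the Dynkin diagram. The first thing to check is that this substitution is legitimate: the Weyl group $W_F$ of any sub-diagram $F\subset\Delta$ is itself a finite Coxeter group with its own longest element, and $W_{\Delta\smallsetminus\Theta}$ is a standard parabolic subgroup of it (generated by the reflections $s_\alpha$, $\alpha\in\Delta\smallsetminus\Theta$). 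Lemma~\ref{lem:decomp}, taken from \cite[Proposition~3.9]{Bruhat_Tits_compl}, is a purely combinatorial statement about a finite Coxeter group and one of its standard parabolic subgroups, so it applies verbatim with $W$ replaced by $W_{\{\alpha_\Theta\}\cup(\Delta\smallsetminus\Theta)}$ and $W_{\Delta\smallsetminus\Theta}$ in the role of the parabolic subgroup.

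Concretely, I would first record that $\Delta\smallsetminus\Theta\subset\{\alpha_\Theta\}\cup(\Delta\smallsetminus\Theta)$, so that $W_{\Delta\smallsetminus\Theta}\le W_{\{\alpha_\Theta\}\cup(\Delta\smallsetminus\Theta)}$, and that the length function on $W_{\{\alpha_\Theta\}\cup(\Delta\smallsetminus\Theta)}$ with respect to its simple generators $\{s_\alpha\}_{\alpha\in\{\alpha_\Theta\}\cup(\Delta\smallsetminus\Theta)}$ agrees with the restriction of the length function $\ell$ on $W$ (this is the standard fact that a standard parabolic subgroup of a Coxeter group is itself a Coxeter group with the induced length function; it is exactly what allows us to write $\ell$ throughout). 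Then I would define $\sigma_{\alpha_\Theta}\in W_{\{\alpha_\Theta\}\cup(\Delta\smallsetminus\Theta)}$ by $w_{\{\alpha_\Theta\}\cup(\Delta\smallsetminus\Theta)}=\sigma_{\alpha_\Theta}\,w_{\Delta\smallsetminus\Theta}$ and invoke Lemma~\ref{lem:decomp} in $W_{\{\alpha_\Theta\}\cup(\Delta\smallsetminus\Theta)}$ to get both the additivity of lengths and the minimality of $\sigma_{\alpha_\Theta}$ in its coset $w_{\{\alpha_\Theta\}\cup(\Delta\smallsetminus\Theta)}\,W_{\Delta\smallsetminus\Theta}$.

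There is really no serious obstacle here; the only point requiring a moment's care is the compatibility of length functions just mentioned — one must be sure that the ``$\ell$'' appearing in the statement of the corollary is unambiguous, i.e.\ that it does not matter whether one measures the length of $\sigma_{\alpha_\Theta}$ and of $w_{\Delta\smallsetminus\Theta}$ inside $W$ or inside the smaller Coxeter group $W_{\{\alpha_\Theta\}\cup(\Delta\smallsetminus\Theta)}$. Since these two elements lie in the parabolic subgroup $W_{\{\alpha_\Theta\}\cup(\Delta\smallsetminus\Theta)}$, and a standard parabolic is length-closed (the length of an element of the parabolic, computed in the big group, equals its length computed in the parabolic), this is automatic; I would simply cite the standard reference for this (e.g.\ Bourbaki, or the same paper \cite{Bruhat_Tits_compl}). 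With that remark in place, the proof is a one-line deduction: apply Lemma~\ref{lem:decomp} to the Coxeter group $W_{\{\alpha_\Theta\}\cup(\Delta\smallsetminus\Theta)}$ and its standard parabolic $W_{\Delta\smallsetminus\Theta}$.
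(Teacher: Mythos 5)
Your proposal is correct and is exactly the paper's argument: the corollary is obtained by applying Lemma~\ref{lem:decomp} to the Coxeter system on $\{\alpha_\Theta\}\cup(\Delta\smallsetminus\Theta)$ with $W_{\Delta\smallsetminus\Theta}$ as the standard parabolic. Your extra remark on the compatibility of length functions for standard parabolic subgroups is a standard fact that the paper leaves implicit.
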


We now establish the following lemma. Applying it to the case when $G$~admits
a $\Theta$-positive structure, we get that the above elements are of order~$2$
(cf.\ Corollary~\ref{cor:order2}).

\begin{lemma} \label{lemma:order2-theta-weyl-group} 
  Let~$W=\langle
    s_\alpha\rangle_{\alpha\in \Delta}$ be a finite Coxeter group, let
    $w_\Delta\in W$ be its longest length element, and let $\alpha\mapsto
    \bar{\alpha} $ be the involution of~$\Delta$ defined by the equalities
    $w_\Delta s_\alpha w_{\Delta}^{-1} =s_{\bar{\alpha}}$. Let also
    $\Theta\subset \Delta$ be 
    invariant by the involution
    $\alpha\mapsto \bar{\alpha}$ and denote by $w_{\Delta\smallsetminus
      \Theta}$ the longest length element of the finite Coxeter group $\langle
    s_\alpha\rangle_{ \alpha\in \Delta\smallsetminus\Theta}$. Finally set
    $w_r = w_{\Delta} w^{-1}_{\Delta\smallsetminus\Theta}$. Then
    \begin{enumerate}
    \item \label{item:i:lemma-order2} The elements $w_\Delta$ and
      $w_{\Delta\smallsetminus \Theta}$ are of order~$2$.
    \item \label{item:ii:lemma-order2} One has $w_\Delta
      w_{\Delta\smallsetminus \Theta} w_{\Delta}^{-1} =
      w_{\Delta\smallsetminus \Theta}$. 
    \item \label{item:iii:lemma-order2} The element~$w_r$ commutes
      with~$w_\Delta$ and with~$w_{\Delta\smallsetminus \Theta}$ and is of order~$2$.
    \end{enumerate}
\end{lemma}
\begin{proof}
Point~(\ref{item:i:lemma-order2})
    follows from the fact that the inverse of a reduced
  expression of $w_{\Delta}$ is a reduced expression of~$w^{-1}_{\Delta}$ and
  hence uniqueness of the longest length element implies the equality
  $w^{-1}_{\Delta}=w_\Delta$. Point~(\ref{item:ii:lemma-order2}) follows from
  the fact that if $s_{\alpha_1}\cdots s_{\alpha_N}$ is a reduced expression
  of $w_{\Delta\smallsetminus \Theta}$ then $s_{\bar{\alpha}_1} \cdots
  s_{\bar{\alpha}_N}$ is a reduced expression of $w_\Delta
  w_{\Delta\smallsetminus \Theta} w^{-1}_{\Delta}$ and hence the equality $w_\Delta
  w_{\Delta\smallsetminus \Theta} w^{-1}_{\Delta} =   w_{\Delta\smallsetminus
    \Theta}$ holds again from the uniqueness of the longest
  length element (applied in the subgroup $\langle
    s_\alpha\rangle_{ \alpha\in \Delta\smallsetminus\Theta}$ this
    time). 
    Point~(\ref{item:iii:lemma-order2}) is now an immediate consequence
  of~(\ref{item:i:lemma-order2}), (\ref{item:ii:lemma-order2}), and the
  definition of~$w_r$.
\end{proof}

\begin{corollary}\label{cor:order2}
  If $G$ admits a $\Theta$-positive structure with $\Theta\neq
  \Delta$, then the elements~$w_\Delta$ and
  $w_{\{\alpha_\Theta\} \cup \Delta\smallsetminus\Theta}$ commute, and 
  the elements
    $w^{\Theta}_{\max}$ and $\sigma_{\alpha_\Theta}$ are of
    order~$2$.
\end{corollary}

\begin{proof}
Observe that whenever the Dynkin diagram contains a double arrow,  the
involution $\alpha \to \bar{\alpha}$ is the identity and the hypothesis of
Lemma~\ref{lemma:order2-theta-weyl-group} are automatically
satisfied.

The Dynkin diagram of the root system~$\Delta$ contains a double arrow, hence 
Lemma~\ref{lemma:order2-theta-weyl-group} implies  that
$w^{\Theta}_{\max}$ is of order~$2$ and, when applied to $\Theta\smallsetminus
\{ \alpha_\Theta\}$, that $w_\Delta$ and
$w_{\{\alpha_\Theta\} \cup \Delta\smallsetminus\Theta}$ commute.

  The Dynkin diagram of the root system $\{\alpha_\Theta\} \cup \Delta\smallsetminus \Theta$ contains a double arrow.    Lemma~\ref{lemma:order2-theta-weyl-group} applied to this root system
    implies that $\sigma_{\alpha_\Theta}$ is of order~$2$. 
\end{proof}

\subsection{The \texorpdfstring{$\Theta$}{Θ}-length}
\label{sec:theta-length}

We introduce now a function on the Weyl group~$W$ that depends on
$\Theta\subset \Delta$. This function can be introduced in general, but is particularly useful and has good properties when  $G$ admits a
$\Theta$-positive structure, because then the sets $\Theta$ and
$\Delta\smallsetminus \Theta$ are connected by a double arrow (we will assume
this starting from Lemma~\ref{lem:theta-length-on-reduced-expression} below). 

For every~$w$ in~$W$, we denote by $\ell_\Theta(w)$ the minimal number
of occurrences of elements in $\{s_\alpha\}_{\alpha\in \Theta}$ when $w$~is
written as an expression
in the generating set
$\{s_\alpha\}_{\alpha\in \Delta}$, 
\begin{multline}
  \label{eq:lThetaw}
  \ell_\Theta(w) = \min \bigl\{ k\in \N \mid \exists N\in \N,\, (\alpha_1, \dots,
  \alpha_N)\in \Delta^N \text{ with }\bigr.\\
\bigl. w=s_{\alpha_1}\cdots s_{\alpha_N} \text{
  and } k=\sharp\{ j\in \llbracket 1, N\rrbracket \mid \alpha_j \in \Theta\} \bigr\}.
\end{multline}

\begin{definition}
  The function \(\ell_\Theta\colon W\to \N\) is called the
  \emph{$\Theta$-length}.\index{$\ell_\Theta$ the
  {$\Theta$-length}}
\end{definition}

One has $\ell_\Theta(s_\alpha)=1$ for every~$\alpha$ in~$\Theta$,
$\ell_\Theta(s_\alpha)=0$ for every~$\alpha$
in~\mbox{$\Delta \smallsetminus \Theta$}, and $\ell_\Theta(ab)\leq
\ell_\Theta(a)+\ell_\Theta(b)$ for every~$a$ and~$b$ in~$W$.

When $\Theta=\Delta$, the $\Theta$-length coincides with the length function
on~$W$ already introduced.

The $\Theta$-length gives a simple characterization
of~$W_{\Delta\smallsetminus\Theta}$.

\begin{lemma}
  \label{lemma:theta-length-0=W_Levi}
  The subgroup~$W_{\Delta\smallsetminus\Theta}$ is the set of elements of zero
  $\Theta$-length:
  \[ W_{\Delta\smallsetminus\Theta} =\{ x\in W \mid \ell_\Theta(x)=0\}.\]
\end{lemma}
\begin{proof}
  Indeed, for any~$x$ in~$W$, the following are equivalent:
  \begin{itemize}
  \item $\ell_\Theta(x)=0$
  \item  there exist $N\in \N$ and $\alpha_1, \dots, \alpha_N$
    in $\Delta$ such that $x= s_{\alpha_1}\cdots s_{\alpha_N}$ and $\{ i \in
    \llbracket 1, N \rrbracket \mid \alpha_i\in \Theta\} =\emptyset$,
  \item  there exist $N\in \N$ and $\alpha_1, \dots, \alpha_N$
    in $\Delta\smallsetminus\Theta$ such that $x= s_{\alpha_1}\cdots s_{\alpha_N}$,
  \item  $x$~belongs to~$W_{\Delta\smallsetminus\Theta}$.\qedhere
  \end{itemize}
\end{proof}

More is true: the $\Theta$-length is $W_{\Delta\smallsetminus\Theta}$-invariant.
\begin{lemma}\label{lem:theta-length-invariant}
  The function~$\ell_\Theta$ is invariant under the subgroup
  $W_{\Delta\smallsetminus \Theta}$:     for every $w\in W$ and
  every $x\in W_{\Delta\smallsetminus\Theta}$ one has $\ell_{\Theta}(w)= \ell_{\Theta}(xw)= \ell_{\Theta}(wx) $.
\end{lemma}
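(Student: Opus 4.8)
The plan is to show that $\ell_\Theta$ is both left- and right-invariant under $W_{\Delta\setminus\Theta}$ by exploiting the subadditivity and the fact that, within $W_{\Delta\setminus\Theta}$ itself, $\ell_\Theta$ vanishes identically. First I would observe that for any $x\in W_{\Delta\setminus\Theta}$ one has $\ell_\Theta(x)=0$: indeed $x$ can be written as a word in $\{s_\alpha\}_{\alpha\in\Delta\setminus\Theta}$, a word containing no generator indexed by $\Theta$, so by the expression~\eqref{eq:lThetaw} we get $\ell_\Theta(x)\le 0$, hence $\ell_\Theta(x)=0$. (Equivalently, the constant-zero function on $W_{\Delta\setminus\Theta}$ extended appropriately competes in the $\min$.)

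Next I would use subadditivity in both directions. For $w\in W$ and $x\in W_{\Delta\setminus\Theta}$, subadditivity gives $\ell_\Theta(xw)\le \ell_\Theta(x)+\ell_\Theta(w)=\ell_\Theta(w)$. Conversely, since $x^{-1}\in W_{\Delta\setminus\Theta}$ as well, $\ell_\Theta(w)=\ell_\Theta(x^{-1}(xw))\le \ell_\Theta(x^{-1})+\ell_\Theta(xw)=\ell_\Theta(xw)$. Combining the two inequalities yields $\ell_\Theta(xw)=\ell_\Theta(w)$. The identical argument with the roles of left and right multiplication swapped — using $\ell_\Theta(wx)\le\ell_\Theta(w)+\ell_\Theta(x)=\ell_\Theta(w)$ and $\ell_\Theta(w)=\ell_\Theta((wx)x^{-1})\le\ell_\Theta(wx)+\ell_\Theta(x^{-1})=\ell_\Theta(wx)$ — gives $\ell_\Theta(wx)=\ell_\Theta(w)$.

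There is essentially no obstacle here: the only thing to be slightly careful about is that $\ell_\Theta$ is genuinely subadditive, but this is immediate from its definition as the largest subadditive function with the prescribed values on generators (or directly from~\eqref{eq:lThetaw} by concatenating minimizing expressions for $a$ and $b$). I would write the proof in two or three sentences along exactly the lines above, noting that the key point is simply that $W_{\Delta\setminus\Theta}$ is generated by generators on which $\ell_\Theta$ vanishes, so its elements have $\Theta$-length zero, and then invariance follows from subadditivity applied symmetrically.
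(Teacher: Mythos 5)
Your proof is correct and follows essentially the same route as the paper's: the paper establishes $\ell_\Theta(wx)\le\ell_\Theta(w)$ by concatenating a minimizing expression of $w$ with a $\Theta$-free word for $x$ (which is exactly your "subadditivity plus $\ell_\Theta(x)=0$" packaged concretely), and then applies the same bound to the pair $(wx,x^{-1})$ to get the reverse inequality. No gaps.
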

\begin{proof}
  By symmetry we will prove the equality only for right multiplication by~$W_{\Delta\smallsetminus\Theta}$. 
  By the subadditivity
  of~$\ell_\Theta$ and Lemma~\ref{lemma:theta-length-0=W_Levi}, 
for every $w\in W$ and every $x\in W_{\Delta\smallsetminus\Theta}$, one has
  \(\ell_\Theta(wx) \leq \ell_\Theta(w).\)

Applying the above bound to the pair $(wx, x^{-1})$ replacing $(w,x)$, we
obtain $\ell_\Theta( (wx)x^{-1})\leq \ell_\Theta(wx)$ and hence the sought for
equality since $(wx)x^{-1}=w$.
\end{proof}

A direct reformulation is
\begin{corollary}
  \label{coro:theta-length-factors}
  The $\Theta$-length factors through 
  \( W_{\Delta\smallsetminus\Theta}\backslash W /
  W_{\Delta\smallsetminus\Theta}\), the quotient
  of~$W$ by the left-right action of~$W_{\Delta\smallsetminus\Theta}$.
\end{corollary}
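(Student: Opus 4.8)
The statement is an immediate reformulation of Lemma~\ref{lem:theta-length-invariant}, so the plan is simply to unwind the definition of the double-coset quotient and invoke that lemma twice. First I would recall that $W_{\Delta\setminus\Theta}\backslash W / W_{\Delta\setminus\Theta}$ is by definition the set of orbits of $W$ under the two-sided action $(x,y)\cdot w = x w y^{-1}$ of $W_{\Delta\setminus\Theta}\times W_{\Delta\setminus\Theta}$; equivalently, two elements $w,w'\in W$ represent the same class precisely when $w' = x w y$ for some $x,y\in W_{\Delta\setminus\Theta}$. To say that $\ell_\Theta$ \emph{factors through} this quotient means exactly that $\ell_\Theta$ is constant on each such class, i.e.\ that $\ell_\Theta(xwy)=\ell_\Theta(w)$ for all $w\in W$ and all $x,y\in W_{\Delta\setminus\Theta}$.

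This is precisely what Lemma~\ref{lem:theta-length-invariant} yields once combined with itself: applying the lemma with the pair $(w,y)$ gives $\ell_\Theta(wy)=\ell_\Theta(w)$, and applying it again to the pair $(wy,x)$, this time for the left multiplication, gives $\ell_\Theta\bigl(x(wy)\bigr)=\ell_\Theta(wy)$. Chaining these two equalities gives $\ell_\Theta(xwy)=\ell_\Theta(w)$, so $\ell_\Theta$ descends to a well-defined function on $W_{\Delta\setminus\Theta}\backslash W / W_{\Delta\setminus\Theta}$, which is the assertion.

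There is essentially no obstacle to overcome here: the only point requiring a modicum of care is to note that the two separate invariances recorded in Lemma~\ref{lem:theta-length-invariant} (left invariance and right invariance under $W_{\Delta\setminus\Theta}$) together amount to invariance under the full two-sided action, and that invariance under the two-sided action is exactly the condition for descending to the orbit set $W_{\Delta\setminus\Theta}\backslash W / W_{\Delta\setminus\Theta}$. One could also phrase the whole argument in one line by saying that $\ell_\Theta$ is constant on every double coset $W_{\Delta\setminus\Theta}\, w\, W_{\Delta\setminus\Theta}$ by Lemma~\ref{lem:theta-length-invariant}, hence defines a function on the set of such double cosets.
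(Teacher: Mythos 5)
Your proof is correct and takes exactly the approach the paper intends: the paper presents this corollary as a direct reformulation of Lemma~\ref{lem:theta-length-invariant}, and your argument of applying the left and right invariances in succession to obtain $\ell_\Theta(xwy)=\ell_\Theta(w)$ is precisely that reformulation spelled out.
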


The next result is specific to $\Theta$-positivity, more accurately it relies
on the fact that $\Theta$ and $\Delta\smallsetminus \Theta$ are connected only
by double arrows.

\begin{lemma}\label{lem:theta-length-on-reduced-expression}
  Suppose that $G$ admits a $\Theta$-positive structure.
 The minimum in Equation~\eqref{eq:lThetaw} is then achieved on all the reduced
    expressions: for every $w\in W$  and for
    every $(\alpha_1, \dots, \alpha_{\ell(w)})\in \Delta^{\ell(w)}$ such that $w=s_{\alpha_1}
    \cdots s_{\alpha_{\ell(w)}}$, one has
    \[ \ell_\Theta(w) = \sharp \{ j\in \llbracket 1, \ell(w)\rrbracket \mid \alpha_j\in \Theta\}.\]
\end{lemma}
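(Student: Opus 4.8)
The plan is to show that the ``obvious'' count on any reduced expression cannot exceed $\ell_\Theta(w)$, which combined with the defining minimality of $\ell_\Theta$ gives equality. So fix $w \in W$ and a reduced expression $w = s_{\alpha_1} \cdots s_{\alpha_{\ell(w)}}$, and set $k = \sharp\{ j \le \ell(w) \mid \alpha_j \in \Theta\}$; we must prove $k \le \ell_\Theta(w)$, since the reverse inequality is immediate from Equation~\eqref{eq:lThetaw}. The natural strategy is to produce, from the definition of $\ell_\Theta(w)$, an expression $w = s_{\beta_1} \cdots s_{\beta_N}$ with exactly $\ell_\Theta(w)$ of the $\beta_j$ in $\Theta$, and then to ``reduce'' this expression to a reduced one by deleting pairs of letters, using the Exchange/Deletion condition in the Coxeter group $(W,\Delta)$, while controlling the number of $\Theta$-letters that get removed.

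The key point is a monotonicity statement: the quantity $\sharp\{ j \mid \beta_j \in \Theta\}$, taken over \emph{all} expressions for a fixed $w$, is minimized on reduced expressions. Concretely, if $s_{\beta_1}\cdots s_{\beta_N}$ is any expression for $w$ with $N > \ell(w)$, then by the Deletion Condition there are indices $a < b$ such that $w = s_{\beta_1}\cdots \widehat{s_{\beta_a}} \cdots \widehat{s_{\beta_b}} \cdots s_{\beta_N}$ (two letters deleted). Iterating, one arrives at a reduced expression after deleting $(N-\ell(w))/2$ such pairs. Each deletion removes a pair of letters $s_{\beta_a}, s_{\beta_b}$; the subtlety is that in general these two deleted letters need not have the same index, so one cannot a priori conclude that the $\Theta$-count drops by an even number or stays put. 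The heart of the argument is therefore: when we delete a pair via the Deletion Condition, the two deleted generators \emph{are} the same generator $s_\beta$ (this is where the hypothesis that $G$ admits a $\Theta$-positive structure — and hence, via Lemma~\ref{lemma:order2-theta-weyl-group}, that the relevant longest elements are involutions and the diagram involution $\alpha \mapsto \bar\alpha$ is trivial — will be used), so the $\Theta$-count of the expression changes by $0$ or $2$ at each step and in particular never increases as we pass to a reduced expression. Hence any reduced expression has $\Theta$-count at most that of the minimizing expression, i.e.\ $k \le \ell_\Theta(w)$, as wanted.

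Let me make the mechanism precise. The statement I want is: in $(W,\Delta)$, if $s_{\beta_1}\cdots s_{\beta_N}$ is not reduced, the Deletion Condition lets us choose the deleted pair $\{a,b\}$ with $\beta_a = \beta_b$. In a general Coxeter group this is false, so one must exploit the structure here; the cleanest route is to invoke the case-by-case reduced-expression data already assembled — the reduced expressions \eqref{eq:red-exp-w-Delta}, \eqref{eq:red-exp-w-max-Theta}, \eqref{eq:red-exp-w-Delta-F4}, \eqref{eq:red-exp-w-max-Theta-F4} and the block structure of $w_\Delta = w_{\max}^\Theta \, w_{\Delta\setminus\Theta}$ from Lemma~\ref{lem:decomp} — together with the observation that, by Corollary~\ref{coro:theta-length-factors}, $\ell_\Theta$ only depends on the double coset $W_{\Delta\setminus\Theta} w W_{\Delta\setminus\Theta}$. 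Thus one may assume $w$ is the minimal-length representative of its double coset, write it in the form prescribed by Proposition~\ref{prop:longestelementWTheta} (the letters from $R(\Theta)$ threaded through letters of $W_{\Delta\setminus\Theta}$), and check directly that the $\Theta$-count on a reduced expression equals the number of $\sigma_\alpha$-blocks, which is $\ell_\Theta(w)$ by Lemma~\ref{lem:theta-length-invariant} and the very definition of the generators $\sigma_\alpha$.

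The main obstacle is exactly this combinatorial step — controlling which letters get deleted when de-reducing an expression, equivalently showing that the $\Theta$-count is non-increasing under the Deletion Condition. I expect the clean proof to sidestep the general-Coxeter difficulty entirely by reducing to the minimal double-coset representative (via Corollary~\ref{coro:theta-length-factors}) and then using the explicit reduced expressions from Section~\ref{sec:long-elem-wtheta} plus the fact (Lemma~\ref{lemma:order2-theta-weyl-group}\eqref{item:1:lemma:order2-theta-weyl-group}) that $w_{\max}^\Theta$ and the $\sigma_\alpha$ are honest involutions; the four cases of the classification then make the verification short.
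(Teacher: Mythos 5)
Your overall target is right---show that the na\"ive count on reduced expressions is both $\geq \ell_\Theta(w)$ (trivial) and $\leq \ell_\Theta(w)$ (by descending from a minimizing expression to a reduced one)---but there are two genuine gaps, and the second one is the heart of the matter.

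First, a misplaced worry and a real one. When you delete a pair of letters via the Deletion Condition, the $\Theta$-count of the word can only decrease or stay the same, \emph{regardless} of whether the two deleted generators coincide: deleting letters from a sequence cannot increase the number of its entries lying in~$\Theta$. So the same-generator claim you flag as ``the heart of the argument'' is not needed for monotonicity (and is false in general anyway). The real problem is that this descent produces \emph{one particular} reduced expression whose $\Theta$-count is $\leq \ell_\Theta(w)$; the lemma asserts the equality for \emph{every} reduced expression of~$w$. To pass from one to all you need to know that the $\Theta$-count is constant across all reduced expressions of a fixed~$w$, and your proposal never establishes this. This is exactly where the $\Theta$-positive hypothesis enters in the paper's proof: by Tits' word theorem any two reduced expressions are connected by braid moves, and a braid move $(s_\alpha s_{\alpha'})^{m_{\alpha,\alpha'}/\cdots}$ preserves the $\Theta$-count because when exactly one of $\alpha,\alpha'$ lies in~$\Theta$ the order $m_{\alpha,\alpha'}$ is \emph{even} (every edge between $\Theta$ and $\Delta\setminus\Theta$ is a double arrow by Theorem~\ref{thm:classification}), so both sides of the braid relation contain $m_{\alpha,\alpha'}/2$ occurrences of each generator. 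The same braid-move invariance, combined with nil-moves $s_\alpha s_\alpha \to e$ (which delete two copies of the \emph{same} generator, hence change the count by $0$ or~$2$), is what the paper uses to reduce an arbitrary minimizing expression to a reduced one without increasing the count.

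Second, your fallback route does not close the gap. Corollary~\ref{coro:theta-length-factors} tells you $\ell_\Theta(w)$ depends only on the double coset of~$w$, but it says nothing about the reduced-expression count of~$w$ versus that of the minimal coset representative; relating those two counts, and showing the count is the same for all reduced expressions of a given element, is precisely the content of the lemma. The explicit reduced expressions in Proposition~\ref{prop:longestelementWTheta} concern only $w_\Delta$ and $w_{\max}^\Theta$, not an arbitrary $w\in W$ with an arbitrary reduced expression, so a case-by-case check there cannot substitute for the braid-invariance argument.
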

\begin{proof}
  For this proof we will denote, for every~$n$ in~$\N$ and every $(\alpha_1,\dots,
  \alpha_n)\in \Delta^n$, by $\hat{\ell}_\Theta(\alpha_1,\dots, \alpha_n)$ the
  number of occurrences of elements in~$\Theta$ in this finite sequence:
  \[ \hat{\ell}_\Theta(\alpha_1,\dots, \alpha_n) = \sharp \{ j\in \llbracket 1, n\rrbracket \mid \alpha_j
    \in \Theta\}.\]
  This function is additive:
  \[\hat{\ell}_\Theta(\alpha_1, \dots, \alpha_n, \alpha_{n+1},\dots, \alpha_m) =
  \hat{\ell}_\Theta(\alpha_1, \dots, \alpha_n) + \hat{\ell}_\Theta(\alpha_{n+1}, \dots,
  \alpha_m).\]

  By definition, for every element~$w$ of~$W$,  $\ell_\Theta(w)$ is
  the infimum of the $\hat{\ell}_{\Theta}( \alpha_1,\dots, \alpha_n)$ for
  $(\alpha_1, \dots, \alpha_n)$ varying among the expressions of~$w$:
  $w=s_{\alpha_1}\cdots s_{\alpha_n}$. 

  If $n'\leq n$ and if $(\alpha^{\prime}_{1}, \dots \alpha^{\prime}_{n'})$ is
  obtained from $(\alpha_1, \dots, \alpha_n)$ by removing some of the entries,
  then $\hat{\ell}_\Theta(\alpha^{\prime}_{1}, \dots \alpha^{\prime}_{n'}) \leq
  \hat{\ell}_\Theta(\alpha_1, \dots, \alpha_n)$.

  If $(\alpha_1, \dots, \alpha_n)$ and $(\beta_1, \dots, \beta_n)$ differ by a
  braid relation then
  \[\hat{\ell}_\Theta (\alpha_1, \dots, \alpha_n) =
  \hat{\ell}_\Theta(\beta_1,\dots, \beta_n).\] Indeed, by the additivity of the
  function $\hat{\ell}_\Theta$ on expressions, one needs to check this equality only
  in the case when $n$~is the order of $s_{\alpha}
  s_{\alpha'}$ ($\alpha,\alpha'\in \Delta$) and one has $(\alpha_1, \alpha_2,
  \dots) = (\alpha, \alpha', \dots)$ and $(\beta_1, \beta_2,
  \dots)=(\alpha',\alpha,\dots)$ (precisely  $(\alpha_j, \beta_j)
  =(\alpha,\alpha')$ for every odd $j\leq n$ and  $(\alpha_j, \beta_j)
  =(\alpha',\alpha)$ for every even $j\leq n$). When $\alpha$ and $\alpha'$
  both belong to~$\Theta$, we have $\hat{\ell}_\Theta(\alpha_1, \dots, \alpha_n)=n$ and 
  $\hat{\ell}_\Theta(\beta_1, \dots, \beta_n)=n$; When $\alpha$ and $\alpha'$
  both belong to~$\Delta\smallsetminus\Theta$, we have $\hat{\ell}_\Theta(\alpha_1, \dots, \alpha_n)=0$ and 
  $\hat{\ell}_\Theta(\beta_1, \dots, \beta_n)=0$.
  The last case to consider (up to exchanging the $2$ sequences) is when $\alpha$ belongs to~$\Theta$ and
  $\alpha'$ belongs to~$\Delta\smallsetminus\Theta$; in this case, we
  know that $n$~is equal to~$4$ (since $\alpha$ and $\alpha'$ are connected by
  a double arrow) and  we have $\hat{\ell}_\Theta(\alpha_1, \dots, \alpha_n)=2$ and 
  $\hat{\ell}_\Theta(\beta_1, \dots, \beta_n)=2$. In every case, the equality
  $\hat{\ell}_\Theta(\alpha_1, \dots, \alpha_n)=\hat{\ell}_\Theta(\beta_1, \dots,
  \beta_n)$ holds.

  The conclusions of the lemma follow from these remarks since:
    \begin{itemize}[leftmargin=*]
  \item from every expression $(\alpha_1, \dots, \alpha_n)$ of~$w$ one can
    deduce a reduced expression $(\beta_1, \dots, \beta_m)$ by
    applying successively a finite number of braid relations or removing subexpressions of the
    form $(\alpha,\alpha)$ ($\alpha\in \Delta$) so that
    $\hat{\ell}_\Theta(\beta_1, \dots, \beta_m) \leq
    \hat{\ell}_\Theta(\alpha_1, \dots, \alpha_n)$, therefore
    \[
    \ell_\Theta(w) = \inf \{ \hat{\ell}_\Theta(\beta_1, \dots, \beta_m) \mid
    (\beta_1, \dots, \beta_m) \text{ is a reduced expr.\ of } w\},\]
    and
  \item two reduced expressions $(\beta_1, \dots, \beta_m)$ and $(\gamma_1,
    \dots, \gamma_m)$ of~$w$ can be obtained from one another by
    applying successively a finite number of braid relations so that
    $\hat{\ell}_\Theta(\beta_1, \dots, \beta_m) = \hat{\ell}_\Theta(\gamma_1, \dots, \gamma_m)$.
  \end{itemize}
  Thus $\ell_\Theta(w) = \hat{\ell}_\Theta(\beta_1, \dots, \beta_m)$ for
  every reduced expression $(\beta_1, \dots, \beta_m)$ of~$w$.
\end{proof}

\subsection{The \texorpdfstring{$\Theta$}{Θ}-Weyl group and its generators}
\label{sec:theta-weyl-group}

We assume for the rest of this section that $G$~is a simple Lie group admitting a $\Theta$-positive structure. 

\begin{notation}\label{nota:sigma-alpha}
  We set
  \begin{itemize}[leftmargin=*]
  \item When $\Theta=\Delta$, $\sigma_\alpha=s_\alpha$ for all~$\alpha$
    in~$\Delta$;\index{$\sigma_\alpha$ ($\alpha$ in $\Theta$) the element
      of~$W$ equal to $s_\alpha$ (if $\alpha\neq\alpha_\Theta$) or to
      $\sigma_{\alpha_\Theta}$  (if $\alpha=\alpha_\Theta$)}
  \item When $\Theta\neq \Delta$, for every $\alpha\in \Theta\smallsetminus\{ \alpha_\Theta\}$,
    $\sigma_\alpha=s_\alpha$;
  \item And  $R(\Theta)$ to be equal to  $\{
    \sigma_\alpha\}_{\alpha\in\Theta}$ (recall that $\sigma_{\alpha_\Theta}$
    is defined in Corollary~\ref{cor:decomp_Herm}).\index{$R(\Theta)$ the subset
      $\{\sigma_\alpha\}_{\alpha\in \Theta}$ of~$W$}
  \end{itemize}
\end{notation}

\begin{definition}\label{defi:theta-weyl-group}
  We define the \emph{$\Theta$-Weyl group} to be the subgroup~$W(\Theta)$
  of~$W$ generated by~$R(\Theta)$.\index{$W(\Theta)$ the subgroup of¬$W$
    generated by $R(\Theta)$} 
\end{definition}

We first investigate the Weyl group of the split
real form~$\mathfrak{h}_\Theta$ as a subgroup of~$W$.

\begin{lemma}
  \label{lemma:Weyl-group_H_J}
  Let $G$~be a simple Lie group admitting a $\Theta$-positive structure with
  $\Theta\neq \Delta$. Then the standard generating Coxeter system of the Weyl
  group $W({H}_\Theta)$ of the split real form~$\mathfrak{h}_\Theta$
  is given by 
  \[ s_\alpha\ \text{for all }\alpha\in \Theta\smallsetminus\{\alpha_\Theta\},\
    \text{and }
    w_{\{\alpha_\Theta \}\cup \Delta\smallsetminus \Theta}.\]
\end{lemma}
\begin{proof}
  Let $(E_\alpha, F_\alpha, D_\alpha)_{\alpha\in \Theta}$ be the
  $\Theta$-system generating~$\mathfrak{h}_\Theta$. Then the generating set of
  $W({H}_\Theta)$ is represented by the elements $\exp(
  \frac{\pi}{2}(E_\alpha-F_\alpha))$ for~$\alpha$ varying in~$\Theta$.

  For $\alpha\neq \alpha_\Theta$, one has $(E_\alpha, F_\alpha)=(X_\alpha,
  -\tau(X_{\alpha}))$ and $\exp(
  \frac{\pi}{2}(E_\alpha-F_\alpha)) = \dot{s}_\alpha$ represents~$s_\alpha$.

  For $\alpha=\alpha_\Theta$, one has (with Notation~\ref{nota:ZkYkWk}), $E_\alpha-F_\alpha = (Z_0-Y_0)+
  (Z_1-Y_1)+\cdots + (Z_d-Y_d)$. Since the elements $(Z_k-Y_k)$ pairwise
  commute
  (Lemma~\ref{lemma:zero_bracket_ZkYkWk}), 
  \begin{align*}
    \exp\Bigl( \frac{\pi}{2}(E_\alpha-F_\alpha)\Bigr)
    &= \prod_{k=0}^{d} \exp\Bigl( \frac{\pi}{2}(Z_k-Y_k)\Bigr)\\
    \intertext{and since, for all~$k$ in $\llbracket 1, d\rrbracket$ (again
    with $\dot{s}_k =\exp\bigl(\frac{\pi}{2}(X_k+\tau(X_k))\bigr)$, cf.\ Notation~\ref{nota:ZkYkWk}), $Z_k-Y_k=
  \Ad( \dot{s}_k)(Z_{k-1}-Y_{k-1})$,}
    \exp\Bigl( \frac{\pi}{2}(Z_k-Y_k)\Bigr)
    &= \dot{s}_k \exp\Bigl( \frac{\pi}{2}(Z_{k-1}-Y_{k-1})\Bigr) \dot{s}_{k}^{-1}.
  \end{align*}
  Denote  $\dot{s}_0= \exp(
  \frac{\pi}{2}(Z_0-Y_0))$ so that $\dot{s}_0,\dot{s}_1,\dots, \dot{s}_d$
  are lifts of the standard generators $s_0, s_1,\dots, s_d$ of the Weyl group
  $W_{\{\alpha_\Theta\}\cup \Delta\smallsetminus\Theta}$ which is of type
  $\lietype{C}_{d+1}$. Since the longest length element in $W_{\{\alpha_\Theta\}\cup
    \Delta\smallsetminus\Theta}$ is the product of $s_0$ and its conjugate by
  $s_1$, $s_2 s_1$, \dots, $s_d\cdots s_1$ (cf.\
  Appendix~\ref{app:bn}) and since the above formula shows
  that  $\exp( \frac{\pi}{2}(E_\alpha-F_\alpha))$ is the product of $\dot{s}_0$ and its conjugate by
  $\dot{s}_1$, $\dot{s}_2\dot{s}_1$, \dots, $\dot{s}_d\cdots \dot{s}_1$, the
  result follows.
\end{proof}

\begin{corollary}
  \label{cor:w-h-theta-centralizes}
  The group $W({H}_\Theta)$ centralizes $W_{\Delta \smallsetminus \Theta}$.
\end{corollary}
\begin{proof}
    Let~$\alpha$ be in~$\Theta$. If $\alpha\neq \alpha_\Theta$, it means that
  there are no arrows in the Dynkin diagram between~$\alpha$ and
  $\Delta\smallsetminus \Theta$; thus $s_\alpha$ commutes with~$s_\beta$ for every~$\beta$ in $\Delta\smallsetminus\Theta$, this means that
  $s_\alpha$~centralizes $W_{\Delta\smallsetminus\Theta}$.

  If $\alpha=\alpha_\Theta$,  we know that in this
  situation the element $w_{\{\alpha_\Theta\} \cup
    \Delta\smallsetminus\Theta}$ is in the center of $W_{\{\alpha_\Theta\} \cup
    \Delta\smallsetminus\Theta}$ since it is $-\id$ as en element of
  $\GL(\mathfrak{a}^*)$ (cf.\ Appendix~\ref{app:bn}). Thus the generating set
  of $W({H}_\Theta)$ is contained in the centralizer
  of~$W_{\Delta\smallsetminus\Theta}$, this implies the statement.
\end{proof}

We next show that the $\Theta$-Weyl group is isomorphic to the Weyl group of
the split real form~$\mathfrak{h}_\Theta$.

\begin{lemma}
  \label{lem:WHTheta-iso-WTheta}
  Let $G$~be a simple Lie group admitting a $\Theta$-positive structure with
  $\Theta\neq \Delta$.

  There exists then a unique homomorphism $\phi\colon
  W({H}_\Theta)\to W$ such that, for all~$\alpha$ in
  $\Theta\smallsetminus \{ \alpha_\Theta\}$, $\phi( s_\alpha)=e$ and
  $\phi(w_{\{\alpha_\Theta\}\cup \Delta \smallsetminus \Theta})=w_{\Delta\smallsetminus\Theta}$.

  The map $\psi\colon W({H}_\Theta)\to W, \,  w\mapsto w \phi(w)$ is a group homomorphism, it is one-to-one and
  its image is equal to the $\Theta$-Weyl group $W(\Theta)$.
\end{lemma}
\begin{proof}
  Uniqueness of~$\phi$ follows from the fact it is defined on a generating set
  of~$W({H}_\Theta)$. To prove existence, we have to check that
  $\phi$~preserves the relations in $W({H}_\Theta)$, i.e.\ the fact
  that the generators are of order~$2$ and the braid relations. Since
  $w_{\Delta\smallsetminus \Theta}$ is of order~$2$, we just have to check the
  compatibility with the braid relations (cf.\ Section~\ref{sec:weyl-group}); for~$\alpha$ and~$\beta$ in $\Theta\smallsetminus
  \{\alpha_\Theta\}$, the $2$~members of the braid relation involving
  $s_\alpha$ and $s_\beta$ are sent to~$e$ in~$W$; when one of $\alpha$
  or~$\beta$ is equal to~$\alpha_\Theta$, the members of the braid relations
  have length~$2$ or~$4$ and their images under~$\phi$ are both equal to
  $w_{\Delta\smallsetminus \Theta}$ (if the length is~$2$) or both equal to $w_{\Delta\smallsetminus
    \Theta}^{2}=e$ (if the length is~$4$). This concludes the existence of~$\phi$.

  Since $w_{\Delta\smallsetminus\Theta}$ centralizes $W({H}_\Theta)$ (Corollary~\ref{cor:w-h-theta-centralizes}),
  the map~$\psi$ is indeed an homomorphism.
  The image by~$\psi$ of the generating set of $W({H}_\Theta)$ (cf.\ Lemma~\ref{lemma:Weyl-group_H_J}) is equal to
  $R(\Theta)$ and thus the image of~$\psi$ is equal to $W(\Theta)$.
  For all~$w$
  in~$W({H}_\Theta)$, $\ell_\Theta( \psi(w))=\ell_\Theta(w)$ since
  $\phi(w)$ belongs to $W_{\Delta\smallsetminus \Theta}$ (Lemma~\ref{lem:theta-length-invariant}). Thus
  the kernel of~$\psi$ is contained in $W({H}_\Theta) \cap
  W_{\Delta\smallsetminus\Theta}$ and is thus reduced to~$e$ since every
  nontrivial element of
  $W({H}_\Theta)$ has positive $\Theta$-length (cf.\ Lemma~\ref{lem:theta-length-on-reduced-expression}).
\end{proof}

As a corollary we have
that
$W(\Theta)$ is a Coxeter group of the same type as the type of the Lie algebra~$\mathfrak{h}_\Theta$.

\begin{corollary}\label{coro:WTheta}
  Let $G$~be a simple Lie group admitting a $\Theta$-positive structure.  Then
  $(W(\Theta), R(\Theta))$ is a Coxeter system of the following type:
  \begin{enumerate}[leftmargin=*]
  \item If $G$ is a split real form and $\Theta = \Delta$, $(W(\Theta),
    R(\Theta) )$ has the same type as~$G$.
  \item If $G$ is Hermitian of 
    tube type and of real rank~$r$ and if $\Theta = \{ \alpha_r\}$, then
    $(W(\Theta), R(\Theta) )$ is of type~$\lietype{A}_1$.
  \item If $G$ is locally isomorphic to $\SO(p+1,p+k)$, $p>0$, $k>1$, and
    $\Theta = \{ \alpha_1, \dots, \alpha_{p}\}$, then
    $(W(\Theta), R(\Theta) )$ is of type~$\lietype{B}_{p}$.
  \item If $G$ is the real form of $\lietype{F}_4, \lietype{E}_6, \lietype{E}_7$, or of~$\lietype{E}_8$, whose reduced
    root 
    system is of type~$\lietype{F}_4$, and $\Theta = \{ \alpha_1, \alpha_2\}$, then
    $(W(\Theta) , R(\Theta))$ is of type~$\lietype{G}_2$.
  \end{enumerate}
\end{corollary}

\begin{proof}
  When $G$ is a split real form, then $\Theta = \Delta$, then
  $R(\Theta)=\{ s_\alpha\}_{\alpha \in \Delta}$, hence $W(\Theta) = W$.

  Let $G$ be of real rank~$r$ and of Hermitian tube type, and let
  $\Theta = \{ \alpha_r\} = \{ \alpha_\Theta\}$. Consequently $W(\Theta)$ is
  generated by the single order~$2$ element $\sigma_{\alpha_\Theta}$, thus the
  result.

  The other cases follow directly from Lemma~\ref{lem:WHTheta-iso-WTheta}.
\end{proof}

\subsection{The longest length element of \texorpdfstring{$W(\Theta)$}{W(Θ)}}
\label{sec:long-elem-wtheta}

\begin{proposition}\label{prop:longestelementWTheta}
  Let $G$ be a simple Lie group admitting a $\Theta$-positive structure. Let
  $w_\Delta$ be the longest length element in $W$,
  $w_{\Delta \smallsetminus \Theta} 
  $ the
  longest length element in $W_{\Delta \smallsetminus \Theta}$, and
  $ w^{\Theta}_{\max} = w_\Delta w^{-1}_{\Delta \smallsetminus \Theta}$ so that
  $w^{\Theta}_{\max}$ is the unique element of minimal length in the coset
  $w_\Delta W_{\Delta \smallsetminus \Theta}$ (Lemma~\ref{lem:decomp}). Then
  the element $w^{\Theta}_{\max}$ belongs to $W(\Theta)$ and is the
    longest length element of the Coxeter system $( W(\Theta),
    R(\Theta))$.
            \end{proposition}
\begin{proof}
    
  We prove the proposition   by a case by
  case analysis, using the computations done in Appendices~\ref{app:bn}
  and~\ref{app:f4}. We adopt below the notation $x^y=y^{-1}xy$ in a group, one has thus
    $(x^y)^z=x^{yz}$.   \begin{enumerate}[leftmargin=*]
  \item In the case when $G$ is a split real simple Lie group there is nothing
    to prove.
  \item When $G$ is a simple Lie group of Hermitian 
    tube
    type, then $w^{\Theta}_{\max}  =
    \sigma_{\alpha_\Theta}$, which is the longest length element in~$W(\Theta)$ that
    is of type~$\lietype{A}_1$.
  \item When $G = \SO(p+1,p+k)$ with $p>0$ and $k>1$, one has   $ \Delta = \{ \alpha_1 , \dots, \alpha_{p+1}\} $,
    $ \Theta = \{ \alpha_1 , \dots, \alpha_{p}\} $, and
    $\Delta \smallsetminus \Theta = \{ \alpha_{p+1}\}$.\footnote{Observe that
      the standard numbering used here for the simple roots does not
      coincide with the ``constructive'' numbering of the general results
      proved earlier (cf.\ the proof of Lemma~\ref{lemma:Weyl-group_H_J} for
      example), the reindexing being $i \in \llbracket 1,p+1\rrbracket\mapsto
      p+1-i \in \llbracket 0,p\rrbracket$.} Writing as usual $s_i$
    for $s_{\alpha_i}$ and for short $\sigma_p$ for $\sigma_{\alpha_p}$ (and
    $\sigma_i=s_i$ for $i<p$), one
    has $w_{\Delta \smallsetminus \Theta}= s_{p+1}$ and
    a direct calculation 
    gives $w_{\{\alpha_\Theta\}\cup \Delta\smallsetminus \Theta}= s_p
    s_{p+1}s_p s_{p+1}$ (so that $\sigma_p=s_p s_{p+1} s_p$).
        Appendix~\ref{app:bn} gives the following reduced expression for the longest length element
    $w_\Delta$:
    \begin{align}\label{eq:red-exp-w-Delta}
  w_\Delta & = s_{p+1}^{s_{p} \cdots s_1} \, s_{p+1}^{s_{p} \cdots s_2} \cdots s_{p+1}^{s_{p}s_{p-1}}\,
             s_{p+1}^{s_{p}} \, s_{p+1}\\
\label{eq:red-exp-w-max-Theta}  w_{\max}^{\Theta} & = s_{p+1}^{s_{p} \cdots s_1} \, s_{p+1}^{s_{p} \cdots s_2}
                      \cdots  s_{p+1}^{s_{p}s_{p-1}}\, s_{p+1}^{s_{p}}\\
      \notag     & = \sigma_{p}^{s_{p-1} \cdots s_1} \, \sigma_{p}^{s_{p-1}
             \cdots s_2} \cdots \sigma_{p}^{s_{p-1}} \,
             \sigma_{p}\\ 
\notag           & = \sigma_{p}^{\sigma_{p-1} \cdots \sigma_1} \, \sigma_{p}^{\sigma_{p-1}
             \cdots \sigma_2} \cdots \sigma_{p}^{\sigma_{p-1}} \,
             \sigma_{p}. 
\end{align}
 In particular we get 
 that $w^{\Theta}_{\max}$ belongs to $W(\Theta)$ and is the longest length element in
 the Coxeter system $(W(\Theta), R(\Theta))$ of type~$\lietype{B}_p$.
  \item When $G$ is the real form of $\lietype{F}_4, \lietype{E}_6, \lietype{E}_7, \lietype{E}_8$, whose 
    system of restricted roots is of type $\lietype{F}_4$, we have that
    $\Delta = \{ \alpha_1, \alpha_2, \alpha_3, \alpha_4\}$,
    $\Theta = \{ \alpha _1, \alpha_2\}$ and
    $\Delta \smallsetminus \Theta = \{ \alpha_3, \alpha_4\}$. We write again
    $s_i$ for $s_{\alpha_i}$, $\sigma_2$ for $\sigma_{\alpha_2}$, and
    $\sigma_1=s_1$. By Appendix~\ref{app:f4},
    $\sigma_{2} = s_2 s_3
s_4 s_2 s_3 s_2 = 
(s_4 s_2)^{s_3 s_2}$;
    one has $w_{\Delta \smallsetminus \Theta} = 
    s_{4}^{ s_3}$ and a direct calculation given in Appendix~\ref{app:f4} gives the following reduced
    expression for the longest length element 
\begin{align}
\label{eq:red-exp-w-Delta-F4}w_\Delta & = (s_4 s_2)^{ s_3 s_2}\, s_1\, (s_4 s_2)^{ s_3 s_2}\, s_1\, (s_4 s_2)^{ s_3 s_2}\, s_1\,  s_{4}^{s_3},\\
\intertext{thus}
\label{eq:red-exp-w-max-Theta-F4}  w_{\max}^{\Theta} & = (s_4 s_2)^{ s_3 s_2}\, s_1\, (s_4 s_2)^{ s_3 s_2}\, s_1\, (s_4 s_2)^{ s_3 s_2}\, s_1\\
\notag  & = \sigma_2 \sigma_1 \sigma_2 \sigma_1 \sigma_2 \sigma_1
\end{align}
this is precisely the longest length element of $(W(\Theta), R(\Theta))$ which
is of type~$\lietype{G}_2$.\qedhere
\end{enumerate}
\end{proof}

The previous proof also gives the following information about reduced
expressions of elements in $W(\Theta)$.
\begin{lemma}
  \label{lem:reduced-exp-inWTheta-and-W}
  Under the hypothesis of Proposition~\ref{prop:longestelementWTheta},
  let~$x$ be in $W(\Theta)$. Let $\sigma_{\alpha_1}\cdots \sigma_{\alpha_k}$
  be a reduced expression of~$x$ in $(W(\Theta), R(\Theta))$.
  Then, for the lengths in~$W$, one has the equality
  \[\ell(x) = \ell(\sigma_{\alpha_1})+\cdots +\ell(\sigma_{\alpha_k}),\]
  so that $\sigma_{\alpha_1}\cdots \sigma_{\alpha_k}$
              is a reduced product of~$x$ in $(W,\Delta)$.
\end{lemma}

\begin{proof}
  The result is obvious in the split case or in the Hermitian tube type case.
  
  Let $\mathbf{X}\subset R(\Theta)^k$ be the set of reduced expressions of~$x$
  in $W(\Theta)$. We will first prove that the map
  \[ L\colon \mathbf{X}\to \N \mid (\sigma_{\alpha_1},\dots, \sigma_{\alpha_k}
    )\mapsto \ell(\sigma_{\alpha_1})+\cdots +\ell(\sigma_{\alpha_k})\]
  is constant. Since two reduced expressions are obtained from one another by
  successive applications of braid relations, it is enough to prove that the
  map~$L$ coincides on reduced expressions differing by a braid relation; let
  $\alpha$ and $\beta$ be the $2$~elements of~$\Theta$ involved in the braid
  relation; the result is obvious when $\alpha\neq \alpha_\Theta$ and
  $\beta\neq \alpha_\Theta$ since in this cass $\sigma_\alpha=s_\alpha$ and
  $\sigma_\beta=s_\beta$; the result also holds when $\alpha$ or $\beta$ is
  equal to~$\alpha_\Theta$ since the braid relations (in $(W(\Theta),
  R(\Theta))$) involving $\alpha_\Theta$ have the same number of occurences
  of~$\alpha_\Theta$ in both members of the braid relation. Hence $L$~is the
  constant function.

 Let us now prove the result for $w_{\max}^{\Theta}$, the longest length element of the Coxeter group
  $W(\Theta)$. It is therefore enough to find \emph{one} reduced expression $\sigma_{\alpha_1} \cdots
  \sigma_{\alpha_r}$ of $w_{\max}^{\Theta}$ that is a reduced product
  of $w_{\max}^{\Theta}$ in $(W,\Delta)$ or, what amounts to the same, such
  that $\sigma_{\alpha_1} \cdots
  \sigma_{\alpha_r} w_{\Delta\smallsetminus \Theta}$ is a reduced product
  of~$w_\Delta$ in~$W$. For the case of orthogonal groups, the reduced expression of~$w_\Delta$ with the desired property is given in Equation~\eqref{eq:red-exp-w-Delta} and the prefix
  producing $w_{\max}^{\Theta}$ is given in 
  Equation~\eqref{eq:red-exp-w-max-Theta}. For the $\lietype{F}_4$ case the corresponding expressions are given in Equation~\eqref{eq:red-exp-w-Delta-F4} 
  and Equation~\eqref{eq:red-exp-w-max-Theta-F4}.

  We now turn back to the element~$x$. There exists a reduced expression $\sigma_{\alpha_1} \cdots
  \sigma_{\alpha_r}$ of $w_{\max}^{\Theta}$ such that $\sigma_{\alpha_1} \cdots
  \sigma_{\alpha_k}$ is a reduced expression of~$x$. If the result holds for
  $w_{\max}^{\Theta}$, one has
  \begin{multline*}
   \ell(w_{\max}^{\Theta})\leq  \ell(x) + \ell(x^{-1}w_{\max}^{\Theta}) \leq
          \ell(\sigma_{\alpha_1})+\cdots +\ell(\sigma_{\alpha_k}) + \ell(\sigma_{\alpha_{k+1}} \cdots
    \sigma_{\alpha_r})\\ \leq 
    \ell(\sigma_{\alpha_1})+\cdots +\ell(\sigma_{\alpha_k}) +
    \ell(\sigma_{\alpha_{k+1}}) + \cdots
    +\ell(\sigma_{\alpha_r})=\ell(w_{\max}^{\Theta}), 
  \end{multline*}
  and thus all the inequalities here are equalities and in particular
  $\ell(x)= \ell(\sigma_{\alpha_1})+\cdots +\ell(\sigma_{\alpha_k})$ which,
  together with the first part of the proof, implies the result.
\end{proof}

\subsection{Normalizer}
\label{sec:normalizer}

\begin{proposition}
  \label{prop:normalizer-WDeltaminusTheta}
  The group $W(\Theta)$ normalizes $W_{\Delta\smallsetminus \Theta}$: 
\[ W( \Theta) \subset N_W( W_{\Delta\smallsetminus\Theta}) = \{ x\in W \mid x
  W_{\Delta \smallsetminus\Theta} x^{-1}=
  W_{\Delta \smallsetminus\Theta}\}.\]
\end{proposition}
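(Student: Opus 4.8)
The plan is to reduce the statement to a purely combinatorial fact about the generators $\sigma_\alpha$ ($\alpha \in \Theta$) of $W(\Theta)$, namely that each $\sigma_\alpha$ conjugates $W_{\Delta\setminus\Theta}$ into itself; since $W(\Theta)$ is generated by $R(\Theta) = \{\sigma_\alpha\}_{\alpha\in\Theta}$ and each $\sigma_\alpha$ has order~$2$ (Lemma~\ref{lemma:order2-theta-weyl-group}), it suffices to show $\sigma_\alpha W_{\Delta\setminus\Theta}\sigma_\alpha^{-1} = W_{\Delta\setminus\Theta}$ for each $\alpha\in\Theta$. First I would treat the easy generators: for $\alpha\in\Theta\setminus\{\alpha_\Theta\}$ we have $\sigma_\alpha = s_\alpha$, and since $\alpha$ is not connected to any node of $\Delta\setminus\Theta$ (the complement $\Delta\setminus\Theta$ is attached to $\Theta$ only at $\alpha_\Theta$, by the connectedness remark following Theorem~\ref{thm:classification}), the reflection $s_\alpha$ commutes with $s_\beta$ for every $\beta\in\Delta\setminus\Theta$, so it centralizes $W_{\Delta\setminus\Theta}$ and in particular normalizes it.

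The substantive case is $\sigma_{\alpha_\Theta}$. Here I would use the defining relation from Corollary~\ref{cor:decomp_Herm}, namely $w_{\{\alpha_\Theta\}\cup\Delta\setminus\Theta} = \sigma_{\alpha_\Theta}\, w_{\Delta\setminus\Theta}$, equivalently $\sigma_{\alpha_\Theta} = w_{\{\alpha_\Theta\}\cup\Delta\setminus\Theta}\, w_{\Delta\setminus\Theta}^{-1}$. Write $W' = W_{\{\alpha_\Theta\}\cup\Delta\setminus\Theta}$ and $W'' = W_{\Delta\setminus\Theta}$, with longest elements $w'$ and $w''$. Since $w'$ is the longest element of the Coxeter group $W'$, conjugation by $w'$ permutes the simple generators $\{s_\gamma : \gamma\in\{\alpha_\Theta\}\cup\Delta\setminus\Theta\}$ of $W'$ and in particular maps the parabolic subgroup $W''\le W'$ to a parabolic subgroup $w' W'' (w')^{-1} = W_{\bar{J}}$, where $\bar J$ is the image of $\Delta\setminus\Theta$ under the diagram involution of $W'$; but this involution is trivial, since the Dynkin diagram $\{\alpha_\Theta\}\cup\Delta\setminus\Theta$ is of type $C_{d+1}$ (Remark after Corollary~\ref{cor:decomp_Herm}) and hence has no nontrivial automorphism (compare the argument in item~(\ref{item:ii:lemma-order2}) of Lemma~\ref{lemma:order2-theta-weyl-group}). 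Therefore $w' W'' (w')^{-1} = W''$, i.e. $w'$ normalizes $W''$. Since $w''\in W''$ trivially normalizes $W''$, the product $\sigma_{\alpha_\Theta} = w' (w'')^{-1}$ normalizes $W''$ as well. This closes the induction and proves $W(\Theta)\subset N_W(W_{\Delta\setminus\Theta})$.

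The main obstacle is the bookkeeping in the $\sigma_{\alpha_\Theta}$ case: one must be sure that the longest element $w'$ of $W_{\{\alpha_\Theta\}\cup\Delta\setminus\Theta}$ really sends the standard parabolic $W_{\Delta\setminus\Theta}$ to itself and not to a conjugate. The clean way is to invoke the general fact that for a finite Coxeter system $(W',S')$ with longest element $w'$, and any $J\subset S'$, one has $w' W_J (w')^{-1} = W_{w'\cdot(-w_0(J))}$ — i.e. conjugation by $w'$ realizes the opposition involution on the Dynkin diagram of $W'$ — combined with the observation that type $C_{d+1}$ has trivial opposition involution. Alternatively, and perhaps more in the spirit of the paper, one can argue case by case using the explicit reduced expressions already recorded (Equations~\eqref{eq:red-exp-w-Delta}, \eqref{eq:red-exp-w-max-Theta} for $B_p$, Equations~\eqref{eq:red-exp-w-Delta-F4}, \eqref{eq:red-exp-w-max-Theta-F4} for $F_4$, and trivially in the split and Hermitian cases) and check the normalization directly on generators; but the uniform Coxeter-theoretic argument above avoids repeating the case analysis.
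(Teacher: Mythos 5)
Your proof is correct and follows essentially the same route as the paper: reduce to the generators $R(\Theta)$, handle $\alpha\neq\alpha_\Theta$ by commutation, and handle $\sigma_{\alpha_\Theta}=w_{\{\alpha_\Theta\}\cup\Delta\setminus\Theta}\,w_{\Delta\setminus\Theta}^{-1}$ by showing the longest element of $W_{\{\alpha_\Theta\}\cup\Delta\setminus\Theta}$ normalizes $W_{\Delta\setminus\Theta}$. The only cosmetic difference is that the paper justifies this last step by noting that this longest element is central in $W_{\{\alpha_\Theta\}\cup\Delta\setminus\Theta}$, whereas you invoke the triviality of the opposition involution in type $C_{d+1}$ --- two phrasings of the same fact.
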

\begin{proof}
  As $W(\Theta)$ is contained in the group generated by
  $W({H}_\Theta)$ and $W_{\Delta \smallsetminus \Theta}$, this
  follows from Corollary~\ref{cor:w-h-theta-centralizes}.
\end{proof}

 \section{Bruhat decomposition and the invariant cones}
\label{sec:bruh-decomp-cones}

In this section we recall some facts on  Bruhat decompositions. We then establish, in the case of $\Theta$-positivity, several facts giving a precise control of the Bruhat cells
containing the elements $\exp(v)$ when $\alpha\in \Theta$ and $v\in \mathring{c}_\alpha$. These properties will play a central role in Section~\ref{sec:positive_semigroup} when we give a parametrization of the positive unipotent semigroup. 

\subsection{Bruhat decomposition}
\label{sec:bruhat-decomposition}
Let $G$ be a simple Lie group. 
We recall here that $G$~can be decomposed under the left-right multiplication
by $P_{\Delta}^{\mathrm{opp}}\times P_{\Delta}^{\mathrm{opp}}$, where
$P_{\Delta}^{\mathrm{opp}}$ is the minimal standard opposite parabolic subgroup.  These orbits are
 called the Bruhat cells and are indexed by elements of~$W$.
We also consider decompositions with respect to the action of $P_{\Delta}^{\mathrm{opp}}\times P_\Theta^{\mathrm{opp}}$ and $P_\Theta^{\mathrm{opp}}\times P_\Theta^{\mathrm{opp}}$, where $\Theta$~is a subset of~$\Delta$. 

Explicitely, for every~$w$ in $W\simeq N_K( \mathfrak{a}) /
C_K(\mathfrak{a})$ (the quotient of the normalizer of~$\mathfrak{a}$ in~$K$ by
its centralizer), let~$\dot{w}$ in $N_K( \mathfrak{a})$ be a representative
of~$w$, the subset
\[ P_{\Delta}^{\mathrm{opp}} \dot{w} P_{\Delta}^{\mathrm{opp}}\]
depends only of~$w$ (and not on the choice of~$\dot{w}$) since
$P_{\Delta}^{\mathrm{opp}}$ contains the centralizer $C_K( \mathfrak{a})$; it is called
the \emph{Bruhat cell indexed by~$w$}; the following equalities hold
\[ P_{\Delta}^{\mathrm{opp}} \dot{w} P_{\Delta}^{\mathrm{opp}} = U_{\Delta}^{\mathrm{opp}} \dot{w}
  P_{\Delta}^{\mathrm{opp}} = P_{\Delta}^{\mathrm{opp}} \dot{w} U_{\Delta}^{\mathrm{opp}}\]
and we will remove the dots in the further notation.

It is well known that (see \eg \cite{Warner}): 
\begin{enumerate}[leftmargin=*]
\item \label{item:1:Bruhat_dec} $G$ is the disjoint union of the
  $P_{\Delta}^{\mathrm{opp}} {w} P_{\Delta}^{\mathrm{opp}}$ for~$w$ varying
  in~$W$.\index{$P_{\Delta}^{\mathrm{opp}} {w} P_{\Delta}^{\mathrm{opp}}$ the
    Bruhat cell associated with~$w$}
\item \label{item:2:Bruhat_dec} $P^{\mathrm{opp}}_{\Theta}$ is the disjoint union of the
  $P_{\Delta}^{\mathrm{opp}} {w} P_{\Delta}^{\mathrm{opp}}$ for~$w$ varying in~$W_{\Delta \setminus
    \Theta}$.
\item \label{item:3:Bruhat_dec} For all~$w_1$ and~$w_2$ in~$W$ such that
  $\ell(w_1 w_2) =\ell(w_1)+\ell(w_2)$, one has
  \[ (P_{\Delta}^{\mathrm{opp}} {w_1} P_{\Delta}^{\mathrm{opp}}) (P_{\Delta}^{\mathrm{opp}} {w_2}
    P_{\Delta}^{\mathrm{opp}}) = P_{\Delta}^{\mathrm{opp}} {w_1} P_{\Delta}^{\mathrm{opp}} w_2 P_{\Delta}^{\mathrm{opp}}
     = P_{\Delta}^{\mathrm{opp}} {w_1} w_2 P_{\Delta}^{\mathrm{opp}}.\]
\item \label{item:4:Bruhat_dec} For all~$\alpha$ in~$\Delta$ and for all~$X$
  in~$\mathfrak{g}_\alpha$, if $X\neq 0$ then
  \[ \exp(X) \in P_{\Delta}^{\mathrm{opp}} s_\alpha P_{\Delta}^{\mathrm{opp}}.\]  
\end{enumerate}

Observe also that $\dot{s}^{-1}_{\alpha}$ also belongs to $P_{\Delta}^{\mathrm{opp}}
s_\alpha P_{\Delta}^{\mathrm{opp}}$ (simply because $\dot{s}^{-1}_{\alpha}$ is a
representative of the class of~$s_\alpha$).
\begin{enumerate}[resume*]
\item \label{item:5:Bruhat_dec} For every~$\alpha$ in~$\Delta$,
  \[P_{\Delta}^{\mathrm{opp}} s_\alpha P_{\Delta}^{\mathrm{opp}} s_\alpha P_{\Delta}^{\mathrm{opp}} =
    P_{\Delta}^{\mathrm{opp}}  \sqcup P_{\Delta}^{\mathrm{opp}} s_\alpha P_{\Delta}^{\mathrm{opp}}.\]
\item For every~$w$ in~$W$, the $P_{\Delta}^\mathrm{opp}\times
  P_{\Theta}^{\mathrm{opp}}$-orbit
  \[ P_{\Delta}^\mathrm{opp} w
    P_{\Theta}^{\mathrm{opp}}\]
  depends only on the class~$x$ of $w$ in $W/ W_{\Delta\setminus\Theta}$
  and
  will sometimes be denoted $P_{\Delta}^\mathrm{opp} x
  P_{\Theta}^{\mathrm{opp}}$.\index{$P_{\Delta}^\mathrm{opp} x
  P_{\Theta}^{\mathrm{opp}}$ the $P_{\Delta}^\mathrm{opp} \times
  P_{\Theta}^{\mathrm{opp}}$-orbit associated with the class $x\in
  W/W_{\Delta\smallsetminus \Theta}$} 
\item The group~$G$ is the disjoint union
  \[ G = \bigsqcup_{\mathclap{x\in W/W_{\Delta\setminus\Theta}}} P_{\Delta}^\mathrm{opp}x
  P_{\Theta}^{\mathrm{opp}}.\]
\item Similar notation will be adopted for the action of $P_{\Theta}^{\mathrm{opp}}\times
  P_{\Delta}^{\mathrm{opp}}$ and of  $P_{\Theta}^{\mathrm{opp}}\times
  P_{\Theta}^{\mathrm{opp}}$: a double orbit $P_{\Theta}^{\mathrm{opp}} w
  P_{\Theta}^{\mathrm{opp}}$ depends only on the class~$a$ of~$w$ in
  $W_{\Delta\setminus\Theta}\backslash W /W_{\Delta\setminus\Theta}$, will be
  denoted $P_{\Theta}^{\mathrm{opp}} a
  P_{\Theta}^{\mathrm{opp}}$\index{$P_{\Theta}^\mathrm{opp} a
  P_{\Theta}^{\mathrm{opp}}$ the $P_{\Theta}^\mathrm{opp} \times
  P_{\Theta}^{\mathrm{opp}}$-orbit associated with the class $a\in
  W_{\Delta\setminus\Theta}\backslash W/W_{\Delta\smallsetminus \Theta}$}  and 
  \[ G =\!\!\!\!\!\!\!\!\!\bigsqcup_{a\in W_{\Delta\setminus\Theta}\backslash W
      /W_{\Delta\setminus\Theta}}\!\!\!\!\!\!\!\!\! P_{\Theta}^{\mathrm{opp}} a
  P_{\Theta}^{\mathrm{opp}}.\]
\end{enumerate}

Application of~(\ref{item:3:Bruhat_dec}) and~(\ref{item:5:Bruhat_dec}) above gives
that, for every $w_1$ and~$w_2$ in~$W$ and for every~$g$ in
$P_{\Delta}^\mathrm{opp} w_1 P_{\Delta}^\mathrm{opp} w_2 P_{\Delta}^\mathrm{opp}$, there exist a
prefix~$x_1$ of~$w_1$ and a suffix~$x_2$ of~$w_2$ such that $g$~belongs to
$P_{\Delta}^\mathrm{opp} x_1 x_2 P_{\Delta}^\mathrm{opp}$. In particular: 
\begin{lemma}
  \label{lem:J-length-prod-bruhat-cells}
  Let $w_1$, $w_2$, and~$x$ be in~$W$ such that $P^{\mathrm{opp}}_{\Theta} x
  P^{\mathrm{opp}}_{\Theta} \subset P^{\mathrm{opp}}_{\Theta} w_1 P^{\mathrm{opp}}_{\Theta} w_2
  P^{\mathrm{opp}}_{\Theta}$. Then
  \[ \ell_\Theta(x)\leq \ell_\Theta(w_1)+\ell_\Theta(w_2).\]
\end{lemma}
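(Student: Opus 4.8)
The plan is to reduce the statement to the Bruhat decomposition of~$G$ with respect to the \emph{minimal} parabolic~$P^{\mathrm{opp}}$, where we already have the subword-type control recorded in the paragraph preceding the lemma, and then to assemble three ingredients: point~(\ref{item:2:Bruhat_dec}), which writes $P^{\mathrm{opp}}_\Theta$ as the union of the minimal cells $P^{\mathrm{opp}} u P^{\mathrm{opp}}$ with $u\in W_{\Delta\setminus\Theta}$; the estimate just quoted, namely that $P^{\mathrm{opp}} w_1 P^{\mathrm{opp}} w_2 P^{\mathrm{opp}}$ is contained in the union of the cells $P^{\mathrm{opp}} x P^{\mathrm{opp}}$ with $\ell_\Theta(x)\le\ell_\Theta(w_1)+\ell_\Theta(w_2)$; and Lemma~\ref{lemma:theta-length-0=W_Levi}, which says that the elements of~$W_{\Delta\setminus\Theta}$ are exactly the elements of~$W$ of zero $\Theta$-length.

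First I would upgrade the two-factor estimate to an arbitrary number of factors: for all $w_1,\dots,w_m\in W$,
\[ P^{\mathrm{opp}} w_1 P^{\mathrm{opp}} w_2 P^{\mathrm{opp}}\cdots P^{\mathrm{opp}} w_m P^{\mathrm{opp}} \ \subseteq\ \bigcup_{\substack{y\in W\\ \ell_\Theta(y)\le\ell_\Theta(w_1)+\cdots+\ell_\Theta(w_m)}} P^{\mathrm{opp}} y P^{\mathrm{opp}} . \]
This is trivial for $m=1$ and is precisely the quoted statement for $m=2$. For the induction step one may regroup, using that $P^{\mathrm{opp}}$ is a subgroup (so $P^{\mathrm{opp}}\cdot P^{\mathrm{opp}}=P^{\mathrm{opp}}$),
\[ P^{\mathrm{opp}} w_1 P^{\mathrm{opp}}\cdots P^{\mathrm{opp}} w_{m+1} P^{\mathrm{opp}} = \bigl(P^{\mathrm{opp}} w_1 P^{\mathrm{opp}}\cdots P^{\mathrm{opp}} w_{m-1} P^{\mathrm{opp}}\bigr)\bigl(P^{\mathrm{opp}} w_m P^{\mathrm{opp}} w_{m+1} P^{\mathrm{opp}}\bigr) , \]
apply the case $m=2$ to the second factor to cover it by cells $P^{\mathrm{opp}} z P^{\mathrm{opp}}$ with $\ell_\Theta(z)\le\ell_\Theta(w_m)+\ell_\Theta(w_{m+1})$, apply the inductive hypothesis to each resulting product $P^{\mathrm{opp}} w_1 P^{\mathrm{opp}}\cdots P^{\mathrm{opp}} w_{m-1} P^{\mathrm{opp}} z P^{\mathrm{opp}}$, and conclude with the subadditivity of~$\ell_\Theta$. (Alternatively one can invoke the subword property of the Demazure product directly.)

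Next I would expand the double coset on the right of the hypothesis into minimal cells. Using point~(\ref{item:2:Bruhat_dec}) for each of the three copies of~$P^{\mathrm{opp}}_\Theta$, together with the facts that a chosen representative of~$w_i$ lies in $P^{\mathrm{opp}} w_i P^{\mathrm{opp}}$ and that $P^{\mathrm{opp}}\cdot P^{\mathrm{opp}}=P^{\mathrm{opp}}$, one gets
\[ P^{\mathrm{opp}}_\Theta\, x\, P^{\mathrm{opp}}_\Theta \ \subseteq\ \bigcup_{u_0,u_1,u_2\in W_{\Delta\setminus\Theta}} (P^{\mathrm{opp}} u_0 P^{\mathrm{opp}})(P^{\mathrm{opp}} w_1 P^{\mathrm{opp}})(P^{\mathrm{opp}} u_1 P^{\mathrm{opp}})(P^{\mathrm{opp}} w_2 P^{\mathrm{opp}})(P^{\mathrm{opp}} u_2 P^{\mathrm{opp}}) . \]
Applying the $m$-factor estimate with $m=5$ to each term and using $\ell_\Theta(u_0)=\ell_\Theta(u_1)=\ell_\Theta(u_2)=0$ (Lemma~\ref{lemma:theta-length-0=W_Levi}), the right-hand side lies in the union of the $P^{\mathrm{opp}} y P^{\mathrm{opp}}$ with $\ell_\Theta(y)\le\ell_\Theta(w_1)+\ell_\Theta(w_2)$. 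Since the representative of~$x$ lies in $P^{\mathrm{opp}}_\Theta\, x\, P^{\mathrm{opp}}_\Theta$, it lies in one such $P^{\mathrm{opp}} y P^{\mathrm{opp}}$; but it also lies in $P^{\mathrm{opp}} x P^{\mathrm{opp}}$, and the minimal Bruhat cells are pairwise disjoint (point~(\ref{item:1:Bruhat_dec})), so $y=x$ and therefore $\ell_\Theta(x)\le\ell_\Theta(w_1)+\ell_\Theta(w_2)$.

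I do not anticipate a real obstacle: the only substantive input is the two-factor estimate, already available from the preceding paragraph (and resting, through Lemma~\ref{lem:theta-length-on-reduced-expression}, on the evenness of the braid relations between $\Theta$ and $\Delta\setminus\Theta$ guaranteed by Theorem~\ref{thm:classification}); everything else is bookkeeping. The step most prone to a slip is the expansion of $P^{\mathrm{opp}}_\Theta\, x\, P^{\mathrm{opp}}_\Theta$ into minimal cells --- one must keep track of where the extra factors $P^{\mathrm{opp}}$ may legitimately be inserted, which is exactly where $P^{\mathrm{opp}}$ being a group and $w_i\in P^{\mathrm{opp}} w_i P^{\mathrm{opp}}$ are used.
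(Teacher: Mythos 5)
Your proposal is correct and follows essentially the same route as the paper: the paper's (very terse) argument is exactly the two-factor subword estimate for minimal Bruhat cells from the preceding paragraph, combined with point~(\ref{item:2:Bruhat_dec}) to expand each $P^{\mathrm{opp}}_{\Theta}$ into cells indexed by $W_{\Delta\setminus\Theta}$ and Lemma~\ref{lemma:theta-length-0=W_Levi} to see those contribute zero $\Theta$-length. Your explicit $m$-factor induction and the final disjointness argument are just the bookkeeping the paper leaves implicit.
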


The following lemma states the equality between some of the orbits for the two
 groups~$P_{\Delta}^\mathrm{opp}$ and~$P_{\Theta}^\mathrm{opp}$.

\begin{lemma}
  \label{lemma:PThetaequalsPforWTheta}
  For every~$w$ in~$W(\Theta)$, one has
  \[  P_{\Theta}^{\mathrm{opp}} w
  P_{\Theta}^{\mathrm{opp}} = P_{\Delta}^{\mathrm{opp}} w
  P_{\Theta}^{\mathrm{opp}}= U_{\Theta}^{\mathrm{opp}} w
  P_{\Theta}^{\mathrm{opp}} .\]
\end{lemma}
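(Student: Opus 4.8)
The plan is to reduce the statement to the identity $P^{\mathrm{opp}} w P_\Theta^{\mathrm{opp}} = P_\Theta^{\mathrm{opp}} w P_\Theta^{\mathrm{opp}}$, which, since $P_\Theta^{\mathrm{opp}} \supset P^{\mathrm{opp}}$ gives one inclusion trivially, amounts to showing $P_\Theta^{\mathrm{opp}} w P_\Theta^{\mathrm{opp}} \subset P^{\mathrm{opp}} w P_\Theta^{\mathrm{opp}}$. Writing $P_\Theta^{\mathrm{opp}} = \bigsqcup_{x \in W_{\Delta \setminus \Theta}} P^{\mathrm{opp}} x P^{\mathrm{opp}}$ (point~(\ref{item:2:Bruhat_dec}) of the Bruhat decomposition list), it suffices to prove that for every $x \in W_{\Delta \setminus \Theta}$ one has $P^{\mathrm{opp}} x P^{\mathrm{opp}} w P_\Theta^{\mathrm{opp}} \subset P^{\mathrm{opp}} w P_\Theta^{\mathrm{opp}}$, i.e.\ that the product cell $P^{\mathrm{opp}} x P^{\mathrm{opp}} w P^{\mathrm{opp}}$, when collapsed on the right by $W_{\Delta\setminus\Theta}$, lands in the class $[w] = w W_{\Delta\setminus\Theta}$.

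The key input is Proposition~\ref{prop:normalizer-WDeltaminusTheta}: since $w \in W(\Theta)$ normalizes $W_{\Delta\setminus\Theta}$, we have $w^{-1} x w \eqqcolon x' \in W_{\Delta\setminus\Theta}$ for any $x \in W_{\Delta\setminus\Theta}$, so $x w = w x'$. I would argue by induction on $\ell(x)$ (equivalently, write $x = s_{\beta_1} \cdots s_{\beta_k}$ with $\beta_i \in \Delta\setminus\Theta$ and peel off generators one at a time). For a single generator $s_\beta$, $\beta \in \Delta\setminus\Theta$, the standard Bruhat multiplication rule (\ref{item:5:Bruhat_dec}) gives $P^{\mathrm{opp}} s_\beta P^{\mathrm{opp}} w P^{\mathrm{opp}} \subset P^{\mathrm{opp}} s_\beta w P^{\mathrm{opp}} \cup P^{\mathrm{opp}} w P^{\mathrm{opp}}$ (depending on whether $\ell(s_\beta w)$ exceeds or is less than $\ell(w)$; in the first case one uses~(\ref{item:3:Bruhat_dec})). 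In the first alternative $s_\beta w = w x'$ with $x' \in W_{\Delta\setminus\Theta}$, hence $P^{\mathrm{opp}} s_\beta w P^{\mathrm{opp}} = P^{\mathrm{opp}} w x' P^{\mathrm{opp}} \subset P^{\mathrm{opp}} w P_\Theta^{\mathrm{opp}}$; in the second alternative we are already inside $P^{\mathrm{opp}} w P^{\mathrm{opp}} \subset P^{\mathrm{opp}} w P_\Theta^{\mathrm{opp}}$. In both cases the result lies in $P^{\mathrm{opp}} w P_\Theta^{\mathrm{opp}}$. Iterating over the factors of $x$ and absorbing the accumulated $W_{\Delta\setminus\Theta}$-factors on the right into $P_\Theta^{\mathrm{opp}}$ completes the induction.

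The main subtlety — and the step to be careful with — is keeping track of the two possible outcomes of each single-generator multiplication and making sure the induction hypothesis is stated strongly enough (namely: $P^{\mathrm{opp}} y P^{\mathrm{opp}} w P^{\mathrm{opp}} \subset P^{\mathrm{opp}} w P_\Theta^{\mathrm{opp}}$ for all $y \in W_{\Delta\setminus\Theta}$ with $\ell(y) \le \ell(x)$), so that the length of the "transported" element can go down as well as up without breaking the argument. Everything else is a routine bookkeeping application of the Bruhat decomposition facts~(\ref{item:2:Bruhat_dec}), (\ref{item:3:Bruhat_dec}), (\ref{item:5:Bruhat_dec}) together with the normalizer property. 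Note that the reverse inclusion $P^{\mathrm{opp}} w P_\Theta^{\mathrm{opp}} \subset P_\Theta^{\mathrm{opp}} w P_\Theta^{\mathrm{opp}}$ is immediate from $P^{\mathrm{opp}} \subset P_\Theta^{\mathrm{opp}}$, so the two orbits coincide.
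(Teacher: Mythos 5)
Your proof is correct, and its outer skeleton coincides with the paper's: decompose $P_{\Theta}^{\mathrm{opp}}$ as $\bigsqcup_{x\in W_{\Delta\setminus\Theta}} P^{\mathrm{opp}} x P^{\mathrm{opp}}$, reduce to showing $P^{\mathrm{opp}} x P^{\mathrm{opp}} w P_{\Theta}^{\mathrm{opp}} \subset P^{\mathrm{opp}} w P_{\Theta}^{\mathrm{opp}}$, and use Proposition~\ref{prop:normalizer-WDeltaminusTheta} to push the $W_{\Delta\setminus\Theta}$-factor past~$w$ and absorb it into $P_{\Theta}^{\mathrm{opp}}$. Where you genuinely diverge is the middle step. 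The paper collapses $P^{\mathrm{opp}} x P^{\mathrm{opp}} w P_{\Theta}^{\mathrm{opp}}$ to $P^{\mathrm{opp}} xw P_{\Theta}^{\mathrm{opp}}$ in a single application of property~(\ref{item:3:Bruhat_dec}), which requires first establishing $\ell(xw)=\ell(x)+\ell(w)$; this is done by observing that $x$ is a suffix of $w_{\Delta\setminus\Theta}$, $w$ is a prefix of $w_{\max}^{\Theta}$, and $w_\Delta = w_{\Delta\setminus\Theta} w_{\max}^{\Theta}$ with lengths adding (which leans on the lemma that reduced expressions in $(W(\Theta),R(\Theta))$ concatenate to reduced expressions in $(W,\Delta)$). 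You instead peel off one simple reflection $s_\beta$, $\beta\in\Delta\setminus\Theta$, at a time and invoke the general two-case multiplication rule $P^{\mathrm{opp}} s_\beta P^{\mathrm{opp}} w P^{\mathrm{opp}} \subset P^{\mathrm{opp}} s_\beta w P^{\mathrm{opp}} \cup P^{\mathrm{opp}} w P^{\mathrm{opp}}$ (which follows from (\ref{item:3:Bruhat_dec}) and (\ref{item:5:Bruhat_dec}), as you note), handling both cells via the normalizer property. This buys you independence from the length-additivity fact: your induction is agnostic as to whether $\ell(s_\beta w)$ goes up or down, so you never need to know that $x$ and $w$ sit compatibly inside a reduced expression of $w_\Delta$ (in fact the ``length goes down'' branch never occurs here, but your argument doesn't need to know that). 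The cost is a slightly longer bookkeeping induction where the paper gets away with one line. Both are valid; your version is marginally more self-contained.
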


\begin{proof}
  Since $P_{\Theta}^{\mathrm{opp}} = \bigsqcup_{x\in
    W_{\Delta\setminus\Theta}} P_{\Delta}^\mathrm{opp} x P_{\Delta}^{\mathrm{opp}}$, one has
  \[  P_{\Theta}^{\mathrm{opp}} w
    P_{\Theta}^{\mathrm{opp}} = \bigcup_{x\in
      W_{\Delta\setminus\Theta}} P_{\Delta}^\mathrm{opp} x P_{\Delta}^{\mathrm{opp}} w
    P_{\Theta}^{\mathrm{opp}}.\]
  It is thus enough to prove the equality for every~$x$ in
  $W_{\Delta\setminus\Theta}$
  \[  P_{\Delta}^\mathrm{opp} x P_{\Delta}^{\mathrm{opp}} w
    P_{\Theta}^{\mathrm{opp}} =  P_{\Delta}^{\mathrm{opp}} w
    P_{\Theta}^{\mathrm{opp}}.\]
  Since $x$~belongs to~$W_{\Delta\setminus\Theta}$, it is a suffix of the
  longest length element $w_{\Delta\setminus\Theta}$. Similarly $w$ is a prefix of
  $w_{\max}^{\Theta}$. We deduce from this that $xw$ is a subword of
  $w_{\Delta\setminus\Theta} w_{\max}^{\Theta} = w_\Delta$ and in particular
  $\ell(xw) = \ell(x)+\ell(w)$. By property~(\ref{item:3:Bruhat_dec}) above,
  one has
  \begin{align*}
      P_{\Delta}^\mathrm{opp} x P_{\Delta}^{\mathrm{opp}} w
    P_{\Theta}^{\mathrm{opp}} &=  P_{\Delta}^{\mathrm{opp}} x w
    P_{\Theta}^{\mathrm{opp}}\\
    &=  P_{\Delta}^{\mathrm{opp}} w (w^{-1}xw)
    P_{\Theta}^{\mathrm{opp}}\\
\intertext{and, as $w^{-1}xw$ belongs to~$W_{\Delta\setminus\Theta}$
      (Proposition~\ref{prop:normalizer-WDeltaminusTheta}),}
    &=  P_{\Delta}^{\mathrm{opp}} w 
    P_{\Theta}^{\mathrm{opp}}
  \end{align*}
  which is the sought for equality.

  Let us prove now the second equality. 
  Since $U_{\Delta}^{\mathrm{opp}}\subset
  U_{\Theta}^{\mathrm{opp}} L_{\Theta}^{\circ}$, 
  it is enough to prove that
  $L_{\Theta}^{\circ} w P_{\Theta}^{\mathrm{opp}} = w
  P_{\Theta}^{\mathrm{opp}}$ which in turn follows from the equalities
  $\dot{w}^{-1} L_{\Theta}^{\circ} \dot{w} =L_{\Theta}^{\circ}$ (where
  $\dot{w}$ is a representative of~$w$). This equality can be proved at the
  level of the Lie algebra: $\Ad(\dot{w}) \mathfrak{l}_\Theta
  =\mathfrak{l}_\Theta$. Since $\Ad(\dot{w})$ stabilizes~$\mathfrak{a}$, it
  stabilizes also $\mathfrak{z}_{\mathfrak{g}}(\mathfrak{a})$ and, thanks to
  the equality $\mathfrak{l}_\Theta  =\mathfrak{z}_{\mathfrak{g}}(\mathfrak{a}) \oplus 
\bigoplus_{\alpha \in \Span(\Delta \smallsetminus \Theta)\cap \Sigma }
\mathfrak{g}_{\alpha}$ and using $\Ad(\dot{w}) \mathfrak{g}_\alpha =
\mathfrak{g}_{w(\alpha)}$, we are reduced to prove that $w( \Span(\Delta
\smallsetminus \Theta)\cap \Sigma ) = \Span(\Delta \smallsetminus \Theta)\cap
\Sigma$. Since $w(\Sigma)=\Sigma$ and since $W(\Theta)$ is generated by
$R(\Theta)$, we will just check the equality  $w( \Span(\Delta
\smallsetminus \Theta) ) = \Span(\Delta \smallsetminus \Theta)$ for~$w$ in~$R(\Theta)$.
   The case $w=s_\alpha$, for $\alpha\in\Theta\smallsetminus
  \{\alpha_\Theta\}$ is direct since $s_\alpha$ fixes pointwise $\Delta\smallsetminus \Theta$. The case $w=w_{\{\alpha_\Theta\} \cup
    \Delta\smallsetminus \Theta} w_{\Delta\smallsetminus \Theta}$ follows
  equally since $w_{\{\alpha_\Theta\} \cup
    \Delta\smallsetminus \Theta}$ induces $-\id$ in $\Span(\Delta
  \smallsetminus \Theta)$ and since $w_{\Delta\smallsetminus \Theta}$ induces
  a permutation of $\Span(\Delta \smallsetminus \Theta) \cap \Sigma$.
\end{proof}

\subsection{Dimensions}
\label{sec:dimensions}

We give here recursive information on the dimensions of the Bruhat cells
when the set $\Theta$ is fixed under the opposition involution or, what
amounts to the same, the dimensions of their images in the flag
variety~$\mathsf{F}_{\iota( \Theta)} = \mathsf{F}_\Theta$.  This information will be of importance when proving the
openness of the parametrizations introduced in
Section~\ref{sec:positive_semigroup}.

Recall that when $G$~admits a $\Theta$-positive structure for a proper subset~$\Theta$ in~$\Delta$,  the opposition involution~$\iota$ is always
the identity. 

For every~$w$ in~$W(\Theta)$ we will denote by $C(w)$\index{$C(w)$  the
$P_{\Theta}^{\mathrm{opp}}$-orbit 
of $w\cdot \mathfrak{p}_{\Theta}^{\mathrm{opp}}$ in $\mathsf{F}_{\Theta}$} the
$P_{\Theta}^{\mathrm{opp}}$-orbit 
of $w\cdot \mathfrak{p}_{\Theta}^{\mathrm{opp}}$ in $\mathsf{F}_{\Theta}$;
$C(w)$ is called a \emph{cell} and is
isomorphic to the quotient
$P_{\Theta}^{\mathrm{opp}}  w
P_{\Theta}^{\mathrm{opp}} /  P_{\Theta}^{\mathrm{opp}}$. By
Lemma~\ref{lemma:PThetaequalsPforWTheta}, it is also the image
of the following map
\begin{align*}
  \mathfrak{u}_{\Theta}^{\mathrm{opp}} & \longrightarrow \mathsf{F}_\Theta \\
  X & \longmapsto \exp(X) w \cdot \mathfrak{p}_{\Theta}^{\mathrm{opp}}.
\end{align*}
Recall that $\mathfrak{u}^{\mathrm{opp}}_{\Theta}$ has the following decomposition
\( \mathfrak{u}^{\mathrm{opp}}_{\Theta} = \bigoplus_{\mathclap{\alpha \in \Sigma^{+}_{\Theta}}} \mathfrak{g}_{-\alpha}. \)

For $w\in W$ we denote\index{$\Sigma^{+}_{w\prec}$ the set  $\Sigma^+ \cap
  w\cdot  \Sigma^-$}%
\index{$\Sigma^{+}_{w\succ}$ the set $\Sigma^+ \cap w\cdot  \Sigma^+$}%
\index{$\Sigma^{+}_{\Theta, w\prec}$ the set $\Sigma^{+}_{w\prec} \cap
  \Sigma^{+}_{\Theta}$}%
\index{$\Sigma^{+}_{\Theta, w\succ}$ the set $\Sigma^{+}_{w\prec} \cap
  \Sigma^{+}_{\Theta}$}%
\index{$\mathfrak{u}^{\mathrm{opp}}_{\Theta,w\prec}$ the sum $ \bigoplus_{\mathclap{\alpha\in \Sigma^{+}_{\Theta, w\prec}}}
 \mathfrak{g}_{-\alpha}$ }%
\index{$\mathfrak{u}^{\mathrm{opp}}_{\Theta,w\succ}$ the sum $\bigoplus_{\mathclap{\alpha\in \Sigma^{+}_{\Theta, w\succ}}}  \mathfrak{g}_{-\alpha}$}
\begin{align*}
  \Sigma^{+}_{w\prec} & =  \Sigma^+ \cap w\cdot  \Sigma^-,
  \,  \Sigma^{+}_{w\succ} = \Sigma^+ \cap w\cdot  \Sigma^+,
  \text{ so that } \Sigma^+ = \Sigma^{+}_{w\prec} \sqcup \Sigma^{+}_{w\succ} ,\\
  \Sigma^{+}_{\Theta, w\prec} & = \Sigma^{+}_{w\prec} \cap \Sigma^{+}_{\Theta},
  \,  \Sigma^{+}_{\Theta, w\succ} = \Sigma^{+}_{w\prec} \cap
                                \Sigma^{+}_{\Theta},\, \text{so that } \Sigma^{+}_{\Theta} = \Sigma^{+}_{\Theta, w\prec} \sqcup
  \Sigma^{+}_{\Theta, w\succ},\\
  \intertext{and at the level of Lie algebras }
  \mathfrak{u}^{\mathrm{opp}}_{\Theta,w\prec} & = \bigoplus_{\mathclap{\alpha\in \Sigma^{+}_{\Theta, w\prec}}}
                             \mathfrak{g}_{-\alpha},
         \, \mathfrak{u}^{\mathrm{opp}}_{\Theta,w\succ} = \bigoplus_{\mathclap{\alpha\in
                             \Sigma^{+}_{\Theta, w\succ}}} 
                             \mathfrak{g}_{-\alpha},\,\text{so that } \mathfrak{u}^{\mathrm{opp}}_\Theta = \mathfrak{u}^{\mathrm{opp}}_{\Theta,w\prec} \oplus
\mathfrak{u}^{\mathrm{opp}}_{\Theta,w\succ},
\end{align*}
 and, for all~$X$ in~$\mathfrak{u}_{\Theta, w\succ}$,
$\exp(X)w$ belongs to $w P^{\mathrm{opp}}_{\Theta}$.

It is well known that the length of~$w$ is the cardinality of
$\Sigma_{w\prec}^{+}$ \cite[ch.~VI, \S1, p.~158, cor.~2]{BourbakiLie456} and
the following equalities hold \cite[Lemme~3.4]{BorelTits}
\begin{align}
\notag  \Sigma_{s_\alpha w\prec}^{+} &= \{\alpha\} \sqcup s_\alpha\cdot \Sigma_{w\prec}^{+}
                             \text{ if } \ell( s_\alpha w) = 1+ \ell(w),\\
\label{eq:Sigma_xw}  \Sigma_{x w\prec}^{+} &= \Sigma_{x \prec}^{+} \sqcup x^{-1}\cdot \Sigma_{w\prec}^{+}
                             \text{ if } \ell( x w) = \ell(x)+ \ell(w).
\end{align}

\begin{lemma}
  \label{lem:param-Bruhat-cell}
  Let~$w$ be in~$W(\Theta)$. The map
  \begin{align*}
    f_w\colon \mathfrak{u}^{\mathrm{opp}}_{\Theta, w\prec}& \longrightarrow C(w) \\
    X & \longmapsto \exp(X) w\cdot P_{\Theta}^{\mathrm{opp}}
  \end{align*}
  is a diffeomorphism.
\end{lemma}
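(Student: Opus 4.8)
The plan is to realize $f_w$ as a bijection onto $C(w)$ whose differential is everywhere an isomorphism onto the tangent space of $C(w)$; since $C(w)$ is the (locally closed, hence submanifold) $P_{\Theta}^{\mathrm{opp}}$-orbit through $\dot w\cdot\mathfrak{p}_\Theta^{\mathrm{opp}}$, this makes $f_w$ a diffeomorphism. Smoothness is automatic. I would first observe that it is harmless to assume $w$ has minimal length in the coset $wW_{\Delta\setminus\Theta}$: the flag $\dot w\cdot\mathfrak{p}_\Theta^{\mathrm{opp}}$ (hence $C(w)$, and the set $\Sigma^{+}_{\Theta,w\prec}$) only changes appropriately under such a replacement because a lift of an element of $W_{\Delta\setminus\Theta}$ can be taken inside $L_\Theta$, which fixes $\mathfrak{p}_\Theta^{\mathrm{opp}}$; and for $w$ of minimal length one has $w(\Sigma^+\cap\Span(\Delta\setminus\Theta))\subset\Sigma^+$, so that $\Sigma^{+}_{\Theta,w\prec}=\Sigma^{+}_{w\prec}$ and $\dim\mathfrak{u}^{\mathrm{opp}}_{\Theta,w\prec}=\ell(w)=\dim C(w)$.

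For surjectivity onto $C(w)$ I would use the decomposition $\mathfrak{u}^{\mathrm{opp}}_\Theta=\mathfrak{u}^{\mathrm{opp}}_{\Theta,w\prec}\oplus\mathfrak{u}^{\mathrm{opp}}_{\Theta,w\succ}$ into two Lie subalgebras: each is a sum of root spaces, and if $\alpha,\beta$ lie both in $\Sigma^{+}_{\Theta,w\prec}$ (resp.\ both in $\Sigma^{+}_{\Theta,w\succ}$) and $\alpha+\beta\in\Sigma$, then $\alpha+\beta$ again lies in the same set, as one checks by tracking the simple-root coefficients and the sign of $w^{-1}(\alpha+\beta)$. Since $U^{\mathrm{opp}}_\Theta$ is simply connected nilpotent and these two subalgebras are complementary and homogeneous for the height grading, the product map $U^{\mathrm{opp}}_{\Theta,w\prec}\times U^{\mathrm{opp}}_{\Theta,w\succ}\to U^{\mathrm{opp}}_\Theta$ is a diffeomorphism. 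Because $U^{\mathrm{opp}}_{\Theta,w\succ}$ stabilizes $\dot w\cdot\mathfrak{p}_\Theta^{\mathrm{opp}}$ — this is exactly the remark recorded just before the lemma, that $\exp(X)\dot w\in\dot wP^{\mathrm{opp}}_\Theta$ for $X\in\mathfrak{u}^{\mathrm{opp}}_{\Theta,w\succ}$ — writing any $n\in U^{\mathrm{opp}}_\Theta$ as $n_\prec n_\succ$ gives $n\dot w\cdot\mathfrak{p}_\Theta^{\mathrm{opp}}=n_\prec\dot w\cdot\mathfrak{p}_\Theta^{\mathrm{opp}}$, so the image of $f_w$ is already all of $C(w)$.

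For injectivity and the immersion property, suppose $f_w(X)=f_w(X')$; then $n:=\exp(-X')\exp(X)$ lies in the subgroup $U^{\mathrm{opp}}_{\Theta,w\prec}$ and fixes $\dot w\cdot\mathfrak{p}^{\mathrm{opp}}_\Theta$. Conjugating by $\dot w^{-1}$: $\Ad(\dot w^{-1})\mathfrak{u}^{\mathrm{opp}}_{\Theta,w\prec}=\bigoplus_{\alpha\in\Sigma^{+}_{\Theta,w\prec}}\mathfrak{g}_{-w^{-1}\alpha}$ is a sum of \emph{positive} root spaces (as $w^{-1}\alpha\in\Sigma^-$), hence contained in $\mathfrak{u}$, so $\dot w^{-1}n\dot w$ lies in the subgroup $\exp(\mathfrak{u})$ and in $P^{\mathrm{opp}}_\Theta$, i.e.\ in $\exp(\mathfrak{u}\cap\mathfrak{l}_\Theta)$; thus $\Ad(\dot w^{-1})(\log n)\in\Ad(\dot w^{-1})\mathfrak{u}^{\mathrm{opp}}_{\Theta,w\prec}\cap(\mathfrak{u}\cap\mathfrak{l}_\Theta)$. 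This intersection is spanned by the $\mathfrak{g}_{-w^{-1}\alpha}$ with $\alpha\in\Sigma^{+}_{\Theta,w\prec}$ and $-w^{-1}\alpha\in\Span(\Delta\setminus\Theta)$; but then $w^{-1}\alpha\in\Sigma^-\cap\Span(\Delta\setminus\Theta)=-(\Sigma^+\cap\Span(\Delta\setminus\Theta))$, and minimality of $w$ in its coset (which gives $w(\Sigma^+\cap\Span(\Delta\setminus\Theta))\subset\Sigma^+$) forces $\alpha\in\Sigma^-$, contradicting $\alpha\in\Sigma^{+}_\Theta$. Hence the intersection is $0$, $n=e$, and $f_w$ is injective. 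The same vanishing identifies the kernel of $df_w$ at the origin with $\mathfrak{u}^{\mathrm{opp}}_{\Theta,w\prec}\cap\Ad(\dot w)\mathfrak{p}^{\mathrm{opp}}_\Theta=0$, and translating by $\exp(X_0)^{-1}\in U^{\mathrm{opp}}_{\Theta,w\prec}$ transports the computation to any base point $X_0$; since source and target have equal dimension $\ell(w)$, $df_w$ is everywhere an isomorphism, and a bijective local diffeomorphism is a diffeomorphism. The only genuinely non-formal step is the root-combinatorial vanishing $\mathfrak{u}^{\mathrm{opp}}_{\Theta,w\prec}\cap\Ad(\dot w)\mathfrak{p}^{\mathrm{opp}}_\Theta=0$ together with the reduction to minimal coset representatives that makes it clean; everything else is bookkeeping with the parabolic and Bruhat structure already in place, so I expect that intersection to be the main obstacle.
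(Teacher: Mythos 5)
Your opening ``harmless'' reduction is where the argument breaks. Replacing $w$ by a shorter representative $w'$ of the coset $wW_{\Delta\setminus\Theta}$ does leave $C(w)$ unchanged, but it genuinely changes the set $\Sigma^{+}_{\Theta,w\prec}$ and hence the domain of the map, so proving the statement for $f_{w'}$ proves nothing about $f_w$. Take $G=\Sp(4,\R)$, $\Theta=\{\alpha_2\}$ (Hermitian tube type), $\Delta\setminus\Theta=\{\alpha_1\}$. For $w=s_1s_2s_1$ one finds $\Sigma^{+}_{\Theta,w\prec}=\{\alpha_1+\alpha_2,\,2\alpha_1+\alpha_2\}$, while for the minimal representative $w'=ws_1=s_1s_2$ of the same coset one finds $\Sigma^{+}_{\Theta,w'\prec}=\{2\alpha_1+\alpha_2\}$: the domains have different dimensions, and in fact $f_{w'}$ is injective while $f_{w}$ is not, since $w^{-1}(\alpha_1+\alpha_2)=-\alpha_1$, so $\Ad(\dot{w}^{-1})\mathfrak{g}_{-(\alpha_1+\alpha_2)}=\mathfrak{g}_{\alpha_1}\subset\mathfrak{p}^{\mathrm{opp}}_{\Theta}$ and $\exp(\mathfrak{g}_{-(\alpha_1+\alpha_2)})$ fixes the base point. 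The dimension count you attach to the reduction is also false: $w=s_2$ is minimal in every coset, yet $\dim\mathfrak{u}^{\mathrm{opp}}_{\Theta,s_2\prec}=1$ while $C(s_2)$ (the Lagrangians meeting the reference Lagrangian $L_0$ in a line) is $2$-dimensional and fibres over $\mathbb{P}(L_0)\cong S^1$, so it is not diffeomorphic to any vector space. This last example also shows that the surjectivity step (which you share with the paper's proof) only reaches the $U^{\mathrm{opp}}_\Theta$-orbit of $\dot{w}\cdot\mathfrak{p}^{\mathrm{opp}}_\Theta$, which can be strictly smaller than the $P^{\mathrm{opp}}_\Theta$-orbit $C(w)$ because $P^{\mathrm{opp}}_\Theta=L_\Theta U^{\mathrm{opp}}_\Theta$ and $L_\Theta$ need not fix $\dot{w}\cdot\mathfrak{p}^{\mathrm{opp}}_\Theta$.

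The lemma is only ever applied (Proposition~\ref{prop:jump-dimensions-Cw}, Corollary~\ref{coro:F_j-open}) to elements of $W(\Theta)$ and their products by the $\sigma_\alpha$, and these are precisely the $w$ for which everything works: by Proposition~\ref{prop:normalizer-WDeltaminusTheta} they normalize $W_{\Delta\setminus\Theta}$, hence permute the roots lying in $\Span(\Delta\setminus\Theta)$. That one property is what is needed in both places. It gives $\Ad(\dot{w}^{-1})\mathfrak{l}_\Theta=\mathfrak{l}_\Theta$, so the $L_\Theta$-part of $P^{\mathrm{opp}}_\Theta$ fixes $\dot{w}\cdot\mathfrak{p}^{\mathrm{opp}}_\Theta$ and the $U^{\mathrm{opp}}_\Theta$-orbit already equals $C(w)$; and it makes your intersection $\Ad(\dot{w}^{-1})\mathfrak{u}^{\mathrm{opp}}_{\Theta,w\prec}\cap\mathfrak{l}_\Theta$ vanish, since $w^{-1}\alpha\in\Span(\Delta\setminus\Theta)$ would force $\alpha\in\Span(\Delta\setminus\Theta)$, contradicting $\alpha\in\Sigma^{+}_{\Theta}$. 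So the repair is not to change coset representatives but to restrict to such $w$ and run your argument for that $w$ directly; with that hypothesis your algebraic route to injectivity is sound and is genuinely different from the paper's, which instead upgrades the local diffeomorphism at $0$ to a global one by equivariance under the contracting element $\exp\bigl(\sum_{\alpha\in\Delta}H_\alpha\bigr)$ of the Cartan subspace.
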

\begin{proof}
  Clearly this map is $C^\infty$. It also results from classical facts about
  nilpotent Lie algebras that
  \begin{align*}
    \mathfrak{u}^{\mathrm{opp}}_{\Theta, w\prec} \times \mathfrak{u}^{\mathrm{opp}}_{\Theta, w\succ}
    & \longrightarrow U_\Theta\\
    (X,Y) & \longmapsto \exp(X)\exp(Y)
  \end{align*}
  is a diffeomorphism. Since, for all~$Y$ in~$\mathfrak{u}^{\mathrm{opp}}_{\Theta, w\succ}$,
  $\exp(Y) w P_{\Theta}^{\mathrm{opp}} =w P_{\Theta}^{\mathrm{opp}}$, we have that $f_w$ is onto. By the very choice of the
  space $\mathfrak{u}^{\mathrm{opp}}_{\Theta, w\prec}$, the map~$f_w$ is a local diffeomorphism
  at~$0$. By equivariance with respect to the element
  $\exp(\sum_{\alpha\in\Delta} H_\alpha)$ of the Cartan subgroup that acts
  as a contracting transformation on~$\mathfrak{u}^{\mathrm{opp}}_{\Theta, w\prec}$ we deduce that
  $f_w$~is a diffeomorphism.
\end{proof}

From this we deduce how the dimensions of the cells $C(w)$ jump:
\begin{proposition}
  \label{prop:jump-dimensions-Cw}
  Let $w$ be in 
  the Coxeter group $(W(\Theta), R(\Theta))$ and let
  $\alpha$ be in~$\Theta$ such that $\sigma_\alpha w$ is a reduced product
  in $W(\Theta)$. Then
  \begin{align*}
    \dim C(\sigma_\alpha w) - \dim C(w)
    & = \dim \mathfrak{u}_\alpha\\
    \intertext{which means}
    \dim C(\sigma_\alpha w) - \dim C(w)
    & = 1 \text{ if $\alpha \neq \alpha_\Theta$},\\
    \dim C(\sigma_{\alpha_\Theta} w) - \dim C(w)
    & = \dim \mathfrak{u}_{\alpha_\Theta}  \text{ if $\alpha = \alpha_\Theta$}.
  \end{align*}
\end{proposition}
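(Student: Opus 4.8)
The starting point is Lemma~\ref{lem:param-Bruhat-cell}, which identifies $\dim C(w)$ with $\dim\mathfrak{u}^{\mathrm{opp}}_{\Theta,w\prec}=\sum_{\beta\in\Sigma^{+}_{\Theta,w\prec}}\dim\mathfrak{g}_\beta$, where $\Sigma^{+}_{\Theta,w\prec}=\{\beta\in\Sigma^{+}_\Theta:w^{-1}\beta\in\Sigma^{-}\}$. So the proposition amounts to comparing this sum for $w$ and for $\sigma_\alpha w$, and the whole argument is bookkeeping with inversion sets. The key input I would isolate first is a description of the inversion set $N(\sigma_\alpha):=\{\gamma\in\Sigma^{+}:\sigma_\alpha\gamma\in\Sigma^{-}\}$ of a generator $\sigma_\alpha$ ($\alpha\in\Theta$): I claim that $N(\sigma_\alpha)$ is exactly the set of roots $\gamma$ with $\mathfrak{g}_\gamma\subseteq\mathfrak{u}_\alpha$. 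Granting this, $N(\sigma_\alpha)\subseteq\Sigma^{+}_\Theta$ (as $\mathfrak{u}_\alpha\subseteq\mathfrak{u}_\Theta$) and, by Equation~\eqref{eq:u_beta_sum_g_alpha}, $\sum_{\gamma\in N(\sigma_\alpha)}\dim\mathfrak{g}_\gamma=\dim\mathfrak{u}_\alpha$, which is the right-hand side of the assertion.

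For $\alpha\neq\alpha_\Theta$ the claim is trivial: $\sigma_\alpha=s_\alpha$, $N(s_\alpha)=\{\alpha\}$, and $\mathfrak{u}_\alpha=\mathfrak{g}_\alpha$ is a line. For $\alpha=\alpha_\Theta$ I would argue in two steps. First, since $\sigma_{\alpha_\Theta}$ normalises $W_{\Delta\setminus\Theta}$ (Proposition~\ref{prop:normalizer-WDeltaminusTheta}) it conjugates reflections of $W_{\Delta\setminus\Theta}$ to reflections of $W_{\Delta\setminus\Theta}$, hence permutes $\Sigma\cap\Span(\Delta\setminus\Theta)$, and therefore permutes its complement $\Sigma^{+}_\Theta\cup(-\Sigma^{+}_\Theta)$ in $\Sigma$. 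Moreover, from the factorisation $\sigma_{\alpha_\Theta}=w_{\{\alpha_\Theta\}\cup(\Delta\setminus\Theta)}\,w_{\Delta\setminus\Theta}$ (Corollary~\ref{cor:decomp_Herm}) together with the fact that the longest element of a Coxeter group of type $C_{d+1}$ acts by $-\id$ on the span of its roots, one sees that $\sigma_{\alpha_\Theta}$ preserves $\Span(\Delta\setminus\Theta)$, hence acts on the set of classes of roots modulo $\Span(\Delta\setminus\Theta)$ and sends the class of $\alpha_\Theta$ to that of $-\alpha_\Theta$. Thus every positive root congruent to $\alpha_\Theta$ modulo $\Span(\Delta\setminus\Theta)$, i.e.\ every $\gamma$ with $\mathfrak{g}_\gamma\subseteq\mathfrak{u}_{\alpha_\Theta}$, is sent to a negative root (one congruent to $-\alpha_\Theta$), giving the inclusion $\supseteq$. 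Second, I would check equality of cardinalities: by Corollary~\ref{cor:decomp_Herm}, $\#N(\sigma_{\alpha_\Theta})=\ell(\sigma_{\alpha_\Theta})=\ell(w_{\{\alpha_\Theta\}\cup(\Delta\setminus\Theta)})-\ell(w_{\Delta\setminus\Theta})$, which is the number of positive roots of the type $C_{d+1}$ subsystem generated by $\{\alpha_\Theta\}\cup(\Delta\setminus\Theta)$ having nonzero coefficient on $\alpha_\Theta$; since $\alpha_\Theta$ is the long simple root at the branch end of this diagram it occurs with coefficient at most $1$ in every root, and since every root of $\Sigma$ congruent to $\alpha_\Theta$ already lies in this subsystem, that number is precisely the number of $\gamma$ with $\mathfrak{g}_\gamma\subseteq\mathfrak{u}_{\alpha_\Theta}$. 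A finite set containing another of the same cardinality equals it, so the claim follows.

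Now the proof proper. The hypothesis that $\sigma_\alpha w$ is reduced in $W(\Theta)$ means $\ell_{W(\Theta)}(\sigma_\alpha w)=\ell_{W(\Theta)}(w)+1$; expanding a reduced $W(\Theta)$-expression $\sigma_\alpha\cdot(\text{reduced expression of }w)$ into $W$ yields a reduced $W$-expression, by the lemma on reduced expressions following Proposition~\ref{prop:longestelementWTheta}, whence $\ell(\sigma_\alpha w)=\ell(\sigma_\alpha)+\ell(w)$ in $W$. By the standard criterion for when lengths add, this gives $w^{-1}\bigl(N(\sigma_\alpha)\bigr)\subset\Sigma^{+}$, i.e.\ $N(\sigma_\alpha)\cap\Sigma^{+}_{w\prec}=\emptyset$. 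Write $N=N(\sigma_\alpha)\subseteq\Sigma^{+}_\Theta$. The permutation property of $\sigma_\alpha$ on $\Sigma^{+}_\Theta\cup(-\Sigma^{+}_\Theta)$ shows that $\sigma_\alpha$ restricts to an involution of $\Sigma^{+}_\Theta\setminus N$ and that $\gamma\mapsto-\sigma_\alpha\gamma$ is an involution of $N$. In $\dim C(\sigma_\alpha w)=\sum_{\beta\in\Sigma^{+}_\Theta,\ w^{-1}\sigma_\alpha\beta\in\Sigma^{-}}\dim\mathfrak{g}_\beta$ I would substitute $\gamma=\sigma_\alpha\beta$, split according to $\beta\in N$ or $\beta\in\Sigma^{+}_\Theta\setminus N$, and use the $W$-invariance of root multiplicities $\dim\mathfrak{g}_\beta=\dim\mathfrak{g}_{\sigma_\alpha\beta}$: the contribution of $\Sigma^{+}_\Theta\setminus N$ is $\sum_{\gamma\in(\Sigma^{+}_\Theta\setminus N)\cap\Sigma^{+}_{w\prec}}\dim\mathfrak{g}_\gamma$, which equals $\sum_{\gamma\in\Sigma^{+}_{\Theta,w\prec}}\dim\mathfrak{g}_\gamma=\dim C(w)$ because $N\cap\Sigma^{+}_{w\prec}=\emptyset$, while the contribution of $N$ is $\sum_{\delta\in N,\ \delta\notin\Sigma^{+}_{w\prec}}\dim\mathfrak{g}_\delta=\sum_{\delta\in N}\dim\mathfrak{g}_\delta$ for the same reason. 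Hence $\dim C(\sigma_\alpha w)-\dim C(w)=\sum_{\delta\in N(\sigma_\alpha)}\dim\mathfrak{g}_\delta=\dim\mathfrak{u}_\alpha$, which is $1$ when $\alpha\neq\alpha_\Theta$ and $\dim\mathfrak{u}_{\alpha_\Theta}$ when $\alpha=\alpha_\Theta$. The one step I expect to require genuine care is the identification of $N(\sigma_{\alpha_\Theta})$ carried out in the second paragraph, which uses the explicit form of $\sigma_{\alpha_\Theta}$ and type $C_{d+1}$ root combinatorics; everything else is formal manipulation of inversion sets and goes through uniformly across the four families.
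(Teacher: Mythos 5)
Your argument is correct and follows the same skeleton as the paper's proof: reduce via Lemma~\ref{lem:param-Bruhat-cell} to comparing the weighted inversion sets $\Sigma^{+}_{\Theta,w\prec}$ and $\Sigma^{+}_{\Theta,\sigma_\alpha w\prec}$, use that $\sigma_\alpha w$ reduced in $W(\Theta)$ forces $\ell(\sigma_\alpha w)=\ell(\sigma_\alpha)+\ell(w)$ in $W$, and then identify the inversion set of $\sigma_\alpha$ with the set of roots contributing to $\mathfrak{u}_\alpha$ — this last identification being exactly the content of the paper's Lemma~\ref{lem:Sigma-plus-for-a-in-J}. Your bookkeeping (direct substitution $\gamma=\sigma_\alpha\beta$ and splitting over $N(\sigma_\alpha)$ versus its complement, using $W$-invariance of root multiplicities) replaces the paper's appeal to Equation~\eqref{eq:Sigma_xw}, but amounts to the same computation.

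The one genuine divergence is your proof of the inversion-set lemma for $\alpha=\alpha_\Theta$: you get the inclusion from the normalizer property plus the fact that $w_{\{\alpha_\Theta\}\cup\Delta\setminus\Theta}$ acts as $-\id$, and then close the gap by comparing cardinalities, $\ell(\sigma_{\alpha_\Theta})=(d+1)^2-\tfrac{d(d+1)}{2}=\tfrac{(d+1)(d+2)}{2}$ against the number of roots of the $C_{d+1}$ subsystem with $\alpha_\Theta$-coefficient equal to $1$. The paper instead reduces to the Hermitian tube-type case and computes the two sets explicitly in coordinates. Your counting argument is arguably cleaner and less coordinate-dependent; it does rely, as you note, on the containment of all roots congruent to $\alpha_\Theta$ in the subsystem generated by $\{\alpha_\Theta\}\cup(\Delta\setminus\Theta)$, which you should make explicit also in the inclusion step (you need $w_{\{\alpha_\Theta\}\cup\Delta\setminus\Theta}$ to negate the class of $\alpha_\Theta$ in the quotient by $\Span(\Delta\setminus\Theta)$, which is immediate once one knows $\sigma_{\alpha_\Theta}$ preserves that span and sends $\alpha_\Theta$ itself to a root congruent to $-\alpha_\Theta$). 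No gap; the proof stands.
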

\begin{proof}
  Denote by~$\delta$ the difference of dimensions. The hypothesis that
  $\sigma_\alpha w$ is reduced in $W(\Theta)$ implies that
  $\sigma_\alpha w$ is also a reduced product in~$W$ (Lemma~\ref{lem:reduced-exp-inWTheta-and-W}).
  The previous lemma, the equation~\eqref{eq:Sigma_xw}
  (applied with $x=\sigma_\alpha$), and the equalities $\dim
  \mathfrak{g}_{-\beta} = \dim \mathfrak{g}_{\beta}$ imply that
  \[ \delta=\!\!\!\!\!\! \sum_{\beta \in \Sigma^{+}_{\Theta}, \sigma_{\alpha}^{-1} \beta
      \in \Sigma^-}\!\!\!\!\!\! \dim \mathfrak{g}_\beta.\]
  But  Lemma~\ref{lem:Sigma-plus-for-a-in-J} below
  establishes 
  \[ \{ \beta \in \Sigma^{+}_{\Theta}\mid \sigma_{\alpha}^{-1} \beta
      \in \Sigma^- \} = \{ \beta\in \Sigma^+ \mid \beta-\alpha \in \Span(
      \Delta\setminus\Theta)\},\]
    hence
    \[ \delta = \dim\!\!\!\!\!\! \bigoplus_{\substack{\beta\in \Sigma^+,\\ \beta-\alpha \in \Span(
        \Delta\setminus\Theta)}}\!\!\!\!\!\! \mathfrak{g}_{\beta} = \dim \mathfrak{u}_\alpha. \qedhere\]
\end{proof}

\begin{lemma}
  \label{lem:Sigma-plus-for-a-in-J}
  For all~$\alpha$ in~$\Theta$, one has
  \[ \{ \beta \in \Sigma^{+}_{\Theta}\mid \sigma_{\alpha}^{-1} \beta
      \in \Sigma^- \} = \{ \beta\in \Sigma^+ \mid \beta-\alpha \in \Span(
      \Delta\setminus\Theta)\}. \]
\end{lemma}
\begin{proof}
  In the case $\alpha\neq \alpha_\Theta$, $\sigma_\alpha=s_\alpha$ and it is already known that
  \[\Sigma^{+}_{s_\alpha\prec} = \{ \beta \in \Sigma^{+}\mid \sigma_{\alpha}^{-1} \beta
    \in \Sigma^- \} =\{ \alpha\}.\]
  Since $\alpha$ belongs to
  $\Sigma^{+}_{\Theta}$, this proves the equality $\Sigma^{+}_{\Theta,
    s_\alpha\prec} = \Sigma^{+}_{s_\alpha\prec} \cap \Sigma^{+}_{\Theta} =\{\alpha\}$. Since $\alpha$ is not connected to
  $\Delta\setminus \Theta$, the only root in the affine subspace $\alpha +
  \Span( \Delta\setminus\Theta)$ is~$\alpha$ and this proves the wanted
  equality.

  Let us treat the case $\alpha = \alpha_\Theta$. The element
  $\sigma_{\alpha_\Theta} = w_{\{\alpha_\Theta\} \cup \Delta \setminus \Theta}
  w_{\Delta\setminus\Theta}$ belongs to the subgroup $W_{\{\alpha_\Theta\}
    \cup \Delta \setminus \Theta}       $ of~$W$.

     The set
  \[ \{ \beta\in \Sigma^+ \mid \beta-\alpha_\Theta \in \Span(
    \Delta\setminus\Theta)\}\]
  is contained in $\Span(\{\alpha_\Theta\}
    \cup \Delta \setminus \Theta )$ and thus  in the root system generated by $\{\alpha_\Theta\}
    \cup \Delta \setminus \Theta$.
    
  From the fact that, for every simple root~$\beta$ in
  $\Theta\setminus \{ \alpha_\Theta\}$ and for every~$\alpha$ in $\{\alpha_\Theta\}
    \cup \Delta \setminus \Theta$, $s_\alpha (\beta)-\beta$ belongs to $\Span(
    \{\alpha_\Theta\}
    \cup \Delta \setminus \Theta)$, we have that $w( \Sigma^+ \smallsetminus \Span(
    \{\alpha_\Theta\}
    \cup \Delta \setminus \Theta)) = \Sigma^+ \smallsetminus \Span(
    \{\alpha_\Theta\}
    \cup \Delta \setminus \Theta)$ for all~$w$ in $W_{\{\alpha_\Theta\}\cup
      \Delta \smallsetminus \Theta}$. In particular
    the set
    \[ \{ \beta \in \Sigma^{+}_{\Theta}\mid \sigma_{\alpha_\Theta}^{-1} \beta
      \in \Sigma^- \}\]
  must be contained in  the root system generated by $\{\alpha_\Theta\}
  \cup \Delta \setminus \Theta$. 

  Therefore we can and will assume that $\Delta = \{\alpha_\Theta\}
    \cup \Delta \setminus \Theta$, i.e.\ that we are in the Hermitian tube
    type case. In this case it is known that the longest length element $w_{\{\alpha_\Theta\}
      \cup \Delta \setminus \Theta}$ is $-\id$ so that
    \[ \{ \beta \in \Sigma^{+}_{\Theta}\mid \sigma_{\alpha_\Theta}^{-1} \beta
      \in \Sigma^- \} = \{ \beta \in \Sigma^{+}_{\Theta}\mid
      w_{\Delta\setminus \Theta} \beta
      \in \Sigma^+ \}. \]
   Furthermore, since for every~$\beta$ in $ \Sigma^{+}_{\Theta}$,
   $w_{\Delta\setminus\Theta} \beta -\beta$ belongs to $\Span(
   \Delta\setminus\Theta)$ (and again since every root is a sum of simple
   roots with coefficients all of the same sign), this set is
   equal to $\Sigma^{+}_{\Theta}$.
   However, here we are dealing with a root system of type $\lietype{C}_{d+1}$, and the equality $\Sigma^{+}_{\Theta} = \{
   \beta \in \Sigma^+ \mid \beta-\alpha_\Theta \in \Span(
   \Delta\setminus\Theta)\}$ is satisfied; indeed it follows from the equality
   $\mathfrak{u}_\Theta=\mathfrak{u}_{\alpha_\Theta}$ at the level of Lie
   algebras.
                                                \end{proof}

\subsection{The nontrivial cone and the Bruhat decomposition}
\label{sec:nontr-cone-bruh}

We give now a precise description of the Bruhat cell with respect to the $P_\Delta^{\mathrm{opp}} \times P_\Theta^{\mathrm{opp}}$ action, containing the
image of the cone~$\mathring{c}_{\alpha_\Theta}$ by the exponential map. 

We assume in this section that $\Theta\neq \Delta$ and adopt the notation of
the previous parts: $\alpha_\Theta$ is the special root, $\Delta\setminus
\Theta = \{ \chi_1, \dots, \chi_d\}$ (with $\chi_1$ connected
to~$\alpha_\Theta$ in the Dynkin diagram), and the elements $Z_0,\dots, Z_d$
of the Lie algebra (Section~\ref{sec:putting-an-hand}); the reflection in~$W$
associated with the simple root~$\alpha_\Theta$ is
denoted by~$s_0$ and the reflections associated with~$\chi_1, \dots, \chi_d$
are denoted by $s_1, \dots, s_d$ respectively.

The
longest length element in $W_{\{\alpha_\Theta\} \cup
  \Delta\setminus\Theta}$ is
\[ w_{\{\alpha_\Theta\}\cup \Delta\setminus\Theta} = s_0\, s_{0}^{s_1} \cdots
  s_{0}^{s_1 \cdots s_d},\]
 again with the notation $x^y = y^{-1} xy$, so that $s_{0}^{ s_1\cdots s_i} = s_i
\cdots s_1 s_0 s_1 \cdots s_i$, the above is a reduced expression
of~$w_{\{\alpha_\Theta\}\cup \Delta\setminus\Theta}$, and these elements
$s_{0}^{ s_1\cdots s_i}$ pairwise commute.

Let us introduce also the following elements of~$W$:\index{$w_j $ the product $ s_0\, s_{0}^{s_1} \cdots
  s_{0}^{s_1 \cdots s_{j-1}}$}%
\index{$w_I $ the product $ \prod_{i\in I} s_{0}^{ s_1\cdots s_i}$}%
\begin{align}
  \label{eq:w_j}
  w_0 &=e, \ w_j = s_0\, s_{0}^{s_1} \cdots
        s_{0}^{s_1 \cdots s_{j-1}}, \text{ for } j\in \llbracket 1, d+1\rrbracket\\
  \intertext{and, for every subset~$I$ of $\llbracket 0, d \rrbracket$,}
 \label{eq:w_I} w_I &= \prod_{i\in I} s_{0}^{ s_1\cdots s_i},
\end{align}
so that $w_\emptyset = e$ is the neutral element of~$W$, $w_{d+1}= w_{\llbracket 0,  d\rrbracket} = w_{\{ \alpha_\Theta\}\cup \Delta\setminus\Theta}$, and the
 expressions~(\ref{eq:w_j}) and~(\ref{eq:w_I}) are reduced. By Lemma~\ref{lem:theta-length-on-reduced-expression}, we have
\begin{equation}
 \ell_\Theta (w_j) =j,\ \ell_\Theta(w_I)= \sharp I, \ \text{for all } j \text{
   and for all }  I.\label{eq:Theta-length-wj-wI}
\end{equation}

The $w_j$ belong to different Bruhat cells:
\begin{lemma}
  \label{lem:w_j-different-cells}
  Let~$j$ and~$k$ be in $\llbracket 0,d\rrbracket$. If
  $P_{\Theta}^{\mathrm{opp}} w_j P_{\Theta}^{\mathrm{opp}} =
  P_{\Theta}^{\mathrm{opp}} w_k P_{\Theta}^{\mathrm{opp}}$, then $j=k$.
\end{lemma}
\begin{proof}
  One has $P_{\Delta}^{\mathrm{opp}} w_j P_{\Delta}^{\mathrm{opp}} \subset P_{\Theta}^{\mathrm{opp}}
      w_j P_{\Theta}^{\mathrm{opp}}$ and
  \begin{equation*}
     P_{\Theta}^{\mathrm{opp}}
      w_k P_{\Theta}^{\mathrm{opp}} =\bigcup_{\mathclap{x_1, x_2 \in
        W_{\Delta\setminus\Theta}}} P_{\Delta}^{\mathrm{opp}} x_1 w_k x_2
      P_{\Delta}^{\mathrm{opp}}.
    \end{equation*}
  (The last union is justified by property~(\ref{item:2:Bruhat_dec}) of
  Section~\ref{sec:bruhat-decomposition} and may not be disjoint.) Hence an equality $P_{\Theta}^{\mathrm{opp}} w_j
  P_{\Theta}^{\mathrm{opp}} = P_{\Theta}^{\mathrm{opp}} w_k
  P_{\Theta}^{\mathrm{opp}}$ implies, and is in fact equivalent to,  the
  existence of $x_1, x_2$ in $W_{\Delta\setminus\Theta}$
  such that
  \[ w_j = x_1 w_k x_2.\]
  By
  Lemma~\ref{lem:theta-length-invariant}, $\ell_\Theta( x_1 w_k x_2)=\ell_\Theta(
  w_k)$,
  hence $j =\ell_\Theta( w_j) =\ell_\Theta(
  w_k)=k$.
\end{proof}

We start by determining the Bruhat cells for linear combinations with nonnegative coefficients of the
elements~$Z_i$. 

\begin{lemma}
  \label{lem:bruh-spanZ_i}
  Let~$Y$ be a linear combination with nonnegative coefficients of the~$Z_i$:
  $Y=\sum_{i=0}^{d} \lambda_i Z_i$,
  i.e.\ $Y$~belongs to $c_{\alpha_\Theta} \cap \bigoplus \R Z_i$ (Lemma~\ref{lem:inter-spanZ_i-with-cone}). Let
  $I\subset \llbracket 0, d\rrbracket$ be the set of indices of the nonzero entries in
  $(\lambda_0, \dots, \lambda_d)$ (in formula $I=\{ i\in \llbracket 0, d\rrbracket \mid
  \lambda_i>0\}$). Then
  \[ \exp(Y) \text{ belongs to } P_{\Delta}^{\mathrm{opp}} w_I P_{\Delta}^{\mathrm{opp}}.\]
\end{lemma}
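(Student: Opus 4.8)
The plan is to reduce everything to the Bruhat‑cell multiplication rules of Section~\ref{sec:bruhat-decomposition} by exploiting two facts already in hand: the $Z_i$ pairwise commute (Lemma~\ref{lem:Z_i-commute}), and each $Z_i$ is a Weyl translate of $Z_0=X_{\alpha_\Theta}$, which lies in the root space $\mathfrak g_{\alpha_\Theta}$ of the \emph{simple} root $\alpha_\Theta\in\Delta$. Concretely, by Proposition~\ref{prop:orbit-of-X_alpha-J}, $Z_i=\Ad(\dot s_i\cdots\dot s_1)Z_0$, so writing $h_i=s_i\cdots s_1\in W$ and $\dot h_i=\dot s_i\cdots\dot s_1$ for the corresponding lift in $N_K(\mathfrak a)$ one has $\exp(\lambda_i Z_i)=\dot h_i\,\exp(\lambda_i Z_0)\,\dot h_i^{-1}$. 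First, since the $Z_i$ commute and $\lambda_j=0$ for $j\notin I$,
\[ \exp(Y)=\prod_{i\in I}\exp(\lambda_i Z_i),\]
the product being independent of the ordering of the factors (the case $I=\emptyset$ reading $\exp(Y)=e\in P^{\mathrm{opp}}=P^{\mathrm{opp}}w_\emptyset P^{\mathrm{opp}}$).

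Next I would locate each individual factor $\exp(\lambda_i Z_i)$, $i\in I$, in a single Bruhat cell. Since $\lambda_i>0$ and $\alpha_\Theta$ is simple, point~(\ref{item:4:Bruhat_dec}) of Section~\ref{sec:bruhat-decomposition} gives $\exp(\lambda_i Z_0)\in P^{\mathrm{opp}}s_0P^{\mathrm{opp}}$, while $\dot h_i\in P^{\mathrm{opp}}h_iP^{\mathrm{opp}}$ and $\dot h_i^{-1}\in P^{\mathrm{opp}}h_i^{-1}P^{\mathrm{opp}}$. As recalled above, $s_i\cdots s_1\,s_0\,s_1\cdots s_i$ is a reduced expression of $s_0^{s_1\cdots s_i}=s_{\gamma_i}$ (of length $2i+1$, this word being a sub‑block of the reduced expression of $w_{\{\alpha_\Theta\}\cup\Delta\setminus\Theta}$), so $\ell(h_i s_0)=\ell(h_i)+1$ and $\ell((h_i s_0)h_i^{-1})=\ell(h_i s_0)+\ell(h_i^{-1})$; two applications of point~(\ref{item:3:Bruhat_dec}) of Section~\ref{sec:bruhat-decomposition} then yield
\[ \exp(\lambda_i Z_i)\in (P^{\mathrm{opp}}h_iP^{\mathrm{opp}})(P^{\mathrm{opp}}s_0P^{\mathrm{opp}})(P^{\mathrm{opp}}h_i^{-1}P^{\mathrm{opp}})=P^{\mathrm{opp}}s_{\gamma_i}P^{\mathrm{opp}}=P^{\mathrm{opp}}s_0^{s_1\cdots s_i}P^{\mathrm{opp}}.\]
Multiplying over $i\in I$ and invoking point~(\ref{item:3:Bruhat_dec}) repeatedly, it then suffices to prove the length identity
\[ \ell(w_I)=\ell\Bigl(\prod_{i\in I}s_0^{s_1\cdots s_i}\Bigr)=\sum_{i\in I}\ell\bigl(s_0^{s_1\cdots s_i}\bigr)=\sum_{i\in I}(2i+1),\]
since then $\prod_{i\in I}\bigl(P^{\mathrm{opp}}s_0^{s_1\cdots s_i}P^{\mathrm{opp}}\bigr)=P^{\mathrm{opp}}w_IP^{\mathrm{opp}}$, which is the assertion.

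The one substantive point is this length identity, and I expect it to be the main obstacle. I would prove it inside the subgroup $W_{\{\alpha_\Theta\}\cup\Delta\setminus\Theta}$, which is of type $C_{d+1}$ and in which lengths agree with lengths in $W$ (it is a standard parabolic). In the standard model of $C_{d+1}$, taking $\alpha_\Theta=2e_{d+1}$ and $\chi_j=e_{d+1-j}-e_{d+2-j}$, a telescoping computation gives $\gamma_i=2e_{d+1-i}$, so that $w_I=\prod_{i\in I}s_{\gamma_i}$ is the sign change on the coordinate set $S=\{d+1-i:i\in I\}$. A short inspection of the positive roots $e_a\pm e_b$ ($a<b$) and $2e_a$ shows that such a sign change sends a positive root to a negative one exactly when the smallest index occurring in the root lies in $S$; hence the inversion set of $w_I$ is the \emph{disjoint} union over $i\in I$ of the inversion sets of the single reflections $s_{\gamma_i}$, each of cardinality $2i+1$, which yields the additivity. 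This "no interference among the reflections $s_{\gamma_i}$" is the heart of the argument; once it is in place, the remainder is routine bookkeeping with the Bruhat product rules already recorded in Section~\ref{sec:bruhat-decomposition}.
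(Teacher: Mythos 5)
Your proof is correct and follows essentially the same route as the paper: split $\exp(Y)$ into commuting factors, place each $\exp(\lambda_i Z_i)$ in $P^{\mathrm{opp}} s_0^{s_1\cdots s_i}P^{\mathrm{opp}}$ by conjugation together with properties~(\ref{item:4:Bruhat_dec}) and~(\ref{item:3:Bruhat_dec}), and multiply the cells using that the expression for $w_I$ is reduced. The only difference is that the paper simply asserts that Equation~\eqref{eq:w_I} is a reduced expression, whereas you verify the length additivity $\ell(w_I)=\sum_{i\in I}(2i+1)$ explicitly via inversion sets in the $C_{d+1}$ model --- a correct and welcome addition, not a divergence in method.
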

\begin{proof}
 Since the elements~$Z_i$ pairwise commute (Proposition~\ref{prop:u_alpha-are-abel}), we have that
  \[ \exp(Y) = \exp( \lambda_0 Z_0) \exp( \lambda_1 Z_1) \cdots \exp(
    \lambda_d Z_d).\]
  By property~(\ref{item:4:Bruhat_dec}) of
  Section~\ref{sec:bruhat-decomposition}, if $\lambda_0>0$, then $\exp(
  \lambda_0 Z_0)$ belongs to $P_{\Delta}^{\mathrm{opp}} s_0 P_{\Delta}^{\mathrm{opp}}$. For all
  $i$, one has $Z_i = \Ad( \dot{s}_i \cdots \dot{s}_1) Z_0$ so that $\exp(
  \lambda_i Z_i) = \dot{s}_{i} \cdots \dot{s}_{1} \exp( \lambda_i
  Z_0) \dot{s}_{1}^{-1} \cdots \dot{s}_{i}^{-1}$. Successive applications of the
  property~(\ref{item:3:Bruhat_dec}) of Section~\ref{sec:bruhat-decomposition}
  and the fact that $s_i \cdots s_1 s_0 s_1 \cdots s_i$ is a reduced expression
  imply that, when $\lambda_i>0$, $\exp( \lambda_i Z_i)$ belongs to
  \[ P_{\Delta}^{\mathrm{opp}} s_i\cdots s_1 s_0 s_1\cdots s_i P_{\Delta}^{\mathrm{opp}} =
    P_{\Delta}^{\mathrm{opp}} s_{0}^{s_1\cdots s_i} P_{\Delta}^{\mathrm{opp}}.\]
  Successive applications again of property~(\ref{item:3:Bruhat_dec}) and the fact
  that Equation~\eqref{eq:w_I} is a reduced expression imply that
  \[ \exp(Y) \text{ belongs to } P_{\Delta}^{\mathrm{opp}}
    \Bigl(\prod_{\mathclap{i\in I}}
    s_{0}^{s_1\cdots s_i} \Bigr)P_{\Delta}^{\mathrm{opp}},\]
  hence the result.
\end{proof}

We can now determine the Bruhat cell with respect to the left-right action of $P_{\Delta}^{\mathrm{opp}}$ pcorresponding to elements in the open
cone.
\begin{proposition}
  \label{prop:open-cone-bruh}
  Let~$X$ be in the open cone~$\mathring{c}_{\alpha_\Theta}$. Then $\exp(X)$
  belongs to $P_{\Delta}^{\mathrm{opp}} w_{\{ \alpha_\Theta\}\cup \Delta\setminus \Theta}P_{\Delta}^{\mathrm{opp}}$.
\end{proposition}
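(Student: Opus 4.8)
The plan is to reduce to the already–treated ``diagonal'' case of Lemma~\ref{lem:bruh-spanZ_i} by using the transitivity of $L_\Theta^\circ$ on $\mathring{c}_{\alpha_\Theta}$, the one extra ingredient being that the conjugating element can be arranged to lie inside $P^{\mathrm{opp}}$. First I would fix the base point: the element $E_{\alpha_\Theta} = Z_0 + \dots + Z_d$ is a linear combination of the $Z_i$ with all coefficients equal to~$1$, so Lemma~\ref{lem:bruh-spanZ_i} applied with $I = \{0,\dots,d\}$, together with the identification $w_{\{0,\dots,d\}} = w_{\{\alpha_\Theta\}\cup\Delta\setminus\Theta}$ (Equation~\eqref{eq:w_I} and the surrounding discussion), gives $\exp(E_{\alpha_\Theta}) \in P^{\mathrm{opp}}\, w_{\{\alpha_\Theta\}\cup\Delta\setminus\Theta}\, P^{\mathrm{opp}}$.

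Next I would homogenize. By Theorem~\ref{thm:one-element-in-the-cone} we have $E_{\alpha_\Theta} \in \mathring{c}_{\alpha_\Theta}$, and by Proposition~\ref{prop:homogeneity-Ltheta} the group $L_\Theta^\circ$ acts transitively on $\mathring{c}_{\alpha_\Theta}$; so I pick $\ell \in L_\Theta^\circ$ with $\Ad(\ell) E_{\alpha_\Theta} = X$, whence $\exp(X) = \ell \exp(E_{\alpha_\Theta}) \ell^{-1}$. If one knew $\ell \in P^{\mathrm{opp}}$ this would finish the proof immediately, since conjugation by an element of the group $P^{\mathrm{opp}}$ maps every double coset $P^{\mathrm{opp}} w P^{\mathrm{opp}}$ to itself. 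In general $\ell \notin P^{\mathrm{opp}}$: the Levi $L_\Theta^\circ$ only lies in the bigger group $P^{\mathrm{opp}}_\Theta$, and conjugation by it merely preserves the coarser double coset $P^{\mathrm{opp}}_\Theta w P^{\mathrm{opp}}_\Theta$, which is a union of several $P^{\mathrm{opp}}$-Bruhat cells. Removing this discrepancy is the crux of the argument.

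The key step is to absorb the non-$P^{\mathrm{opp}}$ part of $\ell$ into the stabilizer of $E_{\alpha_\Theta}$, using an Iwasawa decomposition of the connected reductive group $L_\Theta^\circ$. Write $L_\Theta^\circ = \widetilde{N}^{\mathrm{opp}}\, \widetilde{A}\, \widetilde{K}$, where $\widetilde{K} = K \cap L_\Theta^\circ$ (a maximal compact subgroup, hence connected, with Lie algebra $\mathfrak{k}_\Theta = \mathfrak{l}_\Theta \cap \mathfrak{k}$), where $\widetilde{A} = \exp(\mathfrak{a})$ is a maximal split torus of $L_\Theta^\circ$, and where $\widetilde{N}^{\mathrm{opp}} = \exp\bigl( \bigoplus_{\chi \in \Sigma^+ \cap \Span(\Delta\setminus\Theta)} \mathfrak{g}_{-\chi} \bigr)$ is the unipotent radical of the opposite minimal parabolic of $L_\Theta^\circ$ relative to the positive system induced by $\Sigma^+$. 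Two observations then do the work: (i) $\widetilde{N}^{\mathrm{opp}} \widetilde{A} \subseteq U^{\mathrm{opp}} L_\Delta = P^{\mathrm{opp}}$, since $\widetilde{N}^{\mathrm{opp}} \subseteq U^{\mathrm{opp}}$ and $\widetilde{A} = \exp(\mathfrak{a}) \subseteq L_\Delta \subseteq P^{\mathrm{opp}}$; and (ii) by Theorem~\ref{thm:one-element-in-the-cone} the Lie algebra of the stabilizer of $E_{\alpha_\Theta}$ in $L_\Theta^\circ$ contains $\mathfrak{k}_\Theta$, so, $\widetilde{K}$ being connected, $\Ad(k) E_{\alpha_\Theta} = E_{\alpha_\Theta}$ for all $k \in \widetilde{K}$. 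Writing $\ell = p k$ with $p \in \widetilde{N}^{\mathrm{opp}} \widetilde{A} \subseteq P^{\mathrm{opp}}$ and $k \in \widetilde{K}$, we obtain
\[ \exp(X) = p\, k \exp(E_{\alpha_\Theta}) k^{-1} p^{-1} = p \exp\bigl( \Ad(k) E_{\alpha_\Theta} \bigr) p^{-1} = p \exp(E_{\alpha_\Theta}) p^{-1}, \]
and since $\exp(E_{\alpha_\Theta}) \in P^{\mathrm{opp}} w_{\{\alpha_\Theta\}\cup\Delta\setminus\Theta} P^{\mathrm{opp}}$ and $p \in P^{\mathrm{opp}}$, conjugation by $p$ keeps us in this cell, which is the claim.

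The main obstacle is exactly this last point: because $L_\Theta^\circ$ is not contained in $P^{\mathrm{opp}}$, a naive conjugation argument would only locate $\exp(X)$ inside $P^{\mathrm{opp}}_\Theta w_{\{\alpha_\Theta\}\cup\Delta\setminus\Theta} P^{\mathrm{opp}}_\Theta$ and leave open the possibility of landing in a strictly smaller $P^{\mathrm{opp}}$-cell. The Iwasawa trick circumvents this by trading the compact direction of $\ell$ — which fixes $E_{\alpha_\Theta}$ and is therefore irrelevant — for the freedom to retain only the $\widetilde{N}^{\mathrm{opp}}\widetilde{A}$-part, which does sit inside $P^{\mathrm{opp}}$. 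Everything else is routine bookkeeping with the standard facts about the Bruhat decomposition recalled in Section~\ref{sec:bruhat-decomposition}.
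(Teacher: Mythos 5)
Your proof is correct and follows essentially the same route as the paper's: establish the cell for $\exp(E_{\alpha_\Theta})$ via Lemma~\ref{lem:bruh-spanZ_i}, use transitivity of $L_\Theta^\circ$ on $\mathring{c}_{\alpha_\Theta}$, and then apply the Iwasawa decomposition of $L_\Theta^\circ$ to replace the conjugating element by one in $P^{\mathrm{opp}}$, absorbing the compact factor into the stabilizer of $E_{\alpha_\Theta}$. The paper writes the decomposition as $L^{\circ}_{\Theta} = (P^{\mathrm{opp}}\cap L^{\circ}_{\Theta})(K\cap L^{\circ}_{\Theta})$, which is exactly your $\widetilde{N}^{\mathrm{opp}}\widetilde{A}\widetilde{K}$ argument.
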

\begin{proof}
  Recall that the element $E_{\alpha_\Theta} = Z_0+Z_1+\cdots + Z_d$ belongs
  to~$\mathring{c}_{\alpha_\Theta}$ and that its stabilizer in~$L_{\Theta}^{\circ}$ (for
  the adjoint action) contains $K\cap L^{\circ}_{\Theta}$. Furthermore the action
  of~$L^{\circ}_{\Theta}$ on $\mathring{c}_{\alpha_\Theta}$ is transitive: there is an
  element~$g$ in~$L^{\circ}_{\Theta}$ such that $\Ad(g) E_{\alpha_\Theta} = X$.

  The Iwasawa decomposition for~$L^{\circ}_{\Theta}$ states the equality
  \[ L^{\circ}_{\Theta} = (P_{\Delta}^{\mathrm{opp}}\cap L^{\circ}_{\Theta}) ( K\cap L^{\circ}_{\Theta}).\]
  There are thus elements~$p$ in $P_{\Delta}^{\mathrm{opp}}\cap L^{\circ}_{\Theta} $ and~$k$ in $
  K\cap L^{\circ}_{\Theta}$ such that $g=pk$. Since $\Ad(k) E_{\alpha_\Theta} =
  E_{\alpha_\Theta}$, one has $\Ad(p) E_{\alpha_\Theta} =X$. By the
  compatibility of the exponential map with the adjoint action and the
  conjugation action, we get $\exp(X) = p \exp( E_{\alpha_\Theta}) p^{-1}$ so
  that $\exp(X)$ and $\exp(E_{\alpha_\Theta})$ belong to the same Bruhat
  cell. By Lemma~\ref{lem:bruh-spanZ_i}, $\exp(E_{\alpha_\Theta})$ belongs to
  $P_{\Delta}^{\mathrm{opp}} w_{\llbracket 0, d\rrbracket} P_{\Delta}^{\mathrm{opp}} = P_{\Delta}^{\mathrm{opp}}
  w_{\{\alpha_\Theta\} \cup \Delta\setminus\Theta} P_{\Delta}^{\mathrm{opp}}$. This
  concludes that $\exp(X)$ belongs to $P_{\Delta}^{\mathrm{opp}}
  w_{\{\alpha_\Theta\} \cup \Delta\setminus\Theta} P_{\Delta}^{\mathrm{opp}}$.
\end{proof}

For the elements in the closure of the cone, it will be enough for our purpose
to determine their class under the left-right action of
$P^{\mathrm{opp}}_{\Theta}$.

\begin{proposition}
  Let~$X$ be an element of $c_{\alpha_\Theta}$, There is then a unique~$j$ in
  $\llbracket 0, d\rrbracket$ such that
  \[ \exp(X) \in P_{\Theta}^{\mathrm{opp}} w_j P_{\Theta}^{\mathrm{opp}}.\]
\end{proposition}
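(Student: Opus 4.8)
The plan is to reduce to the understanding already obtained in the open‑cone case (Proposition~\ref{prop:open-cone-bruh}) and its ``spanning'' version (Lemma~\ref{lem:bruh-spanZ_i}), using the diagonalisability of the $L_\Theta^\circ$‑action to stratify $c_{\alpha_\Theta}$. First I would recall that $\mathring c_{\alpha_\Theta}\cup -\mathring c_{\alpha_\Theta}$ is exactly the set of vectors whose stabiliser in $S_\Theta^\circ$ is a maximal compact subgroup (Proposition~\ref{prop:cone-semisimple}), and that the faces of $c_{\alpha_\Theta}$ are themselves permuted by the lifted Weyl group $W_{\Delta\smallsetminus\Theta}$. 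Since $S_\Theta$ is of type $A_d$ and acts on $\u_{\alpha_\Theta}$ as an irreducible proximal even representation, the cone $c_{\alpha_\Theta}$ is one of the ``positive semidefinite Hermitian matrices'' cones of Table~2/Table~3; in each such model, $\bigoplus_i \R Z_i$ is (a choice of) maximal flat, i.e. the diagonal matrices, and every element of the closed cone is $L_\Theta^\circ$‑conjugate to an element $\sum_i\lambda_i Z_i$ with $\lambda_i\ge 0$. This last fact is the analogue of the spectral theorem; I would derive it abstractly from Proposition~\ref{prop:homogeneity-Ltheta} (transitivity of $L_\Theta^\circ$ on $\mathring c_{\alpha_\Theta}$) together with the structure of the boundary strata, or simply invoke the explicit models.

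With this reduction in hand, the argument proceeds as follows. Given $X\in c_{\alpha_\Theta}$, write $X=\Ad(g)Y$ with $g\in L_\Theta^\circ$ and $Y=\sum_{i=0}^d\lambda_i Z_i$, $\lambda_i\ge 0$; let $I=\{i\le d\mid \lambda_i>0\}$ and $r=\sharp I$. By Lemma~\ref{lem:bruh-spanZ_i}, $\exp(Y)\in P^{\mathrm{opp}}w_I P^{\mathrm{opp}}\subset P_\Theta^{\mathrm{opp}}w_I P_\Theta^{\mathrm{opp}}$. Because $g\in L_\Theta^\circ\subset P_\Theta^{\mathrm{opp}}$ and $\exp(X)=g\exp(Y)g^{-1}$, we conclude $\exp(X)\in P_\Theta^{\mathrm{opp}}w_I P_\Theta^{\mathrm{opp}}$. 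Now $w_I=\prod_{i\in I}s_0^{s_1\cdots s_i}$ and, by Equation~\eqref{eq:Theta-length-wj-wI}, $\ell_\Theta(w_I)=r=\ell_\Theta(w_r)$; moreover all the elements $w_I$ with $\sharp I=r$ lie in a single double coset $W_{\Delta\smallsetminus\Theta}\backslash W/W_{\Delta\smallsetminus\Theta}$, because the $W_{\Delta\smallsetminus\Theta}$‑action permutes the set $\{\gamma_0,\dots,\gamma_d\}$ transitively (Lemma~\ref{lem:orbit-of-alpha-J}) and hence permutes the commuting involutions $\{s_0^{s_1\cdots s_i}\}$, so $w_I$ and $w_r=w_{\{0,\dots,r-1\}}$ are conjugate by an element of $W_{\Delta\smallsetminus\Theta}$. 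Therefore $P_\Theta^{\mathrm{opp}}w_I P_\Theta^{\mathrm{opp}}=P_\Theta^{\mathrm{opp}}w_r P_\Theta^{\mathrm{opp}}$, and we may take $j=r$.

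For existence this is all that is needed; for uniqueness I would argue that the index $j$ is a genuine invariant of $X$. One way: $j$ equals the $\Theta$‑length of the unique class in $W_{\Delta\smallsetminus\Theta}\backslash W/W_{\Delta\smallsetminus\Theta}$ carrying $\exp(X)$ — the double cosets $P_\Theta^{\mathrm{opp}}wP_\Theta^{\mathrm{opp}}$ are disjoint (Section~\ref{sec:bruhat-decomposition}, point~(9)), the $\Theta$‑length factors through $W_{\Delta\smallsetminus\Theta}\backslash W/W_{\Delta\smallsetminus\Theta}$ (Corollary~\ref{coro:theta-length-factors}), and the values $\ell_\Theta(w_j)=j$ are pairwise distinct. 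Alternatively, and more intrinsically, $j$ is the rank of $X$ in the Jordan‑algebra model, which is invariant under $\Ad(L_\Theta^\circ)$. I expect the main obstacle to be the reduction ``every $X\in c_{\alpha_\Theta}$ is $L_\Theta^\circ$‑conjugate to some $\sum\lambda_i Z_i$ with $\lambda_i\ge 0$'': proving this cleanly without leaning on the case‑by‑case classification requires identifying $\bigoplus_i\R Z_i$ as a maximal flat for the isotropy representation and checking that the $W_{\Delta\smallsetminus\Theta}$‑orbit of a generic point of the corresponding Weyl chamber exhausts (up to closure) the cone — doable, but it is the one step where one either invokes the explicit models or does a little Lie‑theoretic work.
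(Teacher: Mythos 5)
Your uniqueness argument is exactly the paper's: the double cosets $P_{\Theta}^{\mathrm{opp}} w P_{\Theta}^{\mathrm{opp}}$ are indexed by $W_{\Delta\setminus\Theta}\backslash W/W_{\Delta\setminus\Theta}$, the $\Theta$-length factors through that quotient, and $\ell_\Theta(w_j)=j$ separates the candidates. Your existence argument has the right shape, but it rests entirely on the reduction you yourself flag as the main obstacle --- that every $X\in c_{\alpha_\Theta}$ is $L_{\Theta}^{\circ}$-conjugate (indeed $K_{\Theta}^{\circ}$-conjugate) to some $\sum_i\lambda_i Z_i$ with $\lambda_i\geq 0$ --- and you do not prove it. This is a genuine gap: Proposition~\ref{prop:homogeneity-Ltheta} only gives transitivity on the \emph{open} cone, and passing from there to the closed cone requires controlling degenerations, which is precisely the nontrivial content of the proposition. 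Invoking the explicit Jordan-algebra models would work but imports the classification; the ``little Lie-theoretic work'' you defer is the actual proof.

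The paper fills exactly this hole with a limiting argument that you should be able to reconstruct: write $X=\lim X_n$ with $X_n=\Ad(g_n)E_{\alpha_\Theta}$ in the open cone, decompose $g_n=k_n\exp(A_n)k_n'$ via the Cartan decomposition $L_{\Theta}^{\circ}=K_{\Theta}^{\circ}\exp(\mathfrak{b}^+)K_{\Theta}^{\circ}$, extract a convergent subsequence of $(k_n)$, and observe that $\Ad(\exp(A_n))E_{\alpha_\Theta}=\sum_i e^{\gamma_i(A_n)}Z_i$ converges to some $Y=\sum_i\lambda_iZ_i$ with $\lambda_i\geq 0$. A bonus of choosing $A_n$ in the Weyl chamber $\mathfrak{b}^+$ is that the coefficients come out automatically ordered, $\lambda_0\geq\lambda_1\geq\cdots\geq\lambda_d\geq 0$, so the support is $\{0,\dots,j-1\}$ on the nose and Lemma~\ref{lem:bruh-spanZ_i} gives $\exp(Y)\in P^{\mathrm{opp}}w_jP^{\mathrm{opp}}$ directly. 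This makes your extra step --- conjugating $w_I$ to $w_{\sharp I}$ inside $W_{\Delta\setminus\Theta}$, which is correct since $W_{\Delta\setminus\Theta}$ acts on $\{\gamma_0,\dots,\gamma_d\}$ as the full symmetric group and hence permutes the commuting reflections $s_0^{s_1\cdots s_i}$ --- unnecessary, though it would be needed if you only had the unordered spectral decomposition.
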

\begin{proof}
  Uniqueness follows from Lemma~\ref{lem:w_j-different-cells}.

    Since
  $c_{\alpha_\Theta}$~is the closure of~$\mathring{c}_{\alpha_\Theta}$, there
  exists a sequence $(X_n)_{n\in\N}$ in~$\mathring{c}_{\alpha_\Theta}$ that
  converges to~$X$. For every~$n$ in~$\N$, let~$g_n$ be an element
  in~$L_{\Theta}^{\circ}$ such that $X_n = \Ad(g_n)E_{\alpha_\Theta}$.

  The Cartan decomposition in~$L_{\Theta}^{\circ}$ gives 
  \[ L_{\Theta}^{\circ} = K_{\Theta}^{\circ} \exp( \mathfrak{b}^+) K_{\Theta}^{\circ},\]
  where $K_{\Theta}^{\circ} = K\cap L_{\Theta}^{\circ}$ and $\mathfrak{b}^+ = \{ A\in \mathfrak{a}
  \mid \chi_i(A)\leq 0,\ \forall i\in \llbracket 1, d\rrbracket \}$ is a closed Weyl chamber for
  the reductive Lie group~$L_{\Theta}^{\circ}$. There exist thus, for all~$n$ in~$\N$,
  elements~$k_n$, $k^{\prime}_{n}$ in~$K_{\Theta}^{\circ}$, and~$A_n$
  in~$\mathfrak{b}^+$ such that $g_n = k_n \exp(A_n) k^{\prime}_{n}$. Up to
  extracting we will assume that the sequence $(k_n)_{n\in\N}$ is converging
  and its limit will be denoted~$k_\infty$. One has therefore, for all~$n$,
  \[ \Ad( \exp(A_n)) E_{\alpha_\Theta} = \Ad( k_{n}^{-1}) \Ad(g_n)
    E_{\alpha_\Theta} = \Ad( k_{n}^{-1}) X_n,\]
  so the sequence $( \Ad( \exp(A_n)) E_{\alpha_\Theta})_{n\in \N}$ is
  converging and its limit is $Y=\Ad(k_{\infty}^{-1}) X$. As  $K_{\Theta}^{\circ} \subset L_{\Theta}^{\circ} \subset
  P^{\mathrm{opp}}_{\Theta}$, it will be enough to determine the Bruhat cell
  of $\exp(Y)$.

  For all~$n$ in~$\N$ and for all~$i$ in $\llbracket 0, d\rrbracket$, let $$\lambda_{i,n} =
  \exp\bigl( (\alpha_\Theta +2\chi_1 +\cdots +2\chi_{i})(A_n)\bigr)$$ so
  that
  for all~$n$, as $A_n$~belongs to~$\mathfrak{b}^+$, $\lambda_{0,n}\geq \lambda_{1,n}\geq \cdots \geq
  \lambda_{d,n}>0$ and
  \begin{align*}
    \sum_{i=0}^{d} \lambda_{i,n} Z_i &= \sum_{i=0}^{d} \exp( \ad(A_n)) Z_i\\
    &= \exp( \ad(A_n)) \sum_{i=0}^{d} Z_i= \Ad(\exp( A_n)) E_{\alpha_\Theta}.
  \end{align*}
  This last equality implies that, for all~$i$ in $\llbracket 0, d\rrbracket$, the sequence $(\lambda_{i,n})_{n\in \N}$
  converges in~$\R_{\geq 0}$ and its limit will be denoted
  $\lambda_{i}$. We have hence
  \[ Y =\sum_{i=0}^{d} \lambda_{i} Z_i, \]
   $\lambda_{0} \geq \lambda_{1}\geq\cdots \geq
  \lambda_{d}\geq 0$, and the set of indices of the nonzero entries in
  $(\lambda_{0},\dots, \lambda_{d})$ has the form $
  \llbracket 0, j-1\rrbracket$ for some~$j$ in $\llbracket 0,
  d+1\rrbracket$ (this set is~$\emptyset$ when $j=0$). Lemma~\ref{lem:bruh-spanZ_i} says that $\exp(Y) \in P_{\Delta}^{\mathrm{opp}}
  w_j P_{\Delta}^{\mathrm{opp}}$ and this implies the wanted result.
\end{proof}

One can rephrase the previous result:

\begin{corollary}
  \label{coro:nontr-cone-bruh-J-length}
  Let~$Y$ be in~$c_{\alpha_\Theta}$ and let~$w$ be the element of~$W$ such
  that $\exp(Y)\in P_{\Delta}^{\mathrm{opp}} w P_{\Delta}^{\mathrm{opp}}$. Then
  \begin{enumerate}
  \item $\ell_\Theta(w)\leq d+1$;
  \item if $\ell_\Theta(w)= d+1$ then $Y$ belongs to~$\mathring{c}_{\alpha_\Theta}$
    and $w=w_{\{\alpha_\Theta\} \cup \Delta\setminus\Theta}$.
  \end{enumerate}
\end{corollary}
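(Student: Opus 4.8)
The plan is to read everything off from the preceding proposition and the explicit construction carried out inside its proof. First I would apply that proof to our element $Y\in c_{\alpha_\Theta}$: it produces an element $k\in K\cap L_{\Theta}^{\circ}$ and reals $\lambda_0\geq\lambda_1\geq\cdots\geq\lambda_d\geq 0$ with $Y_0\coloneqq\Ad(k^{-1})Y=\sum_{i=0}^{d}\lambda_i Z_i$, such that, writing $j$ for the number of indices with $\lambda_i>0$, the support of $(\lambda_0,\dots,\lambda_d)$ is exactly $\{0,\dots,j-1\}$; by Lemma~\ref{lem:bruh-spanZ_i} together with the identity $w_{\{0,\dots,j-1\}}=w_j$ coming from~\eqref{eq:w_j} and~\eqref{eq:w_I}, this gives $\exp(Y_0)\in P^{\mathrm{opp}}w_jP^{\mathrm{opp}}$, and since $\exp(Y)=k\exp(Y_0)k^{-1}$ with $k\in P_{\Theta}^{\mathrm{opp}}$ we obtain $\exp(Y)\in P_{\Theta}^{\mathrm{opp}}w_jP_{\Theta}^{\mathrm{opp}}$, with $0\leq j\leq d+1$ and $\ell_\Theta(w_j)=j$ by~\eqref{eq:Theta-length-wj-wI}.

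For part~(1): since $\exp(Y)\in P^{\mathrm{opp}}wP^{\mathrm{opp}}\subset P_{\Theta}^{\mathrm{opp}}wP_{\Theta}^{\mathrm{opp}}$, the double coset $P_{\Theta}^{\mathrm{opp}}wP_{\Theta}^{\mathrm{opp}}$ coincides with $P_{\Theta}^{\mathrm{opp}}w_jP_{\Theta}^{\mathrm{opp}}$. A $P_{\Theta}^{\mathrm{opp}}$-$P_{\Theta}^{\mathrm{opp}}$ double coset depends only on the image of its index in $W_{\Delta\setminus\Theta}\backslash W/W_{\Delta\setminus\Theta}$, and $\ell_\Theta$ descends to that quotient (Corollary~\ref{coro:theta-length-factors}); hence $\ell_\Theta(w)=\ell_\Theta(w_j)=j\leq d+1$.

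For part~(2): suppose $\ell_\Theta(w)=d+1$, so that $j=d+1$, i.e.\ $\lambda_i>0$ for every $i=0,\dots,d$. Since $\gamma_0=\alpha_\Theta$ and $\gamma_i-\gamma_{i-1}=2\chi_i$, with $\alpha_\Theta,\chi_1,\dots,\chi_d$ linearly independent, the forms $\gamma_0,\dots,\gamma_d$ on $\mathfrak{a}$ are linearly independent, so there is $A\in\mathfrak{a}$ with $\gamma_i(A)=\log\lambda_i$ for all $i$; then $\Ad(\exp A)E_{\alpha_\Theta}=\sum_{i=0}^{d}e^{\gamma_i(A)}Z_i=Y_0$, with $\exp A\in\exp(\mathfrak{a})\subset L_{\Theta}^{\circ}$. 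As $E_{\alpha_\Theta}\in\mathring{c}_{\alpha_\Theta}$ (Theorem~\ref{thm:one-element-in-the-cone}) and $\mathring{c}_{\alpha_\Theta}$ is $L_{\Theta}^{\circ}$-invariant, this shows $Y_0\in\mathring{c}_{\alpha_\Theta}$, whence $Y=\Ad(k)Y_0\in\mathring{c}_{\alpha_\Theta}$. Then Proposition~\ref{prop:open-cone-bruh} yields $\exp(Y)\in P^{\mathrm{opp}}w_{\{\alpha_\Theta\}\cup\Delta\setminus\Theta}P^{\mathrm{opp}}$, and disjointness of the Bruhat cells of $G$ under $P^{\mathrm{opp}}\times P^{\mathrm{opp}}$ forces $w=w_{\{\alpha_\Theta\}\cup\Delta\setminus\Theta}$.

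I expect the proof to be essentially bookkeeping on top of the preceding proposition, with no serious obstacle. The one step that is not purely formal is the observation that a positive combination $\sum_i\lambda_i Z_i$ already lies in the open cone $\mathring{c}_{\alpha_\Theta}$, which is where the linear independence of the $\gamma_i$ and the $L_{\Theta}^{\circ}$-invariance of $\mathring{c}_{\alpha_\Theta}$ enter; and the only thing to be careful about is not to confuse the two Bruhat decompositions (relative to $P^{\mathrm{opp}}$ and to $P_{\Theta}^{\mathrm{opp}}$) and to track the exact support $\{0,\dots,j-1\}$ of $(\lambda_0,\dots,\lambda_d)$ produced by the Cartan decomposition of $L_{\Theta}^{\circ}$.
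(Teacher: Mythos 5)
Your proof is correct and follows the route the paper intends: part~(1) is the $\Theta$-length bookkeeping on the double coset $P_{\Theta}^{\mathrm{opp}} w_j P_{\Theta}^{\mathrm{opp}}$ produced by the preceding proposition (using Corollary~\ref{coro:theta-length-factors} and Equation~\eqref{eq:Theta-length-wj-wI}), and part~(2) combines Proposition~\ref{prop:open-cone-bruh} with disjointness of the Bruhat cells. The one step the paper leaves implicit --- that $\sum_{i}\lambda_i Z_i$ with all $\lambda_i>0$ already lies in $\mathring{c}_{\alpha_\Theta}$ --- you justify cleanly via the linear independence of $\gamma_0,\dots,\gamma_d$ on $\mathfrak{a}$ and the $L_{\Theta}^{\circ}$-invariance of the open cone, which is exactly the right argument.
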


\subsection{The Bruhat cells of nonzero elements}
\label{sec:nontr-space-bruh}

The previous paragraph determines the Bruhat cell of an element of the form
$\exp(X)$ when $X$~belongs to the cone $c_{\alpha_\Theta}$. We now consider
the case when $X$~is only supposed to be a nonzero element
of~$\mathfrak{u}_{\alpha_\Theta}$.

\begin{lemma}
  \label{lem:nontr-space-bruh}
  Let~$X$ be a nonzero element of~$\mathfrak{u}_{\alpha_\Theta}$ and let~$x$
  be the element of~$W$ such that $\exp(X) = P_{\Delta}^{\mathrm{opp}} x P_{\Delta}^{\mathrm{opp}}$. Then
  $\ell_\Theta(x) >0$.
\end{lemma}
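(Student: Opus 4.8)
The plan is to prove the contrapositive using the Bruhat decomposition relative to $P^{\mathrm{opp}}_\Theta$. Assume $\ell_\Theta(x)=0$. By Lemma~\ref{lemma:theta-length-0=W_Levi} this is equivalent to $x\in W_{\Delta\setminus\Theta}$, and then property~(\ref{item:2:Bruhat_dec}) of Section~\ref{sec:bruhat-decomposition} yields $P^{\mathrm{opp}} x P^{\mathrm{opp}}\subset P^{\mathrm{opp}}_\Theta$; in particular $\exp(X)\in P^{\mathrm{opp}}_\Theta$. So the whole statement reduces to the claim that $\exp(X)\notin P^{\mathrm{opp}}_\Theta$ whenever $X\neq 0$.

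To establish this claim I would use only the location of $X$ inside the parabolic. Since $\mathfrak{u}_{\alpha_\Theta}\subset\mathfrak{u}_\Theta$, the element $\exp(X)$ lies in $U_\Theta$, hence in $P_\Theta$. If it also belonged to $P^{\mathrm{opp}}_\Theta$, it would lie in $P_\Theta\cap P^{\mathrm{opp}}_\Theta=L_\Theta$, and therefore in $U_\Theta\cap L_\Theta$. As $P_\Theta=L_\Theta\ltimes U_\Theta$ is a semidirect product, this intersection is $\{e\}$, so $\exp(X)=e$; the exponential map being injective on the nilpotent Lie algebra $\mathfrak{u}_\Theta$, this forces $X=0$, contradicting $X\neq 0$. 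Hence $\exp(X)\notin P^{\mathrm{opp}}_\Theta$, which by the previous paragraph gives $\ell_\Theta(x)>0$.

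I do not expect any genuine obstacle here: every ingredient --- the description of $W_{\Delta\setminus\Theta}$ as the zero-$\Theta$-length subgroup, the $P^{\mathrm{opp}}$-Bruhat description of $P^{\mathrm{opp}}_\Theta$, the semidirect product decomposition of $P_\Theta$, and injectivity of $\exp$ on a nilpotent Lie algebra --- is already available. The only point requiring a little care is to invoke the correct disjointness statements for the two Bruhat decompositions (with respect to $P^{\mathrm{opp}}$ and to $P^{\mathrm{opp}}_\Theta$); note also that the argument never uses anything about $\mathfrak{u}_{\alpha_\Theta}$ beyond the inclusion $\mathfrak{u}_{\alpha_\Theta}\subset\mathfrak{u}_\Theta$, so the conclusion in fact holds for every nonzero element of $\mathfrak{u}_\Theta$.
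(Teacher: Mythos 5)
Your proof is correct and follows the same route as the paper: both reduce, via Lemma~\ref{lemma:theta-length-0=W_Levi} and the Bruhat description of $P^{\mathrm{opp}}_\Theta$, to showing $\exp(X)\notin P^{\mathrm{opp}}_\Theta$ for $X\neq 0$. The only (immaterial) difference is in the last step, where the paper invokes that $(X,g)\mapsto\exp(X)g$ is an embedding of $\mathfrak{u}_\Theta\times P^{\mathrm{opp}}_\Theta$ into $G$, while you use $P_\Theta\cap P^{\mathrm{opp}}_\Theta=L_\Theta$, the semidirect product decomposition, and injectivity of $\exp$ on the nilpotent algebra — two equivalent ways of saying $U_\Theta\cap P^{\mathrm{opp}}_\Theta=\{e\}$.
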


\begin{remark}
   The element $x$ belongs to $W_{\{ \alpha_\Theta\} \cup
    \Delta\setminus\Theta}$ as $\exp(X)$ belongs to~$S_{\Theta\smallsetminus \{\alpha_\Theta\}}$.
\end{remark}

\begin{proof}
  One has (Lemma~\ref{lemma:theta-length-0=W_Levi})
  \( W_{\Delta\setminus\Theta} = \{ x\in W \mid \ell_\Theta(x)=0\}.\)
  Furthermore, the following equality holds:
  \[ \bigcup_{x\in W_{\Delta\setminus\Theta}} P_{\Delta}^{\mathrm{opp}} x
    P_{\Delta}^{\mathrm{opp}} =P^{\mathrm{opp}}_{\Theta}.\]
  Thus the statement will be established if we can prove that $\exp(X)$ does
  not belong to $P_{\Theta}^{\mathrm{opp}}$. This last property is a
  consequence of the 
  fact that the map:
  \begin{align*}
    \mathfrak{u}_\Theta \times P_{\Theta}^{\mathrm{opp}} & \longrightarrow G\\
    (X, g) &\longmapsto \exp(X)g
  \end{align*}
  is an embedding.
\end{proof}

\section{Parametrizing the unipotent positive semigroup}
\label{sec:positive_semigroup} 

In this section, given~$G$ a simple Lie group admitting a
$\Theta$-positive structure, we give an explicit  parametrization of the unipotent
positive semigroup $U_\Theta^{>0}$.   First we give explicit parametrizations
of subsets of $U_\Theta$ associated with a reduced expression of the longest length element in the $\Theta$-Weyl group
$W(\Theta)$. This first step is analogous to the strategy in Lusztig's work on total
positivity \cite{lusztigposred}. 
In \cite{lusztigposred} the next step is to show that the image of this
parametrization does not depend on the chosen reduced expression, and once
this is shown, additional properties of the positive unipotent semigroup
are established. 

Here, in the next step we first prove some of these crucial topological properties. In particular, we show that the set we parametrize is a connected component of the intersection of the standard unipotent
subgroup~$U_\Theta$ with the Bruhat cell\index{$\Omega^{\mathrm{opp}}_{\Theta}
  =$ the open Bruhat cell $P^{\mathrm{opp}}_{\Theta} w_\Delta P^{\mathrm{opp}}_{\Theta} $} $\Omega^{\mathrm{opp}}_{\Theta} =
P^{\mathrm{opp}}_{\Theta} w_\Delta P^{\mathrm{opp}}_{\Theta} $.

To show
that the connected component does not depend on the reduced expression, we make use of the split subgroup~$H_\Theta\subset
G$ of type~$W(\Theta)$.

We fix, for each $\alpha \in \Theta$,  an $L_{\Theta}^{\circ}$-invariant convex cone $c_{\alpha}\subset
\mathfrak{u}_{\alpha}$.

\subsection{Maps associated with reduced expressions in \texorpdfstring{$W(\Theta)$}{W(Θ)}}

In this section we define maps from products of the cones to the unipotent group $U_\Theta$, which are associated to reduced expressions in the $\Theta$-Weyl group.  

We consider the element $w^{\Theta}_{\max} \in W$.  In
Section~\ref{sec:Weyl} we saw that $W(\Theta)$ equipped with its generating
set~$R(\Theta)$ (cf.\ Definition~\ref{defi:theta-weyl-group}) is a Coxeter
system of 
Lie type (cf.\ Corollary~\ref{coro:WTheta}), and that
$w^{\Theta}_{\max}$ is the longest length element in~$W(\Theta)$.

We denote by~$N$ the length of~$w^{\Theta}_{\max}$ and by
$\mathbf{W}\subset \Theta^{N}$ the set of reduced expressions of~$w^{\Theta}_{\max}$.\index{$\mathbf{W}$ the set of reduced expressions of~$w^{\Theta}_{\max}$}

For every~$\gammab$ in~$\mathbf{W}$, the product of cones $ c_{\gamma_1}\times
\cdots \times c_{\gamma_N}$ is denoted~$c_\gammab$.\index{$c_\gammab$ the product of cones $ c_{\gamma_1}\times
\cdots \times c_{\gamma_N}$ ($\gammab=(\gamma_1, \dots, \gamma_N)$)} We define
the map\index{$F_\gammab$ the ``parametrization'' of $U_{\Theta}^{\geq 0}$}
\begin{align*}
  F_\gammab \colon c_\gammab & \longrightarrow U_\Theta \\
(v_{1}, \dots, v_{N}) & \longmapsto \exp(v_{1})\cdots
\exp(v_{N}).
\end{align*}

The image of $F_\gammab$ clearly lies in the nonnegative unipotent semigroup $U_\Theta^{\geq 0}$. 
The interior $\mathring{c}_\gammab$ is the product $ \mathring{c}_{\gamma_1}\times
\cdots \times \mathring{c}_{\gamma_N}$ and the restriction of~$F_\gammab$ to~$\mathring{c}_\gammab$ is denoted by~$\mathring{F}_\gammab$. Similar notation,
$c_{\gammab}^{\mathrm{opp}}$, $F_{\gammab}^{\mathrm{opp}}$, and
$\mathring{F}_{\gammab}^{\mathrm{opp}}$ are adopted for the maps into the opposite unipotent group.

The goal of this section is to show that the image of~$\mathring{F}_\gammab$
is independent of~$\gammab$ and coincides with the positive semigroup   $U_{\Theta}^{>0}$: 

\begin{theorem}\label{thm:positive_semigroup}
For all~$\gammab$
  in~$\mathbf{W}$,   the map $\mathring{F}_\gammab$ is a diffeomorphism
  from~$\mathring{c}_\gammab$ onto the {unipotent positive semigroup}
   $U_{\Theta}^{>0}$, 
and $\mathring{F}^{\mathrm{opp}}_{\gammab}$ is a diffeomorphism from
$\mathring{c}_{\gammab}^{\mathrm{opp}}$ onto
the opposite unipotent positive semigroup~$U_{\Theta}^{\mathrm{opp},>0}$.
\end{theorem}

In order to prove Theorem~\ref{thm:positive_semigroup} we are going to establish several properties of the map $F_\gammab$ and $\mathring{F}_\gammab$ in the following sections. 

\subsection{Equivariance, transversality, injectivity, and openness}
\label{sec:transv-inject-maps}

We first note that the map $F_\gammab$ is $L_{\Theta}^{\circ}$-equivariant with respect to  the adjoint action on~$c_\gammab$ and the conjugation action on~$U_\Theta$. 

\begin{lemma}
  \label{lemma:equivarianceFgamma}
  For every~$\gammab$ in~$\mathbf{W}$ the map~$F_\gammab$ is
  $L_{\Theta}^{\circ}$-equivariant.
\end{lemma}
\begin{proof}
  Let $\ell$ be in~$L_{\Theta}^{\circ}$ and let $\vb=(v_1, \dots, v_N)$ be in~$c_\gammab$. The
  action of~$\ell$ on~$\vb$ is
  \[ \ell \cdot \vb = ( \Ad(\ell) v_1, \dots, \Ad(\ell)v_N)\]
  and is again in $c_\gammab$ since the cones~$c_\alpha$ are
  $L_{\Theta}^{\circ}$-invariant. The compatibility of the exponential map with
  the adjoint and conjugation actions implies
  \begin{align*}
    F_\gammab (\ell \cdot \vb)
    &= \exp(\Ad(\ell) v_1) \cdots \exp ( \Ad(\ell)v_N)\\
    &= \ell\exp( v_1)\ell^{-1} \cdots \ell\exp( v_N)\ell^{-1}\\
    &= \ell\exp( v_1) \cdots \exp( v_N)\ell^{-1} = \ell F_\gammab(
      \vb)\ell^{-1},
  \end{align*}
  which is the wanted equivariance property.
\end{proof}

We further establish transversality and injectivity properties of $\mathring{F}_{\gammab}$. 

\begin{proposition}
  \label{prop:transv-inject-mapsFgamma}
  For every~$\gammab$ in~$\mathbf{W}$, the image of the
  map~$\mathring{F}_{\gammab}$ is contained in  the open Bruhat cell $P_{\Delta}^{\mathrm{opp}} w_{\max}^{\Theta}
  P_{\Theta}^{\mathrm{opp}} = P_{\Theta}^{\mathrm{opp}} w_{\max}^{\Theta}
  P_{\Theta}^{\mathrm{opp}}=\Omega^{\mathrm{opp}}_{\Theta}$ and the map
  \[\mathring{c}_\gammab \longrightarrow \mathsf{F}_\Theta \mid \vb \longmapsto
    \mathring{F}_{\gammab}(\vb)\cdot \mathfrak{p}_{\Theta}^{\mathrm{opp}}\]
   is injective.
\end{proposition}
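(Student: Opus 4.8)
The plan is to transfer everything to the flag variety $\mathsf{F}_\Theta\cong G/P_\Theta^{\mathrm{opp}}$, where $f_\Theta^{\mathrm{opp}}=\mathfrak{p}_\Theta^{\mathrm{opp}}$ has stabiliser $P_\Theta^{\mathrm{opp}}$ and where $\Omega_\Theta^{\mathrm{opp}}=P_\Theta^{\mathrm{opp}}w_\Delta P_\Theta^{\mathrm{opp}}=P_\Theta^{\mathrm{opp}}w_{\max}^{\Theta}P_\Theta^{\mathrm{opp}}$ (the last equality since $w_\Delta=w_{\max}^{\Theta}w_{\Delta\smallsetminus\Theta}$ and $w_{\Delta\smallsetminus\Theta}\in W_{\Delta\smallsetminus\Theta}$, whose Bruhat cell is absorbed into $P_\Theta^{\mathrm{opp}}$). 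As $\mathring F_\gammab(\vb)\in U_\Theta$ and $u\mapsto u\cdot f_\Theta^{\mathrm{opp}}$ is injective on $U_\Theta$ with image the set of flags transverse to $f_\Theta$, the transversality assertion $\operatorname{image}(\mathring F_\gammab)\subset\Omega_\Theta^{\mathrm{opp}}\cap U_\Theta$ becomes: $\mathring F_\gammab(\vb)\cdot f_\Theta^{\mathrm{opp}}$ is transverse to $f_\Theta^{\mathrm{opp}}$ for every $\vb\in\mathring c_\gammab$; and injectivity of $\vb\mapsto\mathring F_\gammab(\vb)\cdot f_\Theta^{\mathrm{opp}}$ is equivalent to injectivity of $\mathring F_\gammab\colon\mathring c_\gammab\to U_\Theta$.

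For transversality I would first locate each factor in an ordinary Bruhat cell. If $\gamma_i\in\Theta\smallsetminus\{\alpha_\Theta\}$, then $\mathring c_{\gamma_i}=\mathfrak g_{\gamma_i}\smallsetminus\{0\}$ and property~(\ref{item:4:Bruhat_dec}) of the Bruhat decomposition gives $\exp(v_i)\in P^{\mathrm{opp}}s_{\gamma_i}P^{\mathrm{opp}}$; if $\gamma_i=\alpha_\Theta$, then Proposition~\ref{prop:open-cone-bruh} gives $\exp(v_i)\in P^{\mathrm{opp}}w_{\{\alpha_\Theta\}\cup\Delta\smallsetminus\Theta}P^{\mathrm{opp}}$, and $w_{\{\alpha_\Theta\}\cup\Delta\smallsetminus\Theta}=\sigma_{\alpha_\Theta}w_{\Delta\smallsetminus\Theta}$ with $w_{\Delta\smallsetminus\Theta}\in W_{\Delta\smallsetminus\Theta}$ by Corollary~\ref{cor:decomp_Herm}. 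Setting $w^{(i)}=s_{\gamma_i}$, respectively $w^{(i)}=w_{\{\alpha_\Theta\}\cup\Delta\smallsetminus\Theta}$, I would check that concatenating fixed reduced $(W,\Delta)$-expressions of the $w^{(i)}$ yields a reduced expression of $\hat w:=w^{(1)}\cdots w^{(N)}$: this uses that the $\sigma_{\gamma_i}$-expansion of $w_{\max}^{\Theta}$ is reduced (the lemma following Proposition~\ref{prop:longestelementWTheta}) together with the length additivity in Corollary~\ref{cor:decomp_Herm}, and, for the $\SO(p+1,p+k)$ and $F_4$ families, can be read off the explicit reduced expressions~\eqref{eq:red-exp-w-max-Theta} and~\eqref{eq:red-exp-w-max-Theta-F4}. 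Iterating property~(\ref{item:3:Bruhat_dec}) then gives $\mathring F_\gammab(\vb)\in P^{\mathrm{opp}}\hat w P^{\mathrm{opp}}$. Finally, pushing the inserted copies of $w_{\Delta\smallsetminus\Theta}$ to the right past the $\sigma_{\gamma_j}$'s — legitimate since $W(\Theta)$ normalises $W_{\Delta\smallsetminus\Theta}$ (Proposition~\ref{prop:normalizer-WDeltaminusTheta}) — shows $\hat w=w_{\max}^{\Theta}z$ for some $z\in W_{\Delta\smallsetminus\Theta}$, whence $P^{\mathrm{opp}}\hat w P^{\mathrm{opp}}\subset P_\Theta^{\mathrm{opp}}\hat w P_\Theta^{\mathrm{opp}}=P_\Theta^{\mathrm{opp}}w_{\max}^{\Theta}P_\Theta^{\mathrm{opp}}=\Omega_\Theta^{\mathrm{opp}}$. (In the split case $\hat w=w_{\max}^{\Theta}=w_\Delta$, the classical statement; in the Hermitian case $\{\alpha_\Theta\}\cup\Delta\smallsetminus\Theta=\Delta$, so $\hat w=w_\Delta$ at once.)

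For injectivity I would induct on $N=\ell_{W(\Theta)}(w_{\max}^{\Theta})$, proving the slightly stronger statement that for every reduced word $\gammab=(\gamma_1,\dots,\gamma_k)$ of an arbitrary $y\in W(\Theta)$ the map $\vb\mapsto\mathring F_\gammab(\vb)\cdot f_\Theta^{\mathrm{opp}}$ is injective with image in the cell $C(y)$ (for $y=w_{\max}^{\Theta}$ one has $\Sigma^+_{\Theta,w_{\max}^{\Theta}\prec}=\Sigma^+_\Theta$, so that $C(w_{\max}^{\Theta})$ has dimension $\dim\mathfrak u_\Theta=\dim\mathsf{F}_\Theta$ and is the open cell, recovering transversality once more). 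Writing $\gammab=(\gamma_1,\gammab')$ and $y=\sigma_{\gamma_1}y'$, with $\gammab'$ a reduced word of $y'$ and $\ell_{W(\Theta)}(y)=1+\ell_{W(\Theta)}(y')$, one has $\mathring F_\gammab(\vb)\cdot f_\Theta^{\mathrm{opp}}=\exp(v_1)\cdot\bigl(\mathring F_{\gammab'}(\vb')\cdot f_\Theta^{\mathrm{opp}}\bigr)$, and by induction $\mathring F_{\gammab'}(\vb')\cdot f_\Theta^{\mathrm{opp}}\in C(y')$. Using the parametrisations $C(y')\cong\mathfrak u^{\mathrm{opp}}_{\Theta,y'\prec}$ and $C(y)\cong\mathfrak u^{\mathrm{opp}}_{\Theta,y\prec}$ of Lemma~\ref{lem:param-Bruhat-cell}, the dimension jump of Proposition~\ref{prop:jump-dimensions-Cw} and its root-theoretic source Lemma~\ref{lem:Sigma-plus-for-a-in-J} to identify the ``new'' directions added when passing from $C(y')$ to $C(y)$ with a copy of $\mathfrak u_{\gamma_1}$, and the fact (valid in all four cases, since we are in tube type) that $\mathfrak u_{\gamma_1}$ is abelian so $\exp$ is injective on it, one sees that in these affine charts left multiplication by $\exp(v_1)$ is, after subtracting the contribution already carried by $\mathring F_{\gammab'}(\vb')$, an invertible (affine) map in the $v_1$-variable. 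This recovers $v_1$, hence $\mathring F_{\gammab'}(\vb')\cdot f_\Theta^{\mathrm{opp}}=\exp(-v_1)\mathring F_\gammab(\vb)\cdot f_\Theta^{\mathrm{opp}}$, and the inductive hypothesis recovers $\vb'$. By the reformulation of the first paragraph this is the sought injectivity of $\mathring F_\gammab$.

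The main obstacle is the bookkeeping in the transversality step — verifying, uniformly or through the usual four-case split, that the assembled $(W,\Delta)$-word for $\hat w$ is reduced and that $\hat w$ lands in the open $P_\Theta^{\mathrm{opp}}$-double coset — together with making the ``peeling off of $v_1$'' in the injectivity step precise, i.e.\ writing down the charts on the nested cells $C(y')\subset C(y)$ in which multiplication by $\exp(v_1)$ becomes block-triangular with the $\mathfrak u_{\gamma_1}$-block an isomorphism. Both are the mechanism by which Lusztig's reduced-word/Bruhat-cell correspondence is transplanted onto the $\Theta$-Weyl group $W(\Theta)$.
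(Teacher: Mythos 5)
Your overall architecture --- place each factor $\exp(v_i)$ in an explicit Bruhat cell, multiply the cells, then peel off the leftmost factor for injectivity --- is the paper's, but the transversality step as you set it up breaks down. The concatenated $(W,\Delta)$-word you form for $\hat w$ is \emph{not} reduced once $\alpha_\Theta$ occurs more than once in $\gammab$ and $\Delta\setminus\Theta\neq\emptyset$, i.e.\ precisely in the $\SO(p+1,p+k)$ and $F_4$ families. Concretely, for $\SO(3,2+k)$ each occurrence of $\alpha_\Theta=\alpha_2$ contributes $w^{(i)}=w_{\{\alpha_2,\alpha_3\}}=s_2s_3s_2s_3$ of length~$4$, so your concatenation has word length $4+1+4+1=10$ while $\ell(w_\Delta)=9$; in the $F_4$ family the count is $30$ versus $24$. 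The lemma you invoke only guarantees that concatenating reduced expressions of the $\sigma_{\gamma_i}$ themselves (length~$3$, not~$4$, for $\sigma_2=s_2s_3s_2$) stays reduced; inserting the extra $w_{\Delta\setminus\Theta}$ after each special factor destroys this. Consequently the iterated use of length-additivity for products of Bruhat cells is illegitimate: when lengths do not add, $P^{\mathrm{opp}}w_1P^{\mathrm{opp}}\cdot P^{\mathrm{opp}}w_2P^{\mathrm{opp}}$ is only contained in a \emph{union} of cells indexed by prefix--suffix products, so you cannot conclude that the image lies in the single open cell. The paper's fix is to absorb the trailing $w_{\Delta\setminus\Theta}$ into $P_\Theta^{\mathrm{opp}}$ immediately, $\exp(v_i)\in P^{\mathrm{opp}}\sigma_{\alpha_\Theta}w_{\Delta\setminus\Theta}P^{\mathrm{opp}}\subset P^{\mathrm{opp}}\sigma_{\alpha_\Theta}P_{\Theta}^{\mathrm{opp}}$, and then to multiply the resulting mixed double cosets via Lemma~\ref{lemma:PThetaequalsPforWTheta}; that lemma is exactly where the normalizer property and the reducedness of subwords of $w_{\Delta\setminus\Theta}\,w_{\max}^{\Theta}=w_\Delta$ do the work you were hoping to extract from your concatenated word.

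The injectivity step also rests on an unproved assertion: that in the charts of Lemma~\ref{lem:param-Bruhat-cell} left multiplication by $\exp(v_1)$ becomes ``block-triangular with the $\mathfrak{u}_{\gamma_1}$-block an isomorphism''. The dimension bookkeeping of Proposition~\ref{prop:jump-dimensions-Cw} identifies how many new directions appear, not that the map splits off $v_1$ affinely; making that precise would require analysing the polynomial expression of the multiplication in those coordinates, which is essentially the whole difficulty. The paper separates two candidate preimages by a cell argument instead: if the leftmost entries $X_1\neq Y_1$ differ, then (using that $\mathfrak{u}_{\gamma_1}$ is abelian) the two images differ by left multiplication by $\exp(Y_1-X_1)$, which by Lemma~\ref{lem:nontr-space-bruh} lies in a Bruhat cell of positive $\Theta$-length; multiplying the cell of the shorter product by such an element forces a cell of strictly larger $\Theta$-length than that of $x_{j-1}$, a contradiction. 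This $\Theta$-length obstruction is the missing mechanism in your ``recover $v_1$'' step, and it is what lets the paper prove the stronger statement in which the first variable ranges over all of $\mathfrak{u}_{\gamma_1}$ rather than only over $\mathring{c}_{\gamma_1}$.
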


  Note that, since the map $U_\Theta \to \mathsf{F}_\Theta \mid  u \mapsto
  u\cdot \mathfrak{p}_{\Theta}^{\mathrm{opp}}$ is an embedding, the injectivity
  of the above map is equivalent to the injectivity of~$\mathring{F}_\gammab$.

Proposition~\ref{prop:transv-inject-mapsFgamma} will be proved thanks to an inductive process whose results are of independent interest. For this it will be
  a little more convenient to have a
decreasing numbering for the indices of~$\gammab$: $\gammab
  = (\gamma_N, \gamma_{N-1}, \dots, \gamma_1)$. 
  With this notation the
  cone $c_\gammab$ is the product $c_{\gamma_N}\times \cdots \times
  c_{\gamma_1}$.

Set, for all~$j$ in~$\llbracket 1, N\rrbracket $, $x_j=\sigma_{\gamma_j} \cdots
  \sigma_{\gamma_1}$, this is an element of~$W(\Theta)$.\index{$x_j$ the
    element $\sigma_{\gamma_j} \cdots
  \sigma_{\gamma_1}$ (given $\gammab=(\gamma_N, \dots, \gamma_1)$)}  
  \begin{proposition}
    \label{prop:transv-inject-maps-induction}
    For every~$j$ in $\llbracket 1, N\rrbracket $,
  \begin{enumerate}[leftmargin=*]
  \item\label{item:2proof-theo-trans} The image of the
    map\index{$\mathring{F}_j$ the map to $U_\Theta$ associated with $x_j$}
    \begin{align*}
      \mathring{F}_j\colon \mathring{c}_{\gamma_j} \times \cdots \times \mathring{c}_{\gamma_1} & \longrightarrow
                                                      U_\Theta\\
    (X_j,\dots, X_1) &\longmapsto \exp(X_j)\cdots \exp(X_1)
    \end{align*}
     is contained
    in $P_{\Delta}^{\mathrm{opp}} x_j P_{\Theta}^{\mathrm{opp}}
    =P_{\Theta}^{\mathrm{opp}} x_j P_{\Theta}^{\mathrm{opp}}$ 
     (the equality follows
    from Lemma~\ref{lemma:PThetaequalsPforWTheta} since $x_j$~belongs
    to~$W(\Theta)$);
  \item\label{item:1proof-theo-trans} The map
    \begin{align*}
       \mathfrak{u}_{\gamma_j} \times \mathring{c}_{\gamma_{j-1}} \times \cdots \times \mathring{c}_{\gamma_1} & \longrightarrow
                                                      \mathsf{F}_\Theta\\
    (X_j,\dots, X_1) &\longmapsto \exp(X_j)\cdots \exp(X_1)\cdot \mathfrak{p}_{\Theta}^{\mathrm{opp}}
    \end{align*}
    is injective.
      \end{enumerate}
\end{proposition}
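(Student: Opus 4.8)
The plan is to prove the two statements of Proposition~\ref{prop:transv-inject-maps-induction} simultaneously by induction on~$j$, using the Bruhat-cell bookkeeping from Section~\ref{sec:bruh-decomp-cones} together with the key length property $\ell_\Theta(w_{\max}^{\Theta}) = \sum_i \ell_\Theta(\sigma_{\gamma_i})$ which holds because $\sigma_{\gamma_1}\cdots\sigma_{\gamma_N}$ is a reduced expression in $(W(\Theta),R(\Theta))$ and $\ell_\Theta$ factors through the length in $W(\Theta)$ (Lemma~\ref{lem:theta-length-on-reduced-expression}, Proposition~\ref{prop:longestelementWTheta}). The base case $j=1$ is: for $X_1 \in \mathring{c}_{\gamma_1}$, the element $\exp(X_1)$ lies in $P^{\mathrm{opp}} x_1 P_{\Theta}^{\mathrm{opp}}$ with $x_1 = \sigma_{\gamma_1}$, and the map $\mathfrak{u}_{\gamma_1}\to\mathsf{F}_\Theta$, $X_1\mapsto\exp(X_1)\cdot\mathfrak{p}_\Theta^{\mathrm{opp}}$ is injective. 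When $\gamma_1\neq\alpha_\Theta$ this is property~(\ref{item:4:Bruhat_dec}) of Section~\ref{sec:bruhat-decomposition}; when $\gamma_1 = \alpha_\Theta$ this is exactly Proposition~\ref{prop:open-cone-bruh} (recalling $w_{\{\alpha_\Theta\}\cup\Delta\setminus\Theta} = \sigma_{\alpha_\Theta}w_{\Delta\setminus\Theta}$, so $P^{\mathrm{opp}}w_{\{\alpha_\Theta\}\cup\Delta\setminus\Theta}P^{\mathrm{opp}} \subset P^{\mathrm{opp}}\sigma_{\alpha_\Theta}P_{\Theta}^{\mathrm{opp}}$); injectivity on the whole of $\mathfrak{u}_{\alpha_\Theta}$ follows because $\exp$ composed with a fixed Bruhat-cell coordinate is injective, or directly from Lemma~\ref{lem:param-Bruhat-cell}.

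\textbf{The inductive step.} Suppose the two properties hold for $j-1$. For property~(\ref{item:2proof-theo-trans}): write $\exp(X_j)\cdots\exp(X_1) = \exp(X_j)\cdot\bigl(\exp(X_{j-1})\cdots\exp(X_1)\bigr)$. By the base-case analysis $\exp(X_j) \in P^{\mathrm{opp}}\sigma_{\gamma_j}P_{\Theta}^{\mathrm{opp}} = P_{\Theta}^{\mathrm{opp}}\sigma_{\gamma_j}P_{\Theta}^{\mathrm{opp}}$ (Lemma~\ref{lemma:PThetaequalsPforWTheta}), and by induction the second factor lies in $P_{\Theta}^{\mathrm{opp}}x_{j-1}P_{\Theta}^{\mathrm{opp}}$. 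Hence the product lies in $P_{\Theta}^{\mathrm{opp}}\sigma_{\gamma_j}P_{\Theta}^{\mathrm{opp}}x_{j-1}P_{\Theta}^{\mathrm{opp}}$, so by Lemma~\ref{lem:J-length-prod-bruhat-cells} the class $[x]$ of any element $x$ with $P_{\Theta}^{\mathrm{opp}}xP_{\Theta}^{\mathrm{opp}}$ meeting this product satisfies $\ell_\Theta(x)\le \ell_\Theta(\sigma_{\gamma_j})+\ell_\Theta(x_{j-1}) = \ell_\Theta(x_j)$. But $\sigma_{\gamma_j}\cdots\sigma_{\gamma_1}$ being a prefix of a reduced word of $w_{\max}^{\Theta}$ forces $\ell_\Theta(x_j) = \sum_{i\le j}\ell_\Theta(\sigma_{\gamma_i})$, which is the maximum achievable; combined with the fact that $C(\sigma_{\gamma_j}x_{j-1})$ is the unique cell of its $\Theta$-length class appearing here (using the dimension jump of Proposition~\ref{prop:jump-dimensions-Cw} and Corollary~\ref{coro:nontr-cone-bruh-J-length} to rule out the smaller-$\Theta$-length strata on the closure of the relevant cone), one concludes the image lies in $P^{\mathrm{opp}}x_jP_{\Theta}^{\mathrm{opp}}$. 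For property~(\ref{item:1proof-theo-trans}): given $\exp(X_j)\cdots\exp(X_1)\cdot\mathfrak{p}_\Theta^{\mathrm{opp}} = \exp(X_j')\cdots\exp(X_1')\cdot\mathfrak{p}_\Theta^{\mathrm{opp}}$ with the first coordinates in $\mathfrak{u}_{\gamma_j}$ and the rest in the open cones, apply $\exp(-X_j')$ on the left. The point $\exp(X_{j-1})\cdots\exp(X_1)\cdot\mathfrak{p}_\Theta^{\mathrm{opp}}$ lies in the cell $C(x_{j-1})$ by induction; acting by $\exp(-X_j')\exp(X_j)$ must land it back in $C(x_{j-1})$ (on the one hand) but also, since $\exp(X_j)\cdot(\cdots)$ is in $C(x_j)$, in a cell of higher dimension unless $X_j = X_j'$. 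Making this precise requires showing that for $X,X'\in\mathfrak{u}_{\gamma_j}$ the element $\exp(X')^{-1}\exp(X)$ preserves the cell $C(x_{j-1})$ only when $X=X'$ — this reduces, via Lemma~\ref{lemma:PThetaequalsPforWTheta} and the fact that $x_{j-1}\in W(\Theta)$, to a statement inside the rank-one (or $C_{d+1}$-type) subgroup generated by $\mathfrak{g}_{\pm\alpha_\Theta}$ and $\Delta\setminus\Theta$, where one can argue by the explicit Bruhat coordinates as in Lemma~\ref{lem:param-Bruhat-cell}. Once $X_j = X_j'$, the inductive hypothesis for $j-1$ (after translating by $\exp(-X_j)$) gives $X_i = X_i'$ for all $i<j$.

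\textbf{The main obstacle.} The delicate point is property~(\ref{item:1proof-theo-trans}) in the step where $\gamma_j = \alpha_\Theta$: there $\mathfrak{u}_{\alpha_\Theta}$ is not one-dimensional, so ``$\exp(X_j)$ moves one stratum'' must be upgraded to ``different $X_j\in\mathfrak{u}_{\alpha_\Theta}$ give genuinely different images in $\mathsf{F}_\Theta$ even after composing with the already-injective tail.'' I expect to handle this by combining the dimension count of Proposition~\ref{prop:jump-dimensions-Cw} (the jump is exactly $\dim\mathfrak{u}_{\alpha_\Theta}$, matching the dimension of the fiber one is trying to collapse) with the precise description of which Bruhat cell $\exp(X)$ lands in for $X$ in the closure of $\mathring{c}_{\alpha_\Theta}$ from Section~\ref{sec:nontr-cone-bruh}, ensuring the only way to stay in $C(x_{j-1})$ is the trivial one. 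Everything else is bookkeeping with $\ell_\Theta$ and repeated use of points~(\ref{item:3:Bruhat_dec}) and~(\ref{item:2:Bruhat_dec}) of the Bruhat decomposition; the openness statements~(\ref{item2bis:thm:pos_defined}) and the properness~(\ref{item5:thm:pos_defined}) of Theorem~\ref{thm:pos_defined} are then deferred to the later subsections as indicated.
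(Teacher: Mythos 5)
Your overall architecture (simultaneous induction on~$j$, base case from property~(\ref{item:4:Bruhat_dec}) of Section~\ref{sec:bruhat-decomposition} and Proposition~\ref{prop:open-cone-bruh}, Bruhat-cell bookkeeping for the step) matches the paper's, but both halves of your inductive step have genuine gaps. For part~(\ref{item:2proof-theo-trans}) you place the product in $P_{\Theta}^{\mathrm{opp}}\sigma_{\gamma_j}P_{\Theta}^{\mathrm{opp}}x_{j-1}P_{\Theta}^{\mathrm{opp}}$ and then invoke Lemma~\ref{lem:J-length-prod-bruhat-cells}. That lemma only gives containment in a \emph{union} of double cosets of $\Theta$-length at most $\ell_\Theta(\sigma_{\gamma_j})+\ell_\Theta(x_{j-1})$; it does not isolate the single coset $P^{\mathrm{opp}}x_jP_{\Theta}^{\mathrm{opp}}$, and your appeal to ``the unique cell of its $\Theta$-length class appearing here'' is unsubstantiated: a product of double cosets generally meets several cosets of the same maximal $\Theta$-length (prefix--suffix products in which only letters from $\Delta\setminus\Theta$ have been deleted do not change $\ell_\Theta$), and neither Proposition~\ref{prop:jump-dimensions-Cw} nor Corollary~\ref{coro:nontr-cone-bruh-J-length} rules them out. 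You already have the pieces needed for the correct argument: $\exp(X_j)$ lies in the \emph{single} orbit $P^{\mathrm{opp}}\sigma_{\gamma_j}P_{\Theta}^{\mathrm{opp}}$, the tail lies in $P^{\mathrm{opp}}x_{j-1}P_{\Theta}^{\mathrm{opp}}=P_{\Theta}^{\mathrm{opp}}x_{j-1}P_{\Theta}^{\mathrm{opp}}$ (Lemma~\ref{lemma:PThetaequalsPforWTheta}), and since $x_j=\sigma_{\gamma_j}x_{j-1}$ is a subword of a reduced expression of~$w_\Delta$ the lengths add, so the exact multiplicativity of minimal-parabolic Bruhat cells (property~(\ref{item:3:Bruhat_dec})) yields $P^{\mathrm{opp}}\sigma_{\gamma_j}P^{\mathrm{opp}}x_{j-1}P_{\Theta}^{\mathrm{opp}}=P^{\mathrm{opp}}x_jP_{\Theta}^{\mathrm{opp}}$ on the nose --- no $\Theta$-length estimate is needed here.

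For part~(\ref{item:1proof-theo-trans}) your argument does not close. After cancelling, what must be shown is: if $Z=Y_j-X_j\in\mathfrak{u}_{\gamma_j}$ is nonzero, then $\exp(Z)$ cannot carry the point $\mathring F_{j-1}(Y_{j-1},\dots)\cdot\mathfrak{p}_{\Theta}^{\mathrm{opp}}$ of $C(x_{j-1})$ to another point of $C(x_{j-1})$. Your claim that the result would otherwise lie ``in a cell of higher dimension'' is false as stated --- multiplying an element of one Bruhat cell by an element of another can land in cells of any dimension up to the sum, including lower --- and the proposed reduction to the $C_{d+1}$-type subgroup is unjustified, since $\exp(Z)$ acts on a point deep inside $C(x_{j-1})$ and not on the base point. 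Moreover $Z$ is an arbitrary nonzero element of $\mathfrak{u}_{\gamma_j}$, not an element of a cone, so the results of Section~\ref{sec:nontr-cone-bruh} that you cite in your ``main obstacle'' paragraph do not apply. The missing ingredient is Lemma~\ref{lem:nontr-space-bruh}: for any nonzero $Z\in\mathfrak{u}_{\gamma_j}$, $\exp(Z)$ lies in a Bruhat cell indexed by an element of strictly positive $\Theta$-length; feeding this into the $\Theta$-length bookkeeping along the reduced expression contradicts the equality of the two flags, forcing $X_j=Y_j$, after which the inductive hypothesis gives the remaining coordinates.
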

\begin{proof}
  We prove first~(\ref{item:2proof-theo-trans}) by induction on~$j$ in
  $\llbracket 1, N\rrbracket$. For $j=1$ in the proof below $F_{j-1}(X_{j-1},\dots, X_1)$
  and $x_{j-1}$ should be replaced by the identity element.

  Let $(X_j,\dots, X_1)$ be in $\mathring{c}_{\gamma_j}\times \cdots \times
  \mathring{c}_{\gamma_1}$.
  If $\gamma_j$ is not equal to~$\alpha_\Theta$, then
  $\exp(X_j)$ belongs to
  $P_{\Delta}^{\mathrm{opp}}
  s_{\gamma_j} P_{\Delta}^{\mathrm{opp}} = P_{\Delta}^{\mathrm{opp}}
  \sigma_{\gamma_j} P_{\Delta}^{\mathrm{opp}} \subset P_{\Delta}^{\mathrm{opp}} \sigma_{\gamma_j} P_{\Theta}^{\mathrm{opp}}$  (see
  Section~\ref{sec:bruhat-decomposition}). If $\gamma_j=\alpha_\Theta$, then $\sigma_{\gamma_j}
  = \sigma_{\alpha_\Theta}$ and, by Proposition~\ref{prop:open-cone-bruh}, $
  \exp(X_j)$ belongs to 
  \[ P_{\Delta}^\mathrm{opp} \sigma_{\alpha_\Theta} w_{\Delta\setminus\Theta}
    P_{\Delta}^\mathrm{opp} 
    \subset  P_{\Delta}^\mathrm{opp} \sigma_{\alpha_\Theta}
    P_{\Theta}^{\mathrm{opp}}.\]
  By induction (or trivially if $j=1$) $\mathring{F}_{j-1}( X_{j-1},\dots,
  X_1)$ belongs to $P_{\Delta}^{\mathrm{opp}} x_{j-1}
  P^{\mathrm{opp}}_{\Theta}$.

  We deduce that $\mathring{F}_j( X_j,\dots, X_1)$ belongs to
  \begin{align*}
     P_{\Delta}^\mathrm{opp} \sigma_{j} P_{\Theta}^{\mathrm{opp}} x_{j-1}
    P_{\Theta}^{\mathrm{opp}}
    &= P_{\Delta}^\mathrm{opp} \sigma_{\alpha_\Theta} P_{\Delta}^{\mathrm{opp}} x_{j-1}
      P_{\Theta}^{\mathrm{opp}},\\
    \intertext{where the equality holds by Lemma~\ref{lemma:PThetaequalsPforWTheta}  applied to~$x_{j-1}$,}
    &= P_{\Delta}^\mathrm{opp} \sigma_{\alpha_\Theta} x_{j-1}
      P_{\Theta}^{\mathrm{opp}}
      \intertext{since $x_j = \sigma_{\alpha_\Theta} x_{j-1}$ is a reduced
  product by Lemma~\ref{lem:reduced-exp-inWTheta-and-W}. This concludes the
      induction step for~(\ref{item:2proof-theo-trans}).}
  \end{align*}

  We now prove point~\eqref{item:1proof-theo-trans} by induction on~$j$ again.
  For $j=1$, injectivity follows from the fact that $\mathfrak{u}_\Theta
  \times P^{\mathrm{opp}}_{\Theta}\mid (X,g)\mapsto \exp(X)g$ is injective.

  Suppose now that $j\geq 2$ and that the inductive hypothesis has been
  established up to $j-1$. Let $(X_j,\dots, X_1)$ and $(Y_j, \dots, Y_1)$ be
  in $\mathfrak{u}_{\gamma_j} \times \mathring{c}_{\gamma_{j-1}}\times \cdots \times \mathring{c}_{\gamma_1}$ such that
  \[ {F}_j(X_j,\dots, X_1) P_{\Theta}^{\mathrm{opp}} = {F}_j(Y_j, \dots, Y_1) P_{\Theta}^{\mathrm{opp}},\]
  this means that 
  \begin{align}
    \label{eq:7}
    \mathring{F}_{j-1}(X_{j-1},\dots, X_1) P_{\Theta}^{\mathrm{opp}}
    &= \exp(-X_j)\exp(Y_j)\mathring{F}_{j-1}(Y_{j-1},
    \dots, Y_1) P_{\Theta}^{\mathrm{opp}}\\ 
   \notag &= \exp(Y_j-X_j)\mathring{F}_{j-1}(Y_{j-1},
    \dots, Y_1) P_{\Theta}^{\mathrm{opp}}, 
  \end{align}
  where the last equality holds since $\mathfrak{u}_{\gamma_j}$ is Abelian (Proposition~\ref{prop:u_alpha-are-abel}). 
  By point~(\ref{item:2proof-theo-trans}) we have that $\mathring{F}_{j-1}(X_{j-1},\dots, X_1)$ and
  $\mathring{F}_{j-1}(Y_{j-1}, \dots, Y_1)$ belong to $P_{\Theta}^{\mathrm{opp}}
  x_{j-1} P_{\Theta}^{\mathrm{opp}}$. Suppose that $X_j\neq Y_j$ then (by
  Lemma~\ref{lem:nontr-space-bruh})     we have that $\exp(Y_j-X_j)$ belongs
  to $P_{\Delta}^\mathrm{opp} x P_{\Delta}^\mathrm{opp}$ with $\ell_\Theta(x)>0$. Using again the
  multiplicative properties of Bruhat cells we deduce that the element $y$ such
  that 
  \[\exp(Y_j-X_j)\mathring{F}_{j-1}(Y_{j-1},
    \dots, Y_1) \in P_{\Delta}^\mathrm{opp} y P^\mathrm{opp}_{\Theta}\]
  satisfies $\ell_\Theta(y)> \ell_\Theta( x_{j-1})$ which is a
  contradiction with Equation~\eqref{eq:7}. Hence $X_j=Y_j$ and~\eqref{eq:7} together with the induction hypothesis 
  imply that 
  $X_i=Y_i$ for all~$i$ less than~$j$.
\end{proof}

We can now deduce that the maps $\mathring{F}_{\gammab}$ are open. More
precisely, using again the notation of the proposition, and recalling that $C(x_j)\subset \mathsf{F}_\Theta$ is the 
$U_{\Theta}^{\mathrm{opp}}$-orbit of the element $x_j\cdot
\mathfrak{p}_{\Theta}^{\mathrm{opp}}$,  we have: 

\begin{corollary}
  \label{coro:F_j-open}
  For all~$j$ the map
  \begin{align*}
    \mathring{c}_{\gamma_j}\times \cdots \times \mathring{c}_{\gamma_1}
    & \longrightarrow C(x_j) \subset \mathsf{F}_\Theta\\
    (X_j, \dots, X_1) &\longmapsto \mathring{F}_j(X_j,\dots, X_1)\cdot \mathfrak{p}_{\Theta}^{\mathrm{opp}}
  \end{align*}
  is open.
\end{corollary}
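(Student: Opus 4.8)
The plan is to derive openness of the map into $C(x_j)$ from the injectivity established in Proposition~\ref{prop:transv-inject-maps-induction}, combined with a dimension count using the recursive formula of Proposition~\ref{prop:jump-dimensions-Cw}. The key observation is that the source $\mathring{c}_{\gamma_j}\times\cdots\times\mathring{c}_{\gamma_1}$ is an open subset of the vector space $\mathfrak{u}_{\gamma_j}\oplus\cdots\oplus\mathfrak{u}_{\gamma_1}$, whose dimension is $\dim\mathfrak{u}_{\gamma_j}+\cdots+\dim\mathfrak{u}_{\gamma_1}$. Since $\sigma_{\gamma_j}\cdots\sigma_{\gamma_1}$ is reduced in $(W(\Theta),R(\Theta))$, iterating Proposition~\ref{prop:jump-dimensions-Cw} gives exactly $\dim C(x_j)=\dim\mathfrak{u}_{\gamma_j}+\cdots+\dim\mathfrak{u}_{\gamma_1}$. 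So the map in question is a continuous injective map between (smooth) manifolds of the same dimension; by invariance of domain it is open.

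First I would record that $C(x_j)$, being a $P_\Theta^{\mathrm{opp}}$-orbit in $\mathsf{F}_\Theta$, is a locally closed submanifold, and that by Lemma~\ref{lem:param-Bruhat-cell} it is diffeomorphic to $\mathfrak{u}^{\mathrm{opp}}_{\Theta,x_j\prec}$, hence of the dimension just computed. Next I would note that $\mathring{F}_j$ restricted to $\mathring{c}_{\gamma_j}\times\cdots\times\mathring{c}_{\gamma_1}$ indeed lands in $C(x_j)$: this is exactly point~(\ref{item:2proof-theo-trans}) of Proposition~\ref{prop:transv-inject-maps-induction}, which says the image lies in $P_\Theta^{\mathrm{opp}}x_j P_\Theta^{\mathrm{opp}}$, and passing to the quotient by $P_\Theta^{\mathrm{opp}}$ gives $C(x_j)$. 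The map is smooth since it is a composition of exponentials and the projection $G\to\mathsf{F}_\Theta$. Injectivity is point~(\ref{item:1proof-theo-trans}) of that same proposition, restricted from $\mathfrak{u}_{\gamma_j}\times\mathring{c}_{\gamma_{j-1}}\times\cdots$ to the smaller domain $\mathring{c}_{\gamma_j}\times\mathring{c}_{\gamma_{j-1}}\times\cdots$.

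Then the conclusion follows from Brouwer's invariance of domain: a continuous injective map from an open subset of $\R^m$ to an $m$-dimensional manifold is an open map onto its image (and a homeomorphism onto that image). Applying this with $m=\dim\mathfrak{u}_{\gamma_j}+\cdots+\dim\mathfrak{u}_{\gamma_1}=\dim C(x_j)$ yields that $\mathring{c}_{\gamma_j}\times\cdots\times\mathring{c}_{\gamma_1}\to C(x_j)$ is open, which is the claim.

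The only delicate point is making the dimension bookkeeping airtight: one must check that the reduced expression $\sigma_{\gamma_j}\cdots\sigma_{\gamma_1}$ in $W(\Theta)$ remains such that each successive $\sigma_{\gamma_i}x_{i-1}$ is a genuine length-additive product to which Proposition~\ref{prop:jump-dimensions-Cw} applies — but this is precisely the hypothesis ``$\sigma_\alpha w$ is a reduced expression in $W(\Theta)$'' in that proposition, and prefixes of reduced expressions are reduced, so the induction goes through cleanly. I do not expect any real obstacle here; the corollary is essentially a repackaging of Proposition~\ref{prop:transv-inject-maps-induction} together with the dimension formula.
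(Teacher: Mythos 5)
Your argument is correct and is essentially the paper's own proof: injectivity from Proposition~\ref{prop:transv-inject-maps-induction}, the dimension count by iterating Proposition~\ref{prop:jump-dimensions-Cw} along the reduced expression, and then Invariance of Domain. The extra bookkeeping you supply (Lemma~\ref{lem:param-Bruhat-cell} for the manifold structure of $C(x_j)$, and the remark that prefixes of reduced expressions are reduced) only fleshes out steps the paper leaves implicit.
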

\begin{proof}
  We know that this map is injective. By Invariance of Domain, it is enough to
  show that the dimensions of the source and the range coincide. This can be
  proved again by induction on~$j$, using the relation $x_j =
  \sigma_{\gamma_j} x_{j-1}$ and using Proposition~\ref{prop:jump-dimensions-Cw}
  which says:   \[ \dim C(x_j) - \dim C(x_{j-1}) = \dim \mathfrak{u}_{\gamma_j} = \dim
    c_{\gamma_j}.\]
  This is the precise relation needed to show that the induction step holds true.
\end{proof}

\begin{corollary}\label{item2bis:thm:pos_defined} 
For any~$\gammab$ in~$\mathbf{W}$, the   map $\mathring{c}_\gammab \to \mathsf{F}_\Theta \mid \vb \mapsto
    \mathring{F}_{\gammab}(\vb)\cdot \mathfrak{p}_{\Theta}^{\mathrm{opp}}$ is open.
\end{corollary}

\subsection{Nontransversality at the boundary}
\label{sec:nontr-at-bound}

We now establish, that elements in the boundary of~${c_\gammab}$ are not sent in
the open Bruhat cell~$\Omega_{\Theta}^{\mathrm{opp}}$. 

 \begin{proposition}\label{prop:notinBruhat}
  Let
   $\vb $ be in~${c_\gammab}$. 
   If
   $\vb$~does not belong to~$\mathring{c}_{\gammab}$, then
    $F_{\gammab}(\vb)  $ does not belong to $ P_{\Delta}^{\mathrm{opp}}
   w^{\Theta}_{\max} P_{\Theta}^{\mathrm{opp}}$.
 \end{proposition}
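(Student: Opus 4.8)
The plan is to exploit the $\Theta$-length and the analysis of Bruhat cells carried out in Section~\ref{sec:bruh-decomp-cones}, together with the inductive description of the image of $F_\gammab$ obtained in Proposition~\ref{prop:transv-inject-maps-induction}. Write $\gammab = (\gamma_N, \dots, \gamma_1)$ with the decreasing numbering as in the previous subsection, and for $j = 1,\dots,N$ set $x_j = \sigma_{\gamma_j}\cdots\sigma_{\gamma_1}$, so that $x_N = w_{\max}^{\Theta}$ and $\ell_\Theta(x_j) = j$ (since each $\sigma_{\gamma_i}$ contributes exactly one occurrence of an element of $\{s_\alpha\}_{\alpha\in\Theta}$ in a reduced expression, by Lemma~\ref{lem:theta-length-on-reduced-expression}). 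Given $\vb = (v_N, \dots, v_1) \in c_\gammab \setminus \mathring{c}_\gammab$, there is at least one index $m$ with $v_m \in c_{\gamma_m}\setminus\mathring{c}_{\gamma_m}$; choose $m$ to be the \emph{smallest} such index (equivalently, $v_i \in \mathring{c}_{\gamma_i}$ for all $i < m$).

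First I would localize the argument to the "bad" factor. For $i<m$ the elements $\exp(v_i)$ are handled exactly as in Proposition~\ref{prop:transv-inject-maps-induction}\eqref{item:2proof-theo-trans}: the partial product $\mathring{F}_{m-1}(v_{m-1},\dots,v_1)$ lies in $P^{\mathrm{opp}}_{\Theta} x_{m-1} P^{\mathrm{opp}}_{\Theta}$, and $\ell_\Theta(x_{m-1}) = m-1$. Now I need to control the Bruhat position of $\exp(v_m)$. If $\gamma_m \neq \alpha_\Theta$, then $c_{\gamma_m} \cong \R_{\ge 0}$ and the only boundary point is $v_m = 0$, so $\exp(v_m) = e \in P^{\mathrm{opp}}_\Theta$, which has $\Theta$-length $0$; if $\gamma_m = \alpha_\Theta$, then $v_m \in c_{\alpha_\Theta}\setminus\mathring{c}_{\alpha_\Theta}$, and by the Proposition preceding Corollary~\ref{coro:nontr-cone-bruh-J-length} (and that corollary itself) $\exp(v_m) \in P^{\mathrm{opp}}_\Theta w_j P^{\mathrm{opp}}_\Theta$ with $\ell_\Theta(w_j) = j \le d$, \emph{strictly} less than $d+1 = \ell_\Theta(\sigma_{\alpha_\Theta})$. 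In either case the key gain is: the $\Theta$-length of the Bruhat class of $\exp(v_m)$ is strictly less than $\ell_\Theta(\sigma_{\gamma_m})$.

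Next I would push the product through the remaining factors and use subadditivity of $\ell_\Theta$ under multiplication of Bruhat cells (Lemma~\ref{lem:J-length-prod-bruhat-cells}). Let $w$ be the element of $W$ with $F_\gammab(\vb) \in P^{\mathrm{opp}}_\Theta w P^{\mathrm{opp}}_\Theta$. Since $F_\gammab(\vb) = \exp(v_N)\cdots\exp(v_{m+1})\,\exp(v_m)\,\mathring{F}_{m-1}(v_{m-1},\dots,v_1)$, and $\exp(v_i)$ for $i>m$ lies in $P^{\mathrm{opp}} \sigma_{\gamma_i} P^{\mathrm{opp}}_\Theta$ (when $\gamma_i\neq\alpha_\Theta$, $\Theta$-length $1$; when $\gamma_i = \alpha_\Theta$, $\Theta$-length $\le d+1 = \ell_\Theta(\sigma_{\alpha_\Theta})$) while $\exp(v_m)$ contributes $\Theta$-length $\le \ell_\Theta(\sigma_{\gamma_m}) - 1$ and $\mathring{F}_{m-1}$ contributes $\ell_\Theta(x_{m-1}) = m-1$, Lemma~\ref{lem:J-length-prod-bruhat-cells} gives
\[
\ell_\Theta(w) \le \sum_{i=m+1}^{N}\ell_\Theta(\sigma_{\gamma_i}) + \bigl(\ell_\Theta(\sigma_{\gamma_m}) - 1\bigr) + (m-1) = \sum_{i=1}^{N}\ell_\Theta(\sigma_{\gamma_i}) - 1 = \ell_\Theta(w^{\Theta}_{\max}) - 1 < \ell_\Theta(w^{\Theta}_{\max}).
\]
Hence $w \neq w^{\Theta}_{\max}$, so $F_\gammab(\vb) \notin P^{\mathrm{opp}}_\Theta w^{\Theta}_{\max} P^{\mathrm{opp}}_\Theta = P^{\mathrm{opp}} w^{\Theta}_{\max} P^{\mathrm{opp}}_\Theta$ (the last equality being Lemma~\ref{lemma:PThetaequalsPforWTheta}), which is exactly the claim.

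The step I expect to be the main obstacle is the bookkeeping in the case $\gamma_m = \alpha_\Theta$: one must be careful that the multiplicative estimate for Bruhat cells (Lemma~\ref{lem:J-length-prod-bruhat-cells}, which is stated for $P^{\mathrm{opp}}_\Theta$-double cosets) is applied with the correct groups, and that the strict inequality $\ell_\Theta(w_j) \le d < d+1$ from Corollary~\ref{coro:nontr-cone-bruh-J-length} really does produce a strict drop — this is where the evenness of $m_{\alpha,\alpha'}$ and the precise value $\ell_\Theta(\sigma_{\alpha_\Theta}) = d+1$ enter. One should also double-check that when \emph{several} boundary indices occur, selecting the minimal one suffices (the factors $\exp(v_i)$, $i<m$, are interior so the inductive lemma applies verbatim; the factors $i>m$ only help, since each contributes at most $\ell_\Theta(\sigma_{\gamma_i})$ regardless of whether $v_i$ is interior). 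Everything else is a direct concatenation of results already proved in the excerpt.
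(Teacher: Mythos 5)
Your argument follows the same route as the paper's: place each factor $\exp(v_i)$ in a $P_{\Theta}^{\mathrm{opp}}$-double coset whose $\Theta$-length is at most $\ell_\Theta(\sigma_{\gamma_i})$, observe the strict drop at a boundary factor (trivially when $\gamma_i\neq\alpha_\Theta$, via Corollary~\ref{coro:nontr-cone-bruh-J-length} when $\gamma_i=\alpha_\Theta$), and conclude with the subadditivity of Lemma~\ref{lem:J-length-prod-bruhat-cells}. The paper does not even single out a minimal boundary index or invoke Proposition~\ref{prop:transv-inject-maps-induction}: it bounds every factor by $\ell_\Theta(\sigma_{\gamma_i})$ uniformly and uses the strict inequality at one boundary index, so your detour through the interior factors is harmless but unnecessary.

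One bookkeeping slip should be corrected: $\ell_\Theta(x_j)$ is not $j$ but $\sum_{i\leq j}\ell_\Theta(\sigma_{\gamma_i})$, because $\sigma_{\alpha_\Theta}$ contributes $d+1$ occurrences of $s_{\alpha_\Theta}$ in a reduced expression, not one (for instance $\sigma_p=s_p s_{p+1} s_p$ in the $\SO(p+1,p+k)$ case has $\Theta$-length $2$). Accordingly, the term $(m-1)$ in your displayed estimate must be replaced by $\sum_{i=1}^{m-1}\ell_\Theta(\sigma_{\gamma_i})$; as written, the claimed equality with $\sum_{i=1}^{N}\ell_\Theta(\sigma_{\gamma_i})-1$ fails whenever $\alpha_\Theta$ occurs among $\gamma_1,\dots,\gamma_{m-1}$. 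With that substitution the telescoping is exact, the bound $\ell_\Theta(w)\leq\ell_\Theta(w_{\max}^{\Theta})-1$ holds, and the rest of the argument (distinct $\Theta$-lengths force distinct double cosets, plus Lemma~\ref{lemma:PThetaequalsPforWTheta}) is correct.
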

 \begin{proof}
We use
again natural numbering for the components of~$\gammab$: $\gammab = (\gamma_1,
\dots, \gamma_N)$.
   We denote $(v_1,\dots, v_N)$ the components of~$\vb$. Since $\vb\notin
   \mathring{c}_\gammab$, there exists~$j$ such that $v_{j}$ is in
    $c_{\alpha_{j}} \setminus \mathring{c}_{\alpha_{j} }$.
    For all~$i$ in $
    \llbracket 1,N\rrbracket$ denote by~$t_i$ an element of~$W$ such
    that $\exp(v_i)$ belongs to $P^{\mathrm{opp}}_{\Theta} t_i
    P^{\mathrm{opp}}_{\Theta} $. We have  $\ell_\Theta(t_i)\leq
    \ell_\Theta(\sigma_{\alpha_i}) $. Since $t_j=e$ when $\alpha_j\neq
    \alpha_\Theta$ or by Corollary~\ref{coro:nontr-cone-bruh-J-length} when
    $\alpha_j = \alpha_\Theta$, we have that  $\ell_\Theta(t_j)<
    \ell_\Theta(\sigma_{\alpha_j}) $.

    Repeated applications of Lemma~\ref{lem:J-length-prod-bruhat-cells} show
    that the element~$x$ of~$W$ such that $F_\gammab( \vb)$ belongs to $
    P^{\mathrm{opp}}_{\Theta} x P^{\mathrm{opp}}_{\Theta}$ satisfies
    \begin{equation*}
      \ell_\Theta(x)
       \leq \sum_{i=1}^{N} \ell_\Theta(t_i)
       < \sum_{i=1}^{N} \ell_\Theta(\sigma_{\alpha_i}) = \ell_\Theta(x_{\max}^{\Theta}).
    \end{equation*}
    This implies the result.
 \end{proof}

\begin{corollary}
\label{item3:thm:pos_defined} We have
    $F_\gammab(c_\gammab \setminus \mathring{c}_\gammab) \subset U_\Theta
   \smallsetminus \Omega^{\mathrm{opp}}_{\Theta}$.
\end{corollary}

\begin{corollary}
\label{item4:thm:pos_defined} We have
    $F_\gammab( \mathring{c}_\gammab) = F_\gammab(c_\gammab ) \cap
    \Omega^{\mathrm{opp}}_{\Theta} \subset U_\Theta^{\geq0}\cap  \Omega^{\mathrm{opp}}_{\Theta}$.
\end{corollary}

\subsection{Properness}
\label{sec:properness}

To prove properness of the maps~$F_\gammab$ we consider the
composition of $\log\circ F_\gammab\colon c_\gammab\to \mathfrak{u}_\Theta$
with the projection $p_\alpha\colon \mathfrak{u}_\Theta \to 
\mathfrak{u}_\alpha$ for $\alpha$~varying in~$\Theta$.\index{ $p_\alpha$ the
  projection from  $\mathfrak{u}_\Theta$ to 
$\mathfrak{u}_\alpha$}
As the projection~$p_\alpha$ is a
Lie algebra homomorphism and by applying the Baker--Campbell--Hausdorff
formula, we have for all $\vb=(v_i)_{i\in\llbracket 1, N\rrbracket}$ in
$c_\gammab$
\begin{equation}
  \label{eq:palpha_log}
  p_\alpha \bigl( \log\circ F_\gammab(\vb)\bigr) = \sum_{{\mathclap{{\substack{
        i\in \llbracket 1, N\rrbracket\\ \gamma_i=\alpha}}}}} v_i.
\end{equation}
\begin{proposition}\label{prop:propernessFgamma}
  The map~$F_\gammab\colon c_\gammab \to U_{\Theta}$ is proper. More
  precisely, there is a constant~$C\geq 0$ such that, for any $\vb \in
  c_\gammab$, $\max_i \|v_i\| \leq C \max_{\alpha\in\Theta}\| p_\alpha\circ \log (F_\gammab(\vb))\|$.
\end{proposition}
\begin{proof}
                  For any acute closed convex
  cone~$c$ 
  (in a finite dimensional normed real vector space) and for any $n\geq 1$, the map
  $c^n \to c\mid (x_1, \dots , x_n)\mapsto x_1+\cdots+x_n$ is
  Lipschitz: there is constant~$D$ (depending on~$c$ and~$n$) such
  that $\max \| x_i\| \leq D \| x_1+\cdots +x_n\|$. Together with the formula~\eqref{eq:palpha_log}, this implies the
  precise control on $\vb \mapsto \bigl(p_\alpha\circ
  \log(F_\gammab(\vb))\bigr)_{\alpha\in \Theta}$.
\end{proof}

\subsection{Connectedness}
\label{sec:connectedness}

We now prove that the image of $\mathring{F}_{\gammab}$ is a connected component of the intersection of $U_\Theta$ with the open Bruhat cell~$\Omega^{\mathrm{opp}}_{\Theta}$.

\begin{theorem}\label{cor:conncomp}
  The set $ F_\gammab ( \mathring{c}_\gammab) $ is a connected
  component of $\Omega^{\mathrm{opp}}_{\Theta} \cap U_\Theta$. 
\end{theorem}
\begin{proof}
  Corollary~\ref{coro:F_j-open} and the fact that $U_\Theta\to
  C(w_{\max}^{\Theta}) \mid g \mapsto g \cdot \mathfrak{p}_{\Theta}^{\mathrm{opp}}$ is a
  diffeomorphism imply that the set $ F_\gammab({\mathring{c}_\gammab})$ is
  open in~$U_\Theta$. Since the map
  $F_{\gammab}|_{{c}_\gammab}$ is proper, we
  have that the closure of
  $F_\gammab(\mathring{c}_\gammab)$ in~$U_\Theta$
  is equal to $F_{\gammab}( {c}_\gammab)$.  Therefore the closure  of
  $F_\gammab(\mathring{c}_\gammab)$ in $\Omega^{\mathrm{opp}}_{\Theta}  \cap U_\Theta$ is equal
  to $F_\gammab({c}_\gammab)  \cap \Omega^{\mathrm{opp}}_{\Theta}$ hence to
  $F_\gammab(\mathring{c}_\gammab)$ by Corollary~\ref{item4:thm:pos_defined}. 
  Thus $F_\gammab({\mathring{c}_\gammab})$ is
  open and closed in $\Omega^{\mathrm{opp}}_{\Theta}  \cap U_\Theta$. Since it is connected, it is one connected component.
\end{proof}

\subsection{Independence on~\texorpdfstring{$\gammab$}{γ}}
\label{sec:independence-gammab}
A priori the connected component $ F_\gammab ( \mathring{c}_\gammab)$ could depend on the choice of $\gammab \in\mathbf{W}$. We show now that this is not the case. For this it is sufficient that given any two elements $\gammab, \boldsymbol{\gamma'} \in \mathbf{W}$, the intersection of  $ F_\gammab ( \mathring{c}_\gammab)$ and $ F_{\gammab'} ( \mathring{c}_{\boldsymbol{\gamma'}})$ is nonempty. 
To prove this, we first clarify the relation between~$F_\gammab$ and Lusztig's positive unipotent semigroup in the split real subgroup generated by the $\Theta$-system $(E_\alpha, F_\alpha, D_\alpha)_{\alpha \in \Theta}$.

\subsubsection{Lusztig's positive semigroup}
Recall that the Lie subalgebra $\mathfrak{h}_\Theta$ generated by the
$\Theta$-system $(E_\alpha, F_\alpha, D_\alpha)_{\alpha \in \Theta}$ is a
split real Lie algebra of type $W(\Theta)$. We denote by $H_\Theta < G$ the
connected Lie subgroup with Lie algebra~$\mathfrak{h}_\Theta$. Note that the
intersection $\mathfrak{h}_\Theta \cap \mathfrak{p}_\Theta$ is a standard
minimal parabolic subalgebra in $\mathfrak{h}_\Theta$, and
$\mathfrak{n}_\Theta  = \mathfrak{g}_\Theta \cap \mathfrak{u}_\Theta$ is its
unipotent radical. We denote by $N_\Theta$ the corresponding subgroup, one has
$N_\Theta = U_\Theta \cap H_\Theta$.

The Lie algebra  
 $\mathfrak{n}_\Theta$ is generated by the elements $(E_\alpha)_{ \alpha \in
   \Theta}$. Recall that $E_\alpha \in \mathring{c}_\alpha$ for all $\alpha
 \in \Theta$, more precisely $\mathfrak{n}_\Theta \cap \mathring{c}_\alpha = \R_{>0}E_\alpha$. 
Given an element $\gammab \in\mathbf{W}$ we can restrict the map $F_\gammab $
to $\mathring{c}^{H}_{\gammab}=\prod_{i=1}^{N} \R_{>0} E_{\gamma_i}$.

We have 
\begin{proposition}\label{prop:Lusztig}
The set $N^{>0}_\Theta = F_\gammab \bigl(\mathring{c}^{H}_{\gammab})$ is the unipotent positive semigroup of $H_\Theta$. It is independent of the choice of $\gammab \in \mathbf{W}$.
\end{proposition}
\begin{proof}[Sketch of proof.]
The proposition follows from Proposition~2.7.(b) in Lusztig's work
\cite{lusztigposred}. Using the material developed above, one can also argue
as follows:

The image $F_\gammab \bigl(\mathring{c}^{H}_{\gammab})$ is a connected
component of $\Omega_{\Theta}^{\mathrm{opp}}\cap N_\Theta$. It is thus enough to show that the
images $F_\gammab \bigl(\mathring{c}^{H}_{\gammab})$ and $F_\alphab
\bigl(\mathring{c}^{H}_{\alphab})$ intersect when $\alphab$ and $\gammab$
differ by a braid relation. Focusing on the entries where $\alphab$ and
$\gammab$ differ, we are reduced to the case of rank~$2$ split real Lie groups. The
situation is obvious when this group is $\SL_2(\R)\times \SL_2(\R)$.

For the
group $\SL_3(\R)$, denoting $x_1\colon \R\to U_\Delta \mid t \mapsto \id+t E_{1,2}$ and
$x_2\colon \R\to U_\Delta \mid t \mapsto \id+t E_{2,3}$ the unipotent $1$-parameter groups
corresponding to the simple roots $\alpha_1$ and $\alpha_2$, there are here
$2$~reduced expressions of the longest length element: $s_1 s_2 s_1 = s_2 s_1
s_2$ and the $2$~corresponding maps are $(t_1,t_2,t_3)\mapsto x_1(t_1)
x_2(t_2) x_1(t_3)$ and  $(t_1,t_2,t_3)\mapsto x_2(t_1)
x_1(t_2) x_2(t_3)$ and one checks that $x_1(1)
x_2(2) x_1(1) = x_2(1)
x_1(2) x_2(1)$ showing the result in this case. 

For the group $\Sp_4(\R)$, the unipotent $1$-parameter groups are $x_1\colon
\R\to U_\Delta \mid t \mapsto \id + t(E_{1,2}+E_{3,4})$ and $x_2\colon
\R\to U_\Delta \mid t \mapsto \id + tE_{2,3}$ and one checks the relation
$x_1(1) x_2(2) x_1(3)x_2(1) = x_2(1) x_1(3) x_2(2)x_1(1)$.

For the case of $\lietype{G}_2$, we refer to \cite{Berenstein_Zelevinsky}.
\end{proof}

\begin{remark}
Lusztig has an argument to reduce to the simply laced cases and this enables
him to treat only the case of $\SL_3(\R)$ where he writes down an explicit
formula for the change of coordinates between the images of the maps
associated to these two reduced expressions. This is more than what we used here. 
  \end{remark}

As a corollary we obtain the following proposition. 
\begin{proposition}\label{prop:image_independent}
The image $F_\gammab (\mathring{c}_\gammab)$ is independent of the choice of $\gammab \in \mathbf{W}$. 
\end{proposition}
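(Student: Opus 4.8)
The plan is to deduce Proposition~\ref{prop:image_independent} from Lusztig's result (Proposition~\ref{prop:Lusztig}) by transporting the independence statement from the split subgroup $N_\Theta$ to all of $U_\Theta$ using the transitive action of $L_\Theta^\circ$. The starting point is the observation that $F_\gammab(\mathring c_\gammab)$ is a connected component of $\Omega^{\mathrm{opp}}_\Theta \cap U_\Theta$ (Corollary~\ref{cor:conncomp}), so it suffices to show that all these connected components coincide, and for that it is enough to exhibit a single point lying in $F_\gammab(\mathring c_\gammab)$ independently of $\gammab$.

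First I would recall that each $E_\alpha$ lies in $\mathring c_\alpha$, so for every $\gammab=(\gamma_1,\dots,\gamma_N)\in\mathbf{W}$ the tuple $(E_{\gamma_1},\dots,E_{\gamma_N})$ belongs to $\mathring c_\gammab$, and hence $N^{>0}_\Theta \subset F_\gammab(\mathring c_\gammab)$. By Proposition~\ref{prop:Lusztig}, the set $N^{>0}_\Theta$ does not depend on $\gammab$. Therefore $N^{>0}_\Theta$ is a nonempty subset contained in $F_\gammab(\mathring c_\gammab)$ for every $\gammab$. Since each $F_\gammab(\mathring c_\gammab)$ is a connected component of $\Omega^{\mathrm{opp}}_\Theta\cap U_\Theta$, and two connected components of a space either coincide or are disjoint, the fact that they all meet the common set $N^{>0}_\Theta$ forces them all to be equal. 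This gives the independence.

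There is one subtlety I would address carefully: I need $N^{>0}_\Theta$ to be genuinely nonempty, i.e.\ $N\ge 1$, which holds as soon as $w^{\Theta}_{\max}\ne e$; this is clear since $W(\Theta)$ is a nontrivial Coxeter group in all four cases of Proposition~\ref{prop:WTheta}. I would also check that $N^{>0}_\Theta$ is indeed contained in $\Omega^{\mathrm{opp}}_\Theta \cap U_\Theta$, but this is automatic since $N^{>0}_\Theta\subset F_\gammab(\mathring c_\gammab)\subset \Omega^{\mathrm{opp}}_\Theta\cap U_\Theta$ by point~(\ref{item2:thm:pos_defined}) of Theorem~\ref{thm:pos_defined}. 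A cleaner phrasing avoids even mentioning $\Omega^{\mathrm{opp}}_\Theta$ directly: pick any $\gammab,\gammab'\in\mathbf{W}$; then $F_\gammab(\mathring c_\gammab)$ and $F_{\gammab'}(\mathring c_{\gammab'})$ are connected components of the same space and both contain the point $\exp(E_{\gamma_1})\cdots\exp(E_{\gamma_N})\in N^{>0}_\Theta = F_{\gammab'}\bigl(\prod_i \R_{>0}E_{\gamma'_i}\bigr)$, hence they are equal.

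The main obstacle — or rather the only real input — is Proposition~\ref{prop:Lusztig}, Lusztig's braid-relation invariance, which we are allowed to cite. Everything else is a short formal argument combining the connectedness statement of Corollary~\ref{cor:conncomp} with the containment $N^{>0}_\Theta\subset F_\gammab(\mathring c_\gammab)$. I would therefore keep the proof to a few lines: state that $(E_{\gamma_1},\dots,E_{\gamma_N})\in\mathring c_\gammab$, invoke Proposition~\ref{prop:Lusztig} for the $\gammab$-independence of $N^{>0}_\Theta$, and conclude by the disjointness of distinct connected components that all the $F_\gammab(\mathring c_\gammab)$ coincide. No nontrivial calculation is needed.
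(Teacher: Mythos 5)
Your proposal is correct and follows essentially the same route as the paper's own proof: both observe that $E_{\alpha}\in\mathring{c}_{\alpha}$ gives the inclusion $N_{\Theta}^{>0}=F_{\gammab}\bigl(\prod_{i}\R_{>0}E_{\gamma_i}\bigr)\subset F_{\gammab}(\mathring{c}_{\gammab})$, invoke Lusztig's Proposition~\ref{prop:Lusztig} for the $\gammab$-independence of $N_{\Theta}^{>0}$, and conclude via Corollary~\ref{cor:conncomp} that the sets, being connected components of $\Omega_{\Theta}^{\mathrm{opp}}\cap U_{\Theta}$ with nonempty common intersection, must coincide. No changes needed.
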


\begin{proof}
Since $E_\alpha \in \mathring{c}_\alpha$ for all $\alpha \in \Theta$, and
$\mathring{c}_\alpha$ are convex cones, for any~$\gammab$ in~$
\mathbf{W}$ we have that $F_\gammab (\mathring{c}^{H}_{\gammab}) \subset F_\gammab(\mathring{c}_\gammab)$.
Let $\gammab, \boldsymbol{\gamma'} \in   \mathbf{W}$ be two different reduced
expressions of the longest length element in $W(\Theta)$, then 
$F_\gammab (\mathring{c}_\gammab) \cap F_{\boldsymbol{\gamma'}}
(\mathring{c}_{\boldsymbol{\gamma'}})$ contains $N^{>0}_\Theta = F_\gammab (\mathring{c}^{H}_{\gammab}) = F_{\boldsymbol{\gamma'}} (\mathring{c}^{H}_{\gammab}) $. Therefore $F_\gammab (\mathring{c}_\gammab)$ and $F_{\boldsymbol{\gamma'}} (\mathring{c}_{\boldsymbol{\gamma'}})$ have nonempty intersection. Since both sets are connected components of $\Omega^{\mathrm{opp}}_{\Theta} \cap U_\Theta$, they have to be equal.
\end{proof}

\begin{remark}
We can get in fact precise formulas for the change of coordinates. Two elements $\gammab$ and $\boldsymbol{\gamma'}$ differ by a sequence of braid
relation. In the split case with $\Theta = \Delta$ explicit formulas for the change of coordinates for the braid relations are given in \cite{lusztigposred} for the simply laced case, and in \cite{Berenstein_Zelevinsky} for the general case. 
  In a forthcoming article  \cite{GW_pos_2} we describe explicit systems of polynomial equations for the general $\Theta$-positive case following a similar approach. This gives rise to explicit coordinate changes as well. 
\end{remark}

\subsection{Parametrizations of the positive unipotent
  semigroup} \label{sec:consequences}
In this section we draw some consequences on the positive and nonnegative unipotent semigroups. 
In particular we prove that the maps $\mathring{F_\gammab}$ provide parametrizations of $U_\Theta^{>0}$ for any~$\gammab$ in~$\mathbf{W}$. 

By Proposition~\ref{prop:image_independent}, the set $V = F_\gammab
(\mathring{c}_\gammab)$ is independent of the choice of $\gammab \in
\mathbf{W}$.

We still have to show that this set coincides with $U_{\Theta}^{>0}$. For this we prove first.

\begin{proposition}\label{prop:invariance}
The closure~$\overline{V}$ is equal to $F_\gammab(c_\gammab)$ for
every~$\gammab$ in~$\mathbf{W}$. For all $\alpha \in \Theta$ and for all $v_\alpha \in c_\alpha$, 
$V$~is invariant under left and right multiplication by $\exp (v_\alpha)$.
\end{proposition}
\begin{proof}
  The properness of~$F_\gammab$ implies the first statement.
  
  We prove only the invariance by left multiplication, the case of right
  multiplication follows by entirely analoguous arguments.
  
Let $v_\alpha \in c_\alpha$ for some $\alpha \in \Theta$, and let $u$ be an element in $V$. We want to prove that then  $\exp (v_\alpha) u \in V$. 
Let us choose $\gammab \in \mathbf{W}$ such that $\gamma_1 = \alpha$. The existence
of 
such a reduced expression~$\gammab$ of the longest length element
$w_{\max}^{\Theta}$ in $(W(\Theta), R(\Theta))$ is a classical fact and is
established starting from the equality $w_{\max}^{\Theta} = \sigma_\alpha
(\sigma_\alpha w_{\max}^{\Theta})$ and from a reduced expression of
$\sigma_\alpha w_{\max}^{\Theta}$, indeed since $w_{\max}^{\Theta}$ is the
longest length element of $W(\Theta)$, the element $\sigma_\alpha
w_{\max}^{\Theta}$ is of smaller length and $\sigma_\alpha
(\sigma_\alpha w_{\max}^{\Theta})$ is a reduced product.
Let also~$\vb$ be
in~$\mathring{c}_\gammab$ such that $F_\gammab (\vb) = u$
Then we have 
  \begin{align*}
\exp (v_\alpha)  u &= 
\exp (v_\alpha) \exp(v_1)\cdots \exp(v_N) \\
    & = \exp (v_\alpha + v_1)\cdots \exp(v_N),
\end{align*}
where we used that $\gammab$ starts with $\alpha$ and that
$\mathfrak{u}_\alpha$~is Abelian. Since $c_\alpha$ is a
convex cone, $v_1 \in \mathring{c}_\alpha$ and $v_\alpha \in c_\alpha$ we have
that $v_\alpha + v_1  \in \mathring{c}_\alpha$. Therefore $\exp (v_\alpha)
\cdot u $ belongs to~$F_\gammab(\mathring{c}_\gammab) =V$.
\end{proof}

As a consequence of this proposition we obtain 
\begin{corollary}\label{cor:semigroup}
\begin{enumerate}
  \item\label{item1:cor:semigroup} The inclusions $U_{\Theta}^{\geq 0} V \subset
   V$, and $ V U_{\Theta}^{\geq 0} \subset    V$ hold.
  \item\label{item1bar:cor:semigroup} The inclusions $U_{\Theta}^{\geq 0} \overline{V} \subset
   \overline{V}$, and $ \overline{V} U_{\Theta}^{\geq 0} \subset \overline{V}$ hold.
 \item\label{item2:cor:semigroup} $V$ is a semigroup.
  \item\label{item3:cor:semigroup} One has $\overline{V}=U_{\Theta}^{\geq 0}$.
  \item\label{item4:cor:semigroup} One has ${V}=U_{\Theta}^{> 0}$.
   \end{enumerate}
 \end{corollary}
 
 \begin{proof}
  The first point~(\ref{item1:cor:semigroup}) is a direct consequence of 
  Proposition~\ref{prop:invariance} and the fact that the semigroup
  $U_{\Theta}^{\geq 0}$ is generated by $\bigcup_{\alpha\in \Theta}\exp( c_\alpha)$.

  By taking closure in $uV\subset V$ (for $u$ in $U_{\Theta}^{\geq 0}$), we
  get point~(\ref{item1bar:cor:semigroup}).

  Since $V\subset U_{\Theta}^{\geq 0}$, point~(\ref{item1:cor:semigroup})
  implies that $VV\subset V$, i.e.\ that $V$~is a semigroup.

  One has $\overline{V}=F_{\gammab}(c_\gammab)\subset U_{\Theta}^{\geq 0}$ and,
  using point~(\ref{item1bar:cor:semigroup}) with the fact that $e$~belongs
  to~$\overline{V}$, we also have $U_{\Theta}^{\geq 0}\subset
  \overline{V}$. This proves point~(\ref{item3:cor:semigroup}).

  Point~(\ref{item4:cor:semigroup}) now follows from Lemma~\ref{lemm:semigroups}.(\ref{item:3:lemm:semigroups-inter-of-closure}).
\end{proof}

We can now record the result proved so far with:
\begin{theorem}\label{thm:parametrization}
For any  $\gammab \in\mathbf{W}$, the map $\mathring{F_\gammab} \colon \mathring{c}_\gammab \to U_\Theta$ gives a parametrization of the positive unipotent semigroup $U^{>0}_\Theta$. 
\end{theorem}

And furthermore we proved also:

 \begin{corollary}\label{cor:semigroupUtheta}
  \begin{enumerate}[leftmargin=*]
  \item\label{item1:cor:semigroupUtheta} The inclusions $U_{\Theta}^{\geq 0} U_{\Theta}^{>0} \subset
    U_{\Theta}^{>0}$, and $ U_{\Theta}^{>0} U_{\Theta}^{\geq 0} \subset
    U_{\Theta}^{>0}$ hold.
  \item\label{item2:cor:semigroupUtheta}  The nonnegative unipotent semigroup $
    U_{\Theta}^{\geq 0}$ is equal to the closure of ${ U_{\Theta}^{>0}}$, in particular it is closed. 
    
   \item\label{item3:cor:semigroupUtheta}    The nonnegative unipotent semigroup $ U_{\Theta}^{\geq 0}$ is the image $F_\gammab(
    c_\gammab)$ for every~$\gammab$ in~$\mathbf{W}$.
In particular, every
    element in $
    U_{\Theta}^{\geq 0}$ can be written as a finite product (of length at most~$N$)  of elements of the form $\exp (v_\alpha)$ with $v_\alpha \in c_\alpha$. 
  \item\label{item4:cor:semigroupUtheta} The semigroup~$U_{\Theta}^{>0}$ is equal to
    the intersection of $U_{\Theta}^{\geq 0}$
    with~$\Omega_{\Theta}^{\mathrm{opp}}$.
  \end{enumerate}
\end{corollary}

\begin{remark}
Note that $ F_\gammab$ is in general not injective on ${c_\gammab}$, and thus,
point~(\ref{item3:cor:semigroupUtheta}) of Corollary~\ref{cor:semigroupUtheta} does not give a parametrization of $U_{\Theta}^{\geq 0}$. However, we expect that the restriction of  $ F_\gammab$ to appropriate subsets of ${c_\gammab}$ will give a parametrization of $U_\Theta^{\geq 0}$. We describe this in detail for the orhogonal group  $G = \mathrm{SO}(3,q)$, $q \geq 4 $ in Section~\ref{sec:orthogonal}.
\end{remark}

\subsection{Tangent cone of the semigroup}
\label{sec:tip-semigroup}

We now determine the tangent cone at~$e$ of the semigroup $U_{\Theta}^{\geq 0}$. 
For this we introduce some notation. 

Let~$d$ be the degree of nilpotency of the Lie algebra~$\mathfrak{u}_\Theta$
(i.e.\ the iterated Lie brackets of $d+1$ elements of~$\mathfrak{u}_\Theta$ is
always zero).
Let $\mathfrak{e}_{N,d}$ be the free degree~$d$ nilpotent real Lie algebra generated by elements
$e_{1}, \dots, e_{N}$. Then $\mathfrak{e}_{N,d}$ is a finite dimensional graded
Lie algebra whose degree~$1$ component is equal to $\bigoplus_{i=1}^{N} \R
e_{i}$.

Let~$\gammab$ be in~$\mathbf{W}$.

For any~$\vb=(v_1, \dots, v_N)$ in $\mathfrak{u}_{\gamma_1} \times
\cdots\times \mathfrak{u}_{\gamma_N}$ the unique Lie
algebra morphism $\mathfrak{e}_{N,d} \to \mathfrak{u}_\Theta$ sending, for
each~$i$ in $\llbracket 1, N\rrbracket $, $e_{i}$ to~$v_i$ will be denoted~$\Psi_\vb$. The
Lie algebra~$\mathfrak{u}_\Theta$ is also graded (with degree~$1$ component
equal to $\bigoplus_{\alpha\in \Theta} \mathfrak{u}_\alpha$) and the
map~$\Psi_\vb$ is in fact a graded morphism.

The simply connected Lie group with Lie algebra~$\mathfrak{e}_{N,d}$ will be
denoted~$E_{N,d}$. It is well known that $\exp\colon \mathfrak{e}_{N,d} \to E_{N,d}$ is a
diffeomorphism whose inverse will be denoted by~$\log$ and the map
$\mathfrak{e}_{N,d} \times \mathfrak{e}_{N,d} \to \mathfrak{e}_{N,d} \mid (X,Y) \mapsto
\log( \exp(X) \exp(Y))$ is given by the Baker--Campbell--Hausdorff formula.

Let us introduce the following element of~$\mathfrak{e}_{N,d}$
\[ \Gamma \coloneqq \log\bigl( \exp(e_1)\cdots \exp(e_N)\bigr).\]
Then, for every~$\vb$ in 
$\mathfrak{u}_{\gamma_1} \times
\cdots\times \mathfrak{u}_{\gamma_N}$, the following holds
\[ \log\bigl( \exp(v_1)\cdots \exp(v_N)\bigr) = \Psi_\vb(\Gamma).\]
The Baker--Campbell--Hausdorff formula implies that the degree~$1$ component
of~$\Gamma$ is equal to
\begin{equation}
  \label{eq:degree-one-Gamma}
  \Gamma_1 = \sum_{i=1}^{N} e_{i};
\end{equation}
thus $\Gamma-\Gamma_1$ the sum of components of
higher degree.

From the fact that $\Psi_\vb$ preserves the degree, we deduce:
\begin{lemma}\label{lem:logFgamma}
  Let $\|\cdot\|$ be a norm on $\mathfrak{u}_\Theta$. There is a constant~$C\geq 0$ such that, for every~$\vb$ in
    $\mathfrak{u}_{\gamma_1} \times
\cdots\times \mathfrak{u}_{\gamma_N}$,
  \begin{enumerate}[leftmargin=*]
  \item\label{item1:lem:logFgamma} 
    The
    degree~$1$ component of $\log\bigl( \exp(v_1)\cdots \exp(v_N)\bigr)$ is
    equal to
    \( \psi_{\mathbf{v}}(\Gamma_1)=\sum_{i=1}^{N} v_i\);
  \item\label{item2:lem:logFgamma}                        We have $\log\bigl( \exp(v_1)\cdots \exp(v_N)\bigr) - \sum_{i=1}^{N} v_i=
   \psi_{\mathbf{v}}( \Gamma-\Gamma_1)$ and
    \begin{equation*}
      \bigl\| \psi_{\mathbf{v}}( \Gamma-\Gamma_1) \bigr\| \leq
      C \max_{i} \| v_i\|^2.
    \end{equation*}
  \end{enumerate}
\end{lemma}

The projection $\mathfrak{u}_\Theta \to \bigoplus_{\alpha\in \Theta}
\mathfrak{u}_\alpha$\index{$p_1$ the projection from $\mathfrak{u}_\Theta$ to
  $ \bigoplus_{\alpha\in \Theta} \mathfrak{u}_\alpha$}
 according to the decomposition $\mathfrak{u}_\Theta =
\bigl( \bigoplus_{\alpha\in \Theta} \mathfrak{u}_\alpha\bigr) \oplus \bigl(
\bigoplus_{\alpha\in P( \mathfrak{u}_\Theta,
  \mathfrak{t}_\Theta)\smallsetminus\Theta} \mathfrak{u}_\alpha\bigr)$ is
denoted by~$p_1$.

\begin{proposition}\label{prop:tip-semigroup} 
  Let $c\coloneqq \bigoplus_{\alpha\in\Theta} c_\alpha\subset
  \mathfrak{u}_\Theta$. Then $c$ is the tangent cone at~$e$ of the semigroup $U_{\Theta}^{\geq 0}$, precisely
  \begin{enumerate}[leftmargin=*]
  \item\label{item1:prop:tip-semigroup} There is a constant~$C$ such that, for
    every~$X$ in
    $\log(U_{\Theta}^{\geq 0})\subset \mathfrak{u}_\Theta$, 
    $p_1(X)$~belongs to~$c$ and
    $\|X-p_1(X)\| \leq C \|X\|^2$.
  \item\label{item2:prop:tip-semigroup} For any continuous map $\gamma\colon [0,1]\to U_{\Theta}^{\geq 0}$, if
    $\gamma$ is differentiable at~$0$ then $\gamma'(0)$ belongs to~$c$.
  \item\label{item3:prop:tip-semigroup} For all~$Y$ in~$c$, there is~$X$ in $\log(U_{\Theta}^{\geq 0})$ such
    that $p_1(X)=Y$. 
  \item\label{item4:prop:tip-semigroup} For all~$Y$ in~$c$, the map $\gamma\colon \R_{\geq 0}\to U_\Theta \mid
    t \mapsto \exp(tX)$ is of class $C^\infty$, contained in~$U_{\Theta}^{\geq
    0}$ and its derivative at~$0$ is equal to~$Y$.
  \end{enumerate}
\end{proposition}
\begin{proof}
  The first item~(\ref{item1:prop:tip-semigroup}) is a direct consequence of
  the estimates given in  
  point~(\ref{item2:lem:logFgamma}) of Lemma~\ref{lem:logFgamma} and in  
  Proposition~\ref{prop:propernessFgamma}.

  The second
  item~(\ref{item2:prop:tip-semigroup}) is a consequence of the first one.

  The third item~(\ref{item3:prop:tip-semigroup})
  follows easily from the formula for the map $\vb \mapsto
  p_1\circ \log(F_\gammab(\vb))$ (cf.\ Section~\ref{sec:properness}).

  For item~(\ref{item4:prop:tip-semigroup}), it is   classical that this map
  is $C^\infty$ and that its derivative at~$0$ is~$Y$. Let $\gammab$ be
  in~$\mathbf{W}$ and let also~$\vb$ be in~$c_\gammab$ be such that
  $p_1\circ \log\bigl( F_\gammab(\vb)\bigr)=Y$. 
  Let $t$ be
  in~$\R_{\geq 0}$. For all~$n$ in~$\N_{>0}$, $t \vb/n$ belongs to~$c_\gammab$ so that
  \[ F_\gammab(t \vb /n) \text{ and } F_\gammab(t \vb/n )^n\]
  belong to $U_\Theta^{\geq 0}$. Since the sequence $(F_\gammab( t \vb/n )^n)_{n>0}$
  converges to $\exp(tY)$ and
  since $U_{\Theta}^{\geq 0}$ is closed, we deduce that $\exp(tY)$ belongs
  to~$U_{\Theta}^{\geq 0}$.
\end{proof}

\begin{remark}
   The tangent cone at~$e$ of~$U_{\Theta}^{\geq 0}$ is open if and only if $\Theta$~consists of one
element, i.e.\ if and only if the Lie group~$G$ is Hermitian of tube
type. Otherwise the tangent cone is of positive codimension. We refer to
Figure~\ref{fig:semigroup} (Section~\ref{sec:semigr-bigg-than}) for a
visualization of this phenomenon in the case of $\SL_3(\R)$.
A related phenomenon has already
been observed in the case of total positivity in \cite{Lusztig} where the
$1$-parameter semigroups contained in the unipotent positive semigroup are determined.
\end{remark}

As a consequence we also obtain a proof of 
\begin{theorem}\cite[Conjecture~4.11]{GW_ECM}\label{thm:nilpotent}
Let $v = \sum_{\alpha\in \Theta} v_\alpha$ with $v_\alpha \in \mathring{c}_\alpha$ for all $\alpha \in \Theta$. Then 
$\exp(v) \in U_\Theta^{>0}$. 
Conversely, if $v \in \mathfrak{u}_\Theta$ with $\exp(tv) \in U_\Theta^{>0}$
for all~$t$ in $(0,\varepsilon)$ (for some $\varepsilon>0$),
then $v = \sum_{\alpha\in \Theta} v_\alpha$ with $v_\alpha \in
\mathring{c}_\alpha$ for all $\alpha \in \Theta$.
\end{theorem}

\begin{proof}
Let us prove the first statement. Let $E=\sum_{\alpha\in\Theta}
q^{1/2}_{\alpha} E_\alpha$ be the element of the $\Theta$-principal
$\mathfrak{sl}_2$-triple $(E,F, D)$ given in Section~\ref{sec:sl2}. 
Since $L_{\Theta}^{\circ}$ acts transitively on $\Pi_{\alpha \in \Theta}
\mathring{c}_\alpha$ (Proposition~\ref{prop:homogeneity-Ltheta}), there exists $\ell \in L_{\Theta}^{\circ}$ such that $v = \Ad(\ell) E$. 
Since $\exp\colon \mathfrak{u}_\Theta  \rightarrow U_\Theta$ is equivariant
with respect to $L_{\Theta}^{\circ}$, and $U_{\Theta}^{>0}$ is
$L_{\Theta}^{\circ}$-invariant, it is enough to show that $\exp(E) \in
U_{\Theta}^{>0}$. It is clear that $\exp(E)$~belongs to~$U_{\Theta}^{\geq
  0}$. Let~$a$ and~$b$ be the points $\mathfrak{p}_\Theta$ and
$\mathfrak{p}_{\Theta}^{\mathrm{opp}}$ of~$\mathsf{F}_\Theta$. We need to
prove that $\exp(E)\cdot b$ is transverse to~$b$
(Corollary~\ref{cor:semigroupUtheta}.(\ref{item4:cor:semigroupUtheta})). From
equality valid in $\SL_2(\R)$, one has $\exp(E)\cdot b = \exp(F) \dot{s} \cdot
b$ with $\dot{s} = \exp\bigl(
\frac{\pi}{2}(E-F)\bigr)$. Lemma~\ref{lem:principal-weyl-group} implies that $\dot{s} \cdot
b =a$ and thus the wanted transversality.

Conversely, let~$v$ be in~$\mathfrak{u}_\Theta$ such that $\exp(tv)$ belongs
to~$U_{\Theta}^{>0}$ for all~$t$ in $(0,\varepsilon)$. This implies first that $v$~belongs to
the tangent cone at~$e$ to $U_{\Theta}^{\geq 0}$ and therefore
$v=\sum_{\alpha\in\Theta} v_\alpha$ with $v_\alpha\in c_\alpha$ for
all~$\alpha$ in~$\Theta$. Suppose by contradiction that there exists~$\alpha$
in~$\Theta$ such that $v_\alpha$~does not belong to $\mathring{c}_\alpha$. By
Proposition~\ref{prop:homogeneity-Ltheta}, the stabilizer of~$v$ in~$L_\Theta$ is not
compact; this would imply that for all~$t$ the centralizer of $\exp(tv)$
in~$L_\Theta$ is not compact, in contradiction with the properness of the
action of~$L_\Theta$ on the positive unipotent semigroup. 
\end{proof}
\begin{remark}
  As $U_{\Theta}^{>0}$ is a semigroup, the hypothesis in the theorem is
  equivalent to having  $\exp(tv) \in U_\Theta^{>0}$
for all~$t$ in~$\R_{>0}$.
\end{remark}

\section{The orthogonal groups}\label{sec:orthogonal}
In this section we discuss in a bit more detail the case when $G = \mathrm{SO}(3,q)$, $q\geq 4$. The description of the $\Theta$-positive structure for general orthogonal groups $\SO(p,q)$, $q>p>2$  restricts basically to this case and the description of the positive structure for $\SL_3(\R)$, see also the discussion in \cite[Section~4.5]{GW_ECM}. 
For the case when $G = \mathrm{SO}(3,q)$ ($q\geq 4$), we also provide a parametrization of the nonnegative semigroup~$U_{\Theta}^{\geq0}$. 

\subsection{The positive semigroup} 
We realize $\mathrm{SO}(3,q) = \mathrm{SO}(b_Q)$, where $b_Q$\index{$b_Q$ the
  quadratic form of signature $(3,q)$ associated with the matrix~$Q$} is the nondegenerate symmetric bilinear form of signature $(3,q)$ on $\R^{3+q}$ given by $b_Q(v,w) = {}^t\!v Q w$, with $Q = \begin{pmatrix} 0& 0& K\\ 0 &J &0 \\ -K&0& 0 \end{pmatrix}$, 
  $K= \begin{pmatrix} 0& 1\\ -1&0 \end{pmatrix}$, and $J = \begin{pmatrix} 0&0& 1\\ 0& -\id_{q-3} &0\\ 1 & 0& 0 \end{pmatrix}$. 
We denote by $b_J$ the form $b_J(x,y) = \frac{1}{2} {}^t x J y$ and set $q_J(x) = b_J(x,x)$. Note that $b_J$ is a nondegenerate symmetric bilinear form of signature $(1,q-2)$ on the corresponding subspace of $\R^{q+3}$. 

We choose the Cartan subspace $\mathfrak{a}\subset \mathfrak{so}(3,q)$
to be the intersection of the set of diagonal matrices with
$\mathfrak{so}(3,q)$. Concretely $\mathfrak{a}$~is the space of diagonal
matrices $\mathrm{diag}(\lambda_1, \lambda_2, \lambda_3, 0, \dots, 0,
-\lambda_3, -\lambda_2, -\lambda_1)$, with $\lambda_1$, $\lambda_2$,
and~$\lambda_3$ varying in~$\R$. 
Denote $e_i\colon \mathfrak{a}\to \R$ the linear form that associates to a
diagonal matrix its $i$-th diagonal coefficient. The set of (restricted) roots is
$\{
\pm e_i \pm e_j\}_{1\leq i< j\leq 3} \cup \{ \pm e_i\}_{1\leq i \leq 3}$; a
standard choice for the set of positive roots is $\{
 e_i \pm e_j\}_{1\leq i< j\leq 3} \cup \{  e_i\}_{1\leq i \leq 3}$ and 
the set of simple roots $\Delta = \{\alpha_1, \alpha_2, \alpha_3\}$ is given by $\alpha_i = e_i-e_{i+1}$, for $i = 1,2$, and $\alpha_3 = e_3$. 

We have $\Theta  = \{\alpha_1, \alpha_2\}$. Furthermore 
$\mathfrak{u}_{\alpha_1} \cong \R$ and $\mathfrak{u}_{\alpha_2} \cong
\R^{1,q-2}$. Then  the closed cones identify with $c_{1}  \cong \R_{\geq0}$
and $c_{2} \cong {c}^{1,q-2} =\{ x \in \R^{1,q-2}\mid q_J(x) \geq 0, \,
x_1\geq0\}$\index{${c}^{1,q-2}$ the closed Lorentz cone in signature $(1,q-2)$} and the closed cone $\mathring{c}_{1} \cong \R_{>0}$,
and $\mathring{c}_{2} \cong \mathring{c}^{1,q-2} = \{ x \in
\R^{1,q-2} \mid q_J(x) >0, \, x_1>0\}$.

The Weyl group $W$ is isomorphic to $\{\pm 1\}^{3} \rtimes {S}_3$, the
group of signed permutation matrices. It is presented by the 
generators $s_1, s_2, s_3$ associated with the simple roots, with the relations $s_i^2 = (s_1s_2)^3 = (s_1s_3)^2 = (s_2s_3)^4 = e$. 
The group $W(\Theta)$ is generated by $\sigma_1 = s_1$ and $\sigma_2 =
s_2s_3s_2$. It is a Weyl group of type $\lietype{B}_2$ and the longest length element is $w_{\max}^\Theta = \sigma_1\sigma_2\sigma_1\sigma_2 = \sigma_2\sigma_1\sigma_2\sigma_1$. 

Using that the map from $\mathfrak{u}_{\alpha_1} \times
\mathfrak{u}_{\alpha_2}\times \mathfrak{u}_{\alpha_1+\alpha_2} \times
\mathfrak{u}_{2\alpha_1+\alpha_2}$ to $U_\Theta$ given by the product of
exponentials is a diffeomorphism, every element in $U_\Theta$ can be uniquely
written as a matrix
\begin{multline*}
  U(a,e_1,e_2,c)=\\
  \begin{pmatrix}
    1 & a & {}^t(a e_1 -e_2)J & c +a q_J(e_1)
    & -ac - 2 a b_J(e_1,e_2) - q_J(e_2) \\
      & 1 & {}^t e_1J & q_J(e_1) & -c-2b_J(e_1, e_2)\\
      & & \id_{q-1} &  e_1 &  e_2 \\
      & & & 1 & a \\
      & & & & 1
  \end{pmatrix}.
\end{multline*}

The exponential maps $\R \to U_\Theta$ and $\R^{q-1} \to U_\Theta$ are
denoted $ x_{1},   x_{2}$ and given
by 
\begin{align*}
  x_1(s) &= U(s,0,0,0)\\
  x_2(v) &= U(0,v,0,0).
\end{align*}

Thus, setting $\gammab =(\alpha_1, \alpha_2, \alpha_1, \alpha_2)$, the positive semigroup $U_{\Theta}^{>0}$ in this case is $$U_{\Theta}^{>0} = F_{\gammab} (\mathring{c}_{1}\times \mathring{c}_{2}\times \mathring{c}_{1}\times \mathring{c}_{2}),$$ i.e.\ 
all matrices, that can be written as $x_{1}(s_1)x_{2}(v_1)x_{1}(s_2)x_{2}(v_2)$, with $s_1, s_2 \in \R_{>0}$ and $v_1, v_2 \in  \mathring{c}^{1,q-2}$.

Concretely, one has:
\begin{multline*}
  x_{1}(s_1)x_{2}(v_1)x_{1}(s_2)x_{2}(v_2) = \\U(s_1+s_2,  v_1+  v_2, s_2
  v_1,  -s_2 (q_J(v_1)+2b_J(v_1+v_2,v_2))).
\end{multline*}

A matrix $U(a,e_1, e_2, c)$ hence belongs to $U_{\Theta}^{>0}$ if and only if 
\begin{align*}
  -c-2b_J(e_1, e_2)  >0,& 
  -ac - 2 a b_J(e_1,e_2) - q_J(e_2)  >0 \\
  e_2 \in \mathring{c}^{1,q-2},& \ 
  q_J(e_2)e_1 +(c+2b_J(e_1, e_2))e_2 \in \mathring{c}^{1,q-2}.
\end{align*}

When we use the other reduced expression of the longest length element
$w_{\max}^{\Theta}= \sigma_2\sigma_1\sigma_2\sigma_1$, and
consider elements $x_{2}(w_2)x_{1}(t_2)x_{2}(w_1)x_{1}(t_1)$ with $t_1, t_2 \in \R_{>0}$ and $w_1, w_2 \in  \mathring{c}^{1,q-2}$, we parametrize as well the positive semigroup $U_{\Theta}^{>0}$.

The equation
\begin{equation}
  \label{eq:relation:sopq}
  x_{1}(s_1)x_{2}(v_1)x_{1}(s_2)x_{2}(v_2) = x_{2}(w_2)x_{1}(t_2)x_{2}(w_1)x_{1}(t_1),
\end{equation}
determines then the change between the two parametrizations; 
comparing the entries of the corresponding matrices, 
we get the following relations 
\begin{align}
  \label{eq:1:sopq}
  s_1 + s_2 &= t_1 + t_2\\
  \label{eq:2:sopq}
  v_1 + v_2 &= w_1 + w_2\\
  \label{eq:3:sopq}
  s_1(v_1 + v_2) + s_2 v_2 &= t_2 w_1\\
  \label{eq:4:sopq}
  s_1 q_J(v_1+v_2) + s_2 q_J(v_2) &= t_2 q_J(w_1),\\
  \label{eq:5:sopq}
  s_2v_1 &= t_2 w_2 + t_1 (w_1+w_2),\\
  \label{eq:6:sopq}
  s_2 q_J(v_1) &= t_2q_J(w_2) + t_1 q_J(w_1+w_2)\\
  \label{eq:7:sopq}
  s_1s_2 q_J(v_1) &= t_2 t_1 q_J(w_1)
\end{align}

\begin{remarks}
    \begin{enumerate}[leftmargin=*]
  \item Since the maps $x_1$, $x_2$ have some compatibility with transposition
    (namely with $D$ the block diagonal
    matrix with blocks $\id_2, J, \id_2$, for all~$v$ in~$\R^{q-1}$,
    ${}^t x_2(v) = x_2( Jv) = D x_2 (v)D^{-1}$  and, for all~$s$ in~$\R$,
    ${}^t x_1(s)= x_1(s) = Dx_1(s) D^{-1}$), Equation~\eqref{eq:relation:sopq}
    above is equivalent to
    \[ x_{1}(t_1)x_{2}(w_1)x_{1}(t_2)x_{2}(w_2) =
      x_{2}(v_2)x_{1}(s_2)x_{2}(v_1)x_{1}(s_1). \] Hence the chosen numbering
    makes the relation between $(s_i, v_i)$ and $(t_i, w_i)$ more symmetric.
  \item  Equations~\eqref{eq:1:sopq}--\eqref{eq:4:sopq} are the same than the
    ones obtained by Berenstein and Zelevinsky
    \cite[p.~140]{Berenstein_Zelevinsky} for $\Sp_4(\R)$, i.e. for a Weyl group of type $\lietype{C}_2 = \lietype{B}_2$,  where the squares in their formulas 
    there are replaced by the quadratic form~$q_J$ here.
  \item Equations~\eqref{eq:1:sopq}--\eqref{eq:4:sopq} determine the others:
    indeed multiplying \eqref{eq:1:sopq}$\times$\eqref{eq:2:sopq} and
    subtracting~\eqref{eq:3:sopq} gives~\eqref{eq:5:sopq}; the scalar product
    of~\eqref{eq:2:sopq} with the difference of~\eqref{eq:3:sopq}
    and~\eqref{eq:5:sopq} is the difference of~\eqref{eq:4:sopq}
    and~\eqref{eq:6:sopq}; applying~$q_J$ to~\eqref{eq:3:sopq} and subtracting
    the product of~\eqref{eq:1:sopq} and~\eqref{eq:4:sopq}
    gives~\eqref{eq:7:sopq}.
  \end{enumerate}
\end{remarks}
These equations can be solved as follow: the ratio of~\eqref{eq:7:sopq}
and~\eqref{eq:4:sopq} gives~$t_1$; the ratio of $q_J$\eqref{eq:3:sopq}
and~\eqref{eq:4:sopq} gives~$t_2$; once $t_2$~is determined, \eqref{eq:3:sopq}
gives~$w_1$; once $w_1$~is determined \eqref{eq:2:sopq}
gives~$w_2$. Explicitely we have

\begin{align*} 
t_1 &= \frac{s_1s_2 q_J(v_1)}{s_1 q_J(v_1+v_2) + s_2 q_J(v_2)}, \
 t_2 = \frac{q_J\bigl(s_1(v_1+v_2) + s_2 v_2\bigr)}{s_1 q_J(v_1+v_2) + s_2 q_J(v_2)},\\
w_1 &= \frac{s_1 q_J(v_1+v_2) + s_2 q_J(v_2)}{q_J(s_1(v_1+v_2) + s_2 v_2)} \bigl(s_1(v_1+v_2) + s_2 v_2\bigr),\\
  w_2 &= \frac{s_2}{q_J(s_1(v_1+v_2) + s_2 v_2)} \Bigl(s_2 q_J(v_2)v_1 + \Bigr.\\
  & \qquad\qquad \Bigl. s_1\bigl( q_J(v_1+v_2)v_1 - q_J(v_1)(v_1+v_2)\bigr)\Bigr).
\end{align*} 
Note that even though the formula for $w_2$ contains a minus sign, the
following lemma implies that $w_2$~belongs to $\mathring{c}^{1,q-2}$. 

\begin{lemma}
  For all~$v_1$ and~$v_2$ in~$\mathring{c}^{1,q-2}$, the element
  \[ a(v_1,v_2)= q_J(v_1+v_2) v_1 - q_J(v_1)(v_1+v_2)\]
  belongs to~$\mathring{c}^{1,q-2}$.
\end{lemma}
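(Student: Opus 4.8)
The plan is to show that $a(v_1,v_2)$ lies in the open cone $\mathring{c}_2 = \mathring{c}^{1,q-2}$, i.e.\ that $q_J\bigl(a(v_1,v_2)\bigr) > 0$ and that its first coordinate $a(v_1,v_2)_1 > 0$. First I would compute the self-pairing directly. Writing $q_{12} \coloneqq q_J(v_1+v_2)$, $q_1 \coloneqq q_J(v_1)$, $q_2 \coloneqq q_J(v_2)$, and $b \coloneqq b_J(v_1,v_2)$, so that $q_{12} = q_1 + 2b + q_2$, bilinearity gives
\[
q_J\bigl(a(v_1,v_2)\bigr) = q_{12}^2 q_1 - 2 q_{12} q_1 b_J(v_1,v_1+v_2) + q_1^2 q_{12},
\]
and since $b_J(v_1,v_1+v_2) = q_1 + b$ this simplifies, after factoring out $q_{12} q_1$ (and using $q_{12} = q_1+2b+q_2$), to
\[
q_J\bigl(a(v_1,v_2)\bigr) = q_1 q_{12}\bigl(q_{12} - 2(q_1+b) + q_1\bigr) = q_1 q_{12}\,(q_2 - q_1).
\]
Hmm — this vanishes when $q_1 = q_2$, so the naive expansion must be redone more carefully; the right grouping is to write $a = q_{12}v_1 - q_1 v_1 - q_1 v_2 = (2b+q_2)v_1 - q_1 v_2$ and expand $q_J$ of that, giving $(2b+q_2)^2 q_1 - 2q_1(2b+q_2)b + q_1^2 q_2 = q_1\bigl[(2b+q_2)^2 - 2b(2b+q_2) + q_1 q_2\bigr] = q_1\bigl[q_2^2 + 2bq_2 + q_1 q_2\bigr] = q_1 q_2 (q_1 + q_2 + 2b) = q_1 q_2 q_{12}$. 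So in fact
\[
q_J\bigl(a(v_1,v_2)\bigr) = q_J(v_1)\,q_J(v_2)\,q_J(v_1+v_2).
\]
Since $v_1, v_2 \in \mathring{c}_2$ we have $q_J(v_1) > 0$ and $q_J(v_2) > 0$; and because $\mathring{c}_2$ is a convex cone with $q_J$ positive on it, $v_1 + v_2 \in \mathring{c}_2$, so $q_J(v_1+v_2) > 0$. Hence $q_J\bigl(a(v_1,v_2)\bigr) > 0$.

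It remains to pin down which of the two components of $\{q_J > 0\}$ the vector $a(v_1,v_2)$ lies in, i.e.\ to check $a(v_1,v_2)_1 > 0$. Here I would argue by a connectedness/continuity argument rather than a direct coordinate computation: the set $\mathring{c}_2 \times \mathring{c}_2$ is connected, the map $(v_1,v_2) \mapsto a(v_1,v_2)$ is continuous, and by the displayed identity its image avoids the hypersurface $\{q_J = 0\}$, hence avoids the cone $-\mathring{c}_2$ entirely once we know it meets $\mathring{c}_2$. Evaluating at a convenient point — e.g.\ $v_1 = v_2 = v_0$ for some fixed $v_0 \in \mathring{c}_2$ — gives $a(v_0,v_0) = q_J(2v_0)v_0 - q_J(v_0)(2v_0) = 4q_J(v_0)v_0 - 2q_J(v_0)v_0 = 2q_J(v_0) v_0 \in \mathring{c}_2$. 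So the image of the connected set $\mathring{c}_2\times\mathring{c}_2$ under $a$ is a connected subset of $\{q_J>0\} = \mathring{c}_2 \sqcup -\mathring{c}_2$ meeting $\mathring{c}_2$, hence contained in $\mathring{c}_2$.

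The main obstacle is the sign-of-the-first-coordinate part, and in particular getting the algebraic identity $q_J(a) = q_J(v_1)q_J(v_2)q_J(v_1+v_2)$ right — as the false start above shows, a careless expansion produces $q_1 q_{12}(q_2-q_1)$, which is simply wrong, and one must be disciplined about which vector one expands $q_J$ on. Once the identity is in hand, everything else is the soft connectedness argument together with the already-available fact that $\mathring{c}_2$ is a convex cone on which $q_J$ is positive. As an alternative to the connectedness argument one could verify $a(v_1,v_2)_1 > 0$ by hand: $a(v_1,v_2)_1 = q_J(v_1+v_2)(v_1)_1 - q_J(v_1)\bigl((v_1)_1 + (v_2)_1\bigr)$, and since $q_J(v_1+v_2) \geq q_J(v_1) + q_J(v_2) > q_J(v_1)$ is false in general (the cross term $2b_J(v_1,v_2)$ can be negative), this would require the reverse Cauchy--Schwarz inequality valid on a Lorentzian cone, $b_J(v_1,v_2) \geq \sqrt{q_J(v_1)q_J(v_2)} > 0$ for $v_1,v_2 \in \mathring{c}_2$; I would prefer the connectedness route as it is shorter and cleaner.
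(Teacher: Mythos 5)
Your proof is correct and follows essentially the same route as the paper: compute the identity $q_J\bigl(a(v_1,v_2)\bigr)=q_J(v_1)\,q_J(v_2)\,q_J(v_1+v_2)$, conclude that the image lies in $\mathring{c}_2\sqcup(-\mathring{c}_2)$, and use connectedness together with the evaluation $a(v,v)=2q_J(v)v$ to land in $\mathring{c}_2$. (Minor remark: your ``false start'' was only an arithmetic slip --- the first expansion, done correctly, gives $q_{12}-q_1-2b=q_2$ and hence the same identity --- so both groupings work.)
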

\begin{proof}
  We calculate $q_J(a(v_1,v_2))$. This gives
  \[ q_J(a(v_1,v_2)) = q_J(v_1+v_2) g_J(v_1)q_J(v_2).\]
  Thus $a(v_1,v_2)$ belongs to $\mathring{c}^{1,q-2} \sqcup
  -\mathring{c}^{1,q-2}$. Since $a(v_1,v_1)=2q_J(v_1)v_1$ a connectedness
  argument shows that, for all~$v_1$ and~$v_2$ in~$\mathring{c}^{1,q-2}$, $a(v_1,
  v_2)$ belongs to~$\mathring{c}^{1,q-2}$.
\end{proof}

\subsection{The nonnegative semigroup}\label{sec:example_nonnegative}
By point~(\ref{item3:cor:semigroupUtheta}) of
Corollary~\ref{cor:semigroupUtheta}, the nonnegative unipotent semigroup is
characterized as
$$U_{\Theta}^{\geq 0} = F_{\gammab} ({c}_{1}\times
{c}_{2}\times {c}_{1}\times {c}_{2}),$$ i.e.\ every element in
$U_{\Theta}^{\geq 0} $ can be written as
\[x_{1}(s_1)x_{2}(v_1)x_{1}(s_2)x_{2}(v_2),\]
with $s_1,s_2 \in \R_{\geq 0}$ and  $v_1, v_2 \in  {c}^{1,q-2}$. 

However this does not give a parametrization of $U_{\Theta}^{\geq 0}$ since one can easily check that $F_{\gammab}$ is not injective. For example take $v_1, v_2 \in {c}^{1,q-2}$ and $s_1 = s_2 = 0$. 
Then $$x_{1}(s_1)x_{2}(v_1)x_{1}(s_2)x_{2}(v_2) \\= x_{2}(v_1)x_{2}(v_2) = x_{2}(v_1+v_2) ,$$ hence 
$F_{\gammab} (0, v_1, 0, v_2)  = 
F_{\gammab} (0, v_1+v_2, 0, 0).$ 

Giving a parametrization of $U_{\Theta}^{\geq 0}$ is thus more subtle.
Already in the
case of split real groups, the maps $F_\gammab$ (defined here on $\R_{\geq
  0}^{N}$) are not injective (for the same reason than in the example above). A
way to remedy to this non-injectivity in this split case has been found by
Lusztig in \cite[Corollary~2.8]{Lusztig}; Starting from the fact 
that the nonnegative unipotent semigroup $U^{\geq 0}$ is the
disjoint union of subsets $U^{>0}(w)= U^{\geq 0} \cap P_{\Delta}^{\mathrm{opp}} w P_{\Delta}^{\mathrm{opp}}$, where $w$~varies over the elements of the
Weyl group~$W$. 
Then  parametrizations of
$U^{>0}(w)$ are obtained from  reduced expressions of~$w$, similarly to the
parametrizations of~$U^{>0}$. Of course Lusztig showed first that the image of
these parametrizations does not depend on the choice of reduced expression.
Lusztig showed first that this image does not depend on the choice of reduced expression. 

In the parametrization of the positive unipotent semigroup $U_{\Theta}^{>0}$
the role of the Weyl group $W$ in the split case is replaced by the Weyl
group $W(\Theta)$, so one might hope to get a decomposition of
$U_{\Theta}^{\geq 0}$ into a disjoint union of parametrized sets, indexed by
elements in $W(\Theta)$. The discussion in Section~\ref{sec:nontr-cone-bruh}
shows that this can not work already in the case of Hermitian Lie groups. 
In order to describe a decomposition of $U_{\Theta}^{\geq 0}$ we have to
consider a different object than the $\Theta$-Weyl group and to replace it by the double quotient $W_{\Delta\setminus
  \Theta}\backslash W / W_{\Delta\setminus\Theta}$.

We describe a decomposition of the nonnegative unipotent semigroup
for~$\mathrm{SO}(3,q)$ and a parameterization of it, using $W_{\Delta\setminus
  \Theta}\backslash W / W_{\Delta\setminus\Theta}$. It will be of interest to
explore this further in the general case.

Let us first consider the Bruhat decomposition of~$G$ with respect to the action of
$P_{\Theta}^{\mathrm{opp}} \times P_{\Theta}^{\mathrm{opp}}$:
$G = \bigsqcup_{a \in W_{\Delta\smallsetminus\Theta}\backslash W /
  W_{\Delta\smallsetminus\Theta} }P_{\Theta}^{\mathrm{opp}} a
P_{\Theta}^{\mathrm{opp}}$.  The next lemma determines representatives of
$W_{\Delta\smallsetminus\Theta}\backslash W/W_{\Delta\smallsetminus\Theta}$.

\begin{lemma}\label{lem:so3q:representative-W-double-class}
The following provides a list of smallest length
representatives for the $16$~classes in $W_{\Delta\smallsetminus\Theta}\backslash W/W_{\Delta\smallsetminus\Theta}$: 
\begin{enumerate}
\item $e$
\item $s_1$
\item $ s_2$
\item $ (s_2s_3s_2)$
\item $ s_2 s_1$
\item $ (s_2s_3s_2)s_1$
\item $ s_1s_2$ 
\item $ s_1 (s_2s_3s_2)$ 
\item $ (s_2s_3)s_1s_2$ 
\item $ s_2s_1(s_2s_3s_2)$
\item $ (s_2s_3s_2)s_1s_2$ 
\item $ (s_2s_3s_2)s_1(s_2s_3s_2)$ 
\item $ s_1s_2s_1$ 
\item $ s_1(s_2s_3s_2)s_1$
\item $ s_1(s_2s_3s_2)s_1s_2$ 
\item $ s_1(s_2s_3s_2)s_1(s_2s_3s_2)$. 
\end{enumerate}
\end{lemma}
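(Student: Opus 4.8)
\textbf{Proof strategy for Lemma~\ref{lem:so3q:representative-W-double-class}.}
The plan is to make the double-coset computation completely explicit inside the concrete model $W \cong \{\pm1\}^3 \rtimes S_3$ of signed permutation matrices. Here $\Delta = \{\alpha_1,\alpha_2,\alpha_3\}$ with $\alpha_i = e_i - e_{i+1}$ for $i=1,2$ and $\alpha_3 = e_3$, so $s_1, s_2$ are the adjacent transpositions $(1\,2)$, $(2\,3)$ and $s_3$ is the sign change $e_3 \mapsto -e_3$. Then $\Theta = \{\alpha_1,\alpha_2\}$, $\Delta \smallsetminus \Theta = \{\alpha_3\}$, and $W_{\Delta\smallsetminus\Theta} = \langle s_3\rangle$ is the order-$2$ subgroup flipping the sign of the third coordinate. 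First I would record the two-sided action: a class $W_{\Delta\smallsetminus\Theta}\, w\, W_{\Delta\smallsetminus\Theta}$ is the set $\{w, s_3 w, w s_3, s_3 w s_3\}$, which has either $1$, $2$ or $4$ elements. Since $|W| = 2^3\cdot 3! = 48$, an orbit-counting (Burnside) argument gives the number of classes directly: I count the fixed points of left multiplication by $s_3$, of right multiplication by $s_3$, and of conjugation by $s_3$ on $W$, average the four counts $48, f_L, f_R, f_{\mathrm{conj}}$, and check the result is $16$. Concretely, left (resp.\ right) multiplication by $s_3$ has no fixed point (it is a free involution), while conjugation by $s_3$ fixes exactly those signed permutations commuting with the sign change on slot~$3$; a short count of these gives $f_{\mathrm{conj}}$, and $\tfrac{1}{4}(48 + 0 + 0 + f_{\mathrm{conj}}) = 16$ pins down $f_{\mathrm{conj}} = 16$, consistent with the enumeration.

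The substantive part is then to verify that the sixteen listed words are pairwise in distinct double cosets and are each of minimal length in their coset. For minimality I would use Lemma~\ref{lem:decomp} / the general theory of minimal double-coset representatives: the minimal-length representative of $W_{\Delta\smallsetminus\Theta}\, w\, W_{\Delta\smallsetminus\Theta}$ is the unique element $w'$ in the class with $\ell(s_3 w') > \ell(w')$ and $\ell(w' s_3) > \ell(w')$, i.e.\ $w'^{-1}(\alpha_3) \in \Sigma^+$ and $w'(\alpha_3) \in \Sigma^+$. Translating into the signed-permutation model: writing $w'$ as a signed permutation, $w'(\alpha_3) = w'(e_3) > 0$ means the image of the third basis vector is a positive coordinate vector $e_j$ (not $-e_j$), and $w'^{-1}(e_3) > 0$ means the preimage of $e_3$ is likewise positive. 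For each of the sixteen words I would compute its action on $(e_1,e_2,e_3)$ (a routine matrix multiplication, since each is a short product of the generators) and check both conditions, simultaneously confirming the length via the standard length formula for $B_3$ (number of inversions plus sign-change statistics). Distinctness of the classes then follows because two minimal representatives lie in the same double coset iff they are equal.

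An efficient bookkeeping alternative, which I would actually present, is to use the $\Theta$-length $\ell_\Theta$ of Section~\ref{sec:theta-length}: by Corollary~\ref{coro:theta-length-factors} it descends to $W_{\Delta\smallsetminus\Theta}\backslash W / W_{\Delta\smallsetminus\Theta}$, and by Lemma~\ref{lem:theta-length-on-reduced-expression} it counts occurrences of $s_1, s_2$ in any reduced expression. The sixteen representatives are organized by $\ell_\Theta$: one with $\ell_\Theta = 0$ (namely $e$), those with $\ell_\Theta = 1$ (the four words using exactly one of $s_1$ or one $\sigma_2 = s_2 s_3 s_2$-block, namely $s_1$, $s_2$, $s_2 s_3 s_2$ — note $s_2$ and $s_2 s_3 s_2$ have $\ell_\Theta = 1$ each), and so on up to $\ell_\Theta = 3$ (the terms with three $\Theta$-letters, reaching $w^{\Theta}_{\max} = \sigma_1\sigma_2\sigma_1\sigma_2$... wait, in type $B_2$ the longest element has $\Theta$-length $4$). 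I would list the classes level by level according to the $B_2$ structure of $(W(\Theta), R(\Theta))$ from Proposition~\ref{prop:WTheta}, noting that each of the $8$ elements $x$ of $W(\Theta)$ gives at least one class (namely via Lemma~\ref{lemma:PThetaequalsPforWTheta}, $P_\Theta^{\mathrm{opp}} x P_\Theta^{\mathrm{opp}} = P^{\mathrm{opp}} x P_\Theta^{\mathrm{opp}}$), and the remaining $8$ classes come from the nontrivial behaviour of the special root $\alpha_\Theta = \alpha_2$ relative to $\Delta\smallsetminus\Theta$ as analyzed in Section~\ref{sec:nontr-cone-bruh} (the element $w_{\{\alpha_\Theta\}\cup\Delta\smallsetminus\Theta}$ of type $C_2$ contributes intermediate elements $w_j$).

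\textbf{Main obstacle.} The real work — and the only place an error could creep in — is the completely explicit case analysis: checking for each of the sixteen words that it is the minimal representative of its double coset and that no two of them coincide modulo $W_{\Delta\smallsetminus\Theta}$ on both sides. This is a finite but somewhat intricate computation in $B_3$; the cleanest way to present it is a single table giving, for each representative $w$, the signed permutation $w(e_1,e_2,e_3)$, the length $\ell(w)$, the $\Theta$-length $\ell_\Theta(w)$, and a checkmark that $w(e_3) > 0$ and $w^{-1}(e_3) > 0$. Once that table is in place, Burnside's count $16$ confirms that the list is exhaustive, and the lemma follows.
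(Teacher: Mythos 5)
Your core approach coincides with the paper's: the proof there simply realizes $W$ as the group of signed permutation $3\times3$-matrices, identifies $s_1,s_2$ with the transpositions $(12),(23)$ and $s_3$ with the sign change in the third slot, and says ``a direct calculation shows the result.'' Your added scaffolding is sound and arguably an improvement on that terseness: Burnside does give $\tfrac14(48+0+0+16)=16$ (the centralizer of $s_3$ is the set of signed permutations preserving $\Span(e_1,e_2)\oplus\R e_3$, of order $8\cdot 2=16$), and the characterization of the unique minimal double-coset representative by $w(\alpha_3)\in\Sigma^+$ and $w^{-1}(\alpha_3)\in\Sigma^+$ is the standard and correct way to certify both minimality and pairwise distinctness.

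Two slips to fix before writing this up. First, the Burnside step is phrased backwards: you cannot use the target answer $16$ to ``pin down'' $f_{\mathrm{conj}}=16$ and then cite consistency — you must compute $f_{\mathrm{conj}}=16$ independently (as above) and \emph{deduce} that there are $16$ classes; otherwise exhaustiveness of the list is not established. Second, your claim that $s_2s_3s_2$ has $\ell_\Theta=1$ is false: by Lemma~\ref{lem:theta-length-on-reduced-expression}, $\ell_\Theta$ counts occurrences of $s_1,s_2$ in a reduced expression, so $\ell_\Theta(\sigma_2)=\ell_\Theta(s_2s_3s_2)=2$ (consistent with $\ell_\Theta(\sigma_{\alpha_\Theta})=d+1$ with $d=1$ used in Proposition~\ref{prop:notinBruhat}), and $\ell_\Theta(w^\Theta_{\max})=6$, not $3$ or $4$. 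The level-by-level organization you sketch therefore does not work as stated; either organize by the $W(\Theta)$-length of the $W(\Theta)$-factor, or drop that paragraph and rely on the explicit table, which is where the actual content lives in both your proof and the paper's.
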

\begin{proof}
  Here the group~$W$ can be realized as the group of signed permutation
  $3\times 3$-matrices and the subgroup $W_{\Delta\setminus\Theta}$ is the
  subgroup (isomorphic to $\Z/2\Z$) of diagonal matrices whose only
  possibly nontrivial diagonal coefficient is in the third position. The
  element~$s_1$ corresponds to the transposition $(12)$, $s_2$~corresponds to
  the transposition $(23)$, and $s_3$~is the nontrivial element in
  $W_{\Delta\setminus\Theta}$.

  A direct calculation shows then the result.
\end{proof}

In the present situation the flag variety $\mathsf{F}_\Theta$ can be realized as the space of partial
flags in $(\R^{3+q},b_Q)$ consisting of an isotropic line and an isotropic
$2$-plane containing it. For
concreteness the canonical basis of $\R^{3+q}$ used so far will be denoted\index{$(e_1, e_2, e_3,g_1,
  g_2, \dots, g_{q-3}, f_3, f_2, f_1)$ the basis of $\R^{3+q}$}
\[(e_1, e_2, e_3,g_1,
  g_2, \dots, g_{q-3}, f_3, f_2, f_1).\]
Then $E^+ = (\R e_1, \R e_1 \oplus \R
e_2)$ and  $E^- = (\R f_1, \R f_1 \oplus \R f_2)$ are partial isotropic flags,
 $P_\Theta= \mathrm{Stab} (E^+)$, and $P_{\Theta}^{\mathrm{opp}} =
\mathrm{Stab} (E^-)$. More generally, given a pair of linearly independent vectors $(v_1, v_2)$ in $\R^{3,q}$, 
the flag determined\index{$E_{(v_1, v_2)}$ the flag determined by the
  vectors~$v_1$ and~$v_2$} by $(v_1, v_2)$  is denoted $E_{(v_1, v_2)} =
(\R v_1, \R v_1 \oplus \R v_2)$; this flag depends only on the lines $\R v_1$
and $\R v_2$, it belongs to~$\mathsf{F}_\Theta$ if and only if the plane $\R
v_1 \oplus \R v_2$ is isotropic with respect to~$b_Q$.

\begin{lemma}\label{lem:so3q:representative-Flag-double-class}
  The $P_{\Theta}^{\mathrm{opp}}$-orbits in~$\mathsf{F}_\Theta$ are in
  one-to-one correspondence with $P_{\Theta}^{\mathrm{opp}} \times
  P_{\Theta}^{\mathrm{opp}}$-orbits in~$G$ and
  hence with the $16$~classes in
  $W_{\Delta\smallsetminus\Theta}\backslash W/W_{\Delta\smallsetminus\Theta}$.
  For each class, we give a flag in $\mathsf{F}_\Theta$ belonging to the
  corresponding orbit:
\noindent\begin{enumerate} 
\item $e$: $E_{(f_1, f_2)}=E^-$
\item $s_1$: $E_{(f_2, f_1)}$
\item $s_2$: $E_{(f_1, f_3)}$
    \item $(s_2s_3s_2)$: $E_{(f_1, e_2)}$
\item $s_2 s_1$: $E_{(f_3, f_1)}$
\item $(s_2s_3s_2)s_1$: $E_{(e_2, f_1)}$
\item $s_1s_2$: $E_{(f_2, f_3)}$  
\item $s_1 (s_2s_3s_2)$: $E_{(f_2, e_1)}$ 
\item $(s_2s_3)s_1s_2$ : $E_{(f_3, e_2)}$
\item $s_2s_1(s_2s_3s_2)$: $E_{(f_3, e_1)}$
\item $(s_2s_3s_2)s_1s_2$: $E_{(e_2, f_3)}$ 
\item $(s_2s_3s_2)s_1(s_2s_3s_2)$: $E_{(e_2, e_1)}$ 
\item $s_1s_2s_1$: $E_{(f_3, f_2)}$ 
\item $s_1(s_2s_3s_2)s_1$: $E_{(e_1, f_2)}$
\item $s_1(s_2s_3s_2)s_1s_2$: $E_{(e_1, f_3)}$
\item $s_1(s_2s_3s_2)s_1(s_2s_3s_2)$: $E_{(e_1, e_2)}=E^+$. 
\end{enumerate}
\end{lemma}

\begin{proof}
  The flag $E^- = E_{(f_1, f_2)}$ corresponds to the class of the trivial
  element. A flag belonging to the orbit   $P_{\Theta}^{\mathrm{opp}}
  [w] P_{\Theta}^{\mathrm{opp}}/P_{\Theta}^{\mathrm{opp}} \subset
  G/P_{\Theta}^{\mathrm{opp}} = \mathsf{F}_\Theta$, associated with~$w$ in~$W$, is then $\dot{w}\cdot
  E^{-}$ where $\dot{w}$ is a lift of~$W$ to~$\SO(3,q)$.

  For the explicit calculation, we can choose
  \begin{itemize}
  \item For the lift of~$s_1$ the matrix that exchanges~$e_1$ and~$e_2$,
    exchanges~$f_1$ and~$f_2$, and fixes the other basis vectors.
  \item For the lift of~$s_2$ the matrix that exchanges~$e_2$ and~$e_3$,
    exchanges~$f_2$ and~$f_3$, and fixes the other basis vectors.
  \item For the lift of~$s_3$ the matrix that sends~$e_3$ to~$f_3$,
    sends~$f_3$ to~$-e_3$, and fixes the other basis vectors.\qedhere
  \end{itemize}
\end{proof}

For every~$x$ in $W_{\Delta\setminus\Theta}\backslash W/W_{\Delta\setminus\Theta}$, define
$U^{>0}_{\Theta}(x)$ to be the intersection of $P^{\mathrm{opp}}_{\Theta} x
P^{\mathrm{opp}}_{\Theta}$ with $U^{\geq 0}_{\Theta}$. Let 
$F_\gammab$ be as above the map from $c_1\times c_2 \times c_1 \times c_2$ to
$U_\Theta\subset \SO(3,q)$. 
For each reduced expression~$w$ listed in
Lemma~\ref{lem:so3q:representative-W-double-class} we define~$F_w$ to be the
restriction\index{$F_w$ a
restriction of $F_\gammab$ associated with the element $w$} of~$F_\gammab$ to
the set~$D_w$\index{$D_w$ the domain of definition of $F_w$} where\\
\begin{enumerate}[leftmargin=*]
\item $D_e=\{0\}\times \{0\}\times \{0\}\times \{0\} $,
\item $D_{s_1} = \R_{>0}\times \{0\}\times \{0\}\times \{0\}$,
\item  $D_{s_2}=\{0\}\times(  \partial c_{2} \setminus \{ 0\}) \times \{0\}\times \{0\} $,
\item  $D_{s_2s_3s_2}=\{0\}\times \mathring{c}_2\times \{0\}\times \{0\} $,
\item $ D_{s_2 s_1}=\{0\}\times (\partial c_{2} \setminus \{ 0\}) \times
  \R_{>0} \times \{0\}$,
\item $ D_{(s_2s_3s_2)s_1} =\{0\}\times  \mathring{c}_{2} \times \R_{>0}
  \times \{0\}$,
\item $D_{s_1s_2}= \R_{>0}  \times  (\partial c_{2} \setminus \{ 0\} ) \times \{0\} \times \{0\}$,
\item $D_{s_1 (s_2s_3s_2)} =\R_{>0}  \times  \mathring{c}_{2} \times \{0\} \times \{0\}$,
\item $D_{(s_2s_3)s_1s_2} =\{ (0, v_1, s_1, v_2) \mid s_1> 0, v_1, v_2 \in
  \partial c_2,\, b_J(v_1,
  v_2)\neq 0\}$,
\item $D_{s_2s_1(s_2s_3s_2)} = \{0\} \times (\partial c_{2} \setminus \{ 0\} )
  \times \R_{>0}  \times  \mathring{c}_{2}$,
\item $D_{(s_2s_3s_2)s_1s_2}= \{0\}\times \mathring{c}_{2} \times \R_{>0}  \times (\partial c_{2} \setminus \{ 0\} )$,
\item $D_{(s_2s_3s_2)s_1(s_2s_3s_2)} = \{0\}\times \mathring{c}_{2} \times \R_{>0}  \times \mathring{c}_{2}$,
\item $D_{s_1s_2s_1}= \R_{>0} \times (\partial c_{2} \setminus \{ 0\})  \times
  \R_{>0} \times \{0\}$,
\item $D_{s_1(s_2s_3s_2)s_1} = \R_{>0} \times  \mathring{c}_{2} \times \R_{>0}
  \times \{0\}$,
\item $D_{ s_1(s_2s_3s_2)s_1s_2}= \R_{>0} \times  \mathring{c}_{2}   \times
  \R_{>0} \times(\partial c_{2} \setminus \{0\})$, 
      \item $D_{s_1(s_2s_3s_2)s_1(s_2s_3s_2)} =\R_{>0} \times  \mathring{c}_{2}   \times \R_{>0} \times\mathring{c}_{2}.$ 
\end{enumerate}

\begin{proposition}\label{prop:param_nonnegative}
  For every~$w$ in the list of Lemma~\ref{lem:so3q:representative-W-double-class} the map
  \begin{align*}
    D_w & \longrightarrow \mathsf{F}_\Theta \\
    z & \longmapsto F_w(z)\cdot E^-
  \end{align*}
  is injective; the image of~$F_w$ is $U^{\geq 0}_{\Theta} \cap
  P_{\Theta}^{\mathrm{opp}} w P_{\Theta}^{\mathrm{opp}} = U^{>0}_{\Theta}(x)$
  with $x=[w]$. 
The nonnegative semigroup $U_{\Theta}^{\geq 0}$ is the disjoint union the $U^{>0}_{\Theta}(x)$.
\end{proposition}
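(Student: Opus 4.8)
The plan is to prove Proposition~\ref{prop:param_nonnegative} by combining the general structural results on $U_\Theta^{\geq 0}$ with the explicit matrix computations available in the $\SO(3,q)$ realization. First I would recall from point~(\ref{item5:cor:semigroup}) of Corollary~\ref{cor:semigroup} that every element of $U_\Theta^{\geq 0}$ can be written as $F_{\sigmab}(c_1\times c_2\times c_1\times c_2)$ with $\sigmab = (\sigma_1,\sigma_2,\sigma_1,\sigma_2)$, so that $U_\Theta^{\geq 0}=\bigcup_{w} F_w(D_w)$ provided the sets $D_w$ cover $c_1\times c_2\times c_1\times c_2 = \R_{\geq0}\times c^{1,q-2}\times\R_{\geq0}\times c^{1,q-2}$. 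I would check this covering directly: a tuple $(s_1,v_1,s_2,v_2)$ is classified according to whether each $s_i$ is zero or positive and whether each $v_i$ lies in $\mathring c_2$, in $\partial c_2\setminus\{0\}$, or is zero; the one genuinely delicate splitting is for $(0,v_1,s_1,v_2)$ with both $v_i\in\partial c_2$, where the two cases $b_J(v_1,v_2)\neq 0$ versus $b_J(v_1,v_2)=0$ must be separated (the latter being absorbed into lower cells after simplification of the matrix product). The disjointness of the union then follows once we show each $F_w(D_w)$ lies in a distinct $P_\Theta^{\mathrm{opp}}$-double coset.

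The core of the argument is therefore to show, for each $w$ in the list of Lemma~\ref{lem:so3q:representative-W-double-class}, that $F_w(D_w)\subseteq P_\Theta^{\mathrm{opp}} w P_\Theta^{\mathrm{opp}}\cap U_\Theta^{\geq0}$ and that the map $z\mapsto F_w(z)\cdot F^-$ is injective with image exactly $U_\Theta^{\geq0}\cap P_\Theta^{\mathrm{opp}} w P_\Theta^{\mathrm{opp}}$. For the first inclusion I would compute the flag $F_w(z)\cdot F^-=E_{(v,w)}$ explicitly from the matrix formulas for $x_1$ and $x_2$ given in Section~\ref{sec:orthogonal}, and match it against the representative flags listed in Lemma~\ref{lem:so3q:representative-Flag-double-class}; since $G$ acts on $\mathsf F_\Theta$ and the double cosets $P_\Theta^{\mathrm{opp}} w P_\Theta^{\mathrm{opp}}$ correspond bijectively to $P_\Theta^{\mathrm{opp}}$-orbits on $\mathsf F_\Theta$ via $g\mapsto g\cdot F^-$, it suffices to check that $E_{(v,w)}$ lies in the correct $P_\Theta^{\mathrm{opp}}$-orbit, which is detected by the relative position (incidences and isotropy properties) of the pair of lines $(\R v,\R w)$ with respect to the standard flag $E^+$. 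For injectivity I would argue that within a fixed cell $D_w$, once we know the image flag, the parameters $(s_1,v_1,s_2,v_2)$ are recoverable: the strict positivity or vanishing of the $s_i$ and the membership of $v_i$ in the open or boundary part of $c_2$ is forced by $D_w$, and then on the open-parameter part the change-of-coordinate formulas (the explicit expressions for $t_1,t_2,w_1,w_2$, or their degenerations) together with the properness/injectivity statements of Theorem~\ref{thm:pos_defined}(\ref{item1:thm:pos_defined}) when all parameters are interior give uniqueness; the boundary strata are handled by the same matrix bookkeeping since fewer parameters appear.

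For surjectivity onto $U_\Theta^{\geq0}\cap P_\Theta^{\mathrm{opp}} w P_\Theta^{\mathrm{opp}}$, I would take an arbitrary $u\in U_\Theta^{\geq0}$, write it as $F_{\sigmab}(s_1,v_1,s_2,v_2)$ by Corollary~\ref{cor:semigroup}(\ref{item5:cor:semigroup}), and simplify the product $x_1(s_1)x_2(v_1)x_1(s_2)x_2(v_2)$ in the degenerate cases: when some $s_i=0$ or some $v_i\in\partial c_2$, adjacent factors combine (e.g.\ $x_2(v_1)x_2(v_2)=x_2(v_1+v_2)$, or a boundary $x_2(v)$ behaves like a rank-one unipotent) so that $u$ is rewritten as $F_{w'}(z')$ for a shorter pattern $w'$ and $z'\in D_{w'}$; pushing this through all the cases shows every $u$ lies in some $F_w(D_w)$, and by the double-coset computation it lies in the $F_w(D_w)$ with $[w]$ equal to the double coset of $u$. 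The main obstacle I anticipate is the bookkeeping in these degenerations: when $v_1,v_2\in\partial c_2$ the product $x_2(v_1)x_1(s)x_2(v_2)$ does not simply collapse, and the resulting element's double coset depends on the sign of $b_J(v_1,v_2)$ (the case distinction defining $D_{(s_2s_3)s_1s_2}$ versus the lower cells), so correctly identifying which $16$ patterns arise and verifying no two overlap requires careful case analysis. Everything else — the matrix multiplications, the identification of $E_{(v,w)}$ with the listed representatives, and the injectivity checks — is routine linear algebra once the cell structure is pinned down, and the partition claim then follows formally from $G=\bigsqcup_{[w]} P_\Theta^{\mathrm{opp}} w P_\Theta^{\mathrm{opp}}$ together with the established image identifications.
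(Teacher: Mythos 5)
Your plan is correct and follows essentially the same route as the paper: the paper's own proof consists precisely of writing down the last two columns of the matrix $F_{\gammab}(s_1,v_1,s_2,v_2)$ (which determine the image flag $F_w(z)\cdot F^-$) and then asserting that the claims follow "by case by case consideration". Your proposal is a faithful — and considerably more explicit — expansion of that case-by-case check (identifying the $P_{\Theta}^{\mathrm{opp}}$-orbit of the image flag against Lemma~\ref{lem:so3q:representative-Flag-double-class}, recovering the parameters on each $D_w$ for injectivity, and rewriting degenerate products to get surjectivity onto each $U_{\Theta}^{>0}(x)$), so no gap to report.
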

\begin{proof}
  The map $(s_1, v_1, s_2, v_2) \mapsto F_\gammab(s_1, v_1, s_2, v_2) \cdot
  E^-$ is   completely determined
  by the last two columns of the matrix
  $F_\gammab(s_1, v_1, s_2, v_2)$ that is
  \[ \begin{pmatrix} 
s_1 q_J(v_1+v_2)+ s_2 q_J(v_2)&s_1 s_2 q_J(v_1)\\
q_J(v_1+v_2)& s_2 q_J(v_1)\\
v_1+v_2&s_2 v_1\\
1&s_1+s_2\\
0&1
     \end{pmatrix}.\]
   We can now check by case by case consideration that the statement
   holds.
\end{proof}

\section{Invariant unipotent semigroups}
\label{sec:invar-unip-semigr}
In Section~\ref{sec:posit-flag-vari} we will consider a more geometric
approach to positivity in terms of triples in flag varieties. In
order to connect this with the algebraic approach we took so far, we will
make use of the results of the present section where 
we  show that essentially any $L_{\Theta}^{\circ}$-invariant
semigroup~$U^+$ in~$U_\Theta$ arises from a $\Theta$-positive structure.

We say that a semigroup~$U$ in a group~$G$ is \emph{sharp} if its only
invertible element is the neutral element (i.e.\ if $g\in U^+$ and
    $g^{-1}\in U^+$ then $g=e$).

\begin{theorem}
  \label{theo:invar-unip-semigr-implies-pos}
  Let~$G$ be a connected semisimple Lie group, and let $U_\Theta$ be a
  standard unipotent subgroup of~$G$ and $L_\Theta$ be the corresponding
  standard Levi
  subgroup. Suppose that there is $U^{+}\subset U_\Theta$ such that
  \index{$U^+$ a closed $L_{\Theta}^{\circ}$-invariant, sharp semigroup of
    nonempty interior in
    $U_\Theta$ (Theorem~\ref{theo:invar-unip-semigr-implies-pos})}
  \begin{enumerate}
  \item\label{item1:theo:invar-unip-semigr-implies-pos} $U^+$ is closed and of nonempty interior;
  \item\label{item2:theo:invar-unip-semigr-implies-pos} $U^+$ is $L_{\Theta}^{\circ}$-invariant;
  \item\label{item3:theo:invar-unip-semigr-implies-pos} $U^+$ is a semigroup;
  \item\label{item4:theo:invar-unip-semigr-implies-pos} $U^+$ is sharp.
  \end{enumerate}
  Then $G$ admits a $\Theta$-positive structure; for all~$\alpha$ in~$\Theta$,
  there is a unique $L_{\Theta}^{\circ}$-invariant closed nontrivial convex cone
  $c_\alpha\subset \mathfrak{u}_\alpha$ such that $\exp( c_\alpha)\subset
  U^+$. Defining the nonnegative unipotent semigroup with these cones (Definition~\ref{def:nonnegative}), the semigroup~$U^+$ contains the
  semi\-group~$U_{\Theta}^{\geq 0}$.
\end{theorem}

We treat now the case of a semigroup that is equal to the intersection
of~$U_\Theta$ with the open Bruhat cell:

\begin{theorem}
  \label{theo:invar-unip-semigr-implies-pos-open-case}
  Let~$G$ be a connected semisimple Lie group, and let $U_\Theta$ be a
  standard unipotent subgroup of~$G$ and $\Omega_{\Theta}^{\mathrm{opp}}$ the
  open Bruhat cell with respect to the opposite parabolic group
  $P_{\Theta}^{\mathrm{opp}}$. Suppose that there is $V^{+}\subset U_\Theta$
  such that\index{$V^+$  a
    connected component of $U_\Theta \cap \Omega_{\Theta}^{\mathrm{opp}}$ that
  is a semigroup (Theorem~\ref{theo:invar-unip-semigr-implies-pos-open-case})}
  \begin{enumerate}
  \item\label{item1:theo:invar-unip-semigr-implies-pos-open-case} $V^+$ is a
    connected component of $U_\Theta \cap \Omega_{\Theta}^{\mathrm{opp}}$.
  \item\label{item2:theo:invar-unip-semigr-implies-pos-open-case}  $V^+$ is a semigroup.
  \end{enumerate}
  Then $G$ admits a $\Theta$-positive structure; for every~$\alpha$
  in~$\Theta$, there is a unique $L_{\Theta}^{\circ}$-invariant closed nontrivial convex cone
  $c_\alpha\subset \mathfrak{u}_\alpha$ such that $\exp( c_\alpha)\subset
  \overline{V}^+$. Defining the positive unipotent semigroup with these cones (Definition~\ref{def:positive}), the semigroup~$V^+$ is
  equal to~$U_{\Theta}^{> 0}$.
\end{theorem}

As a corollary of these theorems, we have
\begin{corollary}
  \label{coro:semigroup-in-Bruhat-Theta}
  Let  $U^{+}$ be a closed
  $L_{\Theta}^{\circ}$-invariant semigroup in~$U_\Theta$. Assume that the
  interior of~$U^+$ is nonempty and contained in the open
  Bruhat cell with respect to $P_\Theta^{\mathrm{opp}}$, then the group~$G$
  has a $\Theta$-positive structure and (with the right choice of cones)
  $U^+ = U_{\Theta}^{\geq 0}$.
\end{corollary}

Section~\ref{sec:cones-associated-u+} derives a number of structural results
common to both theorems, then Section~\ref{sec:positive-structure} addresses
the proof of Theorem~\ref{theo:invar-unip-semigr-implies-pos} and
Section~\ref{sec:cones-associated-v+} addresses the proof of
Theorem~\ref{theo:invar-unip-semigr-implies-pos-open-case}. Section~\ref{sec:semigr-bigg-than}
gives examples of semigroups that satisfy the hypothesis of
Theorem~\ref{theo:invar-unip-semigr-implies-pos} and are not equal
to~$U_{\Theta}^{\geq 0}$.

\subsection{The cones associated with~$U^+$}
\label{sec:cones-associated-u+}

We work here with $G$, $\Theta$, and $U^+$ as in
Theorem~\ref{theo:invar-unip-semigr-implies-pos} but satisfying only the
hypothesis (\ref{item1:theo:invar-unip-semigr-implies-pos}),
(\ref{item2:theo:invar-unip-semigr-implies-pos}), and
(\ref{item3:theo:invar-unip-semigr-implies-pos}) of this theorem. We first
note that these three hypothesis are produced by
Theorem~\ref{theo:invar-unip-semigr-implies-pos-open-case}:

\begin{lemma}
  \label{lemma:them-open-case-implies123}
  Let $(G, \Theta, V^+)$ satisfying the hypothesis of
  Theorem~\ref{theo:invar-unip-semigr-implies-pos-open-case}. Then
  $U^+=\overline{V}{}^+$ is a closed $L_{\Theta}^{\circ}$-invariant semigroup
  of nonempty interior, i.e.\ it satisfies the hypothesis (\ref{item1:theo:invar-unip-semigr-implies-pos}),
(\ref{item2:theo:invar-unip-semigr-implies-pos}), and
(\ref{item3:theo:invar-unip-semigr-implies-pos}) of Theorem~\ref{theo:invar-unip-semigr-implies-pos}.
\end{lemma}
\begin{proof}
  In view of Lemma~\ref{lemm:semigroups}, it is enough to prove that the
  semigroup~$V^+$ is $L_{\Theta}^{\circ}$-invariant.   Indeed $U_\Theta$ and $\Omega_{\Theta}^{\mathrm{opp}}$ are invariant
  by~$L_{\Theta}^{\circ}$, and since $L_{\Theta}^{\circ}$ is connected,  every connected component of $U_\Theta \cap
  \Omega_{\Theta}^{\mathrm{opp}}$ is invariant by~$L_{\Theta}^{\circ}$.
\end{proof}

For all~$\alpha$ in~$\Theta$ we
introduce\index{$k_\alpha$ the trace of~$U^+$ on the factor
  $\mathfrak{u}_\alpha$ (Section~\ref{sec:cones-associated-u+})}
\[ k_\alpha = p_\alpha \bigl( \log(U^+)\bigr)\]
where, as above $p_\alpha\colon \mathfrak{u}_\Theta
\to \mathfrak{u}_\alpha$ is the $L_\Theta$-equivariant projection.

We will first show that
\begin{proposition}
  \label{prop:k-alpha-characterization}
  Let $X$ be an element of~$\mathfrak{u}_\alpha$. Then $X$~belongs
  to~$k_\alpha$ if and only if $\exp(X)$ belongs to~$U^+$.
\end{proposition}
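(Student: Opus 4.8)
The plan is to prove both implications by exploiting the graded structure of $\mathfrak{u}_\Theta$ together with the semigroup property of $U^+$ and its $L_\Theta^\circ$-invariance. The ``if'' direction is immediate: if $\exp(X_\alpha) \in U^+$, then $X_\alpha = \log(\exp(X_\alpha))$ lies in $\log(U^+)$, and since $X_\alpha \in \mathfrak{u}_\alpha$ already, its projection $p_\alpha(X_\alpha) = X_\alpha$ lies in $k_\alpha$ by definition. The real content is the ``only if'' direction.

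For the ``only if'' direction, suppose $X_\alpha \in k_\alpha$, so there is some $u \in U^+$ with $p_\alpha(\log u) = X_\alpha$. The idea is to ``extract'' the degree-one component by a rescaling/contraction argument. First I would observe that if $u \in U^+$ then $u^n \in U^+$ for all $n$ by the semigroup property, and more importantly, using the $L_\Theta^\circ$-invariance: the Cartan subspace element $H = \sum_{\beta\in\Theta} t_\beta H_\beta$ (for suitable positive $t_\beta$) acts on $\mathfrak{u}_\beta$ by a positive scalar depending on $\beta$, and by conjugating $u$ by $\exp(sH)$ for $s \to -\infty$ (or $+\infty$) one contracts toward the appropriate graded pieces. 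Concretely, writing $\log u = Y_1 + Y_{\geq 2}$ with $Y_1 = \sum_\beta p_\beta(\log u) \in \bigoplus_\beta \mathfrak{u}_\beta$ the degree-one part, the conjugate $\mathrm{Ad}(\exp(sH))\log u$ converges after renormalization to a degree-one element. Since $U^+$ is closed and $L_\Theta^\circ$-invariant, and since cones can be produced by these limits (scaling by $\mathbb{R}_{>0}$ is harmless because the grading degrees are positive integers, so $\lambda \cdot \mathfrak{u}_\beta$-components scale polynomially and the top-degree-one part dominates under contraction), one gets that $\exp(Y_1') \in U^+$ for the limiting degree-one vector. The key technical point, analogous to Corollary~\ref{coro:tip-semigroup}, is that the tangent cone extraction works: taking $u^{1/n}$-type elements (or rather, using that $U^+ \supseteq U_\Theta^{\geq 0}$ once the positive structure is established, but here we must avoid circularity) — instead, I would directly use that $\exp(s H) u \exp(-sH) \in U^+$ and let $s$ run so that only the $\mathfrak{u}_\alpha$-component survives in the limit, i.e. kill all $\mathfrak{u}_\beta$ with $\beta \neq \alpha$ and all higher-degree terms simultaneously. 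This requires choosing $H$ in the relative interior of the chamber so that the $\mathfrak{u}_\alpha$-weight is uniquely minimal (or maximal) among all weights appearing in $\log u$; such $H$ exists because the weights $\beta|_{\mathfrak{t}_\Theta}$ for $\beta \in \Theta$ are linearly independent and all higher-degree brackets have strictly larger weight-height.

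I expect the main obstacle to be handling the case where $\log u$ has nonzero components in several $\mathfrak{u}_\beta$ simultaneously with $\beta\neq\alpha$, i.e. isolating the single $\mathfrak{u}_\alpha$-direction cleanly. The resolution is a two-step contraction: first conjugate by an element of the center $Z_\Theta^\circ$ (equivalently $\exp(t\mathfrak{t}_\Theta)$) chosen so that the $\alpha$-weight becomes strictly dominant; then the renormalized limit $\lim_{t} \lambda(t)^{-1}\,\mathrm{Ad}(\exp(tH))(\log u)$ lies in $\mathfrak{u}_\alpha$, equals a positive multiple of $X_\alpha$ (by the assumption $p_\alpha(\log u) = X_\alpha$, provided $X_\alpha \neq 0$; the case $X_\alpha = 0$ is trivial since $e \in U^+$), and $\exp$ of it lies in the closed set $U^+$. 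Finally, rescaling within $\mathfrak{u}_\alpha$: since $U^+$ is $L_\Theta^\circ$-invariant and $L_\Theta^\circ$ acts on $\mathfrak{u}_\alpha$ with the center acting by all positive homotheties (cf. the proof of Proposition~\ref{prop:cone-reductive}\eqref{item:2:prop:cone-reductive}), we get $\exp(\mathbb{R}_{>0}\,X_\alpha) \subset U^+$, and by closedness $\exp(X_\alpha) \in U^+$. This also shows along the way that $k_\alpha$ is a convex cone: convexity follows because $\exp(X_\alpha)\exp(X_\alpha') = \exp(X_\alpha + X_\alpha')$ in the abelian-modulo-higher-degree sense projects correctly, i.e. $p_\alpha(\log(\exp(X_\alpha)\exp(X_\alpha'))) = X_\alpha + X_\alpha'$ since the correction terms lie in higher-degree pieces disjoint from $\mathfrak{u}_\alpha$ in the grading — wait, one must check $[\mathfrak{u}_\alpha,\mathfrak{u}_\alpha]$ has weight $2\alpha|_{\mathfrak{t}_\Theta} \neq \alpha|_{\mathfrak{t}_\Theta}$, which holds. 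Thus $k_\alpha$ is stable under addition and under positive scaling, hence a convex cone, and the characterization of Proposition~\ref{prop:k-alpha-characterization} is exactly what makes the subsequent identification of $k_\alpha$ with an invariant cone $c_\alpha$ work in the following sections.
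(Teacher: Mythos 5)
Your overall strategy --- contract $u$ by conjugation with a one-parameter subgroup of the Cartan torus inside $L_{\Theta}^{\circ}$ and invoke closedness and $L_{\Theta}^{\circ}$-invariance of $U^+$ --- is exactly the paper's, and your ``if'' direction and the final homothety remark (the center acts on $\mathfrak{u}_\alpha$ by all positive scalars) are fine. But the step where you pass to a \emph{renormalized} limit is a genuine gap as written. You know that $\exp\bigl(\Ad(\exp(tH))\log u\bigr)$ lies in $U^+$ for all $t$; you do \emph{not} know that $\exp\bigl(\lambda(t)^{-1}\Ad(\exp(tH))\log u\bigr)$ lies in $U^+$, because the scalar rescaling $v\mapsto \lambda(t)^{-1}v$ of all of $\mathfrak{u}_\Theta$ is not implemented by $\Ad$ of any element of $L_{\Theta}^{\circ}$ (central elements scale the different weight spaces $\mathfrak{u}_\beta$ and the higher-degree pieces by \emph{different} factors). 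If $H$ is chosen so that the $\alpha$-weight is merely ``uniquely minimal'' and all weights are positive, the unnormalized conjugates tend to the identity, and closedness of $U^+$ then only yields $e\in U^+$ --- the renormalized direction is lost. So the sentence ``and $\exp$ of it lies in the closed set $U^+$'' is unjustified.

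The fix is to make your first, unrenormalized formulation literal, which requires a sharper condition on the contracting element than extremality of the $\alpha$-weight: choose $A\in\mathfrak{a}$ with $\alpha(A)=0$, $\gamma(A)=0$ for $\gamma\in\Delta\setminus\Theta$, and $\gamma(A)<0$ for $\gamma\in\Theta\setminus\{\alpha\}$. Every weight of $\mathfrak{t}_\Theta$ occurring in $\mathfrak{u}_\Theta$ is a nonnegative integer combination of the $\gamma|_{\mathfrak{t}_\Theta}$, $\gamma\in\Theta$, so with this choice $\Ad(\exp(nA))$ fixes $\mathfrak{u}_\alpha$ pointwise and strictly contracts every other weight space occurring in $\log u$; hence $\exp(nA)\,u\,\exp(-nA)\to\exp\bigl(p_\alpha(\log u)\bigr)=\exp(X_\alpha)$ with no renormalization, and closedness plus invariance finish the proof. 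This is precisely the paper's argument. (Two smaller remarks: your convexity discussion of $k_\alpha$ belongs to the subsequent corollary, not to this proposition; and once the contraction is set up with $\alpha(A)=0$, the final rescaling via the center becomes unnecessary.)
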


\begin{proof}
  Let us define the following sequence $(A_n)_{n\in\N}$ in~$\mathfrak{a}$: for all
  $n\in \N$, $A_n$ is the element of~$\mathfrak{a}$ defined by the equalities
  \[ \alpha(A_n)=0, \gamma(A_n)=0 \ \forall \gamma \in \Delta\setminus\Theta
    \text{ and } \gamma(A_n)=-n \ \forall \gamma\in \Theta\setminus\{ \alpha\}. \]
  One has then, for all $X \in \mathfrak{u}_\alpha$
  \begin{align*}
    \ad(A_n) X &= 0\\
    \intertext{and for all $\beta $ in $P(\mathfrak{u}_\Theta,
    \mathfrak{t}_\Theta)\smallsetminus \{\alpha\}$ and for all~$Y$ in~$\mathfrak{u}_\beta$ }
    \ad(A_n) Y &= - s_\beta n Y
  \end{align*}
  for some $s_\beta>0$. Recall that $\mathfrak{u}_\beta$ is the sum of
  the~$\mathfrak{g}_\delta$ for $\alpha$ in $\Sigma \cap (\beta+ \Span( \Delta
  \smallsetminus \Theta))$ so that $\delta(A_n)=\beta(A_n)$, see Section~\ref{sec:acti-l-theta-on-u-theta}.
  
  The sequence $(g_n =\exp(A_n))_{n\in\N}$ belongs to
  $\exp(\mathfrak{a}) \subset L^{\circ}_{\Theta}$ and, for every~$X$
  in~$\mathfrak{u}_\Theta$, using the
    decomposition $X=\sum X_\beta$ according to the direct sum $\mathfrak{u}_\Theta =
    \bigoplus_\beta \mathfrak{u}_\beta$, one has, for all~$n$ in~$\N$,
  \begin{align*}
    \Ad(g_n) X
    &= \exp( \ad(A_n)) X_\alpha + \sum_{\beta\neq \alpha}
      \exp(\ad(A_n)) X_\beta\\
    &=  X_\alpha + \sum_{\beta\neq \alpha}
      e^{-s_\beta n} X_\beta.
  \end{align*}
  Thus the sequence $(\Ad(g_n)\cdot X)_{n\in \N}$ converges to $X_\alpha = p_\alpha(X)$.

    Let now $X_\alpha$ be in~$k_\alpha$. There exists then $X\in
  \mathfrak{u}_\Theta$ such that $\exp(X)$ belongs to~$U^+$ and $X_\alpha
    =p_\alpha(X)$. By the above, the sequence
    \[     g_n \exp(X) g_{n}^{-1} = \exp( \Ad(g_n) X)\]
  converges to $\exp(X_\alpha)$. Since $U^+$ is $L_{\Theta}^{\circ}$-invariant
  and closed, one has $\exp(X_\alpha)$ belongs to~$U^+$.

  Conversely, let~$X$ be in~$\mathfrak{u}_\alpha$ such that $\exp(X)$ belongs to~$U^+$, then
  \(p_\alpha\bigl( \log(
    \exp(X)) \bigr) = p_\alpha(X) =X\)
  belongs to~$k_\alpha$.
\end{proof}

From this we deduce:

\begin{corollary}
  \label{coro:k-alpha-cone}
  The set~$k_\alpha$ is a closed $L_{\Theta}^{\circ}$-invariant convex
  cone.
\end{corollary}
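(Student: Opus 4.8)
The plan is to establish the three asserted properties of $k_\alpha$ — closed, $L_{\Theta}^{\circ}$-invariant, and a convex cone — by combining the characterization in Proposition~\ref{prop:k-alpha-characterization} with the defining hypotheses on $U^+$. The characterization says precisely that $k_\alpha = \log\bigl( U^+ \cap U_\alpha\bigr)$ where $U_\alpha = \exp(\mathfrak{u}_\alpha)$ is the (abelian, since $\mathfrak{u}_\alpha$ is an $L_\Theta^\circ$-irreducible piece) subgroup with Lie algebra $\mathfrak{u}_\alpha$; equivalently $\exp(X_\alpha)\in U^+ \Leftrightarrow X_\alpha\in k_\alpha$ for $X_\alpha\in\mathfrak{u}_\alpha$. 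So each property of $k_\alpha$ can be translated into a statement about the set $\{ X_\alpha \in\mathfrak{u}_\alpha \mid \exp(X_\alpha)\in U^+\}$, which I will verify directly.

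First, for closedness: $\exp\colon \mathfrak{u}_\alpha \to U_\Theta$ is a homeomorphism onto its image (a closed submanifold, being $U_\alpha$), and $U^+$ is closed by hypothesis~(\ref{item1:theo:invar-unip-semigr-implies-pos}), so $U^+\cap U_\alpha$ is closed in $U_\alpha$ and its preimage $k_\alpha$ under $\exp$ is closed in $\mathfrak{u}_\alpha$. Next, for $L_{\Theta}^{\circ}$-invariance: if $X_\alpha\in k_\alpha$ and $\ell\in L_{\Theta}^{\circ}$, then $\exp(\Ad(\ell) X_\alpha) = \ell \exp(X_\alpha)\ell^{-1}$ lies in $U^+$ because $U^+$ is $L_{\Theta}^{\circ}$-invariant by~(\ref{item2:theo:invar-unip-semigr-implies-pos}); moreover $\Ad(\ell)X_\alpha\in\mathfrak{u}_\alpha$ since $\mathfrak{u}_\alpha$ is $L_{\Theta}^{\circ}$-stable (Theorem~\ref{theo:kostant-results}), so $\Ad(\ell)X_\alpha\in k_\alpha$. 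Finally, the cone and convexity properties: the subgroup $U_\alpha$ is abelian because $[\mathfrak{u}_\alpha,\mathfrak{u}_\alpha]\subset\mathfrak{u}_{2\alpha}$ and either $2\alpha\notin\Sigma$ or $\mathfrak{u}_{2\alpha}$ is a distinct weight space — in any case $\mathfrak{u}_\alpha$ is a single $\mathfrak{t}_\Theta$-weight space and the bracket of two of its elements lands in a different weight space, so it suffices to recall that by the grading $[\mathfrak u_\alpha,\mathfrak u_\alpha]\subseteq\mathfrak u_{2\alpha}$ and to note actually that what we need is just that $\exp$ restricted to $\mathfrak u_\alpha$ is a group homomorphism onto $U_\alpha$, i.e. $\exp(X)\exp(Y)=\exp(X+Y)$ for $X,Y\in\mathfrak u_\alpha$; this holds since $\mathfrak u_\alpha\oplus[\mathfrak u_\alpha,\mathfrak u_\alpha]$ is abelian once we observe $[\mathfrak u_\alpha,[\mathfrak u_\alpha,\mathfrak u_\alpha]]=0$, which follows because $3\alpha$ is never a weight occurring in $\mathfrak u_\Theta$ for the pairs $(G,\Theta)$ under consideration. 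Granting this, if $X_\alpha, Y_\alpha\in k_\alpha$ then $\exp(X_\alpha+Y_\alpha)=\exp(X_\alpha)\exp(Y_\alpha)\in U^+$ (semigroup, hypothesis~(\ref{item3:theo:invar-unip-semigr-implies-pos})), so $X_\alpha+Y_\alpha\in k_\alpha$; and $\R_{>0}$-invariance of $k_\alpha$ follows from $L_{\Theta}^{\circ}$-invariance since $\exp(\mathfrak{t}_\Theta)\subset L_{\Theta}^{\circ}$ acts on $\mathfrak{u}_\alpha$ by positive scalars $e^{\alpha(A)}$, all of which can be achieved as $\alpha$ is a nonzero functional on $\mathfrak{t}_\Theta$; that together with additivity and closedness gives that $k_\alpha$ is a closed convex cone (it contains $0=p_\alpha(\log e)$ and is stable under addition and positive scaling).

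The main obstacle I anticipate is the clean handling of the identity $\exp(X)\exp(Y)=\exp(X+Y)$ for $X,Y\in\mathfrak u_\alpha$ — i.e. confirming that the subgroup generated by $\exp(\mathfrak u_\alpha)$ is abelian — because the Baker–Campbell–Hausdorff correction terms must vanish. For $\alpha\neq\alpha_\Theta$ this is trivial ($\mathfrak u_\alpha$ one-dimensional). For $\alpha=\alpha_\Theta$ one must use the structure established earlier: in all four families $\mathfrak u_{\alpha_\Theta}$ with its invariant cone is a Jordan-algebra-type object (a space of Hermitian matrices, or a Lorentzian space) sitting inside $\mathfrak u_\Theta$, and crucially $2\alpha_\Theta$ restricted to $\mathfrak t_\Theta$ is not a $\mathfrak t_\Theta$-weight of $\mathfrak u_\Theta$ in the tube-type / $\SO(p,q)$ / $F_4$ cases — equivalently $[\mathfrak u_{\alpha_\Theta},\mathfrak u_{\alpha_\Theta}]=0$, which is exactly Kostant's relation $[\mathfrak u_\beta,\mathfrak u_{\beta'}]=\mathfrak u_{\beta+\beta'}$ (Theorem~\ref{theo:kostant-results}) applied with $\beta=\beta'=\alpha_\Theta$, noting $2\alpha_\Theta\notin\Theta\cup(\text{weights of }\mathfrak u_\Theta)$. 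Actually this is cleanest: $\mathfrak{u}_{\alpha_\Theta}$ is abelian because Kostant's theorem gives $[\mathfrak u_{\alpha_\Theta},\mathfrak u_{\alpha_\Theta}]=\mathfrak u_{2\alpha_\Theta}$ and $\mathfrak u_{2\alpha_\Theta}=\{0\}$ in our cases. Granting that, the rest is routine, and I would write the proof in essentially the three-bullet form above, citing Proposition~\ref{prop:k-alpha-characterization} for the translation, hypothesis~(\ref{item1:theo:invar-unip-semigr-implies-pos}) and continuity of $\exp$ for closedness, hypothesis~(\ref{item2:theo:invar-unip-semigr-implies-pos}) together with $L_\Theta^\circ$-stability of $\mathfrak u_\alpha$ for invariance, and hypothesis~(\ref{item3:theo:invar-unip-semigr-implies-pos}) together with $\exp(X+Y)=\exp(X)\exp(Y)$ on $\mathfrak u_\alpha$ for the convex-cone property.
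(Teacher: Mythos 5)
Your treatment of closedness, $L_{\Theta}^{\circ}$-invariance and the scaling property matches the paper's proof. The gap is in the convexity step. You reduce it to the identity $\exp(X_\alpha)\exp(Y_\alpha)=\exp(X_\alpha+Y_\alpha)$, i.e.\ to $[\mathfrak{u}_\alpha,\mathfrak{u}_\alpha]=\mathfrak{u}_{2\alpha}=\{0\}$, and you justify that vanishing by saying it holds ``for the pairs $(G,\Theta)$ under consideration,'' i.e.\ for the four classified families. But this corollary sits inside the proof of Theorem~\ref{theo:invar-unip-semigr-implies-pos}, where $G$ is an \emph{arbitrary} connected semisimple group and $\Theta$ an arbitrary subset of $\Delta$; the conclusion that $(G,\Theta)$ admits a $\Theta$-positive structure (and hence lies in the classified list) is exactly what the chain of lemmas culminating in Section~\ref{sec:positive-structure} is establishing. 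Invoking the classification here is circular, and the fact you need can genuinely fail outside that list: for a non-tube-type Hermitian group with $\Theta=\{\alpha_r\}$ one has $\mathfrak{u}_\Theta=\mathfrak{u}_{\alpha_r}\oplus\mathfrak{u}_{2\alpha_r}$ with $[\mathfrak{u}_{\alpha_r},\mathfrak{u}_{\alpha_r}]=\mathfrak{u}_{2\alpha_r}\neq\{0\}$, so $\exp(X)\exp(Y)\neq\exp(X+Y)$ there.

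The paper sidesteps this entirely by working with the definition $k_\alpha=p_\alpha(\log(U^+))$ rather than only with the characterization of Proposition~\ref{prop:k-alpha-characterization}: since $\exp(X_\alpha)\exp(Y_\alpha)\in U^+$ by the semigroup hypothesis, the element $p_\alpha\bigl(\log(\exp(X_\alpha)\exp(Y_\alpha))\bigr)$ lies in $k_\alpha$ by definition, and the Baker--Campbell--Hausdorff formula shows this element equals $X_\alpha+Y_\alpha$ because every correction term is an iterated bracket lying in a $\mathfrak{t}_\Theta$-weight space $\mathfrak{u}_{n\alpha}$ with $n\geq 2$, which is annihilated by $p_\alpha$ (note $n\alpha|_{\mathfrak{t}_\Theta}\neq\alpha|_{\mathfrak{t}_\Theta}$ since $\alpha|_{\mathfrak{t}_\Theta}\neq 0$ for $\alpha\in\Theta$). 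No abelianness of $\mathfrak{u}_\alpha$ is needed. Replace your convexity paragraph with this projection argument and the proof is complete; everything else you wrote stands.
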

\begin{proof}
  By the previous proposition $k_\alpha = \log^{-1} \bigl( U^+ \cap
  \exp(\mathfrak{u}_\alpha)\bigr)$. Since $U^+ \cap \exp(
  \mathfrak{u}_\alpha)$ is closed and $L_{\Theta}^{\circ}$-invariant, this implies that $k_\alpha$ is closed and
  $L_{\Theta}^{\circ}$-invariant.

  Let $X\in k_\alpha$ and let $t>0$. Let $A\in \mathfrak{a}$ be defined
  by $\alpha(A)= \log t$ and $\gamma(A)=0$ for all $\gamma\in
  \Delta\setminus\{ \alpha\}$. Then
  \[ t X = \exp( \ad(A)) X = \Ad(\exp( A)) X\]
  belongs to~$k_\alpha$ since $\exp(A)$ belongs to $L^{\circ}_{\Theta}$ and
  $k_\alpha$~is $L^{\circ}_{\Theta}$-invariant. This means that $k_\alpha$ is a cone.

  Let $X$ and $Y$ be in $k_\alpha$. Then $\exp(X)
  \exp(Y)$ belongs to $U^+$. From the Baker--Campbell--Hausdorff
  formula, 
  \[ p_\alpha \circ \log\bigl( \exp(X)
    \exp(Y)\bigr) =X+Y\]
  belongs to~$k_\alpha$. This is the property that the cone~$k_\alpha$ is convex.
\end{proof}

The cones~$k_\alpha$ are nontrivial:
\begin{lemma}
  \label{lemm:k-alpha-nontrivial}
  For every~$\alpha$ in~$\Theta$, the cone $k_\alpha$ is not reduced
  to~$\{0\}$.
\end{lemma}
\begin{proof}
     Suppose that $k_\alpha =\{0\}$. Then $U^+$ would be contained in $\{ g \in
   U_\Theta \mid p_\alpha( \log(g))=0\}$ and would be of empty interior,
   contrary to the assumptions.
\end{proof}
\subsection{The positive structure}
\label{sec:positive-structure}

We now turn to the proof of Theorem~\ref{theo:invar-unip-semigr-implies-pos},
i.e.\ we furthermore assume
property~(\ref{item4:theo:invar-unip-semigr-implies-pos}).

The last point to make the relation with
Definition~\ref{defi:theta-pos-structure} is that the cones~$k_\alpha$ are
acute:

\begin{lemma}
  \label{lem:k-alpha-acute}
  For every~$\alpha$ in~$\Theta$, the cone $k_\alpha$  contains no line.
\end{lemma}
\begin{proof}
   Let~$X$ in~$\mathfrak{u}_\alpha$ be such that $X$ and $-X$ belong
   to~$k_\alpha$. Then $g=\exp(X)$ belongs to~$U^+$ as well as
   $g^{-1}=\exp(-X)$. Thus $g=e$ (since by assumption the semigroup~$U^+$ is sharp) and $X=\log(g)=0$. This means that the convex
   cone~$k_\alpha$ contains no line.
\end{proof}

Thus $G$ has a $\Theta$-positive structure and one can introduce the semigroup
$U_{\Theta}^{\geq 0}$ with the following choice of
invariant cones: for all~$\alpha$ in~$\Theta$ set  $c_\alpha=k_\alpha$.
Proposition~\ref{prop:k-alpha-characterization} and the semigroup property
 obviously imply the inclusion $U_{\Theta}^{\geq 0}\subset
U^+$. This concludes the proof of
Theorem~\ref{theo:invar-unip-semigr-implies-pos}.

\subsection{The cones associated with~$V^+$ and the induced positive structure}
\label{sec:cones-associated-v+}

We now turn to the proof of
Theorem~\ref{theo:invar-unip-semigr-implies-pos-open-case}.
Applying the results of Section~\ref{sec:cones-associated-u+} to $U^+
=\overline{V}{}^+$ (cf.\ Lemma~\ref{lemma:them-open-case-implies123}), we get
that,  for every~$\alpha$
in~$\Theta$
\[ k_\alpha = p_\alpha \bigl( \log(\overline{V}{}^+)\bigr)\]
is a closed convex, nonzero, $L_{\Theta}^{\circ}$-invariant cone and that, for
every~$X$ in~$\mathfrak{u}_\alpha$, $\exp(X)$~belongs to~$\overline{V}{}^+$ if
and only if $X$~belongs to~$k_\alpha$.

Applying Lemma~\ref{lemm:semigroups}, we get
$ V^+ \overline{V}{}^+ = V^+, \text{ and } \overline{V}{}^+ V^+ = V^+$ and
that $V^+$~is the interior of its closure~$\overline{V}{}^+$.

We also have
\begin{lemma}
  \label{lem:k_alpha-not-u_alpha}
  For all~$\alpha$ in~$\Theta$, $k_\alpha$ is an acute convex cone
  in~$\mathfrak{u}_\alpha$.
\end{lemma}
\begin{proof}
  Since $k_\alpha$ is a closed convex cone and is
  $L_{\Theta}^{\circ}$-invariant, the subspace of maximal dimension contained
  in~$k_\alpha$ is $L_{\Theta}^{\circ}$-invariant. Since the action
  of~$L_{\Theta}^{\circ}$ on $\mathfrak{u}_\alpha$ is irreducible (Theorem~\ref{theo:kostant-results}), this subspace is either~$\{0\}$
  or~$\mathfrak{u}_\alpha$. Hence we have to exclude the case when
  $k_\alpha=\mathfrak{u}_\alpha$.

  Suppose, by contradiction, that $k_\alpha=\mathfrak{u}_\alpha$. We then
  have   $\exp{
    \mathfrak{u}_\alpha} \subset \overline{V}{}^+$.
  Recall
  that the Lie algebra~$\mathfrak{u}_\Theta$ admits the following
  decomposition\index{$\mathfrak{u}_{\hat{\alpha}}$ the complement of
    $\mathfrak{u}_\alpha$ in $\mathfrak{u}_\Theta$}
  \begin{align*}
    u_\Theta = \mathfrak{u}_\alpha
    & \oplus \mathfrak{u}_{\hat{\alpha}}, \text{ where }
      \mathfrak{u}_{\hat{\alpha}} =\bigoplus_{\mathclap{\beta\in P(\mathfrak{u}_\Theta, \mathfrak{t}_\Theta) \smallsetminus \{\alpha\}}} \mathfrak{u}_\beta\\
    \intertext{and that the map}
    \mathfrak{u}_\alpha \times \mathfrak{u}_{\hat{\alpha}}
             & \longrightarrow U_\Theta\\
    (X,Y) & \longmapsto \exp(X)\exp(Y)
  \end{align*}
  is a diffeomorphism.

  Let now $x$~be an element of~$V^+$. There exists thus $(X,Y)$ in $\mathfrak{u}_\alpha\times \mathfrak{u}_{\hat{\alpha}}$ such that
  $x=\exp(X) \exp(Y)$. The equality $k_\alpha=\mathfrak{u}_\alpha$ implies
  that $\exp(-X)$ belongs to $\overline{V}{}^+$ and   $\exp(Y)= \exp(-X) x$ belongs to $\overline{V}{}^+ V^+ =V^+$. But this element
  $\exp(Y)$ does not belong to~$\Omega_{\Theta}^{\mathrm{opp}}$, in
  contradiction with the fact that $V^+$ is contained in
  $\Omega_{\Theta}^{\mathrm{opp}}$.
\end{proof}

As a consequence the group~$G$ has a $\Theta$-positive structure and we can
choose $c_\alpha=k_\alpha$ in the construction of $U_{\Theta}^{>0}$. The last
point is to notice the equality $V^+= U_{\Theta}^{>0}$.
As in Section~\ref{sec:positive-structure} we have $U_{\Theta}^{\geq 0}\subset
\overline{V}{}^+$ so that $U_{\Theta}^{> 0}\subset
\overline{V}{}^+$ and  $U_{\Theta}^{> 0}\subset {V}{}^+$ by
Lemma~\ref{lemm:semigroups}. Since these $2$~open sets are connected
components of $U_\Theta \cap \Omega_{\Theta}^{\mathrm{opp}}$, they are indeed equal.

\subsection{Semigroups bigger than \texorpdfstring{$U_{\Theta}^{\geq 0}$}{UΘ≥0}}
\label{sec:semigr-bigg-than}

We end this section with examples of closed semigroups contained in the unipotent
standard subgroup of $\SL_3( \R )$ that are bigger than the semigroup $U^{\geq 0}$ of
totally nonnegative unipotent matrices.

For each $r\in \R$ denote by $U_r$ the set of matrices\index{$U_r$ ($r\in
  [1,+\infty[$) a unipotent semigroup in $\SL_3(\R)$ }
\[
  \begin{pmatrix}
    1 & a & c \\
    0 & 1 & b \\
    0 & 0 & 1
  \end{pmatrix},\text{ with }a,b \geq 0\text{ and }0\leq c\leq r ab.
\]

\begin{figure}
  \centering
  \begin{tikzpicture}[x= {(1.3cm,-0.153cm)}, z={(0cm,1cm)}, y={(1cm,0.3cm)}]
    \draw[->] (0,0,0) -- (4,0,0);
    \draw[->] (0,0,0) -- (0,4,0);
    \draw[->] (0,0,0) -- (0,0,4);
    \draw (4,0,0) node[right]{$a$};
    \draw (0,4,0) node[right]{$b$};
    \draw (0,0,4) node[above]{$c$};
    \foreach \t in {0,0.2,...,3} { \draw
      [domain=0:1,smooth,thin] plot ({\x*\t}, {(1-\x)*\t},
      {\x*(1-\x)*\t*\t}); };
    \foreach \t in {0.9,0.5} {
    \draw [domain=0:1,smooth] plot ({\x*\t*3}, {\x*(1-\t)*3},
    {\x*\x*\t*(1-\t)*9});
    };
  \end{tikzpicture}
  \caption{The semigroup $U^{\geq 0}=U_1$ in the matrix coordinates is
    contained in the corner $a\geq 0$, $b\geq 0$, $c\geq 0$ and bounded by
    the surface $c=ab$ on which a few curves have been drawn.}
  \label{fig:semigroup}
\end{figure}
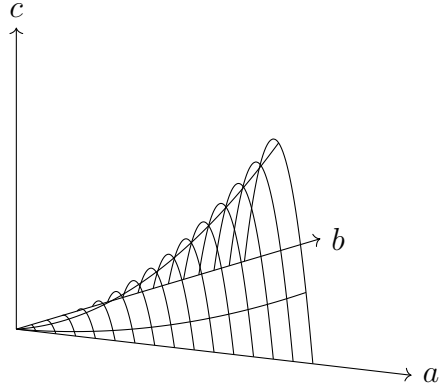

\begin{remark}
  The semigroup $U^{\geq 0}$ is~$U_1$. The matrix with $a=b=c=1$ is not in
  $\Omega_{\Delta}^\mathrm{opp}$ and belongs to the interior of~$U_r$ as soon
  as $r>1$; therefore, for $r>1$, $\mathring{U}_r$ is not contained
  in~$\Omega_{\Theta}^{\mathrm{opp}}$.
\end{remark}

\begin{lemma}
  For all $r\geq 1$, $U_r$ is a closed sharp semigroup     invariant by conjugation by diagonal matrices with positive
  coefficients. 
\end{lemma}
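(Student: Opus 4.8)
The plan is to check the four asserted properties one at a time, directly from the explicit description of $U_r$ in terms of the entries of the unipotent matrix. Throughout I will use the coordinates $g\leftrightarrow(a,b,c)$ for the matrix with $a$ in position $(1,2)$, $b$ in position $(2,3)$, and $c$ in position $(1,3)$, so that $U_r$ is the set of such matrices with $a,b\ge 0$ and $0\le c\le rab$.

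First I would dispatch closedness and conjugation-invariance, which are immediate. The set $U_r$ is defined by the conditions $a\ge 0$, $b\ge 0$, $c\ge 0$, $c\le rab$, each a closed condition on continuous (indeed polynomial) functions of the entries, so $U_r$ is closed. For invariance, conjugating by $\mathrm{diag}(t_1,t_2,t_3)$ with $t_i>0$ replaces $(a,b,c)$ by $\bigl((t_1/t_2)a,\,(t_2/t_3)b,\,(t_1/t_3)c\bigr)$; the sign conditions survive because the ratios $t_i/t_j$ are positive, and the inequality $c\le rab$ survives because $ab$ is multiplied by exactly the same factor $t_1/t_3$ as $c$.

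Next I would treat the absence of nontrivial invertible elements. If $g\leftrightarrow(a,b,c)$ then $g^{-1}\leftrightarrow(-a,-b,ab-c)$; imposing $g,g^{-1}\in U_r$ forces $a\ge 0$ and $-a\ge 0$, hence $a=0$, likewise $b=0$, and then $c\ge 0$ together with $ab-c=-c\ge 0$ gives $c=0$, i.e. $g=e$.

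Finally, the semigroup property, which is the one step that actually uses the hypothesis $r\ge 1$ and which I expect to be the only place a genuine (though still elementary) computation is needed. Multiplying $g_1\leftrightarrow(a_1,b_1,c_1)$ by $g_2\leftrightarrow(a_2,b_2,c_2)$ gives $g_1g_2\leftrightarrow(a_1+a_2,\,b_1+b_2,\,c_1+c_2+a_1b_2)$. The first two coordinates are nonnegative, and so is $c_1+c_2+a_1b_2$. The remaining inequality $c_1+c_2+a_1b_2\le r(a_1+a_2)(b_1+b_2)$ follows by bounding $c_i\le ra_ib_i$ and expanding the right-hand side: the difference is $(r-1)a_1b_2+ra_2b_1$, which is $\ge 0$ exactly because $r\ge 1$ and the $a_i,b_i$ are all nonnegative. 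Hence $g_1g_2\in U_r$, and the proof is complete. The only ``obstacle'' is this last inequality; everything else is bookkeeping.
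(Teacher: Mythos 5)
Your proof is correct and follows essentially the same route as the paper's: the same product formula $(a_1+a_2,\,b_1+b_2,\,c_1+c_2+a_1b_2)$ and the same expansion $r(a_1+a_2)(b_1+b_2)-c''=(ra_1b_1-c_1)+(ra_2b_2-c_2)+ra_2b_1+(r-1)a_1b_2\ge 0$ for the semigroup property, and the same forcing $a=b=0$, then $c=0$, for sharpness. The only difference is that you spell out the closedness and conjugation-invariance computations, which the paper leaves as "easily checked."
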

\begin{proof}
  Closedness and invariance are easily checked.
  To check the semigroup property, we calculate the product of two elements
  of~$U_r$:
  \begin{equation*}
    \begin{pmatrix}
      1 & a & c \\
      0 & 1 & b \\
      0 & 0 & 1
    \end{pmatrix}
    \begin{pmatrix}
      1 & a' & c' \\
      0 & 1 & b' \\
      0 & 0 & 1
    \end{pmatrix} =
    \begin{pmatrix}
      1 & a'' & c'' \\
      0 & 1 & b'' \\
      0 & 0 & 1
    \end{pmatrix}
  \end{equation*}
  then $a''=a+a'\geq 0$, $b''=b+b'\geq 0$ and $c'' = c+c'+ab'\geq 0$, thus
  \begin{equation*}
    r a'' b'' - c'' = (rab-c) + (ra'b'-c') + ra' b + (r-1)ab' \geq 0,    
  \end{equation*}
  and the product belongs to~$U_r$. The above formula shows that if an element
  \[
    \begin{pmatrix}
      1 & a & c \\
      0 & 1 & b \\
      0 & 0 & 1
    \end{pmatrix}
  \]
  is invertible in~$U_r$ then necessarily $a=b=0$ and for such an element the
  condition of being in~$U_r$ says $0\leq c\leq 0$, i.e.\ $c=0$. This proves
  that $U_r$~is sharp.
\end{proof}

\section{Positivity in the flag variety \texorpdfstring{$\mathsf{F}_\Theta$}{F_Θ}}
\label{sec:posit-flag-vari}
In this section we first consider $G$ having a $\Theta$-positive structure and
use the positive unipotent semigroups $U_\Theta^{>0}$ to define positive
$n$-tuples in the flag variety $\mathsf{F}_\Theta$.  The key notion is the one
of positive triples of flags, which can be defined using
$U_\Theta^{>0}$ or $U_\Theta^{\mathrm{opp}, >0}$. To express the properties of
positive triples, we use the language of ``diamonds''. In the simplest flag variety~$\RR\PP^1$ diamonds are intervals. The notion of diamond was coined in \cite{Labourie_Toulisse} in the case when $G$ is $\SO(2,n)$, in which case diamonds actually look like diamonds. 
In the general case, diamonds have a slightly more complicated geometry, but can still thought of as generalized intervals. 
The language of diamonds, which is also used in \cite{GLW},  
captures well all
the necessary geometric properties of positive triples and quadruples needed in
applications.

 We also prove the reverse direction: a family of diamonds gives
rise to a $\Theta$-positive structure and is produced by the
$\Theta$-positivity. This thus gives a geometric characterization of $\Theta$-positivity.

In the case of split real groups and Lusztig's total positivity, the positive
structure on the flag variety $\mathsf{F}_\Delta$ has been described in
\cite{Lusztig} and the notion and key properties of positive $n$-tuples have been established
by Fock and Goncharov in \cite{Fock_Goncharov}.

\subsection{A first diamond}
\label{sec:first-diamond}

We will denote by $\mathcal{O} \subset \mathsf{F}_\Theta$ the set of points
transverse to $\mathfrak{p}_\Theta$\index{$\mathcal{O}$ the set of points in $  \mathsf{F}_\Theta$
transverse to $\mathfrak{p}_\Theta$} and by $\mathcal{O}^{\mathrm{opp}}$ the set of points\index{$\mathcal{O}^{\mathrm{opp}}$ the set of points in $  \mathsf{F}_\Theta$
transverse to $\mathfrak{p}_{\Theta}^{\mathrm{opp}}$}
transverse to $\mathfrak{p}_{\Theta}^{\mathrm{opp}}$. These sets are open and
diffeomorphic to the unipotent Lie groups $U_\Theta$ and
$U_{\Theta}^{\mathrm{opp}}$; indeed the maps $U_\Theta \to \mathcal{O} \mid u
\mapsto u\cdot \mathfrak{p}_{\Theta}^{\mathrm{opp}}$ and  $U_{\Theta}^{\mathrm{opp}} \to \mathcal{O}^{\mathrm{opp}} \mid u
\mapsto u\cdot \mathfrak{p}_{\Theta}$ are diffeomorphisms.
Since $w_\Delta \cdot \mathfrak{p}_{\Theta} =
\mathfrak{p}_{\Theta}^{\mathrm{opp}}$ and $w_\Delta \cdot \mathfrak{p}_{\Theta}^{\mathrm{opp}} =
\mathfrak{p}_{\Theta}$, one has also $\mathcal{O} = \Omega_{\Theta} \cdot
\mathfrak{p}_{\Theta}$ and $\mathcal{O}^{\mathrm{opp}} =
\Omega_{\Theta}^{\mathrm{opp}} \cdot \mathfrak{p}_{\Theta}^{\mathrm{opp}}$,
where $\Omega_\Theta = P_\Theta w_\Delta P_\Theta$ and $\Omega_{\Theta}^{\mathrm{opp}} = P_{\Theta}^{\mathrm{opp}} w_\Delta P_{\Theta}^{\mathrm{opp}}$. 
A diamond will be a connected component of $\mathcal{O} \cap \mathcal{O}^{\mathrm{opp}}$, see Section~\ref{sec:axiomatic-diamonds}. 

We first show that $U_\Theta^{>0}$ or $U_\Theta^{\mathrm{opp}, >0}$  gives rise to the same connected component, hence the same diamond. 

\begin{proposition}
  \label{prop:first-diamond}
  One has the equality, in $\mathsf{F}_\Theta$,
  \[U_{\Theta}^{>0}\cdot
    \mathfrak{p}_{\Theta}^{\mathrm{opp}} = U_{\Theta}^{\mathrm{opp},>0}\cdot
    \mathfrak{p}_{\Theta}.\]
  More precisely these sets are equal to the same
  connected component of $\mathcal{O} \cap \mathcal{O}^{\mathrm{opp}}$.
\end{proposition}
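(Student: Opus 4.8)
The plan is to reduce everything to statements about the unipotent semigroups $U_\Theta^{>0}$ and $U_\Theta^{\mathrm{opp},>0}$ that were already obtained in Section~\ref{sec:consequences}, together with the two presentations $\mathcal{O} = \Omega_\Theta \cdot \mathfrak{p}_\Theta$ and $\mathcal{O}^{\mathrm{opp}} = \Omega_\Theta^{\mathrm{opp}} \cdot \mathfrak{p}_\Theta^{\mathrm{opp}}$. First I would observe that $U_\Theta^{>0} \cdot \mathfrak{p}_\Theta^{\mathrm{opp}}$ is an open connected subset of $\mathcal{O}^{\mathrm{opp}}$ (because $U_\Theta^{>0}$ is open in $U_\Theta$ by Corollary~\ref{cor:semigroup}, and $U_\Theta \to \mathcal{O}^{\mathrm{opp}}$, wait---rather $u \mapsto u\cdot\mathfrak{p}_\Theta^{\mathrm{opp}}$ is a diffeomorphism onto $\mathcal{O}$), and that moreover it is contained in $\mathcal{O}$: indeed $U_\Theta^{>0} \subset \Omega_\Theta^{\mathrm{opp}}$ (by point~(\ref{item6:cor:semigroup}) of Corollary~\ref{cor:semigroup}, $U_\Theta^{>0} = U_\Theta^{\geq 0} \cap \Omega_\Theta^{\mathrm{opp}}$), so applying such an element to $\mathfrak{p}_\Theta^{\mathrm{opp}}$ lands in $\Omega_\Theta^{\mathrm{opp}}\cdot\mathfrak{p}_\Theta^{\mathrm{opp}} = \mathcal{O}^{\mathrm{opp}}$; but one should check it is also transverse to $\mathfrak{p}_\Theta$, i.e.\ lies in $\mathcal{O}$, which holds because any element of $U_\Theta$ fixes the flag opposite configuration appropriately --- more precisely $u\cdot\mathfrak{p}_\Theta^{\mathrm{opp}}$ is automatically transverse to $\mathfrak{p}_\Theta$ since $\mathfrak{p}_\Theta^{\mathrm{opp}}$ is, and $u \in U_\Theta$ stabilizes $\mathfrak{p}_\Theta$. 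Hence $U_\Theta^{>0}\cdot\mathfrak{p}_\Theta^{\mathrm{opp}} \subset \mathcal{O}\cap\mathcal{O}^{\mathrm{opp}}$, is open, and is connected.

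Next I would argue it is a full connected component of $\mathcal{O}\cap\mathcal{O}^{\mathrm{opp}}$. Transport the picture to $U_\Theta$ via the diffeomorphism $u\mapsto u\cdot\mathfrak{p}_\Theta^{\mathrm{opp}}$: under it, $\mathcal{O}\cap\mathcal{O}^{\mathrm{opp}}$ corresponds to $U_\Theta \cap \Omega_\Theta^{\mathrm{opp}}$ and $U_\Theta^{>0}\cdot\mathfrak{p}_\Theta^{\mathrm{opp}}$ corresponds to $U_\Theta^{>0}$ itself, which by Corollary~\ref{cor:conncomp} (point~(\ref{item6:thm:pos_defined}) of Theorem~\ref{thm:pos_defined}) is precisely a connected component of $U_\Theta \cap \Omega_\Theta^{\mathrm{opp}}$. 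So $U_\Theta^{>0}\cdot\mathfrak{p}_\Theta^{\mathrm{opp}}$ is a connected component of $\mathcal{O}\cap\mathcal{O}^{\mathrm{opp}}$. The same reasoning with $\mathfrak{p}_\Theta$ and $\mathfrak{p}_\Theta^{\mathrm{opp}}$ exchanged, using the diffeomorphism $u\mapsto u\cdot\mathfrak{p}_\Theta$ from $U_\Theta^{\mathrm{opp}}$ and the analogous statements (which hold by the ``Analogous properties hold for $F^{\mathrm{opp}}_\gammab$'' clause of Theorem~\ref{thm:pos_defined} and the corresponding corollaries), shows that $U_\Theta^{\mathrm{opp},>0}\cdot\mathfrak{p}_\Theta$ is also a connected component of $\mathcal{O}\cap\mathcal{O}^{\mathrm{opp}}$.

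It then remains to show these two connected components coincide, i.e.\ that they intersect. The natural candidate point is the one coming from the $\Theta$-principal $\mathfrak{sl}_2$ / the split subgroup $\mathfrak g_\Theta$: take $E_\alpha \in \mathring c_\alpha$, $F_\alpha \in \mathring c^{\mathrm{opp}}_\alpha$ as in Definition~\ref{def:theta_base}, and consider the element $\exp(\sum_\alpha E_\alpha)$ or, better, a positive element of the split subgroup $N_\Theta^{>0}$ together with its opposite. Concretely, I would pick $u = \mathring F_\gammab(\mathbf v)$ with all $v_i$ chosen in the one-dimensional sub-cones $\mathbb R_{>0}E_{\gamma_i}$ (so $u \in N_\Theta^{>0}\subset U_\Theta^{>0}$ by Proposition~\ref{prop:Lusztig} and Proposition~\ref{prop:image_independent}), and observe that inside the split real subgroup $\exp(\mathfrak g_\Theta)$ the classical statement of Lusztig/Fock--Goncharov for split groups gives $N_\Theta^{>0}\cdot (\text{opposite basepoint}) = N_\Theta^{\mathrm{opp},>0}\cdot(\text{basepoint})$; since $N_\Theta^{>0}\subset U_\Theta^{>0}$ and $N_\Theta^{\mathrm{opp},>0}\subset U_\Theta^{\mathrm{opp},>0}$, the corresponding point of $\mathsf F_\Theta$ lies in both components, forcing them to be equal. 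An alternative, avoiding any appeal to the split-case geometry: show directly that some explicit positive $u\in U_\Theta^{>0}$ satisfies $u\cdot\mathfrak p_\Theta^{\mathrm{opp}} = u'\cdot\mathfrak p_\Theta$ for some $u'\in U_\Theta^{\mathrm{opp},>0}$, e.g.\ by using that $u\cdot\mathfrak p_\Theta^{\mathrm{opp}}\in\mathcal{O}$ means it equals $u'\cdot\mathfrak p_\Theta$ for a unique $u'\in U_\Theta^{\mathrm{opp}}$, and then checking $u'\in U_\Theta^{\mathrm{opp},>0}$ via the Bruhat-cell characterization $U_\Theta^{\mathrm{opp},>0} = U_\Theta^{\mathrm{opp},\geq 0}\cap\Omega_\Theta$ (point~(\ref{item6:cor:semigroup}) of Corollary~\ref{cor:semigroup}, opposite version), together with a connectedness argument deforming $u$ to a special element (such as the one from $\mathfrak g_\Theta$) where the identity $u' \in U_\Theta^{\mathrm{opp},>0}$ can be verified by hand. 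The main obstacle is precisely this last step---producing one point in the intersection of the two components---since everything before it is a formal transport of the semigroup results of Theorem~\ref{thm:pos_defined} through the two diffeomorphisms; I expect the cleanest route is via the split subgroup $\mathfrak g_\Theta$ of Theorem~\ref{theorem:real-split-Theta-subalgebra}, where the analogous equality of diamonds is the known statement for split real grops.
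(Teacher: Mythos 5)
Your proposal is correct and follows essentially the same route as the paper: both arguments use point~(\ref{item6:thm:pos_defined}) of Theorem~\ref{thm:pos_defined} to identify each of $U_{\Theta}^{>0}\cdot \mathfrak{p}_{\Theta}^{\mathrm{opp}}$ and $U_{\Theta}^{\mathrm{opp},>0}\cdot \mathfrak{p}_{\Theta}$ as a connected component of $\mathcal{O}\cap\mathcal{O}^{\mathrm{opp}}$, and then produce a common point via the split subalgebra $\mathfrak{g}_\Theta$ and Lusztig's split-case statement (the paper cites \cite[Th.~8.7]{lusztigposred}, exactly the fact $N_{\Theta}^{>0}\cdot(\text{opposite basepoint}) = N_{\Theta}^{\mathrm{opp},>0}\cdot(\text{basepoint})$ you invoke), in parallel with Proposition~\ref{prop:image_independent}. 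The minor slips between $\mathcal{O}$ and $\mathcal{O}^{\mathrm{opp}}$ in your first paragraph do not affect the argument.
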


\begin{proof}
By Corollary~\ref{cor:semigroupUtheta}.(\ref{item4:cor:semigroupUtheta}) we have  that $U_{\Theta}^{>0}\cdot
  \mathfrak{p}_{\Theta}^{\mathrm{opp}}$ is a connected component of
  $\mathcal{O} \cap \mathcal{O}^{\mathrm{opp}}$. Equally, $U_{\Theta}^{\mathrm{opp},>0}\cdot
  \mathfrak{p}_{\Theta}$ is a connected component of $\mathcal{O} \cap
  \mathcal{O}^{\mathrm{opp}}$. Hence we only need to see that these sets
intersect. This can be obtained along the same lines than
Theorem~\ref{thm:nilpotent} using the equality  $\exp(E)\cdot
  \mathfrak{p}_{\Theta}^{\mathrm{opp}} = \exp(F)\cdot \mathfrak{p}_\Theta $
  (with $(E,F,D)$ the $\Theta$-principal $\mathfrak{sl}_2$-triple, Section~\ref{sec:sl2}).
\end{proof}

\subsection{Axiomatic of diamonds}
\label{sec:axiomatic-diamonds}

In this section and the next one, we do not assume that $G$~has a
$\Theta$-positive structure. We will prove however that the definition below
forces the presence of a $\Theta$-positive structure.

We introduce now the expected properties for positive
triples 
in $\mathsf{F}_\Theta$. It will be a little easier to express these properties in terms of
``diamonds'' that are, fixing $a$ and~$b$ in~$\mathsf{F}_\Theta$, the
connected components of the set $\{ x \in \mathsf{F}_\Theta \mid (a,x,b)
\text{ is a positive triple}\}$ (cf.\ below Definition~\ref{defi:posit-tripl}).

Let $\Theta \subset \Delta$ be a subset invariant by the opposition involution
$\iota\colon \alpha \mapsto -w_\Delta\cdot \alpha$ (so that $\mathsf{F}_{\iota(\Theta)}
= \mathsf{F}_\Theta$ and we can speak of transverse pairs
in~$\mathsf{F}_\Theta$, cf.\ Section~\ref{sec:flag-variety}).
For a point~$a$ in $\mathsf{F}_\Theta$, let us denote by $\mathcal{O}_a$ the
(open) subset of $\mathsf{F}_\Theta$ whose points are those transverse
to~$a$ and by~$P_a$ its stabilizer in~$G$.\index{$\mathcal{O}_a$ the points in 
 $\mathsf{F}_\Theta$  transverse to~$a$}

\begin{definition}
  \label{defi:axiomatic-diamonds}
  A \emph{family of diamonds in}~$\mathsf{F}_\Theta$ is a
  family~$\mathcal{F}$\index{$\mathcal{F}$ a family of diamonds}
  of triples  $(D, a,b)$\index{$(D, a,b)$ a diamond with extremities $a$ and $b$} where $D$ is a subset of~$\mathsf{F}_\Theta$ and $a$, $b$ belong
  to $\mathsf{F}_\Theta$ (and will be called the \emph{extremities} of the diamond) such that
  \begin{enumerate}[leftmargin=*]
  \item\label{item:1:defi:axiomatic-diamonds} For every $(D,a,b)$ in~$\mathcal{F}$, $a$ is transverse to~$b$ and $D$
    is a connected component of $\mathcal{O}_a \cap \mathcal{O}_b$;
  \item\label{item:2:defi:axiomatic-diamonds} For every $(D,a,b)$ in~$\mathcal{F}$,  $(D,b,a)$ belongs
    to~$\mathcal{F}$;
  \item\label{item:3:defi:axiomatic-diamonds} For every $(D,a,b)$
    in~$\mathcal{F}$, and for every~$g$ in $\Aut_1(
    \mathfrak{g})$, $( g\cdot D, g\cdot a, g\cdot b)$ belongs
    to~$\mathcal{F}$;
  \item\label{item:4:defi:axiomatic-diamonds}  For every $(D,a,b)$
    in~$\mathcal{F}$, and for every~$x$ in~$D$, there exists a unique diamond
    $(D', a, x)$ in~$\mathcal{F}$ such that $D'$~is contained in~$D$.
  \end{enumerate}
 The symmetry condition~(\ref{item:2:defi:axiomatic-diamonds})
together with the last condition imply also
\begin{enumerate}[leftmargin=*,resume]
\item \label{item:5:defi:axiomatic-diamonds} For every $(D,a,b)$
    in~$\mathcal{F}$, and for every~$x$ in~$D$, there exists a unique diamond
    $(D', x, b)$ in~$\mathcal{F}$ such that $D'$~is contained in~$D$.
\end{enumerate}
\end{definition}

\begin{remark}
  It is natural to ask that the notion of diamond (or the notion of positive
  triple to come later) is invariant under all the automorphisms of the flag
  variety~$\mathsf{F}_\Theta$. This is why we require invariance under the
  group $\Aut_1( \mathfrak{g})$ in the definition and not only under~$G$.
\end{remark}

\pgfmathsetmacro{\ra}{0.5}
\begin{figure}
  \centering
  \begin{tikzpicture}[scale=0.5]
    \fill[color=gray!20] (0,0) -- (4,4) -- (7,1) -- (3,-3);
    \fill[color=gray!40] (0,0) -- (2.5,2.5) -- (4,1) -- (1.5,-1.5);
    \draw ({0-\ra}, {0-\ra}) -- ({4+\ra}, {4+\ra});
    \draw ({0-\ra}, {0+\ra}) -- ({3+\ra}, {-3-\ra});
    \draw ({3-\ra}, {-3-\ra}) -- ({7+\ra}, {1+\ra});
    \draw ({4-\ra}, {4+\ra}) -- ({7+\ra}, {1-\ra});
    \draw ({2.5-\ra}, {2.5+\ra}) -- ({4+\ra}, {1-\ra});
    \draw ({1.5-\ra}, {-1.5-\ra}) -- ({4+\ra}, {1+\ra});
    \draw (7,1) node[right]{$b$};
    \draw (4,1) node[right]{$c$};
    \draw (0,0) node[left]{$a$};
  \end{tikzpicture}
  \caption{The diamonds in the case of the group $\SO(2,2)$. The point~$c$ is
    contained in the diamond with extremities~$a$ and~$b$ (the light gray
    region); the gray region is the diamond with extremities~$a$ and~$c$
    contained in the first diamond. In the general case, the diamonds are curved as shown in Figure~\ref{fig:semigroup},  which pictures the tip of a diamond in the case of the group $\SL_3(\R)$.}
  \label{fig:diamond}
\end{figure}
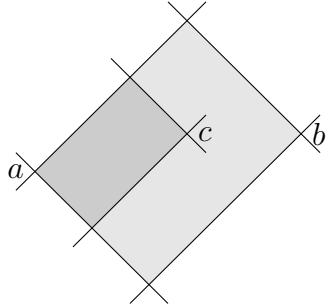

We can immediately note that:
\begin{itemize}[leftmargin=*]
\item For all~$a$ and~$b$ in~$\mathsf{F}_\Theta$, with $a$ transverse to~$b$,
  there is a least one diamond with extremities~$a$ and~$b$ (since the
  group $\Aut_1( \mathfrak{g})$ acts transitively on the space of
  pairs of  transverse points), and there are finitely many such diamonds
  (since $\mathcal{O}_a \cap \mathcal{O}_b$ has finitely many components);
\item For every diamond $(D,a,b)$ the set~$D$ is invariant under the 
  Lie group $L_{a,b}^{\circ}$ that is the neutral component of $L_{a,b} =
  P_\alpha \cap P_b$.
\end{itemize}

\subsection{From diamonds to \texorpdfstring{$\Theta$}{Θ}-positivity}
\label{sec:diamonds-semigroups}

We assume in this subsection that there is a family of diamonds~$\mathcal{F}$ in~$\mathsf{F}_\Theta$.
We immediately note that this family is produced from a semigroup
in~$U_\Theta$.

\begin{proposition}
  \label{prop:from-diamonds-to-semigroups}
  Let $a$ and $b$ be respectively the elements $\mathfrak{p}_\Theta$ and
  $\mathfrak{p}_{\Theta}^{\mathrm{opp}}$ of~$\mathsf{F}_\Theta$. Let $(D,a,b)$
  be a diamond in~$\mathcal{F}$ with extremities~$a$ and~$b$. Then the set
  \[ V^+ \coloneqq \{ v\in U_\Theta \mid v\cdot b \in D\}\]
  is a connected component of $U_\Theta \cap \Omega_{\Theta}^{\mathrm{opp}}$
  and is a semigroup.
\end{proposition}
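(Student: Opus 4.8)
The plan is to translate the two conditions in Definition~\ref{defi:axiomatic-diamonds} — that $D$ is a connected component of $\mathcal{O}_a \cap \mathcal{O}_b$ and that $\mathcal{F}$ is stable under the "subdivision" axiom~(\ref{item:4:defi:axiomatic-diamonds}) — into the corresponding statements about $V^+$ under the identification $U_\Theta \cong \mathcal{O}^{\mathrm{opp}} = \mathcal{O}_b$ given by $v \mapsto v\cdot b = v\cdot \mathfrak{p}_{\Theta}^{\mathrm{opp}}$. Recall from Section~\ref{sec:first-diamond} that this map is a diffeomorphism, that $\mathcal{O}_b = \mathcal{O}^{\mathrm{opp}} = \Omega_{\Theta}^{\mathrm{opp}}\cdot \mathfrak{p}_{\Theta}^{\mathrm{opp}}$, and that $a = \mathfrak{p}_\Theta$ corresponds to $\mathcal{O}_a = \mathcal{O} = \Omega_\Theta \cdot \mathfrak{p}_{\Theta}$.

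First I would show $V^+$ is a connected component of $U_\Theta \cap \Omega_{\Theta}^{\mathrm{opp}}$. Under the diffeomorphism $u\mapsto u\cdot b$, the set $U_\Theta \cap \Omega_{\Theta}^{\mathrm{opp}}$ corresponds exactly to $\mathcal{O}_b \cap \mathcal{O}_a$: indeed a point $u\cdot b$ with $u\in U_\Theta$ is automatically transverse to $b$ (this is the defining property of $\mathcal{O}^{\mathrm{opp}}$), and it is transverse to $a = \mathfrak p_\Theta$ precisely when $u$ lies in the open Bruhat cell $\Omega_\Theta^{\mathrm{opp}}$, since $\mathcal{O} = \Omega_\Theta\cdot\mathfrak p_\Theta$ and $\{x : x \text{ transverse to } a\} = \Omega_\Theta^{\mathrm{opp}}\cdot \mathfrak p_\Theta^{\mathrm{opp}}$ by the standard description of transversality via the big cell. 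Hence $u\mapsto u\cdot b$ restricts to a homeomorphism $U_\Theta\cap\Omega_\Theta^{\mathrm{opp}} \to \mathcal{O}_a\cap\mathcal{O}_b$, so it carries connected components to connected components. Since $D$ is a connected component of $\mathcal{O}_a\cap\mathcal{O}_b$ by axiom~(\ref{item:1:defi:axiomatic-diamonds}) and $V^+ = \{v : v\cdot b\in D\}$ is its preimage, $V^+$ is a connected component of $U_\Theta\cap\Omega_\Theta^{\mathrm{opp}}$.

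Next I would show $V^+$ is a semigroup: I must check that $v_1, v_2 \in V^+$ implies $v_1 v_2 \in V^+$, i.e.\ $v_1 v_2 \cdot b \in D$. The idea is to apply the uniqueness-of-subdivision axiom~(\ref{item:4:defi:axiomatic-diamonds}). Set $x = v_2\cdot b \in D$. By~(\ref{item:4:defi:axiomatic-diamonds}) there is a unique diamond $(D', a, x) \in \mathcal{F}$ with $D' \subseteq D$. Now apply the $\Aut(\mathfrak g)$-equivariance axiom~(\ref{item:3:defi:axiomatic-diamonds}) to the original diamond $(D, a, b)$ with the element $v_2 \in U_\Theta \subset G$ (acting through $G\to\Aut(\mathfrak g)$): since $v_2\in P_\Theta$ fixes $a = \mathfrak p_\Theta$, the triple $(v_2\cdot D,\, a,\, v_2\cdot b) = (v_2\cdot D,\, a,\, x)$ is a diamond in $\mathcal{F}$. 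To invoke uniqueness I need $v_2\cdot D \subseteq D$; this holds because $v_1 \mapsto v_1\cdot b$ translates the inclusion $v_1 v_2 \in V^+$ we are trying to prove, so I should instead argue directly that $v_2\cdot D \subseteq D$ follows from $v_2\cdot b\in D$ together with connectedness — namely $v_2\cdot D$ is a connected component of $\mathcal{O}_a\cap \mathcal{O}_{x}$... wait, $v_2$ does not fix $b$. The cleaner route: $v_2\cdot D$ is a connected component of $v_2\cdot(\mathcal O_a\cap\mathcal O_b) = \mathcal O_a \cap \mathcal O_{v_2\cdot b} = \mathcal O_a\cap\mathcal O_x$, and by axiom~(\ref{item:1:defi:axiomatic-diamonds}) applied to the diamond $(v_2\cdot D, a, x)$ this is consistent; moreover the unique diamond of $\mathcal F$ with extremities $a,x$ contained in $D$ is $D'$, so I must match $v_2\cdot D$ with $D'$. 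For that I use that $x = v_2\cdot b$ lies in $D$ and that $D'$ is characterized by $D'\ni$ nothing a priori — so the genuine subtlety is establishing $v_2\cdot D \subseteq D$ (equivalently $v_2\cdot D = D'$). I expect this to be the main obstacle, and I anticipate the paper resolves it by: observing $V^+\cdot b\subseteq D$ is open and closed in $D$ (openness from the orbit map, closedness from a limiting/connectedness argument using that $e\in\overline{V^+}$), hence $V^+\cdot b = D$ by connectedness of $D$; this identity then gives $v_2\cdot D = v_2 V^+\cdot b$, and one shows $v_2 V^+ \subseteq V^+$ by the same open-closed argument inside $V^+$. Once $v_2\cdot D \subseteq D$ is known, uniqueness in~(\ref{item:4:defi:axiomatic-diamonds}) forces $v_2\cdot D = D'$, so in particular $v_1 v_2\cdot b = v_2\cdot(v_1\cdot b) \in v_2\cdot D = D'\subseteq D$ — but to write $v_1 v_2 = v_2 v_1$ is false in general, so the correct bookkeeping is $v_1(v_2\cdot b) \in v_1\cdot D$; hence I should instead run the subdivision at the extremity $b$, using the symmetric axiom~(\ref{item:5:defi:axiomatic-diamonds}) with the point $v_1^{-1}\cdot(\text{something})$, or — most robustly — prove $V^+\cdot b = D$ first and then note $v_1 v_2\cdot b \in v_1\cdot(v_2\cdot b)$ with $v_2\cdot b\in D = V^+\cdot b$, so $v_2\cdot b = v_3\cdot b$ for some $v_3\in V^+$, giving $v_2 = v_3$ (injectivity of $v\mapsto v\cdot b$) and $v_1 v_2 = v_1 v_3 \in V^+ V^+$, whence $v_1 v_2\cdot b = v_1\cdot(v_3\cdot b)\in v_1\cdot D = D$ by the established $v_1\cdot D \subseteq D$. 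Thus the whole proof reduces to the single key claim $V^+\cdot b = D$, i.e.\ that the $V^+$-orbit of $b$ is all of $D$, which I would prove by showing it is nonempty (it contains $b$ itself, as $e\in U_\Theta$ and $e\cdot b = b\in D$), open in $D$ (orbit map of a Lie group action on a manifold, using that $V^+$ is open in $U_\Theta$), and closed in $D$ (using properness/closedness properties of $U_\Theta$ acting on $\mathcal O^{\mathrm{opp}}$ together with axiom~(\ref{item:4:defi:axiomatic-diamonds}) to rule out escape), then invoking connectedness of $D$.
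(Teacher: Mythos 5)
The first half of your argument (that $V^+$ is a connected component of $U_\Theta\cap\Omega_{\Theta}^{\mathrm{opp}}$) is correct and is essentially what the paper does, although you have the two transversality conditions swapped: for $u\in U_\Theta$ the point $u\cdot b$ is \emph{automatically} transverse to $a=\mathfrak{p}_\Theta$ (since $u$ fixes $a$ and $(a,b)$ is transverse), and it is transverse to $b$ precisely when $u\in\Omega_{\Theta}^{\mathrm{opp}}$. The conclusion is unaffected.

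The semigroup half has a genuine gap. You correctly isolate the crux, namely $v\cdot D\subseteq D$ for $v\in V^+$, but you never prove it. Your ``single key claim'' $V^+\cdot b=D$ is a tautology: by definition $V^+$ is the preimage of $D$ under the bijection $u\mapsto u\cdot b$ from $U_\Theta$ onto $\mathcal{O}_a$, and $D\subseteq\mathcal{O}_a$, so $V^+\cdot b=D$ holds with no open--closed argument and carries no information. The subsequent step, proving $v_2V^+\subseteq V^+$ ``by the same open--closed argument inside $V^+$,'' is exactly the semigroup property you are trying to establish, and the open--closed scheme does not close up: openness of $\{w\in V^+\mid v_2w\in V^+\}$ is fine, but closedness in $V^+$ would require knowing that a limit $v_2w$ of elements of $V^+$ cannot land on the boundary of $\Omega_{\Theta}^{\mathrm{opp}}$ (i.e.\ that $v_2w\cdot b$ stays transverse to $b$) nor jump to another component --- and that transversality is precisely the nontrivial content. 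Your final chain then invokes ``the established $v_1\cdot D\subseteq D$,'' which was never established. The paper closes this gap with a different idea: writing $x=v\cdot b$ and letting $(D',a,x)$ be the unique sub-diamond of $D$ given by axiom~(\ref{item:4:defi:axiomatic-diamonds}), it contracts $v$ to the identity via $v_t=\ell_{\log t}\,v\,\ell_{\log t}^{-1}$ with $\ell_s=\exp(sX)$, $\alpha(X)=-1$ for all $\alpha\in\Delta$. The sub-diamonds $D'_t\subseteq D$ with extremities $a$ and $x_t=v_t\cdot b$ vary continuously and tend to $D$ as $t\to 0$, so $(v_t^{-1}\cdot D'_t)_{t\in[0,1]}$ is a continuous family of diamonds with the \emph{fixed} extremities $a$ and $b$; since there are only finitely many such diamonds, the family is constant, equal to $D$ at $t=0$, whence $v\cdot D=D'\subseteq D$. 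Some such global input (finiteness of the set of diamonds with given extremities, plus a deformation to the identity) is needed; a purely local open--closed argument in $U_\Theta$ does not suffice.
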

\begin{proof}
  The fact that $V^+$ is a connected component of $U_\Theta \cap
  \Omega_{\Theta}^{\mathrm{opp}}$ follows directly from
  Condition~(\ref{item:1:defi:axiomatic-diamonds}) in Definition~\ref{defi:axiomatic-diamonds}.

  Proving that $V^+$ is a semigroup amounts to show that, for all~$v$
  in~$V^+$, $v V^+ \subset V^+$. This inclusion can be phrased in terms of
  diamonds: let $x=v\cdot b$ so that $x$~belongs to~$D$ and there is a unique
  diamond $(D',a,x)$ with extremities~$a$ and~$x$ with $D'\subset D$; with
  this notation, one wants to prove the equality $D' = v \cdot D$.

  Let $\ell_s = \exp( s X)$ ($s\in \R$) be the $1$-parameter subgroup associated with the element~$X$ of~$\mathfrak{a}$ such that
  $\alpha(X) = -1$ for all~$\alpha$ in~$\Delta$. Define $(v_t)_{t\in [0,1]}$ and $(x_t)_{t\in [0,1]}$
  by the equalities $v_0=e$,
  $v_t = \ell_{\log t} v \ell_{\log t}^{-1}$ for $t>0$, and $x_t = v_t \cdot b$. By the choice of~$X$,
  these paths are continuous and $x_0=b$, $x_1=x$.

  For all positive~$t$, $D^{\prime}_{t} = \ell_{\log t}\cdot D'$ is the
  diamond with extermities~$a$ and~$x_t$ contained in $\ell_{\log t}\cdot D=D$; i.e.\
  $D^{\prime}_{t}$ is the connected component of $\mathcal{O}_a \cap
  \mathcal{O}_{x_t}$ contained in~$D$. From the convergence of $x_t$ to~$b$,
  we deduce that $D'_t$ converges to~$D$ as $t$~tends to~$0$. Stated
  differently, setting $D'_0=D$, the family $(D'_t)_{t\in [0,1]}$ is
  continuous. This implies that the family $( v_{t}^{-1} \cdot D'_t)_{t\in
    [0,1]}$ of diamonds with extremities~$a$ and~$b$ is continuous. Since
  there are finitely many connected components in~$\mathcal{O}_a \cap
  \mathcal{O}_b$, we deduce that this family is constant equal to~$D$. In
  particular $D= v_{1}^{-1}\cdot D'_1 = v^{-1}\cdot D'$ which is the sought
  for equality.
\end{proof}

As a consequence of Proposition~\ref{prop:from-diamonds-to-semigroups}, we can apply Theorem~\ref{theo:invar-unip-semigr-implies-pos-open-case}
 and thus $G$~admits a $\Theta$-positive structure. Let us then fix,
for every~$\alpha$ in~$\Theta$, a nonzero acute $L_{\Theta}^{\circ}$-invariant
closed convex cone~$c_\alpha$ in~$\mathfrak{u}_\alpha$. We already noticed
that there are exactly two such cones, namely $c_\alpha$ and $-c_\alpha$. To
take into account all the possible choices of cones, let us introduce, for
every $\varepsilonb = ( \varepsilon_\alpha)_{\alpha \in \Theta}$
in $\{ \pm 1\}^\Theta$ the nonnegative unipotent
semigroup\index{$U_{\Theta}^{\varepsilonb\geq 0}$ the nonnegative unipotent semigroup
 generated by $\bigcup_{\alpha\in \Theta}\exp(
\varepsilon_\alpha c_\alpha)$}
$U_{\Theta}^{\varepsilonb\geq 0}$ generated by $\bigcup_{\alpha\in\Theta}\exp(
\varepsilon_\alpha c_\alpha)$ and the
positive unipotent semigroup\index{$U_{\Theta}^{\varepsilonb> 0}$ 
the interior of
$U_{\Theta}^{\varepsilonb\geq 0}$} $U_{\Theta}^{\varepsilonb> 0}$ that is
the interior (relative to~$U_\Theta$) of
$U_{\Theta}^{\varepsilonb\geq 0}$.

 Theorem~\ref{theo:invar-unip-semigr-implies-pos-open-case} 
gives the following corollary
\begin{corollary}
  \label{coro:all-diamonds-from-semigroup}
  The diamonds in~$\mathcal{F}$ with extremities $a=\mathfrak{p}_\Theta$
  and $b=\mathfrak{p}_{\Theta}^{\mathrm{opp}}$ are exactly
  the triples $( U_{\Theta}^{\varepsilonb> 0} \cdot b, a,b)$
  for~$\varepsilonb$ varying in~$\{ \pm 1\}^\Theta$.
\end{corollary}
\begin{proof}
  Indeed Proposition~\ref{prop:from-diamonds-to-semigroups} and
  Theorem~\ref{theo:invar-unip-semigr-implies-pos-open-case} prove that every
  diamond with extremities~$a$ and~$b$ has this form. The fact that all
  signs~$\varepsilonb$ are achieved is a consequence of the
  invariance under $\Aut_1( \mathfrak{g})$ and that there are elements in this group
  exchanging the cones (Proposition~\ref{prop:homog-under-aut}).  
\end{proof}

In fact the transitivity observed in Proposition~\ref{prop:homog-under-aut}
implies that there is exactly one orbit of diamonds under the action of $\Aut_1(
\mathfrak{g})$.

\begin{corollary}
  \label{coro:one-orbit-of-diamond}
  Let $a$ and $b$ be respectively
  $\mathfrak{p}_\Theta$ and
  $\mathfrak{p}_{\Theta}^{\mathrm{opp}}$. 
  Then   the
  diamonds are exactly the $(g\cdot U_{\Theta}^{> 0} \cdot b, g\cdot a, g\cdot b)$ for~$g$ varying
  in~$\Aut_1( \mathfrak{g})$.
\end{corollary}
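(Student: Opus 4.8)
The plan is to combine Corollary~\ref{coro:all-diamonds-from-semigroup} with the transitivity statement from Proposition~\ref{prop:homog-under-aut}. The first assertion, that $(D,a,b) = ( U_{\Theta}^{> 0} \cdot b, a,b)$ is a diamond, is the special case $\varepsilonb = (1,\dots,1)$ of Corollary~\ref{coro:all-diamonds-from-semigroup}; alternatively it follows directly from point~(\ref{item6:thm:pos_defined}) of Theorem~\ref{thm:pos_defined}, which says $U_\Theta^{>0}\cdot \mathfrak p_\Theta^{\mathrm{opp}}$ is a connected component of $\Omega_\Theta^{\mathrm{opp}}\cap U_\Theta$, hence (under the identification $\mathcal O_a\cap\mathcal O_b \cong \Omega_\Theta^{\mathrm{opp}}\cap U_\Theta$) a connected component of $\mathcal O_a \cap \mathcal O_b$, which is exactly the content of axiom~(\ref{item:1:defi:axiomatic-diamonds}).

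For the second assertion I would argue as follows. By axiom~(\ref{item:3:defi:axiomatic-diamonds}) every triple $(g\cdot D, g\cdot a, g\cdot b)$ with $g\in \Aut(\mathfrak g)$ is a diamond, so one inclusion is immediate. Conversely, let $(D'',a'',b'')$ be an arbitrary diamond. Since $\Aut(\mathfrak g)$ acts transitively on transverse pairs in $\mathsf F_\Theta$ (this uses that $G\to\Aut(\mathfrak g)$ has image acting transitively on such pairs, as recorded in Section~\ref{sec:flag-variety}, and $\Aut(\mathfrak g)$ is larger), there is $g_0\in\Aut(\mathfrak g)$ with $g_0\cdot a'' = a$ and $g_0\cdot b'' = b$; then $g_0\cdot(D'',a'',b'') = (g_0\cdot D'', a,b)$ is a diamond with extremities $a,b$. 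By Corollary~\ref{coro:all-diamonds-from-semigroup}, $g_0\cdot D'' = U_\Theta^{\varepsilonb>0}\cdot b$ for some $\varepsilonb\in\{\pm1\}^\Theta$. Now apply Proposition~\ref{prop:homog-under-aut}: there is $h\in\Aut(\mathfrak g)$ whose restriction to each $\mathfrak u_{-\alpha}$ ($\alpha\in\Theta$) is $\varepsilon_\alpha\,\mathrm{id}$ and which is $+\mathrm{id}$ on $\mathfrak g_0$, hence fixes $\mathfrak p_\Theta$ and $\mathfrak p_\Theta^{\mathrm{opp}}$, so $h\cdot a = a$, $h\cdot b = b$, and $h$ conjugates $U_\Theta^{\mathrm{opp},>0}$ to $U_\Theta^{\mathrm{opp},\varepsilonb>0}$; dually (or by the version of Proposition~\ref{prop:homog-under-aut} for $\mathfrak u_\alpha$ rather than $\mathfrak u_{-\alpha}$) one gets $h$ with $h\cdot U_\Theta^{>0}\cdot b = U_\Theta^{\varepsilonb>0}\cdot b = g_0\cdot D''$. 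Therefore $D'' = (g_0^{-1}h)\cdot D$, $a'' = (g_0^{-1}h)\cdot a$, $b'' = (g_0^{-1}h)\cdot b$ with $g_0^{-1}h\in\Aut(\mathfrak g)$, which is the desired description.

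The only genuinely delicate point is matching up the sign conventions: Proposition~\ref{prop:homog-under-aut} is phrased for the cones $c_\alpha\subset\mathfrak u_\alpha$ (and also $c_\alpha^{\mathrm{opp}}\subset\mathfrak u_{-\alpha}$), while the diamonds with fixed extremities are indexed by the signs of the \emph{opposite} cones $c_\alpha^{\mathrm{opp}}$ since $D = U_\Theta^{>0}\cdot\mathfrak p_\Theta^{\mathrm{opp}}$ is built from $U_\Theta^{\mathrm{opp},>0}$ via Proposition~\ref{prop:first-diamond} --- or one may simply work with $U_\Theta^{>0}$ directly as in Corollary~\ref{coro:all-diamonds-from-semigroup}, in which case $h$ is chosen with $h|_{\mathfrak u_\alpha} = \varepsilon_\alpha\,\mathrm{id}$ and $h|_{\mathfrak g_0} = +\mathrm{id}$, using the first bullet of Proposition~\ref{prop:homog-under-aut}, so that $h$ fixes both $\mathfrak p_\Theta$ and $\mathfrak p_\Theta^{\mathrm{opp}}$ and sends the $\varepsilonb=(1,\dots,1)$ semigroup to the $\varepsilonb$ one. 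Since $\Aut(\mathfrak g)$ acts on $\mathsf F_\Theta$ through conjugation of subalgebras, $h$ fixing $\mathfrak p_\Theta^{\mathrm{opp}}$ forces $h\cdot(U_\Theta^{>0}\cdot\mathfrak p_\Theta^{\mathrm{opp}})$ to again be a component of $\mathcal O_a\cap\mathcal O_b$, and the cone-level computation identifies which one. Everything else is bookkeeping with the already-established facts, so the corollary follows without further work.
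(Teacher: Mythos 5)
Your proof is correct and follows essentially the same route as the paper, which derives this corollary in one line from Corollary~\ref{coro:all-diamonds-from-semigroup} together with the sign-flipping elements of $\Aut(\mathfrak{g})$ supplied by Proposition~\ref{prop:homog-under-aut} (which fix $\mathfrak{p}_\Theta$ and $\mathfrak{p}_\Theta^{\mathrm{opp}}$ since they preserve every root space). Your final paragraph correctly resolves the sign-convention issue by working with $U_\Theta^{>0}$ and the cones $c_\alpha\subset\mathfrak{u}_\alpha$ directly; the only (harmless) caveat is that your parenthetical ``alternative'' justification of the first assertion via Theorem~\ref{thm:pos_defined}(\ref{item6:thm:pos_defined}) only verifies axiom~(\ref{item:1:defi:axiomatic-diamonds}) rather than membership in the given family $\mathcal{F}$, so the route through Corollary~\ref{coro:all-diamonds-from-semigroup} is the one to keep.
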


\subsection{From \texorpdfstring{$\Theta$}{Θ}-postivity to diamonds}
\label{sec:from-theta-postivity}

We now consider the reverse direction of 
Corollary~\ref{coro:one-orbit-of-diamond}. Namely, we assume that $G$~has a
$\Theta$-positive structure and consider the family~$\mathcal{F}$ consisting 
of the $\Aut_1( \mathfrak{g})$-orbit of $( U_{\Theta}^{> 0} \cdot b,
a,b)$ where $a$ and $b$ are  respectively $\mathfrak{p}_\Theta$ and
$\mathfrak{p}_{\Theta}^{\mathrm{opp}}$.

Equivalently, the family~$\mathcal{F}$ can be defined by taking the union of
the  $G$-orbits
of  $( U_{\Theta}^{\varepsilonb>0}\cdot b, a,b)$
for~$\varepsilonb$ varying in~$\{\pm 1\}^\Theta$.
\begin{proposition}
  \label{prop:from-theta-postivity}
  The above family~$\mathcal{F}$ is a family of diamonds in~$\mathsf{F}_\Theta$.
\end{proposition}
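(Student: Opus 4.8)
The plan is to verify the four defining axioms of Definition~\ref{defi:axiomatic-diamonds} for the family $\mathcal{F}$ obtained as the $\Aut(\mathfrak{g})$-orbit of the triple $(D,a,b)=(U_{\Theta}^{>0}\cdot b,a,b)$, where $a=\mathfrak{p}_\Theta$ and $b=\mathfrak{p}_{\Theta}^{\mathrm{opp}}$. Axiom~(\ref{item:3:defi:axiomatic-diamonds}), invariance under $\Aut(\mathfrak{g})$, holds by the very construction of $\mathcal{F}$ as an $\Aut(\mathfrak{g})$-orbit. For axiom~(\ref{item:1:defi:axiomatic-diamonds}), I note that $a$ and $b$ are transverse by definition, that $\mathcal{O}_a = \mathcal{O}$ and $\mathcal{O}_b = \mathcal{O}^{\mathrm{opp}}$ in the notation of Section~\ref{sec:first-diamond}, and that $U_{\Theta}^{>0}\cdot b$ is a connected component of $\mathcal{O}\cap\mathcal{O}^{\mathrm{opp}}$ by Proposition~\ref{prop:first-diamond} (equivalently point~(\ref{item6:thm:pos_defined}) of Theorem~\ref{thm:pos_defined}); applying an element of $\Aut(\mathfrak{g})$ preserves all these properties since it acts on $\mathsf{F}_\Theta$ by transformations respecting transversality.

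For axiom~(\ref{item:2:defi:axiomatic-diamonds}), the symmetry $(D,a,b)\mapsto(D,b,a)$, it suffices to treat the base triple. By Proposition~\ref{prop:first-diamond} one has $U_{\Theta}^{>0}\cdot b = U_{\Theta}^{\mathrm{opp},>0}\cdot a$; applying a lift $\dot{w}_\Delta$ of the longest Weyl element, which exchanges $a=\mathfrak{p}_\Theta$ and $b=\mathfrak{p}_{\Theta}^{\mathrm{opp}}$ and conjugates $U_\Theta$ to $U_{\Theta}^{\mathrm{opp}}$ (so it maps $U_{\Theta}^{\mathrm{opp},>0}$ to a positive unipotent semigroup of the correct type, up to possibly changing the cones by signs, which is absorbed into the $\Aut(\mathfrak{g})$-orbit via Proposition~\ref{prop:homog-under-aut}), one gets $(D,b,a)\in\mathcal{F}$. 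Concretely: $\dot w_\Delta\cdot(U_{\Theta}^{\mathrm{opp},>0}\cdot a, a, b) = ((\dot w_\Delta U_{\Theta}^{\mathrm{opp},>0}\dot w_\Delta^{-1})\cdot b, b, a)$, and the conjugated semigroup is $U_{\Theta}^{\varepsilonb>0}$ for suitable signs, hence lies in the $\Aut(\mathfrak{g})$-orbit. So symmetry holds.

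The main obstacle is axiom~(\ref{item:4:defi:axiomatic-diamonds}): given a diamond $(D,a,b)$ and a point $x\in D$, existence and \emph{uniqueness} of a sub-diamond $(D',a,x)$ with $D'\subset D$. Reducing to the base case, write $x = u\cdot b$ with $u\in U_{\Theta}^{>0}$. Existence: I would take $D' = u\cdot D = u U_{\Theta}^{>0}\cdot b$, which by Corollary~\ref{cor:semigroup}(\ref{item1:cor:semigroup}) (namely $U_{\Theta}^{>0}U_{\Theta}^{\geq0}\subset U_{\Theta}^{>0}$, together with the semigroup property) satisfies $u U_{\Theta}^{>0}\cdot b\subset U_{\Theta}^{>0}\cdot b = D$; since $u\in\Aut(\mathfrak{g})$ (via $G\to\Aut(\mathfrak{g})$), the triple $(D',a,x) = u\cdot(D, a, x') $ for appropriate $x'$... more carefully, $u\cdot(U_{\Theta}^{>0}\cdot b, a, b) = (uU_{\Theta}^{>0}\cdot b, u\cdot a, u\cdot b)$, and $u\cdot a = u\cdot\mathfrak{p}_\Theta = \mathfrak{p}_\Theta = a$ since $u\in U_\Theta\subset P_\Theta$; thus $(D', a, x)\in\mathcal{F}$ with $D'\subset D$. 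Uniqueness: a second such diamond $(D'',a,x)$ with $D''\subset D$ must be a connected component of $\mathcal{O}_a\cap\mathcal{O}_x$; since $D'$ is already such a component contained in $D$, I must rule out a \emph{different} component of $\mathcal{O}_a\cap\mathcal{O}_x$ being contained in $D$. Here I would deform: let $\ell_t$ be the one-parameter group $\exp((\log t)X)$ with $\alpha(X) = -1$ for all $\alpha\in\Delta$, so that conjugation by $\ell_t$ contracts $U_\Theta$ to the identity as $t\to 0$; then $x_t := (\ell_t u\ell_t^{-1})\cdot b$ gives a continuous path in $D$ from $b$ to $x$, the corresponding family of components of $D$ containing... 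I run the same continuity-of-components-plus-finiteness argument as in the proof of Proposition~\ref{prop:from-diamonds-to-semigroups}: any component of $\mathcal{O}_a\cap\mathcal{O}_{x_t}$ contained in $D$ varies continuously with $t$, equals $D$ at $t=0$, and since there are finitely many components, there is exactly one such for each $t$, giving uniqueness at $t=1$. Finally, axioms~(\ref{item:5:defi:axiomatic-diamonds}) is then a formal consequence of~(\ref{item:2:defi:axiomatic-diamonds}) and~(\ref{item:4:defi:axiomatic-diamonds}) as already noted in the definition, so nothing further is needed.
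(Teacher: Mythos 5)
Your treatment of axioms~(\ref{item:1:defi:axiomatic-diamonds})--(\ref{item:3:defi:axiomatic-diamonds}) and of the existence half of axiom~(\ref{item:4:defi:axiomatic-diamonds}) is correct and matches the paper (invariance by construction, Proposition~\ref{prop:first-diamond} for the component property and the symmetry, the semigroup property for existence). The problem is your uniqueness argument.

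First, you are aiming at a statement stronger than what the axiom requires: you try to show that \emph{no} connected component of $\mathcal{O}_a\cap\mathcal{O}_x$ other than $u\cdot D$ is contained in~$D$, whereas uniqueness only concerns components that are diamonds \emph{in~$\mathcal{F}$}. More seriously, the continuity argument you import from the proof of Proposition~\ref{prop:from-diamonds-to-semigroups} does not transfer. In that proof the family $(D'_t)$ is well defined precisely because uniqueness is an \emph{axiom} there; here uniqueness is the goal, so ``the component of $\mathcal{O}_a\cap\mathcal{O}_{x_t}$ contained in~$D$'' is not a well-defined object whose continuity you can invoke --- the argument is circular. What you would actually need is that the finite set $S_t=\{C \text{ component of } \mathcal{O}_a\cap\mathcal{O}_b \mid v_t\cdot C\subset D\}$ is locally constant in~$t$, and neither inclusion is justified: containment of one open, non-relatively-compact set in another is neither an open nor a closed condition under continuous deformation (a component touching $\partial D$ only at infinity can slip out, or stay in, after an arbitrarily small perturbation). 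So the step ``there is exactly one such for each $t$'' is a genuine gap.

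The paper closes this differently. Since $D''\in\mathcal{F}$ has extremities $a$ and $x$, the translate $u^{-1}\cdot D''$ is a diamond of $\mathcal{F}$ with extremities $a$ and $b$, hence of the form $\ell\cdot D$ with $\ell\in\Aut(\mathfrak{g})$ stabilizing $(a,b)$; such an $\ell$ preserves each $\mathfrak{u}_\alpha$ and sends $c_\alpha$ to $\varepsilon_\alpha c_\alpha$, so $u^{-1}\cdot D''=U_{\Theta}^{\varepsilonb>0}\cdot b$ for some $\varepsilonb\in\{\pm1\}^\Theta$, and the inclusion $D''\subset D$ becomes $uU_{\Theta}^{\varepsilonb>0}\subset U_{\Theta}^{>0}$. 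One then rules out $\varepsilon_\alpha=-1$ by a first-order computation: choose $v\in U_{\Theta}^{\varepsilonb>0}$ with $\pi_\alpha(\log v)=-\pi_\alpha(\log u)$; the degree-one part of the Baker--Campbell--Hausdorff formula (Lemma~\ref{lem:logFgamma}) gives $\pi_\alpha(\log(uv))=0$, which cannot lie in $\mathring{c}_\alpha$, contradicting $uv\in U_{\Theta}^{>0}$. You need this (or an equivalent) argument in place of the deformation.
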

\begin{proof}
  We need to check the conditions of
  Definition~\ref{defi:axiomatic-diamonds}. Condition~(\ref{item:1:defi:axiomatic-diamonds})
  follows from the corresponding property of~$U_{\Theta}^{> 0}$
  (see Proposition~\ref{prop:first-diamond}). Condition~(\ref{item:2:defi:axiomatic-diamonds})
  is a consequence of the equality $w_\Delta U_{\Theta}^{>0} w_{\Delta}^{-1} =
  U_{\Theta}^{\mathrm{opp}, >0}$ and of
  Proposition~\ref{prop:first-diamond}. Condition~(\ref{item:3:defi:axiomatic-diamonds})
  is there by construction. The existence part in
  Condition~(\ref{item:4:defi:axiomatic-diamonds}) follows from the
  fact that $U_{\Theta}^{> 0}$ is a semigroup: indeed let $(D,a,b)$ be
  in~$\mathcal{F}$ and
let~$x$ be in~$D$, by equivariance we can assume that $a=\mathfrak{p}_\Theta$, 
$b=\mathfrak{p}_{\Theta}^{\mathrm{opp}}$ and $D=U_{\Theta}^{> 0} \cdot b$; let~$u$ be the element of~$U_{\Theta}^{> 0}$ such that
$x=u\cdot b$ and set $D'=uU_{\Theta}^{> 0}$, since $(D', a, x) = u\cdot
(D,a,b) $, $D'$ is a diamond and the inclusion $D'\subset D$
follows from the fact that $U_{\Theta}^{> 0}$ is a semigroup.

  Let us now address the uniqueness in
  Condition~(\ref{item:4:defi:axiomatic-diamonds}). With the notation
  introduced in this condition, we can (by $\Aut_1( \mathfrak{g})$-invariance)
  assume that $a=\mathfrak{p}_\Theta$, $b=
  \mathfrak{p}_{\Theta}^{\mathrm{opp}}$, and $D= U_{\Theta}^{> 0}\cdot
  b$. Let~$u$ be the element of~$U_{\Theta}^{>0}$ such that $x=u\cdot b$ and
  let $(D',a,x)$ be a diamond in~$\mathcal{F}$ such that $D'\subset D$. Then
  $(u^{-1} D', a,b)$ is a diamond with extremities~$a$ and~$b$.

  There is thus an element~$\ell$ in $\Aut_1( \mathfrak{g})$ fixing~$a$ and~$b$
  and such that $u^{-1} \cdot D' = \ell \cdot D$. The element~$\ell$
  stabilizes all the spaces~$\mathfrak{u}_\alpha$ (for~$\alpha$ in~$\Theta$)
  and thus sends, for every~$\alpha$ in~$\Theta$, the cone~$c_\alpha$ to
  $\varepsilon_\alpha c_\alpha$ for some~$\varepsilon_\alpha$ in~$\{\pm
  1\}$. Setting $\varepsilonb =(\varepsilon_\alpha)_{\alpha\in
    \Theta}$ one has thus $u^{-1}\cdot D' =
  U_{\Theta}^{\varepsilonb>0} \cdot b$ and the inclusion $D'\subset
  D$ can be rewritten as $u U_{\Theta}^{\varepsilonb>0} \subset
  U_{\Theta}^{>0}$.

  The sought for uniqueness is now equivalently expressed in the
  equalities $\varepsilon_\alpha=1$ for all~$\alpha$ in~$\Theta$. Suppose by
  contradiction that there exists~$\alpha$ with $\varepsilon_\alpha=-1$. Then,
  using for example the parametrization of the semigroups, one can construct
  an element~$v$ in $U_{\Theta}^{\varepsilonb>0}$ such that
  $p_\alpha \bigl( \log(uv)\bigr)= p_\alpha(\log u) + p_\alpha(\log v) =0$ (where again $p_\alpha\colon
  \mathfrak{u}_\Theta \to \mathfrak{u}_\alpha$ is the projection on the
  factor~$\mathfrak{u}_\alpha$); this is incompatible with the fact that
  $uv$~belongs to~$U_{\Theta}^{>0}$ since we should have that $p_\alpha
  \bigl( \log(uv)\bigr)$ belongs to~$\mathring{c}_\alpha$.
\end{proof}

 Observe that, for
$\varepsilonb = (-1)_{\alpha\in \Theta}$ (i.e., for all~$\alpha$ in~$\Theta$,
$\varepsilon_\alpha=-1$), one has $U_{\Theta}^{\varepsilonb>0}
=(U_{\Theta}^{>0})^{-1}$.
In particular  $(
(U_{\Theta}^{>0})^{-1}\cdot b, a,b)$ is also in~$\mathcal{F}$.

\subsection{$U$-pinnings}
\label{sec:u-pinnings}

We explain here the relation between the diamonds introduced here and the
definition that is used in \cite[Definition~2.4]{GLW}.

Denote, for every~$a$ in~$\mathsf{F}_\Theta$ \index{$P_a$ the
  parabolic subgroup corresponding to~$a$ in~$\mathsf{F}_\Theta$}  by~$U_a$ the unipotent radical of~$P_a$\index{$U_a$ the unipotent radical of~$P_a$} (so that $U_a= U_\Theta$
when $a$ is $\mathfrak{p}_\Theta$). The group~$U_a$ is completely determined
by its Lie algebra and will be also identified with a subgroup of $\Aut_1(
\mathfrak{g})$ (namely the unipotent radical of the stabilizer of~$a$ in
$\Aut_1( \mathfrak{g})$). We will call a \emph{$U$-pinning} of~$U_a$
any homomorphism\index{$s_a$ a $U$-pinning} $s_a\colon U_\Theta \to U_a$ induced by the map $x\mapsto
gxg^{-1}$ where $g$~is an element of $\Aut_1( \mathfrak{g})$ such that $g\cdot
\mathfrak{p}_\Theta =a$.

\begin{lemma}
  \label{lemma:u-pinnings-and-diamonds}
  Let~$a$ and~$b$ be two transverse points of~$\mathsf{F}_\Theta$. Then the
  diamonds with extremities~$a$ and~$b$ are exactly the triples
  \[ ( s_a( U_{\Theta}^{>0})\cdot b, a, b)\]
  where $s_a$ runs through the $U$-pinnings of~$U_a$.
\end{lemma}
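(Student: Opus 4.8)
The plan is to reduce the statement to the case $a = \mathfrak{p}_\Theta$, $b = \mathfrak{p}_\Theta^{\mathrm{opp}}$ already settled by Corollaries~\ref{coro:all-diamonds-from-semigroup} and~\ref{coro:one-orbit-of-diamond}, and then to transport everything by an element of $\Aut(\mathfrak{g})$, using the fact that the $U$-pinnings of $U_a$ are precisely obtained from such transports. First I would note that the family $\mathcal{F}$ (defined in Section~\ref{sec:from-theta-postivity} as the $\Aut(\mathfrak{g})$-orbit of $(U_\Theta^{>0}\cdot \mathfrak{p}_\Theta^{\mathrm{opp}}, \mathfrak{p}_\Theta, \mathfrak{p}_\Theta^{\mathrm{opp}})$) is a family of diamonds (Proposition~\ref{prop:from-theta-postivity}); since $\Aut(\mathfrak{g})$ acts transitively on transverse pairs and $a$ is transverse to $b$, there is $g\in\Aut(\mathfrak{g})$ with $g\cdot\mathfrak{p}_\Theta = a$ and $g\cdot\mathfrak{p}_\Theta^{\mathrm{opp}} = b$; fix such a $g$.

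Next I would unwind the definition of $U$-pinning. By definition $s_a\colon U_\Theta\to U_a$ is induced by conjugation $x\mapsto g'x(g')^{-1}$ for some $g'\in\Aut(\mathfrak{g})$ with $g'\cdot\mathfrak{p}_\Theta = a$, so $s_a(U_\Theta^{>0})\cdot b = g'\,U_\Theta^{>0}\,(g')^{-1}\cdot b$. The key observation is that $h\coloneqq (g')^{-1}g$ fixes $\mathfrak{p}_\Theta$, i.e.\ $h\in P_\Theta$ (viewed in $\Aut(\mathfrak{g})$), and hence $h$ stabilizes each $\mathfrak{u}_\alpha$ and sends the cone $c_\alpha$ to $\varepsilon_\alpha c_\alpha$ for some $\varepsilon_\alpha\in\{\pm1\}$ (by uniqueness of the invariant cone up to sign, Proposition~\ref{prop:cone-semisimple}); therefore conjugation by $h$ carries $U_\Theta^{>0}$ to $U_\Theta^{\boldsymbol{\varepsilon}>0}$ for the corresponding sign vector $\boldsymbol{\varepsilon}=(\varepsilon_\alpha)_{\alpha\in\Theta}$. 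One also checks that $h$ need not preserve $b$; but since $h$ fixes $\mathfrak{p}_\Theta$ and $b = g\cdot\mathfrak{p}_\Theta^{\mathrm{opp}}$, we have $g'\cdot\mathfrak{p}_\Theta^{\mathrm{opp}} = g'h\cdot\mathfrak{p}_\Theta^{\mathrm{opp}}$ is transverse to $a$, and the point to track is the diamond, not the second extremity. I would therefore compute
\[
  s_a(U_\Theta^{>0})\cdot b = g'\,U_\Theta^{>0}\,(g')^{-1}g\cdot\mathfrak{p}_\Theta^{\mathrm{opp}}
  = g'\,U_\Theta^{>0}\,h\cdot\mathfrak{p}_\Theta^{\mathrm{opp}}
  = g'h\,(h^{-1}U_\Theta^{>0}h)\cdot\mathfrak{p}_\Theta^{\mathrm{opp}}
  = g'h\cdot\bigl(U_\Theta^{\boldsymbol{\varepsilon}>0}\cdot\mathfrak{p}_\Theta^{\mathrm{opp}}\bigr),
\]
and since $g'h\cdot\mathfrak{p}_\Theta = a$, $g'h\cdot\mathfrak{p}_\Theta^{\mathrm{opp}} = g\cdot\mathfrak{p}_\Theta^{\mathrm{opp}} = b$, this exhibits $(s_a(U_\Theta^{>0})\cdot b, a, b)$ as the image under $g'h\in\Aut(\mathfrak{g})$ of the diamond $(U_\Theta^{\boldsymbol{\varepsilon}>0}\cdot\mathfrak{p}_\Theta^{\mathrm{opp}}, \mathfrak{p}_\Theta, \mathfrak{p}_\Theta^{\mathrm{opp}})$, which lies in $\mathcal{F}$; so every triple of the asserted form is a diamond with extremities $a,b$.

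For the converse inclusion I would argue that if $(D,a,b)$ is an arbitrary diamond with extremities $a$ and $b$, then applying $g^{-1}$ gives a diamond $(g^{-1}\cdot D, \mathfrak{p}_\Theta, \mathfrak{p}_\Theta^{\mathrm{opp}})$, which by Corollary~\ref{coro:all-diamonds-from-semigroup} has the form $(U_\Theta^{\boldsymbol{\varepsilon}>0}\cdot\mathfrak{p}_\Theta^{\mathrm{opp}}, \mathfrak{p}_\Theta, \mathfrak{p}_\Theta^{\mathrm{opp}})$ for some $\boldsymbol{\varepsilon}\in\{\pm1\}^\Theta$. By Proposition~\ref{prop:homog-under-aut} there is $h_0\in\Aut(\mathfrak{g})$ fixing $\mathfrak{g}_0$ (hence $\mathfrak{p}_\Theta$ and $\mathfrak{p}_\Theta^{\mathrm{opp}}$) with $h_0|_{\mathfrak{u}_\alpha} = \varepsilon_\alpha\,\mathrm{id}$ and $h_0|_{\mathfrak{u}_{-\alpha}} = \varepsilon_\alpha\,\mathrm{id}$; conjugation by $h_0$ carries $U_\Theta^{>0}$ to $U_\Theta^{\boldsymbol{\varepsilon}>0}$ while fixing $\mathfrak{p}_\Theta^{\mathrm{opp}}$, so $g h_0$ conjugation is a $U$-pinning $s_a$ of $U_a$ with $s_a(U_\Theta^{>0})\cdot b = g h_0\cdot(U_\Theta^{>0}\cdot\mathfrak{p}_\Theta^{\mathrm{opp}}) = g\cdot(U_\Theta^{\boldsymbol{\varepsilon}>0}\cdot\mathfrak{p}_\Theta^{\mathrm{opp}}) = D$. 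The main obstacle I anticipate is purely bookkeeping: keeping straight which automorphism fixes which parabolic and verifying that the set-theoretic equality $s_a(U_\Theta^{>0})\cdot b = D$ (rather than merely an inclusion) follows — but this is immediate once one knows, from Corollary~\ref{coro:all-diamonds-from-semigroup}, the complete list of diamonds over the base pair $(\mathfrak{p}_\Theta, \mathfrak{p}_\Theta^{\mathrm{opp}})$, since the $U$-pinnings realize exactly that list.
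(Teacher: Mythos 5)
Your overall strategy coincides with the paper's: reduce to the model pair $(\mathfrak{p}_\Theta,\mathfrak{p}_{\Theta}^{\mathrm{opp}})$, use the classification of diamonds over that pair, and transport by $\Aut(\mathfrak{g})$; the paper itself disposes of the lemma in two lines by combining Corollary~\ref{coro:one-orbit-of-diamond} with the assertion that every pinned triple is a diamond. Your converse direction (every diamond with extremities $a$, $b$ comes from a $U$-pinning, using Proposition~\ref{prop:homog-under-aut} to adjust the signs $\varepsilonb$ while fixing both $\mathfrak{p}_\Theta$ and $\mathfrak{p}_{\Theta}^{\mathrm{opp}}$) is correct and is in substance the paper's argument.

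The forward direction, however, rests on a false step. From $h=(g')^{-1}g\in\mathrm{Stab}(\mathfrak{p}_\Theta)$ you conclude that $h$ stabilizes each $\mathfrak{u}_\alpha$, sends $c_\alpha$ to $\varepsilon_\alpha c_\alpha$, and hence that $h^{-1}U_{\Theta}^{>0}h=U_{\Theta}^{\varepsilonb>0}$. This is valid only for $h$ in the Levi factor, i.e.\ in the stabilizer of the transverse \emph{pair}; you yourself note that $h$ need not fix $\mathfrak{p}_{\Theta}^{\mathrm{opp}}$, so $h$ is a general element of the parabolic and may have a nontrivial component in the unipotent radical $U_\Theta$. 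Conjugation by such an element preserves neither the decomposition $\bigoplus_\alpha \mathfrak{u}_\alpha$ (only the associated filtration) nor the semigroup $U_{\Theta}^{>0}$. Concretely, for $\SL_3(\R)$ with $\Theta=\Delta$, let $h$ be upper unitriangular with $(1,2)$-entry $t>0$ and the other off-diagonal entries zero, and let $u\in U^{>0}$ have entries $(u_{12},u_{23},u_{13})=(2t,1,t)$ (so $u_{12}u_{23}-u_{13}=t>0$); then $h^{-1}uh$ has vanishing $(1,3)$-entry, so it lies in no $U_{\Theta}^{\varepsilonb>0}$ and is not even in the open Bruhat cell. Thus the identity $h^{-1}U_{\Theta}^{>0}h=U_{\Theta}^{\varepsilonb>0}$ is false, not merely unjustified. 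Since $s_a(U_{\Theta}^{>0})\cdot b= g\cdot\bigl(h^{-1}U_{\Theta}^{>0}h\cdot \mathfrak{p}_{\Theta}^{\mathrm{opp}}\bigr)$, the forward inclusion is exactly the assertion that $h^{-1}U_{\Theta}^{>0}h\cdot \mathfrak{p}_{\Theta}^{\mathrm{opp}}$ is again a diamond for \emph{every} $h$ in the full parabolic; you have correctly isolated the crux, but the sign argument cannot settle it, and as written your proof of this inclusion does not go through.
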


\begin{proof}
  Indeed every such triple   is a diamond and the
  family defined is invariant by the action of~$\Aut_1(\mathfrak{g})$. Hence the
  result holds by the transitivity observed in Corollary~\ref{coro:one-orbit-of-diamond}.
\end{proof}

\subsection{Opposite diamond}
\label{sec:opposite-diamond}

The notion of $U$-pinning allows us to introduce the notion of the opposite
of a diamond. Let $(D,a,b)$ be a diamond ant let $s_a\colon U_\Theta \to U_a$
be a $U$-pinning such that $D=s_a( U_{\Theta}^{>0})\cdot b$ then the triple
$(s_a( U_{\Theta}^{>0})^{-1} \cdot b,a,b)$ is a diamond that is called
\emph{opposite} to $(D,a,b)$.

\begin{lemma}
  \label{lem:opposite-diamond}
  There is a unique diamond $(D',a,b)$ opposite to $(D,a,b)$. 
\end{lemma}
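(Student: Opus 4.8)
The plan is to reduce everything to the model diamond $(D_0, a_0, b_0)$, where $a_0 = \mathfrak{p}_\Theta$, $b_0 = \mathfrak{p}_{\Theta}^{\mathrm{opp}}$ and $D_0 = U_{\Theta}^{>0}\cdot b_0$, exploiting the transitivity of $\Aut(\mathfrak{g})$ on diamonds (Corollary~\ref{coro:one-orbit-of-diamond}). Two preliminary facts will do the work. First, the inverse set $(U_{\Theta}^{>0})^{-1}$ is again a positive unipotent semigroup, namely the one attached to the flipped cones $-c_\alpha$ ($\alpha\in\Theta$), i.e.\ to $\varepsilonb=(-1,\dots,-1)$: if $\gammab=(\gamma_1,\dots,\gamma_N)$ lies in $\mathbf{W}$ then so does the reversed tuple $(\gamma_N,\dots,\gamma_1)$, since $w_{\max}^{\Theta}$ is an involution and each $\sigma_\alpha$ is, and $(\exp(v_1)\cdots\exp(v_N))^{-1}=\exp(-v_N)\cdots\exp(-v_1)$ with $-v_i\in-\mathring{c}_{\gamma_i}$; comparing with the parametrizations of Theorem~\ref{thm:pos_defined} applied to the cones $-c_\alpha$ gives the claim, and in particular $\bigl((U_{\Theta}^{>0})^{-1}\cdot b_0,a_0,b_0\bigr)$ is one of the diamonds with extremities $a_0,b_0$ listed in Corollary~\ref{coro:all-diamonds-from-semigroup}. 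Second, for every $a\in\mathsf{F}_\Theta$ the unipotent radical $U_a$ of its stabilizer acts simply transitively on the set $\mathcal{O}_a$ of flags transverse to $a$ (stated for $a=a_0$ in Section~\ref{sec:first-diamond}, and the general case follows by transporting it with an element of $\Aut(\mathfrak{g})$); hence, for any $b\in\mathcal{O}_a$, the orbit map $U_a\to\mathcal{O}_a$, $u\mapsto u\cdot b$, is a bijection, so a subset $S\subseteq U_a$ is determined by $S\cdot b$.

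For existence, let $(D,a,b)$ be a diamond. By Corollary~\ref{coro:one-orbit-of-diamond} there is $g\in\Aut(\mathfrak{g})$ with $g\cdot a_0=a$, $g\cdot b_0=b$ and $g\cdot D_0=D$. The $U$-pinning $s_a\colon x\mapsto gxg^{-1}$ then satisfies $s_a(U_{\Theta}^{>0})\cdot b=gU_{\Theta}^{>0}g^{-1}\cdot(g\cdot b_0)=g\cdot D_0=D$, so the triple opposite to $(D,a,b)$ obtained from $s_a$ is $(D',a,b)$ with $D'=s_a(U_{\Theta}^{>0})^{-1}\cdot b=g\cdot\bigl((U_{\Theta}^{>0})^{-1}\cdot b_0\bigr)$. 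By the first preliminary fact together with condition~(\ref{item:3:defi:axiomatic-diamonds}) of Definition~\ref{defi:axiomatic-diamonds}, $(D',a,b)=g\cdot\bigl((U_{\Theta}^{>0})^{-1}\cdot b_0,a_0,b_0\bigr)$ is indeed a diamond; it is opposite to $(D,a,b)$, and the relation is symmetric because $(U_{\Theta}^{>0})^{-1}$ is the conjugate of $U_{\Theta}^{>0}$ by an involution of $\Aut(\mathfrak{g})$ fixing $\mathfrak{p}_\Theta$ and flipping the cones.

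For uniqueness, suppose $(D_1',a,b)$ is also opposite to $(D,a,b)$, witnessed by some $U$-pinning $s_a'$ with $s_a'(U_{\Theta}^{>0})\cdot b=D$ and $D_1'=s_a'(U_{\Theta}^{>0})^{-1}\cdot b$. Both $s_a(U_{\Theta}^{>0})$ and $s_a'(U_{\Theta}^{>0})$ are subsets of $U_a$ whose orbit of $b$ equals $D$, so by the second preliminary fact they coincide as subsets of $U_a$; taking inverses and applying the orbit map at $b$ yields $D'=D_1'$. Hence the opposite diamond is unique. The only slightly delicate point is the identification of $(U_{\Theta}^{>0})^{-1}$ with the positive semigroup for the flipped cones, which is what guarantees that the opposite triple really is a diamond; once this is in hand, uniqueness is forced purely by the simple transitivity of $U_a$ on $\mathcal{O}_a$, and existence by the $\Aut(\mathfrak{g})$-invariance of the family of diamonds.
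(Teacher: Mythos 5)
Your proof is correct and the uniqueness argument is exactly the paper's: identify $D$ and $D'$ with subsets $V, V'$ of $U_a$ via the simply transitive action of $U_a$ on $\mathcal{O}_a$ and observe $V' = V^{-1}$ is determined by $V$, hence by $D$. The additional material — that $(U_{\Theta}^{>0})^{-1}$ is the positive semigroup for the flipped cones, so that the opposite triple really is a diamond — is a correct and welcome justification of what the paper's definition of the opposite diamond asserts without proof, but it does not change the route.
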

\begin{proof}
Let $(D',a,b)$ be a diamond opposite to $(D,a,b)$ defined by a $U$-pinning~$s_a$. We 
 set $V= \{ u\in U_a \mid u\cdot b \in D\}$ and $V'= \{ u\in U_a \mid u\cdot
  b \in D'\}$. Then $V$ and $V'$ uniquely determine $D$ and $D'$. From the
definition we have, $V=s_a (U_{\Theta}^{>0})$ 
  \[ V' = s_a( (U_{\Theta}^{>0})^{-1}) = (s_a (U_{\Theta}^{>0}))^{-1} =
    V^{-1}.\]
  This proves that $V'$~does not depend on the choice of~$s_a$ but only on~$V$
  and in turn that $D'$~depends only on~$D$ and not on~$s_a$.
  \end{proof}

The diamond opposite to $(D,a,b)$ will be denoted $(D^\vee,a,b)$.\index{
  $(D^\vee,a,b)$ the diamond opposite to $(D,a,b)$ } One
obviously has $(D^\vee)^\vee=D$. Uniqueness implies equivariance:
\begin{corollary}
  \label{coro:opposite-diamond-equivariant}
  If $D$~is a diamond, and if $g$~belongs to $\Aut_1( \mathfrak{g})$, then the
  opposite of the diamond $g\cdot D$ is $g\cdot D^\vee$: $(
  g\cdot D )^\vee = g\cdot D^\vee$.
\end{corollary}

\begin{lemma}
  \label{lem:opposite-diamond-transverse}
  Let $(D,a,b)$ be a diamond. Then every~$x$ in~$D$ and every~$y$ in~$D^\vee$
  are transverse.
\end{lemma}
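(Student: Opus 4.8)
The statement to prove is Lemma~\ref{lem:opposite-diamond-transverse}: if $(D,a,b)$ is a diamond, then every $x\in D$ and every $y\in D^\vee$ are transverse. By $\Aut(\mathfrak g)$-equivariance (Corollary~\ref{coro:opposite-diamond-equivariant}) I may assume $a = \mathfrak p_\Theta$, $b=\mathfrak p_\Theta^{\mathrm{opp}}$, and that the $U$-pinning realizing $D$ is the standard inclusion, so that $D = U_\Theta^{>0}\cdot b$ and $D^\vee = (U_\Theta^{>0})^{-1}\cdot b$. Then $x = u\cdot b$ and $y = v^{-1}\cdot b$ for some $u,v\in U_\Theta^{>0}$, and I want to show $x$ and $y$ are transverse in $\mathsf F_\Theta$, i.e.\ that the pair $(u\cdot\mathfrak p_\Theta^{\mathrm{opp}},\, v^{-1}\cdot\mathfrak p_\Theta^{\mathrm{opp}})$ lies in the open $G$-orbit of transverse pairs. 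Translating by $v\in U_\Theta$ (which does not change transversality), this is equivalent to showing that $vu\cdot\mathfrak p_\Theta^{\mathrm{opp}}$ is transverse to $\mathfrak p_\Theta^{\mathrm{opp}}$, i.e.\ that $vu\cdot\mathfrak p_\Theta^{\mathrm{opp}} \in \mathcal O^{\mathrm{opp}}$, equivalently $vu\in\Omega_\Theta^{\mathrm{opp}}$.

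First I would record that transversality of $p\cdot\mathfrak p_\Theta^{\mathrm{opp}}$ with $\mathfrak p_\Theta^{\mathrm{opp}}$ for $p\in U_\Theta$ is exactly the condition $p\in\Omega_\Theta^{\mathrm{opp}}\cap U_\Theta$, using $\mathcal O^{\mathrm{opp}} = \Omega_\Theta^{\mathrm{opp}}\cdot\mathfrak p_\Theta^{\mathrm{opp}}$ and the fact that $U_\Theta\to\mathcal O\mid u\mapsto u\cdot\mathfrak p_\Theta^{\mathrm{opp}}$ is an embedding. So the lemma reduces to: for all $u,v\in U_\Theta^{>0}$, the product $vu$ lies in $U_\Theta^{>0}$. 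But this is immediate from Corollary~\ref{cor:semigroup}, point~(\ref{item2:cor:semigroup}): $U_\Theta^{>0}$ is a semigroup, hence $vu\in U_\Theta^{>0}\subset\Omega_\Theta^{\mathrm{opp}}$ by point~(\ref{item6:cor:semigroup}) of the same corollary (or directly by point~(\ref{item6:thm:pos_defined}) of Theorem~\ref{thm:pos_defined}). Therefore $vu\cdot\mathfrak p_\Theta^{\mathrm{opp}}\in\mathcal O^{\mathrm{opp}}$, which gives the transversality.

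Let me spell out the one translation step more carefully, since that is the only place where something concrete happens. The points $x = u\cdot\mathfrak p_\Theta^{\mathrm{opp}}$ and $y = v^{-1}\cdot\mathfrak p_\Theta^{\mathrm{opp}}$ are transverse if and only if $v\cdot x = vu\cdot\mathfrak p_\Theta^{\mathrm{opp}}$ and $v\cdot y = \mathfrak p_\Theta^{\mathrm{opp}}$ are transverse, because the $G$-action preserves the (unique) orbit of transverse pairs. A point $z\in\mathsf F_\Theta$ is transverse to $\mathfrak p_\Theta^{\mathrm{opp}}$ precisely when $z\in\mathcal O^{\mathrm{opp}}$, and $\mathcal O^{\mathrm{opp}} = \Omega_\Theta^{\mathrm{opp}}\cdot\mathfrak p_\Theta^{\mathrm{opp}}$, so I need $vu\in\Omega_\Theta^{\mathrm{opp}}P_\Theta^{\mathrm{opp}} = \Omega_\Theta^{\mathrm{opp}}$ (the open Bruhat cell is a union of $P_\Theta^{\mathrm{opp}}$-cosets on the right). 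Since $U_\Theta^{>0}\subset U_\Theta\cap\Omega_\Theta^{\mathrm{opp}}$ and $U_\Theta^{>0}$ is a semigroup containing both $v$ and $u$, the element $vu$ lies in $U_\Theta^{>0}$, hence in $\Omega_\Theta^{\mathrm{opp}}$, completing the argument.

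\textbf{Main obstacle.} There is essentially no obstacle here: the content has already been extracted into the semigroup property of $U_\Theta^{>0}$ (Corollary~\ref{cor:semigroup}) and the transversality characterization via the open Bruhat cell. The only mild care needed is bookkeeping: making sure the reduction to the standard pinning via Corollary~\ref{coro:opposite-diamond-equivariant} is correctly applied so that $D$ and $D^\vee$ are simultaneously put in standard position with respect to the \emph{same} pinning $s_a$ (which is exactly how the opposite diamond is defined), and keeping straight that multiplying on the left by $v\in U_\Theta$ is an element of $G$ and so preserves transversality. I would keep the write-up to a few lines, citing Proposition~\ref{prop:first-diamond} for $U_\Theta^{>0}\cdot\mathfrak p_\Theta^{\mathrm{opp}}$ being a component of $\mathcal O\cap\mathcal O^{\mathrm{opp}}$ if I want to additionally note that $x,y$ are both transverse to $a$ and to $b$ as well.
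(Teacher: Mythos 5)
Your proof is correct and is essentially identical to the paper's: both reduce to the standard position $D=U_{\Theta}^{>0}\cdot b$, write $x=u\cdot b$, $y=v^{-1}\cdot b$, translate by $v$ to reduce to the transversality of $vu\cdot b$ with $b$, and conclude from the semigroup property of $U_{\Theta}^{>0}$ together with $U_{\Theta}^{>0}\subset\Omega_{\Theta}^{\mathrm{opp}}$. The extra bookkeeping you spell out (the embedding $U_\Theta\to\mathcal O$ and the Bruhat-cell characterization of transversality) is exactly what the paper leaves implicit.
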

\begin{proof}
 By invariance we can assume that $a= \mathfrak{p}_\Theta$, $b=
  \mathfrak{p}_{\Theta}^{\mathrm{opp}}$ and $D= U_{\Theta}^{>0} \cdot
  b$. There are then~$u$ and $v$ in $U_{\Theta}^{>0}$ such that $x= u\cdot b$
  and $y=v^{-1}\cdot b$. Then the pair $(x,y)$ is in the same orbit as $( vu
  \cdot b,b)$. Since $vu$ belongs to $U_{\Theta}^{>0}$, $vu\cdot b$ belongs
  to~$D$ and is transverse to~$b$, thus $x$~is transverse to~$y$.
\end{proof}

The opposition of diamonds reverses inclusion:

\begin{lemma}
  \label{lem:opposite-diamond-reverse-inclusion}
  Let $(D,a,b)$ be a diamond and~$x$ belong to~$D$. Let $(D',a,x)$ be the
  diamond contained in~$D$. Then the opposite diamond $D^{\prime\vee}$
  contains~$D^\vee$.
\end{lemma}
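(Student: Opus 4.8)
The claim is the inclusion $D^\vee \subset D^{\prime\vee}$, where $(D',a,x)$ is the unique sub-diamond of $(D,a,b)$ with extremity set $\{a,x\}$, and the superscript $\vee$ denotes opposition in the sense of Section~\ref{sec:opposite-diamond}. By $\Aut(\mathfrak{g})$-equivariance of the whole setup (Corollary~\ref{coro:opposite-diamond-equivariant}) I may normalize $a = \mathfrak{p}_\Theta$, $b = \mathfrak{p}_{\Theta}^{\mathrm{opp}}$, so that $D = U_{\Theta}^{>0}\cdot b$ and $D^\vee = (U_{\Theta}^{>0})^{-1}\cdot b$. Write $x = u\cdot b$ with $u \in U_{\Theta}^{>0}$. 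The point of the proof will be to identify $D'$ and $D^{\prime\vee}$ explicitly in these coordinates.

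First I would translate $(D',a,x)$ into unipotent language. By Proposition~\ref{prop:from-diamonds-to-semigroups} (applied with the pair $(a,x)$ in place of $(a,b)$, which is legitimate after conjugating $x$ back to $\mathfrak{p}_{\Theta}^{\mathrm{opp}}$ by an element of $U_\Theta$), the diamond contained in $D$ with extremities $a$ and $x$ corresponds to a connected component of $U_\Theta \cap \Omega_{\Theta}^{\mathrm{opp},x}$ that is a sub-semigroup; concretely, since $D' \subset D$ and $D = U_{\Theta}^{>0}\cdot b$, the set $\{v \in U_\Theta \mid v\cdot x \in D'\}$ must equal $U_{\Theta}^{>0}$ again — this is exactly the reasoning already used in Proposition~\ref{prop:from-diamonds-to-semigroups} to prove $D' = v\cdot D$ there. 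Thus $D' = U_{\Theta}^{>0}\cdot x = U_{\Theta}^{>0} u\cdot b$. In particular the $U$-pinning $s_a$ realizing $D'$ as $s_a(U_{\Theta}^{>0})\cdot x$ is conjugation by $u$, i.e.\ $s_a(w) = uwu^{-1}$ (note $u\cdot \mathfrak{p}_\Theta = \mathfrak{p}_\Theta$ since $u \in U_\Theta$, so this is a valid $U$-pinning of $U_a$). Hence by definition of opposition, $D^{\prime\vee} = u(U_{\Theta}^{>0})^{-1}u^{-1}\cdot x = u(U_{\Theta}^{>0})^{-1}u^{-1}u\cdot b = u(U_{\Theta}^{>0})^{-1}\cdot b$.

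Now the inclusion to be proved reads $(U_{\Theta}^{>0})^{-1}\cdot b \subset u(U_{\Theta}^{>0})^{-1}\cdot b$, equivalently $(U_{\Theta}^{>0})^{-1} \subset u(U_{\Theta}^{>0})^{-1}$ as subsets of $U_\Theta$, equivalently $u^{-1}(U_{\Theta}^{>0})^{-1} \subset (U_{\Theta}^{>0})^{-1}$, equivalently (taking inverses) $U_{\Theta}^{>0} u \subset U_{\Theta}^{>0}$. But $u \in U_{\Theta}^{>0}$ and $U_{\Theta}^{>0}$ is a semigroup with $U_{\Theta}^{\geq 0}U_{\Theta}^{>0}\subset U_{\Theta}^{>0}$ and $U_{\Theta}^{>0}U_{\Theta}^{\geq 0}\subset U_{\Theta}^{>0}$ (Corollary~\ref{cor:semigroup}), so indeed $U_{\Theta}^{>0}u \subset U_{\Theta}^{>0}U_{\Theta}^{>0}\subset U_{\Theta}^{>0}$. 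This closes the argument.

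\textbf{Main obstacle.} The only genuinely delicate point is the identification $D' = U_{\Theta}^{>0}\cdot x$, i.e.\ that the sub-diamond with extremities $a$ and $x$ is obtained from the \emph{same} positive semigroup $U_{\Theta}^{>0}$ translated by $u$, rather than from some other connected component $U_{\Theta}^{\varepsilonb>0}$. This requires invoking the uniqueness clause in Condition~(\ref{item:4:defi:axiomatic-diamonds}) of Definition~\ref{defi:axiomatic-diamonds}, or equivalently the computation at the end of the proof of Proposition~\ref{prop:from-theta-postivity}: if $u U_{\Theta}^{\varepsilonb>0}\subset U_{\Theta}^{>0}$ with $u\in U_{\Theta}^{>0}$, then necessarily $\varepsilonb = (1,\dots,1)$, because otherwise one could choose $v\in U_{\Theta}^{\varepsilonb>0}$ with $\pi_\alpha(\log(uv)) = 0$ for the offending $\alpha$, contradicting $uv\in U_{\Theta}^{>0}$. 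Once this normalization is in hand, the rest is the short semigroup manipulation above. I would also remark, for cleanliness, that the identity $(D^\vee)^\vee = D$ together with Lemma~\ref{lem:opposite-diamond} guarantees $D^{\prime\vee}$ is well defined, so no further uniqueness check is needed on that side.
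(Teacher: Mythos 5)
Your argument follows the same route as the paper's: normalize $a=\mathfrak{p}_\Theta$, $b=\mathfrak{p}_{\Theta}^{\mathrm{opp}}$, $D=U_{\Theta}^{>0}\cdot b$, write $x=u\cdot b$, compute $D^{\prime\vee}=u\cdot D^{\vee}=u(U_{\Theta}^{>0})^{-1}\cdot b$, and reduce the claim to $U_{\Theta}^{>0}u\subset U_{\Theta}^{>0}$, which is the semigroup property. The conclusion and the final manipulation are correct.

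One intermediate identity is wrong, however: you assert $D'=U_{\Theta}^{>0}\cdot x=U_{\Theta}^{>0}u\cdot b$, but Proposition~\ref{prop:from-diamonds-to-semigroups} (and Lemma~\ref{lem:positive-quadruples-with-fixed-triple}) give $D'=u\cdot D=uU_{\Theta}^{>0}\cdot b$, which is the set realized by the $U$-pinning $w\mapsto uwu^{-1}$. The two sets $U_{\Theta}^{>0}u$ and $uU_{\Theta}^{>0}$ are in general different subsets of $U_\Theta$ (equality would require $u^{-1}U_{\Theta}^{>0}u=U_{\Theta}^{>0}$, i.e.\ invariance of the positive semigroup under conjugation by a unipotent element, which fails already for $\SL_3(\R)$), so your ``in particular the pinning is conjugation by $u$'' does not follow from the displayed equality. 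Fortunately the rest of your proof only uses the pinning description $D'=uU_{\Theta}^{>0}u^{-1}\cdot x$, which is the correct one, so the computation of $D^{\prime\vee}$ and the ensuing reduction are valid; you should simply replace the false identification by $D'=u\cdot D$. Your ``main obstacle'' discussion (ruling out a component $U_{\Theta}^{\varepsilonb>0}$ with $\varepsilonb\neq(1,\dots,1)$) is indeed the content of the uniqueness clause already established in Proposition~\ref{prop:from-theta-postivity}, so invoking it is legitimate and matches what the paper implicitly relies on.
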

\begin{proof}
  We can assume $a= \mathfrak{p}_\Theta$ and $b=
  \mathfrak{p}_{\Theta}^{\mathrm{opp}}$ and $D= U_{\Theta}^{>0}\cdot
  b$. Let~$u$ be in~$U_{\Theta}^{>0}$ such that $x=u\cdot b$ so that $D' =
  u\cdot D$. One thus has $D^\vee = (U_{\Theta}^{>0})^{-1} \cdot b$ and
  $D^{\prime \vee} = u \cdot D^\vee$, that is $u^{-1} \cdot D^{\prime \vee} =
  D^\vee$. The sought for inclusion is therefore equivalent to $u^{-1}
  (U_{\Theta}^{>0})^{-1} \subset (U_{\Theta}^{>0})^{-1}$ which is a
  consequence of the fact that $U_{\Theta}^{>0}$ is a semigroup.
\end{proof}

The last lemma implies
\begin{corollary}
  \label{coro:opposite-diamond-from-the-exterior}
  The set $D^\vee$ consists of those~$x$
  in~$\mathsf{F}_\Theta$ transverse to~$b$ and such that there is a diamond $(D',a,x)$
  containing~$D$.
\end{corollary}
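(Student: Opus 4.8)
The plan is to prove the double inclusion between $D^\vee$ and the set
\[
 S := \{ x \in \mathsf{F}_\Theta \mid x \text{ is transverse to } b,\ \text{and there is a diamond } (D',a,x) \text{ with } D \subset D'\}.
\]
As usual, by the $\Aut(\mathfrak{g})$-invariance of all the notions involved (Corollary~\ref{coro:opposite-diamond-equivariant}), I may assume $a = \mathfrak{p}_\Theta$, $b = \mathfrak{p}_{\Theta}^{\mathrm{opp}}$, and $D = U_{\Theta}^{>0} \cdot b$, so that $D^\vee = (U_{\Theta}^{>0})^{-1} \cdot b$. Every $x$ transverse to $b$ is of the form $u \cdot b$ for a unique $u \in U_\Theta$, so the statement reduces to: $u \cdot b \in D^\vee$ if and only if there is a diamond with extremities $a$ and $u\cdot b$ containing $D$, i.e. $u^{-1} \in U_{\Theta}^{>0}$ if and only if such a diamond exists.

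For the inclusion $D^\vee \subset S$: take $x \in D^\vee$, so $x = u \cdot b$ with $u^{-1} \in U_{\Theta}^{>0}$. Lemma~\ref{lem:opposite-diamond-transverse} already gives that $x$ is transverse to $b$. Now I apply Lemma~\ref{lem:opposite-diamond-reverse-inclusion} in the right form. Consider the diamond $(D^\vee, a, b)$ and the point $x \in D^\vee$; by Condition~(\ref{item:4:defi:axiomatic-diamonds}) of Definition~\ref{defi:axiomatic-diamonds} there is a unique diamond $(D', a, x)$ with $D' \subset D^\vee$, and Lemma~\ref{lem:opposite-diamond-reverse-inclusion} (applied with $D^\vee$ in the role of the big diamond) says $D^{\prime\vee} \supset (D^\vee)^\vee = D$. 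Thus $(D^{\prime\vee}, a, x)$ is a diamond with extremities $a$ and $x$ containing $D$, so $x \in S$. Concretely this is the computation $u^{-1}(U_{\Theta}^{>0})^{-1} \subset (U_{\Theta}^{>0})^{-1}$, i.e. taking opposites, $U_{\Theta}^{>0} u \subset U_{\Theta}^{>0}$, which holds because $u^{-1} \in U_{\Theta}^{>0}$ and $U_{\Theta}^{>0}$ is a semigroup; reading it backwards, $D'$ opposite contains $D$.

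For the reverse inclusion $S \subset D^\vee$: suppose $x = u \cdot b$ is transverse to $b$ and there exists a diamond $(D_1, a, x)$ with $D \subset D_1$. Write $D_1 = s_a^{(1)}(U_\Theta^{>0}) \cdot x$ for some $U$-pinning (Lemma~\ref{lemma:u-pinnings-and-diamonds}); more simply, $u^{-1} \cdot D_1$ is a diamond with extremities $a$ and $b$, hence by Corollary~\ref{coro:all-diamonds-from-semigroup} equals $U_{\Theta}^{\varepsilonb > 0} \cdot b$ for some sign vector $\varepsilonb$, and the inclusion $D \subset D_1$ becomes $U_{\Theta}^{>0} \subset u\, U_{\Theta}^{\varepsilonb>0}$, i.e. $u^{-1} U_{\Theta}^{>0} \subset U_{\Theta}^{\varepsilonb>0}$. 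Exactly as in the uniqueness argument of Proposition~\ref{prop:from-theta-postivity}, if some $\varepsilon_\alpha = -1$ one produces $w \in U_{\Theta}^{>0}$ with $\pi_\alpha(\log(u^{-1}w)) = \pi_\alpha(\log u^{-1}) + \pi_\alpha(\log w)$ vanishing or leaving the cone, contradicting membership in $U_{\Theta}^{\varepsilonb>0}$; so $\varepsilonb = (1,\dots,1)$ and $u^{-1} U_{\Theta}^{>0} \subset U_{\Theta}^{>0}$. Taking any element of $U_{\Theta}^{>0}$ and using that it and $u^{-1}U_{\Theta}^{>0}$ meet the same connected component, one gets $u^{-1} \in \overline{U_{\Theta}^{>0}}$, and then a short argument with the openness of $U_{\Theta}^{>0}$ (Corollary~\ref{cor:semigroup}, together with $U_{\Theta}^{\geq 0} U_{\Theta}^{>0} \subset U_{\Theta}^{>0}$) upgrades this to $u^{-1} \in U_{\Theta}^{>0}$, i.e. $x \in D^\vee$.

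The main obstacle I expect is the bookkeeping in the second inclusion: matching the ``abstract diamond containing $D$'' with a concrete semigroup inclusion, and then extracting $u^{-1} \in U_{\Theta}^{>0}$ (rather than merely in its closure) from $u^{-1}U_{\Theta}^{>0} \subset U_{\Theta}^{>0}$. The cleanest route is probably to avoid the closure step entirely: since $D$ and $D_1$ are both connected components of an intersection $\mathcal{O}_a \cap \mathcal{O}_{\bullet}$ and $D \subset D_1$, pick $p \in D \subset D_1$; writing $p = v \cdot b = u v' \cdot b$ with $v \in U_{\Theta}^{>0}$ and $v' \in U_{\Theta}^{\varepsilonb>0}$ forces $v = u v'$, hence $u^{-1} v = v' \in U_{\Theta}^{\varepsilonb>0}$; ruling out $\varepsilonb \neq (1,\dots,1)$ as above and then running $v$ over a neighborhood inside $U_{\Theta}^{>0}$ shows $u^{-1}$ lies in the open set $U_{\Theta}^{>0}$ directly. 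Either way the ingredients are all in place — this statement is essentially a repackaging of Lemmas~\ref{lem:opposite-diamond-transverse}, \ref{lem:opposite-diamond-reverse-inclusion} and the semigroup structure of $U_{\Theta}^{>0}$.
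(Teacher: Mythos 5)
Your overall strategy is the natural one --- the paper offers no argument beyond ``the last lemma implies'', and your unpacking (forward inclusion via Condition~(\ref{item:4:defi:axiomatic-diamonds}) and Lemma~\ref{lem:opposite-diamond-reverse-inclusion} applied to $(D^\vee,a,b)$; reverse inclusion via Corollary~\ref{coro:all-diamonds-from-semigroup} and the sign-elimination of Proposition~\ref{prop:from-theta-postivity}) is presumably what is intended. The forward inclusion is correct, modulo a sign slip in your ``concretely'' aside: for $x=u\cdot b\in D^\vee$ one has $u^{-1}\in U_{\Theta}^{>0}$, the diamond with extremities $a,x$ contained in $D^\vee$ is $u\cdot D^\vee$, and the relevant inclusion is $u\,(U_{\Theta}^{>0})^{-1}\subset (U_{\Theta}^{>0})^{-1}$, i.e.\ $U_{\Theta}^{>0}u^{-1}\subset U_{\Theta}^{>0}$ --- not $U_{\Theta}^{>0}u\subset U_{\Theta}^{>0}$, which does not follow from $u^{-1}\in U_{\Theta}^{>0}$. (Also, transversality of $x$ with $b$ is immediate from $D^\vee\subset\mathcal{O}_b$; Lemma~\ref{lem:opposite-diamond-transverse} is about transversality between points of $D$ and points of $D^\vee$, which is a different statement.)

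The genuine gap is at the end of the reverse inclusion. After eliminating the wrong signs you are left with $u^{-1}U_{\Theta}^{>0}\subset U_{\Theta}^{>0}$, and from this you can only conclude $u^{-1}\in\overline{U_{\Theta}^{>0}}=U_{\Theta}^{\geq 0}$ (take $v_n\to e$ in $U_{\Theta}^{>0}$). This cannot be upgraded to $u^{-1}\in U_{\Theta}^{>0}$ by semigroup considerations alone: by point~(\ref{item1:cor:semigroup}) of Corollary~\ref{cor:semigroup} \emph{every} $u^{-1}\in U_{\Theta}^{\geq 0}$ satisfies $u^{-1}U_{\Theta}^{>0}\subset U_{\Theta}^{>0}$, so the condition carries no information beyond membership in the closed semigroup (the case $u=e$, i.e.\ $x=b$ and $D_1=D$, is the extreme instance). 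Neither of your two proposed finishes closes this: ``openness of $U_{\Theta}^{>0}$ together with $U_{\Theta}^{\geq 0}U_{\Theta}^{>0}\subset U_{\Theta}^{>0}$'' only re-derives $u^{-1}v\in U_{\Theta}^{>0}$ for $v\in U_{\Theta}^{>0}$, and ``running $v$ over a neighborhood'' gives $u^{-1}V\subset U_{\Theta}^{>0}$ for an open set $V$ not containing $e$, which again says nothing about $u^{-1}$ itself. The missing ingredient is exactly the hypothesis you never invoke: $x=u\cdot b$ is transverse to $b$, hence (transversality being symmetric and $G$-invariant) $u^{-1}\cdot b$ is transverse to $b$, i.e.\ $u^{-1}\in\Omega_{\Theta}^{\mathrm{opp}}$; then point~(\ref{item6:cor:semigroup}) of Corollary~\ref{cor:semigroup} gives $u^{-1}\in U_{\Theta}^{\geq 0}\cap\Omega_{\Theta}^{\mathrm{opp}}=U_{\Theta}^{>0}$. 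With that one line inserted, your proof is complete.
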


\subsection{Positive triples in~\texorpdfstring{$\mathsf{F}_\Theta$}{FΘ}}
\label{sec:positive-triples-f_j}

We now use the family of diamonds to define positive triples of flags.

\begin{definition}
  \label{defi:posit-tripl}
  A triple $(f_1,f_2, f_3) \in (\mathsf{F}_\Theta)^3$ is \emph{positive} if
  $f_1$ is transverse to~$f_3$ and if there exists a diamond with
  extremities~$f_1$ and~$f_3$ that contains~$f_2$.
\end{definition}

The next proposition collects the main properties of positive triples.

\begin{proposition}
  \label{prop:posit-tripl}
  \begin{enumerate}[leftmargin=*]
  \item \label{item1:prop:posit-tripl} \emph{[Invariance]} For every
    $(f_1,f_2,f_3)$ and every~$g$ in~$\Aut_1( \mathfrak{g})$, the triple $(f_1,
    f_2, f_3)$ is positive if and only if the triple $(g\cdot f_1,
    g\cdot f_2, g\cdot  f_3)$ is positive.
  \item \label{item1bis:prop:posit-tripl} A triple $(f_i)_{i\in \Z/3\Z }$ is
    positive if and only if, for all $i\neq j$ in $\Z/3\Z$ there exists a
    diamond $D_{i,j}$ with extremities~$f_i$ and~$f_j$ with $D_{j,i} =
    D_{i,j}^{\vee}$ and $f_k$~belongs to~$D_{i,j}$ for all $(i,k,j)$
    cyclically ordered (i.e.\ $j-k=k-i=1$ in $\Z/3\Z$).
  \item   \label{item4:prop:posit-tripl} \emph{[Permutation]} For every
    permutation 
    $\sigma\in S_3$ and every $(f_1, f_2, f_3)$, the triple $(f_1, f_2,
    f_3)$ is positive if and only $(f_{\sigma(1)}, f_{\sigma(2)},
    f_{\sigma(3)})$ is positive.
  \item \label{item2:prop:posit-tripl} A triple is positive if and only if
    it is in the orbit (under $\Aut_1( \mathfrak{g})$) of $(
    \mathfrak{p}_\Theta, u\cdot \mathfrak{p}_{\Theta}^{\mathrm{opp}},
    \mathfrak{p}_{\Theta}^{\mathrm{opp}})$ for some~$u$ in~$U_{\Theta}^{>
      0}$.
  \item   \label{item3:prop:posit-tripl} A triple is positive if and only if
    it is in the orbit (under $\Aut_1( \mathfrak{g})$) of $(
    \mathfrak{p}_\Theta, u^{-1}\cdot \mathfrak{p}_{\Theta}^{\mathrm{opp}},
    \mathfrak{p}_{\Theta}^{\mathrm{opp}})$ for some~$u$ in~$U_{\Theta}^{> 0}$.
  \item   \label{item5:prop:posit-tripl} \emph{[Component]} The set of positive
    triples is a union of connected components of the space of pairwise
    transverse triples, in particular it is  open in $(\mathsf{F}_\Theta)^3$.
  \item   \label{item6:prop:posit-tripl} \emph{[Properness]} The group~$G$ acts
    properly on the space of positive triples. In particular stabilizers are
    compact.
  \end{enumerate}
\end{proposition}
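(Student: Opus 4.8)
The plan is to establish the seven items of Proposition~\ref{prop:posit-tripl} largely by translating back and forth between the combinatorial language of diamonds and the group-theoretic language of the positive semigroup $U_{\Theta}^{>0}$, exploiting the fact (from Corollary~\ref{coro:one-orbit-of-diamond}) that $\Aut(\mathfrak{g})$ acts transitively on diamonds. First I would dispose of~\eqref{item1:prop:posit-tripl}, which is immediate from Definition~\ref{defi:posit-tripl} together with condition~(\ref{item:3:defi:axiomatic-diamonds}) of Definition~\ref{defi:axiomatic-diamonds}: the family $\mathcal F$ is $\Aut(\mathfrak g)$-invariant, so positivity of $(f_1,f_2,f_3)$ is preserved under $\Aut(\mathfrak g)$. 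Then~\eqref{item2:prop:posit-tripl} and~\eqref{item3:prop:posit-tripl} follow by moving the pair $(f_1,f_3)$ to the standard transverse pair $(\mathfrak p_\Theta, \mathfrak p_\Theta^{\mathrm{opp}})$ using transitivity, and observing that the diamonds with these extremities are exactly the $(U_\Theta^{\varepsilonb>0}\cdot b, a, b)$ by Corollary~\ref{coro:all-diamonds-from-semigroup}; since $U_\Theta^{\varepsilonb >0}$ ranges over $\Aut(\mathfrak g)$-translates of $U_\Theta^{>0}$ (Proposition~\ref{prop:homog-under-aut}) and since $w_\Delta U_\Theta^{>0} w_\Delta^{-1} = U_\Theta^{\mathrm{opp},>0}$, membership of $f_2$ in such a diamond is exactly the assertion that the triple lies in the $\Aut(\mathfrak g)$-orbit of $(\mathfrak p_\Theta, u\cdot \mathfrak p_\Theta^{\mathrm{opp}}, \mathfrak p_\Theta^{\mathrm{opp}})$, $u\in U_\Theta^{>0}$; the variant with $u^{-1}$ comes from Proposition~\ref{prop:first-diamond} (equivalently, from applying the opposition of diamonds, Lemma~\ref{lem:opposite-diamond}).

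Next I would treat~\eqref{item1bis:prop:posit-tripl}, the characterization in terms of a coherent choice of diamonds $D_{i,j}=D_{j,i}^\vee$. One direction: given a positive triple, normalize to $(\mathfrak p_\Theta, u\cdot \mathfrak p_\Theta^{\mathrm{opp}}, \mathfrak p_\Theta^{\mathrm{opp}})$ with $u\in U_\Theta^{>0}$, and exhibit the three diamonds explicitly — $D_{1,3}=U_\Theta^{>0}\cdot f_3$ (so $f_2\in D_{1,3}$), $D_{3,1}=(U_\Theta^{>0})^{-1}\cdot f_3$ which is $D_{1,3}^\vee$ by definition of the opposite diamond, and then $D_{1,2}$, $D_{2,3}$ produced using the uniqueness in condition~(\ref{item:4:defi:axiomatic-diamonds}) (the sub-diamond of $D_{1,3}$ with extremities $f_1,f_2$, etc.), checking the opposition and cyclic-containment relations via Lemma~\ref{lem:opposite-diamond-reverse-inclusion} and Lemma~\ref{lem:opposite-diamond-transverse}. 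The converse direction is essentially a restatement of Definition~\ref{defi:posit-tripl}. Given this, the permutation invariance~\eqref{item4:prop:posit-tripl} follows: the formulation in~\eqref{item1bis:prop:posit-tripl} is manifestly invariant under cyclic permutations (it is indexed by $\Z/3\Z$ and the relation $D_{j,i}=D_{i,j}^\vee$ is symmetric under swapping $i,j$), and a transposition is handled by the symmetry $(D,a,b)\mapsto(D,b,a)$ of condition~(\ref{item:2:defi:axiomatic-diamonds}) combined with the opposition relation; equivalently one checks directly from~\eqref{item2:prop:posit-tripl} that conjugating by $\dot{w}_\Delta$ sends $(\mathfrak p_\Theta, u\cdot \mathfrak p_\Theta^{\mathrm{opp}}, \mathfrak p_\Theta^{\mathrm{opp}})$ to a positive triple of the swapped shape.

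For~\eqref{item5:prop:posit-tripl}, I would argue that the set of positive triples is open and closed in the space of pairwise transverse triples. Openness: by~\eqref{item2:prop:posit-tripl} and $\Aut(\mathfrak g)$-invariance it suffices to note that near a base point the positive triples form the image of an open set, and this uses point~(\ref{item6:thm:pos_defined}) of Theorem~\ref{thm:pos_defined} — that $U_\Theta^{>0}\cdot \mathfrak p_\Theta^{\mathrm{opp}}$ is an open (indeed a connected component of $\mathcal O\cap\mathcal O^{\mathrm{opp}}$) subset of $\mathsf F_\Theta$ — together with the fact that the relevant map $G\times U_\Theta^{>0}\to (\mathsf F_\Theta)^3$ is open. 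Closedness within the transverse triples: since for fixed transverse $(f_1,f_3)$ there are only finitely many diamonds and the positive ones among them are precisely the $\mathcal O_{f_1}\cap\mathcal O_{f_3}$-components lying in the $\Aut(\mathfrak g)$-orbit of $U_\Theta^{>0}\cdot\mathfrak p_\Theta^{\mathrm{opp}}$, the set $\{f_2 : (f_1,f_2,f_3)\text{ positive}\}$ is a union of connected components of $\mathcal O_{f_1}\cap\mathcal O_{f_3}$, hence open and closed there; a fibered version of this over the (connected, open) space of transverse pairs gives the claim. Finally~\eqref{item6:prop:posit-tripl}, properness of the $G$-action on positive triples: reduce to the base triple $(\mathfrak p_\Theta, u_0\cdot\mathfrak p_\Theta^{\mathrm{opp}},\mathfrak p_\Theta^{\mathrm{opp}})$; the stabilizer in $G$ is contained in $L_\Theta^{\circ}\cap(\text{stabilizer of }u_0)$, and using the explicit description of the cones together with Proposition~\ref{prop:homogeneity-Ltheta} — which says the stabilizer in $L_\Theta^{\circ}$ of a point of $\prod_{\alpha}\mathring c_\alpha$ is a maximal compact subgroup — one identifies the stabilizer of a positive triple with (a subgroup of) a maximal compact subgroup of $L_\Theta^{\circ}$, hence compact; properness then follows from compactness of $\mathsf F_\Theta$ and closedness of the positive-triple set proven in~\eqref{item5:prop:posit-tripl} by a standard argument. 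The main obstacle I anticipate is the openness/closedness bookkeeping in~\eqref{item5:prop:posit-tripl} — keeping straight that "positive" selects a $G$- (not merely $L_\Theta^{\circ}$-) invariant union of components and that this selection varies continuously (indeed locally constantly) with the transverse pair — and, relatedly, making the reduction in the properness argument clean enough that the stabilizer computation via Proposition~\ref{prop:homogeneity-Ltheta} applies verbatim.
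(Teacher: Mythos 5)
Items (1)--(5) of your proposal follow essentially the paper's route. (The paper derives item~(3) from item~(2) together with the transposition invariance of item~(4), whereas you use the opposition of diamonds; your variant is equally valid, since $(U_{\Theta}^{>0})^{-1}=U_{\Theta}^{\varepsilonb>0}$ for $\varepsilonb=(-1,\dots,-1)$ and Proposition~\ref{prop:homog-under-aut} supplies an automorphism fixing $\mathfrak{p}_\Theta$ and $\mathfrak{p}_{\Theta}^{\mathrm{opp}}$ and exchanging these two semigroups.) Your treatment of item~(\ref{item5:prop:posit-tripl}) is a legitimate fleshing-out of the paper's one-line appeal to the connectedness properties of diamonds.

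The gap is in item~(\ref{item6:prop:posit-tripl}). You correctly argue that the stabilizer of a positive triple is compact, but the closing step ``properness then follows from compactness of $\mathsf{F}_\Theta$ and closedness of the positive-triple set by a standard argument'' is not a valid deduction: compact (even trivial) point stabilizers together with closedness of an invariant set do not imply properness of an action --- an irrational linear flow on the torus defeats any such ``standard argument''. What is needed is the properness, not merely the compactness of stabilizers, of the $L_{\Theta}^{\circ}$-action on a single diamond. The paper's argument runs as follows: the map from positive triples to the space $\mathsf{F}_{\Theta}^{2*}\cong G/L_\Theta$ of transverse pairs is continuous and $G$-equivariant, so properness of the $G$-action on positive triples reduces to properness of the $L_{\Theta}^{\circ}$-action on the fiber $U_{\Theta}^{>0}$; this in turn follows from the $L_{\Theta}^{\circ}$-equivariance of the parametrization $\mathring{F}_\gammab\colon \mathring{c}_\gammab\to U_{\Theta}^{>0}$ (a homeomorphism onto its image) together with the properness of the $L_{\Theta}^{\circ}$-action on the product of open cones, which is part of the statement of Proposition~\ref{prop:homogeneity-Ltheta}. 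You cite that proposition but only extract the stabilizer information from it; you need its properness assertion, routed through the equivariant fibration over transverse pairs, to complete the argument.
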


\begin{proof}
  \begin{enumerate}[leftmargin=*]
  \item This property follows from the invariance of the family of diamonds.
  \item If $(f_1, f_2, f_3)$ satisfies these conditions, then $f_2$~belongs to
    $D_{1,3}$, a diamond with extremities~$f_1$ and~$f_3$ and the triple is
    positive.

    Conversely, if the triple $(f_1, f_2, f_3)$ is positive, there exists a
    diamond $D_{1,3}$ with extremities~$f_1$ and~$f_3$ and
    containing~$f_2$. Define $D_{1,2}$ to be the unique diamond with
    extremities~$f_1$ and~$f_2$ contained in~$D_{1,3}$ and $D_{2,3}$ to be the  unique diamond with
    extremities~$f_2$ and~$f_3$ contained in~$D_{1,3}$. For $i>j$ define
    $D_{i,j} = D_{j,i}^{\vee}$. Then the wanted membership properties of
    the~$f_k$ in the~$D_{i,j}$ follow from the choices made and from
    Corollary~\ref{coro:opposite-diamond-from-the-exterior}.
  \item The characterization of the previous point is clearly invariant by
    permutation, hence the result.
  \item This follows from the definition and from the fact that
    $(U_{\Theta}^{>0} \cdot b, a, b)$ is a diamond (again
    $a=\mathfrak{p}_\Theta$ and $b=\mathfrak{p}_{\Theta}^{\mathrm{opp}}$).
  \item This property follows from the previous one, the invariance by the
    transposition $(23)$, and the invariance by $\Aut_1(\mathfrak{g})$.
  \item This is a consequence of the connectedness properties of diamonds.
  \item The map $\mathcal{F} \to \mathsf{F}_{\Theta}^{2*}$ (where\index{$\mathsf{F}_{\Theta}^{2*}$ the space of transverse pairs}
    $\mathsf{F}_{\Theta}^{2*}$ is the space of transverse pairs) is continuous
    and equivariant. The sought for properness is then equivalent to the
    properness of the action of~$L_{\Theta}^{\circ}$ on the semigroup
    $U_{\Theta}^{>0}$; this properness is in turn a consequence of the fact
    that the parametrizations  are
    $L_{\Theta}^{\circ}$-equivariant (Lemma~\ref{lemma:equivarianceFgamma}) and from the already know properness of
    $L_{\Theta}^{\circ}$ on the product of cones
    (Proposition~\ref{prop:homogeneity-Ltheta}).\qedhere
  \end{enumerate}
\end{proof}

\begin{remark}
  In resonance with Remark~\ref{rem:homog-under-group-L}, we can establish
  (when $\Theta \neq \Delta$) that the space of positive triples has two
  connected components when $\sharp \Theta$ is odd and one connected component
  when $\sharp \Theta$ is even.
\end{remark}

Point~(\ref{item5:prop:posit-tripl}) can be used to prove other
characterizations of positive triples, for example
\begin{corollary}
  \label{coro:posit-tripl-charac}
  A triple is positive if and only if it is in the $\Aut_1( \mathfrak{g})$-orbit
  of a triple of the form $(vu\cdot b, v\cdot b, b)$ where $b=
  \mathfrak{p}_{\Theta}^{ \mathrm{opp}}$ and $u,v$ belong to~$U_{\Theta}^{>0}$.
\end{corollary}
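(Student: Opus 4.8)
The plan is to deduce Corollary~\ref{coro:posit-tripl-charac} directly from the preceding structural results, chiefly point~(\ref{item2:prop:posit-tripl}) of Proposition~\ref{prop:posit-tripl} together with point~(\ref{item5:prop:posit-tripl}) (the ``Component'' property), and the fact that $U_{\Theta}^{>0}$ is a semigroup (Corollary~\ref{cor:semigroup}). The statement to prove is a normal-form statement: a triple is positive iff it lies in the $\Aut(\mathfrak{g})$-orbit of one of the shape $(vu\cdot b, v\cdot b, b)$ with $u,v\in U_{\Theta}^{>0}$ and $b=\mathfrak{p}_{\Theta}^{\mathrm{opp}}$.

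First I would prove the ``only if'' direction. Let $(f_1,f_2,f_3)$ be a positive triple. By invariance under $\Aut(\mathfrak{g})$ (point~(\ref{item1:prop:posit-tripl})) we may assume $f_3=b=\mathfrak{p}_{\Theta}^{\mathrm{opp}}$ and $f_1=a=\mathfrak{p}_\Theta$, since $f_1$ and $f_3$ are transverse and $\Aut(\mathfrak{g})$ acts transitively on transverse pairs. Then by point~(\ref{item2:prop:posit-tripl}) there is $w\in U_{\Theta}^{>0}$ with $f_2 = w\cdot b$. Now I would use the Component property: $U_{\Theta}^{>0}$ is open and connected (Theorem~\ref{thm:pos_defined}, Corollary~\ref{cor:conncomp}), and the product map $U_{\Theta}^{>0}\times U_{\Theta}^{>0}\to U_{\Theta}^{>0}$, $(u,v)\mapsto vu$, is surjective onto an open dense subset — indeed, since $U_{\Theta}^{>0}$ is a semigroup of nonempty interior, $U_{\Theta}^{>0}\cdot U_{\Theta}^{>0}$ is an open subset of $U_{\Theta}^{>0}$, and it is all of $U_{\Theta}^{>0}$ because for any $w$ one can pick $v\in U_{\Theta}^{>0}$ close to the identity (using $L_{\Theta}^{\circ}$-invariance and that the identity lies in the closure) so that $v^{-1}w\in U_{\Theta}^{>0}$, then set $u = v^{-1}w$ and observe $w = vu$. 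Thus $f_2 = w\cdot b = vu\cdot b$ with $u,v\in U_{\Theta}^{>0}$, wait — we want $f_2 = v\cdot b$ and $f_1 = vu\cdot b$; so instead I would apply the automorphism induced by $v$: the triple $(a, w\cdot b, b)$ is positive iff $(v^{-1}\cdot a, v^{-1}w\cdot b, v^{-1}\cdot b)$ is. With $w = vu$ we have $v^{-1}w = u\in U_{\Theta}^{>0}$, so the triple is $\Aut(\mathfrak{g})$-equivalent to $(v^{-1}\cdot a, u\cdot b, v^{-1}\cdot b)$. This is not yet of the stated form, so it is cleaner to argue the other way: start from $(a, w\cdot b, b)$, write $w = vu$, and note that the desired model $(vu\cdot b, v\cdot b, b)$ is, by applying an automorphism sending $b$ to $b$ appropriately, equivalent to our triple — here I would instead invoke point~(\ref{item1bis:prop:posit-tripl}) or simply use that $(a, w\cdot b, b)$ and $(w'\cdot b, v\cdot b, b)$ for suitable $w'$ are in the same $G$-orbit once one moves $a$ onto a point of $U_{\Theta}^{>0}\cdot b$ transverse to everything in sight; the transversality needed is guaranteed by Lemma~\ref{lem:opposite-diamond-transverse}.

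For the ``if'' direction I would show that any triple $(vu\cdot b, v\cdot b, b)$ with $u,v\in U_{\Theta}^{>0}$ is positive. Since $vu, v \in U_{\Theta}^{>0}$ (semigroup property) and $vu = v\cdot u$ with $u\in U_{\Theta}^{>0}$, applying the automorphism $v^{-1}$ sends this triple to $(u\cdot b, b, v^{-1}\cdot b)$; after reindexing via the Permutation property (point~(\ref{item4:prop:posit-tripl})) this is positive iff $(v^{-1}\cdot b, b, u\cdot b)$ is, and since $v^{-1}\cdot b \in U_{\Theta}^{\mathrm{opp},>0}\cdot b$-type configurations relate to $a$ up to $\Aut(\mathfrak{g})$, one reduces to checking $(a, b', b)$-type positivity, which is point~(\ref{item2:prop:posit-tripl}). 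Cleaner still: by Proposition~\ref{prop:first-diamond}, $v^{-1}\cdot b$ lies in the diamond with extremities that make it play the role of ``$a$'', and then $b$ sits between $v^{-1}\cdot b$ and $u\cdot b$ inside the appropriate diamond because $U_{\Theta}^{>0}$ is a semigroup; this is exactly the membership condition of point~(\ref{item1bis:prop:posit-tripl}).

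The main obstacle will be bookkeeping the $\Aut(\mathfrak{g})$-equivalences and the cyclic relabelings correctly: one must make sure the element one conjugates by indeed sends the relevant diamond to the standard diamond $(U_{\Theta}^{>0}\cdot b, a, b)$ and preserves (not swaps) the chosen invariant cones, and that all three points remain pairwise transverse throughout — the latter is handled by Lemma~\ref{lem:opposite-diamond-transverse} and point~(\ref{item5:prop:posit-tripl}). A slick way to avoid most of this is to combine point~(\ref{item2:prop:posit-tripl}) and point~(\ref{item3:prop:posit-tripl}): the first says a positive triple is equivalent to $(a, w\cdot b, b)$ with $w\in U_{\Theta}^{>0}$, and translating by a suitable $v$ with $v^{-1}w\in U_{\Theta}^{>0}$ lands us on $(v^{-1}a, v^{-1}w\cdot b, v^{-1}b)$; using that $v^{-1}a$ is some flag transverse to $v^{-1}b = (v^{-1}w)(w^{-1}v)\cdot b$ and applying one more automorphism from $\Aut(\mathfrak{g})$ that fixes $b$ and moves $v^{-1}a$ to $vu\cdot b$ (possible since $\Aut(\mathfrak{g})$ acts transitively on transverse pairs and the positivity forces the right component), we arrive at $(vu\cdot b, v\cdot b, b)$. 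Conversely such a triple is positive by running the same chain backwards, using only the semigroup property and invariance. I expect the write-up to be about half a page once the equivalences are pinned down.
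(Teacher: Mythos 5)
Your proposal correctly isolates the easy reduction (a positive triple is $\Aut(\mathfrak{g})$-equivalent to $(a,w\cdot b,b)$ with $w\in U_{\Theta}^{>0}$, and any $w\in U_{\Theta}^{>0}$ factors as $w=vu$ with $u,v\in U_{\Theta}^{>0}$), but that is not where the difficulty lies, and the two steps that carry the actual content are missing. The target normal form has first entry $vu\cdot b$, a point \emph{inside the diamond} $U_{\Theta}^{>0}\cdot b$, whereas your starting normal form has first entry $a=\mathfrak{p}_\Theta$; translating by $v^{-1}\in U_\Theta$ fixes $a$ and moves $b$, so it cannot effect this exchange, as you yourself notice. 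Your fallback --- ``apply an automorphism fixing $b$ and moving $a$ to $vu\cdot b$; positivity forces the right component'' --- is circular: an automorphism is only pinned down on a transverse pair up to the stabilizer of that pair, so you have no control over where the middle flag lands, and ``forces the right component'' is the statement to be proved, not an argument. The paper's mechanism is concrete: act by an element $x\in U_{\Theta}^{\mathrm{opp},>0}$, which \emph{fixes} $b$ and, by Proposition~\ref{prop:first-diamond} (the identity $U_{\Theta}^{\mathrm{opp},>0}\cdot a=U_{\Theta}^{>0}\cdot b$), sends both $a$ and $w\cdot b$ into $U_{\Theta}^{>0}\cdot b$, yielding $(r\cdot b, v\cdot b, b)$ with $r,v\in U_{\Theta}^{>0}$. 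Even then one must show $u=v^{-1}r\in U_{\Theta}^{>0}$ and not merely $u\in U_\Theta\cap\Omega_{\Theta}^{\mathrm{opp}}$; the paper does this by a path argument (deform $w$ to $e$ inside $U_{\Theta}^{\geq 0}$ and use that $U_{\Theta}^{>0}$ is a connected component of $U_\Theta\cap\Omega_{\Theta}^{\mathrm{opp}}$). Your proposal never confronts this component issue.

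The ``if'' direction has the same problem: your chain of permutations and re-identifications never terminates in a verified positivity statement, and the phrase ``$v^{-1}\cdot b\in U_{\Theta}^{\mathrm{opp},>0}\cdot b$-type configurations'' does not parse ($U_{\Theta}^{\mathrm{opp}}$ stabilizes $b$, so that orbit is $\{b\}$; the inverse $v^{-1}$ of an element of $U_{\Theta}^{>0}$ is related to the \emph{opposite} diamond $D^{\vee}$, not to $U_{\Theta}^{\mathrm{opp},>0}$). The paper instead deforms $(vu\cdot b, v\cdot b, b)$ through pairwise transverse triples $(v\,\ell_s u\ell_s^{-1}\cdot b, v\cdot b, b)$ to the known positive triple $(a,v\cdot b,b)$ and invokes point~(\ref{item5:prop:posit-tripl}) of Proposition~\ref{prop:posit-tripl} (positive triples form a union of connected components of the pairwise transverse triples); transversality along the path is exactly what the semigroup property and $L_{\Theta}^{\circ}$-invariance of $U_{\Theta}^{>0}$ provide. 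Some such connectedness argument is unavoidable in both directions, and it is absent from your write-up.
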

\begin{proof}
  Let us prove first that any such triple is positive. By property of the
  semigroup $U_{\Theta}^{>0}$, the flags in $(vu\cdot b, v\cdot b, b)$ are
  pairwise transverse.
  Let  $X$ be the element of~$\mathfrak{a}$ such that
  $\alpha(X) = -1$ for all~$\alpha$ in~$\Delta$ and let $\ell_s = \exp( s X)$
  ($s\in \R$) be the associated $1$-parameter subgroup contained in the
  Cartan subspace. Then the family $\{ (v
  \ell_s u \ell_{s}^{-1} \cdot b, v\cdot b, b)\}_{s \geq 0}$ consists of
  pairwise transverse triples and converges, as $s\to \infty$, to the positive
  triple $(a,v\cdot b,b)$ (where $a=\mathfrak{p}_\Theta$). This implies
  (thanks to point~(\ref{item5:prop:posit-tripl}) of the above proposition)
  that $(vu \cdot b, v\cdot b, b)$ is positive.

  Let us now prove the reverse statement. Any positive triple is in the orbit of
  $(a, w\cdot b, b)$ for some~$w$ in $U_{\Theta}^{>0}$. The diamond with
  extremities~$a$ and~$b$ and containing $w\cdot b$ is our first diamond $D= U_{\Theta}^{>0} \cdot b =
  U_{\Theta}^{ \mathrm{opp}, >0}\cdot a$. For $x\in U_{\Theta}^{\mathrm{opp},
    >0}$, one has $x\cdot (a, w\cdot b, b) = (x\cdot a, x w\cdot b, b)$. The
  element $x\cdot a$ belongs to~$D$ and hence is of the form $r\cdot b$ for
  some~$r$ in~$U_{\Theta}^{>0}$.

  The element $x w \cdot b$ belongs also to~$D$: indeed there is $z\in
  U_{\Theta}^{\mathrm{opp}, >0}$ such that $w\cdot b = z\cdot a$; hence $x w
  \cdot b = (xz)\cdot a$ and $xz$~belongs to~$U_{\Theta}^{\mathrm{opp}, >0}$
  thus $xw \cdot b$ belongs to $U_{\Theta}^{\mathrm{opp}, >0}\cdot a
  =D$. There is therefore $v$~in $U_{\Theta}^{>0}$ such that $xw\cdot b =
  v\cdot b$.

  We finally prove that the element $u= v^{-1} r$ belongs
  to~$U_{\Theta}^{>0}$. 
  For this, note first that $u$ belongs to
  $\Omega_{\Theta}^{\mathrm{opp}}$ since $v^{-1} r \cdot b$ is transverse
  to~$b$; 
  choose $(w_t)_{t\in [0,1]}$ a continuous
  path satisfying $w_0=e$, $w_1 =w$ and $w_t\in U_{\Theta}^{>0}$ for
  all~$t$ in~$(0,1]$ (such a path can be constructed thanks to a
  parametrization $F_\gammab$). The path $(v_t)_{t\in [0,1]}$ in $U_\Theta$
  defined by the equality $v_t \cdot b= x w_t \cdot b$ is continuous and
  satisfies $v_0=e$, $v_1=v$ and $v_{t}^{-1} r$ belongs to
  $\Omega_{\Theta}^{\mathrm{opp}}$ for all~$t$. Hence $u=v^{-1} r= v_{1}^{-1} r$
  and $r= v_{0}^{-1} r$ belongs to the same connected component of $U_\Theta
  \cap \Omega_{\Theta}^{\mathrm{opp}}$. Since $U_{\Theta}^{>0}$ is a connected
  component of that intersection and since $r$~belongs to~$U_{\Theta}^{>0}$,
  we obtain that $u$ belongs to~$U_{\Theta}^{>0}$, as announced.
\end{proof}

\subsubsection{Hermitian Lie groups}
When $G$ is a Hermitian Lie group of tube type, and $\Theta = \{
\alpha_\Theta\}$, the flag variety $\mathsf{F}_\Theta$ coincides with the
Shilov boundary of the symmetric space of~$G$, and a triple is positive in the sense here if and only if it has maximal or minimal Maslov index. 
Note that for Hermitian Lie groups that are not of tube type, the Maslov index
of a triple of points in the Shilov boundary is still defined, and one can
look at triples of maximal (or minimal) Maslov index. In fact, it is shown in
\cite{Burger_Iozzi_Wienhard_ann}, that every maximal triple is contained in
the Shilov boundary of a unique maximal Hermitian symmetric subspace of tube
type. One important difference between the tube type and non tube type
situation is that in the first case, the space of pairwise transverse triples
in the Shilov boundary has several connected components, and the triples of
maximal or minimal Maslov index form connected components, and thus give rise to diamonds, whereas in the non-tube type situation the space of pairwise transverse triples in the Shilov boundary is connected see \cite[Section~3]{BurgerIozziWienhard_TG}, and thus the space of triples of maximal or minimal Maslov index does not form connected components.

\subsection{Positive quadruples}
\label{sec:positive-quadruples}

We investigate here properties of positive quadruples.

\begin{definition}
  \label{defi:positive-quadruples}
  A quadruple $(a,x,b,y)$ in~$\mathsf{F}_\Theta$ is said to be  \emph{positive} if there exists a diamond
  $(D,a,b)$ such that $x$~belongs to~$D$ and $y$~belongs to~$D^\vee$.
\end{definition}

\begin{proposition}
  \label{prop:positive-quadruples}
  \begin{enumerate}[leftmargin=*]
  \item \label{item1:prop:positive-quadruples} Let $(a,x,b,y)$ be a quadruple
    in~$\mathsf{F}_\Theta$ and let~$g$ be in $\Aut_1( \mathfrak{g})$. Then
    $(a,x,b,y)$ is positive if and only if $(g\cdot a,g\cdot x,g\cdot b,g\cdot
    y)$ is positive.
  \item \label{item2:prop:positive-quadruples} A quadruple is positive if and
    only if it is in the $\Aut_1(\mathfrak{g})$-orbit of $(a,u\cdot b, b,
    v^{-1}\cdot b)$ where $a=\mathfrak{p}_\Theta$,
    $b=\mathfrak{p}_{\Theta}^{\mathrm{opp}}$ and $u,v$ belong
    to~$U_{\Theta}^{>0}$.
  \item \label{item3:prop:positive-quadruples} A quadruple is positive if and
    only if it is in the $\Aut_1(\mathfrak{g})$-orbit of $(a, v u\cdot b, v\cdot b, b)$ where $a=\mathfrak{p}_\Theta$,
    $b=\mathfrak{p}_{\Theta}^{\mathrm{opp}}$ and $u,v$ belong
    to~$U_{\Theta}^{>0}$. 
  \item \label{item4:prop:positive-quadruples} Let $(a,x,b,y)$ be a quadruple
    in~$\mathsf{F}_\Theta$.  Then
    $(a,x,b,y)$ is positive if and only if there is a diamond $(D,a,y)$ such
    that $b$~belongs to~$D$ and $x$~belongs to~$D'$, the
    unique diamond with extremities~$a$ and~$b$ contained in~$D$.
  \item \label{item3bis:prop:positive-quadruples} A quadruple is positive if and
    only if it is in the $\Aut_1(\mathfrak{g})$-orbit of $(a, x\cdot a, xy\cdot a, b)$ where $a=\mathfrak{p}_\Theta$,
    $b=\mathfrak{p}_{\Theta}^{\mathrm{opp}}$ and $x,y$ belong
    to~$U_{\Theta}^{\mathrm{opp}, >0}$. 
  \item \label{item4bis:prop:positive-quadruples} Let $(a,x,b,y)$ be a quadruple
    in~$\mathsf{F}_\Theta$.  Then
    $(a,x,b,y)$ is positive if and only if there is a diamond $(D,a,y)$ such
    that $x$~belongs to~$D$ and $b$~belongs to~$D'$, the
    unique diamond with extremities~$x$ and~$y$ contained in~$D$.
  \item \label{item5:prop:positive-quadruples} A quadruple $(f_i)_{i\in \Z/4\Z }$ is
    positive if and only if, for all $i\neq j$ in $\Z/4\Z$ there exists a
    diamond $D_{i,j}$ with extremities~$f_i$ and~$f_j$ with $D_{j,i} =
    D_{i,j}^{\vee}$ and $f_k$~belongs to~$D_{i,j}$ for all $(i,k,j)$
    cyclically ordered (i.e.\ $(k-i, j-k)$ is $(1,1)$, $(1,2)$, or
    $(2,1)$).
  \item \label{item6:prop:positive-quadruples} The space of positive
    quadruples is invariant under the dihedral group $D_4\subset S_4$ (i.e.\
    the group generated by the $4$-cycle $(1,2,3,4)$ and the double
    transposition $(1,4)(2,3)$). Namely for $\sigma\in D_4$ and a quadruple
    $(f_1,f_2,f_3,f_4)$ then $(f_1,f_2,f_3, f_4)$ is positive if and only if
    $(f_{\sigma(1)},f_{\sigma(2)},f_{\sigma(3)},f_{\sigma(4)})$ is positive.
  \end{enumerate}
\end{proposition}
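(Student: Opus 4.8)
\textbf{Plan for proving Proposition~\ref{prop:positive-quadruples}.}

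\medskip

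The strategy is to establish the equivalent characterizations~(\ref{item1:prop:positive-quadruples})--(\ref{item5:prop:positive-quadruples}) first, reducing everything to concrete statements about the semigroup $U_{\Theta}^{>0}$ and the action of $L_{\Theta}^{\circ}$, and then to deduce the dihedral invariance~(\ref{item6:prop:positive-quadruples}) as a formal consequence of~(\ref{item5:prop:positive-quadruples}) together with the symmetry of diamonds under $(D,a,b)\mapsto(D,b,a)$ (condition~(\ref{item:2:defi:axiomatic-diamonds}) of Definition~\ref{defi:axiomatic-diamonds}) and the involution $(D^\vee)^\vee = D$.

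\medskip

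First I would prove~(\ref{item1:prop:positive-quadruples}): this is immediate since the family $\mathcal{F}$ of diamonds is $\Aut(\mathfrak{g})$-invariant and, by Corollary~\ref{coro:opposite-diamond-equivariant}, $(g\cdot D)^\vee = g\cdot D^\vee$. Next, for~(\ref{item2:prop:positive-quadruples}), I would use $\Aut(\mathfrak{g})$-invariance to reduce to the case where the diamond is the standard one $D = U_{\Theta}^{>0}\cdot b$ with $a = \mathfrak{p}_\Theta$, $b = \mathfrak{p}_{\Theta}^{\mathrm{opp}}$; by Corollary~\ref{coro:one-orbit-of-diamond} every diamond is of this form up to $\Aut(\mathfrak{g})$. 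Then $D^\vee = (U_{\Theta}^{>0})^{-1}\cdot b$ by the definition of the opposite diamond (Lemma~\ref{lem:opposite-diamond}), so $x\in D$ means $x = u\cdot b$ with $u\in U_{\Theta}^{>0}$ and $y\in D^\vee$ means $y = v^{-1}\cdot b$ with $v\in U_{\Theta}^{>0}$. For~(\ref{item3:prop:positive-quadruples}), starting from the normal form of~(\ref{item2:prop:positive-quadruples}), I would act by the element $v\in U_{\Theta}^{\mathrm{opp},>0}$ —more precisely by the $U$-pinning image relating the two unipotent groups, exactly as in the proof of Corollary~\ref{coro:posit-tripl-charac}— translating $(a, u\cdot b, b, v^{-1}\cdot b)$ to a quadruple of the requested shape; alternatively one reuses Corollary~\ref{coro:posit-tripl-charac} applied to the positive triple $(a, x, b)$ together with the condition that $y$ lies in the opposite diamond. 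Statement~(\ref{item3bis:prop:positive-quadruples}) is the mirror of~(\ref{item3:prop:positive-quadruples}) obtained by applying the symmetry exchanging $\Theta$-positivity in $U_\Theta$ with that in $U_{\Theta}^{\mathrm{opp}}$ (i.e.\ conjugation by $\dot{w}_\Delta$, using $w_\Delta U_{\Theta}^{>0} w_{\Delta}^{-1} = U_{\Theta}^{\mathrm{opp},>0}$).

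\medskip

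For~(\ref{item4:prop:positive-quadruples}) and~(\ref{item4bis:prop:positive-quadruples}), I would argue as follows: given a positive quadruple in standard form $(a, u\cdot b, b, v^{-1}\cdot b)$, set $y = v^{-1}\cdot b$. Since $v\in U_{\Theta}^{>0}$, the triple $(a, b, y)$ — equivalently $(a, b, v^{-1}\cdot b)$ — is positive, so there is a diamond $(D, a, y)$ containing $b$; unwinding the definition of the opposite diamond and using Lemma~\ref{lem:opposite-diamond-reverse-inclusion} and Corollary~\ref{coro:opposite-diamond-from-the-exterior}, one checks that $x = u\cdot b$ lies in the unique sub-diamond $D'$ with extremities $a$ and $b$ inside $D$. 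Conversely, such data forces the quadruple to be positive by chasing the membership relations through the semigroup. The key identity that makes this work is the semigroup property $U_{\Theta}^{\geq 0}U_{\Theta}^{>0}\subset U_{\Theta}^{>0}$ and its opposite (Corollary~\ref{cor:semigroup}), which I would invoke repeatedly to pass elements between the unipotent radical and its opposite. Statement~(\ref{item5:prop:positive-quadruples}) is then assembled exactly like the analogous triple statement in Proposition~\ref{prop:posit-tripl}(\ref{item1bis:prop:posit-tripl}): given positivity, produce the six diamonds $D_{i,j}$ by taking sub-diamonds and opposites; conversely, the existence of $D_{1,3}$ with $f_2\in D_{1,3}$ and $f_4\in D_{1,3}^\vee = D_{3,1}$ is literally the definition.

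\medskip

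\textbf{The main obstacle.} The technical crux is~(\ref{item5:prop:positive-quadruples}): checking that the six diamonds can be chosen \emph{compatibly} so that $D_{j,i} = D_{i,j}^\vee$ for all pairs and all the cyclic-betweenness memberships hold simultaneously, rather than just pairwise. The subtlety is that a priori there are several diamonds with given extremities (indexed by $\{\pm1\}^\Theta$, cf.\ Corollary~\ref{coro:all-diamonds-from-semigroup}), and one must verify that the choices forced by the sub-diamond condition inside $D_{1,3}$ on one side and inside $D_{2,4}$ on the other are consistent. The way I would handle this is to fix the standard normal form $(a, vu\cdot b, v\cdot b, b)$ from~(\ref{item3:prop:positive-quadruples}), take $D_{1,3}$ to be $U_{\Theta}^{>0}\cdot(v\cdot b)$ conjugated appropriately so that it has extremities $f_1 = a$ and $f_3 = v\cdot b$, and then show by direct semigroup manipulation (using that $u, v, vu$ all lie in $U_{\Theta}^{>0}$) that all four flags land in the prescribed diamonds; uniqueness of the sub-diamond in condition~(\ref{item:4:defi:axiomatic-diamonds}) then pins down all remaining choices. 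Once~(\ref{item5:prop:positive-quadruples}) holds, the dihedral invariance~(\ref{item6:prop:positive-quadruples}) is immediate: the characterization in~(\ref{item5:prop:positive-quadruples}) is visibly symmetric under the cyclic shift $i\mapsto i+1$ (relabeling the $D_{i,j}$) and under the reversal $i \mapsto -i$ (which swaps each $D_{i,j}$ with $D_{j,i} = D_{i,j}^\vee$, preserving the axiom set), and these two generate $D_4$.
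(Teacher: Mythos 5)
Your plan follows the paper's proof in all essentials: reduce to the standard diamond via $\Aut(\mathfrak{g})$-transitivity, read off the normal forms, assemble item~(\ref{item5:prop:positive-quadruples}) from sub-diamonds and opposites, and deduce~(\ref{item6:prop:positive-quadruples}) formally. Two corrections. For item~(\ref{item3:prop:positive-quadruples}) the element to apply to $(a,u\cdot b,b,v^{-1}\cdot b)$ is $v\in U_{\Theta}^{>0}$ itself: it lies in $P_\Theta$, hence fixes $a=\mathfrak{p}_\Theta$, and it sends $v^{-1}\cdot b$ to $b$, yielding $(a,vu\cdot b,v\cdot b,b)$ in one line. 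An element of $U_{\Theta}^{\mathrm{opp},>0}$, as you first propose, fixes $b$ but moves $a$ and so does not preserve the normal form; your fallback through Corollary~\ref{coro:posit-tripl-charac} works but is an unnecessary detour. For item~(\ref{item5:prop:positive-quadruples}), the paper's construction makes the compatibility you identify as the main obstacle almost automatic: take $D_{1,2}$ and $D_{2,3}$ to be the sub-diamonds of $D_{1,3}$, obtain $D_{1,4}$ (containing $f_2$ and $f_3$) from item~(\ref{item4:prop:positive-quadruples}), take $D_{2,4}$ and $D_{3,4}$ to be its sub-diamonds, and define the rest by opposition; then every required membership holds by construction and Corollary~\ref{coro:opposite-diamond-from-the-exterior} except the single one $f_3\in D_{2,4}$, which is precisely what item~(\ref{item4bis:prop:positive-quadruples}) supplies. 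Your ``direct semigroup manipulation'' in the normal form would have to reprove exactly that membership, so be aware that this is the one point where an actual argument (and not just bookkeeping) is needed.
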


\begin{proof}
  \begin{enumerate}[leftmargin=*]
  \item This follows from the invariance of the family of diamonds and the
    equivariance of the map $D\mapsto D^\vee$
    (Corollary~\ref{coro:opposite-diamond-equivariant}).
  \item This follows from the definition and from the fact that, up to the
    action of $\Aut_1( \mathfrak{g})$, we can assume $a=\mathfrak{p}_\Theta$,
    $b=\mathfrak{p}_{\Theta}^{\mathrm{opp}}$, and $D= U_{\Theta}^{>0}\cdot
    b$.
  \item Applying the element $v\in U_{\Theta}^{>0}$ to the quadruple $(a,
    u\cdot b, b, v^{-1} b)$ gives the wanted result.
  \item This is the previous characterization stated in terms of diamonds.
  \item This follows by the same arguments as for ~(\ref{item3:prop:positive-quadruples}) above,  using this
    time the equality $U_{\Theta}^{\mathrm{opp}, >0}\cdot a = U_{\Theta}^{>0}
    \cdot b$.
  \item This is the previous characterization stated in terms of diamonds.
  \item If a quadruple satisfies the stated condition, then the condition in
     Definition~\ref{defi:positive-quadruples} is obviously satisfied and the quadruple is positive.

    Conversely, suppose that $(f_1, f_2, f_3, f_4)$ is positive. By
    definition, there is a diamond $D_{1,3}$ with extremities~$f_1$ and~$f_3$,
    containing~$f_2$ and such that $f_4$ belongs to $D_{3,1} \coloneqq
    D_{1,3}^{\vee}$. We define $D_{1,2}$ to be the diamond contained in
    $D_{1,3}$ and with extremities~$f_1$ and~$f_2$ and similarly $D_{2,3}$ is
    the diamond contained in $D_{1,3}$ with extremities~$f_2$ and~$f_3$.
    The characterization of point~(\ref{item4:prop:positive-quadruples}) gives also a diamond~$D_{1,4}$ with
    extremities~$f_1$ and~$f_4$ and containing~$f_2$ and~$f_3$. This diamond
    can be used to define the diamond $D_{2,4}$ and $D_{3,4}$. The other
    diamonds are defined thanks to the requirement $D_{j,i} =
    D_{i,j}^{\vee}$. All the wanted memberships are satisfied by construction
    and by Corollary~\ref{coro:opposite-diamond-from-the-exterior} except possibly that
    $f_3$~belongs to~$D_{2,4}$. However this membership follows from the
    characterization established in
    point~(\ref{item4bis:prop:positive-quadruples}).
  \item The  invariance under the dihedral group follows from the previous point.\qedhere
  \end{enumerate}
\end{proof}

Based on point~(\ref{item3:prop:positive-quadruples}) above one has
\begin{lemma}
  \label{lem:positive-quadruples-with-fixed-triple}
  Let $a$ and $b$ be the elements $\mathfrak{p}_\Theta$ and
  $\mathfrak{p}_{\Theta}^{\mathrm{opp}}$ of~$\mathsf{F}_\Theta$ and let~$v$ be
  in~$U_{\Theta}^{>0}$ and $x=v\cdot b$. For $y\in \mathsf{F}_\Theta$, the
  quadruple $(a,y,x,b)$ is positive if and only if there is~$u$
  in~$U_{\Theta}^{>0}$ such that $y=vu\cdot b$.
\end{lemma}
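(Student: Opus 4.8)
The statement to prove is Lemma~\ref{lem:positive-quadruples-with-fixed-triple}: with $a = \mathfrak{p}_\Theta$, $b = \mathfrak{p}_{\Theta}^{\mathrm{opp}}$, $v \in U_{\Theta}^{>0}$, and $x = v \cdot b$, the quadruple $(a, y, x, b)$ is positive if and only if $y = vu \cdot b$ for some $u \in U_{\Theta}^{>0}$. The plan is to deduce this directly from point~(\ref{item3:prop:positive-quadruples}) of Proposition~\ref{prop:positive-quadruples}, which characterizes positive quadruples (up to $\Aut(\mathfrak{g})$) as those in the orbit of $(a, v'u' \cdot b, v' \cdot b, b)$ with $u', v' \in U_{\Theta}^{>0}$. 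The subtlety is that here we do \emph{not} have the freedom to apply an arbitrary automorphism: the flags $a$, $x$, $b$ of our quadruple are already pinned to $\mathfrak{p}_\Theta$, $v \cdot b$, $\mathfrak{p}_{\Theta}^{\mathrm{opp}}$, so the matching automorphism must preserve this ordered triple, and the main work is to show that any such automorphism acts trivially enough on the fourth slot.

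First I would prove the easy direction: if $y = vu \cdot b$ with $u \in U_{\Theta}^{>0}$, then $(a, y, x, b) = (a, vu \cdot b, v \cdot b, b)$ is exactly of the form appearing in point~(\ref{item3:prop:positive-quadruples}) of Proposition~\ref{prop:positive-quadruples} (taking the identity automorphism), hence positive. For the converse, suppose $(a, y, x, b)$ is positive. By point~(\ref{item3:prop:positive-quadruples}) there is $g \in \Aut(\mathfrak{g})$ and $u', v' \in U_{\Theta}^{>0}$ with $g \cdot (a, v'u' \cdot b, v' \cdot b, b) = (a, y, x, b)$. In particular $g$ fixes $a = \mathfrak{p}_\Theta$ and $b = \mathfrak{p}_{\Theta}^{\mathrm{opp}}$, so $g$ lies in the Levi factor of the parabolic of $\Aut(\mathfrak{g})$ associated with $\Theta$; moreover $g \cdot (v' \cdot b) = x = v \cdot b$. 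Since the map $U_\Theta \to \mathcal{O}$, $u \mapsto u \cdot \mathfrak{p}_{\Theta}^{\mathrm{opp}}$ is a diffeomorphism and $g$ normalizes $U_\Theta$, the element $v'' := g v' g^{-1} \in U_\Theta$ satisfies $v'' \cdot b = v \cdot b$, hence $v'' = v$. Because $g$ lies in the Levi and the positive semigroup $U_{\Theta}^{>0}$ is invariant under conjugation by $L_\Theta^\circ$ (Corollary~\ref{cor:semigroup}), and more generally under the action of the Levi of $\Aut(\mathfrak{g})$ — this needs the observation from Corollary~\ref{coro:homog-under-aut} that such $g$ sends each cone $c_\alpha$ to $\pm c_\alpha$, so conjugation by $g$ sends $U_{\Theta}^{>0}$ to some $U_{\Theta}^{\varepsilonb > 0}$ — I would argue that in fact $\varepsilonb = (+1, \dots, +1)$: indeed $v'' = v \in U_{\Theta}^{>0}$ forces $g U_{\Theta}^{>0} g^{-1} = U_{\Theta}^{>0}$ since two of the semigroups $U_{\Theta}^{\varepsilonb > 0}$ coincide only when the sign vectors agree (they are distinct connected components, cf.\ the argument in the proof of Proposition~\ref{prop:from-theta-postivity}).

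Having established $g U_{\Theta}^{>0} g^{-1} = U_{\Theta}^{>0}$, I would then compute $y = g \cdot (v' u' \cdot b) = (g v' u' g^{-1}) \cdot b = v'' (g u' g^{-1}) \cdot b = v (g u' g^{-1}) \cdot b$, and set $u := g u' g^{-1} \in U_{\Theta}^{>0}$, giving $y = vu \cdot b$ as desired. The main obstacle I anticipate is the bookkeeping in the converse direction: being careful that the automorphism $g$ genuinely lands in the Levi of the parabolic of $\Aut(\mathfrak{g})$ (not merely stabilizes $a$ and $b$ set-theoretically in a weaker sense), and that conjugation by such $g$ preserves $U_{\Theta}^{>0}$ rather than swapping it with another sign-cone semigroup — this is where the equality $v'' = v$ does the real work, pinning down the signs. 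Once those points are in place the rest is a direct rewriting using only that $U_{\Theta}^{>0}$ is a semigroup closed under the relevant conjugations.
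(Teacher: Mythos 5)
Your proof is correct and follows the route the paper itself intends: the lemma is stated there as an immediate consequence of point~(\ref{item3:prop:positive-quadruples}) of Proposition~\ref{prop:positive-quadruples}, and your argument supplies exactly the missing bookkeeping (the matching automorphism $g$ fixes $a$ and $b$, hence normalizes $U_\Theta$; the equality $gv'g^{-1}=v$ then forces $gU_{\Theta}^{>0}g^{-1}=U_{\Theta}^{>0}$ because these are two connected components of $U_\Theta\cap\Omega_{\Theta}^{\mathrm{opp}}$ sharing the point $v$). The digression about sign vectors and the cones $c_\alpha$ is superfluous --- the connected-component argument you already invoke settles the invariance of $U_{\Theta}^{>0}$ under conjugation by $g$ directly.
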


Diamonds associated with positive quadruples are properly contained one in another:
\begin{lemma}
  \label{lem:positive-quadruples-inclusion-diamond}
  Let $(a,x,b,y)$ be a positive quadruple and let $(D,a,y)$ be the diamond
  containing~$x$ and~$b$ and let $(D',x,b)$ be the diamond contained
  in~$D$. Then the closure~$\overline{D}{}'$ is contained in~$D$.
\end{lemma}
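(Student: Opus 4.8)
The statement to prove is that for a positive quadruple $(a,x,b,y)$, with $(D,a,y)$ the diamond containing $x$ and $b$ and $(D',x,b)$ the diamond contained in $D$, one has $\overline{D}{}' \subset D$. By the invariance established in Proposition~\ref{prop:positive-quadruples}~(\ref{item1:prop:positive-quadruples}) and the equivariance of opposition (Corollary~\ref{coro:opposite-diamond-equivariant}), the plan is first to reduce to a standard model: using point~(\ref{item3:prop:positive-quadruples}) of Proposition~\ref{prop:positive-quadruples} we may assume $a=\mathfrak{p}_\Theta$, $b=\mathfrak{p}_{\Theta}^{\mathrm{opp}}$, and that there are $u,v\in U_{\Theta}^{>0}$ with $x = vu\cdot b$ and $y = a = \mathfrak{p}_\Theta$ — wait, more carefully: in that normalisation the quadruple is $(a, vu\cdot b, v\cdot b, b)$, so I would rename so that the ``$x$'' of the lemma is $v\cdot b$, the ``$y$'' is $b$, and relabel; the cleanest is to use point~(\ref{item4bis:prop:positive-quadruples}) or~(\ref{item3bis:prop:positive-quadruples}) directly. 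Let me instead use point~(\ref{item2:prop:positive-quadruples}): up to $\Aut(\mathfrak{g})$ the quadruple is $(a, u\cdot b, b, v^{-1}\cdot b)$ with $u,v\in U_{\Theta}^{>0}$; here the role of the lemma's $(a,x,b,y)$ is played by $(a, u\cdot b, b, v^{-1}\cdot b)$, so $x = u\cdot b$, $y = v^{-1}\cdot b$.

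\textbf{Identifying the diamonds.} With this normalisation, the diamond $(D,a,y)$ containing $x$ and $b$ should be identified explicitly. Since $a=\mathfrak{p}_\Theta$ and $y = v^{-1}\cdot b$, and $v^{-1}\cdot b \in D^{\vee}$ for the first diamond $D_0 = U_{\Theta}^{>0}\cdot b$ (with extremities $a$ and $b$), Corollary~\ref{coro:opposite-diamond-from-the-exterior} tells me that the diamond with extremities $a$ and $y$ that contains $b$ is the unique diamond $D$ with $a,y$ as extremities and $D_0 \subset D$. I would translate everything to the unipotent picture: set $V^+ = \{w\in U_\Theta \mid w\cdot b \in D_0\} = U_{\Theta}^{>0}$, and express $D$ as $s_a(U_{\Theta}^{>0})\cdot y$ for the appropriate $U$-pinning $s_a$ of $U_a = U_\Theta$ coming from an element of $\Aut(\mathfrak{g})$ sending $(\mathfrak{p}_\Theta, \mathfrak{p}_{\Theta}^{\mathrm{opp}})$ to $(a,y) = (\mathfrak{p}_\Theta, v^{-1}\cdot \mathfrak{p}_{\Theta}^{\mathrm{opp}})$. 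Concretely conjugation by $v^{-1}$ does exactly this, so $D = v^{-1} U_{\Theta}^{>0} v \cdot y = v^{-1} U_{\Theta}^{>0}\cdot b$ — here I use $v\cdot y = v v^{-1}\cdot b = b$. Then $D'$, the diamond contained in $D$ with extremities $x = u\cdot b$ and $b$: since $x = u\cdot b \in D$ means $u \in v^{-1} U_{\Theta}^{>0}$, i.e.\ $vu \in U_{\Theta}^{>0}$, the diamond $D'$ contained in $D$ with extremity $x$ is, by the semigroup structure and the uniqueness in axiom~(\ref{item:4:defi:axiomatic-diamonds}), equal to $v^{-1}(vu) U_{\Theta}^{>0}\cdot b = u\, U_{\Theta}^{>0}\cdot b$ (translating the first-diamond construction by $v^{-1}$, then picking the sub-diamond at $x$). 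So in unipotent coordinates: $D \leftrightarrow v^{-1}U_{\Theta}^{>0}$ and $D' \leftrightarrow u\,U_{\Theta}^{>0}$.

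\textbf{The closure inclusion.} It now suffices to show $\overline{u\,U_{\Theta}^{>0}} \subset v^{-1}U_{\Theta}^{>0}$ as subsets of $U_\Theta$ (continuity of $w\mapsto w\cdot b$ on $\Omega^{\mathrm{opp}}_\Theta$ transports this to the flag variety, noting both sides lie in $\Omega^{\mathrm{opp}}_\Theta$). Since $u$ is a fixed element of $U_\Theta$, $\overline{u\,U_{\Theta}^{>0}} = u\,\overline{U_{\Theta}^{>0}} = u\, U_{\Theta}^{\geq 0}$ by Corollary~\ref{cor:semigroup}~(\ref{item5:cor:semigroup}). So I must show $u\, U_{\Theta}^{\geq 0} \subset v^{-1}U_{\Theta}^{>0}$, equivalently $vu\, U_{\Theta}^{\geq 0} \subset U_{\Theta}^{>0}$. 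But $vu \in U_{\Theta}^{>0}$, and by Corollary~\ref{cor:semigroup}~(\ref{item1:cor:semigroup}) we have $U_{\Theta}^{>0}\, U_{\Theta}^{\geq 0} \subset U_{\Theta}^{>0}$, which gives exactly the desired inclusion. The main subtlety — and the step I would write most carefully — is the precise identification of the diamonds $D$ and $D'$ in unipotent coordinates, i.e.\ justifying via the uniqueness clauses of Definition~\ref{defi:axiomatic-diamonds} and Corollary~\ref{coro:opposite-diamond-from-the-exterior} that $D = v^{-1}U_{\Theta}^{>0}\cdot b$ and $D' = u\,U_{\Theta}^{>0}\cdot b$; once this bookkeeping is in place the closure inclusion is immediate from the semigroup properties of $U_{\Theta}^{>0}$ and $U_{\Theta}^{\geq 0}$ already established in Corollary~\ref{cor:semigroup}.
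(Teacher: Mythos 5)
Your identification of~$D'$ is incorrect, and the proof does not survive this. In your normalisation $a=\mathfrak{p}_\Theta$, $b=\mathfrak{p}_{\Theta}^{\mathrm{opp}}$, $x=u\cdot b$, the set $u\,U_{\Theta}^{>0}\cdot b$ is the image of the diamond $(U_{\Theta}^{>0}\cdot b,a,b)$ under $u\in U_\Theta$; since $u$ fixes $\mathfrak{p}_\Theta$ and sends $b$ to~$x$, this is a diamond with extremities $a$ and~$x$, \emph{not} a diamond with extremities $x$ and~$b$. To realise a diamond with extremities $x$ and~$b$ one needs an element of $\Aut(\mathfrak{g})$ carrying $(\mathfrak{p}_\Theta,\mathfrak{p}_{\Theta}^{\mathrm{opp}})$ to $(x,b)$ --- for instance the element $m\in U_{\Theta}^{\mathrm{opp},>0}$ with $m\cdot a=x$ supplied by Proposition~\ref{prop:first-diamond}, which fixes~$b$ --- and the resulting set is quite different from $u\,U_{\Theta}^{>0}\cdot b$. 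The true $D'$ corresponds, in the chart $U_\Theta\cong\mathcal{O}_a$, to the elements $w$ ``sandwiched between $e$ and~$u$'' (i.e.\ $w\in U_{\Theta}^{>0}$ and $w^{-1}u\in U_{\Theta}^{>0}$), which form a \emph{bounded} subset of~$U_\Theta$, whereas your set is unbounded.

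This matters because the closure in the lemma is taken in the compact flag variety $\mathsf{F}_\Theta$, not in~$U_\Theta$. The homeomorphism $U_\Theta\to\mathcal{O}_a$ only transports relative closures inside~$\mathcal{O}_a$; the entire content of the lemma is that $\overline{D}{}'$ does not reach the boundary of $\mathcal{O}_a\cap\mathcal{O}_y$. For your candidate $u\,U_{\Theta}^{>0}\cdot b$ the conclusion is in fact false: any unbounded sequence $(w_n)$ in $U_{\Theta}^{>0}$ gives points $uw_n\cdot b$ accumulating (after extraction) on a flag outside~$\mathcal{O}_a$, so $\overline{u\,U_{\Theta}^{>0}\cdot b}\not\subset D$. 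The paper's proof works in the normalisation $y=\mathfrak{p}_{\Theta}^{\mathrm{opp}}$, $b=v\cdot y$, $x=vu\cdot y$, observes that every point of $\overline{D}{}'$ has the form $vw\cdot y$ with $w\in U_{\Theta}^{\geq0}$ and $w^{-1}u\in U_{\Theta}^{\geq0}$, deduces transversality to~$y$ from $vw\in U_{\Theta}^{>0}U_{\Theta}^{\geq0}\subset U_{\Theta}^{>0}$ (Corollary~\ref{cor:semigroup}), argues symmetrically for~$a$, and concludes by connectedness since $D$ is a connected component of $\mathcal{O}_a\cap\mathcal{O}_y$. Your computation $vu\,U_{\Theta}^{\geq0}\subset U_{\Theta}^{>0}$ is correct but establishes a statement about the diamond with extremities $a$ and~$x$, not the one the lemma is about.
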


\begin{proof}
  It will be enough to prove that this closure is contained in
  the intersection $\mathcal{O}_a \cap \mathcal{O}_y$. By symmetry, we need only to prove the
  inclusion into~$\mathcal{O}_y$. We can assume that $a=\mathfrak{p}_\Theta$,
  $y=\mathfrak{p}_{\Theta}^{\mathrm{opp}}$ and $b=v\cdot y$, $x=vu \cdot y$
  with $u,v$ in~$U_{\Theta}^{>0}$. Any point in $\overline{D}{}'$ is of the
  form $vw \cdot y$ with $w\in U_{\Theta}^{\geq 0}$ (and $w^{-1} u$ must also
  belong to $U_{\Theta}^{\geq 0}$). In particular $vw$ belongs to
  $U_{\Theta}^{>0}$ (Corollary~\ref{cor:semigroupUtheta}.(\ref{item1:cor:semigroupUtheta})) and $vw\cdot y$ is transverse to~$y$ as wanted.
\end{proof}

Similarly to what has been established for triples we note
\begin{lemma}
  The space of positive quadruples is a union of connected components of the
  space of pairwise transverse quadruples.
\end{lemma}

\subsection{Positive tuples}
\label{sec:positive-tuples}

Let~$n$ be an integer~$\geq 3$. We introduce here the positive $n$-tuples in
generalizing point~(\ref{item1bis:prop:posit-tripl}) of
Proposition~\ref{prop:posit-tripl} or
point~(\ref{item5:prop:positive-quadruples}) of
Proposition~\ref{prop:positive-quadruples}. Positive tuples of flags have been
also extensively studied in\cite{GLW}.

\begin{definition}
  \label{defi:positive-tuples}
  A $n$-tuple $(f_i)_{i\in \Z/n\Z }$ of elements of~$\mathsf{F}_\Theta$ is
    \emph{positive} if, for all $i\neq j$ in $\Z/n\Z$, there exists a
    diamond $D_{i,j}$ with extremities~$f_i$ and~$f_j$ with $D_{j,i} =
    D_{i,j}^{\vee}$ and $f_k$~belongs to~$D_{i,j}$ for all $(i,k,j)$
    cyclically ordered (i.e.\ there are integers $a$, $b$, and~$c$
    representing $i$, $j$, and~$k$ respectively and such that
    $a<b<c<a+n$).
\end{definition}

Note that the diamonds are uniquely determined by the properties that
$f_{i+1}$ belongs to $D_{i,j}$ (when $j\notin \{ i,i+1\}$) and $D_{i,i+1} =
D_{i+1,i}^{\vee}$.

From the definition, it is obvious that
\begin{lemma}
  \label{lem:positive-tuples-first-prop}
  \begin{enumerate}[leftmargin=*]
  \item The set of positive $n$-tuples is invariant by the dihedral
    group~$D_n$ (i.e.\ by cyclic permutations and by the permutation $i
    \leftrightarrow n+1-i$).
  \item Any subconfiguration of a positive $n$-tuple consisting of $m$ elements is a positive $m$-tuple.
  \end{enumerate}
\end{lemma}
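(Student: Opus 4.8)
\textbf{Proof plan for Lemma~\ref{lem:positive-tuples-first-prop}.}

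The plan is to verify the two assertions directly from Definition~\ref{defi:positive-tuples}, exploiting the symmetry built into the notion of opposite diamond and the combinatorics of cyclic orders. For the first item, I would proceed as follows. Fix a positive $n$-tuple $(f_i)_{i\in \Z/n\Z}$, together with the diamonds $D_{i,j}$ witnessing positivity. I must produce, for each dihedral symmetry $\sigma$, a corresponding family of diamonds for the permuted tuple $(f_{\sigma(i)})_{i\in\Z/n\Z}$. For a cyclic permutation $\sigma\colon i\mapsto i+c$ this is immediate: the family $D'_{i,j}\coloneqq D_{i+c,j+c}$ satisfies all the requirements, because the relation ``$(i,k,j)$ cyclically ordered'' is translation-invariant in $\Z/n\Z$ and the involution $D_{j,i}=D_{i,j}^\vee$ is preserved verbatim. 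For the reflection $\sigma\colon i\mapsto n+1-i$ (equivalently $i\mapsto -i+1$ in $\Z/n\Z$), I would set $D'_{i,j}\coloneqq D_{\sigma(i),\sigma(j)}^{\vee} = D_{\sigma(j),\sigma(i)}$; one then checks that the cyclic-order condition transforms correctly. The key point here is that applying the order-reversing bijection $\sigma$ to $\Z/n\Z$ turns ``$(i,k,j)$ cyclically ordered'' into ``$(\sigma(j),\sigma(k),\sigma(i))$ cyclically ordered,'' so that the required membership $f_{\sigma(k)}\in D'_{i,j}$ becomes exactly a membership already guaranteed for the original tuple, and the condition $D'_{j,i}=(D'_{i,j})^\vee$ follows from $(D^\vee)^\vee = D$ (noted after Lemma~\ref{lem:opposite-diamond}). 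Since $D_n$ is generated by one cyclic permutation and this single reflection, handling these two cases suffices.

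For the second item, let $(f_i)_{i\in\Z/n\Z}$ be positive and let $J = \{i_1 < i_2 < \dots < i_m\}\subset \Z/n\Z$ (with $m\geq 3$, cyclically ordered as inherited from $\Z/n\Z$) index a subconfiguration; I want to show $(f_{i_1},\dots,f_{i_m})$ is positive as an $m$-tuple. The natural candidate witnessing family is the restriction: for $a\neq b$ in $\{1,\dots,m\}$ put $D'_{a,b}\coloneqq D_{i_a,i_b}$. The involution condition $D'_{b,a}=(D'_{a,b})^\vee$ is inherited directly. The only thing to check is the cyclic-order/membership condition: if $(a,c,b)$ is cyclically ordered in $\Z/m\Z$, I must show $f_{i_c}\in D'_{a,b}=D_{i_a,i_b}$. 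This is where I would invoke that the cyclic order on $J$ is the one induced from $\Z/n\Z$: if $(a,c,b)$ is cyclically ordered among the indices of the subconfiguration, then $(i_a, i_c, i_b)$ is cyclically ordered in $\Z/n\Z$, hence $f_{i_c}\in D_{i_a,i_b}$ by the positivity of the original tuple. So the restricted family works.

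I expect the main (though still modest) obstacle to be purely bookkeeping: making the identification $\Z/m\Z \leftrightarrow J$ precise enough that ``cyclically ordered'' is transported correctly, and likewise being careful with the reflection in item~(1), where one must not confuse the two a priori different conventions (applying $\sigma$ versus applying $\sigma$ together with opposition). In both cases the substantive input is just two structural facts already available: the involutivity $(D^\vee)^\vee=D$ and the translation- and reflection-equivariance of the cyclic order on $\Z/n\Z$; no appeal to the finer geometry of $U_\Theta^{>0}$ is needed here. I would therefore keep the write-up short, presenting the witnessing families explicitly and checking the defining conditions line by line.
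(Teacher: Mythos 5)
Your proposal is correct: the paper gives no proof of this lemma (it simply states that it is "obvious from the definition"), and your direct verification — relabelling the witnessing diamonds under cyclic shifts, using $D_{j,i}=D_{i,j}^{\vee}$ together with $(D^{\vee})^{\vee}=D$ and the order-reversal of the reflection, and restricting the family of diamonds for subconfigurations — is exactly the intended argument.
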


Here is one (among many) characterizations of positive tuples.

\begin{lemma}
  \label{lem:positive-tuples-charact}
  A $n$-tuple is positive if and only if it is in the
  $\Aut_1(\mathfrak{g})$-orbit of
  \[ (a, u_{1}\cdots u_{n-3} u_{n-2} \cdot b, u_{1}\cdots u_{n-3} \cdot b, \dots,
    u_1 \cdot b, b),\]
  where $a=\mathfrak{p}_\Theta$, $b=\mathfrak{p}_{\Theta}^{\mathrm{opp}}$,
  and, for all~$i$ in $\llbracket 1, n-2\rrbracket$, $u_i$~belongs to~$U_{\Theta}^{>0}$.
\end{lemma}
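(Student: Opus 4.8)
The plan is to reduce the statement to the already-established characterizations of positive triples and quadruples by an induction on~$n$, using the recursive structure built into Definition~\ref{defi:positive-tuples}. The base cases $n=3$ and $n=4$ are exactly point~(\ref{item2:prop:posit-tripl}) of Proposition~\ref{prop:posit-tripl} and point~(\ref{item3:prop:positive-quadruples}) of Proposition~\ref{prop:positive-quadruples} (up to the dihedral relabelling recorded in Lemma~\ref{lem:positive-tuples-first-prop}), so there is nothing to do there. For the inductive step, suppose the claim holds for $(n-1)$-tuples and let $(f_0, f_1, \dots, f_{n-1})$ (I index by $\Z/n\Z$ with $f_0=:a'$, $f_{n-1}=:b'$ in the ``linearized'' picture) be a positive $n$-tuple. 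By Lemma~\ref{lem:positive-tuples-first-prop} the subconfiguration obtained by deleting~$f_1$ is a positive $(n-1)$-tuple, so after acting by $\Aut(\mathfrak{g})$ we may assume $a:=f_0 = \mathfrak{p}_\Theta$, $b:= f_{n-1}=\mathfrak{p}_{\Theta}^{\mathrm{opp}}$, and
\[ (f_2, f_3, \dots, f_{n-1}) = (u_2\cdots u_{n-2}\cdot b,\ u_3\cdots u_{n-2}\cdot b,\ \dots,\ u_{n-2}\cdot b,\ b)\]
with $f_0=a$ and each $u_i\in U_{\Theta}^{>0}$ for $i=2,\dots,n-2$; here I have arranged indices so that the product $u_2\cdots u_{n-2}$ plays the role of ``$u_1\cdots u_{n-3}$'' in the $(n-1)$-case applied to $(f_0,f_2,\dots,f_{n-1})$.

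Next I would use the remaining information, namely the diamond relations involving~$f_1$. Consider the diamond $D_{0,2}$ with extremities $f_0=a$ and $f_2 = u_2\cdots u_{n-2}\cdot b$: by positivity of the $n$-tuple we have $f_1\in D_{0,2}$. Now $f_2$ lies in the first diamond $D:=U_{\Theta}^{>0}\cdot b$ with extremities $a$ and $b$ (that is what the displayed normal form says, since $u_2\cdots u_{n-2}\in U_{\Theta}^{>0}$ by the semigroup property, Corollary~\ref{cor:semigroup}). Hence the unique diamond contained in~$D$ with extremities~$a$ and~$f_2$ is $D_{0,2} = v\cdot D$ where $v = u_2\cdots u_{n-2}$ (using that the action of~$v\in U_\Theta$ sends~$D$ to the diamond with extremities $v\cdot a$ — but $a=\mathfrak{p}_\Theta$ is fixed by $U_\Theta$ — and $v\cdot b = f_2$, together with the uniqueness in Condition~(\ref{item:4:defi:axiomatic-diamonds}) of Definition~\ref{defi:axiomatic-diamonds}). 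Therefore $f_1\in v\cdot D = v\cdot U_{\Theta}^{>0}\cdot b$, i.e.\ $f_1 = v u_1\cdot b = u_1\cdots u_{n-2}\cdot b$ for some $u_1\in U_{\Theta}^{>0}$. Relabelling, this is exactly the asserted normal form. Conversely, any tuple of the displayed shape is positive: I would verify the diamond conditions of Definition~\ref{defi:positive-tuples} directly, taking $D_{i,j}$ (for $i<j$ in the linear order) to be $u_1\cdots u_{j'}\cdot D$ where $j'$ is chosen so that $u_1\cdots u_{j'}\cdot b = f_j$, and $D_{j,i}=D_{i,j}^\vee$; membership of the intermediate $f_k$'s then follows from the semigroup property $U_{\Theta}^{\geq 0}U_{\Theta}^{>0}\subset U_{\Theta}^{>0}$ and Corollary~\ref{coro:opposite-diamond-from-the-exterior}, in exact analogy with the proof of point~(\ref{item5:prop:positive-quadruples}) of Proposition~\ref{prop:positive-quadruples}.

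The main obstacle I anticipate is bookkeeping, not mathematics: one must be careful that the diamonds $D_{i,j}$ produced in the inductive step for the \emph{deleted} $(n-1)$-tuple genuinely agree with the ones required for the full $n$-tuple on the overlapping indices, and that the ``new'' diamond $D_{0,1}$ obtained from $f_1\in D_{0,2}$ is compatible (via opposition) with the diamonds $D_{1,j}$ for $j\geq 2$. This is where Corollary~\ref{coro:opposite-diamond-from-the-exterior}, Corollary~\ref{coro:opposite-diamond-equivariant}, and Lemma~\ref{lem:opposite-diamond-reverse-inclusion} do the real work: the point is that the nested family of diamonds $v\cdot D\supset$ (smaller diamonds) behaves monotonically under opposition, so the cyclic ordering conditions propagate automatically. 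Concretely, I expect the cleanest writeup routes everything through point~(\ref{item5:prop:positive-quadruples}) of Proposition~\ref{prop:positive-quadruples} applied to consecutive quadruples $(f_i,f_{i+1},f_j,f_k)$ of the configuration, so that no diamond condition has to be re-proved from scratch.
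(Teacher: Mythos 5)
Your overall strategy is the paper's: the paper also normalizes $f_1=a$, $f_n=b$, $f_{n-1}=u_1\cdot b$ and then places the remaining points one at a time by repeated application of Lemma~\ref{lem:positive-quadruples-with-fixed-triple} (which is exactly the ``diamond $D_{0,2}=v\cdot D$'' step you re-derive), and the converse is handled the same way in both write-ups. However, there is a concrete error in your inductive step that breaks the final identification. The normal form of the lemma accumulates the new unipotent factor on the \emph{right}: moving from $b$ towards $a$, the points are $u_1\cdot b$, $u_1u_2\cdot b$, \dots, $u_1\cdots u_{n-2}\cdot b$, consistently with Lemma~\ref{lem:positive-quadruples-with-fixed-triple}, which says that $(a,y,x,b)$ positive with $x=v\cdot b$ forces $y=vu\cdot b$ (not $y=uv\cdot b$). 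Your transcription of the induction hypothesis for the deleted $(n-1)$-tuple, namely $(u_2\cdots u_{n-2}\cdot b,\ u_3\cdots u_{n-2}\cdot b,\ \dots,\ u_{n-2}\cdot b,\ b)$, drops factors from the \emph{left} instead of the right; it is not the relabelled normal form you claim it is, and it is not known to be an equivalent one (passing between the two would require $U_{\Theta}^{>0}$ to be stable under conjugation by its own elements, which is established nowhere). This error then propagates: with $v=u_2\cdots u_{n-2}$ you obtain $f_1=vu_1\cdot b=u_2\cdots u_{n-2}\,u_1\cdot b$, and the concluding ``relabelling'' to $u_1u_2\cdots u_{n-2}\cdot b$ silently commutes $u_1$ past the other factors, which is false since $U_\Theta$ is not abelian.

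The repair is purely one of bookkeeping. State the induction hypothesis in the form the lemma actually gives, $f_j=u_1\cdots u_{n-1-j}\cdot b$ for $j=2,\dots,n-2$, so that $f_2=w\cdot b$ with $w=u_1\cdots u_{n-3}$; then your diamond argument (or, more directly, Lemma~\ref{lem:positive-quadruples-with-fixed-triple} applied to the positive quadruple $(a,f_1,f_2,b)$) yields $f_1=wu_{n-2}\cdot b=u_1\cdots u_{n-3}u_{n-2}\cdot b$ for a \emph{new} element $u_{n-2}\in U_{\Theta}^{>0}$, which is exactly the asserted normal form. One further small point: your identification $D_{0,2}=v\cdot D$ presupposes that the diamond $D_{0,2}$ of the coherent family is contained in $D=D_{0,n-1}$; this needs a word (e.g.\ $f_1\in D_{0,2}\cap D$ together with $b\in D_{2,0}=D_{0,2}^{\vee}$ and Corollary~\ref{coro:opposite-diamond-from-the-exterior} pin down the ambient diamond, after which the uniqueness clause of Definition~\ref{defi:axiomatic-diamonds} applies). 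Citing Lemma~\ref{lem:positive-quadruples-with-fixed-triple} outright, as the paper does, packages all of this.
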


\begin{proof}
  Let $(f_1, \dots, f_n)$ be a positive tuple. Up to the $\Aut_1( \mathfrak{g})$
  action, we can assume that $f_1=a$, $f_n=b$ and $f_{n-1}= u_1\cdot
  b$. Repeated applications of Lemma~\ref{lem:positive-quadruples-with-fixed-triple}
  give the sequence $(u_2, \dots, u_{n-2})$.

  Conversely let $u_1, \dots, u_{n-2}$ be in~$U_{\Theta}^{>0}$ and let $(f_1,
  \dots, f_n)$ be
  \[ (a, \dots, u_1 \cdots u_i \cdot b, \dots, b).\]For all $i<
  j$ in $\llbracket 1, n\rrbracket$, let $D_{i,j}$ be the unique diamond with
  extremities~$f_i$ and~$f_j$ contained in $D=U_{\Theta}^{>0}\cdot b$ and
  $D_{j,i} = D_{i,j}^{\vee}$. The facts that, for
  $i<k<j$ in $\llbracket 1, n\rrbracket$, $f_k$ belongs to~$D_{i,j}$ come from
  Corollary~\ref{coro:posit-tripl-charac}, the other cases come from
  Corollary~\ref{coro:opposite-diamond-from-the-exterior}.
\end{proof}

Restated in terms of diamonds, the lemma says it is enough to check that some sub-quadruples are positive:

\begin{proposition}
  \label{prop:positive-tuples-charact}
  Let $(f_1, \dots, f_n)$ be in $(\mathsf{F}_\Theta)^n$. Then $(f_1, \dots,
  f_n)$ is positive if and only if, for every $i=2, \dots, n-2$, the quadruple
  $(f_1, f_i, f_{i+1}, f_n)$ is positive.
\end{proposition}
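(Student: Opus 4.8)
The forward implication is immediate: if $(f_1,\dots,f_n)$ is positive, then by Lemma~\ref{lem:positive-tuples-first-prop} any subconfiguration is positive, so each quadruple $(f_1,f_i,f_{i+1},f_n)$ is positive. The content is in the converse, and the plan is to use the characterizations already assembled in Section~\ref{sec:positive-quadruples} to reconstruct, from the hypothesis on the $(f_1,f_i,f_{i+1},f_n)$, the family of diamonds required by Definition~\ref{defi:positive-tuples}. First I would apply an element of $\Aut(\mathfrak g)$ so that $f_1 = a = \mathfrak p_\Theta$, $f_n = b = \mathfrak p_\Theta^{\mathrm{opp}}$ (this is possible since $(f_1,f_2,f_3,f_n)$ is positive, in particular $f_1$ and $f_n$ are transverse), and moreover so that the diamond with extremities $a$ and $b$ containing $f_2$ is the standard one $D = U_\Theta^{>0}\cdot b = U_\Theta^{\mathrm{opp},>0}\cdot a$.

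\textbf{Reconstructing the $u_i$.} Since $(f_1,f_2,f_3,f_n)=(a,f_2,f_3,b)$ is positive, by point~(\ref{item3:prop:positive-quadruples}) of Proposition~\ref{prop:positive-quadruples} we may write $f_2 = v_2 v_3 \cdots$ — more precisely, after the normalization above, there is $v\in U_\Theta^{>0}$ with $f_3 = v\cdot b$ and then $f_2 = vu\cdot b$ for some $u\in U_\Theta^{>0}$ by Lemma~\ref{lem:positive-quadruples-with-fixed-triple}. The idea is to run this inductively: set $u_1$ so that $f_{n-1} = u_1\cdot b$ (using positivity of $(f_1,f_{n-2},f_{n-1},f_n)$, or directly that $f_{n-1}$ lies in $D$), and then for each $i$ from $2$ to $n-2$ use positivity of the quadruple $(f_1, f_i, f_{i+1}, f_n) = (a, f_i, f_{i+1}, b)$ together with Lemma~\ref{lem:positive-quadruples-with-fixed-triple} to produce $u_i \in U_\Theta^{>0}$ with $f_i = u_1\cdots u_i \cdot b$, given that $f_{i+1} = u_1\cdots u_{i-1}\cdot b$ from the previous step. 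The point that makes this induction legitimate is that the hypothesis "$(f_1,f_i,f_{i+1},f_n)$ positive" is exactly the input Lemma~\ref{lem:positive-quadruples-with-fixed-triple} requires, once we know $f_{i+1}=u_1\cdots u_{i-1}\cdot b$ with $u_1\cdots u_{i-1}\in U_\Theta^{>0}$ (the latter because $U_\Theta^{>0}$ is a semigroup, Corollary~\ref{cor:semigroup}). At the end of the induction we have exhibited $(f_1,\dots,f_n)$ in the $\Aut(\mathfrak g)$-orbit of $(a, u_1\cdots u_{n-2}\cdot b, u_1\cdots u_{n-3}\cdot b,\dots, u_1\cdot b, b)$, and Lemma~\ref{lem:positive-tuples-charact} then gives that $(f_1,\dots,f_n)$ is positive.

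\textbf{Main obstacle.} The delicate point is the base of the induction and the bookkeeping of which quadruple licenses which step: Lemma~\ref{lem:positive-quadruples-with-fixed-triple} is stated for the configuration $(a,y,x,b)$ with $x=v\cdot b$ for a \emph{given} $v\in U_\Theta^{>0}$, and one must check that at stage $i$ the role of "$v$" is played by the \emph{honest} positive element $u_1\cdots u_{i-1}$ and not merely by some $U_\Theta$ element — this is where Corollary~\ref{cor:semigroup}(\ref{item1:cor:semigroup}) (that $U_\Theta^{>0}$ is closed under products) is essential, and also where one must be careful that the normalization fixing $D$ as the standard diamond is compatible with all the quadruples simultaneously (it is, because $f_2,\dots,f_{n-1}$ all lie in the one diamond $D_{1,n}$ containing $f_2$, by the cyclic-order membership conditions implicit once one quadruple pins down $D$). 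I would spell this out by first proving the statement for $n=5$ as the genuinely new case beyond Proposition~\ref{prop:positive-quadruples}(\ref{item5:prop:positive-quadruples}), and then observing the induction on $n$ is purely formal: apply the $n=5$ case to $(f_1, f_{i}, f_{i+1}, f_{i+2}, f_n)$ to promote consecutive quadruple-positivity to quintuple-positivity, and iterate, or — cleaner — just carry out the direct reconstruction of the $u_i$ as above and invoke Lemma~\ref{lem:positive-tuples-charact} once.
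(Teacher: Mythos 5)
Your proposal is correct and follows essentially the same route as the paper: the proposition is presented there as a restatement of Lemma~\ref{lem:positive-tuples-charact}, whose proof is exactly the normalization $f_1=a$, $f_n=b$, $f_{n-1}=u_1\cdot b$ followed by repeated applications of Lemma~\ref{lem:positive-quadruples-with-fixed-triple} to the quadruples $(a,f_i,f_{i+1},b)$ that you describe, concluding with the converse direction of that lemma. (Only a small indexing slip: with your conventions one obtains $f_i=u_1\cdots u_{n-i}\cdot b$ rather than $u_1\cdots u_i\cdot b$, which does not affect the argument.)
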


We also note that
\begin{lemma}
  The set of positive $n$-tuples is a union of connected components of the
  space of pairwise transverse $n$-tuples.
\end{lemma}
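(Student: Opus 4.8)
The plan is to reduce the statement about $n$-tuples to the already-established result for positive quadruples (point~(\ref{item6:prop:positive-quadruples}) of Proposition~\ref{prop:positive-quadruples}, together with the fact that the space of positive quadruples is a union of connected components of $\mathsf{F}_{\Theta}^{4*}$) via Proposition~\ref{prop:positive-tuples-charact}. The point is that positivity of an $n$-tuple $(f_1,\dots,f_n)$ is, by Proposition~\ref{prop:positive-tuples-charact}, equivalent to a finite conjunction of conditions, each asserting that a specific quadruple $(f_1, f_i, f_{i+1}, f_n)$ is positive; and each such condition cuts out a union of connected components of the relevant space of transverse configurations.

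First I would fix notation: let $\mathsf{F}_\Theta^{n*}$ denote the space of $n$-tuples of pairwise transverse flags, an open subset of $(\mathsf{F}_\Theta)^n$, and let $\mathcal{P}_n\subset \mathsf{F}_\Theta^{n*}$ be the set of positive $n$-tuples. By Lemma~\ref{lem:positive-tuples-charact} (or directly from the definition), a positive $n$-tuple is pairwise transverse, so $\mathcal{P}_n\subset \mathsf{F}_\Theta^{n*}$. Now consider, for each $i=2,\dots,n-2$, the map
\[
\pi_i\colon \mathsf{F}_\Theta^{n*}\longrightarrow \mathsf{F}_\Theta^{4*},\quad
(f_1,\dots,f_n)\longmapsto (f_1, f_i, f_{i+1}, f_n).
\]
This map is continuous (it is a restriction of a coordinate projection) and it does indeed land in $\mathsf{F}_\Theta^{4*}$ because the four selected flags are pairwise transverse. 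By Proposition~\ref{prop:positive-tuples-charact}, $\mathcal{P}_n = \bigcap_{i=2}^{n-2}\pi_i^{-1}(\mathcal{P}_4)$, where $\mathcal{P}_4$ is the set of positive quadruples. Since $\mathcal{P}_4$ is a union of connected components of $\mathsf{F}_\Theta^{4*}$ (the last displayed lemma of Section~\ref{sec:positive-quadruples}), it is open and closed in $\mathsf{F}_\Theta^{4*}$; hence each $\pi_i^{-1}(\mathcal{P}_4)$ is open and closed in $\mathsf{F}_\Theta^{n*}$, and therefore so is the finite intersection $\mathcal{P}_n$. A subset of a topological space that is open and closed is automatically a union of connected components, which is the assertion.

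I should be careful about the base cases $n=3$ and $n=4$: for $n=3$ there are no indices $i$ in the range $2,\dots,n-2=1$, so the statement must instead be read off point~(\ref{item5:prop:posit-tripl}) of Proposition~\ref{prop:posit-tripl}; for $n=4$ it is the last lemma of Section~\ref{sec:positive-quadruples} quoted above. So strictly speaking I would state the proof for $n\geq 5$ via the reduction above and note that $n=3,4$ have already been treated. I do not anticipate a genuine obstacle here — the only mild subtlety is making sure that Proposition~\ref{prop:positive-tuples-charact} is invoked as an honest logical equivalence (positivity $\Leftrightarrow$ positivity of all the quadruples $(f_1,f_i,f_{i+1},f_n)$) rather than merely one implication, and that the maps $\pi_i$ are genuinely maps into the transverse locus, both of which are immediate from the earlier material.
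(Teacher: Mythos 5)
Your proof is correct: the reduction via Proposition~\ref{prop:positive-tuples-charact} to the clopen set of positive quadruples, followed by pulling back along the continuous coordinate projections $\pi_i\colon \mathsf{F}_\Theta^{n*}\to\mathsf{F}_\Theta^{4*}$ and intersecting finitely many clopen sets, is exactly the route the paper's arrangement of the material suggests (the paper states this lemma without proof, immediately after that proposition, and the quadruple and triple cases are handled exactly as you cite them). Your care about the base cases $n=3,4$ and about Proposition~\ref{prop:positive-tuples-charact} being a genuine equivalence is appropriate and both points check out.
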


\subsection{Compatibility of positive structures}
\label{sec:comp-posit-struct}
As observed before (see Remarks~\ref{rem:split-and-2-str} and Examples~\ref{exam:nonnegativeSplitHermitian}), if $G$~is a split
real group whose Dynkin diagram has a double arrow, then $G$~carries two
positive structures, one with respect to~$\Delta$ and one with respect to a
proper subset $\Theta\subset \Delta$. Both positive structures give rise to a
notion of positive triples in the flag varieties~$\mathsf{F}_\Delta$,
respectively~$\mathsf{F}_\Theta$. Here we show that these two different
notions are compatible.  \index{$\mathsf{F}_\Delta$ the
  complete flag variety} This
concerns the following groups up to isogeny:
\begin{itemize}
\item the symplectic Lie group $\Sp(2n, \R)$, $n\geq 2$;
\item the orthogonal groups $\SO(n,n+1)$, $n\geq 3$ ;
\item the real split Lie group of type~$\lietype{F}_4$.
\end{itemize}
Note that in this case, as explained in Section~\ref{sec:putting-an-hand}, the choice of root vectors $X_\alpha$ for all $\alpha\in \Delta$ determine the cones ${c}_\alpha$ for $\alpha \in \Theta$.  Let $U_{\Delta}^{>0}$, $U_{\Delta}^{\mathrm{opp}, >0}$, and $U_{\Theta}^{>0}$, $U_{\Theta}^{\mathrm{opp}, >0}$ be the corresponding semigroups. 
The natural projection $\pi\colon \mathsf{F}_\Delta \to
\mathsf{F}_\Theta$ is $\Aut_1(\mathfrak{g})$-equivariant and $\pi( \mathfrak{p}_\Delta) = \mathfrak{p}_\Theta$ and $\pi(\mathfrak{p}_\Delta^{\mathrm{opp}}) = \mathfrak{p}_{\Theta}^{\mathrm{opp}}$. This projection behaves well with respect to the notion of positivity  introduced:
\begin{proposition}\label{prop:compatible}
  Let $(a,x,b)$ be a positive triple in~$\mathsf{F}_\Delta$. Then
  $( \pi(a), \pi(x), \pi(b))$ is a positive triple
  in~$\mathsf{F}_\Theta$.
\end{proposition}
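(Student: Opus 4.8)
The plan is to reduce the statement to a concrete assertion about the semigroups $U^{>0}$ in $U_\Delta$ and $U_\Theta^{>0}$ in $U_\Theta$, and then use the transitivity of the $\Aut(\mathfrak{g})$-action together with the explicit parametrizations. By point~(\ref{item2:prop:posit-tripl}) of Proposition~\ref{prop:posit-tripl}, a positive triple $(a,x,b)$ in $\mathsf{F}_\Delta$ is in the $\Aut(\mathfrak{g})$-orbit of $(\mathfrak{b}, u\cdot \mathfrak{b}^{\mathrm{opp}}, \mathfrak{b}^{\mathrm{opp}})$ for some $u\in U^{>0}$. Since $\pi$ is $\Aut(\mathfrak{g})$-equivariant and sends $\mathfrak{b}$ to $\mathfrak{p}_\Theta$ and $\mathfrak{b}^{\mathrm{opp}}$ to $\mathfrak{p}_\Theta^{\mathrm{opp}}$, the triple $(\pi(a),\pi(x),\pi(b))$ is in the orbit of $(\mathfrak{p}_\Theta, u\cdot \mathfrak{p}_\Theta^{\mathrm{opp}}, \mathfrak{p}_\Theta^{\mathrm{opp}})$. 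So, again by point~(\ref{item2:prop:posit-tripl}), it suffices to prove the inclusion
\[ U^{>0} \subset U_\Theta^{>0} \cdot N \]
for a suitable subgroup $N$ fixing $\mathfrak{p}_\Theta^{\mathrm{opp}}$ --- more precisely, it suffices to show that for every $u\in U^{>0}$ the flag $u\cdot \mathfrak{p}_\Theta^{\mathrm{opp}}$ lies in $U_\Theta^{>0}\cdot\mathfrak{p}_\Theta^{\mathrm{opp}}$, equivalently that the image of $U^{>0}$ under the quotient map $U_\Delta \to U_\Delta/(U_\Delta\cap P_\Theta^{\mathrm{opp}})$ lands inside $U_\Theta^{>0}$ (using $U_\Delta = U_\Theta \rtimes (U_\Delta\cap L_\Theta)$ and that $U_\Delta\cap L_\Theta$ fixes $\mathfrak{p}_\Theta^{\mathrm{opp}}$).

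The key computational step is then to relate the two parametrizations via reduced expressions. Here I would use the split case of the present theory: in the split group, $\Delta$ itself is one of the admissible subsets, with cones $c_\alpha = \R_{\ge 0}$, and $w_\Delta$ is the longest element of $W = W(\Delta)$; on the other hand $w_{\max}^\Theta$ is the longest element of $W(\Theta)\subset W$, and by Proposition~\ref{prop:longestelementWTheta} one has $w_\Delta = w_{\max}^\Theta \, w_{\Delta\setminus\Theta}$ with lengths adding. Pick a reduced expression of $w_{\Delta\setminus\Theta}$ and a reduced expression $\gammab$ of $w_{\max}^\Theta$ in $(W(\Theta),R(\Theta))$; refining each $\sigma_{\gamma_i}$ into a reduced word in $(W,\Delta)$ (Lemma after Proposition~\ref{prop:longestelementWTheta}) produces a reduced expression $\deltab$ of $w_\Delta$ in $(W,\Delta)$ whose first $N' = \ell(w_{\max}^\Theta)$ letters refine $\gammab$. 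Writing $u\in U^{>0} = F_{\deltab}(\R_{>0}^{\ell(w_\Delta)})$ as $\exp(v_1)\cdots\exp(v_{\ell(w_\Delta)})$ with each $v_j\in \R_{>0}X_{\beta_j}$, the tail $\exp(v_{N'+1})\cdots\exp(v_{\ell(w_\Delta)})$ is a product of root-exponentials for simple roots all lying in $\Span(\Delta\setminus\Theta)$, hence lies in $U_\Delta\cap P_\Theta^{\mathrm{opp}}$ --- wait, more carefully, it lies in $U_\Delta\cap L_\Theta$ since those roots are in $\Span(\Delta\setminus\Theta)$. Thus modulo $U_\Delta\cap L_\Theta$ (which fixes $\mathfrak{p}_\Theta^{\mathrm{opp}}$), $u$ agrees with the head $\exp(v_1)\cdots\exp(v_{N'})$.

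It remains to identify the head with an element of $U_\Theta^{>0}$. Grouping the factors according to the blocks refining $\sigma_{\gamma_1},\dots,\sigma_{\gamma_N}$, the $i$-th block is $\exp$ of the product corresponding to a reduced expression of $\sigma_{\gamma_i}$ in positive Chevalley root vectors; when $\gamma_i\ne\alpha_\Theta$ this is simply $\exp(tX_{\gamma_i})$ with $t>0$, an element of $\exp(\mathring{c}_{\gamma_i})$; when $\gamma_i=\alpha_\Theta$, the block is a product $\exp(t_1 X_{\beta_1})\cdots\exp(t_k X_{\beta_k})$ over a reduced word of $\sigma_{\alpha_\Theta}$ inside $W_{\{\alpha_\Theta\}\cup\Delta\setminus\Theta}$ --- precisely the split $A_1$-over-reals data of the Hermitian tube-type block --- and by Lusztig's positivity in that split subgroup (which is the split group of type $W(\{\alpha_\Theta\}\cup\Delta\setminus\Theta)$) this equals $\exp(E)\cdot\ell$ for some $E$ in the open cone $\mathring{c}_{\alpha_\Theta}$ and some $\ell$ in $U_\Delta\cap L_\Theta$, again using that the relevant $\Theta$-base element $E_{\alpha_\Theta}$ lies in $\mathring{c}_{\alpha_\Theta}$ (Theorem~\ref{thm:one-element-in-the-cone}) and that $\mathring{c}_{\alpha_\Theta}$ is $L_\Theta^\circ$-homogeneous. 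Carrying the $L_\Theta$-parts to the right through the subsequent blocks (they normalize everything appropriately, using $L_\Theta$-equivariance as in Lemma~\ref{lemma:equivarianceFgamma}) rewrites the head as $F_{\gammab}(w) \cdot (\text{element of } U_\Delta\cap L_\Theta)$ with $w\in\mathring{c}_{\gamma_1}\times\cdots\times\mathring{c}_{\gamma_N}$, so its class mod $U_\Delta\cap L_\Theta$ lies in $U_\Theta^{>0} = \mathring{F}_{\gammab}(\mathring{c}_{\gammab})$.

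The main obstacle I anticipate is making precise and correct the bookkeeping of the $L_\Theta$-parts: when I rewrite a block $\gamma_i = \alpha_\Theta$ as $\exp(E)\ell$, the factor $\ell\in U_\Delta\cap L_\Theta$ must be pushed past all later blocks, and one needs that conjugating a later exponential $\exp(v)$ (with $v\in\mathring{c}_{\gamma_{i'}}$ for the simple blocks, or the analogous datum for a later $\alpha_\Theta$-block) by $\ell\in L_\Theta^\circ$ again lands in $\exp(\mathring{c}_{\gamma_{i'}})$ times possibly another $L_\Theta$-element --- for the simple blocks this is automatic since $L_\Theta^\circ$ preserves $\mathring{c}_{\gamma_{i'}}$, and for $\alpha_\Theta$-blocks one invokes $L_\Theta^\circ$-invariance of $\mathring{c}_{\alpha_\Theta}$ and transitivity (Proposition~\ref{prop:homogeneity-Ltheta}). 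A cleaner alternative, which I would adopt if the direct rewriting gets unwieldy, is: show directly that $U^{>0}\cdot\mathfrak{p}_\Theta^{\mathrm{opp}}$ is a connected subset of $\mathcal{O}\cap\mathcal{O}^{\mathrm{opp}}$ in $\mathsf{F}_\Theta$ containing (by specializing all $\alpha_\Theta$-block parameters to the $\Theta$-base values and the off-$\Theta$ parameters to $0$) the point $F_{\gammab}(E_{\gamma_1},\dots,E_{\gamma_N})\cdot\mathfrak{p}_\Theta^{\mathrm{opp}}\in U_\Theta^{>0}\cdot\mathfrak{p}_\Theta^{\mathrm{opp}}$; since $U_\Theta^{>0}\cdot\mathfrak{p}_\Theta^{\mathrm{opp}}$ is a connected component of that intersection (point~(\ref{item6:thm:pos_defined}) of Theorem~\ref{thm:pos_defined}), connectedness forces $U^{>0}\cdot\mathfrak{p}_\Theta^{\mathrm{opp}}\subset U_\Theta^{>0}\cdot\mathfrak{p}_\Theta^{\mathrm{opp}}$, which is exactly what is needed. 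This connectedness argument parallels the proof of Proposition~\ref{prop:image_independent} and avoids the explicit conjugation algebra entirely.
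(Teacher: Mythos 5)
Your reduction (transitivity of $\Aut(\mathfrak{g})$, equivariance of $\pi$, and absorbing the $\Delta\setminus\Theta$ root exponentials into $L_\Theta$, which fixes $\mathfrak{p}_\Theta^{\mathrm{opp}}$) matches the paper's starting point, but both of your routes stall at the same crucial step: showing that the resulting element actually lands in the \emph{open} semigroup $U_\Theta^{>0}$. In the direct-rewriting route, the claim that an $\alpha_\Theta$-block $\exp(t_1X_{\beta_1})\cdots\exp(t_kX_{\beta_k})$ equals $\exp(E)\,\ell$ with $E\in\mathring{c}_{\alpha_\Theta}$ and $\ell\in U_\Delta\cap L_\Theta$ is not something "Lusztig's positivity in that split subgroup" hands you: Lusztig's theory knows nothing about the cone $c_{\alpha_\Theta}$, and the identification of Lusztig-positive elements of the subgroup of type $\{\alpha_\Theta\}\cup\Delta\setminus\Theta$ with elements of $\mathring{c}_{\alpha_\Theta}$ is precisely the Hermitian-tube-type instance of the proposition you are proving. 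So this route reduces the statement to a base case that it then assumes. In the connectedness route, the witness point fails: specializing the off-$\Theta$ parameters to $0$ collapses each $\alpha_\Theta$-block to $\exp\bigl((\sum t_i)X_{\alpha_\Theta}\bigr)$, i.e.\ to $\exp$ of a point of $\partial c_{\alpha_\Theta}$, not to $\exp(E_{\alpha_\Theta})=\exp(Z_0+\cdots+Z_d)$ (whose components along the non-simple roots $\gamma_1,\dots,\gamma_d$ can only arise from \emph{nonzero} $\Delta\setminus\Theta$ parameters). The specialized point therefore lies in $F_\gammab(c_\gammab\setminus\mathring{c}_\gammab)\cdot\mathfrak{p}_\Theta^{\mathrm{opp}}$, which by point~(\ref{item3:thm:pos_defined}) of Theorem~\ref{thm:pos_defined} is disjoint from $U_\Theta^{>0}\cdot\mathfrak{p}_\Theta^{\mathrm{opp}}$, so it cannot serve as a point of the intersection you need.

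The paper avoids all of this by relaxing to the closed semigroups and upgrading at the end. After the same reduction, it suffices to show $u\cdot\pi(b)\in U_\Theta^{\geq 0}\cdot\pi(b)$, because $\pi(x)$ is transverse to $\pi(b)$ and $U_\Theta^{>0}=U_\Theta^{\geq 0}\cap\Omega_\Theta^{\mathrm{opp}}$ (point~(\ref{item6:cor:semigroup}) of Corollary~\ref{cor:semigroup}). The inclusion $U^{\geq 0}\cdot\bigl(U_\Theta^{\geq 0}\cdot\pi(b)\bigr)\subset U_\Theta^{\geq 0}\cdot\pi(b)$ is then checked on the generators $\exp(tX_\alpha)$, $t\geq 0$: for $\alpha\in\Theta$ one has $X_\alpha\in c_\alpha$ (closed cone --- no openness needed), and for $\alpha\in\Delta\setminus\Theta$ the element lies in $L_\Theta^\circ$, which normalizes $U_\Theta^{\geq 0}$ and fixes $\pi(b)$. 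If you want to salvage your argument, this "pass to $\geq 0$, then recover $>0$ from transversality" step is the missing idea.
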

\begin{proof}
  The positive semigroup~$U^{>0}_{\Delta}$ is determined here by elements~$X_\alpha$
  generating~$\mathfrak{g}_\alpha$ for~$\alpha$ in~$\Delta$. These elements
  can be used determine the semigroup~$U_{\Theta}^{>0}$, i.e.\ we choose the cones $c_\alpha\subset \mathfrak{u}_\alpha$ ($\alpha\in
  \Theta$) by the convention  that $X_\alpha\in c_\alpha$ (Remark~\ref{remark:normalize_cones}).
  
  First observe that, by the transitivity of the action of~$G$ on the space of
  transverse pairs and since $(\pi(\mathfrak{p}_\Delta)
  ,\pi(\mathfrak{p}_\Delta^{\mathrm{opp}})) = ( \mathfrak{p}_{\Theta},
  \mathfrak{p}_{\Theta}^{\mathrm{opp}})$, the image by~$\pi$ of every
  transverse pairs in~$\mathsf{F}_\Delta$ is a transverse pair
  in~$\mathsf{F}_\Theta$.

  Let now $(a,x,b)$ be a positive triple in~$\mathsf{F}_\Delta$.
  By the transitivity of the action of~$\Aut_1(\mathfrak{g})$, we can assume
  that $a= \mathfrak{p}_\Delta$, $b=\mathfrak{p}_\Delta^{\mathrm{opp}}$ and
  $x=u\cdot b$ for $u\in U^{>0}_{\Delta}$. Hence $\pi(x)= u\cdot \pi(b)$ and we need to
  show that $u\cdot \pi(b) \in U_{\Theta}^{>0}\cdot \pi(b)$. Since $\pi(x)$ is
  transverse to~$\pi(b)$ it is enough to prove that $u\cdot \pi(b) \in
   U_{\Theta}^{\geq 0}\cdot \pi(b)$.

   For this, we will  prove the inclusions $u\cdot   (U_{\Theta}^{\geq 0}\cdot \pi(b)) \subset U_{\Theta}^{\geq 0}\cdot
  \pi(b)$ for~$u$ in the generating set $\bigcup_{\alpha\in
    \Delta}\exp(\R_{\geq 0}X_\alpha)$ of~$U_{\Delta}^{\geq 0}$. Let thus
  $\alpha$ be in~$\Delta$, $t\geq 0$, and $u=\exp(tX_\alpha)$.
    When
  $\alpha$~belongs to~$\Theta$, then $u$ belongs to
  $U_{\Theta}^{\geq 0}$ and the inclusion comes from the fact
  that $U_{\Theta}^{\geq 0}$ is a semigroup. When $\alpha$ belongs
  to~$\Delta\setminus \Theta$, then $u$ belongs to~$L_{\Theta}^{\circ}$
  and the inclusion comes from the fact that $L_{\Theta}^{\circ}$
  normalizes~$U_{\Theta}^{\geq 0}$ and fixes~$\pi(b)$.
\end{proof}

\subsection{Positive maps}
\label{sec:positive-maps}

We extend the notion of positive tuples to the notion of positive maps. For
this let us denote by~$\Lambda$ a set equipped with a cyclic
ordering,\index{$\Lambda$ a set equipped with a cyclic ordering} i.e.\ there
is a notion of ``$y$ being between~$x$ and~$z$'' satisfying a number of
natural properties that are a bit similar to the properties requested for
diamonds, cf.\ \cite{Huntington} for example (typically $\Lambda$ is a subset
of the circle). This means that there is a subset $\Lambda^{3+}$
of~$\Lambda^3$ consisting of cyclically ordered triples. There is therefore a
notion of cyclically ordered $n$-tuples in~$\Lambda$ for every $n\geq 3$.

\begin{definition}\label{defi:positive-maps}
  A map $f\colon \Lambda \to \mathsf{F}_\Theta$ is said to be \emph{positive} if the
  image by~$f$ of every cyclically ordered $n$-tuple is a positive $n$-tuple.
\end{definition}

Proposition~\ref{prop:positive-tuples-charact} implies 
\begin{lemma}
  \label{lemma:positive-maps-char}
  Suppose $\Lambda$ is $\geq 4$. A map~$f$ is positive if and only if it sends every cyclically ordered
  quadruple to a positive quadruple.
\end{lemma}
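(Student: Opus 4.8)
The statement to prove is Lemma~\ref{lemma:positive-maps-char}: a map $f\colon \Lambda \to \mathsf{F}_\Theta$ is positive if and only if it sends every cyclically oriented quadruple to a positive quadruple.

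The plan is as follows. One direction is immediate: if $f$ is positive then, by Definition~\ref{defi:positive-maps}, it sends every cyclically oriented $n$-tuple to a positive $n$-tuple, and in particular every cyclically oriented $4$-tuple to a positive quadruple. So the only content is the converse. Assume $f$ sends every cyclically oriented quadruple to a positive quadruple; we must show that the image of an arbitrary cyclically oriented $n$-tuple $(\lambda_1, \dots, \lambda_n)$ in $\Lambda$ (with $n\geq 3$) is a positive $n$-tuple in $\mathsf{F}_\Theta$.

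For $n=3$: given a cyclically oriented triple $(\lambda_1,\lambda_2,\lambda_3)$, pick any fourth point or better, exploit that the hypothesis already gives positivity of quadruples; concretely, observe that positivity of a quadruple implies positivity of each of its subtriples (this follows from Proposition~\ref{prop:positive-quadruples}, e.g. point~(\ref{item5:prop:positive-quadruples}) together with Proposition~\ref{prop:posit-tripl}(\ref{item1bis:prop:posit-tripl}), since the diamonds $D_{i,j}$ witnessing positivity of the quadruple restrict to diamonds witnessing positivity of any subtriple). If $\Lambda$ has at least four points, every triple extends to a cyclically oriented quadruple whose image is positive, hence the subtriple's image is positive. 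If $\Lambda$ has exactly three points there is nothing further to check beyond reinterpreting the hypothesis. For the general case $n\geq 4$: the key tool is Proposition~\ref{prop:positive-tuples-charact}, which states that $(f_1,\dots,f_n)$ is a positive $n$-tuple if and only if for every $i=2,\dots,n-2$ the quadruple $(f_1,f_i,f_{i+1},f_n)$ is positive. Applying this with $f_j = f(\lambda_j)$, I only need that each $(f(\lambda_1), f(\lambda_i), f(\lambda_{i+1}), f(\lambda_n))$ is a positive quadruple. But $(\lambda_1,\lambda_i,\lambda_{i+1},\lambda_n)$ is a cyclically oriented quadruple in $\Lambda$ (being a sub-$4$-tuple of the cyclically oriented $n$-tuple $(\lambda_1,\dots,\lambda_n)$, and cyclic orientation passes to ordered subtuples), so by hypothesis its image is a positive quadruple. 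This is exactly what Proposition~\ref{prop:positive-tuples-charact} requires, so $(f(\lambda_1),\dots,f(\lambda_n))$ is positive.

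There is essentially no obstacle here — the lemma is a formal consequence of Proposition~\ref{prop:positive-tuples-charact}. The only point requiring a sentence of care is the bookkeeping that a sub-$4$-tuple of a cyclically oriented $n$-tuple is itself cyclically oriented in the induced cyclic order on $\Lambda$, which is part of the standard formalism of cyclically ordered sets and is already implicitly used in Lemma~\ref{lem:positive-tuples-first-prop}. I would write the proof in two short paragraphs: first dispatch the trivial implication, then invoke Proposition~\ref{prop:positive-tuples-charact} to reduce an arbitrary cyclically oriented tuple to quadruples covered by the hypothesis.

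\begin{proof}
  If $f$ is positive then by definition it sends every cyclically oriented
  $4$-tuple to a positive quadruple. Conversely, assume that $f$~sends every
  cyclically oriented quadruple to a positive quadruple and let
  $(\lambda_1, \dots, \lambda_n)$ be a cyclically oriented $n$-tuple
  in~$\Lambda$ (with $n\geq 3$). Set $f_i = f(\lambda_i)$. By
  Proposition~\ref{prop:positive-tuples-charact} it is enough to check that,
  for every $i=2,\dots, n-2$, the quadruple $(f_1, f_i, f_{i+1}, f_n)$ is
  positive. But $(\lambda_1, \lambda_i, \lambda_{i+1}, \lambda_n)$ is a
  cyclically oriented quadruple in~$\Lambda$ (being an ordered sub-$4$-tuple
  of the cyclically oriented tuple $(\lambda_1, \dots, \lambda_n)$), hence by
  hypothesis its image $(f_1, f_i, f_{i+1}, f_n)$ is a positive
  quadruple. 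This proves that $(f_1, \dots, f_n)$ is a positive $n$-tuple,
  and hence that $f$~is positive.
\end{proof}
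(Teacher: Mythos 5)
Your proof is correct and follows exactly the route the paper intends: the paper offers no written proof beyond the remark that Proposition~\ref{prop:positive-tuples-charact} ``implies immediately'' the lemma, and your argument is precisely the fleshing-out of that implication (trivial forward direction, then reduction of an arbitrary cyclically oriented $n$-tuple to the quadruples $(f_1,f_i,f_{i+1},f_n)$). The only caveat, which you already flag in your discussion but omit from the final proof environment, is that for $n=3$ the condition in Proposition~\ref{prop:positive-tuples-charact} is vacuous, so triples should be handled via Lemma~\ref{lem:positive-tuples-first-prop} (subconfigurations of positive tuples are positive) as you sketch; this is at worst the same level of informality as the paper itself.
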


The $\Theta$-principal embedding
$\pi_\Theta\colon \mathfrak{sl}_{2} \rightarrow \mathfrak{g}$ introduced in
Section~\ref{sec:sl2} induces an homomorphism
$\pi_\Theta\colon \SL_2(\R) \to G$ and hence an action of~$\SL_2(\R)$ on the
flag variety~$\mathsf{F}_\Theta$.
 \begin{lemma}
   The stabilizer of $\mathfrak{p}_\Theta\in \mathsf{F}_\Theta$ in $\SL_2(\R)$
   is the standard Borel subgroup~$B$ (whose Lie algebra is $\mathfrak{b} =\R E\oplus \R D$).
 \end{lemma}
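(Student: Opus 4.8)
The plan is to reduce the statement to the well-known analogous fact for the ordinary principal $\mathfrak{sl}_2$ inside the split Lie algebra $\mathfrak{g}_\Theta$, and then to transport it to $\mathsf{F}_\Theta$ via the inclusion $\mathfrak{g}_\Theta\hookrightarrow\mathfrak{g}$. First I would recall from Lemma~\ref{lem:theta-princ-mathfr-explicit} that the $\Theta$-principal $\mathfrak{sl}_2$-triple $(E,F,D)$ generates the principal three-dimensional subalgebra of $\mathfrak{g}_\Theta$, that $E=\sum_{\alpha\in\Theta}q_\alpha^{1/2}E_\alpha$ is a regular nilpotent element of $\mathfrak{g}_\Theta$ lying in $\mathfrak{n}_\Theta=\mathfrak{g}_\Theta\cap\mathfrak{u}_\Theta$, and that $D=\sum_{\alpha\in\Phi^+}D_\alpha\in\mathfrak{a}_\Theta\subset\mathfrak{a}$. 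Since $\mathfrak{b}=\mathbf{R}E\oplus\mathbf{R}D$ is the standard Borel subalgebra of $\mathfrak{sl}_2(\mathbf{R})$, its image under $\pi_\Theta$ is contained in $\mathfrak{n}_\Theta\oplus\mathfrak{a}_\Theta\subset\mathfrak{u}_\Theta\oplus\mathfrak{l}_\Theta=\mathfrak{p}_\Theta$; hence $\pi_\Theta(B)$ fixes the point $\mathfrak{p}_\Theta\in\mathsf{F}_\Theta$, giving the inclusion $B\subset\operatorname{Stab}_{\SL_2(\mathbf{R})}(\mathfrak{p}_\Theta)$.

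For the reverse inclusion I would argue by dimension and connectedness. The stabilizer $S\coloneqq\operatorname{Stab}_{\SL_2(\mathbf{R})}(\mathfrak{p}_\Theta)$ is a closed subgroup of $\SL_2(\mathbf{R})$ containing $B$, which is a maximal connected proper subgroup; so either $S=B$, or $S^\circ=B$ and $S$ has two components, or $S=\SL_2(\mathbf{R})$. The last case is impossible: it would force $\pi_\Theta(\mathfrak{sl}_2)\subset\mathfrak{p}_\Theta$, but $F=\sum_{\alpha\in\Theta}q_\alpha^{1/2}F_\alpha$ has a nonzero component in $\mathfrak{u}_{-\alpha}\subset\mathfrak{u}_\Theta^{\mathrm{opp}}$ for every $\alpha\in\Theta$ (the $F_\alpha$ lie in the open cones $\mathring{c}_\alpha^{\mathrm{opp}}$, in particular are nonzero), hence $F\notin\mathfrak{p}_\Theta$ since $\mathfrak{u}_\Theta^{\mathrm{opp}}\cap\mathfrak{p}_\Theta=\{0\}$. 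To rule out the two-component case it suffices to exhibit an element of $\SL_2(\mathbf{R})$ not normalizing $B$ which nonetheless would have to lie in $S$ if $S\supsetneq B$; equivalently, one checks that $N_{\SL_2(\mathbf{R})}(B)=B$, so the only subgroup strictly between $B$ and $\SL_2(\mathbf{R})$ having $B$ as identity component is $B$ itself — indeed any larger closed subgroup with identity component $B$ would normalize $B$. Thus $S=B$.

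The main obstacle is making the statement ``$F\notin\mathfrak{p}_\Theta$'' clean: I need that the $\Theta$-principal nilpotent $E$ (equivalently its image $\pi_\Theta(\mathfrak{b})$) really does not spill outside $\mathfrak{p}_\Theta$ and, dually, that the lowering operator $F$ really does have a genuine $\mathfrak{u}_\Theta^{\mathrm{opp}}$-component. Both follow from the explicit formulas $E=\sum q_\alpha^{1/2}E_\alpha$, $F=\sum q_\alpha^{1/2}F_\alpha$, $D=\sum_{\alpha\in\Phi^+}D_\alpha$ of Section~\ref{sec:theta-princ-mathfr}, together with $E_\alpha\in\mathring{c}_\alpha\subset\mathfrak{u}_\alpha\subset\mathfrak{u}_\Theta$, $F_\alpha\in\mathring{c}_\alpha^{\mathrm{opp}}\subset\mathfrak{u}_{-\alpha}\subset\mathfrak{u}_\Theta^{\mathrm{opp}}$, and $D_\alpha=[E_\alpha,F_\alpha]\in\mathfrak{a}$; there is nothing subtle once these are unwound. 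Alternatively, and perhaps more conceptually, one may observe that $\pi_\Theta$ factors through $\mathfrak{g}_\Theta$ and that $\mathfrak{g}_\Theta\cap\mathfrak{p}_\Theta$ is a minimal parabolic subalgebra of $\mathfrak{g}_\Theta$ with unipotent radical $\mathfrak{n}_\Theta$ (as noted in Section~\ref{sec:independence-gammab}), so the stabilizer of $\mathfrak{p}_\Theta$ in the $\Theta$-principal $\SL_2$ is exactly the stabilizer in that $\SL_2$ of the standard Borel of $\mathfrak{g}_\Theta$, and this is the classical fact that the principal $\SL_2$ of a split group meets the Borel exactly in a Borel of $\SL_2$.
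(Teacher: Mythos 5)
There is a genuine gap, and it sits exactly where the paper's proof does its real work. The Borel subgroup $B$ of $\SL_2(\R)$ with Lie algebra $\mathfrak{b}=\R E\oplus\R D$ is \emph{not connected}: it is the full upper-triangular subgroup, which has two components, the non-identity one containing $-\id$. From the inclusion $\pi_\Theta(\mathfrak{b})\subset\mathfrak{p}_\Theta$ you may only conclude that the identity component $B^\circ$ stabilizes $\mathfrak{p}_\Theta$; your sentence ``hence $\pi_\Theta(B)$ fixes the point $\mathfrak{p}_\Theta$'' is an unjustified jump from the Lie algebra to the disconnected group. Your second paragraph then inherits this: the correct trichotomy starting from $B^\circ\subset S$ and $S\neq\SL_2(\R)$ (which you do rule out correctly via $F\notin\mathfrak{p}_\Theta$) is $S^\circ=B^\circ$ and $S\subset N_{\SL_2(\R)}(B^\circ)=B$, so $S\in\{B^\circ,B\}$ — and nothing you have written decides between these two. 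The phrase ``$S^\circ=B$ and $S$ has two components'' is symptomatic of the confusion, since $B$ cannot be an identity component.

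Deciding that $S=B$ rather than $B^\circ$ is precisely the content of the second half of the paper's proof: one must show that $-\id=\exp\bigl(\pi(E-F)\bigr)$ lies in the stabilizer. The paper does this by observing that $x=\exp\bigl(\frac{\pi}{2}(E-F)\bigr)$ is sent by $\pi_\Theta$ to a representative of the longest length element, so that $\pi_\Theta(x)$ exchanges $\mathfrak{p}_\Theta$ and $\mathfrak{p}_\Theta^{\mathrm{opp}}$ (via Proposition~\ref{prop:longestelementWTheta}), whence $\pi_\Theta(x^2)=\pi_\Theta(-\id)$ fixes $\mathfrak{p}_\Theta$. Some argument of this kind is unavoidable; note that the distinction matters, since the next step of the paper identifies $\SL_2(\R)/B$ with $\mathbb{P}(\R^2)$, which fails for $\SL_2(\R)/B^\circ$. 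Your ``more conceptual'' alternative has the same hole: the classical statement that the principal $\mathfrak{sl}_2$ meets the Borel subalgebra in $\mathfrak{b}$ is again only a Lie-algebra (hence identity-component) statement, and transporting it to the disconnected group level, or identifying the stabilizer of the point $\mathfrak{p}_\Theta$ with the preimage of a Borel subgroup of $G_\Theta$ including its component group, requires exactly the missing computation with $-\id$.
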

 \begin{proof}
   Since $\pi_\Theta(E)$ and $\pi_\Theta(D)$ belongs to~$\mathfrak{p}_\Theta$,
   it is clear that the Lie algebra of this stabilizer contains $\mathfrak{b}
   $. Since the $\SL_2(\R)$-orbit of $\mathfrak{p}_\Theta$ is not
   trivial, the Lie algebra of the stabilizer must be equal
   to~$\mathfrak{b}$. Hence the stabilizer is either~$B$ or its neutral
   component~$B^\circ$. We thus need to prove that $-\id = \exp( \pi(E-F))$
   belongs to the stabilizer.
   
   The element $\dot{s}=\exp( \frac{\pi}{2}(E-F))$ of
   $\SL_2(\R)$ is sent by~$\pi_\Theta$ to (a representative of) the longest
   length element of~$H_\Theta$ (Lemma~\ref{lem:principal-weyl-group}). One
   has hence (cf.\ Lemma~\ref{lem:WHTheta-iso-WTheta} and
   Proposition~\ref{prop:longestelementWTheta}) that $\pi_\Theta(\dot{s}) \cdot
   \mathfrak{p}_{\Theta} = w_\Delta \cdot
   \mathfrak{p}_{\Theta} =\mathfrak{p}^{\mathrm{opp}}_{\Theta} $ and  $\pi_\Theta(\dot{s}) \cdot
   \mathfrak{p}^{\mathrm{opp}}_{\Theta} =\mathfrak{p}_{\Theta} $ which implies
   the sought for equality: $\pi_\Theta(\dot{s}^2) \cdot
   \mathfrak{p}_{\Theta} =\mathfrak{p}_{\Theta}$.
 \end{proof}

 From this, identifying $\mathbb{P}(\R^2)$ with
 $\SL_2(\R)/B$, we obtain an equivariant embedding $\mathbb{P}(\R^2)\to \mathsf{F}_\Theta$.
As a direct consequence of Theorem~\ref{thm:nilpotent} (and
since the cyclically ordered tuples of $\mathbb{P}(\R^2)$ are well understood) we obtain the following
 \begin{proposition}\label{prop:positive_circle} \cite[Conjecture~4.12]{GW_ECM}
 The embedding $\mathbb{P}(\R^2)$ into $\mathsf{F}_\Theta$ induced from the Lie algebra morphism $\pi_\Theta\colon \mathfrak{sl}_2(\R) \rightarrow \mathfrak{g}$ is a positive circle. 
 \end{proposition}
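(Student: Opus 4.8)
The statement asserts that the $\SL_2(\R)$-equivariant embedding $\iota\colon \mathbb{P}(\R^2)\hookrightarrow \mathsf{F}_\Theta$ coming from $\pi_\Theta$ is a positive circle, i.e.\ that $\iota$ is a positive map in the sense of Definition~\ref{defi:positive-maps}, where $\mathbb{P}(\R^2)$ carries its natural cyclic order. By Lemma~\ref{lemma:positive-maps-char} it suffices to show that $\iota$ sends every cyclically ordered quadruple of points of $\mathbb{P}(\R^2)$ to a positive quadruple of flags; and by Proposition~\ref{prop:positive-tuples-charact} (applied to $n=4$, or directly via the definition of positive quadruple together with point~(\ref{item5:prop:positive-quadruples}) of Proposition~\ref{prop:positive-quadruples}) this reduces further, using the dihedral invariance already established, to checking positivity of the image of one ``model'' cyclically ordered quadruple, since $\SL_2(\R)$ acts transitively on cyclically ordered quadruples up to the action that changes cross-ratio monotonically within a connected interval. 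More precisely: the space of positive quadruples is a union of connected components of $\mathsf{F}_\Theta^{4*}$ (Lemma at the end of Section~\ref{sec:positive-quadruples}), $\iota$ is continuous, and the set of cyclically ordered quadruples of $\mathbb{P}(\R^2)$ is connected, so it is enough to produce a \emph{single} cyclically ordered quadruple whose image is positive.

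\textbf{The model quadruple.} Fix the standard basis $(v_-, v_+)$ of $\R^2$, so that $\R v_+$ corresponds to $\mathfrak{p}_\Theta$ and $\R v_-$ to $\mathfrak{p}_\Theta^{\mathrm{opp}}$ under the embedding $\mathbb{P}(\R^2)\cong \SL_2(\R)/B \to \mathsf{F}_\Theta$ (here I use the lemma just proved that $\operatorname{Stab}_{\SL_2(\R)}(\mathfrak{p}_\Theta)=B$). The upper unipotent one-parameter group $\{n(t)=\exp(tE)\}_{t\in\R}$ of $\SL_2(\R)$ moves $\R v_-$ along the big cell, and for $0<t_1<t_2$ the quadruple $(\R v_+, \, n(t_2)\cdot \R v_-, \, n(t_1)\cdot \R v_-, \, \R v_-)$ is cyclically ordered in $\mathbb{P}(\R^2)$. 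Its image under $\iota$ is
\[
\bigl( \mathfrak{p}_\Theta,\ \pi_\Theta(n(t_2))\cdot \mathfrak{p}_\Theta^{\mathrm{opp}},\ \pi_\Theta(n(t_1))\cdot \mathfrak{p}_\Theta^{\mathrm{opp}},\ \mathfrak{p}_\Theta^{\mathrm{opp}}\bigr).
\]
Now $\pi_\Theta(n(t)) = \exp(t\,\pi_\Theta(E)) = \exp(tE)$ where $E=\sum_{\alpha\in\Theta} q_\alpha^{1/2} E_\alpha$ is the $\Theta$-principal nilpotent of Lemma~\ref{lem:theta-princ-mathfr-explicit}, which has the form $\sum_{\alpha\in\Theta} E_\alpha'$ with $E_\alpha' = t\,q_\alpha^{1/2} E_\alpha \in \mathring{c}_\alpha$ (for $t>0$) by the Corollary just above Proposition~\ref{prop:positive_circle}. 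Hence $\exp(tE) \in U_\Theta^{>0}$ for every $t>0$, and more: for $0<t_1<t_2$ one has $\exp(t_2 E) = \exp((t_2-t_1)E)\exp(t_1 E)$ with both factors in $U_\Theta^{>0}$. Thus, writing $v=\exp(t_1E)\in U_\Theta^{>0}$ and $u=\exp((t_2-t_1)E)\in U_\Theta^{>0}$, the image quadruple equals $(a,\, vu\cdot b,\, v\cdot b,\, b)$ with $a=\mathfrak{p}_\Theta$, $b=\mathfrak{p}_\Theta^{\mathrm{opp}}$. By point~(\ref{item3:prop:positive-quadruples}) of Proposition~\ref{prop:positive-quadruples}, this quadruple is positive.

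\textbf{Conclusion and the main obstacle.} Combining: there exists one cyclically ordered quadruple of $\mathbb{P}(\R^2)$ whose $\iota$-image is a positive quadruple of flags; since the set of cyclically ordered quadruples of $\mathbb{P}(\R^2)$ is connected and maps continuously into $\mathsf{F}_\Theta^{4*}$, and the positive quadruples form a union of connected components of $\mathsf{F}_\Theta^{4*}$, every cyclically ordered quadruple has positive image. By Lemma~\ref{lemma:positive-maps-char}, $\iota$ is a positive map, i.e.\ a positive circle, which is the assertion of Proposition~\ref{prop:positive_circle}. The only genuinely delicate point is the bookkeeping identifying the $\SL_2(\R)$-orbit picture in $\mathbb{P}(\R^2)$ with the $U_\Theta^{>0}$-picture in $\mathsf{F}_\Theta$ — specifically, checking that the cyclic order on $\mathbb{P}(\R^2)$ corresponding to our chosen orientation matches the one for which $(R v_+, n(t_2)R v_-, n(t_1)R v_-, R v_-)$ is positively ordered with $t_1<t_2$ (as opposed to $t_1>t_2$), and that all the transversality hypotheses implicit in ``positive quadruple'' hold; both are routine once one recalls that $U_\Theta^{>0}$ is a semigroup inside the big cell and that $w_\Delta\cdot\mathfrak{p}_\Theta=\mathfrak{p}_\Theta^{\mathrm{opp}}$, so I would not belabor it.
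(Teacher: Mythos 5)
Your proof is correct and follows the route the paper intends: the entire content is that $\exp(tE)\in U_\Theta^{>0}$ for $t>0$ (Lemma~\ref{lemma:positive_unipotent_split} together with its corollary), combined with the semigroup characterization of positive quadruples in Proposition~\ref{prop:positive-quadruples} and the reduction of positivity of maps to quadruples via Lemma~\ref{lemma:positive-maps-char}. Your connectedness reduction to a single model quadruple is only a minor bookkeeping variant of the paper's (implicit) direct parametrization of cyclically ordered tuples by the $\SL_2(\R)$-action, and the orientation issue you flag at the end is indeed harmless because of the dihedral invariance of positive quadruples.
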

 
\begin{remark}
When $ \Theta \neq \Delta$, there are sometimes more than one embedding of
$\mathfrak{sl}_{2}$ that give rise to a positive circle in
$\mathsf{F}_\Theta$. For example when $G$~is a split real group, the principal
$3$-dimensional subalgebra of~$\mathfrak{g}$ by Proposition~\ref{prop:compatible} also determines a positive circle in~$\mathsf{F}_\Theta$ for any~$\Theta$ such that $G$~admits a $\Theta$-positive
structure. For Hermitian groups of tube type, any tight embedding of $\mathfrak{sl}_{2}$ into $\mathfrak{g}$ gives rise to a positive circle, see \cite{BurgerIozziWienhard_tight}.
\end{remark}

\appendix

\section{Longest length element in $\lietype{B}_{p+1}$}\label{app:bn}
In this first appendix we determine a reduced expression of the longest length
element in the Weyl group associated to a root system of type~$\lietype{B}_{p+1}$; it
is also the longest length
element for~$\lietype{C}_{p+1}$ since these  two Coxeter groups coincide. 

This is the type of the system of restricted roots of the groups $\SO(p+1,p+k)$
($p>0$, $k>1$). A choice of quadratic form which makes the calculations a little
easier is given by the matrix (cf.\ Section~\ref{sec:orthogonal}) 
 $Q = \begin{pmatrix} 0& 0& K\\ 0 &J &0 \\ {}^t\! K&0& 0 \end{pmatrix}$, where
 $K= \begin{pmatrix}  & & &    (-1)^p \\
         & &    \iddots & \\
   & 1 & &\\
   -1& & & \end{pmatrix}$, and
 $J = \begin{pmatrix} 0&0& 1\\ 0& -\id_{k-1} &0\\ 1 & 0& 0 \end{pmatrix}$.

 With this choice, a Cartan subspace~$\mathfrak{a}$ of $\so(p+1,p+k)$ is its
intersection with the space of diagonal matrices. A natural basis of~$\mathfrak{a}^*$
(with respect to this particular matrix realization of the group) are the
$e_i$ mapping a diagonal matrix to its $i$-th diagonal
entry ($i$ in $\llbracket 1, p+1\rrbracket$). We will use this to identify $\mathfrak{a}$ with $\R^{p+1}$ and describe the Weyl
group and its elements in $\GL_{p+1}(\R)$.

The roots are \[\{ \pm e_i\}_{i\in \llbracket 1, p+1\rrbracket}\cup \{ \pm e_j \pm e_k\}_{j,k \in \llbracket 1,p+1\rrbracket ,
  j<k}.\] For the lexicographic order the positive roots are
\[\{ e_i\}_{i\in\llbracket 1, p+1\rrbracket }\cup\{ e_j \pm e_k\}_{j,k \in \llbracket 1,p+1\rrbracket , j<k}\] and the simple roots
are $\alpha_1 = e_1 - e_2$, $\alpha_2 = e_2 - e_3$, \dots,
$\alpha_{p} = e_{p} - e_{p+1}$, $\alpha_{p+1} = e_{p+1}$.

The Weyl group~$W$ naturally identifies with $(\Z/2\Z)^{p+1} \rtimes S_{p+1}$ acting
on $\R^{p+1}$ by permuting the coordinates and changing their signs. Its
generators $s_1, \dots, s_{p+1}$ associated with $\alpha_1, \dots, \alpha_{p+1}$
are the transformations:
\begin{align*}
  s_1\colon  (x_1, x_2, \dots, x_{p+1})& \longmapsto (x_2, x_1,
         \dots, x_{p+1}),\\
  s_2\colon (x_1, x_2, x_3, \dots, x_{p+1})& \longmapsto (x_1, x_3, x_2, \dots,
         x_{p+1}),\\
       &\dots,\\
  s_{p}\colon (x_1,  \dots, x_{p}, x_{p+1})& \longmapsto (x_1, \dots, x_{p+1},
             x_{p})\\
  \text{and } s_{p+1}\colon (x_1, x_2, \dots, x_{p+1})& \longmapsto (x_1, x_2, \dots,
                     -x_{p+1}).
\end{align*}
Furthermore the longest length element of $W$ is $-\id_{p+1}$ as it exchanges
the Weyl chamber with its opposite.

Let $\Theta=\{ \alpha_1, \dots,\alpha_p\}$. With the notation of
Section~\ref{sec:special-root}, 
$\alpha_\Theta = \alpha_{p}$, the longest length element in
$W_{\Delta \smallsetminus \Theta} = \langle s_{p+1}\rangle$ is $s_{p+1}$ itself and
the subgroup of~$W$ generated by $s_{p}$ and $s_{p+1}$ is isomorphic to
$(\ZZ/2\ZZ)^2 \rtimes S_2$ (the Weyl group of $\SO(2,2+k)$). Its longest
length element is $s_p s_{p+1} s_p s_{p+1}  $ so that the
element $\sigma_{p} = \sigma_\Theta$ is $s_{p} s_{p+1} s_{p}$. Seen as an
element of $\GL_{p+1}(\R)$, $\sigma_{p}$ is the transformation
$(x_1, \dots, x_{p}, x_{p+1}) \mapsto (x_1, \dots, -x_{p}, x_{p+1})$

Recall that the group $W(\Theta)$ is the subgroup of $W$ generated by $s_1,
\dots, s_{p-1}$ and $\sigma_{p}$. As all these generators fix the last
coordinate, we can identify $W(\Theta)$ with a subgroup of
$\GL_p(\R)$. With this identification in mind and the above description,
it is apparent that this subgroup is the Weyl group of type $\lietype{B}_{p}$ and its
longest length element 
is, in this geometric realization, $-\id_p$.

Turning back to subgroups of $\GL_{p+1}(\R)$, the longest length element of $W(\Theta)$
is the transformation
\[(x_1, \dots, x_{p}, x_{p+1}) \mapsto (-x_1, \dots, -x_{p}, x_{p+1}).\] Since the
longest length element of $W$ is $-\id_{p+1}$ and the longest length element of
$W_{\Delta \smallsetminus \Theta}$ is
$s_{p+1}$ one gets
as well that $w_{\mathrm{max}}^\Theta$ is
\[(x_1, \dots, x_{p}, x_{p+1}) \mapsto (-x_1, \dots, -x_{p}, x_{p+1})\]
establishing, for the type~$\lietype{B}_{p+1}$, the fact that $w_{\max}^{\Theta}$
belongs to~$W(\Theta)$
(Proposition~\ref{prop:longestelementWTheta}).

Written as products of generators, these elements are: (we use the notation
$x^y = y^{-1}xy$ so that $(x^y)^z = x^{yz}$ and
$\sigma_{p}= s_{p+1}^{s_{s}}$)
\begin{align*}
  w_\Delta & = s_{p+1}^{s_{p} \cdots s_1} s_{p+1}^{s_{p} \cdots s_2} \cdots
             s_{p+1}^{s_{p}} \cdots s_{p+1}\\
  w_{\max}^{\Theta} & = s_{p+1}^{s_{p} \cdots s_1} s_{p+1}^{s_{p} \cdots s_2}
                      \cdots  s_{p+1}^{s_{p}}\\
           & = \sigma_{p}^{s_{p-1} \cdots s_1} \sigma_{p}^{s_{p-1}
             \cdots s_2} \cdots \sigma_{p}^{s_{p-1}}. 
             \sigma_{p},
\end{align*}
The above equalities are  established noting that
$s_{p+1}^{ s_{p} \cdots s_{k}}$ is
\[(x_1, \dots, x_k, \dots, x_{p+1}) \mapsto (x_1, \dots, -x_k,\dots, x_{p+1}).\]

One can also verify that the above decompositions are reduced: for example,
the length of $w_\Delta$ is the dimension of the complete flag variety for the
split group $\SO(p+1,p+2)$ and is thus equal to $(p+1)^2$; this number
coincides with the
length of the above product.

\section{Longest length element in $\lietype{F}_4$}\label{app:f4}
In this section we determine a reduced expression of the longest length
element in the Weyl group associated to a root system of type~$\lietype{F}_4$. 

The root system~$\lietype{F}_4$ is intimately related with the lattice~$\Lambda$
of~$\R^4$ generated by $\Z^4$ and the element $\frac{1}{2}(1,1,1,1)$. An
alternative description of~$\Lambda$ is the set of elements in $\R^4$ all of
whose coordinates have the same remainder, $0$ or $1/2$, modulo~$1$.

The elements of $\lietype{F}_4$ are the elements of $\Lambda$ whose Euclidean norms
are~$1$ or~$\sqrt{2}$. They can be explicitely listed: let
$(e_i)_{i \in \llbracket 1, 4\rrbracket }$ be the canonical basis of~$\R^4$, then
\[ \lietype{F}_4=\bigl\{ \pm e_i\bigr\}_{i \in \llbracket 1, 4\rrbracket} \cup \bigl\{ \pm e_k\pm
  e_\ell\bigr\}_{k,\ell \in \llbracket 1, 4\rrbracket, k<\ell} \cup \Bigl\{ \frac{1}{2}(\pm e_1 \pm e_2 \pm e_3
  \pm e_4)\Bigr\}.\] The positive roots
are (using colexicographic order)
\[ \bigl\{ e_i\bigr\}_{i \in \llbracket 1, 4\rrbracket}\cup \bigl\{\pm e_k+
  e_\ell\bigr\}_{k,\ell \in \llbracket 1, 4\rrbracket, k<\ell}\cup \Bigl\{ \frac{1}{2}(\pm e_1 \pm e_2 \pm e_3
  + e_4)\Bigr\},\] and the simple roots
are
\[ \alpha_1 = -e_2+e_3, \ \alpha_2 = -e_1+e_2, \alpha_3=e_1, \alpha_4 =
  \frac{1}{2}(-e_1-e_2-e_3+e_4).\]

The Weyl group is the subgroup of $\GL_4(\R)$ generated by the symmetries
$s_1, \dots, s_4$ associated with $\alpha_1, \dots, \alpha_4$. The
transformation $s_i$ is
$x \mapsto x- 2\frac{ \langle \alpha_i, x\rangle}{\langle \alpha_i, \alpha_i
  \rangle} \alpha_i$. In matrix coordinates
\[ s_4 = \frac{1}{2}
  \left(\begin{array}{rrrr}
          1 & -1 & -1 & 1\\
          -1 & 1 & -1 & 1\\
          -1 & -1 & 1 & 1\\
          1 & 1 & 1 & 1
        \end{array}\right)\ 
      .
\]
The element~$s_3$ is $(x_1, x_2, x_3, x_4) \mapsto (-x_1, x_2, x_3,
x_4)$. Finally $s_2$ and $s_1$ are respectively
$(x_1, x_2, x_3, x_4) \mapsto (x_2, x_1, x_3, x_4)$ and
$(x_1, x_2, x_3, x_4) \mapsto (x_1, x_3, x_2, x_4)$.
    
The relevant subset of the simple roots in this situtation is
$\Theta=\{ \alpha_1, \alpha_2\}$ and the root $\alpha_\Theta$ is $\alpha_2$.
The subgroup $W_{\Delta \smallsetminus \Theta}$ is generated by~$s_3$ and~$s_4$ and is isomorphic to $S_3$, its longest length element is~$s_3 s_4 s_3 $.

The subgroup generated by the symmetries indexed by $\Delta\smallsetminus \Theta$ and~$\alpha_\Theta$ is the group generated by $\{ s_2, s_3, s_4\}$ and is
isomorphic to the Weyl group~$\lietype{C}_3$, which is the same as the Weyl group $\lietype{B}_3$ (with the reindexation $1\mapsto 4,
2\mapsto 3, 3\mapsto 2$ with respect to the
previous appendix). Its longest length element is
$s_{2} s_{2}^{s_3} s_{2}^{s_3 s_4} = s_{2} s_3 s_{2} {s_3} s_4 s_3 s_{2} {s_3
  s_4}$. Hence the element $\sigma_2=\sigma_\Theta$ is:
\begin{align*}
  \sigma_2   &= s_{2} s_3 s_{2} {s_3} s_4 s_3 s_{2} (s_3 s_4 s_4 s_3) s_4\\
           &= s_{2} s_3 s_{2} {s_3} s_4 s_3 (s_{2} s_4)\\
           &= s_{2} s_3 s_{2} ({s_3} s_4 s_3 s_{4}) s_2\\
           &= s_{2} s_3 s_{2} s_4 s_3 s_2
           = s_{2} s_3 s_{4} s_2 s_3 s_2.
\end{align*}
And the last two expressions are reduced (this 
can be deduced
from the
fact below that $s_{2} s_3 s_{4} s_2 s_3 s_2$ is a subword of a reduced
expression of the longest length element).

One can calculate $\sigma_2$ in $\GL_4(\R)$:
\[ \sigma_2 = \frac{1}{2} \left(
    \begin{array}{rrrr}
      -1&-1&-1&1\\
      -1&-1&1&-1\\
      -1&1&1&1\\
      1&-1&1&1
    \end{array}
  \right),\] as well as (recall that $\sigma_1=s_1$)
\[ \sigma_1 \sigma_2 = \frac{1}{2} \left(
    \begin{array}{rrrr}
      -1&-1&-1&1\\
      -1&1&1&1\\
      -1&-1&1&-1\\
      1&-1&1&1
    \end{array}
  \right).\]
Its square is
\[ (\sigma_1 \sigma_2)^2 = \left(
    \begin{array}{rrrr}
      1&0&0&0\\
      0&0&1&0\\
      0&0&0&-1\\
      0&-1&0&0
    \end{array}
  \right),\]
showing that $\sigma_1 \sigma_2$ is of order~$6$ and that $W(\Theta)$ is of type~$\lietype{G}_2$.
                       
Finally, the following holds
\begin{multline*}
  s_3 s_4 s_3 = \frac{1}{2} \left(\begin{array}{rrrr}
                                    1 & 1 & 1 & -1\\
                                    1 & 1 & -1 & 1\\
                                    1 & -1 & 1 & 1\\
                                    -1 & 1 & 1 & 1
                                  \end{array}\right)\ \text{ and}\\
                                (\sigma_1 \sigma_2)^3 = \frac{1}{2} \left(
                                  \begin{array}{rrrr}
                                    -1&-1&-1&1\\
                                    -1&-1&1&-1\\
                                    -1&1&-1&-1\\
                                    1&-1&-1&-1
                                  \end{array}
                                \right) = -(s_3 s_4 s_3)\ .
                              \end{multline*}
We thus have
\begin{align*}
  (\sigma_1 \sigma_2)^3 s_3 s_4 s_3 & =(\sigma_2 \sigma_1)^3 s_3 s_4 s_3 =
                                      -\id_4\\
                                    &= s_2 s_3 s_2 s_4 s_3 s_2 s_1 \cdot  s_2 s_3 s_2 s_4 s_3 s_2 s_1 \cdot s_2
                                      s_3 s_2 s_4 s_3 s_2 s_1 \cdot s_3 s_4 s_3
\end{align*}
which is the longest length element of $\lietype{F}_4$ (since it sends the Weyl chamber to its
opposite) and a reduced decomposition of it (since the length of this
decomposition is equal to 24, the number of positive roots). This shows  Proposition~\ref{prop:longestelementWTheta} in this case too.


\bibliographystyle{amsalpha}
\bibliography{GeneralizingLusztigPositivity-revision}

\end{document}